\def\@secnumfont{\mdseries}
\def\section{\@startsection{section}{1}%
 \z@{.7\linespacing\@plus\linespacing}{.5\linespacing}%
 {\normalfont\scshape\centering}}
\def\subsection{\@startsection{subsection}{2}%
 \z@{.5\linespacing\@plus.7\linespacing}{-.5em}%
 {\normalfont\bfseries}}
\patchcmd{\@thm}{\let\thm@indent\indent}{\let\thm@indent\noindent}{}{}
\patchcmd{\@thm}{\thm@headfont{\scshape}}{\thm@headfont{\bfseries}}{}{}
\DeclareFontFamily{U}{matha}{\hyphenchar\font45}
\DeclareFontShape{U}{matha}{m}{n}{
      <5> <6> <7> <8> <9> <10> gen * matha
      <10.95> matha10 <12> <14.4> <17.28> <20.74> <24.88> matha12
      }{}
\DeclareSymbolFont{matha}{U}{matha}{m}{n}
\DeclareMathSymbol{\acap}{2}{matha}{"58}
\DeclareMathSymbol{\acup}{2}{matha}{"59}
\DeclareMathOperator\cone{cone}
\DeclareMathOperator\Hom{Hom}
\numberwithin{figure}{chapter}
\theoremstyle{plain}
\newtheorem{theorem}{Theorem}[chapter]
\newtheorem{lemma}[theorem]{Lemma}
\newtheorem{corollary}[theorem]{Corollary}
\newtheorem{proposition}[theorem]{Proposition}
\newtheorem*{proposition*}{Proposition}
\newtheorem*{theorem*}{Theorem}
\newtheorem{question}[theorem]{Question}
\newtheorem{conjecture}[theorem]{Conjecture}
\newtheorem{assumption}[theorem]{Assumption}
\theoremstyle{definition}
\newtheorem{definition}[theorem]{Definition}
\theoremstyle{remark}
\newtheorem{remark}[theorem]{Remark}
\newtheorem{notation}[theorem]{Notation}
\newtheorem{example}[theorem]{Example}
\newtheorem{construction}[theorem]{Construction}
\newtheorem*{claim}{Claim}
\def\@tocline#1#2#3#4#5#6#7{\relax
  \ifnum #1>\c@tocdepth % then omit
  \else
    \par \addpenalty\@secpenalty\addvspace{#2}%
    \begingroup \hyphenpenalty\@M
    \@ifempty{#4}{%
      \@tempdima\csname r@tocindent\number#1\endcsname\relax
    }{%
      \@tempdima#4\relax
    }%
    \parindent\z@ \leftskip#3\relax \advance\leftskip\@tempdima\relax
    \rightskip\@pnumwidth plus4em \parfillskip-\@pnumwidth
    #5\leavevmode\hskip-\@tempdima
      \ifcase #1
       \or\or \hskip 1em \or \hskip 2em \else \hskip 3em \fi%
      #6\nobreak\relax
    \dotfill\hbox to\@pnumwidth{\@tocpagenum{#7}}\par
    \nobreak
    \endgroup
  \fi}
\def\tf{\op{F}\!}
\newcommand{\map}[3]{\ensuremath{#1\colon#2\rightarrow#3}}
\newcommand{\Ch}[3]{\ensuremath{C_{#1}(#2;#3)}}
\newcommand{\Cech}[3]{\ensuremath{\check{H}^{#1}(#2;#3)}}
\newcommand{\coCh}[3]{\ensuremath{C^{#1}(#2;#3)}}
\newcommand{\SET}[1]{\left\{#1\right\}}
\newcommand{\Ztriv}{\ensuremath{\mathbb{Z}^{\operatorname{triv}} }}
\let\emptyset\varnothing
\newcommand{\inv}{\ensuremath{^{-1}}}
\def\int{\op{Int}}
\def\Q{\mathbb{Q}}
\def\Z{\mathbb{Z}}
\def\R{\mathbb{R}}
\def\C{\mathbb{C}}
\def\N{\mathbb{N}}
\def\O{\mathbb{O}}
\def\shortsms{\!\setminus\!}
\def\shortcup{\!\cup\!}
\def\toiso{\xrightarrow{\simeq}}
\newcommand{\colim@}[2]{%
  \vtop{\m@th\ialign{##\cr
    \hfil$#1\operator@font colim$\hfil\cr
    \noalign{\nointerlineskip\kern1.5\ex@}#2\cr
    \noalign{\nointerlineskip\kern-\ex@}\cr}}%
}
\newcommand{\colim}{%
  \mathop{\mathpalette\colim@{\rightarrowfill@\scriptscriptstyle}}\nmlimits@
}
\newcommand{\smfrac}[2]{\mbox{\footnotesize$\displaystyle\frac{#1}{#2}$}}
\newcommand{\tmfrac}[2]{\mbox{\large$\frac{#1}{#2}$}}
\newcommand{\tsmfrac}[2]{\mbox{\scriptsize$\displaystyle\frac{#1}{#2}$}}
\newcommand{\tmsum}[2]{\mbox{$\textstyle \sum\limits_{#1}^{#2}$}}
\newcommand{\opnormal}[1]{\operatorname{\textnormal{#1}}}
\DeclareMathAlphabet{\mathbf}{OML}{cmm}{b}{it}
\newcommand{\B}{\operatorname{B}}
\newcommand{\BO}{\operatorname{BO}}
\renewcommand{\O}{\operatorname{O}}
\newcommand{\SO}{\operatorname{SO}}
\newcommand{\BSO}{\operatorname{BSO}}
\newcommand{\Spin}{\operatorname{Spin}}
\newcommand{\BSpin}{\operatorname{BSpin}}
\newcommand{\BPL}{\operatorname{BPL}}
\newcommand{\PL}{\operatorname{PL}}
\newcommand{\BTOP}{\operatorname{BTOP}}
\newcommand{\TOP}{\operatorname{TOP}}
\newcommand{\CAT}{\operatorname{CAT}}
\newcommand{\BCAT}{\operatorname{BCAT}}
\newcommand{\Diff}{\operatorname{Diff}}
\newcommand{\STOP}{\operatorname{STOP}}
\newcommand{\BSTOP}{\operatorname{BSTOP}}
\newcommand{\TOPSpin}{\operatorname{TOPSpin}}
\newcommand{\BTOPSpin}{\operatorname{BTOPSpin}}
\newcommand{\ograph}{\operatorname{graph}}
\newcommand{\PD}{\operatorname{PD}}
\newcommand{\Sq}{\operatorname{Sq}}
\newcommand{\Emb}{\operatorname{Emb}}
\newcommand{\Homeo}{\operatorname{Homeo}}
\def\bnm{\begin{enumerate}[leftmargin=1cm,font=\normalfont]}
\def\bnmt{\begin{enumerate}[leftmargin=0.8cm,font=\normalfont]}
\def\enm{\end{enumerate}}
\def\ba{\begin{array}}
\def\ea{\end{array}}
\def\bpp{\begin{pmatrix}}
\def\epp{\end{pmatrix}}
\def\ext{\operatorname{Ext}}
\def\coker{\operatorname{coker}}
\def\ord{\op{ord}}
\def\ker{\operatorname{ker}}
\def\im{\operatorname{Im}}
\def\hom{\operatorname{Hom}}
\def\sms{\setminus}
\def\op{\operatorname}
\def\up{\textup}
\def\ub{\underbrace}
\def\us{\underset}
\def\wti{\widetilde}
\def\ol{\overline}
\def\id{\operatorname{Id}}
\def\Id{\operatorname{Id}}
\def\rp{\R\textup{P}}
\def\cp{\C\textup{P}}
\def\CP{\C\textup{P}}
\def\sm{\setminus}
\def\wt{\widetilde}
\def\link{\operatorname{Lk}}
\def\pt{\operatorname{pt}}
\def\PL{\operatorname{PL}}
\def\Int{\operatorname{Int}}
\DeclareMathOperator\Ext{Ext}
\DeclareMathOperator\GL{GL}
\DeclareMathOperator{\ex}{ex}
\DeclareMathOperator{\ks}{ks}
\DeclareMathOperator{\pr}{pr}
\numberwithin{equation}{chapter}
\numberwithin{section}{chapter}
\numberwithin{subsection}{section}
\begin{document}
\title[The foundations of $4$-manifold theory]{A survey of the foundations of four-manifold theory in the topological category}

\author[S. Friedl]{Stefan Friedl}
\address{Fakult\"at f\"ur Mathematik\\ Universit\"at Regensburg\\   Germany}
\email{sfriedl@gmail.com}

\author[M. Nagel]{Matthias Nagel}
%\address{Department of Mathematics, ETH Zürich, Switzerland}
\email{matthinagel@gmail.com}

\author[P.~Orson]{Patrick Orson}
\address{California Polytechnic State University, San Luis Obispo, CA, USA}
\email{patrickorson@gmail.com}

\author[M. Powell]{Mark Powell}
\address{School of Mathematics and Statistics\\ University of Glasgow\\   UK}
\email{mark.powell@glasgow.ac.uk}

\maketitle

\begin{abstract}
This survey aims to provide a guide to the literature on topological 4-manifolds. 
Foundational theorems on 4-manifolds are stated, especially in the topological category.  Precise references are given, with indications of the strategies employed in the proofs. Where appropriate we give statements for manifolds of all dimensions.

Many intuitively plausible theorems which are standard results in differential topology are either extraordinarily deep results in the topological category, are open, or are known to be false.  Hence one must proceed with caution. This book seeks to help 4-manifold topologists navigate potential pitfalls, and to apply the many powerful results that do exist with confidence.
\end{abstract}

%======================================
\chapter{Introduction}

Our aim in this survey is to provide readers who have trained in algebraic topology and perhaps differential topology or Riemannian geometry, with a guide to the literature on topological manifolds.
We aim to state some foundational theorems in topological manifolds, with a strong bias towards dimension four. If we were considering manifolds with a smooth atlas, many of these statements would be familiar textbook-level tools. In the topological category, navigating which of these tools can still be used, and where to find proofs of these facts, can be a challenging endeavour.
On the other hand, particularly in dimension four, some of the results we describe are not familiar results in the smooth category and often no result of the sort holds for smooth manifolds. This second type of statement demonstrates one of the attractions of working with 4-manifolds in the topological category, where major classification statements can  be achieved.

Our hope is that with the statements from this book the ``working topologist'' will be equipped to handle most situations.
We make no claims of originality.

\section{High-dimensional topological manifolds}
Though we have a strong bias in this survey towards thinking about 4-dimensional manifolds, in order to do this in the topological category one must have a good understanding of manifold topology in higher dimensions.

Spectacular results on the classification of smooth manifolds in dimensions $\geq 5$ arose from Surgery Theory, due to Smale~\cite{Smale:1962-1}, Kervaire--Milnor~\cite{KM1963}, Browder~\cite{Browder}, Novikov~\cite{Novikov}, Sullivan~\cite{Sullivan}, and Wall~\cite{Wall-Ranicki:1999-1}, among others. These methods were extended to topological manifolds by Newman~\cite{Newman}, Kirby~\cite{Ki69}, and Siebenmann~\cite{KS77} in dimensions at least five, and to dimension four by Freedman and Quinn~\cite{Freedman-82,Qu82,FQ90}. To paraphrase Andrew Ranicki,

\begin{quote}
    \emph{Smooth manifolds in dimensions at least five exhibit a beautiful correspondence between geometry and algebra. Topological manifolds in dimension at least four are in this sense like smooth high-dimensional manifolds,} but even more so.
\end{quote}

To what does ``even more so'' refer?
One instance is the principle that topological manifolds are governed by their homotopy type which, in turn, is often governed by algebraic invariants.
Many specific instances of this principle hold uniformly across all dimensions for topological manifolds and we highlight a few of these now to demonstrate the point.
 Most famous is the (topological) Poincar\'{e} conjecture, which characterises the sphere~$S^n$, up to homeomorphism, in terms of algebraic topological invariants. This result is known to be true in all dimensions, due to Newman, Freedman, and Perelman~\cite{Newman66,Freedman-82,Morgan-Tian}.
Locally flat topological embeddings of spheres in spheres are also well understood in these terms. In codimension zero, every orientation preserving homeomorphism of $S^n$ is isotopic to the identity, due to Fisher, Kirby, and Quinn~\cite{Fisher, Ki69,Qu82}. For codimension 1, we have the Schoenflies conjecture, that every locally flat embedding $S^{n-1} \subseteq S^n$ is trivial. This is true in all dimensions, and due to Brown, Mazur, and Morse~\cite{Brown60,Mazur-schoenflies,Morse-schoenflies}. A codimension two locally flat embedding $S^{n-2} \subseteq S^n$ is topologically unknotted if and only if the complement is homotopy equivalent to $S^1$, due to Papakyriakopoulos~\cite{MR0087944}, Stallings~\cite{Stallings-unknotting}, and Freedman-Quinn~\cite[Theorem 11.7A]{FQ90}. Finally for high codimension, every knot $S^{n-k} \subseteq S^n$ is trivial, for $k \geq 3$, which is again due to Stallings~\cite{Stallings-unknotting}, cf.~\cite{Zeeman-unknotting}.

For manifold classification results, there is also an especially close correspondence between topology and algebra in the topological category in dimensions at least four.
Wall~\cite{Wall62}, Freedman, and Quinn~\cite{FQ90} proved that topological manifolds of dimension $2n$ that are $(n-1)$-connected are classified up to homeomorphism by their intersection forms on the $n$th homology groups, together with a quadratic extension.
The Borel conjecture holds in many cases~\cite{AFW15,Lueck2010}. In particular for every $n \geq 1$, every homotopy equivalence $M^n \to T^n = (S^1)^n$ is homotopic to a homeomorphism, see~\cite[p.~205]{FQ90} and~\cite{Lueck2010}.
Finally, an important example, though somewhat more specialised, is that the Surgery Exact Sequence becomes an exact sequence of abelian groups in the topological category, due to Quinn~\cite{Quinn-geometric} and Nicas~\cite{Nicas-thesis}, with a purely algebraic formulation due to Ranicki~\cite{Ranicki-blue-book}.

These examples demonstrate a tight connection between the study of topological manifolds and the associated homotopy types, and related algebra. In dimension four, many of the significant classification results we state in this survey will also hew to this principle.

\section{A focus on dimension four}
In the Ranicki quotation paraphrased above, the only dimension not common to both ranges is dimension four. Indeed, most of the deep classification results that are known about 4-manifolds are only possible in the topological category.
Studying topological 4-manifolds combines the visual nature of low-dimensional topology, with the ability to apply powerful high-dimensional methods to classification problems.

An additional special feature of dimension four is that here the contrast between the smooth and topological manifold categories is starkest. Indeed, the indications are that smooth 4-manifolds fail in every way imaginable to exhibit a close correspondence to their underlying homotopy type. Although in this survey we focus very heavily on topological 4-manifolds, we will hint at this dramatic divergence in some places, for example in Chapter~\ref{chapter:intersection-form}.

The geometric topologist who wishes to study topological 4-manifolds, with all the rich rewards suggested above, must navigate the challenge that even basic results from differential topology are either false, unknown, or extraordinarily deep results in the absence of a smooth atlas. For example it is not true in general that topological submanifolds admit tubular neighbourhoods, and in cases where this is known, such as for codimension two submanifolds, the proofs use all the available technology developed by Kirby-Siebenmann and Freedman-Quinn. As another example, it was not known until Quinn's work in 1982 that connected sum of topological 4-manifolds is a well-defined operation.

\section{What is in this survey?}

This brings us, at last, to one of the main  practical purposes of this survey.
We seek to clarify exactly which of the familiar  tools of geometric topology are available in the topological category, and to provide a precise guide for where to find proofs. Below, we give a sample of the statements discussed in this book. (Here, and throughout the book, ``manifold'' refers to what is often called a ``topological manifold''; see Chapter~\ref{ch:manifold} for a precise definition.)

%However for general $4$-manifolds, it can be difficult to find precise references.  We aim to rectify this situation to some extent, and to provide a guide to the literature.

\begin{enumerate}[leftmargin=1cm]
\item Existence and uniqueness of collar neighbourhoods (Theorem~\ref{thm:collar}).
\item The Isotopy Extension Theorem~\ref{thm:isotopy-extension-theorem}.
\item Existence of CW structures (Theorem~\ref{thm:topological-manifold-CW complex}).
\item Multiplicativity of the Euler characteristic under finite covers (Corollary~\ref{cor:topological-mfd-homology}).
\item The Annulus Theorem~\ref{thm:annulus} and the Stable Homeomorphism Theorem~\ref{thm:SHT}.
\item Connected sum of two oriented connected 4-manifolds is well-defined (Theorem~\ref{thm:connected-sum-well-defined}).
\item Existence and uniqueness of tubular neighbourhoods of submanifolds (Theorems~\ref{thm:tubular-neighbourhood} and~\ref{thm:uniqueness-of-general-tubular-neighbourhoods}).
\item Stiefel-Whitney classes for topological manifolds (Chapter~\ref{chapter:SW-classes}).
\item Intersection forms of compact, connected, oriented 4-manifolds are even
(Proposition~\ref{proposition-int-form-spin-even}).
\item Noncompact connected 4-manifolds admit a smooth structure (Theorem~\ref{thm:smooth-outside-a-point}).
\item When the Kirby-Siebenmann invariant of a connected 4-manifold vanishes, both connected sum with copies of $S^2 \times S^2$ and taking the product with $\R$ yield smoothable manifolds (Theorem~\ref{thm:connect-sum-is-smooth}).
\item Transversality for submanifolds and for maps (Theorems~\ref{thm:TransvSubmanifolds} and~\ref{thm:TransveralityMaps}).
\item Codimension one and two homology classes can be represented by submanifolds (Theorem~\ref{thm:represent-homology-by-submanifolds}).
\item Classification of 4-manifolds up to homeomorphism with trivial and cyclic fundamental groups (Chapter~\ref{chapter:classification-simply-conn-4-mflds}).
\item Compact orientable manifolds that are homeomorphic are stably diffeomorphic (Theorem~\ref{thm:stably-diffeo-homeo-4-mflds} and Corollary~\ref{cor:stably-diffeomorphic-nonorientable-manifolds}).
\item Multiplicativity of signatures under finite covers (Theorem~\ref{thm:multiplicativity-of-signature}).
\item The definition of Reidemeister torsion for compact manifolds and some of its key technical properties (Section~\ref{section:reidemeister-torsion}).
\item Obstructions to concordance of knots and links (Theorem~\ref{theorem:fox-milnor}).
\item Poincar\'e duality for compact manifolds (with possibly non-empty boundary) with twisted coefficients (Theorems~\ref{thm:poincareduality} and~\ref{thm:poincareduality-chain-complex}).
\end{enumerate}

\begin{remark}
    We make a remark here about a notable \emph{omission} from this survey. We have not written in any great detail about non-compact $4$-manifolds. This is itself a rich topic, exhibiting some of the most exciting and distinctive features of dimension 4. However, it was decided to be beyond the scope of the current survey.
\end{remark}

\subsection*{Conventions.}
\bnm
\item Given a subset $A$ of a topological space $X$ we denote the interior by $\int A$.
\item  For $n\in \N_0$, $x\in \R^n$ and $r\geq 0$ we write $D^n_r(x)=\{ y\in \R^n \mid  \|y-x\|\leq r\}$. We write $D^n_r=D^n_r(0)$ and we write $D^n=D^n_1(0)$ for the closed unit ball in $\R^n$.
We refer to $\int D^n=\{ x\in \R^n \mid  \|x\|<1\}$ as the open $n$-ball.
\item Unless indicated otherwise $I$ denotes the interval $I=[0,1]$.
\item All maps between topological spaces are understood to be continuous.
\item A topological space $X$ is called \emph{simply connected} if it is nonempty, path-connected and if the fundamental group is trivial.
\item On several occasions we use cup and cap products and we cite several results from~\cite{Dold95,Br93,Hat02,Fr23}. Different books on algebraic topology often have different sign conventions for cup and cap products, but in all statements that we give, the sign conventions are irrelevant, so it is not a problem to mix results from different sources.
\enm

\subsection*{Acknowledgments.}
Thanks to Steve Boyer, Anthony Conway, Jim Davis, Mauricio G\'{o}mez L\'{o}pez, Fabian Hebestreit, Jonathan Hillman, Min Hoon Kim, Alexander Kupers, Markus Land, Tye Lidman, Chuck Livingston, Ciprian Manolescu, Erik Pedersen, George Raptis, Arunima Ray, and Eamonn Tweedy for very helpful conversations.

We are particularly grateful to Gerrit Herrmann for providing us with detailed notes which form the basis of  Appendix~\ref{appendix:poincare-duality}.

Jim Davis brought the Ranicki quotation above to our attention, and discussed its significance with us. He also provided crucial assistance with the proof of Theorem~\ref{thm:ks-basics}.

We wish to thank the referee for an extraordinarily thorough and helpful report.

SF was supported by the SFB 1085 `Higher Invariants' at the University of Regensburg, funded by the Deutsche Forschungsgemeinschaft (DFG).
MN gratefully acknowledges support by the SNSF Grant~181199.
MP was supported by an NSERC Discovery Grant, EPSRC New Investigator grant EP/T028335/2 and EPSRC New Horizons grant EP/V04821X/2. SF  wishes to thank the Universit\'e du Qu\'ebec \`a Montr\'eal, Durham University, Glasgow University, and Cal Poly for hospitality.

\tableofcontents
%======================================
\chapter{Manifolds}\label{ch:manifold}
In this chapter we introduce the very basic notions of a manifold, of a submanifold,
of locally flat embeddings and of immersions.
We also state two fundamental results, namely the
Collar Neighbourhood Theorem~\ref{thm:collar}
and the Isotopy Extension Theorem~\ref{thm:isotopy-extension-theorem}.

%==========================
\section{Definition of manifolds}
In the literature  the notion of a ``manifold'' gets defined differently, depending on the preferences of the authors. Thus we state here what we mean by a manifold.

\begin{definition}\label{def:manifold}
Let~$X$ be a topological space.
\begin{enumerate}[leftmargin=1cm,font=\normalfont]
\item We say that~$X$ is \emph{second countable} if there exists a countable basis for the topology.
\item An \emph{$n$-dimensional chart for~$X$ at a point $x\in X$} is a homeomorphism $\Phi\colon U\to V$ where~$U$ is an open neighbourhood of $x$ and \label{def:charts}
\begin{enumerate}[leftmargin=0.8cm]
\item[(i)]~$V$ is an open subset of $\R^n$ or
\item[(ii)]~$V$ is an open subset of the half-space $H_n=\{ (x_1,\dots,x_n)\in \R^n \mid  x_n\geq 0\}$ and $\Phi(x)$ lies on $E_{n-1}=\{ (x_1,\dots,x_n)\in \R^n \mid  x_n= 0\}$.
\end{enumerate}
In the former case we say that $\Phi$ is a \emph{chart of type (i)}; in the latter case we say that $\Phi$ is a \emph{chart of type (ii)}.
\item We say that~$X$ is an \emph{$n$-dimensional manifold} if~$X$ is second countable and Hausdorff, and if for every $x\in X$ there exists an $n$-dimensional chart $\Phi\colon U\to V$ at~$x$.
\item We say that a point $x$ on a manifold\label{def:boundary-point} is a \emph{boundary point} if $x$ admits a chart of type (ii). (A point cannot admit charts of both types~\cite[Theorem~2B.3]{Hat02}.) We denote  the set of all boundary points of~$X$ by $\partial X$.
\item An \emph{atlas} for a manifold $X$ consists of a family of charts such that the domains cover all of $X$. An atlas $\{\Phi_i\colon U_i\to V_i\}_{i\in I}$  is \emph{smooth} if all transition maps $\Phi_i\circ \Phi_j^{-1}\colon \Phi_j(U_i\cap U_j)\to \Phi_i(U_i\cap U_j)$  are smooth. A \emph{smooth manifold} is a manifold together with a smooth atlas. Usually one suppresses the choice of a smooth atlas from the notation.
\end{enumerate}
\end{definition}

To avoid misunderstandings we want to stress once again that what we call a ``manifold'' is often referred to as a ``topological manifold''.

\begin{definition}
An \emph{orientation} of an $n$-manifold $M$ is a choice of generators $\alpha_x\in H_n(M,M\sms \{x\};\Z)$ for each $x\in M\sms \partial M$ such that for every $x\in M\sms \partial M$ there exists an open neighbourhood $U\subseteq M\sms \partial M$ of $x$ and a class $\beta\in H_n(M,M\sms U;\Z)$ such that  $\beta$ projects to $\alpha_y$ for each $y\in U$.
\end{definition}

Using the cross product one can show that the product of two oriented manifolds admits a natural orientation. Furthermore, the boundary of an oriented manifold also comes with a natural orientation.
The proof of the latter statement is slightly delicate; we refer to \cite[Chapter~28]{Greenberg-Harper} or to \cite[Chapter~125.5]{Fr23} for details.

The following theorem \cite[Theorem~2B.3]{Hat02} is one of the foundational results on $n$-manifolds.

\begin{theorem}\textbf{\textup{(Invariance of Domain Theorem)}}\label{thm:invariance-of-domain}
If $U\subseteq \R^n$ is an open subset and if $h\colon U\to \R^n$ is an injective map,
then $h(U)$ is an open subset of $\R^n$.
\end{theorem}

We conclude this section with the following lemma.

\begin{lemma}\label{lem:manifolds-transitive}
	Let $M$ be a connected manifold of dimension $n\geq 2$. Then for any two
 sets of pairwise disjoint points $\{x_1,\dots,x_m\},\{y_1,\dots,y_m\}\in M\setminus \partial M$, there exists a homeomorphism $f\colon M\to M$ with $f(x_i)=y_i$ for $i=1,\dots,m$.
\end{lemma}

\begin{proof}
	Since $M$ is  connected we  see that $M\setminus \partial M$ is  path-connected.
Thus there exist points $x_1 = t_0, t_1, \ldots, t_{k+1} = y_1$  in $M\sms \partial M$ such that there are charts $(U_i, \psi_i\colon U_i\to \Int D^n)$ for $i=0, \ldots, k$ and both $t_i$, $t_{i+1}$ are contained in $U_i$.
It is elementary to show that given any two points $a,b\in \Int D^n$ there exists a homeomorphism $f\colon \Int D^n\to \Int D^n$ with $f(a)=b$ and which is the identity outside of a compact subset. It is now clear that one can find a homeomorphism $f\colon M\to M$ with $f(x_1)=y_1$ such that $f$ is the identity outside of a compact subset.
We now consider the image of the remaining points in $M\sms \{y_1\}$ and we restart the engine.
\end{proof}

%====================================================
\section{Definition of submanifolds}
We move on to the definition of a submanifold. Again there are many different definitions in the literature, so let us define carefully what we mean by a submanifold.

\begin{definition}\label{defn:ProperSubmanifold}
Let $M$ be an $n$-dimensional manifold. We say a subset $X\subseteq M$ is a \emph{$k$-dimensional submanifold} if given any $x\in X$ one of the following holds:\label{def:submanifold}
\bnm
\item[($\alpha$)] there exists a chart $\Phi\colon U\to V$ of type (i) for $M$ and $x$ such that
\[ \Phi(U\cap X)\,\,\subseteq \,\,\{(0,\dots,0,x_1,\dots,x_k) \mid x_1,\dots,x_k\in \R\},\]
\item[($\beta$)]   there exists a chart $\Phi\colon U\to V$ of type (ii) for $M$  and $x$ such that $\Phi(x)$ lies in $E_{n-1}$ and  \[ \hspace{1.15cm} \Phi(U\cap X)\,\,\subseteq\,\, \{(0,\dots,0,x_1,\dots,x_k)\in \R^n \mid x_k\geq 0\},\]
\item[($\gamma$)]   there exists a chart $\Phi\colon U\to V$ of type (i) for $M$  and $x$ such that $\Phi(x)$ lies in $E_{n-1}$ and  \[ \hspace{1.15cm} \Phi(U\cap X)\,\,\subseteq\,\, \{(0,\dots,0,x_1,\dots,x_k)\in \R^n \mid x_k\geq 0\}.\]
\enm
If for every $x\in X$ we can find charts as in ($\alpha$) and ($\beta$), and $X\subseteq M$ is a closed subset, then we call $X$ a \emph{proper} submanifold.
\end{definition}

\begin{figure}[h]
\begin{center}
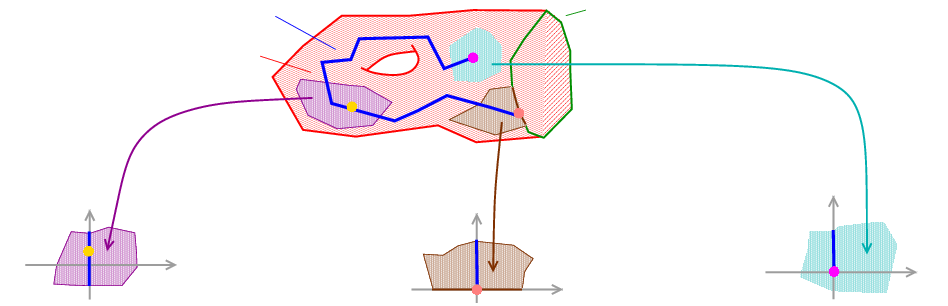
\caption{Definition of submanifolds.}
\label{fig:submanifolds}
\end{center}
\end{figure}

The following proposition is a straightforward consequence of the definitions. The proposition often makes it possible to reduce arguments about manifolds with boundary to the case of closed manifolds.

\begin{proposition}\label{lem:double-of-topological-manifold}~
\bnm
\item
	Let $N$ be an $n$-manifold, possibly disconnected. Let $A$ and $B$ be collections of components of $\partial N$ such that $A\cap B=\emptyset$. Let $f\colon A\to B$ be a homeomorphism. Then the quotient $N/\hspace{-0.05cm}\sim$ under the relation $a\sim f(a)$ is an $n$-manifold with boundary $\partial (N/\hspace{-0.05cm}\sim)=\partial N \sms (A\cup B)$. Moreover, the image of $A$ in $N/\hspace{-0.05cm}\sim$ is a submanifold.
	\item
Let $M$ be an $n$-manifold. Its double $DM:=M\cup M$, where the boundaries are identified via $\id_{\partial M}$, is an $n$-manifold with empty boundary. Moreover, $M\subseteq DM$ is a submanifold.
\enm
\end{proposition}

\begin{definition}\label{def:LocallyFlat}
A map $f\colon X\to M$ from a $k$-manifold to an $m$-manifold $M$ is  called a (proper) \emph{locally flat embedding} if $f$ is a homeomorphism onto its image and if the image is a (proper) submanifold of $M$.
\end{definition}

\begin{remark}\leavevmode
\bnm
\item Note that if $M$ is a $k$-manifold and $U$ is an open subset of $\R^k$, then it follows from the Invariance of Domain Theorem~\ref{thm:invariance-of-domain} that the image of any injective map $f\colon U\to M$ is an open subset of $M$. In particular $f(U)$ is a submanifold of $M$. Put differently, $f$ is locally flat.
\item
In point set topology, one often defines a \emph{topological embedding} to be a
map~$f \colon X\to Y$ of topological spaces that is a homeomorphism to its image.
The image of a topological embedding is not necessarily a submanifold and such an image is sometimes called  \emph{wild} due to the bizarre properties that such objects can exhibit.  For example, the famous Alexander horned sphere~\cite{Alexander24} is not a submanifold of $S^3$ under Definition~\ref{def:submanifold}, but it is the image of a wild topological embedding $S^2 \to S^3$.
\item In the literature a compact subset $F$ of 4-manifold is often called a \emph{locally flat surface}
if $F$ is homeomorphic to a compact 2-dimensional manifold with $\partial F=F\cap \partial M$ and if $F$ has the following  properties.
\bnmt
\item Given any $x\in F\sms \partial F$ there exists a topological embedding
$\varphi\colon D^2\times D^2\to M\sms \partial M$ with $\varphi(D^2\times D^2)\cap F=\varphi(D^2\times \{0\})$ and with $x\in \varphi(D^2\times \{0\})$.
\item Given any $x\in \partial F$ there exists a topological embedding
	$\varphi\colon D^2_{\geq 0}\times D^2\to M$ such that
$\varphi(D^2_{\geq 0}\times D^2)\cap F=\varphi(D^2_{\geq 0}\times \{0\})$,
$\varphi(D^2_{\geq 0}\times D^2)\cap \partial M=\varphi(\partial_{y=0} D^2_{\geq 0}\times D^2)$,
and with $x\in \varphi(D^2_{\geq 0}\times \{0\})$.
Here, we used the following abbreviations $D^2_{\geq 0}=\{(x,y)\in D^2 \mid y\geq 0\}$ and $\partial_{y=0} D^2 = \{(x,0)\in D^2\}$.
\enm
It follows easily from the definitions that $F\subseteq M$ is a locally flat surface if and only if $F$ is proper 2-dimensional submanifold of $M$.
\enm
\end{remark}

%\begin{remark}
%A related result~\cite[Proposition~4.5.1]{DV09} shows that one can approximate a map of %a compact polyhedron into a manifold by a (nonlocally flat) embedding.
%\end{remark}

The following proposition gives examples of embeddings $D^2\to D^4$ that are not locally flat.

\begin{proposition}\label{prop:cone-locallyflat-iff-K-unknot}
Given a knot $K\subseteq S^3$ the corresponding cone
\[\op{Cone}(K):=\{ r\cdot Q\mid Q\in K\mbox{ and }r\in [0,1]\}\,\,\subseteq\,\,D^4.\]
is locally flat if and only if $K$ is the unknot.
\end{proposition}

\begin{proof} Consider the specific unknot $U$ that is the equator of the equator $U=S^1\subseteq S^2\subseteq S^3=\partial D^4$. Taking the cone radially inwards to the origin of $D^4$ exhibits $\cone(U)$ as a locally flatly (properly) embedded disc. Any other unknotted $K\subseteq S^3$ is related to $U$ by a homeomorphism of $S^3$. By the Alexander trick \ref{lem:alexander-trick}(1), this homeomorphism extends radially inwards to a homeomorphism of $D^4$ fixing the origin. Thus the cone on any other unknot $K$ is locally flatly embedded, as we obtain a chart as in Definition \ref{def:submanifold}(1) at the origin of $D^4$.

Conversely, suppose $K\subseteq S^3$ is a knot such that $C:=\op{Cone}(K)$ is locally flat.
This implies that there is a chart
$\Phi\colon U\to D^4$ where $U$ is an open neighbourhood of the cone point $0$,
such that $\Phi(0)=0$ and such that $\Phi(D^4\cap C)=D^2\times \{0\}$. We set $\Psi:=\Phi^{-1}\colon D^4\to U$.
We introduce the following  notation.
\bnm
\item[(i)] Given $J\subseteq [0,1]$ we write $D_J:=\{v\in D^4\mid \|v\|\in J\}$.
\item[(ii)] Given $J\subseteq [0,1]$ we write $N_J:=\Psi(D_J)$.
\enm
An elementary argument shows that there exist $s_1<t_1<s_2<t_2<s_3$
such that $D_{[0,s_1]}\subseteq N_{[0,t_1]}\subseteq D_{[0,s_2]}\subseteq N_{[0,t_2]}\subseteq D_{[0,s_3]}$.
We make the following observations:
\bnm
\item For any $J\subseteq [0,1]$ we have homeomorphisms $D_J\sms C\cong (S^3\sms K)\times J$ and $N_J\sms C\xrightarrow{\Phi} D_J\sms \op{Cone}(U)\cong (S^3\sms U)\times J$.
\item For any inclusion $J\subseteq J'$ of intervals the inclusion induced map $D_J\to D_{J'}$ is a  homotopy equivalence.
\enm
We consider  the following commutative diagram where all maps are induced by inclusions
\[ \xymatrix{   \pi_1(S^3\sms K)\cong \pi_1(D_{\{s_2\}}\sms C)\ar[dr]_\cong \ar[rr] && \pi_1(N_{[t_1,t_2]}\sms C)\cong \Z\ar[dl] \\ & \pi_1(S^3\sms K)\cong \pi_1(D_{[s_1,s_3]}\sms C).}\]
Since the inclusion $D_{\{s_2\}}\sms C\to D_{[s_1,s_3]}\sms C$ is a homotopy equivalence we see that the left diagonal map is an isomorphism.
Thus we see that we have an automorphism of $\pi_1(S^3\sms K)$ that factors through $\Z$. Since the abelianisation of $\pi_1(S^3\sms K)$ is isomorphic to $\Z$ we see that $\pi_1(S^3\sms K)\cong \Z$. It follows from the Loop Theorem that $K$ is in fact the unknot~\cite[Theorem 4.B.1]{Ro90}.
\end{proof}

We conclude this section with the following lemma, which   provides us with examples of locally flat embeddings:

\begin{lemma}
	Let $M$ be a connected manifold. Then any two points $x\ne y\in M\setminus \partial M$ are connected by a locally flat embedded arc.
\end{lemma}

\begin{proof}
By Lemma~\ref{lem:manifolds-transitive} we only have to deal with the case that $x$ and $y$ lie in a subspace that is homeomorphic to an open $n$-ball. But this case is trivial.
\end{proof}

%=======================================
\section{Immersions}\label{section:immersions}

We define immersions and generic immersions in the topological category cf.~\cite[Section~2]{KPRT}.
In the smooth category immersions are required to have injective derivative at each point. In the topological category we cannot make such a definition, but instead define immersions as follows.

% For $m \geq 0$ let $\R^m_+ := \{(x_1,\dots,x_m) \in \R^m \mid x_1 \geq 0\}$.
% For $k \leq n$ we have  standard inclusions:
% \begin{align*}
%   \iota \colon \R^k_{\phantom{+}} &= \R^k_{\phantom{+}} \times \{0\} \hookrightarrow \R^k_{\phantom{+}} \times \R^{n-k} = \R^n, \\
%   \iota_\mathsmaller{+} \colon \R^k_+ &= \R^k_+ \times \{0\} \hookrightarrow \R^k_{\phantom{+}} \times \R^{n-k} = \R^n, \text{ and }\\
%   \iota_\mathsmaller{++} \colon \R^k_+ &= \R^k_+ \times \{0\} \hookrightarrow \R^k_+ \times \R^{n-k} = \R^n_+.
% \end{align*}

\begin{definition}\label{defn:immersion}
A continuous map $F \colon \Sigma^k \to M^n$ between manifolds of dimensions $k\leq n$ is an \emph{immersion} if for each $p \in \Sigma$ there is a codimension zero submanifold $U \subseteq \Sigma$ containing $p$ such that $F|_{U} \colon U \to M$ is a locally flat embedding.
\end{definition}

Recall that a continuous map is said to be \emph{proper} if the inverse image of every compact set in the codomain is compact. With this notion we can define proper immersions.

\begin{definition}\label{defn:proper-immersion}
A proper continuous map $F \colon \Sigma^k \to M^n$ between manifolds of dimensions $k\leq n$ is a \emph{proper immersion} if for each $p \in \Sigma$ there is a codimension zero submanifold $U \subseteq \Sigma$ containing $p$ such that $U \subseteq \Sigma$ is an open subset, $F|_{U} \colon U \to M$ is a locally flat embedding, and every for $u\in U$ we can find a chart for $F(u)$ in $M$ as in ($\alpha$) and ($\beta$) of Definition~\ref{defn:ProperSubmanifold}.
\end{definition}

%
% it is locally a flat embedding, that is if for every $p\in \Sigma$ there is a chart $\varphi$ around $p$ and a chart $\Psi$ around $F(p)$ fitting into one of the following commutative diagrams. The first diagram is for $p \in \Int\Sigma$ and $F(p) \in \Int M$, the second diagram is for $p \in \partial \Sigma$ and $F(p) \in \Int M$, and the third is for $p \in \partial \Sigma$ and $F(p) \in \partial M$. In particular $F$ is required to map interior points of $\Sigma$ to interior points of~$M$.
% \begin{equation}\label{diagram:above}
% \begin{gathered}
% \xymatrix{
% \R^k \ar[r]^{\iota} \ar[d]^{\varphi} & \ar[d]^{\Psi}  \R^n   \\
% \Sigma \ar[r]^F & M }
% \hspace{2em}
% \xymatrix{
% \R^k_+ \ar[r]^{\iota_+} \ar[d]^{\varphi} & \ar[d]^{\Psi}  \R^n   \\
% \Sigma \ar[r]^F & M }
% \hspace{2em}
% \xymatrix{
% \R^k_+ \ar[r]^{\iota_{++}} \ar[d]^{\varphi} & \ar[d]^{\Psi}  \R^n_+   \\
% \Sigma \ar[r]^F & M }
% \end{gathered}
% \end{equation}
%Some authors prefer to call this notion a \emph{locally flat immersion}.

We now consider surfaces in 4-manifolds, that is we restrict to $k=2$ and $n=4$. We take $M$ to be a connected 4-manifold.
The \emph{singular set} of an immersion $F \colon \Sigma \to M$ is \[\mathcal{S}(F) := \{m \in M \mid |F^{-1}(m)| \geq 2\}.\]

\begin{definition}\label{def:gen_immersion}
Let $\Sigma$ be a surface, possibly noncompact.  A continuous, proper map $F \colon \Sigma \to M$ is said to be a \emph{generic immersion}, denoted $F\colon \Sigma\looparrowright M$, if it is a proper immersion and the singular set is a closed, discrete subset of $M$ consisting only of transverse double points, each of whose preimages lies in the interior of $\Sigma$. The requirement that all singular points be transverse double points means that whenever $m \in \mathcal{S}(F)$, there are exactly two points $p_1, p_2 \in \Sigma$ with $F(p_1)=m = F(p_2)$, and there are disjoint charts $\varphi_i$ around $p_i$, for $i=1,2$, where $\varphi_1$ and $\varphi_2$ are as in \eqref{diagram:above}, with respect to the same chart $\Psi$ around $m$ and the standard inclusions
\begin{align*}
\iota_1 \colon \R^2 &= \R^2 \times \{0\} \hookrightarrow \R^2 \times \R^2 = \R^4 \text{ and} \\
\iota_2 \colon \R^2 &= \{0\} \times \R^2 \hookrightarrow \R^2 \times \R^2 = \R^4.
\end{align*}
\begin{equation}\label{diagram:above}
 \xymatrix{
 \R^2 \ar[r]^{\iota_i} \ar[d]^{\varphi_i} & \ar[d]^{\Psi}  \R^4   \\
 \Sigma \ar[r]^F & M }
 \end{equation}
\end{definition}

Typically one prefers to work with generic immersions of surfaces in 4-manifolds than arbitrary immersions. As the name suggests we can always arrange by a homotopy that maps of surfaces are generic immersions; see Theorem~\ref{thm:generic-immersions-are-generic}.
%Having said that, during a finger or Whitney move, there is necessarily one point in time where there is a non-generic immersion. For one parameter families, one instead works with generic homotopies of surfaces.

%======================================
\section{Collar neighbourhoods}\label{collar-neighbourhoods}

We discuss the existence of a collar neighbourhood of the boundary of a manifold. First we recall the definition of a neighbourhood.

\begin{definition}[Neighbourhood]
Let $X$ be a space.  A \emph{neighbourhood} of a subset $A \subseteq X$ is a set $U \subseteq X$ for which there is an open set $V$ satisfying $A \subseteq V \subseteq U$.
\end{definition}

Next we give our definition of a collar neighbourhood.

\begin{definition}[Collar neighbourhood]
Let $M$ be a  manifold and let $B$ be a union of components of $\partial M$.
A \emph{collar neighbourhood} is a  map $\Phi\colon B\times [0,r]\to M$ for some $r>0$ with the following three properties:
\bnm
\item $\Phi$   is a locally flat embedding,
\item  for all $x\in B$ we have $\Phi(x,0)=x$,
\item we have $\Phi^{-1}(B\times [0,r])\cap \partial M=B$.
\enm
Often, by a slight abuse of language,
we identify $B\times [0,r]$ with its image $\Phi(B\times [0,r])$ and we refer to $B\times [0,r]$ also as a collar neighbourhood.
\end{definition}

Note that a  collar neighbourhood of a union $B$ of components of  $\partial M$ is a neighbourhood of $B$.
Now we can state the Collar Neighbourhood Theorem in the formulation of \cite[Theorem~1]{Ar70}. The existence of collars is originally due to Brown~\cite{Brown62}, and there is another easier proof due to Connelly~\cite{Connelly}.

\begin{theorem}\textbf{\textup{(Collar Neighbourhood Theorem)}}\label{thm:topological-collar}\label{thm:collar}
Let $M$ be an $n$-manifold, let $C$ be a compact submanifold of $\partial M$
and let $f\colon C\times [0,2]\to M$ be a map with $f(x,0)=x$ for all $x\in C$.
We assume that we are in one of the following two settings:
\bnm
    \item $C$ is closed as a manifold and $f$ is a locally flat embedding;
    \item $C\subseteq \partial M$ is a codimension zero submanifold and the restriction of $f$ to
    $(\partial C\times [0,2])\cup (C\times \{2\})$ is a locally flat embedding.
   \enm
Then there exists a collar neighbourhood $g\colon \partial M\times [0,1]\to M$ with $g|_{C\times [0,1]}=f$.
\end{theorem}

An \emph{isotopy} of a space $Y$ is a continuous one parameter family of maps $H=\{H_t\}_{t\in [0,1]}\colon Y\times [0,1]\to Y$ such that $H_t \colon Y \to Y$ is a homeomorphism for all $t \in [0,1]$.
To formulate a uniqueness result for collar neighbourhoods it helps to introduce the following definition.

\begin{definition}
Let $f,g\colon X\to Y$ be two maps between topological spaces and let $Z$ be a subset of $X$.
We say $f$ and $g$ are \emph{ambiently isotopic rel.\ $Z$} if there exists an isotopy $H=\{H_t\}_{t\in [0,1]}\colon Y\times [0,1]\to Y$ such that $H_0=\id$, $H_t|_{Z}=\id_Z$ for all $t$, and such that  $H_1\circ f=g$.
\end{definition}

\begin{theorem}\label{thm:topological-collar-unique}
Let $M$ be a manifold.
Given two collar neighbourhoods $\Phi\colon \partial M\times [0,2]\to M$ and
$\Psi\colon \partial M\times [0,2]\to M$, their restrictions $\Phi|_{\partial M \times [0,1]}$ and $\Psi|_{\partial M \times [0,1]}$ are ambiently isotopic rel.\ $\partial M\times \{0\}$.
\end{theorem}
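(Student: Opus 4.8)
The plan is to reduce, by an elementary ``shrinking'' move, to a uniqueness statement inside the product $\partial M\times[0,2]$, and then to settle that model case by straightening the collar to be product-like and finishing with an interpolation of reparametrisations. For the reduction, regard $\Phi$ as a fixed coordinate system near $\partial M$. As remarked after the definition of a collar neighbourhood, $\Phi(\partial M\times[0,1))$ is a neighbourhood of $\partial M$ in $M$ -- apply the invariance of domain argument mentioned there to the sub-collar $\Phi|_{\partial M\times[0,1/2]}$ -- so by compactness of $\partial M$ there is $\delta\in(0,1)$ with $\Psi(\partial M\times[0,\delta])\subseteq\Phi(\partial M\times[0,1))$. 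Reparametrising the $[0,2]$-coordinate produces an ambient isotopy of $M$, supported in $\Psi(\partial M\times[0,2])$ and fixing $\partial M$, that carries $\Psi(\partial M\times[0,1])$ onto $\Psi(\partial M\times[0,\delta])$; hence $\Psi|_{\partial M\times[0,1]}$ is ambiently isotopic rel $\partial M\times\{0\}$ to a collar $\Psi'$ with image inside $\Phi(\partial M\times[0,1))$. Writing $W:=\partial M$ (a compact manifold without boundary) and $c:=\Phi^{-1}\circ\Psi'\colon W\times[0,1]\to W\times[0,2]$, we obtain a collar of $W\times\{0\}$ in $W\times[0,2]$ with $c(W\times[0,1])\subseteq W\times[0,1)$. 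Since any ambient isotopy of $W\times[0,2]$ that is the identity near $W\times\{2\}$ pushes forward -- via $\Phi$, extended by the identity -- to an ambient isotopy of $M$, and since being ambiently isotopic rel $\partial M\times\{0\}$ is an equivalence relation, the theorem reduces to the \emph{model case}: $c$ is ambiently isotopic to the standard inclusion $\iota\colon W\times[0,1]\hookrightarrow W\times[0,2]$ through homeomorphisms of $W\times[0,2]$ fixing $W\times\{0\}$ and equal to the identity near $W\times\{2\}$.

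For the model case the first step is the \emph{straightening}: an ambient isotopy of $W\times[0,2]$, rel $W\times\{0\}$ and the identity near $W\times\{2\}$, after which $c$ is level-preserving, $c(x,t)=(x,g(x,t))$. Level-preservation then forces each $g(x,\cdot)$ to be an increasing homeomorphism of $[0,1]$ onto $[0,g(x,1)]$ depending continuously on $x\in W$ (an injective path in a line is monotone), and $g(x,1)<1$ since the straightening may be taken supported in $W\times[0,1)$. Granting the straightening, one reaches $\iota$ by a level-preserving ambient isotopy: over the region $\{v\le g(x,1)\}=c(W\times[0,1])$ one deforms the identity, by linear interpolation, to the fibrewise reparametrisation $v\mapsto g(x,\cdot)^{-1}(v)$, while over $\{g(x,1)\le v\le 2\}$ one spreads out the resulting displacement so that it dies near $W\times\{2\}$; at every stage the second coordinate is an increasing self-homeomorphism of $[0,2]$ fixing $0$, so the map is a homeomorphism, and at the end $c$ has been carried to $\iota$. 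Concatenating with the isotopies from the reduction finishes the proof.

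The main obstacle is the straightening step -- uniqueness of the ``collar direction''. This is exactly where the topological category is harder than the smooth one: there is no inward normal vector field to average, and the leaves $c(W\times\{t\})$ need not even be graphs over $W$. The route I would take is to straighten locally first, comparing two collars of $E_{n-1}$ inside the half-space $H_n$ by a direct move that exploits its product structure, and then to globalise by patching the local straightenings together using the isotopy extension theorem (Theorem~\ref{thm:isotopy-extension-theorem}) together with local contractibility of homeomorphism groups. This is the classical collar uniqueness argument; compare Armstrong~\cite{Ar70} and Brown~\cite{Brown62}.
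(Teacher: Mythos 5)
The paper does not actually prove this statement: its ``proof'' is a citation of \cite[Theorem~2]{Ar70} (with the argument attributed to Lashof) and of \cite[Essay~I,~Theorem~A.2]{KS77}. Your reduction is correct and is the standard first half of any proof. Shrinking $\Psi$ by a reparametrisation of the collar parameter so that its image lands in $\Phi(\partial M\times[0,1))$ is a legitimate ambient isotopy rel.\ $\partial M$ (continuity of the extension by the identity uses that the frontier of $\Psi(\partial M\times[0,2])$ in $M$ is contained in $\Psi(\partial M\times\{2\})$, which follows from invariance of domain); the transfer to $W\times[0,2]$ via $\Phi$ is fine; and the final interpolation for a level-preserving collar $c(x,t)=(x,g(x,t))$ is correct, since a fibrewise convex combination of increasing self-homeomorphisms of $[0,2]$ fixing the endpoints is again a homeomorphism.

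The gap is the straightening step, and it is not a small one: making an arbitrary topological collar of $W\times\{0\}$ in $W\times[0,2]$ level-preserving by an ambient isotopy rel.\ $W\times\{0\}$ \emph{is} the uniqueness theorem -- everything before and after it is bookkeeping. Your plan (straighten locally in $H_n$ ``by a direct move that exploits the product structure,'' then patch using the Isotopy Extension Theorem and local contractibility of homeomorphism groups) describes where a proof would live rather than giving one. In particular, the local comparison of two collars of $E_{n-1}$ in $H_n$ is not a direct move: it is the same problem transplanted into a chart, and the known solutions (Armstrong/Lashof's argument, Connelly's push construction, or the Edwards--Kirby deformation machinery of \cite{KirbyEdwards1971}) each require a genuine idea that you have not supplied; and the meshing of local straightenings into a single global ambient isotopy is itself delicate and is exactly what the Edwards--Kirby results were built to handle. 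Since you close by citing \cite{Ar70} and \cite{Brown62} for this step -- essentially the same sources the paper cites for the whole theorem -- the proposal should be read as a correct reduction of the statement to its essential content plus an appeal to the literature for that content, not as a self-contained proof.
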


\begin{proof}
The theorem is due to \cite[Theorem~2]{Ar70}, although Armstrong comments that the proof was told to him by Lashof. See also~\cite[Essay~I,~Theorem~A.2,~p.~40]{KS77}.
\end{proof}

%========================================================
\section{The Isotopy Extension Theorem}\label{subsection:isotopy-extension-thm}
In the smooth setting the Isotopy Extension Theorem gets used frequently and often subconsciously. In the non-smooth setting the formulation of the Isotopy Extension Theorem requires some care.

\begin{definition}
Let $X$ be a $k$-dimensional manifold and let $M$ be a compact $m$-dimensional manifold. Let $h\colon X\times [0,1]\to M$ be a homotopy.
\bnm
\item We say $h$ is \emph{locally flat} if for every $(x,t)\in X\times [0,1]$ there exists a neighbourhood $[t_0,t_1]$ of $t$ and  level-preserving embeddings $\alpha\colon D^k\times [t_0,t_1]\to X\times [0,1]$ and $\beta\colon D^k\times D^{m-k}\times [t_0,t_1]\to M\times [0,1]$ to neighbourhoods of $(x,t)$ and $(h_t(x),t)$ respectively, such that the following diagram commutes:
\[ \xymatrix@R0.65cm@C2.3cm{ D^k\times \{0\}\times [t_0,t_1]\ar[d]^\alpha\ar@{^(->}[r] & D^k\times D^{m-k}\times [t_0,t_1]\ar[d]^\beta \\
X\times [0,1]\ar[r]^{(x,t)\mapsto (h_t(x),t)} & M\times [0,1].}\]
\item We say $h$ is \emph{proper} if for every $t\in [0,1]$ we have $h_t(X)\cap \partial M=h_t(\partial X)$.
\enm
\end{definition}

This definition allows us to formulate the following useful theorem \cite[Corollary~1.4]{KirbyEdwards1971} (see \cite[p.~530]{Lees1969} for a related result.)

\begin{theorem} \textbf{\emph{(Isotopy Extension Theorem)}}\label{thm:isotopy-extension-theorem}
Let $h\colon X\times [0,1]\to M$ be a locally flat proper isotopy of a compact manifold $X$ into a manifold $M$. Then $h$ can be covered by an ambient isotopy of $M$, i.e.\ there exists an isotopy $H\colon M\times [0,1]\to M$ such that $H_0=\id$ and $h_t=H_t\circ h_0$ for all $t\in [0,1]$.
\end{theorem}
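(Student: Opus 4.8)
The plan is to recast the conclusion as a path–lifting problem for the orbit map of the homeomorphism group. Since $h(X\times[0,1])$ is compact, there is no loss in arranging that the ambient isotopy $H$ be supported in a compact subset of $M$, so one may as well pretend $M$ is compact. The isotopy $h$ is then a path $t\mapsto h_t$ in the space $\Emb(X,M)$ of locally flat embeddings, beginning at $h_0$, and an ambient isotopy covering $h$ is exactly a lift of this path, starting at $\id_M$, along the orbit map $r\colon \Homeo(M)\to\Emb(X,M)$, $f\mapsto f\circ h_0$. Thus the theorem follows once one knows that $\Emb(X,M)$ is locally contractible with the contractions realised by ambient homeomorphisms — equivalently, that every locally flat embedding $e$ of the compact manifold $X$ has a neighbourhood $\mathcal{U}$ in $\Emb(X,M)$ and a continuous map $\tau\colon\mathcal{U}\to\Homeo(M)$ with $\tau(e)=\id$ and $\tau(e')\circ e=e'$ for all $e'\in\mathcal{U}$. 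This parametrised local contractibility is the substance of \cite{KirbyEdwards1971}; granting it, the required path lifting is the usual patching argument: subdivide $[0,1]$ into $0=s_0<\dots<s_N=1$ so that each $h|_{[s_{j-1},s_j]}$ lies in such a neighbourhood of $h_{s_{j-1}}$, set $H_t=\tau_j(h_t)\circ H_{s_{j-1}}$ on $[s_{j-1},s_j]$, and observe that this is consistent at the endpoints and satisfies $H_t\circ h_0=h_t$.

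To prove the local contractibility statement I would localise, using compactness of $X\times[0,1]$ to cover it by finitely many of the boxes furnished by local flatness; each such box identifies $h$, near one of its points, with the standard inclusion $D^k\hookrightarrow D^k\times D^{m-k}$ being slid around inside $\R^m$ (or its half-space analogue near $\partial M$). A partition-of-unity argument then lets one factor a sufficiently small deformation of an embedding as a finite composite of deformations, each of which alters the embedding only inside one coordinate box; so it suffices to realise such a one-box deformation by an ambient homeomorphism of $\R^m$ (respectively $H_m$) that is the identity outside a compact set and is itself small. Keeping all the intermediate maps embeddings, keeping the sizes uniformly controlled, and carrying everything out continuously in the parameter are real but routine-in-spirit bookkeeping once the one-box case is available, and composites and concatenations of ambient isotopies are again ambient isotopies.

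The genuine obstacle is precisely that one-box case: extending a compactly supported, small isotopy of the flatly embedded $\R^k\subset\R^m$ to a compactly supported ambient isotopy of $\R^m$. In the smooth or piecewise-linear categories this is immediate from a tubular neighbourhood or from integrating a vector field, but in the purely topological category neither device is available, and here one must invoke the Kirby torus trick and the Kirby–Siebenmann handle-straightening machinery (together with Chernavskii's theorem that $\Homeo(\R^n)$ is locally contractible); it is the smallness of the isotopy, guaranteed by the time subdivision, that makes the torus-trick argument applicable and lets one control the support. The boundary is handled in parallel by working throughout in the half-space $H_m$, using a collar from Theorem~\ref{thm:collar} and the properness hypothesis to keep $\partial M$ invariant — properness being exactly what is needed for an ambient extension to exist at all near $\partial M$. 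With the one-box extension in hand, the reassembly above produces the desired $H$ with $H_0=\id$ and $h_t=H_t\circ h_0$.
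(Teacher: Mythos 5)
Your outline is correct and is essentially the argument of Edwards--Kirby, which is exactly what the paper relies on: the paper gives no proof of its own but simply cites \cite[Corollary~1.4]{KirbyEdwards1971}, and your reduction to path lifting along the orbit map $\Homeo(M)\to\Emb(X,M)$, via parametrised local contractibility of embedding spaces proved by handle-by-handle straightening with the torus trick, is precisely the strategy of that reference. The one genuinely hard ingredient (the one-box ambient extension of a small isotopy of a flat disc) is correctly identified and appropriately black-boxed, so there is nothing to add.
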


Using the Isotopy Extension Theorem~\ref{thm:isotopy-extension-theorem}
we can now prove the following refinement of the
Collar Neighbourhood Theorem~\ref{thm:collar}.

\begin{theorem}\textbf{\textup{(Collar Neighbourhood Theorem for Proper Submanifolds)} }\label{thm:collar-boundary}
Let $M$ be a manifold and let $X\subseteq M$ be a proper submanifold.
There exists a collar neighbourhood $\partial M\times [0,1]$ such that
$(\partial M\times [0,1])\cap X$ is a collar neighbourhood for $\partial X\subseteq X$.
\end{theorem}

\begin{proof}
By the earlier Collar Neighbourhood Theorem~\ref{thm:collar}
we can pick a collar neighbourhood $\partial M\times [0,2]$ for $\partial M$ and we can also pick a collar neighbourhood $\partial X\times [0,2]$ for $\partial X$.
Given $t\in [0,1]$ we consider the obvious homeomorphisms
\[ f_t\colon M=(M\sms (\partial M\times [0,2)))\cup (\partial M\times [0,2])\,\to\,
(M\sms (\partial M\times [0,2)))\cup (\partial M\times [t,2])\]
and
\[ g_t\colon X=(X\sms (\partial X\times [0,2)))\cup (\partial X\times [0,2])\,\to\,
(X\sms (\partial X\times [0,2)))\cup (\partial X\times [t,2]).\]
Next we consider the following proper locally flat isotopy:
\[ \ba{rcl} h\colon X\times [0,1]&\to & M\\
(x,t)&\mapsto & \left\{ \ba{rl} (y,s)\in \partial M\times [0,t], &\mbox{ if }x=(y,s)\mbox{ with }y\in \partial X,s\in [0,t],\\
f_t(g_t^{-1}(x)), &\mbox{ otherwise.}\ea\right.\ea\]
Note that the collar neighbourhood $\partial M\times [0,1]$ is of the desired form for the proper submanifold $h_1(X)$. By the Isotopy Extension Theorem~\ref{thm:isotopy-extension-theorem} we can extend $h$ to a isotopy $H$ of $M$. Thus $H_1^{-1}(\partial M\times [0,1])$ is the desired collar neighbourhood for $M$.
\end{proof}

\begin{figure}[h]
\begin{center}
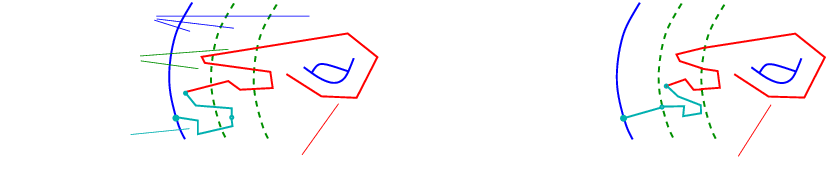
\caption{Illustration of the proof of the Collar Neighbourhood Theorem~\ref{thm:collar-boundary}.}\label{fig:collar-boundary}
\end{center}
\end{figure}

%===============================================================================
\chapter{CW structures, triangulations, and handle structures on manifolds}\label{chapter:CW structures}

In this chapter we will discuss the existence of various types of structures on compact manifolds.
The results of this chapter are summarised in the following diagram. We use the following colour code.
\begin{enumerate}[leftmargin=1cm]
\item A green arrow from $A$ to $B$ means that existence of structure $A$ implies the existence of structure $B$.
\item A dashed blue arrow from $A$ to $B$ means that existence of structure $A$ implies the existence of structure $B$, under the hypothesis that is written next to the blue arrow.
%\item A crossed out red arrow from $A$ to $B$  means that the existence of structure $A$ does not necessarily imply the  existence of structure $B$.
\end{enumerate}

\begin{figure}[h]
\begin{center}
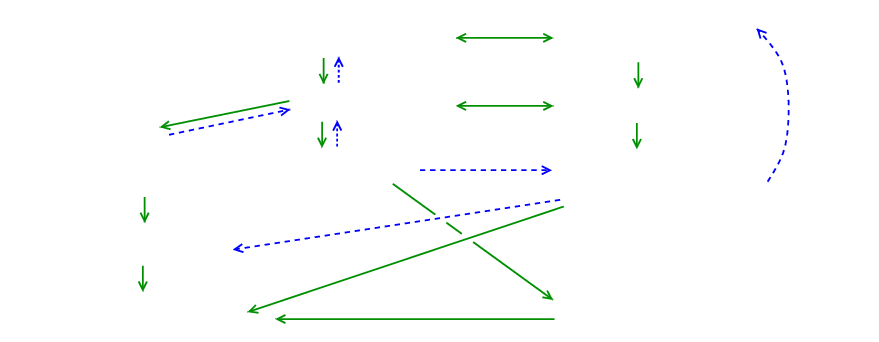
\caption{Existence of structures on a compact manifold.}
\label{fig:structures-on-manifolds}
\end{center}
\end{figure}

In the following we will explain some of the structures mentioned in the diagram and
we explain the various arrows.

\begin{definition}
An \emph{$n$-dimensional PL manifold} is a simplicial complex $X$ such that each $x\in X$ admits a neighbourhood that is PL homeomorphic to the standard PL $n$-ball.

%\footnote{[SF] Should it not always be a ball? It seems to me that we are confusing neighborhoods with links. Also compact seems unnecessary.}
Given a topological $n$-manifold $M$, a \emph{PL structure} is a triangulation of $M$ such that the resulting simplicial complex is an $n$-dimensional PL manifold.
\end{definition}

% \begin{proposition}

% \end{proposition}

% The points where the link is PL homeomorphic to $D^{n-1}$ are the boundary of $X$.

\begin{remark}
Note that Dedecker \cite{Dedecker1962} and Zeeman \cite{Zeeman1962} showed that  a  PL-manifold can also be defined as a topological manifold together with a ``piecewise linear'' atlas. We refer to the the above references for precise definitions.
\end{remark}

\begin{definition}
Let $\CAT=\TOP, \op{PL}$, or $\Diff$.
Let $M$ be a $\CAT$-manifold and let $W$ be a (possibly empty) union of components of $\partial M$. A \emph{$\CAT$-handle structure rel.~$W$}  is a $\CAT$-isomorphism to a $\CAT$-manifold that is obtained from $W\times [0,1]$ by iteratively attaching handles of dimension $0,1,2,\dots$ along $\CAT$-gluing maps.
\end{definition}

With these definitions we can now formulate the theorems which in particular contain all the results mentioned in the diagram in Figure~\ref{fig:structures-on-manifolds}.

\begin{theorem}\label{thm:smooth-implies-handles-pl}\mbox{}\label{thm:smooth-manifold-handle-pl}
\begin{enumerate}[leftmargin=1cm,font=\normalfont]
\item Every compact smooth manifold admits a smooth handle structure rel.~any union of boundary components.
\item Every smooth manifold admits a PL structure.
\item Every manifold that admits a smooth handle structure also admits a PL handle structure.
\end{enumerate}
\end{theorem}

\begin{proof}
\mbox{}
\begin{enumerate}[leftmargin=1cm]
\item In \cite[Section~3]{Milnor-Morse} and \cite[Section~6.4]{Hi76} it was shown that every compact smooth manifold admits a smooth handle decomposition.
\item In \cite[Theorem~10.6]{Munkres66} and  \cite[Chapter~IV.12]{Whitney57} it was shown that
every smooth manifold admits a PL structure.
\item Every manifold that admits a smooth handle structure is by definition smooth. It follows from (2) and
Theorem~\ref{thm:pl-manifolds} (1) that every smooth manifold admits a PL handle structure. One might expect that one can turn every smooth handle structure into a PL handle structure by suitable choices of PL structures on handles and by applying isotopies to the attaching maps. But it is not clear to us whether or not this approach can be made to work.
\qedhere
\end{enumerate}
\end{proof}

\begin{theorem}\label{thm:pl-manifolds}
\mbox{}
\begin{enumerate}[leftmargin=1cm,font=\normalfont]
\item Every compact PL manifold admits a PL handle structure rel.~any fixed union of boundary components.
\item Every compact PL manifold of dimension $\leq 7$ admits a smooth structure, which is unique up to isotopy in dimensions $\leq 6$.
\item Every manifold that admits a PL handle structure also admits a topological handle structure.
\end{enumerate}
\end{theorem}

 \begin{proof}
\mbox{}
\begin{enumerate}[leftmargin=1cm]
\item This statement is proved in \cite[Proposition~6.8]{RoSa-Block-III}.
\item The 0-dimensional case is clear and the 1-dimensional case can be proved fairly easily by hand. The 2-dimensional case can also done by hand: one picks obvious charts for the interiors of the 2-simplices and one can easily find charts that cover the interiors of the 1-simplices. For each 0-simplex one can pick a single chart by combining the various 2-simplices and adjusting the angle. The transition maps of such an atlas will all be smooth.
Alternatively \cite[Theorem~A]{Hatcher-Kirby-torus-trick} shows that every 2-di\-men\-sional topological manifold admits a smooth structure. Furthermore, \cite[Theorem~B]{Hatcher-Kirby-torus-trick}
says that every homeomorphism between smooth surfaces is isotopic to a diffeomorphism.

We turn to manifolds of dimension $3,4,5,6$, and $7$.
By smoothing theory for~$\PL$ manifolds, due to Munkres~\cite{Mun1959a,Munkres60} and Hirsch-Mazur~\cite[Part~II, Theorem~4.2]{Hirsch-Mazur74}, isotopy classes of smooth structures on a compact PL manifold $M$ are in one to one correspondence with homotopy classes of sections of a fibre bundle over $M$ with fibre $\PL/\O$. It follows from work of Munkres~\cite{Munkres-isotopies-60}, Cerf~\cite{Cer1968}, and Kervaire--Milnor \cite{KM1963} that $\PL/\O$ is 6-connected. It then follows from obstruction theory that every compact PL manifold of dimension $\leq 7$ admits a smooth structure, which is unique up to isotopy in dimensions $\leq 6$.

We remark that uniqueness of smooth structures in dimension 3 was proven earlier by Munkres \cite[p.~333]{Mun1959a}, \cite{Munkres-isotopies-60}, \cite[Theorems 6.2 and 6.3]{Munkres60}, and independently by Whitehead \cite[Corollary 1.18]{WhdJ1961a}. See  also  \cite[Chapter~3.10]{Thu1997} for a more detailed discussion.
%
%
%Finally we turn to the case of manifolds of dimensions $n=4,5,6$ and $7$.
%As is explained in \cite[p.~IX]{Cer1968} and \cite[p.~220]{Scor2005},
%this statement  follows for  $n=4$ from
%Cerf's Theorem \cite{Cer1968}, which says that every orientation-preserving diffeomorphism of $S^3$ is diffeotopic to the identity, and for $n=5,6,7$ from the
%work of Kervaire--Milnor \cite{KM1963}. Similarly for $n=4,5,6$, the resulting smooth structures are unique up to isotopy.
%These conclusions also rely on Munkres' obstruction theory from \cite{Mun1959a,Munkres60}.
\item This statement follows immediately from the definitions. \qedhere
\end{enumerate}
\end{proof}

\begin{theorem}\label{thm:manifold-low-dim}\label{thm:handle-dec}
\mbox{}
\begin{enumerate}[leftmargin=1cm,font=\normalfont]
\item Every manifold of dimension $\geq 5$ admits a topological handle structure rel.~any fixed union of boundary components.
\item
\bnmt\item Every compact manifold of dimension 1,2 or 3 admits a PL structure.
\item In dimension $\leq 4$ a manifold that admits a simplicial structure also admits a PL structure.
\item Given any $n\geq 4$ there exists a closed $n$-manifold that does not admit a simplicial structure.
\enm
\end{enumerate}
\end{theorem}

\begin{proof}
\mbox{}
\begin{enumerate}[leftmargin=1cm]
\item For manifolds of dimension $\geq 6$ this statement was proven by Kirby-Sieben\-mann~\cite[Essay~III, Theorem 2.1, p.~104]{KS77}.  Quinn~\cite[Theorem 2.3.1]{Qu82} extended this result to manifolds of dimension $5$.
\item
\bnmt
\item
Rad\'o~\cite{Ra26} showed in 1926 that every compact 2-manifold has a simplicial structure. (Uniqueness was proved by Papakyriakopoulos \cite{Pap1946} in 1946.)  Moise~\cite{Mo52,Mo77} proved the analogous result for 3-manifolds.  We refer to \cite[Theorems~6 and~8]{Bin1959}, \cite[Theorem~2]{Hamilton-triang-3-mflds} and \cite{Shal1984} for alternative proofs of the 3-dimensional case.
\item By (a) we only need to consider the case of a 4-dimensional manifold $M$.
We will defer the proof to Proposition~\ref{prop:PHM-main} below.
In fact, in Proposition~\ref{prop:PHM-main} we will show the stronger fact that every triangulation of a manifold $M$ of dimension $n$ at most $4$ is in fact a PL-structure.  The inductive nature of the proof of Proposition~\ref{prop:PHM-main} means that we need to consider all $n \leq 4$, even though only $n=4$ is logically required at this point.
%
%
% A standard argument shows that for any simplicial structure on a manifold of dimension $\geq 3$ the link of any vertex is simply connected. Furthermore  the link of any vertex of a manifold of dimension at most $4$ is a manifold.
%
%\footnote{What is below will be incorporated into what is further below.}
%A standard argument shows that for any simplicial structure on a manifold of dimension $\geq 3$ the link of any vertex is simply connected.
%Furthermore  the link of any vertex of a manifold of dimension at most $4$ is a manifold.
%\footnote{This needs more information, I would say, or we are perpetrators of the crime this survey complains about.}
%It follows from these two observations and the Poincar\'e conjecture in dimension 3~\cite{Perelman:2002-1,Perelman:2003-1,Perelman:2003-2} that the link of any vertex of a simplicial structure on a 4-manifold is  the standard PL 3-sphere.
%
%
\item Casson~\cite[p.~xvi]{AM90} showed in the 1980s that there exist closed 4-manifolds that do not have a simplicial structure.
It is now known that in every dimension $n \geq 5$, there exists a closed $n$-manifold that does not admit a simplicial structure. This question was reduced to a problem about homology $3$-spheres~\cite{Matumoto78, Stern80}, which was then solved by Manolescu~\cite{Ma16}.
Manolescu's examples are necessarily nonorientable.

We remark that orientable topological manifolds in dimension $n \geq 5$ that do not admit a PL structure were constructed much earlier by Siebenmann in \cite[Annex~C, Section 2]{KS77}. However each of Siebenmann's manifolds admits a simplicial structure.
\qedhere
\enm
\end{enumerate}
\end{proof}

In order to prove Theorem~\ref{thm:manifold-low-dim}~(2b), we will need some facts about polyhedral homology manifolds.  We are grateful to Arunima Ray for assistance with this proof.

\begin{definition}\label{Defn:PHM}
    A locally finite $n$-dimensional simplicial complex $P$ is an \emph{$n$-dimensional polyhedral homology manifold} if for every $x \in P$ and for any subdivision of $P$ with $x$ as a vertex,
    \[H_*(\link(x);\Z) \cong H_*(S^{n-1};\Z).\]
%
    %$k=0,\dots,n$ and for every $k$-simplex $\sigma$ of $P$, the link $\link(\sigma)$ is a simplicial complex of dimension $n-k-1$ with \[H_*(\link(\sigma);\Z) \cong H_*(S^{n-k-1};\Z).\]
\end{definition}

\begin{remark}\leavevmode
\begin{enumerate}[leftmargin=1cm]
    \item It is immediate from the definition that if $P$ is an $n$-dimensional polyhedral homology manifold, then so is any subdivision.
 \item  There is an extensions of the definition for polyhedral homology manifolds with  nonempty boundary, but we will not give it here.
\end{enumerate}
\end{remark}

We need two facts about polyhedral homology manifolds. The first is that triangulated topological manifolds are examples of polyhedral homology manifolds.

\begin{proposition}[{\cite[Proposition~1.2]{Stern80}}]\label{prop:PHM-2}
    Let $M$ be a  topological manifold with empty boundary, with a simplicial structure. Then $M$ is a polyhedral homology manifold.
\end{proposition}

The second fact we need, which will drive the induction in the proof of Proposition~\ref{prop:PHM-main}, is that the polyhedral homology manifold property descends to links of vertices.

\begin{proposition}\label{prop:PHM-1}
    Let $P$ be an $n$-dimensional polyhedral homology manifold. Then for all vertices $v \in P$, the link $\link(v)$ is a polyhedral homology manifold.
\end{proposition}

\begin{proof}
If $n=1$, the link of a vertex consists of a collection of points, so is certainly a polyhedral homology manifold.  So from now on we let $n \geq 2$.
A lemma of Maunder~\cite[Lemma~5.4.7, p.~188]{Maunder} implies the following. Let $K$ be a simplicial complex, let $x \in K$ be a vertex, and let $L:= \link_K(x)$, and let $y \in L$ be a vertex. Let $z$ be the midpoint of the line segment $xy$. Then \[\wt{H}_*(\link_K(z);\Z) \cong \wt{H}_{*-1}(\link_{L}(y);\Z)\]
where $\link_K(z)$ is understood to be the link of $z$ in the simplicial complex obtained from $K$ by subdividing at $z$.

To apply this, let $K:= P$ and let $x \in P$. Then since $z\in P$ is a vertex in some subdivision of $P$,
\[\wt{H}_{*}(S^{n-2};\Z) \cong  \wt{H}_{*+1}(S^{n-1};\Z) \cong \wt{H}_{*+1}(\link_K(z);\Z) \cong \wt{H}_{*}(\link_{L}(y);\Z).\]
Since this holds for every vertex $y$ of $L$, it follows that $L = \link_P(x)$ is a polyhedral homology manifold.
\end{proof}

Now we prove the main result of our digression into polyhedral homology manifolds. In particular note that the case $n=4$ (b) implies Theorem~\ref{thm:manifold-low-dim}~(2b).

\begin{proposition}\label{prop:PHM-main}
    Let $n \leq 4$.
    \begin{enumerate}[leftmargin=1cm,font=\normalfont]
        \item[(a)]
        Let $P$ be a finite $(n-1)$-dimensional polyhedral homology manifold with $H_*(P;\Z) \cong H_*(S^{n-1};\Z)$. If $n=4$, suppose that $\pi_1(P)=\{1\}$.
        Then $P$ is a PL $(n-1)$-manifold and is PL homeomorphic to $S^{n-1}$.
        \item[(b)] Every triangulation of a topological $n$-manifold $M$ without boundary is a PL structure.
    \end{enumerate}
\end{proposition}

\begin{proof}
    We work inductively on $n$, starting with $n=1$. In this proof all homology groups are with $\Z$ coefficients, which will be omitted for brevity.

We also make the following observation before we begin.
Let $X$ be a simplicial complex such that for every vertex $x \in X$, the link $\link_X(x)$ is PL homeomorphic to the standard PL $(n-1)$-sphere. Then for each $x$, the star of $x$ is the cone on the link of $x$, and so the star is PL homeomorphic to the standard $n$-ball. Hence $X$ is a PL $n$-manifold without boundary. \\

\noindent \emph{The case $n=1$} (a).  This means that $P$ is dimension 0, so is a finite collection of points. Thus $P$ is certainly a PL manifold. Also $H_*(P) \cong H_*(S^0)$, so we deduce that $P$ consists of two points, that is $P \cong S^0$.\\

\noindent \emph{The case $n=1$} (b). By Proposition~\ref{prop:PHM-2}, $M$ is a 1-dimensional polyhedral homology manifold, so the link $\link(v)$ is dimension 0, with $H_*(\link(v)) \cong H_*(S^0)$ for every vertex $v \in M$.  Hence by the case $n=1$ (a), $\link(v) \cong S^0$, and so the triangulation of $M$ is a PL structure.  \\

\noindent \emph{The case $n=2$} (a). Let $P$ be a finite 1-dimensional polyhedral homology manifold with $H_*(P) \cong H_*(S^1)$. We have that $P$ is a graph. By definition of a polyhedral homology manifold, links of vertices have $H_*(\link(v)) \cong H_*(S^0)$, for every vertex $v \in P$. Hence vertices have valency two, so $P$ is a PL 1-manifold.  Since $H_*(P) \cong H_*(S^1)$, we must have that $P \cong S^1$, by the classification of compact PL $1$-manifolds. \\

\noindent \emph{The case $n=2$} (b). By Proposition~\ref{prop:PHM-2}, $M$ is a 2-dimensional polyhedral homology manifold, so by Proposition~\ref{prop:PHM-1} the link $\link(v)$ is a polyhedral homology manifold of dimension 1 with $H_*(\link(v)) \cong H_*(S^1)$ for every vertex $v \in M$.  Hence by the case $n=2$ (a), $\link(v) \cong S^1$, and so the triangulation of $M$ is a PL structure.  \\

\noindent \emph{The case $n=3$} (a).  Now $P$ is a 2-dimensional finite polyhedral homology manifold, with $H_*(P) \cong H_*(S^2)$.  By Proposition~\ref{prop:PHM-1}, $\link(v)$ is a polyhedral homology manifold and $H_*(\link(v)) \cong H_*(S^1)$ by the definition of a polyhedral homology manifold, for every vertex $v$ of $P$. Then by the case $n=1$~(a), $\link(v)$ is a PL manifold and $\link(v) \cong S^1$.  Hence $P$ is a PL manifold with $H_*(P) \cong H_*(S^2)$.  Thus $P \cong S^2$ by the classification of compact PL 2-manifolds.  \\

\noindent \emph{The case $n=3$} (b). By Proposition~\ref{prop:PHM-2}, $M$ is a 3-dimensional polyhedral homology manifold, so by Proposition~\ref{prop:PHM-1} the link $\link(v)$ is a polyhedral homology manifold of dimension 2 with $H_*(\link(v)) \cong H_*(S^2)$ for every vertex $v \in M$.  Hence by the case $n=3$ (a), $\link(v) \cong S^2$. Thus  the triangulation of $M$ is a PL structure, as desired.\\

\noindent \emph{The case $n=4$} (a).  Now $P$ is a 3-dimensional finite polyhedral homology manifold, with $H_*(P) \cong H_*(S^3)$.  In addition, since $n=4$ we have by the hypotheses of the proposition that $\pi_1(P)=\{1\}$. By Proposition~\ref{prop:PHM-1}, for every vertex $v$ of $P$, $\link(v)$ is a polyhedral homology manifold with $H_*(\link(v)) \cong H_*(S^2)$. Hence by the case $n=3$~(a), $\link(v)$ is a PL 2-manifold and $\link(v) \cong S^2$.  Hence $P$ is a PL 3-manifold.
Since $P$ is simply-connected, compact, and has the homology of $S^3$
the Poincar\'e conjecture in dimension three~\cite{Perelman:2002-1,Perelman:2003-1,Perelman:2003-2}, implies that $P \cong S^3$.\\

\noindent \emph{The case $n=4$} (b).
Let $M$ be a 4-dimensional topological manifold with a simplicial structure.
By Proposition~\ref{prop:PHM-2}, $M$ is a 4-dimensional polyhedral homology manifold, so by Proposition~\ref{prop:PHM-1} for every vertex $v \in M$, the link $\link(v)$ is a polyhedral homology manifold of dimension 3 with $H_*(\link(v)) \cong H_*(S^3)$.  Moreover, for a triangulation of a topological manifold of dimension $n \geq 3$, a standard argument, similar to the proof of Proposition~\ref{prop:cone-locallyflat-iff-K-unknot} (see \cite[Theorem~1.5]{Stern80} for a proof), shows that $\pi_1(\link(v))= \{1\}$ for every vertex $v$.  Hence we can apply the case  $n=3$ (a), to deduce that $\link(v)$ is a PL manifold and $\link(v) \cong S^3$. Thus the triangulation of $M$ is a PL structure, as desired.
\end{proof}

%To see that the link of any vertex of a manifold $M$ of dimension at most $4$ is a manifold, we argue one dimension, by an argument due to Ciprian Manolescu.
% The crucial fact is that the link of any $r$-simplex in a triangulation of an $n$-manifold $M$ has the homology of the $(n-r-1)$-sphere.  It follows\footnote{I hope/can we cite it?} that the link of every vertex in $M$ is a simplicial complex that is a homology manifold i.e.\ $H_*(M,M \sm \{v\};\Z) \cong H_n(M,M \sm \{v\};\Z) \cong \Z$ for every vertex $v$. We call such objects \emph{combinatorial homology manifolds}.  Moreover the link of every vertex in $M$ has the homology of $S^{n-1}$.
% \begin{enumerate}
%     \item For $N$ a combinatorial homology 1-manifold (in particular for $N=M$, a 1-manifold), is a graph with links of vertices two points. Hence it is  a manifold.
%     \footnote{To be continued.}
%     \item  For $M$ a 2-manifold, the links of vertices are graphs that are homology manifolds, and hence are manifolds.
%     \item For $M$ a 3-manifold, the link of a vertex is a 2-dimensional homology manifold that has the homology of $S^2$.
%     \item
% \end{enumerate}

We continue by recalling the definition of a CW structure.

\begin{definition}
A \emph{CW complex}  is a topological space $X$ together with a filtration \[\emptyset = X_{-1} \subseteq X_0 \subseteq X_1 \subseteq X_2 \subseteq \cdots\]
such that $X= \colim X_n$ and such that for each $n \geq 0$, the space $X_n$ arises as a pushout
\[\xymatrix @C+1cm{ \coprod_{j \in \mathcal{J}_n} S^{n-1} \ar[r] \ar[d] & X_{n-1} \ar[d] \\ \coprod_{j \in \mathcal{J}_n} D^{n} \ar[r] & X_n}\]
where $\mathcal{J}_n$ indexes the discs $D^n$.  The interiors $\Int D^n$  of the discs are called the \emph{$n$-cells}.
For $n \geq 0$, a CW complex $X$ is said to be \emph{$n$-dimensional} if $X_n\setminus X_{n-1}\ne \emptyset$ and $X_{i} = X_{i+1}$ for all $i \geq n$.
We say a topological space~$X$ \emph{admits a CW structure} if $X$ admits such a filtration.
\end{definition}

The above results can be used to prove the following theorem on the existence of CW structures.

\begin{theorem}\label{thm:n-not-equal-4-CW structure}~
\bnm
\item Every  PL manifold \textup{(}and thus every smooth manifold\textup{)}  admits a CW structure.
\item For $n\leq 3$ every compact $n$-manifold admits the structure of a finite $n$-dimensional CW complex.
\item Let $n\geq 5$ and let $M$ be a compact $n$-manifold. Then $M$ is homeomorphic to the mapping cylinder of some map $f\colon \partial M\to X$, where $X$ is a finite CW complex.
\item For $n\geq 5$, every \emph{closed} $n$-manifold admits the structure of a finite $n$-dimensional CW complex.
\enm
\end{theorem}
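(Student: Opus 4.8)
I would prove the three parts by unrelated methods, as the statement suggests. For \emph{part (1)} I would use classical triangulation results. The cases $n\leq 1$ are elementary, since a compact $1$-manifold is a finite disjoint union of circles and arcs. For $n=2$ one invokes Rad\'o's theorem that every compact surface is triangulable, and for $n=3$ the theorem of Moise (see also Bing) that every compact $3$-manifold is triangulable. A triangulation of a compact nonempty $n$-manifold uses finitely many simplices and has simplices in the top dimension, hence is in particular a finite $n$-dimensional CW structure. It is worth stressing that this approach is unavailable for $n\geq 5$: triangulability fails in general (Galewski--Stern, and in dimension $5$ Manolescu), and dimension $4$ is genuinely exceptional, since there exist closed topological $4$-manifolds with no handle decomposition.

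For \emph{parts (2) and (3)} the engine is the existence of handle decompositions of topological manifolds. The key input I would cite is the Kirby--Siebenmann handle decomposition theorem: every compact topological manifold of dimension $\geq 6$ admits a handle decomposition, absolute when the manifold is closed and relative to a collar of the boundary in general, together with its extension to dimension $5$, which additionally uses Quinn's four-dimensional annulus and disc-embedding theorems and is carried out in Freedman--Quinn. (This is exactly why dimension $4$ must be excluded.) Granting this, part (3) follows by the standard conversion of a handle decomposition of a closed $M^n$ into a finite $n$-dimensional CW complex: the $0$-handles are $n$-balls, which are triangulable, and, inductively, attaching a $k$-handle is, after straightening the attaching region relative to the cell structure already built on the boundary, the same up to homeomorphism as attaching a $k$-cell and then an $n$-cell; after the finitely many handles one has exhibited $M$ as a finite $n$-dimensional CW complex.

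For part (2) I would pass to the dual handle decomposition, so that $M$ is obtained from $\partial M\times[0,1]$ by attaching finitely many handles. Then $M$ deformation retracts onto the union of $\partial M\times\{1\}$ with the cores of these handles, and reorganising the cores produces a finite CW complex $X$ together with a map $f\colon\partial M\to X$ realising $M$, up to homeomorphism, as the mapping cylinder of $f$; this is precisely the packaging in which Kirby--Siebenmann record the consequence, and I would cite them for it. One should note why (2) is deliberately not stated as ``$M$ is a CW complex'': when $n=5$ the boundary $\partial M$ is a topological $4$-manifold that need not be homeomorphic to a CW complex, whereas the target $X$ of the mapping cylinder is always a finite CW complex, and that is the strongest uniform conclusion.

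The main obstacle is that, for $n\geq 5$, the whole argument rests on the Kirby--Siebenmann handle decomposition theorem and its dimension-$5$ extension via Quinn---a deep result one cites rather than reproves in a survey. The secondary, more technical point is the passage from a handle decomposition to a genuine CW structure in the \emph{topological} category: one cannot first triangulate, so one must check that the combinatorial manipulations with handles---collapsing onto cores and straightening attaching regions to subcomplexes---can be realised by homeomorphisms and not merely up to homotopy equivalence. The mapping-cylinder formulation in part (2) is the clean way to record exactly this, which is why it, and not a bare CW statement, is what one proves for manifolds with boundary.
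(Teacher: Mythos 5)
Your proposal is correct and follows essentially the same route as the paper: Rad\'o and Moise for $n\leq 3$, and the Kirby--Siebenmann relative handle decomposition theorem together with Quinn's extension to dimension $5$ and the Kirby--Siebenmann mapping cylinder theorem for $n\geq 5$. The only organisational difference is that the paper obtains part (3) as an immediate consequence of part (2) (for closed $M$ the mapping cylinder degenerates to $X$ itself), whereas you re-derive it by converting the handle decomposition into a CW structure directly; both are fine.
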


\begin{proof}
\mbox{}
\bnm
\item  Every simplicial complex is evidently a CW complex.
The statement for PL manifold thus holds by definition.
For smooth manifolds this statement now follows from
Theorem~\ref{thm:smooth-manifold-handle-pl} (2).
Also note that it follows from
\cite[Theorem~3.5]{Milnor-Morse} and the existence of a handle decomposition
that every compact smooth $n$-dimensional manifold is homotopy equivalent to a compact $n$-dimensional CW complex. For most purposes it suffices to know that a compact smooth manifold has the homotopy type of a compact CW complex.
\item  In Theorem~\ref{thm:manifold-low-dim} we  saw that every compact $n$-manifold of dimension $\leq 3$ admits a PL structure. It follows from (1) that every such
manifold admits a CW structure.
\item Let $M$ be a manifold with (possibly empty) boundary of dimension $\geq 5$.
In Theorem~\ref{thm:handle-dec} we saw that $M$ admits a topological handle structure  rel.\ $\partial M \times I$.   Kirby-Siebenmann~\cite[Essay~III, Theorem 2.2, p.~107]{KS77} then says that $M$ is homeomorphic to the mapping cylinder of some map $f\colon \partial M\to X$, where $X$ is a finite CW complex. Thus (2) holds.
\item If $M$ is a closed manifold of dimension $\geq 5$, then it follows from (3)
that $M$ admits the structure of a finite $n$-dimensional CW complex.\qedhere
\enm
\end{proof}

It is not clear to us whether Theorem~\ref{thm:n-not-equal-4-CW structure} suffices to show that every compact high-dimensional manifold admits a CW structure. Put differently, to the best of our knowledge the following question is open for manifolds with nonempty boundary.

\begin{question}
Let $n\ge 5$. Does every compact  $n$-manifold have a CW structure?
\end{question}

The following question also seems to still be open, even in the closed case.

\begin{question}
Does every compact $4$-manifold have a CW structure?
\end{question}

In Theorem~\ref{thm:n-not-equal-4-CW structure} we showed that many compact manifolds admit a CW structure, but we saw that there are still open cases. In many applications it suffices to know that a topological space is homotopy equivalent to a finite CW complex. This leads us to the following theorem.

\begin{theorem}\label{thm:topological-manifold-CW complex}
\leavevmode
\bnm
\item \label{item:topcwcx1}
Every connected closed  nonempty $n$-manifold is homotopy equivalent to an $n$-di\-men\-sional finite CW complex.
\item \label{item:topcwcx2} Every connected compact nonempty $n$-manifold with nonempty boundary is homotopy equivalent to an $(n-1)$-dimensional finite CW complex.
\enm
\end{theorem}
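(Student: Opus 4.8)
The plan is to deduce both statements from Theorem~\ref{thm:n-not-equal-4-CW structure} together with standard facts in geometric topology, treating dimension $4$ by a separate argument. For statement~\eqref{item:topcwcx1}: when $n \neq 4$, Theorem~\ref{thm:n-not-equal-4-CW structure} already gives that a compact $n$-manifold (for $n \le 3$, or closed for $n \ge 5$) is a finite CW complex, and for $n \ge 5$ with nonempty boundary it is the mapping cylinder of a map $f \colon \partial M \to X$ with $X$ finite CW; a mapping cylinder of a map between finite CW complexes is homotopy equivalent to a finite CW complex of dimension $\max(\dim X, \dim \partial M + 1)$, and since $\partial M$ is a closed $(n-1)$-manifold one bounds this by $n$ after a cellular-approximation/collapse argument. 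The remaining case is a compact $4$-manifold $M$. Here I would invoke the fact that every compact $4$-manifold is an ANR (being locally Euclidean, hence locally contractible and finite-dimensional), and that a compact ANR is dominated by a finite CW complex (Borsuk); by Wall's finiteness obstruction theory \cite{Wa67} the relevant obstruction lives in $\widetilde{K}_0(\Z[\pi_1 M])$, and one argues it vanishes here. A cleaner route for dimension $4$: a compact topological $4$-manifold has a handle decomposition in the complement of a point (this follows from smoothing theory away from a point, Theorem~\ref{thm:smooth-outside-a-point} as referenced in the introduction), or one cites directly that compact $4$-manifolds are homotopy equivalent to finite CW complexes via \cite{KS77} combined with the fact that the only obstruction to a handle decomposition is the Kirby--Siebenmann class, which does not obstruct the homotopy type.

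For statement~\eqref{item:topcwcx2}, let $M$ be connected compact with $\partial M \neq \emptyset$. The idea is a standard "engulfing"/collapsing argument: using the collar neighbourhood theorem~\ref{thm:collar}, $M$ deformation retracts onto $M$ minus an open collar, but more usefully one shows that a connected compact manifold with boundary collapses onto a subcomplex of dimension one less. Concretely, I would first reduce to the case where $M$ carries a handle decomposition: for $n \neq 4$ this is Theorem~\ref{thm:n-not-equal-4-CW structure} (or \cite{KS77}, \cite{Qu82}); for $n = 4$ one uses the smooth structure in the complement of a point. Given a handle decomposition of the connected manifold $M$ with $\partial M \neq \emptyset$, one can always cancel or avoid handles of top index $n$: an $n$-handle attached to a connected manifold-with-boundary can be slid and cancelled against an $(n-1)$-handle, or the manifold is built without $n$-handles at all precisely because $\partial M \neq \emptyset$ keeps it from being closed. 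Hence $M$ is homotopy equivalent to the CW complex obtained from the handle decomposition, which has cells only in dimensions $0, 1, \ldots, n-1$, giving an $(n-1)$-dimensional finite CW complex. Alternatively one cites \cite[Theorem~2.2]{Wa67} directly, as the excerpt notes, for $n \ge 4$, and handles $n \le 3$ by the explicit triangulation results of Rad\'o and Moise.

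The main obstacle is honestly dimension $4$, where none of the clean tools (handle decompositions, triangulations, \cite{KS77}) apply directly. The key input there is that although a compact $4$-manifold may fail to admit a handle decomposition or even a CW structure (an open question, as the excerpt stresses), its \emph{homotopy type} is nonetheless that of a finite complex: one obtains this either from the existence of a smooth structure on the complement of a point together with the collar theorem to understand a neighbourhood of that point, or from ANR theory plus vanishing of Wall's finiteness obstruction. I would present the $4$-dimensional case as: remove an open ball, apply the smooth handle-decomposition theory to the resulting compact manifold-with-boundary (which has the homotopy type of an $(n-1) = 3$-complex, and in particular a finite $4$-complex), then glue back the ball, which at the level of homotopy type only attaches cells and preserves finiteness and, for statement~\eqref{item:topcwcx2}, does not occur when $\partial M$ was already nonempty since then we never needed to remove a ball in the first place — instead we directly use that the punctured or handle-decomposed manifold retracts to its $(n-1)$-skeleton.
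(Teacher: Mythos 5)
Your plan works for $n\neq 4$, but the whole content of this theorem is the $4$-dimensional case, and there both of your routes have genuine gaps. For statement~(\ref{item:topcwcx1}) with $n=4$, you assert that the Wall finiteness obstruction in $\widetilde{K}_0(\Z[\pi_1 M])$ "vanishes here" without giving any argument; that vanishing is not formal, and the standard way to bypass it is not finiteness-obstruction theory at all but West's theorem that every \emph{compact ANR is homotopy equivalent to} (not merely dominated by) a finite CW complex, which combined with Hanner's theorem that manifolds are ANRs is exactly what the paper uses. Your "cleaner route" via a handle decomposition in the complement of a point does not repair this: removing an interior point changes the homotopy type (e.g.\ $D^4\setminus\{0\}\simeq S^3$), the resulting manifold is noncompact so its smooth handle decomposition is a priori infinite, and gluing the ball back is precisely the step you leave unexamined. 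The same problem makes your argument for statement~(\ref{item:topcwcx2}) in dimension $4$ circular: a compact $4$-manifold with boundary is not known to admit a handle decomposition, so you cannot "cancel the top-index handles", and your fallback ("we never needed to remove a ball in the first place") leaves you with no decomposition to work with.

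Separately, you have no mechanism in dimension $4$ for the \emph{dimension} bounds ($n$ in part~(1), $n-1$ in part~(2)), which is the second half of the statement. The paper's mechanism is Wall's theorem (Theorem~\ref{thm:Wall}, from \cite{Wa66}): a finite connected complex $X$ with $H^i(X;\Z[\pi])=0$ for all $i>n$, $n\geq 3$, is homotopy equivalent to a finite $n$-complex. This is a different result from the $\widetilde K_0$ finiteness obstruction you cite. One feeds it the vanishing $H^k(X;\Lambda)\cong H_{n-k}(X;\Lambda)=0$ for $k>n$ from universal-coefficient Poincar\'e duality (Theorem~\ref{thm:poincareduality}) to get part~(1), and for part~(2) the vanishing $H^n(X;\Lambda)\cong H_0(X,\partial X;\Lambda)=0$, which uses Poincar\'e--Lefschetz duality together with $\partial X\neq\emptyset$ and connectedness. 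Without some version of this cohomological-dimension argument (or a working handle/spine argument, which is unavailable in dimension $4$), your proof does not establish the stated dimensions.
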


In the proof of Theorem~\ref{thm:topological-manifold-CW complex} we will use the following theorem proven by Wall~\cite[Corollary 5.1]{Wa66}.

\begin{theorem}\label{thm:Wall} Let $X$ be a finite connected CW complex with fundamental group $\pi=\pi_1(X)$.  Suppose that there is an integer $n \geq 3$ such that $H^i(X;\Z[\pi]) = 0$  for all $i>n$. \textup{(}Here $H^i(X;\Z[\pi]) = 0$ denotes cohomology with twisted coefficients which we will introduce in Appendix~\ref{section:twisted-invariants}.\textup{)}
Then $X$ is homotopy equivalent to an $n$-dimensional finite CW complex.
\end{theorem}

\begin{proof}[Proof of Theorem~\ref{thm:topological-manifold-CW complex}]
\mbox{}
We start out with the following claim.

\begin{claim}
Every compact manifold is homotopy equivalent to a finite CW complex.
\end{claim}

\noindent We provide two different proofs of the claim.
\begin{enumerate}[leftmargin=1cm]
\item[(a)] Let $M$ be a compact  $n$-manifold.
If $n\geq 6$, then it  follows also from
Theorem~\ref{thm:n-not-equal-4-CW structure}~(3) and (4) together with the Cellular Approximation Theorem~\cite[Theorem~IV.11.4]{Br93} that $M$ is homotopy equivalent to a finite CW complex.
If $n<5$, then we just replace $M$ by $M\times D^6$ and apply the above argument.
(see also Kirby-Siebenmann~\cite[Section 1~(III), p.~744]{KS69}.)
%\fotnote{[SF] I can never find these references to KS69, perhaps we can add page numbers?}
\item[(b)] We now provide a second proof,  which is of a very different flavour.
Before we can provide this alternative argument we need to introduce one definition:
a space $X$ is called an \emph{absolute neighbourhood retract (ANR)} if $X$ is metrisable (recall that a space is metrisable if it admits the structure of a metric space inducing the given topology) and if whenever $X \subseteq Y$ is a closed subset of a metrisable space $Y$, then $X$ is a neighbourhood retract of $Y$.  That is, there is an open neighbourhood $U \subseteq Y$ containing $X$, with a map $r \colon U \to X$ such that the composition $X \to U \xrightarrow{r} X$ is equal to the identity on~$X$.

Now we can return to the actual proof of the claim:
It follows from the Dugundji Extension Theorem \cite[Theorem~6.1.1]{Sakai2013} and work of Hanner~\cite[Theorem~3.3]{Han51}  that every manifold (possibly noncompact) is an ANR, and West~\cite[Corollary~5.3]{We77} showed that every compact ANR is homotopy equivalent to a finite CW complex.
\end{enumerate}

With this claim it remains to prove the dimension statements of the theorem.
\bnm
\item  By Theorem~\ref{thm:n-not-equal-4-CW structure} we only need to  prove (\ref{item:topcwcx1}) in the case $n=4$.  But since the subsequent argument works for all $n\geq 4$ we also give it for all $n\geq 4$.
We follow the argument provided in \cite[Corollary~5.4]{We77}.

Let $M$ be a compact connected nonempty $n$-manifold. By the claim, $M$ is homotopy equivalent to a finite CW complex $X$.

First we consider the case that $M$ is orientable. Since $M$ is $n$-dimensional it follows from the Universal Poincar\'e Duality Theorem~\ref{thm:poincareduality} that for every $k>n$ and every $\Z[\pi_1(X)]$-module $\Lambda$ we have
\[ H^k(X;\Lambda)\,\,\cong\,\, H^k(M;\Lambda)\,\,\cong\,\, H_{n-k}(M,\partial M;\Lambda)\,\,=\,\,0.\]
By Theorem~\ref{thm:Wall}, $X$ is homotopy equivalent to an $n$-dimensional finite CW complex. Note that to apply Theorem~\ref{thm:Wall} we have used that $n\geq 3$.

If $M$ is nonorientable, then in the above one needs to apply Poincar\'e Duality for nonorientable manifolds. In the closed case, a proof can be found in \cite{Sun17}. The case for manifolds with boundary can be proved by combining the ideas of \cite{Sun17} and
the Universal Poincar\'e Duality Theorem~\ref{thm:poincareduality}.
\item
Now we turn to the proof of (\ref{item:topcwcx2}). Let $M$ be a compact connected $n$-manifold with nonempty boundary. We start with $n=1,2$, or $3$.
 We saw in Theorem~\ref{thm:manifold-low-dim} that in this dimension range every compact connected $n$-manifold admits a simplicial
 structure. It is well known that a compact connected $n$-manifold with nonempty boundary and a simplicial structure is homotopy equivalent to an $(n-1)$-dimensional simplicial complex: iteratively collapse top dimensional simplices starting with those that have a face on the boundary. In particular such a manifold is homotopy equivalent to an $(n-1)$-dimensional finite CW complex.

Now suppose that $n\geq 4$. By the claim $M$ is homotopy equivalent to  a finite CW complex $X$.
Let  $k\geq n$ and let $\Lambda$ be a $\Z[\pi_1(X)]$-module. By  the Universal Poincar\'e Duality Theorem~\ref{thm:poincareduality}   we have that
\[ H^k(X;\Lambda)\,\,\cong\,\,  H^k(M;\Lambda)\,\,\cong\,\, H_{n-k}(M,\partial M;\Lambda)\,\,=\,\,0.\]
Here the last conclusion is obvious for $k>n$. For $k=n$ the conclusion follows from the fact that $\partial M\ne \emptyset$, that $M$ is connected and the explicit calculation of 0-th twisted homology groups as given in \cite[Chapter~VI.3]{HS97}.
It follows from Theorem~\ref{thm:Wall} that $X$ is homotopy equivalent to an $(n-1)$-dimensional finite CW complex. Note here that to apply Theorem~\ref{thm:Wall} we used $n\geq 4$.\qedhere
\enm
\end{proof}

Theorem~\ref{thm:topological-manifold-CW complex} is strong enough to recover many familiar statements.

\begin{corollary}\label{cor:topological-mfd-homology}
Let~$M$ be a compact, connected, nonempty manifold.
\bnm
\item The group $\pi_1(M)$ is finitely presented.
\item All homology groups $H_k(M;\Z)$ are finitely generated abelian groups, in particular it makes sense to define the Euler characteristic
\[ \chi(M)\,\,:=\,\,\tmsum{n}{} (-1)^n\cdot b_n(M).\]
\item Let  $p\colon \wti{M}\to M$ be a finite covering. Then
\[\chi(\widetilde{M})\,\,=\,\,[\widetilde{M}:M]\cdot \chi(M).\]
\enm
\end{corollary}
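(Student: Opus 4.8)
The plan is to deduce all three statements from Theorem~\ref{thm:topological-manifold-CW complex}: since $M$ is homotopy equivalent to a finite CW complex $X$, everything reduces to standard facts about finite CW complexes together with the homotopy invariance of $\pi_1$ and of singular homology.

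For part (1), I would use that a finite CW complex has finitely presented fundamental group: after collapsing a spanning tree of the $1$-skeleton one obtains a presentation with one generator per $1$-cell and one relator per $2$-cell, by the cellular form of the Seifert--van Kampen theorem, while cells of dimension $\geq 3$ do not affect $\pi_1$. Since $\pi_1(M)\cong\pi_1(X)$, this gives (1). For part (2), note that the cellular chain complex $C_*(X;\Z)$ is a bounded complex of finitely generated free abelian groups, hence each $H_k(X;\Z)\cong H_k(M;\Z)$ is finitely generated and vanishes for $k>\dim X$; in particular every $b_n(M)$ is finite and almost all vanish, so the alternating sum defining $\chi(M)$ is a finite sum and is well-defined.

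For part (3), first observe that $\wti M$ is again a compact manifold (a finite cover of a compact space is compact -- cover $M$ by finitely many open sets whose closures are evenly covered), so $\chi(\wti M)$ makes sense by (2); one may assume $\wti M$ is connected, otherwise argue componentwise. The heart of the matter is to transport the covering across a homotopy equivalence $f\colon M\to X$: the $d$-fold cover $p$, with $d=[\wti M:M]$, corresponds to a conjugacy class of index-$d$ subgroups $H\leq\pi_1(M)$, and since $f_*$ is an isomorphism, $f_*(H)$ has index $d$ in $\pi_1(X)$ and defines a $d$-fold cover $q\colon\wti X\to X$. By the lifting criterion $f$ lifts to $\tilde f\colon\wti M\to\wti X$, and a lift of a homotopy inverse of $f$ provides a homotopy inverse of $\tilde f$ up to deck transformations, so $\tilde f$ is a homotopy equivalence. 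Finally $\wti X$ inherits a finite CW structure from $X$ in which the characteristic map of each cell lifts and each open $n$-cell of $X$ has exactly $d$ preimage cells, so $\chi(\wti X)=d\cdot\chi(X)$, whence $\chi(\wti M)=\chi(\wti X)=d\cdot\chi(X)=d\cdot\chi(M)$.

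The step I expect to be the main obstacle is precisely this transport argument in (3) -- one must check carefully that the homotopy equivalence $M\simeq X$ carries the given covering of $M$ to a covering of $X$ with homotopy equivalent total space. An alternative that avoids lifting CW structures is to identify $H_*(\wti M;\Q)$ with the homology of $M$ with coefficients in the rank-$d$ local system given by the permutation action on the fibre, and to use that the Euler characteristic of a finite CW complex with coefficients in a rank-$d$ local system over a field equals $d$ times the ordinary Euler characteristic; this yields the same formula.
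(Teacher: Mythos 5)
Your proposal is correct and follows essentially the same route as the paper: parts (1) and (2) via standard facts about finite CW complexes, and part (3) by transporting the finite cover across the homotopy equivalence $M\simeq X$ and invoking multiplicativity of $\chi$ for finite covers of finite CW complexes. The paper states this transport step without proof, whereas you supply the details (lifting criterion, lifted CW structure) and a local-systems alternative, but the underlying argument is the same.
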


\begin{proof}
The first two statements in the corollary are an immediate consequence of
Theorem~\ref{thm:topological-manifold-CW complex} and standard results on fundamental groups and homology groups of finite CW complexes.
We turn to the final statement. Let $X$ be a finite CW complex homotopy equivalent to $M$. Use the fact that the Euler characteristic is multiplicative for finite covers of finite CW complexes and use that a $k$-fold cover $\wti{M}$ of $M$ induces a $k$-fold cover~$\wti{X}$
of $X$ such that $\wti{M}$ and $\wti{X}$ are homotopy equivalent, to deduce the result.

Here is an alternative approach to proving the first two statements.
Let $M$ be a compact, connected manifold.
Borsuk's Theorem that $M$ is a Euclidean Neighbourhood Retract shows that $M$ is a retract of a finite CW complex; see~\cite[Appendix~A,~Corollary~A.9]{Hat02}, \cite[Appendix~E]{Br93}, \cite{Floris:ENRs}, and \cite[Theorem~124.3 and Proposition~124.7]{Fr23}. This fact is nontrivial but it is much easier to prove than Theorem~\ref{thm:topological-manifold-CW complex}.
Borsuk's Theorem implies immediately that the homology groups of $M$ are finitely generated and that
the fundamental group of $M$ is finitely generated.
In fact using a group theoretic lemma as in \cite[Lemma~1.3]{Wall65} or \cite[Theorem~3.1]{FR01}, one actually obtains that $\pi_1(M)$ is finitely presented.
But it is not clear how Borsuk's Theorem  can  be used to prove the third statement.
\end{proof}

\begin{remark}
As pointed out above, every compact smooth manifold admits the structure of a finite CW complex.
One can combine this fact with Theorem~\ref{thm:connect-sum-is-smooth} below to obtain an alternative proof of Corollary~\ref{cor:topological-mfd-homology} (1) and (2).
More precisely: Theorem~\ref{thm:connect-sum-is-smooth} says that for any compact  4-manifold $M$ there is a
closed orientable simply connected  $4$-manifold $N$ such that the connected sum $M \# N$ admits a smooth structure. Using the well-known behaviour of the fundamental group and the homology groups under the connected sum operation one can now fairly easily provide
 an alternative proof of Corollary~\ref{cor:topological-mfd-homology}~(1) and~(2).
\end{remark}

%======================================
\chapter{The Annulus Theorem and the Stable Homeomorphism Theorem}\label{chapter:annulus}

The Annulus Theorem  and the Stable Homeomorphism Theorems are two (basically equivalent) fundamental results in the development of the theory of manifolds.
For example, in high dimensions,  the Stable Homeomorphism Theorem
is an essential ingredient in the proof of the Product Structure Theorem~\ref{theorem:product-structure}~\cite[Essay~I, Theorem~5.1, p.~31]{KS77}, which itself underpins all the results of \cite{KS77}. We state the Product Structure Theorem in Section~\ref{section:product-structure theorem}.
In dimension four, the Annulus Theorem is one of the many consequences of Quinn's controlled $h$-cobordism theorem~\cite[Chapter~7]{FQ90}.  In dimension 4 this theorem is used in the proofs of smoothing theorems (Chapter~\ref{chapter:smoothing}), existence and uniqueness of normal vector bundles (Chapter~\ref{chapter:tubular}), and transversality (Chapter~\ref{chapter:transversality}). We discuss these developments in the later chapters indicated. Later in this section (Section~\ref{section:connected-sum-well-defined}), we will discuss an application of the Annulus Theorem: showing that connected sum is a well-defined operation on connected, topological manifolds that are either oriented, or at least one of which is nonorientable.
Here is the Annulus Theorem.

\begin{theorem}\textbf{\textup{(Annulus Theorem)}}\label{thm:annulus}
Let $n\in \N_0$ and let $f,g\colon D^n\to \R^n$ be two orientation-preserving locally flat embeddings. If $f(D^n)\subseteq \int(g(D^n))$, then $g(D^n)\sms \int(f(D^n))$ is homeomorphic to $S^{n-1} \times [0,1]$.
\end{theorem}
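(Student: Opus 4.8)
The theorem is one of the deep cornerstones of the theory, so the plan is not to give a self-contained proof but to lay out the reduction and then quote the hard inputs. First I would dispose of the low-dimensional cases $n\le 3$: via the triangulation theorems of Rad\'o ($n=2$) and Moise ($n=3$) one may take the ambient $\R^n$ and the locally flat embeddings $f,g$ to be piecewise-linear, whereupon the statement becomes the classical piecewise-linear annulus theorem in dimensions $\le 3$ (the case $n=1$ being trivial). From now on assume $n\ge 4$.

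Next I would normalise the outer embedding. By Brown's generalised Schoenflies theorem the locally flat sphere $g(S^{n-1})$ bounds a ball on each side of $S^n=\R^n\cup\{\infty\}$, and combining this with the Alexander trick one produces an orientation-preserving homeomorphism $\Phi\colon\R^n\to\R^n$ with $\Phi\circ g=\id_{D^n}$. Replacing $f$ by $\Phi\circ f$, it remains to show: for an orientation-preserving locally flat embedding $f\colon D^n\hookrightarrow\int(D^n)$, the region $D^n\sms\int(f(D^n))$ is homeomorphic to $S^{n-1}\times[0,1]$. This region is a compact $n$-manifold with boundary $\partial D^n\sqcup f(S^{n-1})$, both spheres bicollared (the first obviously, the second by local flatness) and the boundary collared by the Collar neighbourhood theorem~\ref{thm:collar}, so the statement is an assertion purely about the region between two bicollared spheres.

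This reduced statement is precisely the content of the theorem of Brown and Gluck that the Annulus Theorem in dimension $n$ follows from the Stable Homeomorphism Theorem in dimension $n$ (Theorem~\ref{thm:SHT}): from the bicollar of $f(S^{n-1})$ one manufactures an orientation-preserving self-homeomorphism of $\R^n$, the hypothesis that every such homeomorphism is stable lets one isotope it to the identity in a controlled way --- the uniqueness of collars~\ref{thm:topological-collar-unique} and the Isotopy Extension Theorem~\ref{thm:isotopy-extension-theorem} are the natural tools for the bookkeeping --- and that isotopy straightens the region between $\partial D^n$ and $f(S^{n-1})$ onto the standard product; for the details I would cite Brown--Gluck. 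The main obstacle, and the reason the result is deep, is the Stable Homeomorphism Theorem itself: for $n\ge 5$ it is Kirby's theorem, obtained via the torus trick (with the residual low-dimensional $\TOP/\PL$ input supplied by Kirby--Siebenmann), while for $n=4$ it is Quinn's theorem, which rests on Freedman's disc embedding theorem. In a survey of this kind these two inputs are quoted as black boxes.
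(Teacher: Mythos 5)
Your proposal is correct and follows essentially the same route as the paper: normalise the outer embedding via the generalised Schoenflies theorem and the Alexander trick (this is the content of Lemma~\ref{lem:intertwined}), then invoke the Brown--Gluck machinery to deduce the annulus statement from the Stable Homeomorphism Theorem~\ref{thm:SHT}, quoting Kirby ($n\geq 5$) and Quinn ($n=4$) as black boxes. The only difference is presentational: the paper makes the Brown--Gluck step precise via the chain \emph{intertwined} $\Rightarrow$ \emph{annularly equivalent} $\Rightarrow$ \emph{strictly annularly equivalent} (citing specific theorems of Brown--Gluck and Brown), where you defer to the reference, and the paper treats all $n$ uniformly rather than splitting off $n\leq 3$.
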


For $n=0,1$ the Annulus Theorem is basically trivial.
For $n=2,3$ the Annulus Theorem follows from the work of Rad\'o~\cite{Ra26} and Moise~\cite{Mo52,Mo77} (see also \cite[p.~247]{Ed84}).
The Annulus Theorem was proved for dimensions $\geq 5$ by Kirby~\cite{Ki69}, with a little help from Siebenmann,
and in dimension~$4$ by Quinn~\cite[p.~506]{Qu82}, making use of the main results of \cite{Freedman-82}; see also~\cite[p.~247]{Ed84}.

The known proofs of the Annulus Theorem~\ref{thm:annulus} deduce it from the Stable Homeomorphism Theorem.
In the next chapter we will state the Stable Homeomorphism Theorem~\ref{thm:SHT}
and we will explain the argument, provided in \cite{Brown-Gluck}, showing that the Annulus Theorem~\ref{thm:annulus} can be deduced from the Stable Homeomorphism Theorem~\ref{thm:SHT}.

%=======================================
\section{The  Stable Homeomorphism Theorem}

We reduce the Annulus Theorem to the Stable Homeomorphisms Theorem stated
in Theorem~\ref{thm:SHT}. This follows from work
of Brown and Gluck~\cite{Brown-Gluck}, but since it requires some work to find this deduction in \cite{Brown-Gluck}, we give the details here.

\begin{definition}
Let $n\in \N_0$. A homeomorphism $f \colon \R^n \to \R^n$ is said to be \emph{stable} if there is a sequence of homeomorphisms $f_1,\dots,f_m \colon \R^n \to \R^n$ such that $f_m \circ \cdots \circ f_1 =f$ and such that for each $i$, the homeomorphism~$f_i$ is \emph{somewhere the identity}, which means that there is an open nonempty set $U \subseteq \R^n$ such that $f_i|_U$ is the identity on $U$.
\end{definition}

The key ingredient to the subsequent discussion is the following theorem.

\begin{theorem}\textbf{\textup{(Stable homeomorphism Theorem)}}\label{thm:SHT}
Let $n\in \N_0$.
  Every orientation preserving homeomorphism from $\R^n$ to itself is stable.
\end{theorem}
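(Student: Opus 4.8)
The statement we must address is the Stable Homeomorphism Theorem: every orientation-preserving self-homeomorphism of $\R^n$ is stable. This is one of the deepest results in geometric topology and there is no short self-contained proof; the strategy is to reduce it to the high-dimensional case via the Kirby torus trick together with smoothing theory, and to invoke the dimension-four case separately. I would organize the exposition around the following chain of reductions. First, one reduces the Stable Homeomorphism Theorem to the Annulus Theorem in a fixed dimension $n$ — but since the excerpt derives the Annulus Theorem \emph{from} the Stable Homeomorphism Theorem, here one instead reduces $\mathrm{SHT}_n$ directly to the statement that a homeomorphism of $\R^n$ that is the identity on a neighborhood of a point is stable (trivially true, since such a map is already "somewhere the identity"), so the content is showing an \emph{arbitrary} orientation-preserving homeomorphism is a finite composite of maps each agreeing with the identity on some open set.

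\textbf{The torus trick.} The heart of the proof, following Kirby~\cite{Ki69}, runs as follows. Given an orientation-preserving homeomorphism $f\colon \R^n \to \R^n$, after composing with a translation and a stable homeomorphism one arranges that $f$ agrees with a \emph{specific} comparison near a point; the goal is to "immerse" the problem into the $n$-torus. Using that $\R^n$ is the universal cover of $T^n = \R^n/\Z^n$ and that an open subset of $\R^n$ immerses in $T^n$, one transfers $f$ to a homeomorphism $\bar f$ of (an open subset of) $T^n$ that is homotopic to the identity. One then passes to a finite cover $T^n \to T^n$; by the smoothing theory of Kirby–Siebenmann, after this covering trick the lifted homeomorphism can be isotoped to one that is smooth (equivalently $\mathrm{PL}$) away from a point, using that $T^n \times \R$ — or more precisely the relevant handle — admits a compatible smooth structure. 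The crucial input here is that the obstruction to smoothing lives in $H^3(T^n;\Z/2)$ (via the Kirby–Siebenmann invariant machinery) and is killed by passing to a suitable finite cover, or can be handled by the homotopy-torus classification. Pulling the resulting smooth (hence stable, since smooth orientation-preserving homeomorphisms are stable by a standard isotopy-to-the-identity-near-a-point argument) structure back up the cover and back to $\R^n$ shows $f$ is stable. This is the step I expect to be the main obstacle: it is exactly where the deep machinery of \cite{KS77} (the product structure theorem, handle straightening, and the identification of the smoothing obstruction) enters, and it genuinely requires $n \geq 5$.

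\textbf{Low dimensions.} For $n = 1, 2$ the theorem is elementary, and for $n = 2, 3$ it follows from the classical work of Radó~\cite{Ra26} and Moise~\cite{Mo52,Mo77} on triangulating surfaces and $3$-manifolds (every homeomorphism is isotopic to a $\mathrm{PL}$, hence stable, one). The remaining case $n = 4$ is not accessible by the torus trick in the same form and is instead due to Quinn~\cite[p.~506]{Qu82}, using controlled/handle-theoretic techniques special to dimension four (building on Freedman's disk-embedding results); this I would simply cite. So in the write-up the honest thing is to say: the theorem is due to Kirby~\cite{Ki69} for $n \neq 4$ via the torus trick and smoothing theory of Kirby–Siebenmann, to Radó and Moise in dimensions $\leq 3$, and to Quinn~\cite{Qu82} for $n = 4$, and then sketch the torus-trick reduction above as the representative argument, flagging that the smoothing step is where dimension $\geq 5$ is used. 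Given the survey nature of the paper, I would keep the torus-trick sketch to a paragraph and give precise pointers rather than reproving anything.
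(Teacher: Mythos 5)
Your proposal matches the paper's treatment: the paper does not actually prove Theorem~\ref{thm:SHT} but cites Kirby for $n\geq 5$ (reducing it, via the torus trick, to the surgery-theoretic classification of $\PL$ homotopy tori due to Wall and Hsiang--Shaneson) and Quinn for $n=4$, which is exactly the citation structure you propose. One small caution in your sketch: Kirby's original argument reduces directly to the homotopy-tori classification rather than to a Kirby--Siebenmann smoothing obstruction in $H^3(T^n;\Z/2)$ (that obstruction theory was developed afterwards, partly on the back of the torus trick), so the ``killed in a finite cover'' step is properly attributed to the Hsiang--Shaneson/Wall computation rather than to a cohomological obstruction being annihilated.
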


For $n\geq 5$ this was proven by Kirby~\cite[p.~575]{Ki69} using the torus trick.
Slightly more precisely, Kirby~\cite{Ki69} showed that the Stable Homeomorphism Theorem in dimensions at least five is a consequence of the surgery theoretic classification of $\PL$ homotopy tori,  which was worked out around the same time by Wall~\cite{Wall-hom-tori}, \cite[Section~15A]{Wall-Ranicki:1999-1}
and independently by Hsiang-Shaneson~\cite[p.~688]{HS69}, both proofs building on the work of Browder, Novikov, and Wall, which culminated in Wall's book~\cite{Wall-Ranicki:1999-1}. See also~\cite{Hsiang-Wall-hom-tori-II}.
For $n=4$ the Stable Homeomorphism Theorem was proven by Quinn~\cite{Qu82}, see also \cite[p.~247]{Ed84}.

Before we discuss consequences of the Stable Homeomorphism Theorem~\ref{thm:SHT}
we recall the two versions of the Alexander trick.

\begin{lemma}\textbf{\textup{(Alexander trick)}}\label{lem:alexander-trick}
Let $n\in \N_0$.
\bnm
\item Every homeomorphism of $S^{n-1}$ can be extended radially to a homeomorphism of $D^n$ that sends 0 to 0.
\item Let $f$ and $g$ be two homeomorphisms of $D^n$. If the restrictions of $f$ and $g$ to $S^{n-1}$ are isotopic, then $f$ and $g$ are isotopic homeomorphisms of $D^n$.
\item The topological group $\operatorname{Homeo}_{\partial}(D^n)$ of homeomorphisms of $D^n$ fixing the boundary pointwise is contractible.
\enm
\end{lemma}

\begin{proof}
The extension in the first statement can be obtained by coning: $f(t\cdot x)=t\cdot f(x)$.

The proof of the second is an amusing exercise; see e.g.~\cite[Lemma~5.6]{Hansen} for a proof.
The idea is also similar to the proof of the third statement, which we give now.
To show that $\operatorname{Homeo}_{\partial}(D^n)$ is contractible it suffices to give a homotopy
\[F \colon \operatorname{Homeo}_{\partial}(D^n) \times [0,1] \to \operatorname{Homeo}_{\partial}(D^n)\]
with $F(f,0) = \Id_D^n$ and $F(f,1) = f$, for all $f \in \operatorname{Homeo}_{\partial}(D^n)$.
To do this, we define
\[F(f,t)(x) = \begin{cases} t \cdot f\big(\smfrac{1}{t} \cdot x\big) & \|x\| \leq t \\
x & t \leq \|x\| \leq 1.
\end{cases}\]
We omit the proof that this is well-defined and continuous.
\end{proof}

We can now prove the following almost immediate consequence of the Stable Homeomorphism Theorem~\ref{thm:SHT}.

\begin{corollary}\label{cor:homeo-sn}
Let $n\in \N_0$.
Every orientation preserving self-homeomorphism of $S^n$ is isotopic to the identity.
\end{corollary}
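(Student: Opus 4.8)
The plan is to deduce the corollary from the Stable Homeomorphism Theorem~\ref{thm:SHT} and the Alexander trick (Lemma~\ref{lem:alexander-trick}). Let $f\colon S^n \to S^n$ be an orientation-preserving homeomorphism. First I would reduce to a homeomorphism that fixes a point: pick any $p \in S^n$ and, using that $\operatorname{Homeo}^+(S^n)$ acts transitively on points (indeed, rotations already do), compose $f$ with an isotopy of the identity so that after this modification we may assume $f(p) = p$ for a chosen $p$, say the north pole. Removing $p$, stereographic projection identifies $S^n \sms \{p\}$ with $\R^n$, and $f$ restricts to an orientation-preserving self-homeomorphism $F\colon \R^n \to \R^n$.

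Next I would apply the Stable Homeomorphism Theorem~\ref{thm:SHT} to $F$: it is stable, so $F = F_k \circ \cdots \circ F_1$ where each $F_i$ is somewhere the identity, i.e.\ agrees with $\id_{\R^n}$ on some nonempty open set $U_i$. The key observation is that a homeomorphism of $\R^n$ that is the identity on a nonempty open set extends to a homeomorphism of $S^n = \R^n \cup \{\infty\}$ fixing $\infty$, and moreover such an extension is isotopic to the identity of $S^n$: by conjugating with a suitable isotopy of $S^n$ one can arrange that $U_i$ is a neighbourhood of the point $q_i \in S^n$ corresponding to where we puncture, so that, viewing things on a disc $D^n \subseteq S^n$ on whose boundary collar $F_i$ is the identity, the Alexander trick (second part of Lemma~\ref{lem:alexander-trick}) applied to $F_i|_{D^n}$ and $\id_{D^n}$ — which agree on $S^{n-1}$ — gives an isotopy rel $S^{n-1}$, hence an ambient isotopy of $S^n$ supported in $D^n$ from the extension of $F_i$ to the identity. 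Composing these isotopies over $i=1,\dots,k$ shows the extension of $F$, which is our (modified) $f$, is isotopic to $\id_{S^n}$. Tracking back through the first reduction, the original $f$ is isotopic to the identity as well.

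The step I expect to require the most care is making precise the passage between "somewhere the identity on $\R^n$" and "isotopic to the identity on $S^n$": one must choose the open set $U_i$ (or conjugate it into standard position) so that the point at infinity sits inside it, which uses that any two points of the connected manifold $\R^n$ lie in a common chart — or more elementarily, that one may pre- and post-compose each $F_i$ by isotopies of the identity to move $U_i$ wherever convenient without affecting the conclusion — and then verify that the resulting map on a standard disc is genuinely the identity near the boundary so that the Alexander trick applies cleanly. Everything else is bookkeeping: composing finitely many isotopies and checking orientations are preserved throughout so that Theorem~\ref{thm:SHT} is applicable at the one place it is used.
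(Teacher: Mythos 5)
Your proposal is correct and follows essentially the same route as the paper: fix the point at infinity, apply the Stable Homeomorphism Theorem to reduce to homeomorphisms that are the identity on an open set, move that open set to a neighbourhood of $\infty$ by an isotopy (conjugation), and finish with the Alexander trick on the complementary disc. The only cosmetic difference is that the paper arranges $h(\infty)=\infty$ via the Isotopy Extension Theorem rather than by composing with a rotation, and treats the stable factors implicitly rather than one at a time.
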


\begin{proof}
We identify $S^n$ with $\R^n\cup \{\infty\}$. Let $h$ be a self-homeomorphism of $S^n=\R^n\cup \{\infty\}$.
%After an isotopy (using Theorem~\ref{thm:isotopy-extension-theorem}) we can assume that $h(\infty)=\infty$.
By the Stable Homeomorphism Theorem~\ref{thm:SHT} we know that $h$ is stable.
Thus we only have to consider the case that $h$ fixes an open subset of $\R^n\cup \{\infty\}=S^n$, since a composition of homeomorphisms isotopic to the identity is isotopic to the identity. After an isotopy (using Theorem~\ref{thm:isotopy-extension-theorem}) we can assume that $h$ fixes an open neighbourhood of~$\infty$, so in particular there exists $C>0$ such that  $h$ is the identity on $\{ x\in \R^n\mid \|x\|\geq C\}$.
It then follows from Lemma~\ref{lem:alexander-trick} (2) that $h$ is isotopic to the identity.
\end{proof}

Now we being showing how to deduce the annulus theorem from the stable homeomorphism theorem.

\begin{definition}
Let $n\in \N_0$.
We say that two elements~$f_0, f_1 \in \Emb(D^n, \R^n)$ are \emph{intertwined}
if there exists an $h \in \Homeo(\R^n, \R^n)$ with $h \circ f_0 = f_1$.
\end{definition}

 We will need the following straightforward technical lemma. See Definition~\ref{def:LocallyFlat} for the definition of a locally flat embedding.

\begin{lemma}\label{lem:extend-embedding-of-disc}
Let $n\in \N_0$.
Let $M$ be an $n$-dimensional manifold and let $f\colon D^n\to M$ be a locally flat embedding into $\int M=M\sms \partial M$. Then there exists a locally flat embedding $F\colon \R^n\to M$ such that the restriction of $F$ to $D^n$ equals $f$.
\end{lemma}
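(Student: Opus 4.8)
The plan is to enlarge the embedding $f\colon D^n \to \operatorname{Int}M$ to an embedding of a slightly larger open ball, and then to reparametrize $\R^n$ onto that open ball in a way that restricts to the identity on $D^n$. First I would observe that since $f(D^n)$ is compact and contained in the open set $\operatorname{Int}M$, and since each point of $f(D^n)$ has a Euclidean chart, a standard compactness argument (covering $f(D^n)$ by finitely many charts and using a Lebesgue-number type estimate) produces a locally flat embedding $g\colon D^n_{1+\varepsilon} \to \operatorname{Int}M$ of the closed ball of radius $1+\varepsilon$ for some $\varepsilon>0$, with $g|_{D^n} = f$. Actually, to get this cleanly one can argue directly: cover $f(D^n)$ by finitely many open sets each homeomorphic to $\R^n$ via charts; one could instead first pass to the case $M = \R^n$ on a neighbourhood of a single point, but since $f(D^n)$ need not lie in one chart, the honest route is to use that $f$ is an embedding of a compact space, hence extends continuously to an embedding of a marginally larger disc by the tube/collar-type openness of the embedding condition. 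Alternatively, and perhaps most cleanly, I would use the collar neighbourhood theorem applied to the sphere $S^{n-1} = \partial D^n$ inside $D^n$: no — the simplest is this: $f$ maps a neighbourhood of $D^n$ in $\R^n$ into $M$ only after we know $D^n$ has a collar in its own interior-enlargement, which it does, being $D^n \subset \operatorname{Int}D^n_{1+\varepsilon}$ for the obvious radial scaling; but the domain of $f$ is only $D^n$. So the genuinely needed input is: a locally flat embedding $f\colon D^n\to \operatorname{Int}M$ extends to a locally flat embedding of $D^n_{1+\varepsilon}$ for some $\varepsilon>0$. This follows because $f$, restricted to $S^{n-1}$, together with the local flatness data, gives a bicollar of $f(S^{n-1})$ in $M$ on the outward side; pushing $f(S^{n-1})$ outward along this bicollar and coning (Alexander trick, Lemma \ref{lem:alexander-trick}) produces the extension $g$.

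Granting the existence of $g\colon D^n_{1+\varepsilon}\to\operatorname{Int}M$ with $g|_{D^n}=f$, the second step is purely to fill in $\R^n$. Fix a homeomorphism $\lambda\colon \R^n \to \operatorname{Int}D^n_{1+\varepsilon}$ that is the identity on $D^n$; such a $\lambda$ exists because one can take a radial homeomorphism $r\colon [0,\infty)\to[0,1+\varepsilon)$ with $r|_{[0,1]}=\operatorname{id}$ (e.g.\ $r(t)=t$ for $t\le 1$ and $r$ a fixed homeomorphism $[1,\infty)\to[1,1+\varepsilon)$ for $t\ge1$) and set $\lambda(x) = r(\|x\|)\cdot x/\|x\|$ for $x\ne 0$, $\lambda(0)=0$. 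Then $F := g\circ\lambda\colon \R^n\to M$ is a composition of a homeomorphism onto an open subset of $\operatorname{Int}D^n_{1+\varepsilon}$ with the locally flat embedding $g$, hence is itself a locally flat embedding, and $F|_{D^n} = g\circ\lambda|_{D^n} = g|_{D^n} = f$. This completes the construction.

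The main obstacle is the first step: producing the small extension $g$ of $f$ to $D^n_{1+\varepsilon}$. When $M=\R^n$ this is trivial (radial rescaling of the whole picture), but $f(D^n)$ need not be contained in a single coordinate chart of $M$, so one cannot simply rescale in one chart. The clean way around this is to invoke local flatness of $f$ along $\partial D^n = S^{n-1}$: local flatness provides, for each point of $f(S^{n-1})$, a product chart, and by compactness of $S^{n-1}$ these patch (after shrinking) to a bicollar $f(S^{n-1})\times(-\delta,\delta)\hookrightarrow \operatorname{Int}M$ extending the restriction of $f$ to an annular neighbourhood $S^{n-1}\times(-\delta,\delta)$ of $S^{n-1}$ in $\R^n$. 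Glueing this outward half-bicollar onto $f(D^n)$ along $f(S^{n-1})$ and reparametrizing gives the desired $g\colon D^n_{1+\varepsilon}\to\operatorname{Int}M$. (If one prefers to avoid even this, one can replace "locally flat embedding $\R^n\to M$" arguments downstream by working with $\operatorname{Int}D^n$ throughout; but as stated, the bicollar argument is the natural proof.) I expect this patching-of-charts-along-$S^{n-1}$ argument, while standard, to be the only place requiring genuine care.
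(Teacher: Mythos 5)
Your overall strategy coincides with the paper's: produce a collar of $f(S^{n-1})$ on the outward side and then map the exterior $\R^n\setminus \operatorname{Int}D^n\cong S^{n-1}\times[1,\infty)$ into that collar by a radial reparametrisation such as $t\mapsto \tfrac{2}{\pi}\arctan(t-1)$. The one step you should not treat as routine is the production of that outward collar. You propose to cover $f(S^{n-1})$ by finitely many local-flatness product charts and "patch (after shrinking) to a bicollar" by a Lebesgue-number argument; but local collars of a codimension-one submanifold do \emph{not} glue by a naive compactness argument --- the passage from ``locally collared'' to ``collared'' is precisely the content of Brown's collaring theorem, and pretending it is a finite-cover patching step is a genuine gap. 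The paper avoids this by observing that $W:=M\setminus f(\operatorname{Int}D^n)$ is itself a manifold whose boundary contains $f(S^{n-1})$ as a compact component, and then simply invoking the already-stated Collar Neighbourhood Theorem~\ref{thm:collar} to get the collar $f(S^{n-1})\times[0,1]\subset W$; you should route your argument through that theorem rather than through an ad hoc patching claim. Two further minor points: the detour through the Alexander trick is unnecessary (once the outward collar exists, the annulus maps onto it directly), and your intermediate extension to a closed disc $D^n_{1+\varepsilon}$ followed by a homeomorphism $\R^n\to\operatorname{Int}D^n_{1+\varepsilon}$ is fine but can be collapsed into a single formula as in the paper.
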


\begin{proof}
Let $f\colon D^n\to M$ be a locally flat embedding.
By definition $f(D^n)$ is a submanifold of $M$.
It is straightforward to see that $W:=M\sms f(\int D^n)$ is also a submanifold of $M$.
By the Collar Neighbourhood Theorem~\ref{thm:topological-collar} there exists a collar $f(S^{n-1})\times [0,1]$. The map
\[ \ba{rcl} F\colon \R^n&\to & M\\
x&\mapsto & \left\{ \ba{ll} f(x), &\mbox{ if }x\in D^n,\\
\big(f(y), \frac{2}{\pi}\arctan(t-1)\big)&\mbox{ if $x=t\cdot y$  with $t\in [1,\infty)$ and $y\in S^{n-1}$},\ea\right.\ea\]
is easily seen to be a locally flat embedding.
\end{proof}

Denote the set of locally flat embeddings of $D^n$ into $\R^n$ by $\Emb(D^n, \R^n)$.

\begin{lemma}\label{lem:intertwined}
Let $n\in \N_0$.
Any two elements~$f_0, f_1 \in \Emb(D^n, \R^n)$ are intertwined.
\end{lemma}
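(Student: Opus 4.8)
The plan is to show that any locally flat embedding $f\colon D^n\to\R^n$ is intertwined with a fixed ``standard'' embedding, say the inclusion $\iota\colon D^n\hookrightarrow\R^n$; since being intertwined is manifestly an equivalence relation on $\Emb(D^n,\R^n)$, this suffices. First I would apply Lemma~\ref{lem:extend-embedding-of-disc} with $M=\R^n$ to extend $f$ to a locally flat embedding $F\colon\R^n\to\R^n$ with $F|_{D^n}=f$. The image $F(\R^n)$ is an open subset of $\R^n$ homeomorphic to $\R^n$; by the Annulus theorem~\ref{thm:annulus} (or rather the invariance of domain plus the structure it gives), $F$ is in fact a homeomorphism onto its image, and one can arrange -- after precomposing with a translation and scaling of $\R^n$, which does not change the intertwining class -- that $0\in\int F(D^n)$ and that $F$ is orientation preserving.

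The key step is then to promote the self-embedding $F\colon\R^n\to F(\R^n)\subseteq\R^n$ to a self-\emph{homeomorphism} of $\R^n$ whose restriction to $D^n$ still equals $f$, up to an allowable adjustment. If $F$ happens to be surjective there is nothing to do. In general, $F(\R^n)$ is an open subset of $\R^n$ homeomorphic to $\R^n$, and one extends $F$ over $S^n=\R^n\cup\{\infty\}$: the complement $S^n\sms F(\R^n)$ is a compact set, which after a further application of the Annulus theorem to a large sphere $F(S^{n-1}_R)$ inside $F(\R^n)$ one sees is ``cellular''-like, i.e.\ collared from outside by an $S^{n-1}\times[0,1)$. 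Coning that collar (the Alexander trick, Lemma~\ref{lem:alexander-trick}(1)) produces a homeomorphism $h\colon\R^n\to\R^n$ agreeing with $F$ on a neighbourhood of $D^n$ -- in particular $h\circ f_0'=f$ where $f_0'$ is the standard inclusion, modulo keeping track of the orientation. Running the same argument for $f_1$ and composing gives a homeomorphism carrying $f_0$ to $f_1$.

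An alternative, cleaner route avoids extending to infinity: since $f(D^n)$ lies in the interior of $g(D^n)$ for a suitably chosen large standard disc $g$, the Annulus theorem says $g(D^n)\sms\int f(D^n)\cong S^{n-1}\times[0,1]$, and similarly the outside $\R^n\sms\int g(D^n)\cong S^{n-1}\times[0,\infty)$; gluing a collar-respecting homeomorphism of the annular region to the cone homeomorphism on $f(D^n)\to D^n$ and to the identity far out yields the desired $h$. The main obstacle is the bookkeeping in the ``cellular complement'' argument -- ensuring that the piece of $\R^n$ outside the image of a large disc under the extended embedding really is a half-open collar so that the Alexander trick applies -- and making sure all the identifications can be chosen to respect orientations so that the composition at the end is genuinely a homeomorphism of $\R^n$ and not merely of some open subset. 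This is exactly the kind of place where one invokes the Annulus theorem~\ref{thm:annulus} together with uniqueness of collars (Theorem~\ref{thm:topological-collar-unique}).
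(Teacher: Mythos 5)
The central problem with your argument is that it is circular in the context of this paper: you invoke the Annulus theorem~\ref{thm:annulus} at every essential step (to identify $g(D^n)\sms\int f(D^n)$ with $S^{n-1}\times[0,1]$, and to see that the complement of the image of a large disc under the extended embedding is a half-open collar that can be coned off), but Lemma~\ref{lem:intertwined} is itself one of the two inputs to the paper's proof of the Annulus theorem, the other being Theorem~\ref{thm:annuluar-iff-stable}. The whole point of this section is to deduce the Annulus theorem from the Stable Homeomorphism Theorem~\ref{thm:SHT}, so this lemma must be proved without it. Moreover, your softer claims are not consequences of invariance of domain alone: asserting that $S^n\sms F(\R^n)$ is ``cellular-like'' and is collared from outside, so that the Alexander trick applies, amounts to asserting that the locally flat sphere $f(S^{n-1})$ bounds a disc on its outside in $S^n$ --- and that is exactly the nontrivial input you have not supplied. (A minor additional point: the lemma has no orientation hypothesis, so no normalisation of orientations is needed or relevant; intertwining permits arbitrary homeomorphisms of $\R^n$.)

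The missing ingredient is Brown's generalised Schoenflies theorem, which predates and is logically independent of the Annulus theorem. The paper's proof extends $f$ to a locally flat embedding of the larger disc $D^n(\frac{3}{2})$ via Lemma~\ref{lem:extend-embedding-of-disc}; this exhibits $f(S^{n-1})$ as a bicollared sphere in $S^n=\R^n\cup\{\infty\}$, so the Schoenflies theorem yields a homeomorphism $g\colon \wti{D}^n\to S^n\sms f(\int \wti{D}^n)$ from a disc onto the closed complement. After arranging $g(0)=\infty$ and correcting $g$ by the Alexander trick (Lemma~\ref{lem:alexander-trick}) so that $g=f$ on $S^{n-1}$, gluing $f$ and $g$ produces a self-homeomorphism of $S^n$ fixing $\infty$, hence a self-homeomorphism of $\R^n$ restricting to $f$ on $D^n$. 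Your overall strategy --- reduce to intertwining with the standard inclusion, extend $f$, and build a global homeomorphism by controlling the complement --- is the right one; the gap is that the complement can only be controlled via Schoenflies here, not via the Annulus theorem.
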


\begin{proof}
It suffices to show that any  $f \in \Emb(D^n, \R^n)$ is intertwined with the standard embedding~$D^n \subseteq \R^n$.
So let $f\in \Emb(D^n,\R^n)$.
 Apply Lemma~\ref{lem:extend-embedding-of-disc} to extend $f$ to a locally flat embedding  $F\colon D^n_{\frac{3}{2}}\to \R^n$.  Note that $F$ restricts to a locally flat embedding of $S^{n-1}\times [\frac{1}{2},\frac{3}{2}]$ into $S^n=\R^n\cup \{\infty\}$.
Let $\wti{D}^n$ be another copy of $D^n$. By the generalised Schoenflies Theorem~\cite[Theorem~5]{Brown60} there exists a homeomorphism $g\colon \wti{D}^n\to  S^n\sms f(\int {D}^n)$.
Using that the homeomorphisms of $\wti{D}^n$ act transitively on the interior of $\wti{D}^n$, arrange that $g(0)=\infty$.

Note that $g^{-1}\circ f\colon S^{n-1}\to S^{n-1}$ is a homeomorphism. By Lemma~\ref{lem:alexander-trick}~(1) this homeomorphism extends to a homeomorphism $\phi$ of $D^n$. Replace $g$ by $g\circ \phi$ if necessary to obtain that $f=g\colon S^{n-1}\to f(S^{n-1})$.
Identify $S^n=\R^n\cup \{\infty\}= D^n\cup \wti{D}^n$ in such a way that $0\in \wti{D}^n$ corresponds precisely to $\infty$.
Consider the map
\[ \ba{rcl} F\colon S^n=D^n\cup \wti{D}^n &\to & S^n\\
 x&\mapsto & \begin{cases}  f(x)  & x\in D^n\\
 g(x) & x\in \wti{D}^n.
 \end{cases}\ea\]
The maps $f$ and $g$ agree on the overlap, so the map is well-defined and is a homeomorphism. Note that $F$ restricts to a homeomorphism of $\R^n$ which  has the property that the restriction to $D^n$ equals $f$. This shows that $F \circ \id=f$, so  $f$ and the standard embedding are intertwined.
\end{proof}

We continue with the following definition from \cite[p.~19]{Brown-Gluck}.
\begin{definition}
Let $n\in \N_0$.
Let $f_0,f_1\in \Emb(D^n, \R^n)$.
\bnm
\item
We say $f_0$ and $f_1$ are  \emph{strictly annularly equivalent} if
 $f_0(D^n) \subseteq \Int f_1(D^n)$ and if there exists a map $F \colon S^{n-1} \times I \to  \R^n$ that is a homeomorphism onto its image
 such that $F(x,0) = f_0(x)$
and $F(x,1) = f_1(x)$ for all~$x \in S^{n-1}$.
\item
We say $f_0$ and $f_1$ are  \emph{annularly equivalent} if there exists  a sequence
$f_0=g_0,g_1,\dots,g_k=f_1$ of elements of $\Emb(D^n, \R^n)$
such that any two successive $g_i$ are strictly annularly equivalent.
\enm
\end{definition}

\begin{theorem}\label{thm:annuluar-iff-stable}
Let $n\in \N_0$.
Let  $f_0, f_1 \in \Emb(D^n, \R^n)$ with $f_0(D^n) \subseteq \Int f_1(D^n)$.
If $f_0$ and $f_1$  are orientation preserving, then they
are strictly annularly equivalent if and only if they are intertwined.
\end{theorem}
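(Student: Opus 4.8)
The plan is to prove both implications separately, using the earlier technical results, especially Lemma~\ref{lem:intertwined}, the generalised Schoenflies theorem, and the Alexander trick.

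For the direction \emph{strictly annularly equivalent $\Rightarrow$ intertwined}: Suppose $F\colon S^{n-1}\times I\to \R^n$ is a homeomorphism onto its image with $F(x,0)=f_0(x)$ and $F(x,1)=f_1(x)$. The image of $F$ is a closed ``annular'' region separating $f_0(S^{n-1})$ from $f_1(S^{n-1})$. I would glue this annular region to $f_0(D^n)$ along $f_0(S^{n-1})$ to produce a locally flat embedded closed $n$-disc $D$ with $D=f_1(D^n)$ as sets but realised via $f_0$ on the inner disc and $F$ on the collar. Then both $f_1$ and this newly-constructed embedding are elements of $\Emb(D^n,\R^n)$ with the same image; by the Alexander trick applied to the homeomorphism $f_1^{-1}\circ(\text{new embedding})$ of $S^{n-1}$ (extended to $D^n$), one adjusts the parametrisation so the two embeddings literally agree, and hence $f_0$ (extended over the collar) and $f_1$ are related by a homeomorphism of $\R^n$ — indeed one can take $h=\id$ after this adjustment, so $f_0$ and $f_1$ are intertwined. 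The care needed here is purely in organising the point-set gluing and checking local flatness of the glued disc along the seam $f_0(S^{n-1})$, which follows from the collar neighbourhood theorem.

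For the direction \emph{intertwined $\Rightarrow$ strictly annularly equivalent}: Suppose $h\in\Homeo(\R^n,\R^n)$ with $h\circ f_0=f_1$. First reduce to the standard case: by Lemma~\ref{lem:intertwined} there is a homeomorphism of $\R^n$ carrying $f_0$ to the standard inclusion $D^n\subset\R^n$, so we may assume $f_0$ is standard and $f_1=h|_{D^n}$ for a homeomorphism $h$ of $\R^n$ with $D^n\subset\Int h(D^n)$. Now I would use the region $h(D^n)\sms\Int D^n=h(D^n)\sms\Int f_0(D^n)$. The generalised Schoenflies theorem~\cite{Brown60}, applied to the bicollared sphere $f_1(S^{n-1})=h(S^{n-1})$ sitting in $S^n=\R^n\cup\{\infty\}$, shows that $h(D^n)\sms\Int D^n$ is homeomorphic to $S^{n-1}\times I$; but to get a \emph{strict} annular equivalence one needs the boundary parametrisation to match $f_0$ and $f_1$ exactly. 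To arrange this I would extend $f_0$ and $f_1$ slightly past their boundaries (using Lemma~\ref{lem:extend-embedding-of-disc} to get locally flat embeddings of a larger disc $D^n(1+\epsilon)$), take the Schoenflies homeomorphism $\Theta\colon S^{n-1}\times[0,1]\to h(D^n)\sms\Int D^n$, and then precompose and postcompose with the collar structures coming from $f_0$ and $f_1$ — equivalently, use the Alexander trick on $S^{n-1}\times\{0\}$ and $S^{n-1}\times\{1\}$ separately and interpolate — to obtain $F$ with $F(x,0)=f_0(x)$, $F(x,1)=f_1(x)$.

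The main obstacle will be the bookkeeping in the second direction: reparametrising the Schoenflies annulus so that both boundary spheres are hit by exactly the given embeddings $f_0,f_1$, rather than merely up to homeomorphism. This is where orientation-preservation is used — it guarantees that the boundary identifications on the two ends can be chosen compatibly (so that the isotopies supplied by the Alexander trick exist on each end) rather than being obstructed by a reflection. Everything else — the existence of collars, local flatness of glued discs, transitivity of $\Homeo(D^n)$ on $\Int D^n$ — is routine given the results already established in the excerpt.
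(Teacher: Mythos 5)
The forward direction (strictly annularly equivalent $\Rightarrow$ intertwined) is the easy one, and your gluing argument is essentially the content of \cite[Theorem 5.2]{Brown-Gluck}, which the paper simply cites; that part is fine modulo the point-set bookkeeping you acknowledge.

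The converse direction, however, has a fatal gap. You claim that the generalised Schoenflies theorem, applied to the bicollared sphere $h(S^{n-1})\subset S^n$, shows that $h(D^n)\sms \Int D^n$ is homeomorphic to $S^{n-1}\times I$. It does not. Schoenflies tells you that each closed complementary domain of a single bicollared sphere in $S^n$ is a disc; it says nothing about the region \emph{between} two disjoint nested bicollared spheres. That region being an annulus is precisely the Annulus Theorem~\ref{thm:annulus} — the statement this whole section is building towards — and it was a famous open problem for decades precisely because it does not follow from Schoenflies. Your argument is therefore circular: after applying Schoenflies to both spheres you are left with a small disc sitting inside the interior of a larger disc, which is exactly the configuration you started with. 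Notice also that your proof never invokes the Stable Homeomorphism Theorem~\ref{thm:SHT}, which is the essential input: the paper's proof uses it to conclude that $h$ is stable, then applies \cite[Theorem 5.4]{Brown-Gluck} to get that $f_0$ and $f_1$ are annularly equivalent (related by a finite chain of strict annular equivalences), and finally \cite[Theorem 3.5]{Bro64} to upgrade this to a single strict annular equivalence using the disjointness of the boundary spheres. Relatedly, your claim that orientation-preservation is only needed to match boundary parametrisations is off the mark — it is needed so that the Stable Homeomorphism Theorem applies to $h$ at all.
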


\begin{proof}
If two such elements are strictly annularly equivalent, then they are
intertwined by~\cite[Theorem 5.2]{Brown-Gluck}.

Now suppose that $f_0$ and $f_1$ are intertwined, that is there exists an $h \in \Homeo(\R^n, \R^n)$ with $h \circ f_0 = f_1$.
By the Stable Homeomorphism Theorem~\ref{thm:SHT} we know that $h$ is stable.
Thus we know from \cite[Theorem 5.4]{Brown-Gluck} that the embeddings are annularly equivalent, i.e.\
there  exist $h_0,\dots,h_k\in \Emb(D^n,\R^n)$ such that $h_0=f_0,h_k=f_1$ and for each $i$ the maps $h_i$ and $h_{i+1}$ are strictly annularly equivalent.
 Since $f_0(D^n) \subseteq \Int f_1(D^n)$, the embeddings of the boundary spheres~$f_0(\partial D^n)$ and~$f_1(\partial D^n)$ are disjoint. Therefore it follows from \cite[Theorem 3.5]{Bro64}
that $f_0$ and $f_1$ are not only annularly equivalent, but are moreover strictly annularly equivalent.
\end{proof}

Now we can easily prove the Annulus Theorem~\ref{thm:annulus}.

\begin{proof}[Proof of the Annulus Theorem~\ref{thm:annulus}]
Let $f_0,f_1\colon D^n\to \R^n$ be  two orientation-preserving locally flat embeddings
with  $f_0(D^n)\subseteq \int(f_1(D^n))$.
By Lemma~\ref{lem:intertwined} and
Theorem~\ref{thm:annuluar-iff-stable}
the two maps $f_0$ and $f_1$ are strictly annularly equivalent.
But this  implies that  $f_1(D^n)\sms \int(f_0(D^n))$ is homeomorphic to $S^{n-1} \times [0,1]$.
\end{proof}

%======================================
\section{The connected sum operation}
\label{section:connected-sum-well-defined}

\begin{definition}
Let $M$ and $N$ be connected nonempty oriented $n$-manifolds.
Pick an orientation preserving locally flat embedding $\Phi_M \colon D^n \to M\sms \partial M$ of an $n$-ball into $M$ and an orientation reversing locally flat embedding $\Phi_N \colon D^n \to N\sms \partial N$ of an $n$-ball into $N$. Define the \emph{connected sum} $M \# N$
of $M$ and $N$ by
\[M \# N := (M \setminus \Phi_M(\int(D^n))) \cup_{\Phi_M(S^{n-1})=\Phi_N(S^{n-1})} (N \setminus \Phi_N(\int(D^n)))\]
where
we glue the left hand side to the  right hand side  via the map
\[ \Phi_N\circ \Phi_M^{-1}\colon \Phi_M(S^{n-1})\,\,\xrightarrow{\cong}\,\, \Phi_N(S^{n-1}).\]
It follows from the
Collar Neighbourhood Theorem~\ref{thm:topological-collar} that
the topological space~$M \# N$ inherits the structure of an $n$-manifold; see \cite[Proposition~6.6]{Le13} for details. Furthermore $M\# N$ can be oriented in such a way that $M \setminus \Phi_M(D^n)$ and $N \setminus \Phi_N(D^n)$ are oriented submanifolds.
\end{definition}

\begin{theorem}\label{thm:connected-sum-well-defined}
Let $n\in \N_0$.
The connected sum $M\# N$ of two connected oriented $n$-manifolds $M$ and~$N$ is independent of
the choice of embeddings of the $n$-balls.
\end{theorem}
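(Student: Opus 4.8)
The plan is to show that any two choices of embedded $n$-balls in a fixed connected oriented $n$-manifold $M$ can be carried one to the other by an orientation-preserving self-homeomorphism of $M$, and likewise in $N$; the connected sum is then seen to be independent of the choices by composing these self-homeomorphisms with the gluing map. So the whole theorem reduces to the following claim: if $M$ is a connected oriented $n$-manifold and $\Phi_0, \Phi_1 \colon D^n \to M$ are two orientation-preserving locally flat embeddings into $\operatorname{Int} M$, then there is an orientation-preserving homeomorphism $G\colon M \to M$ with $G \circ \Phi_0 = \Phi_1$.

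To prove the claim, I would first use connectedness of $M$, together with the Proposition already proved in the excerpt (any two points of a connected $n$-manifold lie in the domain of a single chart $\R^n$), to reduce to the case where $\Phi_0(D^n)$ and $\Phi_1(D^n)$ are both contained in a single chart $\phi \colon U \to \R^n$ — more precisely, I would pick a point in the interior of each image, join them by a locally flat arc, thicken that arc to a chart, and use the Isotopy Extension Theorem~\ref{thm:isotopy-extension-theorem} (exactly as in the proof of that Proposition) to move $\Phi_1(D^n)$ by an ambient isotopy of $M$ into the chart $U$ containing $\Phi_0(D^n)$. Here one must be slightly careful to isotope a \emph{whole ball} rather than a point; but since $\Phi_1(D^n)$ is compact and contained in the open set carved out along the arc, this is routine. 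Within the common chart $U \cong \R^n$, the problem becomes: given two orientation-preserving locally flat embeddings $f_0, f_1 \colon D^n \to \R^n$, produce an orientation-preserving homeomorphism of $\R^n$ taking $f_0$ to $f_1$. By Lemma~\ref{lem:intertwined}, any two elements of $\Emb(D^n,\R^n)$ are intertwined, so there is some $h \in \Homeo(\R^n,\R^n)$ with $h\circ f_0 = f_1$; if $h$ is orientation-preserving we are done, and if not one composes with a reflection fixing a ball disjoint from $f_0(D^n)$ — and in fact, since both $f_i$ are orientation-preserving, $h$ is automatically orientation-preserving. Finally, this homeomorphism of $\R^n$, which agrees with the identity outside a large ball (after a further Alexander-trick adjustment, Lemma~\ref{lem:alexander-trick}), extends by the identity to a self-homeomorphism $G$ of $M$ supported in $U$, and $G$ is orientation-preserving because it is so on the chart.

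Granting the claim, I finish as follows. Let $\Phi_M^0, \Phi_M^1 \colon D^n \to M$ and $\Phi_N^0, \Phi_N^1 \colon D^n \to N$ be two choices of embeddings, and let $M\#^0 N$, $M\#^1 N$ be the resulting connected sums. By the claim there are orientation-preserving homeomorphisms $G_M \colon M \to M$ with $G_M\circ \Phi_M^0 = \Phi_M^1$ and $G_N \colon N \to N$ with $G_N\circ \Phi_N^0 = \Phi_N^1$. Then $G_M$ restricts to a homeomorphism $M \setminus \Phi_M^0(\operatorname{Int} D^n) \to M \setminus \Phi_M^1(\operatorname{Int} D^n)$ and similarly for $G_N$; since $G_M$ and $G_N$ agree with the gluing identifications $\Phi_M^i(S^{n-1}) = \Phi_N^i(S^{n-1})$ up to the prescribed map $\Phi_N^i \circ (\Phi_M^i)^{-1}$ — one checks the square $\Phi_N^1\circ(\Phi_M^1)^{-1}\circ G_M = G_N \circ \Phi_N^0\circ(\Phi_M^0)^{-1}$ commutes on $\Phi_M^0(S^{n-1})$ — the pair $(G_M, G_N)$ descends to a homeomorphism $M\#^0 N \to M\#^1 N$, which is orientation-preserving. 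One should also remark that the connected sum does not depend on whether the two balls are embedded orientation-preservingly or -reversingly in a fixed orientation-reversing way, but since we have fixed orientations on $M$ and $N$ and required the embeddings to be orientation-preserving, this ambiguity does not arise.

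The main obstacle is the reduction step: honestly producing the ambient isotopy of $M$ that slides the second \emph{ball} into a chart containing the first, with the orientation bookkeeping correct, and making sure the Isotopy Extension Theorem applies (one needs a locally flat proper isotopy of a compact manifold, so one isotopes, say, $S^{n-1} = \partial D^n$ or the whole disc along the arc, using that locally flatness is a local condition, exactly in the style of the Proposition already in the text). Once the two balls sit in a common Euclidean chart, everything else is a formal consequence of Lemma~\ref{lem:intertwined} and the Alexander trick.
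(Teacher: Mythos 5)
Your overall strategy is sound and close in spirit to the paper's: reduce to carrying one embedded ball onto the other by a self-homeomorphism of $M$, and use Lemma~\ref{lem:intertwined} once the problem is localised to $\R^n$. But there is a genuine gap at the decisive step, namely where you say that the intertwining homeomorphism $h$ of $\R^n$ with $h\circ f_0=f_1$ ``agrees with the identity outside a large ball (after a further Alexander-trick adjustment)''. The $h$ produced by Lemma~\ref{lem:intertwined} comes from the generalised Schoenflies theorem and has no support control whatsoever; without compact support it cannot be extended by the identity to a homeomorphism $G$ of $M$, and the Alexander trick does not repair this. To replace $h$ by a compactly supported homeomorphism still carrying $f_0(D^n)$ onto $f_1(D^n)$ you must produce a homeomorphism of the region between $f_0(D^n)$ (or $f_1(D^n)$) and a large round sphere interpolating between the given boundary data and the identity. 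That requires (a) knowing this region is homeomorphic to $S^{n-1}\times[0,1]$, which is exactly the Annulus theorem~\ref{thm:annulus} (packaged in the paper as Lemma~\ref{lem:application-annulus-theorem}), and (b) knowing the two boundary homeomorphisms of $S^{n-1}$ are isotopic, which is Corollary~\ref{cor:homeo-sn} and hence rests on the Stable Homeomorphism Theorem~\ref{thm:SHT}. Your write-up invokes neither, and indeed a proof of well-definedness of connected sum that used only Schoenflies and the Alexander trick cannot be right: the Annulus theorem is the essential input here, which is why the paper places this result in Section~\ref{section:annulus}.

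Two smaller points. First, your reduction ``slide the whole ball into a common chart by the Isotopy Extension Theorem~\ref{thm:isotopy-extension-theorem}'' needs the ball isotopy to be locally flat and proper; for a codimension-zero ball with boundary this is not automatic from the point case, and the paper sidesteps it by instead extending $\Phi_1$ to an embedding of $\R^n$ (Lemma~\ref{lem:extend-embedding-of-disc}) and shrinking and translating inside Euclidean charts chained along a path (Lemma~\ref{lem:balls-are-the-same}). Second, your claim is stronger than the paper's: you ask for $G\circ\Phi_0=\Phi_1$ on the nose, whereas the paper only arranges $h(D_1)=D_2$ as sets and then absorbs the resulting parametrisation mismatch $\varphi_2^{-1}\circ h\circ\varphi_1$ of $S^{n-1}$ into a collar of the gluing sphere on the $N$ side, again via Corollary~\ref{cor:homeo-sn}. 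Your on-the-nose version does make the final gluing argument cleaner, but it needs exactly the same deep inputs in step (b) above, so nothing is saved; either way the Annulus theorem and Corollary~\ref{cor:homeo-sn} must appear explicitly.
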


\begin{remark}
\mbox{}
\bnm
\item
In
Proposition~\ref{prop:additivity-of-intersection-form}
we will see that the manifolds~$\C\text{P}^2 \# \overline{ \C\text{P}^2 }$ and
$\C\text{P}^2 \# \C\text{P}^2 $ have non-isometric intersection forms, so they are not homeomorphic. Thus connected sum is not well-defined on orientable $4$-manifolds, rather it depends on the choice of orientation.
\item If  at least one of the two manifolds involved is nonorientable, then  the connected sum is well-defined.
This follows from the fact that one can show, say using the orientation cover of a nonorientable manifold, that in a nonorientable connected $n$-manifold $M$ any two locally flat embeddings of $D^n\to M\sms \partial M$ are ambiently isotopic. In the smooth category this argument is worked out in  detail in \cite[Chapter~47.4]{Fr23}. Using the Annulus Theorem~\ref{thm:annulus} one can translate the smooth argument to a topological argument.
\item  As discussed in \cite{BDFHKLN19}, in contrast to the case of orientable 3-dimensional manifolds, orientable 4-dimensional topological manifolds do not admit a unique decomposition as a connected sum of irreducible 4-manifolds. For example $\CP^2 \# S^2 \times S^2$ and $\CP^2 \# \CP^2 \# \ol{\CP}^2$ are diffeomorphic.
\enm
\end{remark}

The proof of Theorem~\ref{thm:connected-sum-well-defined} relies on the following two lemmas. The elementary proof of the first lemma is left to the reader.

\begin{lemma}\label{lem:balls-are-the-same}
Let $D_r^n(x)$ and $D_s^n(y)$ be two Euclidean balls in $\R^n$. There exists an orientation-preserving homeomorphism $f\colon \R^n\to \R^n$ with $f(D_r^n(x))=D_s^n(y)$ such that $f$ is the identity outside of some compact set.
\end{lemma}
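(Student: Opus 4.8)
The plan is to reduce the statement to two elementary sub-moves and realize each by an explicit formula. Observe first that the affine map $A(z) = y + \tfrac{s}{r}(z-x)$ carries $D_r^n(x)$ onto $D_s^n(y)$, and factors as the translation $z\mapsto z-x$, followed by the central dilation $z\mapsto \tfrac{s}{r}z$, followed by the translation $z\mapsto z+y$. Since a composition of homeomorphisms that are each the identity outside a compact set is again of this form, it is enough to realize, by compactly supported homeomorphisms of $\R^n$: (i) the ``slide'' taking a ball $D_r^n(x)$ onto $D_r^n(x+v)$ for a vector $v$; and (ii) the ``rescaling'' taking $D_r^n(0)$ onto $D_{\lambda r}^n(0)$ about its center $0$, for $\lambda>0$.

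For the slide (i), I would fix $R > r + |v|$ (so that $D_r^n(x+v)\subseteq D_R^n(x)$), take a cutoff $\psi\colon[0,\infty)\to[0,1]$ that equals $1$ on $[0,r]$, equals $0$ on $[R,\infty)$, and is linear in between, and set $f(z) := z + \psi(|z-x|)\,v$. By construction $f$ is the identity outside $D_R^n(x)$ and is the genuine translation $z\mapsto z+v$ on $D_r^n(x)$, so $f(D_r^n(x)) = D_r^n(x+v)$; the one point to check is that $f$ is a homeomorphism, which holds because $\psi$ has Lipschitz constant $(R-r)^{-1} < |v|^{-1}$, so the perturbation $z\mapsto \psi(|z-x|)v$ is a strict contraction, whence $f$ is injective and proper, and then invariance of domain makes it an open map onto a clopen nonempty subset of $\R^n$, i.e.\ a homeomorphism.

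For the rescaling (ii) I would use a standard radial homeomorphism: choose $R > \max(r,\lambda r)$ and a piecewise-linear increasing self-homeomorphism $g$ of $[0,\infty)$ with $g(0)=0$, with $g(t)=\lambda t$ on $[0,r]$, and with $g(t)=t$ on $[R,\infty)$, and put $f(z):=\tfrac{g(|z|)}{|z|}\,z$ for $z\ne 0$ and $f(0)=0$. This is the identity outside $D_R^n(0)$ and sends $D_r^n(0)$ to $D_{g(r)}^n(0) = D_{\lambda r}^n(0)$. Composing the three maps obtained this way produces the desired $f$ with $f(D_r^n(x))=D_s^n(y)$, the identity off a compact set.

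There is essentially no real obstacle, as the authors indicate: the only steps demanding a moment's thought are checking that the two interpolating maps are genuinely homeomorphisms — injectivity of the perturbed translation (via the contraction bound, which is why one needs $R - r > |v|$) and strict monotonicity of the radial profile $g$ (which is why one needs $R > \lambda r$). If a coordinate-free presentation were preferred, the same result follows from an ambient isotopy argument — a ``push'' supported in a tube around the segment from $x$ to $y$ that is an honest translation near $D_r^n(x)$, composed with a radial rescaling — but the explicit formulae above seem to be the shortest route.
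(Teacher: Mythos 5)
The paper leaves this lemma as an elementary exercise (``The elementary proof of the first lemma is left to the reader''), so there is no proof to compare against; your argument is a correct and complete solution. Both constructions are sound --- the cut-off translation is injective by the contraction bound $|v|/(R-r)<1$ and surjective by properness plus invariance of domain, the radial rescaling works since $R>\lambda r$ allows an increasing interpolation of the profile $g$ --- and the composition of the three compactly supported homeomorphisms is the identity off a compact set and carries $D_r^n(x)$ onto $D_s^n(y)$ as required.
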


The next lemma is a consequence of the Annulus Theorem~\ref{thm:annulus}.

\begin{lemma}\label{lem:application-annulus-theorem}
Let $\varphi,\psi\colon D^n\to \R^n$ be two orientation-preserving locally flat embeddings. If $\varphi (D^n)\subseteq \int(\psi (D^n))$, then there exists an orientation-preserving homeomorphism
$f$ of $\R^n$ with $f(\varphi(D^n))=\psi(D^n)$ such that $f$ is the identity outside of some compact set.
\end{lemma}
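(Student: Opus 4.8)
The plan is to prove Lemma~\ref{lem:application-annulus-theorem} by combining the conclusion of the Annulus theorem~\ref{thm:annulus} with an Alexander-trick-style extension argument. First I would apply the Annulus theorem to the two embeddings $\varphi, \psi$: since $\varphi(D^n) \subset \int(\psi(D^n))$ and both embeddings are orientation preserving, the theorem gives that $\psi(D^n) \sms \int(\varphi(D^n))$ is homeomorphic to $S^{n-1} \times [0,1]$. However, the bare homeomorphism is not quite enough: I want a product structure on the annular region that is compatible with the parametrisations coming from $\varphi$ and $\psi$ on the two boundary spheres. So the first real step is to promote this to a homeomorphism $G \colon S^{n-1} \times [0,1] \to \psi(D^n)\sms \int(\varphi(D^n))$ with $G(x,0) = \varphi(x)$ and $G(x,1) = \psi(x)$; this is exactly the statement that $\varphi$ and $\psi$ are strictly annularly equivalent, which is what the proof of the Annulus theorem actually produces via Theorem~\ref{thm:annuluar-iff-stable} (one should cite that intermediate statement rather than the bare homeomorphism conclusion).

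Next I would build the desired homeomorphism $f$ of $\R^n$ by patching together three pieces. On $\varphi(D^n)$, define $f := \psi \circ \varphi^{-1}$, which maps $\varphi(D^n)$ homeomorphically onto $\psi(D^n)$ and agrees with $\psi \circ \varphi^{-1}$ on the sphere $\varphi(S^{n-1})$. On the annular region $\psi(D^n) \sms \int(\varphi(D^n))$, I need a homeomorphism onto some collar-type region exterior to $\psi(D^n)$ that matches $\psi\circ\varphi^{-1}$ on the inner boundary $\varphi(S^{n-1})$ and the identity on the outer boundary $\psi(S^{n-1})$. Using the product structure $G$ and a second collar of $\psi(S^{n-1})$ in $\R^n \sms \int(\psi(D^n))$ (from the Collar neighbourhood theorem~\ref{thm:topological-collar}), one fills in a homeomorphism from $\psi(D^n)\sms\int(\varphi(D^n))$ onto the region between $\psi(S^{n-1})$ and a slightly larger concentric sphere, interpolating linearly in the collar coordinate between the boundary behaviours $\psi\circ\varphi^{-1}$ and $\id$; this is the Alexander-trick flavour of the argument. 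Finally, on the remaining exterior region of $\R^n$ beyond that larger sphere, set $f := \id$. One checks the three pieces agree on their common boundary spheres, so $f$ is a well-defined homeomorphism of $\R^n$, it satisfies $f(\varphi(D^n)) = \psi(D^n)$, and it is the identity outside a compact set (namely outside the larger concentric ball).

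The main obstacle I anticipate is the bookkeeping in the middle step: one must be careful that the ``annular equivalence'' produced genuinely restricts on the two boundary spheres to exactly $\varphi$ and $\psi$ (not just to some homeomorphisms of $S^{n-1}$ composed with them), and that the interpolation in the collar extends the resulting self-homeomorphism of $\varphi(S^{n-1})$ (transported to $\psi(S^{n-1})$) to a homeomorphism of the collar that is the identity at the far end --- this is precisely where Lemma~\ref{lem:alexander-trick}~(1) is used, coning the homeomorphism of the sphere over the collar interval. Everything else (identifying collars, checking the patched map is a homeomorphism, verifying it is eventually the identity) is routine point-set topology, and a short remark can note that an entirely analogous statement underlies the well-definedness of connected sum, so the reader may also simply cite the connected-sum literature.
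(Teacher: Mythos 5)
Your overall strategy --- upgrade the annulus theorem to a strict annular equivalence, adjoin an exterior collar of $\psi(S^{n-1})$, and realise $f$ as a radial shift supported in the resulting compact region --- is essentially the paper's, and you are right that one should invoke the strict annular equivalence (Theorem~\ref{thm:annuluar-iff-stable} together with Lemma~\ref{lem:intertwined}) rather than only the bare homeomorphism conclusion of Theorem~\ref{thm:annulus}. However, the three-piece decomposition as written does not assemble into a homeomorphism of $\R^n$. Write $A=\psi(D^n)\sms\int\varphi(D^n)$ for the annulus and $B$ for the exterior collar region between $\psi(S^{n-1})$ and the larger sphere $\Sigma$. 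Your pieces are defined on $\varphi(D^n)$, on $A$, and on the exterior of $\Sigma$; the union of these domains is $\psi(D^n)$ together with the exterior of $\Sigma$, which omits the interior of $B$ entirely, and there is no room left to send $B$ anywhere, since the images of your three pieces ($\psi(D^n)$, $B$, and the exterior of $\Sigma$) already exhaust $\R^n$. Moreover the boundary conditions imposed on the middle piece are contradictory: a homeomorphism $A\to B$ carrying $\varphi(S^{n-1})$ to $\psi(S^{n-1})$ via $\psi\circ\varphi^{-1}$ must carry the other boundary sphere $\psi(S^{n-1})$ of $A$ onto the outer boundary $\Sigma$ of $B$, so it cannot restrict to the identity on $\psi(S^{n-1})$. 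Relatedly, Lemma~\ref{lem:alexander-trick}~(1) does not do what you want at the far end: radial coning extends a homeomorphism of $S^{n-1}$ to one of $D^n$ that equals the same homeomorphism on every concentric sphere; it does not interpolate to the identity across a collar (that would require an isotopy to the identity, not a cone).

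The repair is small. Concatenate the product structure on $A$ coming from the strict annular equivalence (so $G(x,0)=\varphi(x)$, $G(x,1)=\psi(x)$) with the exterior collar reparametrised by $\psi$, to obtain an embedding $\Theta\colon S^{n-1}\times[0,2]\to\R^n$ with $\Theta(S^{n-1}\times[0,1])=A$ and $\Theta(S^{n-1}\times[1,2])=B$. Define the middle piece on all of $A\cup B$ by $\Theta(x,t)\mapsto\Theta(x,\mu(t))$ for a homeomorphism $\mu\colon[0,2]\to[1,2]$ with $\mu(0)=1$ and $\mu(2)=2$. In these coordinates the two required boundary behaviours ($\psi\circ\varphi^{-1}$ at $t=0$, matching your first piece, and the identity at $t=2$, matching your third) are both the identity in the sphere factor, so no extension of sphere homeomorphisms is needed at all. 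The paper's proof is the variant that also takes an interior collar of $\varphi(D^n)$, producing a bicollar $S^{n-1}\times[-1,2]$, and lets $f$ be a shift of the interval coordinate fixing both ends and the complement; this avoids using $\psi\circ\varphi^{-1}$ anywhere. Either version completes the argument.
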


\begin{proof}
By the Annulus Theorem~\ref{thm:annulus} and the Collar Neighbourhood Theorem~\ref{thm:collar} we can find a locally flat embedding
$\theta\colon S^{n-1}\times [-1,2]$ such that $\theta(S^{n-1}\times [-1,0])\subseteq \varphi(D^n)$ is an (interior) collar for
$\partial \varphi(D^n)$, such that $\theta(S^{n-1}\times [0,1])=\psi(D^n)\sms \varphi(\int D^n)$ and such that $\theta(S^{n-1}\times [1,2])\subseteq \R^n\sms \psi(\int D^n)$ is an (internal) collar for $\partial (\R^n\sms \psi(\int D^n))$.
It is now obvious that we can find a homeomorphism $f$ with $f(\varphi(D^n))=\psi(D^n)$
which is the identity outside of $\theta(S^{n-1}\times [-1,2])$.
\end{proof}

The subsequent proof is partly based on the sketch given in \cite[p.~42]{Ro90}.

\begin{proof}[Proof of Theorem~\ref{thm:connected-sum-well-defined}] We have to show that the connected sum is independent of the choice of $\Phi_M\colon D^n\to M$ and $\Phi_N\colon D^n\to N$.
In the following we show that the oriented homeomorphism type of the connected sum  is independent of the choice of $\Phi_M$.
Basically the same argument then shows that the oriented homeomorphism type of the connected sum  is independent of the choice of $\Psi_N$. Putting these two orientation-preserving homeomorphisms together gives  independence of all choices.

After this preamble we now show that the oriented homeomorphism type of the connected sum is independent of the choice of $\Phi_M$. So suppose we are given two orientation-preserving embeddings $\Phi_1\colon D^n\to M$ and $\Phi_2\colon D^n\to M$ and suppose we are given an orientation-reversing embedding $\Psi\colon D^n\to N$. For $i=1,2$ we introduce the following notation.
\bnm
\item Write $D_i :=\Phi_i(D^n)$.
\item  Let
$X_i := M\sms \Phi_i(\int D^n)$ and let $Y := N\sms \Psi(\int D^n)$,
\item  Denote the restriction of $\Phi_i$ to $S^{n-1}$ by $\varphi_i$ and denote the restriction of $\Psi$ to $S^{n-1}$ by~$\psi$.
\enm
Figure~\ref{fig:connected-sum-well-defined} hopefully makes it easier for the reader to internalise the notation.
We have to show that there exists a homeomorphism
\[  (X_1\cup Y)/\varphi_1(x)\sim \psi(x)\,\, \to \,\, (X_2\cup Y)/\varphi_2(x)\sim \psi(x)\]
where the gluing on both sides is given by taking $x\in S^{n-1}$.

\begin{figure}[h]
\begin{center}
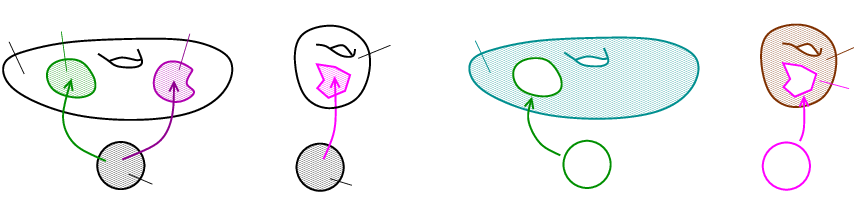
\caption{Illustration for the proof of
Theorem~\ref{thm:connected-sum-well-defined}.}
\label{fig:connected-sum-well-defined}
\end{center}
\end{figure}

\begin{claim}
There exists an orientation-preserving homeomorphism $h$ of $M$ so that $h(D_1)=D_2$.
\end{claim}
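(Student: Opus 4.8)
The plan is to produce $h$ as a composition of three homeomorphisms of $M$, working locally near the two embedded balls and then globalizing. The only subtlety is that $\Phi_1(D^n)$ and $\Phi_2(D^n)$ need not be disjoint, so one cannot immediately appeal to Lemma~\ref{lem:application-annulus-theorem}; we first arrange disjointness.

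\textbf{Step 1: shrink the first ball.} Since $\Phi_1$ is a locally flat embedding of $D^n$ into $\int M$, by Lemma~\ref{lem:extend-embedding-of-disc} it extends to a locally flat embedding $\widetilde\Phi_1\colon \R^n\to M$ (we really only need a slightly larger ball). Applying Lemma~\ref{lem:balls-are-the-same} inside the Euclidean source and conjugating, there is a homeomorphism $g_1$ of $M$, supported in $\widetilde\Phi_1(\R^n)$, which carries $D_1=\Phi_1(D^n)$ to $\Phi_1(D^n_{\varepsilon}(0))$ for any small $\varepsilon>0$. Choosing $\varepsilon$ small enough that this shrunken ball is disjoint from $D_2$ — possible because $D_2$ is compact and $\Phi_1(0)$ either lies outside $D_2$, in which case small balls around it avoid $D_2$, or lies inside $\int D_2$, in which case we instead first perform this shrinking trick on $D_2$ to push it off of $\Phi_1(0)$; in all cases a finite iteration makes the two balls disjoint — we may henceforth assume $D_1\cap D_2=\emptyset$.

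\textbf{Step 2: connect the two disjoint balls by an embedded arc and engulf.} With $D_1\cap D_2=\emptyset$, pick (using that $M$ is connected, and the Proposition on charts through two points proved above) a locally flat arc from a point of $\int D_1$ to a point of $\int D_2$ meeting $\partial D_1\cup\partial D_2$ only at its endpoints; a regular neighbourhood of the arc together with $D_1$ and $D_2$ is a locally flat embedded $n$-ball $B\subset \int M$ containing both $D_1$ and $D_2$ in its interior, and each of $D_1,D_2$ is a locally flat ball in $\int B\cong \R^n$. Now $D_1\subset \int B$ and $D_2\subset\int B$ are two orientation-preserving locally flat balls inside a copy of $\R^n$, so by Lemma~\ref{lem:application-annulus-theorem} — applied to suitably rescaled versions so that one is contained in the interior of the other, or directly after first using Lemma~\ref{lem:balls-are-the-same} to standardize $D_1$ to a small round ball inside $\int D_2$ — there is a homeomorphism $f$ of $\R^n\cong\int B$, equal to the identity outside a compact subset of $\int B$, with $f(D_1)=D_2$. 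Extending $f$ by the identity on $M\sms B$ gives a homeomorphism $h_2$ of $M$ with $h_2(\Phi_1(D^n_\varepsilon(0)))=D_2$; composing with $g_1$ from Step~1 yields the desired $h$ with $h(D_1)=D_2$.

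\textbf{Main obstacle.} The routine parts are the two applications of the annulus-type lemmas; the genuine point requiring care is Step~1, the reduction to the disjoint case, since a priori the balls $D_1$ and $D_2$ can be nested, tangent, or interlocked in complicated ways, and one must be sure that after finitely many shrink-and-push moves they become disjoint without having introduced any wild behaviour — this is where local flatness and Lemma~\ref{lem:extend-embedding-of-disc} do the work. (Note that for the full proof of Theorem~\ref{thm:connected-sum-well-defined} one will afterwards also want $h$ to be orientation-preserving and isotopic to the identity, which follows since $M$ is connected and, by construction, $h$ is supported in an embedded ball; but for the Claim as stated, existence of $h$ with $h(D_1)=D_2$ is all that is required.)
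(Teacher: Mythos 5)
Your overall strategy matches the paper's in its ingredients (Lemma~\ref{lem:extend-embedding-of-disc}, Lemma~\ref{lem:balls-are-the-same}, Lemma~\ref{lem:application-annulus-theorem}, connectedness of $M$), and the shrink-then-apply-the-annulus-lemma skeleton is right. But Step~2 rests on an assertion that is not routine in the topological category and is given no justification: that ``a regular neighbourhood of the arc together with $D_1$ and $D_2$'' is a locally flat embedded $n$-ball. First, ``regular neighbourhood'' is a PL notion; for a locally flat arc in a manifold of arbitrary dimension $n$ you would need a product neighbourhood $C\times D^{n-1}$, and the paper's tubular neighbourhood theorem is only proved for $n=4$. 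Second, even granting the tube, the claim that the boundary connected sum of two locally flat balls along a tube is again a locally flat ball is exactly the kind of statement this circle of ideas is designed to prove, not something one may assume; in TOP it needs another pass through Schoenflies/annulus-type arguments (compare the care taken in Proposition~\ref{prop:tubing}). As written, this is a genuine gap. (There are also smaller issues: your case analysis in Step~1 omits $\Phi_1(0)\in\partial D_2$; if you shrink $D_2$ you must conjugate back by $g_2^{-1}$ at the end; and your arc is described as running between interior points yet meeting the boundaries only at its endpoints.)

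The gap is repairable without the engulfing construction, and the repair is essentially what the paper does. The paper's proof never makes the balls disjoint; instead it uses Lemma~\ref{lem:extend-embedding-of-disc} and Lemma~\ref{lem:balls-are-the-same} together with path connectedness to push a shrunken copy of $D_1$ along a finite chain of charts until it lies in $\int D_2$, and then applies Lemma~\ref{lem:application-annulus-theorem} in the chart obtained by extending $\Phi_2$ to $\R^n$. Alternatively, you could invoke the full strength of the Proposition you cite only in passing: it provides a single chart $U\cong\R^n$ containing both $\Phi_1(0)$ and $\Phi_2(0)$, so after shrinking both balls into $U$ your Step~2 can be carried out entirely inside that chart, with no need to build the ball $B$ by hand.
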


To prove the claim, first note that it  follows from  Lemmas~\ref{lem:extend-embedding-of-disc} and~\ref{lem:balls-are-the-same}, together with our hypothesis that $M$ is path connected,
that there exists an orientation-preserving homeomorphism $\mu$ of $M$  such that
$\mu(D_1)\subseteq \int D_2$.
Then apply Lemmas~\ref{lem:extend-embedding-of-disc} and~\ref{lem:application-annulus-theorem} to find an orientation-preserving homeomorphism $\nu$ of $M$ such that $\nu(\mu(D_1))=D_2$.
This concludes the proof of the claim.

After replacing $\varphi_1$ by $h\circ \varphi_1$  we can assume that  $\varphi_2^{-1}\circ \varphi_1$ is an orientation-preserving homeomorphism of $S^{n-1}$.
By Corollary~\ref{cor:homeo-sn} we know that
there exists an isotopy $H\colon S^{n-1}\times [0,1]\to S^{n-1}$
from $\varphi_2^{-1}\circ \varphi_1$ to the identity.

We write $C:=\Psi(S^{n-1})$.
By the Collar Neighbourhood Theorem~\ref{thm:collar} we can pick an (internal) collar $C\times [0,1]\subseteq Y$ for $\partial Y$.
It is straightforward to verify that
\[ \ba{rcl} (X_1\cup Y)/\varphi_1(x)\sim \psi(x)&\to & (X_2\cup Y)/\varphi_2(x)\sim \psi(x)\\
p&\mapsto & \left\{ \ba{ll} h(p), &\mbox{ if }p\in X_1,\\
\psi(H(\psi^{-1}(q),t))&\mbox{ if }p=(q,t)\in C\times [0,1]\\
p,&\mbox{ if }p\in Y\sms ( C\times [0,1])
\ea\right.\ea\]
is a well-defined map and is an orientation-preserving homeomorphism.
This shows that the connected sums defined using $\Phi_1$ and $\Phi_2$ give rise to manifolds of the same oriented homeomorphism type.
\end{proof}

%======================================
\section{The Product Structure Theorem}\label{section:product-structure theorem}

The Product Structure Theorem \cite[Essay~I, Theorem~5.1, p.~31]{KS77}, is a key result for the development of topological manifold theory in high dimensions.  It is a consequence of the Stable Homeomorphism Theorem~\ref{thm:SHT}, together with a more sophisticated torus trick. The Product Structure Theorem is used in \cite{KS77} to deduce the existence of handle structures for manifolds of dimension $n \geq 6$, transversality and smoothing theory for $n \geq 5$, and the existence of a canonical simple homotopy type for all $n$. We will give some examples of the use of the Product Structure Theorem, for instance in Section~\ref{section:simple-homotopy-type} on the simple homotopy type of topological manifolds.
Even though the Product Structure Theorem a priori only concerns high dimensional manifolds, it still appears in the development of the theory of 4-manifolds.

The Product Structure Theorem will be stated for upgrading either a smooth or PL structure on $M \times \R$ to one on $M$.

A \emph{concordance} of (smooth, PL) structures $\Sigma, \Sigma'$ on a manifold $N$ is a (smooth, PL) structure $\Omega$ on $N \times I$ that restricts to $\Sigma$ on $N \times \{0\}$ and restricts to $\Sigma'$ on $N \times \{1\}$.

\begin{theorem}\textbf{\textup{(Product Structure Theorem)}}\label{theorem:product-structure}
  Let $M$ be a manifold of dimension $n \geq 5$. Let $\Sigma$ be a $($smooth, PL$)$ structure on $M \times \R^s$, with $s \geq 1$.  Let $U$ be an open subset of $M$ with a $($smooth, PL$)$ structure $\rho$ on $U$ such that $\rho \times \R^s = \Sigma|_{U \times\R^s}$. If $n=5$ then suppose that $\partial M \subseteq U$.

  Then there is a $($smooth, PL$)$ structure $\sigma$ on $M$ extending $\rho$, together with a concordance of (smooth, PL) structures from $\Sigma$ to $\sigma \times \R^s$, that is a product concordance in some neighbourhood of $U \times \R^s$ and that is a product near $M \times \R^s \times \{i\}$ for $i=0,1$.
\end{theorem}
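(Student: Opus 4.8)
For a survey the proof is really a citation --- \cite[Essay~I, Theorem~5.1]{KS77} for $n \geq 6$, extended to $n = 5$ by Quinn~\cite{Qu82} --- so let me instead indicate the shape of the Kirby--Siebenmann argument, which deduces the statement from the Stable Homeomorphism Theorem~\ref{thm:SHT}. Note first that passing from a $($smooth, PL$)$ structure $\sigma$ on $M$ to $\sigma \times \R^s$ on $M \times \R^s$ is clearly well-defined on concordance classes, so the content is entirely the converse: every $\Sigma$ on $M \times \R^s$ is, up to concordance, a product, and can be so realised compatibly with the prescribed $\rho$ on $U$.

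The first ingredient is the \textbf{Concordance Extension Theorem}, the structure-theoretic analogue of the Isotopy Extension Theorem~\ref{thm:isotopy-extension-theorem}: a concordance of $($smooth, PL$)$ structures defined near a closed set, or on an open set with collared frontier, extends to an ambient concordance. Using collars (Theorem~\ref{thm:collar}) this is essentially formal. Its purpose is to make the Product Structure Theorem \emph{local in $M$}: granting the conclusion over each chart of an open cover of $M$ --- rel the portion already straightened --- one patches to the conclusion over all of $M$ by an induction over the cover, with an ``infinite process'' handling noncompact $M$ (the structures stabilise on compacta). Crucially this route avoids ever using a handle or CW decomposition of $M$, which is good, since those facts are themselves downstream of the Product Structure Theorem.

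It remains to treat a single chart, so I would take $M = \R^n$ (or a half-space --- the case where $\partial M$ is involved is exactly where, for $n = 5$, one needs $\rho$ already given near $\partial M$, i.e.\ $\partial M \subset U$), reduce to $s = 1$ by peeling off one $\R$-factor at a time (each intermediate manifold $M \times \R^{s-1}, \dots, M \times \R$ still has dimension $\geq 5$), and after a preliminary concordance arrange that $\Sigma$ is the product of a standard structure outside $K \times \R$ for some compact $K \subset \R^n$. The heart of the matter is now the \textbf{torus trick}: wrap the chart up into the torus $T^n$, transporting $\Sigma$ to a structure on $T^n \times \R$ that is standard away from a ball-times-$\R$; in the manner of Kirby's original argument, relate such structures to structures on homotopy $(n+1)$-tori, invoke the surgery-theoretic classification of $\PL$ homotopy tori in dimensions $\geq 5$ together with the Stable Homeomorphism Theorem~\ref{thm:SHT}, pass to a finite cyclic cover $T^n \to T^n$ to kill the obstruction (which lives in $H^3(T^n; \Z/2)$), and then unwrap --- via the classical immersion of $T^n \setminus \{\pt\}$ into $\R^n$ and immersion theory, which lets the structure be carried along --- to conclude that $\Sigma$ is concordant, rel a neighbourhood of infinity, to the product. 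Dimension $\geq 5$ enters essentially here; for the borderline $n = 5$ one substitutes Quinn's handle-straightening~\cite{Qu82} in place of the high-dimensional step.

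Finally I would reassemble: feeding the local concordances just produced into the Concordance Extension Theorem over a cover of $M$ yields a global concordance from $\Sigma$ to $\sigma \times \R$ for a structure $\sigma$ on $M$ extending $\rho$; iterating over the $s$ factors, and throughout keeping the concordances standard near $U \times \R^s$ and near $M \times \R^s \times \{0,1\}$, gives the stated product-concordance refinement. I expect the local torus-trick step to be the main obstacle: it is the only place where genuinely new input (the Stable Homeomorphism Theorem and homotopy-tori surgery) enters, it forces the dimension hypotheses, and it is where keeping everything standard at infinity --- the condition that makes the pieces patchable --- requires the most care. The $n = 5$ case, needing $\partial M \subset U$ and Quinn's separate argument, is a secondary complication of the same flavour.
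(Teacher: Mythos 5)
The paper offers no proof of this theorem beyond the citation to \cite[Essay~I, Theorem~5.1]{KS77} (with Quinn~\cite{Qu82} supplying the handle straightening needed for $n=5$), so your framing of the statement as a citation matches the paper exactly, and your sketch of the Kirby--Siebenmann argument --- reduction to $s=1$ by peeling off $\R$-factors, localisation over a chart cover via the Concordance Extension Theorem, and the torus trick fed by the surgery classification of $\PL$ homotopy tori together with the Stable Homeomorphism Theorem --- is a faithful outline of the cited proof. The only nitpick is that the cover of $T^n$ used to kill the obstruction in $H^3(T^n;\Z/2)$ is the $2^n$-fold cover obtained by doubling each circle factor (pullback multiplies the obstruction by $2^3\equiv 0$), which is finite but not cyclic for $n>1$.
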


\begin{remark}
The statement of the Product Structure Theorem was modelled on the Cairns-Hirsch Theorem~\cite[Essay~I, Theorem~5.3, p.~37]{KS77}, which was proven in the early 1960s, and provided the analogous upgrade from PL structures to smooth structures. See \cite{Hirsch-Mazur74} for a comprehensive treatment of smoothing theory for PL manifolds.  The Cairns-Hirsch Theorem tells us that if $M$ already has a PL structure $\varpi$, such that $\varpi \times \R^s$ is \emph{Whitehead compatible} (see the discussion below \cite[Essay~I, Theorem~5.3, p.~37]{KS77} for details) with a smooth structure $\Sigma$ on $M \times \R^s$, then the smooth structure $\sigma$ on $M$ produced by Theorem~\ref{theorem:product-structure} is Whitehead compatible with $\varpi$.
\end{remark}

In Section~\ref{section:simple-homotopy-type} on the simple homotopy type of a manifold we make use of the following stronger local version~\cite[Essay I, Theorem 5.2, p.~36]{KS77}.

\begin{theorem}\textbf{\textup{(Local Product Structure Theorem)}}\label{theorem:local-product-structure}
  Let $M$ be a manifold of dimension $n \geq 5$.
  \begin{enumerate}[leftmargin=1cm,font=\normalfont]
   \item[(i)] Let $W$ be an open neighbourhood of $M \times \{0\}$ in $M \times \R^s$, for some $s \geq 1$.
   \item[(ii)] Let $\Sigma$ be a $($smooth, PL$)$ structure on $W$.
   \item[(iii)] Let $C \subseteq M \times \{0\}$ be a closed subset such that there is a neighbourhood $N(C)$ of $C\subseteq W$ on which the $($smooth, PL$)$ structure $\Sigma$ is a product $\Sigma|_{N(C)} = \sigma \times \R^s$ for some $($smooth, PL$)$ structure $\sigma$ on $N(C)$.   If $n=5$ then suppose that $\partial M \subseteq C$.
   \item[(iv)]  Let $D \subseteq M \times \{0\}$ be another closed subset.
   \item[(v)] Let $V \subseteq W$ be an open neighbourhood of $D \setminus C$ in $M \times \R^s$.
    \end{enumerate}
  Then we have the following.
  \begin{enumerate}[leftmargin=1cm,font=\normalfont]
   \item A $($smooth, PL$)$ structure $\Sigma'$ on $W$ that equals $\Sigma$ on $(W \setminus V) \cup ((C \times \R^s) \cap W)$ and is a product $($smooth, PL$)$ structure $\rho \times \R^s$ on $(N(D) \times \R^s) \cap W$ for some neighbourhood $N(D)$ of $D$ and for some $($smooth, PL$)$ structure $\rho$ on $N(D)$.
   \item A concordance of $($smooth, PL$)$ structures from $\Sigma$ to $\Sigma'$, that is a product concordance on some neighbourhood of $(W \setminus V) \cup ((C \times \R^s) \cap W)$ and that is a product near $W \times \{i\}$ for $i=0,1$.
   \end{enumerate}
  \end{theorem}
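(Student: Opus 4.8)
The plan is to deduce this local statement from the semi-global Product Structure Theorem~\ref{theorem:product-structure}, applied chart by chart over $M$, together with the Concordance Extension Theorem of Kirby--Siebenmann \cite[Essay~I]{KS77}. The guiding observation is that both the desired conclusion ``$\Sigma'$ is a product near $D$'' and the constraint ``$\Sigma'=\Sigma$ off $V$ and over $C$'' are conditions that are local in the $M$-direction, so one can work over a locally finite cover of the closed set $D\setminus C\subseteq M$ and patch the resulting structures and concordances.

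First I would fix a local model. Since $\Sigma$ is already a product near $C$ and, when $n=5$, $\partial M\subseteq C$, the set $D\setminus C$ is disjoint from $\partial M$; hence it admits a locally finite cover by open sets $U_\alpha\subseteq M\setminus\partial M$, each with closure inside a chart of $M$ and with some $\varepsilon_\alpha>0$ for which $\overline{U_\alpha}\times[-\varepsilon_\alpha,\varepsilon_\alpha]^s\subseteq W\cap V$. Postcomposing the $\R^s$-coordinate with a diffeomorphism $\R^s\xrightarrow{\cong}(-\varepsilon_\alpha,\varepsilon_\alpha)^s$ that is the identity near the origin, we may regard $\Sigma$ as a structure defined on all of $U_\alpha\times\R^s$; this change of coordinates preserves the meaning of ``product structure'' and is supported away from $M\times\{0\}$, so it is harmless, and it brings us into the exact setting of Theorem~\ref{theorem:product-structure} (whose $n=5$ boundary hypothesis is now vacuous, as $U_\alpha$ has empty boundary).

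Then I would run a transfinite induction over a well-ordering of the index set; local finiteness of the cover guarantees that the limiting structure is well defined, since over any point of $W$ only finitely many stages are relevant. At stage $\alpha$ one keeps a structure $\Sigma_\alpha$ on $W$ that agrees with $\Sigma$ on $(W\setminus V)\cup((C\times\R^s)\cap W)$, that is a product over a neighbourhood of $C\cup\bigl(D\cap\bigcup_{\beta<\alpha}U_\beta\bigr)$, and that carries a concordance to $\Sigma$ which is a product near that set and near the two ends $W\times\{i\}$. To advance, apply Theorem~\ref{theorem:product-structure} to $\Sigma_\alpha|_{U_\alpha\times\R^s}$, taking as the prescribed already-product open set the part of $U_\alpha$ over which $\Sigma_\alpha$ is a product; this yields a product structure over a neighbourhood of $\overline{U_\alpha}\cap D$, together with a concordance that is a product concordance near that already-product region and near the ends. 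Feed this concordance into the Concordance Extension Theorem, relative to a closed neighbourhood of $D\cap U_\alpha$ sitting well inside the chart, to obtain a concordance of structures on all of $W$ that is stationary outside $V$ and over $C$; its terminal structure is $\Sigma_{\alpha+1}$, and stacking the new concordance on top of the old one (legitimate since each is a product near the relevant end) produces the concordance from $\Sigma$ to $\Sigma_{\alpha+1}$. Passing to the limit over $\alpha$ yields $\Sigma'$, the required concordance, and $N(D)$ as the union of a small neighbourhood of $C$ inside $N(C)$ with the product neighbourhoods created at the successive stages.

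The main obstacle is exactly this bookkeeping: one must ensure that treating a single chart $U_\alpha$ does not disturb the product structure already installed over $C$ or over the earlier charts, and does not move $\Sigma$ off $V$ or over $C$. This is precisely what the relative form of Theorem~\ref{theorem:product-structure} (product concordance near the prescribed already-product set and near the ends) and the ``stationary outside a given open set'' form of the Concordance Extension Theorem are designed to supply; nonetheless, arranging the overlapping product neighbourhoods coherently at every stage and verifying the hypotheses of these two theorems at each step is where the real work lies. All of this is essentially contained in \cite[Essay~I,~\S5]{KS77}, to which we refer for the details.
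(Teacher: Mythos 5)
The first thing to note is that the paper contains no proof of this statement: it is quoted from Kirby--Siebenmann (\cite[Essay~I, Theorem~5.2]{KS77}, as cited later in Construction~\ref{const:SimpleType}) and stated without argument. So there is no in-paper proof to compare yours against; the only question is whether your proposed reduction to Theorem~\ref{theorem:product-structure} plus the Concordance Extension Theorem closes on its own terms.

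It does not, and the gap is exactly the support control you flag at the end. Applying Theorem~\ref{theorem:product-structure} on a reparametrised chart $U_\alpha \times \R^s$ produces a new structure and a concordance that is a product near the already-product region and near the two ends, but with no control whatsoever near the frontier of $U_\alpha$ in $M$ --- which is precisely where the gluing difficulty sits. To splice the result back into $W$ while keeping $\Sigma' = \Sigma$ on $(W \setminus V) \cup ((C \times \R^s)\cap W)$, you need a concordance extension theorem with a ``stationary outside a prescribed neighbourhood of the closed set'' clause; the unrelativised Concordance Extension Theorem extends a concordance given near a closed set to all of $W$, but does not by itself confine the extension to $V$ or freeze it over $C$. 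Supplying that relative, supported version is essentially the same handle-by-handle argument with supports that proves the local product structure theorem directly in \cite{KS77} --- note that the global Theorem~\ref{theorem:product-structure} is the special case of the local one (take $C$ a closed set filling out $U$, $D = M\times\{0\}$, $V = W = M \times \R^s$), so your reduction runs the logical dependency backwards. Your closing appeal to \cite[Essay~I, \S 5]{KS77} for ``the details'' therefore concedes the substantive content rather than supplying it; as written, the proposal is a plausibility sketch of the patching scheme, not a proof.
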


Note that the Concordance implies Isotopy Theorem \cite[Essay~I,~Theorem~4.1, p.~25]{KS77} means that the concordances in Theorems~\ref{theorem:product-structure} and~\ref{theorem:local-product-structure} can be upgraded to isotopies of (smooth, PL) structures under the same hypotheses on dimensions, that is if  $n \geq 6$ \emph{or} if $n=5$ and the structures already agree on $\partial M$.

We start with a structure on $W$ that is a product structure over $C$. We obtain a concordance of structures to a product structure on $D$, supported in $V$, and that is a product concordance over $C$. We therefore have a product structure in some neighbourhood of $C \cup D$.

%======================================
\chapter{Tubular neighbourhoods}\label{chapter:tubular}

%======================================
%\section{Submanifolds}
Every smooth submanifold of a smooth manifold admits a normal vector bundle and, by the smooth Tubular Neighbourhood Theorem, also admits a tubular
neighbourhood~\cite[Sections 5~\&~6]{Hi76}, \cite[Chapter~2.5]{Wall16}. However, in the topological category submanifolds may not admit normal vector bundles, a general problem we discuss further below and in Chapter \ref{chapter:bundlestructures} once we have developed the necessary language. Curiously, in the special case of~$4$-manifolds these general problems do not exist, and familiar smooth results hold true using an appropriate notion of normal vector bundles (Definition \ref{def:extendable}).

%======================================

\section{Tubular neighbourhoods: existence and uniqueness}
In the literature one can find many different definitions of tubular neighbourhoods for smooth submanifolds. We will give a definition for manifolds that is modelled on the definition provided by Wall~\cite{Wall16} for smooth manifolds. To do so we first need one extra definition.

\begin{definition}
Let $M$ be an $n$-dimensional manifold. We say a subset $W\subseteq M$ is a \emph{$k$-dimensional sub\-mani\-fold with corners} if given any $p\in W$ there exists a chart of the type (1), (2) or (3) as in Definition~\ref{def:submanifold} above,
or if
\bnm
\item[(4)] there exists a chart $\Phi\colon U\to V$ of type (ii) for $M$ such that
\[  \Phi(U\cap W)\,\,\subseteq \,\,\{(0,\dots,0,x_1,\dots,x_k)\mid x_i\in \R\mbox{ with }x_{k-1}\geq 0\mbox{ and } x_k\geq 0\}\]
and with $\Phi(p)\in \{(0,\dots,0,x_1,\dots,x_{k-2},0,0)\mid x_1,\dots,x_{k-2}\in \R\}$.
\enm
If $W$ is an $n$-dimensional submanifold with corners we write
\[ \partial_0W\,\,:=\,\, W\cap \overline{M\sms W},\quad \partial_1W\,\,:=\,\, W\cap \partial M,\]
and we note that
\[
\int W\,\,=\,\, W\sms \partial_0 W.
\]
\end{definition}

\begin{figure}[h]
\begin{center}
\includegraphics{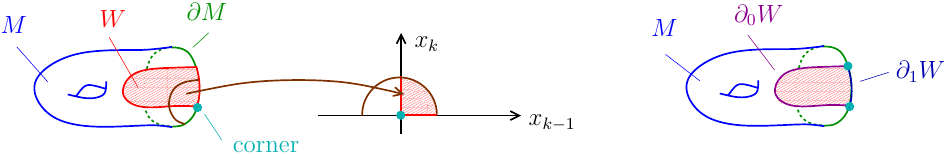}
\caption{Definition of $\partial_0$ and $\partial_1W$ of a submanifold.}
\label{fig:submanifold-with-corner}
\end{center}
\end{figure}

\begin{remark}\label{rem:complementcorrner}
The complement of the interior of a codimension 0 submanifold with corners is a submanifold with corners, with the obvious corners.
\end{remark}

\begin{definition}
Let $M$ be an $n$-manifold and let $X$ be a compact, proper, $k$-dimensional submanifold.
A \emph{tubular neighbourhood} for $X$ is a pair $(N,p\colon N\to X)$ with the following properties:
\bnm
\item $N$ is a neighbourhood of $X$;
\item $N$ is a codimension zero sub\-mani\-fold with corners of $M$;
\item the map $p\colon N\to X$ is a linear $D^{n-k}$-bundle such that $p(x)=x$ for all $x\in X$;
\item $\partial_1 N=p^{-1}(\partial X)$.
\enm
\end{definition}

Here linear means that there exists an atlas of local trivialisations such that the transition maps take values in $\op{O}(n-k)$ instead of $\operatorname{Homeo}(D^{n-k})$.

\begin{remark}\label{remark:Hirsch-no-tub-nbhd-example}
In the topological category, tubular neighbourhoods do not always exist. Indeed it is shown in~\cite[Theorem~4]{Hi68} that there exists a $4$-dimensional submanifold of $S^7$ that does not admit a tubular neighbourhood.
%\fotnote{MP: I think this example also does not admit a normal microbundle. Such a normal microbundle would contain a $\TOP(3)$ bundle, and every $\TOP(3)$ bundle can be improved to an $\O(3)$ bundle, which in turn contains a tubular neighbourhood.  If that makes sense, perhaps we should also mention this example in Example 6.23. }
\end{remark}

Fortunately, for submanifolds of $4$-manifolds, tubular neighbourhoods exist and they are unique in the appropriate sense.

\begin{theorem}\textbf{\textup{(Tubular Neighbourhood Theorem)}}\label{thm:tubular-neighbourhood}
Every compact proper submanifold $X$ of a $4$-manifold $M$ admits a tubular neighbourhood.
\end{theorem}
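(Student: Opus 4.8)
The plan is to reduce the statement about a general compact proper submanifold $X \subset M^4$ to a few model cases, distinguishing according to the dimension $k$ of $X$. For $k = 4$ the submanifold is codimension zero, hence open in $M$ (after taking care of the boundary with the Collar neighbourhood theorem for proper submanifolds, Theorem~\ref{thm:collar-boundary}), and a tubular neighbourhood is trivially the submanifold itself with the rank-zero bundle. For $k = 0$, $X$ is a finite collection of points in $\int M$, and the existence of a tubular neighbourhood amounts to finding disjoint embedded charts $\R^4 \hookrightarrow M$ around each point, which is immediate from Lemma~\ref{lem:extend-embedding-of-disc}. So the real content is in codimensions one, two, and three, i.e.\ $k = 3, 2, 1$.

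First I would treat the codimension-one case $k = 3$: here a tubular neighbourhood of $X$ is essentially a bicollar (or a collar, if $X$ lies partly on $\partial M$). This is where the heavy input enters --- the fact that a locally flat codimension-one submanifold of a topological manifold is bicollared. For $4$-manifolds this follows from the work underpinning the annulus theorem and the stable homeomorphism theorem; more precisely, one invokes the bicollar theorem (Brown's generalized Schoenflies-type results, together with Quinn's $4$-dimensional results, cf.\ Theorem~\ref{thm:SHT} and Theorem~\ref{thm:annulus}). One then promotes the bicollar $X \times (-1,1) \hookrightarrow M$ to the structure of a linear $D^1$-bundle; since the structure group of line bundles is $O(1) = \Z/2$, the only subtlety is the (possibly nontrivial) gluing coming from non-orientability of the normal direction, which is handled exactly as in the construction of a twisted $I$-bundle. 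The corner condition $\partial_1 N = p^{-1}(\partial X)$ is arranged by first applying Theorem~\ref{thm:collar-boundary} so that $X$ meets a collar of $\partial M$ in a product, and then taking the bicollar to respect this product structure near $\partial X$.

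Next, for codimensions two and three one can largely quote the literature: the existence of normal (micro)bundles for locally flat submanifolds of $4$-manifolds is due to Freedman--Quinn (see \cite[Section~9.3]{FreedmanQuinn} or Quinn~\cite{Qu82}), and in dimension $4$ one upgrades the normal microbundle to an honest linear $D^{n-k}$-bundle because, again by Freedman--Quinn, topological $\R^j$-microbundles over complexes of dimension $\le 4$ admit vector bundle structures (this uses $\TOP/O$ being $5$-connected through the relevant range in low dimensions, i.e.\ $BO \to B\TOP$ being an iso on $\pi_i$ for $i \le 3$ and surjective on $\pi_4$, which suffices for base dimension $\le 4$). Concretely: take a normal microbundle $\nu$ of $X$ in $M$, use the low-dimensional bundle-theoretic input to put a linear structure on $\nu$, exponentiate/truncate to the associated disc bundle $N$, and finally isotope $N$ (via the Isotopy Extension Theorem~\ref{thm:isotopy-extension-theorem}) so that $\partial_1 N = p^{-1}(\partial X)$, using Theorem~\ref{thm:collar-boundary} once more to make the picture standard along $\partial M$.

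The main obstacle is the codimension-two case $k = 2$, which is both the one most used in $4$-manifold topology and the one where the general-position/immersion arguments available in higher dimensions fail: here one genuinely needs the deep Freedman--Quinn machinery (handle decompositions of $4$-manifolds, the disc embedding theorem's consequences for normal bundles of surfaces) rather than an elementary collaring argument. I expect the write-up to spend most of its effort citing and assembling these results correctly, and in reconciling the microbundle-theoretic output with the paper's chosen definition of a tubular neighbourhood as a linear $D^{n-k}$-bundle with the prescribed corner behaviour; the boundary bookkeeping via Theorem~\ref{thm:collar-boundary} and Theorem~\ref{thm:isotopy-extension-theorem} is routine but must be done carefully to match condition (3) in the definition.
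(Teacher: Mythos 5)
Your proposal is correct in substance and leans on the same essential input as the paper --- the existence results of \cite[Section~9.3]{FQ90} --- but it is organised quite differently. The paper does not split by codimension at all: it proves a single statement (Theorem~\ref{thm:existnormal}) that every compact proper submanifold of a $4$-manifold admits a normal \emph{vector} bundle in the sense of Definition~\ref{def:extendable}, by quoting \cite[Theorem~9.3A]{FQ90} directly (FQ's normal bundles are already linear, so no microbundle-to-vector-bundle upgrade is needed there), and then the tubular neighbourhood is obtained in one line by truncating to the unit disc bundle of a fibrewise positive definite form (Lemma~\ref{lem:tubular-nhds-in-vector-bundles}). Your detour through normal microbundles plus the low-dimensional vanishing of $\pi_k(\TOP(j)/\O(j))$ is exactly the content of the paper's Theorem~\ref{thm:normalmicroexist}, but it is used there in the opposite direction and is not needed for existence; note also that your phrasing about the \emph{stable} $\TOP/\O$ being highly connected is inaccurate ($\pi_3(\TOP/\O)\cong\Z/2$ is the Kirby--Siebenmann obstruction) --- what is true and what is needed is the connectivity of the unstable $\TOP(j)/\O(j)$ for fibre dimension $j\le 3$ over a base of dimension $4-j$. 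The one place where your outline underestimates the work is the boundary: \cite[Theorem~9.3A]{FQ90} extends a normal bundle already given near a closed subset of the \emph{interior}, so to satisfy condition~(3) of the definition one must first construct the normal bundle of $\partial X$ inside $\partial M$ (the paper does this by smoothing the $3$-manifold $\partial M$ and taking a smooth normal bundle), extend it as a product over a collar from Theorem~\ref{thm:collar-boundary}, and only then apply FQ rel that collar; an after-the-fact isotopy to arrange $\partial_1 N = p^{-1}(\partial X)$ does not substitute for this step. Your case split buys some transparency (codimensions $0$, $1$, and $4$ are indeed elementary or reduce to bicollaring), at the cost of reproving in pieces what \cite[Theorem~9.3A]{FQ90} delivers uniformly.
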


\begin{theorem}\textbf{\textup{(Uniqueness of tubular neighbourhoods)}}\label{thm:uniqueness-of-general-tubular-neighbourhoods}
Let~$M$ be a $4$-manifold and let $X$ be a compact proper $k$-dimen\-si\-on\-al submanifold. Furthermore let $p_i\colon N_i\to X$, $i=1,2$  be two tubular neighbourhoods of $X$, with inclusion maps $\iota_i \colon N_i\to M$.
Then there exists an isomorphism $\Psi\colon N_1\to N_2$ of linear disc bundles such that
$\iota_2\circ \Psi\colon N_1\to M$ and $\iota_1\colon N_2\to M$ are ambiently isotopic rel.~$X$.
\end{theorem}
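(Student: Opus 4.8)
The plan is to reduce to the case where $N_1$ is engulfed by $N_2$, and then to straighten $N_1$ inside $N_2$ by a parametrised, relative version of the Annulus theorem. Write $N_i=D(\nu_i)$ for the rank $4-k$ vector bundle $\nu_i$ underlying the linear disc bundle $p_i$, and for $t\in(0,1]$ let $tN_i\subseteq N_i$ denote the sub-disc-bundle of fibre-radius $t$, so that $\bigcap_{t>0}tN_i=X$.

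\textbf{Step 1 (engulfing).} For each $i$ and each $t$, splice the fibrewise radial structure of $N_i$ with a collar of the sphere-bundle face $\partial_0N_i$ inside $\overline{M\sms\int N_i}$---such a collar exists and can be taken compatible with the corner along $\partial X$ by the Collar neighbourhood theorems~\ref{thm:collar} and~\ref{thm:collar-boundary}---to obtain a homeomorphism of $M$, supported near $N_i$, isotopic to $\id_M$ rel $X$, and carrying $N_i$ onto $tN_i$; the Isotopy Extension Theorem~\ref{thm:isotopy-extension-theorem} realises this as an ambient isotopy, and $tN_i$ is a tubular neighbourhood linearly isomorphic to $N_i$. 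Since $X$ is compact and $\int N_2$ is open, the nested compact sets $tN_1\sms\int N_2$ are eventually empty, so after such an isotopy of $N_1$ we may assume $X\subseteq N_1\subseteq\int N_2$, with $\partial_1N_1=p_1^{-1}(\partial X)\subseteq p_2^{-1}(\partial X)=\partial_1N_2$.

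\textbf{Step 2 (straightening) and conclusion.} Now $N_1$ is a tubular neighbourhood of the zero section $X$ inside the $4$-manifold $N_2$. Over a trivialising chart $U\subseteq X$ for both bundles, possibly meeting $\partial X$, the closed shell $\overline{N_2\sms\int N_1}$ restricted to $U$ is the region between two nested fibrewise-radial locally flat $D^{4-k}$-ball neighbourhoods of $U\times\{0\}$ in $U\times D^{4-k}$, and a parametrised, relative form of the Annulus theorem~\ref{thm:annulus} identifies it with an $(S^{3-k}\times I)$-bundle over $U$. Assembling these local product structures over a finite handle decomposition of the compact base $X$---using the Isotopy Extension Theorem~\ref{thm:isotopy-extension-theorem}, uniqueness of collars (Theorem~\ref{thm:topological-collar-unique}), and Theorem~\ref{thm:collar-boundary} along $\partial X$---produces a homeomorphism $\overline{N_2\sms\int N_1}\cong\partial_0N_1\times I$ rel the corner. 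Over a base of dimension $k$ a topological $D^{4-k}$-bundle has an essentially unique linear structure (this is vacuous for $4-k\in\{0,4\}$, and for $4-k\in\{1,2,3\}$ it follows from the fact that $\operatorname{O}(j)\hookrightarrow\Homeo(\R^j)$ is a homotopy equivalence), so we may fix a linear isomorphism $\nu_2\cong\nu_1$. Combining this isomorphism with the product structure on the shell and with a collar of $\partial_0N_2$ in $\overline{M\sms\int N_2}$, one builds a homeomorphism $K$ of $M$ that is ambiently isotopic to $\id_M$ rel $X$ via some isotopy $H_t$ with $H_0=\id_M$, satisfies $K(N_2)=N_1$, and whose restriction $K|_{N_2}\colon N_2\to N_1$ is a linear disc bundle isomorphism. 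Setting $\Psi:=(K|_{N_2})^{-1}\colon N_1\to N_2$ then yields $H_1\circ\iota_2\circ\Psi=\iota_1$, which is exactly the assertion.

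\textbf{Main obstacle.} Essentially all the work is in Step 2. In the smooth category the uniqueness of tubular neighbourhoods is an immediate corollary of the smooth Tubular neighbourhood theorem; in the topological category one must instead run the Annulus theorem~\ref{thm:annulus}---and hence the Stable homeomorphism theorem~\ref{thm:SHT}---in a parametrised, relative, cornered setting, formulate the correct such statement, and patch the local product structures over the handles of $X$ while retaining control of the bundle projections and of the behaviour over $\partial X$. That this is possible at all is special to dimension four: in higher dimensions even the \emph{existence} of a topological tubular neighbourhood can fail, for instance for the $4$-dimensional submanifold of $S^7$ of~\cite{Hi68}, so no uniqueness statement of this shape is available in general. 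An alternative would be to quote the normal bundle theory for locally flat submanifolds of $4$-manifolds directly, where existence and uniqueness are proven together.
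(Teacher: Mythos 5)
Your overall strategy is genuinely different from the paper's. The paper does not attempt to run any annulus-type argument itself: it converts each tubular neighbourhood into a normal vector bundle (Lemma~\ref{lem:tubular-implies-normal-vector-bundle}), invokes the uniqueness of normal vector bundles (Theorem~\ref{thm:uniquenormal}, which rests on \cite[Theorem~9.3D]{FQ90} together with collar uniqueness and smooth uniqueness over $\partial X$), and then uses convexity of the space of positive definite forms (Lemma~\ref{lem:tubular-nhds-in-vector-bundles}) to compare the two disc bundles inside one vector bundle. Your Step~1 (radial engulfing so that $N_1\subseteq N_2$) is fine and is the same kind of move the paper uses elsewhere, e.g.\ in Lemma~\ref{lem:exterior-deformation-retract}.

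The gap is in Step~2, and it is not a small one. The ``parametrised, relative form of the Annulus theorem'' that you invoke is never formulated, and it does not follow from Theorem~\ref{thm:annulus}, which is an unparametrised, codimension-zero statement about a single pair of discs in $\R^n$. What you need is a fibrewise annulus structure on $\overline{N_2\sms\int N_1}$ that varies continuously over $X$, respects the two (a priori unrelated) bundle projections $p_1$ and $p_2$ on the two boundary faces of the shell, and behaves correctly at the corner over $\partial X$; and you then need to patch these local structures over a handle decomposition of $X$, which requires a concordance-implies-isotopy statement for such structures, not just the Isotopy Extension Theorem and uniqueness of collars. Producing and globalising exactly this kind of fibrewise radial structure is the content of \cite[Theorem~9.3D]{FQ90}, which is a deep theorem proved with the full Freedman--Quinn machinery; your proof in effect assumes it. There is also a secondary gap at the end of Step~2: a homeomorphism $\overline{N_2\sms\int N_1}\cong\partial_0N_1\times I$ does not by itself make $K|_{N_2}\colon N_2\to N_1$ a \emph{bundle} map, let alone a linear one --- for that the product structure must interpolate between the $p_1$-fibres on the inner face and the $p_2$-fibres on the outer face, which is additional structure you have not arranged. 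Your closing remark that one could instead ``quote the normal bundle theory for locally flat submanifolds of $4$-manifolds directly'' is exactly right: that is what the paper does, and it is the step your argument cannot avoid.
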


The proofs of the above two theorems rely on the existence and uniqueness results for normal vector  bundles in \cite[Section 9]{FQ90}, which we discuss further in Section \ref{subsec:normalvector}. Thus we postpone the proofs of the Theorems~\ref{thm:tubular-neighbourhood} and~\ref{thm:uniqueness-of-general-tubular-neighbourhoods} to Section~\ref{section:tubular-nhd-proofs}.
Right now, let us first observe some nice consequences of the existence and uniqueness of tubular neighbourhoods.

\begin{remark}
Let $X$ be a compact proper submanifold of a $4$-manifold $M$.
By Theorem~\ref{thm:tubular-neighbourhood} we can pick a tubular neighbourhood $p\colon N\to X$. We refer to $E_X:=M\sms \int N$ as the \emph{exterior of $X$}. Note $E_X\subseteq M$ is a submanifold with corners; cf.~Remark~\ref{rem:complementcorrner}.
By Theorem~\ref{thm:uniqueness-of-general-tubular-neighbourhoods} the homeomorphism type of the exterior is well-defined.
\end{remark}

\begin{lemma}\label{lem:exterior-deformation-retract}
Let $X$ be a compact proper submanifold of a $4$-manifold $M$.
The exterior $E_X$ of $X$ is a deformation retract of the complement $M\sms X$.
\end{lemma}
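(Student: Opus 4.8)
The plan is to use the tubular neighbourhood $p\colon N\to X$ guaranteed by Theorem~\ref{thm:tubular-neighbourhood}, and to build the deformation retraction of $M\sms X$ onto $E_X=M\sms\int N$ by radially pushing the punctured disc fibres outward. First I would set up notation: since $p\colon N\to X$ is a linear $D^{n-k}$-bundle with $p|_X=\id_X$, the zero section is exactly $X$, and $N\sms X$ is the total space of the associated $(D^{n-k}\sms\{0\})$-bundle, which carries a well-defined radial coordinate $\rho\colon N\sms X\to (0,1]$ coming from the Euclidean norm on the fibres (well-defined because the transition maps lie in $O(n-k)$, hence preserve the norm). The frontier $\partial_0 N=p^{-1}(\partial D^{n-k}\text{-bundle})$ is the locus $\rho=1$, and $M\sms X = (N\sms X)\cup_{\partial_0 N} E_X$, glued along $\{\rho=1\}$.

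The core step is to define the homotopy $H\colon (M\sms X)\times[0,1]\to M\sms X$ that is the identity on $E_X$ for all time, and on $N\sms X$ radially expands: in each fibre, send a point at radius $\rho$ to radius $\max\{\rho,\, t + (1-t)\rho\}$ — more simply, $(1-t)\rho + t$ — so that at $t=0$ it is the identity and at $t=1$ every fibre point is pushed to radius $1$, i.e.\ onto $\partial_0 N$. This is fibrewise and $O(n-k)$-equivariant, so it is well-defined on the whole bundle $N\sms X$, and it agrees with the identity on $\{\rho=1\}=\partial_0 N$ at every time $t$; hence it glues with the constant homotopy on $E_X$ to give a continuous $H$ on all of $M\sms X$. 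Then $H_0=\id$, $H_t$ restricts to the identity on $E_X$ for all $t$, and $H_1(M\sms X)=E_X$, which is precisely the assertion that $E_X$ is a deformation retract of $M\sms X$.

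The only genuine point requiring care — and the step I expect to be the main obstacle to write cleanly — is the well-definedness and continuity of the radial coordinate $\rho$ and of $H$ across the gluing $\partial_0 N$: one must check that the linear (i.e.\ $O(n-k)$-structure-group) hypothesis on the bundle is genuinely used to make $\rho$ global, and that $H_t$ is literally the identity in a neighbourhood of the seam so that the pasting lemma applies. Once this is in place the rest is formal. I would remark that no $4$-dimensional input beyond the existence of the tubular neighbourhood is needed; the same argument shows more generally that for any compact proper submanifold admitting a tubular neighbourhood, the exterior is a deformation retract of the complement.
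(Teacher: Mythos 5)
Your proposal is correct and follows exactly the route the paper takes: the paper's proof is a two-line version of yours ("using the fact that $p$ is a \emph{linear} bundle, introduce compatible radial coordinates in the fibres and isotope radially outwards"), and your explicit formula $(1-t)\rho+t$ together with the observation that linearity of the structure group makes $\rho$ globally well-defined is precisely the content being left to the reader. The only nitpick is that the pasting lemma needs agreement only on the closed overlap $\partial_0 N$, not on a neighbourhood of it, so that worry can be dropped.
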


\begin{proof}
Let $p\colon N\to X$ be a tubular neighbourhood for $X$. Using the fact that $p$ is a \emph{linear} bundle, introduce compatible radial coordinates in the fibres and isotope $N\sm X$ radially outwards.
This implies that $\partial_0 N$ is a deformation retract of $N\sms X$. But this also implies that the exterior $E_X=M\sms N$  is a deformation retract of $M\sms X$.
\end{proof}

\begin{corollary}
Let $X$ be a proper submanifold of a compact $4$-manifold $M$.
If $X$ is compact, then the fundamental group of each component of $M\sms X$ is finitely generated, and $H_*(M\sms X)$ is finitely generated.
\end{corollary}

\begin{proof}
It follows from Lemma~\ref{lem:exterior-deformation-retract} that $M\sms X$ is homotopy equivalent to the exterior $E_X$ of $X$. Each connected component of $E_X$ is a  compact 4-manifold since we assume that $M$ is compact.  The corollary is now a consequence of  Corollary~\ref{cor:topological-mfd-homology}.
\end{proof}

\begin{proposition}\label{prop:trivial-normal-bundle}
Let $X \subseteq M$ be a compact, proper $2$-dimensional orientable submanifold of a compact, orientable $4$-manifold $M$, such that each connected component of $X$ has nonempty boundary. Then the tubular neighbourhood of Theorem~\ref{thm:tubular-neighbourhood} is homeomorphic to $X \times D^2$.
\end{proposition}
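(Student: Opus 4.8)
The plan is to reduce to showing that the rank-two vector bundle underlying the tubular neighbourhood is trivial. By Theorem~\ref{thm:tubular-neighbourhood} there is a tubular neighbourhood $(N,p\colon N\to X)$, and by definition $p$ is a \emph{linear} $D^2$-bundle, so $N=D(E)$ is the unit disc bundle of a rank-two Euclidean vector bundle $E\to X$ with structure group $\O(2)$. It suffices to prove $E\cong X\times\R^2$, since then $N\cong X\times D^2$. Each connected component of $X$ is a compact connected surface with nonempty boundary, hence is homotopy equivalent to a finite wedge of circles; thus $X$ is homotopy equivalent to a $1$-dimensional CW complex and in particular $H^2(X;\Z)=0$. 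Over such a space a rank-two bundle is classified by $w_1(E)\in H^1(X;\Z/2)$: if $w_1(E)=0$ then $E$ reduces to structure group $\SO(2)$, so $E$ is the underlying real bundle of a complex line bundle, which is classified by its Euler class in $H^2(X;\Z)=0$ and is therefore trivial. So the whole problem becomes: show $w_1(E)=0$.

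For this I would use the orientability of $M$ and of $X$ together. View $w_1(E)$ as an element of $\Hom(H_1(X;\Z),\Z/2)$. Since $X$ is an orientable surface with boundary, $H_1(X;\Z)$ is generated by the classes of finitely many pairwise disjoint embedded circles $\gamma$ lying in $\int X$, and each such $\gamma$ is two-sided in $X$, hence has an annular neighbourhood $A\cong\gamma\times[-1,1]$ with $A\subset\int X$. Then $N_A:=p^{-1}(A)$ is a codimension-zero submanifold of $M$ contained in $\int M$ (it misses $\partial M$ because $\partial M\cap N=\partial_1 N=p^{-1}(\partial X)$ and $A\cap\partial X=\emptyset$), so $N_A$ is orientable since $M$ is. On the other hand $E|_A$ is pulled back from $E|_\gamma$ along the retraction $A\to\gamma$, so $N_A=D(E|_A)\cong D(E|_\gamma)\times[-1,1]$; hence $D(E|_\gamma)$ is orientable too. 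But a $D^2$-bundle over $S^1$ has orientable total space only if it is the trivial bundle — the nontrivial one, with reflection monodromy in $\O(2)\sms\SO(2)$, has nonorientable total space. Therefore $E|_\gamma$ is trivial, i.e.\ $\langle w_1(E),[\gamma]\rangle=0$. Running over a generating set of $H_1(X;\Z)$ gives $w_1(E)=0$, and combined with the first paragraph this yields $N\cong X\times D^2$.

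The one real subtlety — and what I expect to be the main point to get right — is this last step. One is tempted to argue directly that $p^{-1}(\gamma)$ is orientable because it lies in the orientable manifold $M$; but $p^{-1}(\gamma)$ has \emph{codimension one} in $M$, so this is false in general. (Indeed a Möbius band sits locally flatly in orientable $4$-manifolds with nontrivial normal $D^2$-bundle, which is exactly why the hypothesis that $X$ is orientable is needed.) Passing first to the annular neighbourhood $A$ — legitimate precisely because orientable surfaces contain only two-sided simple closed curves — produces a codimension-zero piece $N_A\subseteq M$, and only there does "an open subset of an oriented manifold is oriented'' apply. The remaining ingredients (the homotopy type of a compact surface with boundary, the classification of rank-$\le 2$ bundles over a $1$-complex, and the two $D^2$-bundles over $S^1$ together with their total-space orientability) are standard. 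One could alternatively run the argument through the tangent microbundle identity $\tau_M|_X\cong\tau_X\oplus E$ and the relation $w_1(\tau_M)|_X=w_1(\tau_X)+w_1(E)$, but the route above avoids invoking Stiefel--Whitney classes of topological microbundles.
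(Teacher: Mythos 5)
Your proof is correct and follows the same route as the paper's: realise the tubular neighbourhood as the unit disc bundle of a rank-two vector bundle $E\to X$, note that each component of $X$ is homotopy equivalent to a wedge of circles, and conclude that an orientable bundle over such a space is trivial. The paper's one-line proof simply asserts that the bundle is orientable without justification; your annulus argument for $w_1(E)=0$ (correctly avoiding the codimension-one trap) supplies exactly the detail the paper omits, and it is sound.
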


\begin{proof}
Connected surfaces with nonempty boundary are homotopy equivalent to wedges of circles. Every orientable linear $D^n$-bundle over a space $X$ is classified up to isomorphism by a homotopy class of maps $X\to \op{BSO}(n)$. As $\op{SO}(n)$ is connected, so $\op{BSO}(n)$ is simply connected, and any orientable linear disc bundle over $X$ is trivial. In particular, this is true when $n=2$.
\end{proof}

%======================================
\section{Normal vector bundles}\label{subsec:normalvector}
The reader will be familiar with the definition of a normal vector bundle when working in the smooth category: if $X\subseteq M$ is a smooth submanifold of a smooth manifold, then the normal vector bundle is defined as  the quotient of the vector bundle $TM|_X$ by the subbundle~$TX$. This definition uses the smooth structure to ensure the existence of tangent vector bundles, and vector bundles are a strong enough bundle technology to ensure the existence
%of the perpendicular subspaces required to form the
of quotient bundles. While some (weaker) canonical tangential structures do exist in the topological category (see Chapter~\ref{chapter:bundlestructures}), the idea of a `quotient bundle' no longer makes sense for them.

In the topological category, following \cite[Section 9]{FQ90}, we will use a definition of normal vector bundle that is much closer to the geometry of tubular neighbourhoods. We begin with a definition that is almost what we need but suffers from a slight technical problem, which we then remedy.

\begin{definition}
Let $M$ be an $n$-manifold and let $X$ be a proper $k$-dimensional submanifold. An \emph{internal linear bundle over $X$} is a pair $(E,p\colon E\to X)$ with the following properties.
\bnm
\item $E$ is a neighbourhood of $X$;
\item $E$ is a codimension zero sub\-mani\-fold of $M$;
\item the map $p\colon E\to X$ is an $(n-k)$-dimensional vector bundle such that $p(x)=x$ for all $x\in X$;
\item $\partial E=p^{-1}(\partial X)$.
\enm
\end{definition}

An internal linear bundle $(E,p\colon E\to X)$ is intended to mirror the notion, from the smooth category, of an \emph{open} tubular neighbourhood of $X$. As such, the definition as stands suffers from the potential technical problem that the closure of $E$ in $M$, which should be a closed tubular neighbourhood, may no longer be a submanifold; see Figure~\ref{fig:non-extendable}. As in \cite[p.~137]{FQ90}, we use the following additional idea to rule out this problem.

\begin{definition}\label{def:extendable}
Let $M$ be an $n$-manifold, let $X$ be a proper $k$-dimensional submanifold, and let $(E,p\colon E\to X)$ be an internal linear bundle over $X$. Suppose that given any $(n-k)$-dimensional vector bundle $(F,q\colon F\to X)$, any radial homeomorphism from an open convex disc bundle of $F$ to $E$ can be extended to a homeomorphism from the whole of $F$ to a neighbourhood of $E$. Then we say $(E,p\colon E\to X)$ is \emph{extendable}.
\end{definition}

\begin{figure}[h]
\begin{center}
\includegraphics{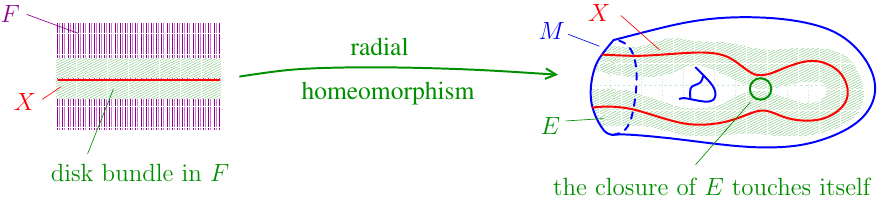}
\caption{Non-extendable internal linear bundle.}
\label{fig:non-extendable}
\end{center}
\end{figure}

Now we can define the notion of a normal vector bundle.

\begin{definition}
Let $M$ be a  $n$-manifold and let $X$ be a proper $k$-dimensional submanifold. A \emph{normal vector bundle} for $X$  is an internal linear bundle that is extendable.
(Note that the same concept is called a \emph{normal bundle} on
\cite[p.~137]{FQ90}, we prefer the name normal \emph{vector} bundle.)
\end{definition}

\begin{theorem}\textbf{\textup{(Existence of normal vector bundles)}}\label{thm:existnormal}
Every proper submanifold of a compact 4-manifold admits a normal vector bundle.
\end{theorem}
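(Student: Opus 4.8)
The plan is to reduce the existence of a normal vector bundle to the existence of a tubular neighbourhood (Theorem~\ref{thm:tubular-neighbourhood}), which we may assume, and then to the deep input from \cite[Section 9]{FQ90} that underlies both statements. More precisely, I would proceed as follows. Let $X$ be a proper $k$-dimensional submanifold of a compact $4$-manifold $M$. By Theorem~\ref{thm:tubular-neighbourhood}, $X$ admits a tubular neighbourhood $p\colon N\to X$, which by definition is a linear $D^{4-k}$-bundle, i.e.\ the unit disc bundle $D(\xi)$ of some genuine $(4-k)$-dimensional vector bundle $\xi$ over $X$, sitting inside $M$ as a codimension-zero submanifold with corners and with $\partial_1 N=p^{-1}(\partial X)$. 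The natural candidate for a normal vector bundle is then $E:=\operatorname{int}_{\mathrm{fibre}} N$, the associated \emph{open} disc bundle, which is homeomorphic over $X$ to the total space of $\xi$ (via the fibrewise homeomorphism $v\mapsto v/(1-\|v\|)$ from the open unit disc bundle to all of $\xi$). This $E$ is a codimension-zero open submanifold of $M$, the bundle projection restricts correctly, and $\partial E=p^{-1}(\partial X)$ intersected appropriately, so $E$ is an internal linear bundle over $X$.

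The real content is to verify that $E$ is \emph{extendable} in the sense of Definition~\ref{def:extendable}: given any $(4-k)$-vector bundle $(F,q\colon F\to X)$ and any radial homeomorphism from an open convex disc bundle of $F$ onto $E$, this must extend to a homeomorphism from all of $F$ onto a neighbourhood of $E$. Here I would argue that, because $E$ was obtained as the open core of the genuine closed disc bundle $N=D(\xi)$ and $N$ has a collar on its frontier $\partial_0 N$ inside $M$ (apply the Collar neighbourhood theorem~\ref{thm:topological-collar}, or rather Theorem~\ref{thm:collar-boundary} to handle $\partial X$), one has a little extra room: $N$ sits inside a slightly larger open bundle neighbourhood $N'$ in $M$. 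A radial homeomorphism $\phi$ from an open convex disc subbundle $D_c(F)$ onto $E$ is fibrewise a homeomorphism of open discs carrying $0$ to $0$; by the Alexander trick (Lemma~\ref{lem:alexander-trick}) applied fibrewise — and, crucially, done continuously in the base parameter, which is where one invokes that $\operatorname{Homeo}_\partial(D^{4-k})$ is contractible, as noted after Lemma~\ref{lem:alexander-trick} — one extends $\phi$ radially outward over the closure and then uses the collar of $\partial_0 N$ to push slightly past into $N'$, producing the required homeomorphism $F\to N'$ onto a neighbourhood of $E$. Then $(E,p)$ is a normal vector bundle for $X$.

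The main obstacle, and the step I expect to be genuinely hard rather than bookkeeping, is the fibrewise-continuous extension: one must perform the Alexander-trick extension of $\phi$ simultaneously over all fibres so that the result is a homeomorphism of total spaces, not merely a fibrewise bijection. This requires either a parametrised version of the Alexander trick (using the contractibility of $\operatorname{Homeo}_\partial(D^{4-k})$ to get a section of an evaluation fibration over $X$ — harmless since $X$ is a manifold, hence paracompact and an ANR) or, more honestly, to quote directly the construction in \cite[Section 9, especially pp.~137--139]{FQ90}, where the existence of normal vector bundles for submanifolds of $4$-manifolds (indeed for the locally flat submanifolds arising in Freedman--Quinn theory) is established exactly by this kind of argument; the authors of the present survey in fact flag this dependence explicitly. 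So my proof would be a short reduction of Theorem~\ref{thm:existnormal} to Theorem~\ref{thm:tubular-neighbourhood} plus an extendability check, with the extendability check either spelled out via parametrised Alexander trick and collars, or cited wholesale from \cite[Section 9]{FQ90} — most likely the latter, given the survey's stated conventions.
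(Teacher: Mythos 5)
Your proposal is circular relative to the logical architecture of this part of the paper. You begin by invoking Theorem~\ref{thm:tubular-neighbourhood} to produce a tubular neighbourhood and then extract a normal vector bundle from it; but the existence of tubular neighbourhoods is itself \emph{deduced from} Theorem~\ref{thm:existnormal} (the proof in Section~\ref{section:tubular-nhd-proofs} reads: take a normal vector bundle, choose a positive definite form, and the corresponding disc bundle is the tubular neighbourhood). There is no independent proof of Theorem~\ref{thm:tubular-neighbourhood} available here, so you cannot assume it. What your construction of the open fibrewise interior of a closed disc bundle, together with the collar on $\partial_0 N$ and the extendability check, actually establishes is essentially Lemma~\ref{lem:tubular-implies-normal-vector-bundle} (tubular neighbourhood $\Rightarrow$ normal vector bundle), which the paper needs for the \emph{uniqueness} of tubular neighbourhoods, not for the existence of normal vector bundles.

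Your fallback of citing \cite[Section~9]{FQ90} wholesale is the right instinct and is what the paper does for the case $\partial X=\emptyset$, via \cite[Theorem~9.3A]{FQ90}. But that theorem, as reproduced in Theorem~\ref{thm:FQ-exist-normal}, takes the closed subset $K$ inside $N\sms\partial N$, so the case of a proper submanifold with nonempty boundary is not covered directly, and this is where the paper's actual work lies: one first observes that $\partial X\subset\partial M$ is a smooth submanifold of a $3$-manifold (topological $=$ smooth in dimension $3$) and so has a smooth normal bundle; one then uses the Collar neighbourhood theorem for proper submanifolds (Theorem~\ref{thm:collar-boundary}) to push this bundle as a product into a collar $\partial M\times[0,1]$; and finally one applies \cite[Theorem~9.3A]{FQ90} to the boundaryless manifold $M'=M\sms(\partial M\times[0,\tfrac12])$ relative to $K=\partial X\times(\tfrac12,1]$ and glues. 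Your proposal omits the boundary case entirely, which is the only step in the paper's proof that goes beyond a direct citation.
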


\begin{remark}
Generally, in high dimensions, the existence of normal vector bundles is peculiar to when the submanifold has low dimension or low codimension. We refer the reader to \cite[Section 9.4]{FQ90} for a discussion of the other known situations where these objects always exist. Here is a summary of the known cases.
A submanifold of dimension at most 3 in a closed manifold of dimension at least 5 has a normal vector bundle \cite[p.~150]{FQ90}. Codimension one submanifolds have normal vector bundles~\cite[Theorem~3]{Brown62}. That every codimension two submanifold of a manifold of dimension not equal to four has a normal vector  bundles was shown in~\cite{KS75}, and this was extended to include dimension four in \cite[Section~9.3]{FQ90}.
It is striking that, while among smooth manifolds dimension 4 exhibits worse than usual behaviour, in the topological category the existence of normal vector bundles seems to show that in this respect it is among the better behaved of the dimensions.
\end{remark}

For the proof of Theorem~\ref{thm:existnormal} we will essentially appeal to results of \cite{FQ90}.  We reproduce these results here for the benefit of the reader.

\begin{theorem}\label{thm:FQ-exist-normal}
  Let $N$ be a proper submanifold of a $4$-manifold $M$, with a closed subset $\partial N\subseteq K \subseteq N$ and a normal  vector bundle over some neighbourhood $U$ of $K$ in $N$.  Then there is a normal vector bundle over $N$ that agrees with the given one over some neighbourhood $V \subseteq U$ of $K$.  Moreover this extension is unique up to ambient isotopy relative to some neighbourhood $W \subseteq V$ of $K$.
  \end{theorem}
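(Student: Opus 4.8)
The plan is to prove the existence and the uniqueness assertions together, by induction over a handle decomposition of $N$. First dispose of $\partial N$: near $\partial N$ the Collar neighbourhood theorem for proper submanifolds~\ref{thm:collar-boundary} identifies the pair $(M,N)$ with a product $(\partial M\times[0,1],\,\partial N\times[0,1])$, so a normal vector bundle of $N$ along a collar of $\partial N$ is the same datum as a normal vector bundle of $\partial N$ in the $3$-manifold $\partial M$, which exists and is unique by the smooth tubular neighbourhood theorem since $3$-manifolds are smoothable. Hence we may assume a normal vector bundle is already given over a neighbourhood $N_0$ of $K\cup\partial N$ in $N$. Since $\dim N\le 3$, the manifold $N$ is triangulable (Rad\'o, Moise; see Theorem~\ref{thm:n-not-equal-4-CW structure}), so $N$ is obtained from $N_0$ by attaching finitely many handles, of index $0$, $1$ and --- when $\dim N=2$ --- also $2$. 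The inductive claim is that a normal vector bundle realised, as a codimension-zero submanifold of $M$, over the union of $N_0$ with the handles treated so far extends over the next handle, and that any two such extensions are carried into one another by an ambient isotopy of $M$ fixing a neighbourhood of the previous stage. Splicing these ambient isotopies together by the Isotopy Extension Theorem~\ref{thm:isotopy-extension-theorem} then yields both halves of the theorem.

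For low dimension and low index the extension step is soft. If $\dim N\le 1$ (codimension at least $3$) the bundle to be extended over a $0$- or $1$-handle is a trivial $D^{n-k}$-bundle, and the fibre group $\Homeo_\partial(D^{n-k})$ is contractible (the Alexander trick, Lemma~\ref{lem:alexander-trick}); since the codimension is at least $3$ and the handles are discs and bands, there is ample room to realise the extension inside $M$ and to push two extensions together by general position. The same remarks cover the $0$- and $1$-handles of a surface $N$. When $\dim N=0$ the normal bundle is just a flat chart at each point, which exists trivially and is unique up to ambient isotopy by Lemma~\ref{lem:intertwined} and the Isotopy Extension Theorem. These steps are the elementary part of \cite[Section~9]{FQ90}.

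What remains, and what I expect to be the main obstacle, is the extension of a normal vector bundle over a $2$-handle of a surface $N\subset M^4$: one must realise the core disc of the handle together with its normal $D^2$-fibres as a submanifold $D^2\times D^2$ of $M$ that agrees with the already-built bundle over a neighbourhood of the attaching region. Here the dimension and the codimension both equal $2$, general position gives nothing, and removing the obstruction genuinely requires the $4$-dimensional topology of \cite[Section~9]{FQ90} --- the controlled-immersion and handle-straightening techniques resting on Freedman's disc embedding theorem. Rather than reproduce that argument I would invoke \cite[Theorem~9.3A]{FQ90} for the existence of the extension over the $2$-handle and \cite[Theorem~9.3D]{FQ90} for its uniqueness up to ambient isotopy relative to a neighbourhood of the attaching region; feeding these into the handle induction above, and using Theorem~\ref{thm:isotopy-extension-theorem} to turn the bundle-level isotopies into ambient ones, completes the proof.
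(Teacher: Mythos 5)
There is a basic problem of circularity here. The statement you are asked to prove \emph{is} \cite[Theorem~9.3A]{FQ90} (together with the uniqueness clause from \cite[Theorem~9.3D]{FQ90}): the paper introduces it with the words ``we reproduce this theorem here for the benefit of the reader'' and never proves it --- it is used as a black box, and the proof environment that follows it in the source is in fact the derivation of Theorem~\ref{thm:existnormal} (existence of normal bundles for proper submanifolds of compact $4$-manifolds, handling the boundary via a collar) \emph{from} Theorem~\ref{thm:FQ-exist-normal}. Your argument sets up a handle induction and then, for the only step that carries real content, invokes \cite[Theorem~9.3A]{FQ90} for existence and \cite[Theorem~9.3D]{FQ90} for uniqueness. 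Since those are precisely the results being restated, your proposal reduces the theorem to (a special case of) itself; as a proof it contributes nothing beyond the citation the paper already makes, and as written it is circular. If the intention is to treat Freedman--Quinn as the black box, the honest proof is the paper's: cite it and stop.

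Separately, the steps you describe as ``soft'' are not soft in the topological category. General position is not available for free: in a topological $4$-manifold even the statement that two surfaces can be made to meet in isolated points requires the transversality machinery of Section~\ref{sec:transversality}, which itself rests on normal bundle existence and uniqueness --- so appealing to ``general position'' inside this proof risks a second circularity. Likewise, extending a trivial $D^{2}$-bundle over a $1$-handle of a surface $N\subset M^{4}$, compatibly with bundle data already fixed at the attaching region, is essentially the tubing construction of Proposition~\ref{prop:tubing}; the paper devotes Section~\ref{section:tubing} to showing that this requires smoothing the complement of a point (Theorem~\ref{thm:smooth-outside-a-point}), uniqueness of collars (Theorem~\ref{thm:topological-collar-unique}), and careful coordinate control, not just ``ample room''. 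Finally, your opening reduction (disposing of $\partial N$ via Theorem~\ref{thm:collar-boundary} and smoothness of $3$-manifolds) silently assumes $N$ is proper and $\partial M\neq\emptyset$ is where $\partial N$ lives, which is an assumption of Theorem~\ref{thm:existnormal} but not of the statement at hand, where $K$ is merely a closed subset of $N\setminus\partial N$.
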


\begin{proof}[Proof of Theorem~\ref{thm:FQ-exist-normal}]
The existence statement of the theorem follows immediately from \cite[Theorem 9.3A]{FQ90} and
the uniqueness statement of the theorem follows immediately from ~\cite[Theorem 9.3D]{FQ90}.
\end{proof}

\begin{proof}[Proof of Theorem~\ref{thm:existnormal}]
Let $X$ be a proper submanifold of a compact 4-manifold~$M$.
The case that $X$ has no boundary follows immediately from
the existence statement of Theorem~\ref{thm:FQ-exist-normal} with $K=\emptyset$.
The case that $X$ has nonempty boundary follows also from Theorem~\ref{thm:FQ-exist-normal} if we apply more care. We sketch the argument.

First it is well-known that
given any pair $(X,\Sigma)$ where $X$ is a 3-manifold and $\Sigma$ is a proper submanifold,
then there exists a smooth structure on $X$ such that $\Sigma$ is a smooth submanifold.
Even though this fact is well-known and often used, it is hard to give complete references.
The existence of a smooth structure on $X$ follows from  \cite[p. 252 and 253]{Mo77}  and \cite[
Theorems 6.2 and 6.3]{Munkres60}. The extra complication of having a proper submanifold is
partly taken care of by
\cite[Theorem~XVIII.4.B]{Bin1983}.
Thus we can view the submanifold $\partial X\subseteq \partial M$ as a  smooth submanifold. Hence it has a smooth normal vector bundle, see  e.g.\ \cite[Chapter~III.2]{Kosinski} or \cite[Section~IV.5]{Lang}.

Next use the Collar Neighbourhood Theorem \ref{thm:collar-boundary} to obtain a collar $\partial M \times [0,1]\subseteq M$ that restricts to a collar $\partial X \times [0,1]$ for the boundary of $X$. Extend the smooth tubular neighbourhood of~$\partial X\subseteq \partial M$ into the collar by taking a product with $[0,1]$.

Finally, consider the 4-manifold without boundary $M':=M\sm (\partial M\times[0,\frac{1}{2}])$. What remains of $X$ is a submanifold $N:=X\sm (\partial X\times [0,\frac{1}{2}])$.
The submanifold $N$ already has a preferred normal vector bundle on the closed subset
$K := \partial X \times (1/2,1]$.
Now apply Theorem~\ref{thm:FQ-exist-normal} to  the triple~$(M', N, K)$ to obtain a normal vector bundle $E\to N$ agreeing with the given one on $K$. The normal vector bundles over $N$ and $\partial X\times [0,1]$ agree on the overlap $K$. Thus they define a normal vector bundle on all of $X$.
\end{proof}

\begin{figure}[h]
\begin{center}
\includegraphics{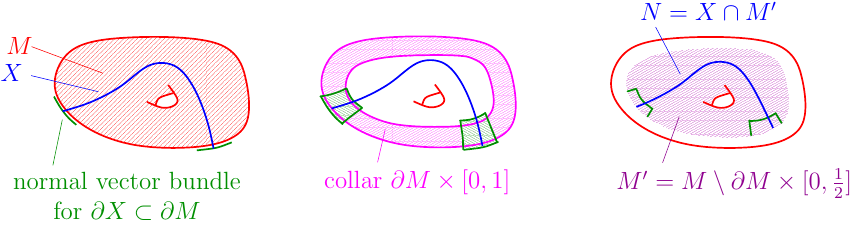}
\caption{Illustration of the proof of Theorem~\ref{thm:existnormal}.}
\label{fig:existnormal}
\end{center}
\end{figure}

Next we turn to the uniqueness of normal vector bundles.

\begin{theorem}\textbf{\textup{(Uniqueness of normal vector bundles)}}\label{thm:uniquenormal}
Let $M$ be a compact 4-manifold and let $X$ be a proper submanifold of $M$.
Suppose we are given two normal vector bundles $p_i\colon E_i\to X$, $i=1,2$ over $X$.
For $i=1,2$ let $\iota_i\colon E_i\to M$ be the inclusion map.
Then there exists a bundle isomorphism $f\colon E_1\xrightarrow{\cong} E_2$ such that $\iota_2\circ f$ and $\iota_1$ are ambiently isotopic rel.\  $X$.
\end{theorem}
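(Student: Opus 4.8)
The plan is to reduce the uniqueness of normal vector bundles over a proper submanifold $X$ with boundary to the boundaryless uniqueness statement contained in \cite[Theorem~9.3D]{FQ90}, applied in the form of Theorem~\ref{thm:FQ-exist-normal}, using the same "stretch the collar" technique as in the proof of Theorem~\ref{thm:existnormal}. First I would dispose of the case $\partial X=\emptyset$: here $M$ can be taken to be closed after deleting a collar of $\partial M$ (which is disjoint from $X$ since $X$ is proper and compact, hence lies in the interior after a small isotopy, or one simply works in $M\sms(\partial M\times[0,\tfrac12))$), and the statement is precisely \cite[Theorem~9.3D]{FQ90}, with $K=\emptyset$. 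So assume $\partial X\ne\emptyset$.

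The next step handles the boundary. As in dimension three the smooth and topological categories agree, so $\partial X\subset\partial M$ is a smooth submanifold with a smooth normal vector bundle, which is unique up to isotopy by the classical smooth uniqueness theorem. Given the two normal vector bundles $E_1,E_2$ over $X$, restrict them to collars: use the Collar neighbourhood theorem for proper submanifolds~\ref{thm:collar-boundary} to fix a collar $\partial M\times[0,1]$ restricting to a collar $\partial X\times[0,1]$ of $\partial X$ in $X$; after an ambient isotopy rel $X$ (via the Isotopy Extension Theorem~\ref{thm:isotopy-extension-theorem}) arrange that both $E_1$ and $E_2$ restrict over $\partial X\times[0,1]$ to the product of a fixed smooth normal vector bundle of $\partial X\subset\partial M$ with $[0,1]$. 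This is possible because over the collar each $E_i$ is, after shrinking, a normal vector bundle of a submanifold of $\partial M\times[0,1]$ of the form $(\text{submanifold of }\partial M)\times[0,1]$, so smooth uniqueness together with a product trick makes them both standard there. Thus we may assume $E_1$ and $E_2$ literally agree on a neighbourhood of $\partial X$ in $X$.

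Now pass to the boundaryless situation: set $M':=M\sms(\partial M\times[0,\tfrac12))$, a compact $4$-manifold without boundary, and $N:=X\sms(\partial X\times[0,\tfrac12))$, a submanifold of $M'$ without boundary. Let $K:=\partial X\times(\tfrac12,1]$, a closed subset of $N$; by the previous paragraph $E_1$ and $E_2$ restrict to the same normal vector bundle over a neighbourhood of $K$ in $N$. Apply Theorem~\ref{thm:FQ-exist-normal} (the uniqueness clause) to the triple $(M',N,K)$: it yields a bundle isomorphism $f_0\colon E_1|_N\xrightarrow{\cong}E_2|_N$ with $\iota_2\circ f_0$ and $\iota_1$ ambiently isotopic in $M'$ rel a neighbourhood of $K$. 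Since the isotopy is the identity near $K$, it extends by the identity across the collar $\partial M\times[0,\tfrac12]$ to an ambient isotopy of $M$; and $f_0$ extends over the collar part of $X$ (where $E_1=E_2$ already) by the identity, giving the desired bundle isomorphism $f\colon E_1\xrightarrow{\cong}E_2$ with $\iota_2\circ f$ and $\iota_1$ ambiently isotopic rel $X$.

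The main obstacle is the second step: carefully arranging that the two given normal vector bundles can be made to coincide near $\partial X$. One must check that restricting an internal extendable linear bundle over $X$ to a collar $\partial X\times[0,1]$ genuinely produces (after shrinking the disc-bundle coordinates) a normal vector bundle of $\partial X\times[0,1]\subset\partial M\times[0,1]$ in the sense of Definition~\ref{def:extendable}, so that smooth uniqueness applies, and that the resulting isotopies and bundle isomorphisms can be taken to be products in the collar direction near the boundary so they glue with the identity. This is the same technical point flagged as requiring "more care" in the proof of Theorem~\ref{thm:existnormal}, and it is where the bulk of the honest work lies; once it is in place, the reduction to \cite[Theorem~9.3D]{FQ90} is formal.
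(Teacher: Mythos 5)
Your proposal is correct and follows essentially the same route as the paper: dispose of the closed case via \cite[Theorem~9.3D]{FQ90}, normalise both bundles near $\partial X$ using the smooth uniqueness of normal bundles for $\partial X\subset\partial M$ together with collar uniqueness (Theorem~\ref{thm:topological-collar-unique}), and then apply the relative uniqueness clause of Theorem~\ref{thm:FQ-exist-normal} to the manifold obtained by cutting off half the collar. The paper packages the boundary step slightly differently (by first arguing that every normal vector bundle arises from the construction in the existence proof), but the ingredients and the acknowledged technical point about restricting extendable bundles to collars are the same.
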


\begin{proof}
If $X$ has no boundary, then the theorem is an immediate consequence of Theorem~\ref{thm:FQ-exist-normal}.
Now suppose that~$X$ has nonempty boundary.

First we claim that any normal vector bundle of $X$ is obtained by the construction outlined in the proof of Theorem~\ref{thm:existnormal}. To see this, let $p\colon E\to X$ be a normal vector bundle. Pick a collar neighbourhood $\partial X\times [0,2]\subseteq X$.  Since $p$ is extendable, we can view $p$ as the interior of a disc bundle $q\colon F\to X$ in $M$. Write $C:=q^{-1}(\partial X) \subseteq \partial M$. The disc bundle $q\colon q^{-1}(\partial X\times [0,2])\to \partial X\times [0,2]$ defines a collar neighbourhood $C\times [0,2]$ for the compact submanifold  $C$ of $\partial M$. By the Collar Neighbourhood Theorem~\ref{thm:topological-collar} we can extend the collar neighbourhood $C\times [0,1]$  of $C$ to a collar neighbourhood $\partial M\times [0,1]$. With this choice of collar neighbourhood of $\partial M$, the construction in the proof of Theorem~\ref{thm:existnormal}, with further appropriate choices, gives rise to the normal vector bundle $p\colon E\to X$.  This completes the proof of the claim.

After this long preamble it suffices to prove the theorem for  any two normal vector bundles obtained as in the proof of Theorem~\ref{thm:existnormal}.
Uniqueness follows by arguing that each step in the proof of existence of normal vector bundles was essentially unique. The proofs of uniqueness in the three steps make use of the following ingredients.

First, apply the uniqueness statement for normal vector bundles of submanifolds of smooth manifolds to $\partial X \subseteq \partial M$ e.g.\ \cite[Chapter~III.2]{Kosinski} or \cite[Section~IV.5]{Lang}.

Next use the uniqueness of collar neighbourhoods as formulated in Theorem~\ref{thm:topological-collar-unique}, applied to the two collar neighbourhoods of $\partial M$ subordinate to the given normal  vector bundles of~$X$.

Finally apply the full relative version of Theorem~\ref{thm:FQ-exist-normal} to extend the normal vector bundle uniquely over the rest of ~$X$.
\end{proof}

%======================================
\section{Tubular neighbourhoods: proofs}\label{section:tubular-nhd-proofs}
Now we will use the results from the previous section to prove
Theorems~\ref{thm:tubular-neighbourhood}
and~\ref{thm:uniqueness-of-general-tubular-neighbourhoods}, i.e.\ we will prove the existence and uniqueness of tubular neighbourhoods.
First we show how one can obtain tubular neighbourhoods from normal vector bundles.

\begin{definition}
\mbox{}
\bnm
\item A \emph{form} over a real vector space $V$ is an $\R$-bilinear symmetric map $g\colon V\times V\to \R$. It is called \emph{positive definite} if for every $v\in V\sms \{0\}$ we have $g(v,v)>0$.
\item
Let $p\colon E\to X$ be a vector bundle over a topological space $X$. Given $x\in X$, write $E_x:=p^{-1}(x)$.
A \emph{positive definite form} $g=\{g_x\}_{x\in X}$ consists of a positive definite form $g_x$ over every  $E_x$ such that $g_x$ changes continuously with $x$.
\enm
\end{definition}

\begin{lemma}\label{lem:tubular-nhds-in-vector-bundles}
Let $X$ be a compact manifold and let  $p\colon E\to X$ be  an $n$-dimensional  vector bundle.
Then the space of positive definite forms on $E$ is nonempty and convex.
Furthermore, let $g=\{g_x\}_{x\in X}$ be a positive definite form on $E$ and consider the map
\[  p\colon E(g)\,:=\, \bigcup\limits_{x\in X}{} \{ v\in E_x\mid g_x(v,v)\leq 1\}\,\,\to \,\, X.
\]
This map  has the following properties.
\bnm
\item The  map $p|_{E(g)}\colon E(g)\to X$ is a linear $D^n$-bundle.
\item Given two different positive definite forms $g$ and $h$ on $E$ there
exists a continuous map $H\colon E\times [0,1]\to E$ with the following properties.
\bnmt
\item We have $H_0=\id$.
\item The map $H_1$ restricts to an isomorphism $H_1\colon E(g)\to E(h)$ of linear $D^n$-bundles.
\enm
\enm
\end{lemma}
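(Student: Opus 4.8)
The plan is to assemble the statement from three standard pieces of bundle theory: existence of a fibre metric, convexity of the space of such metrics, and the fact that sub-level sets of a fibrewise positive definite form are linear disc bundles. First I would prove existence and convexity of the space $\mathcal{G}$ of positive definite forms on $p\colon E\to X$. Existence follows by a partition of unity argument: choose a locally finite atlas of trivialisations $\{(U_\alpha,\tau_\alpha)\}$ of $E$ over the compact manifold $X$ (which exists since $X$ is metrisable, hence paracompact), pull back the standard inner product on $\R^n$ through each $\tau_\alpha$, and patch these together with a partition of unity subordinate to $\{U_\alpha\}$; a convex combination of positive definite forms on a fixed fibre is again positive definite, so the result $g = \{g_x\}$ is a positive definite form varying continuously with $x$. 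Convexity of $\mathcal{G}$ is immediate: if $g,h\in\mathcal{G}$ and $s\in[0,1]$, then $sg_x+(1-s)h_x$ is positive definite for every $x$ and depends continuously on $x$, so $sg+(1-s)h\in\mathcal{G}$.

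Next I would prove property (1), that $p\colon E(g)\to X$ is a linear $D^n$-bundle. This is a local statement, so I would work over a trivialising open set $U\subseteq X$ with $\tau\colon p^{-1}(U)\xrightarrow{\cong} U\times\R^n$. Over $U$ the form $g$ becomes a continuous family $\{A_x\}_{x\in U}$ of symmetric positive definite $n\times n$ matrices, and by the Gram--Schmidt process (or by taking the continuously-varying positive definite square root $A_x^{1/2}$, which is continuous in $A_x$) one obtains a continuous family of linear isomorphisms $B_x\colon \R^n\to\R^n$ with $B_x^{\mathsf{T}}B_x = A_x$. Then $(x,v)\mapsto (x, B_x v)$ is a homeomorphism of $U\times\R^n$ carrying $\{(x,v) : v^{\mathsf{T}}A_x v\le 1\}$ to $U\times D^n$, which exhibits $E(g)|_U$ as a trivial $D^n$-bundle; the transition maps between two such trivialisations are fibrewise linear, i.e. take values in $\GL_n(\R)$, and after composing with the Gram--Schmidt maps one may arrange them to take values in $O(n)$, so the bundle is linear in the sense required. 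I would also note that $\partial_0 E(g) = \bigcup_x\{v\in E_x : g_x(v,v)=1\}$ is the corresponding sphere bundle, as needed for the disc-bundle structure.

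Finally I would prove property (2), the isotopy moving $E(g)$ to $E(h)$. Here the key device is fibrewise radial rescaling: for $v\in E_x\sms\{0\}$ define $\lambda_g(v) = g_x(v,v)^{-1/2}$, so that $\lambda_g(v)\cdot v$ is the point of the unit sphere of $g_x$ on the ray through $v$, and similarly for $h$. The map $\Psi\colon E\to E$ that fixes the zero section and sends $v\mapsto \dfrac{\lambda_g(v)}{\lambda_h(v)}\,v$ for $v\ne 0$ is a bundle homeomorphism (continuity at the zero section follows because the ratio $\lambda_g(v)/\lambda_h(v)$ is bounded above and below on each compact piece of $X$ by comparing the two continuous positive definite forms) carrying $E(g)$ onto $E(h)$. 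To turn this into an \emph{isotopy} rel the zero section and rel the complement of a compact set, I would first use compactness of $X$ to fix a large $R$ with $E(g)\cup E(h)$ contained in the region $\{g_x(v,v)\le R\}$, then interpolate: pick a continuous cutoff $\rho\colon [0,\infty)\to[0,1]$ with $\rho\equiv 1$ near $0$ and $\rho\equiv 0$ on $[R,\infty)$, and define $\Psi_t(v) = \bigl(1-t\rho(g_x(v,v))\bigr)\,v + t\rho(g_x(v,v))\,\Psi(v)$, checking that each $\Psi_t$ is a homeomorphism (it is fibrewise a radial reparametrisation of the ray through $v$, strictly increasing in $\|v\|$), that $\Psi_0 = \id$, that $\Psi_1$ agrees with $\Psi$ near the zero section hence carries a neighbourhood of $0$ in $E(g)$ correctly, and that $\Psi_t$ is the identity outside $\{g_x(v,v)\le R\}$. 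A small additional radial adjustment ensures $\Psi_1$ carries all of $E(g)$ onto $E(h)$, not merely near the zero section.

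The main obstacle I anticipate is purely bookkeeping rather than conceptual: making the radial isotopy in step~(2) simultaneously (a) a genuine ambient isotopy of $E$ (each time slice a homeomorphism), (b) identity on the zero section and outside a compact set, and (c) actually sending $E(g)$ precisely onto $E(h)$ and not just matching the two near the zero section. This is exactly the kind of ``standard but fiddly'' verification that the lemma statement flags by saying it is left to the reader, so in the write-up I would state the construction and assert these properties, indicating that each is checked fibrewise using that $g_x,h_x$ are comparable positive definite forms with ratio bounded on the compact base.
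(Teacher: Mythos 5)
The paper does not actually supply a proof of this lemma --- it explicitly says the details are ``left to the reader'' --- so your proposal cannot diverge from the paper's argument; it is the standard one (partition of unity for existence, pointwise convexity, fibrewise Gram--Schmidt/positive square root for the $O(n)$-structure, radial rescaling for the isotopy), and parts (0) and (1) as you present them are complete and correct. Note that the comparison map $\Psi(v)=\sqrt{g_x(v,v)/h_x(v,v)}\,v$ is homogeneous of degree one along rays, so it carries all of $E(g)$ onto $E(h)$ exactly, not merely near the zero section; the ``small additional radial adjustment'' you mention is only needed because of the cutoff, and is handled by taking $\rho\equiv 1$ on $[0,1]$ so that $\Psi_1=\Psi$ on $E(g)$.

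There is, however, one genuine (if small) gap in part (2): the parenthetical claim that each $\Psi_t$ is ``fibrewise a radial reparametrisation of the ray through $v$, strictly increasing in $\|v\|$'' is false for an arbitrary cutoff $\rho$. Writing $c(w)=\sqrt{g_x(w,w)/h_x(w,w)}$ for a $g$-unit vector $w$, the radial coordinate of $\Psi_t$ along the ray through $w$ is $r\mapsto \bigl(1-t\rho(r^2)(1-c(w))\bigr)r$, and if $\rho$ decreases too quickly relative to how far $c(w)$ is from $1$, this map fails to be monotone and $\Psi_t$ fails to be injective. The fix is easy but must be said: by compactness of $X$ (hence of the $g$-unit sphere bundle) the function $c$ is bounded away from $0$ and $\infty$, so one may choose $R$ and a cutoff $\rho$ with Lipschitz constant small compared to $\sup|1-c|/R$, making each radial map strictly increasing. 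Alternatively, and more cleanly, replace the smooth cutoff altogether: along the ray through $w$, in the $g$-radial coordinate, take the piecewise-linear homeomorphism of $[0,\infty)$ fixing $0$, sending $1$ to $(1-t)+t\,a(w)$ where $a(w)=h_x(w,w)^{-1/2}$, and equal to the identity on $[R,\infty)$ for any fixed $R>\sup_w\max(1,a(w))$; each time slice is then manifestly a strictly increasing homeomorphism depending continuously on $(w,t)$, and at $t=1$ it carries $E(g)$ onto $E(h)$. (Convexity of the space of forms gives yet another route, via $g_t=(1-t)g+th$ and $v\mapsto\sqrt{g_x(v,v)/(g_t)_x(v,v)}\,v$, but the same cutoff issue arises there, so it buys nothing extra.)
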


\begin{proof}
\mbox{}
\bnm
\item This statement can be proved easily using the observation that the set of positive definition $\R$-bilinear symmetric forms on a real vector space is nonempty and convex.
\item  The proof of this statement follows from the same argument as \cite[Lemmas~2.5.2 and 2.5.4]{Wall16}.\qedhere
\enm
\end{proof}

Let $X$ be a compact manifold and let  $p\colon E\to X$ be  a  vector bundle. Given a positive definite form $g$ on $E$ we refer  to
\[  p\colon  E(g)\,\,:=\,\,  \bigcup\limits_{x\in X}{} \{ v\in E_x\mid g_x(v,v)\leq 1\}\,\,\to \,\, X
\]
as a \emph{corresponding disc bundle}. It follows from Lemma~\ref{lem:tubular-nhds-in-vector-bundles} that for most purposes the precise choice of $g$ is irrelevant.

We can now prove the existence of tubular neighbourhoods.

\begin{proof}[Proof of the Tubular Neighbourhood Theorem~\ref{thm:tubular-neighbourhood}]
Let $X$ be a compact proper submanifold of a $4$-manifold $M$.
By Theorem~\ref{thm:existnormal} there exists a normal vector bundle $p\colon N\to X$ for $X$.
By Lemma~\ref{lem:tubular-nhds-in-vector-bundles} (1)
there exists a corresponding disc bundle. Using the uniqueness statement of
Lemma~\ref{lem:tubular-nhds-in-vector-bundles} (2)
locally one can show that this disc bundle  is a submanifold with corner and a tubular neighbourhood.
\end{proof}

The uniqueness proof for tubular neighbourhoods also requires us to associate a normal vector bundle to a tubular neighbourhood.

\begin{lemma}\label{lem:tubular-implies-normal-vector-bundle}
Let~$M$ be a compact $4$-manifold and let $X$ be a compact proper $k$-dimen\-si\-on\-al submanifold. Let $p\colon N\to X$ be a tubular neighbourhood for $X$. There exists a normal vector bundle $q\colon E\to X$ and a positive definite form $g$ such that $N=E(g)$ and $p\colon N\to X$ equals $q\colon E(g)\to X$.
\end{lemma}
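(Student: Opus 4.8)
The plan is to exhibit $N$ as the closed unit disc bundle, for a suitable fibre metric, of a vector bundle which sits inside $M$ as a normal vector bundle; the only real work is to fatten the compact fibres $D^{4-k}$ of $p\colon N\to X$ into open discs.

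First I would unpack the hypothesis. As $p\colon N\to X$ is a \emph{linear} $D^{4-k}$-bundle, its structure group reduces to $O(4-k)$, so it is the unit disc bundle of an associated $(4-k)$-dimensional vector bundle $q_0\colon E_0\to X$ carrying the standard $O(4-k)$-invariant fibre metric $g_0$. Set $\|v\|:=\sqrt{g_0(v,v)}$ and put $E_0^{\le 1}:=\{v\in E_0\mid\|v\|\le 1\}$, $E_0^{<2}:=\{v\in E_0\mid\|v\|<2\}$, $S(E_0):=\{v\in E_0\mid\|v\|=1\}$. I would fix a linear disc bundle isomorphism $\theta\colon E_0^{\le 1}\to N$ over $X$, viewed as a homeomorphism onto $N\subseteq M$ with $p\circ\theta=q_0|_{E_0^{\le 1}}$; it restricts to a homeomorphism $S(E_0)\to\partial_0N$ onto the frontier $\partial_0N=N\cap\ol{M\sms N}$, the sphere bundle of $p$.

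Next I would extend $\theta$ outward into $M$. The set $\ol{M\sms N}$ is a codimension zero submanifold with corners of $M$ having $\partial_0N$ as one of its faces; this face is a two-sided hypersurface of $M$ whose boundary $\partial_0N\cap\partial M$ is a proper submanifold of $\partial M$. Applying the collar neighbourhood theorems~\ref{thm:collar} and~\ref{thm:collar-boundary} to this face yields an embedding $c\colon\partial_0N\times[0,1]\to\ol{M\sms N}$ onto a neighbourhood of $\partial_0N$ in $\ol{M\sms N}$, with $c(y,0)=y$ and $c^{-1}(\partial M)=(\partial_0N\cap\partial M)\times[0,1]$. Identifying the annular region $\{v\in E_0\mid 1\le\|v\|<2\}$ radially with $S(E_0)\times[1,2)$ via $v\mapsto(v/\|v\|,\|v\|)$, I would then define $\Theta\colon E_0^{<2}\to M$ to equal $\theta$ on $E_0^{\le 1}$ and to send $(w,s)\in S(E_0)\times[1,2)$ to $c(\theta(w),s-1)$. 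The two prescriptions agree along $S(E_0)$, and as $N$ is disjoint from $c\bigl(\partial_0N\times(0,1)\bigr)$ the map $\Theta$ is a homeomorphism onto $E:=N\cup c\bigl(\partial_0N\times[0,1)\bigr)$, a neighbourhood of $N$ in $M$. Setting $q:=q_0\circ\Theta^{-1}\colon E\to X$ and $g:=\Theta_*g_0$, one has $E(g)=\Theta(E_0^{\le 1})=N$ and $q|_{E(g)}=q|_N=p$ straight from the construction.

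It then remains to check that $(E,q)$ is a normal vector bundle. Since $\Theta\colon E_0^{<2}\to E$ is a homeomorphism with $q\circ\Theta=q_0|_{E_0^{<2}}$, the projection $q$ is a vector bundle; in particular $E$ is a codimension zero submanifold of $M$, and the identity $c^{-1}(\partial M)=(\partial_0N\cap\partial M)\times[0,1]$ gives $\partial E=E\cap\partial M=\partial_1N\cup c\bigl((\partial_0N\cap\partial M)\times[0,1)\bigr)=q^{-1}(\partial X)$, so $(E,q)$ is an internal linear bundle over $X$. For extendability, the frontier of $E$ in $M$ is $c\bigl(\partial_0N\times\{1\}\bigr)$, the outer sphere bundle, which is collared in $M$ as a face of the codimension zero submanifold with corners $\ol E$; hence any radial homeomorphism from an open convex disc bundle of a vector bundle $F\to X$ onto $E$ extends, by prolonging each ray of $F$ into a collar of that frontier, to a homeomorphism of $F$ onto a neighbourhood of $E$. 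Therefore $(E,q)$ is a normal vector bundle, and with the positive definite form $g$ it has all the required properties.

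The step I anticipate as the main obstacle is the construction of the $\partial M$-compatible collar $c$ of the frontier face $\partial_0N$, and the closely related verification that $E$ is extendable: neither is deep, but each demands a careful application of the collar neighbourhood theorems to a face of a codimension zero submanifold with corners, with attention to the corner locus lying over $\partial X$. When $\partial X=\emptyset$ this locus is empty, $N$ has no corners, and the argument simplifies accordingly.
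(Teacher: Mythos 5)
Your proposal is correct and follows essentially the same route as the paper: fatten the closed disc bundle $N$ into an open disc bundle $E$ by attaching a collar of the frontier $\partial_0N$ taken inside $\ol{M\sms \int N}$, transport the linear structure and metric radially, and verify extendability using leftover room beyond $E$. The only (cosmetic) difference is that the paper builds $E$ from half the collar, $\partial_0N\times[0,\tfrac12)$, so that the unused half directly witnesses extendability, whereas you consume the full open collar and then invoke a further outward collar of the frontier $c(\partial_0N\times\{1\})$ — which must be taken on the far side, in $\ol{M\sms \ol{E}}$, not merely ``as a face of $\ol{E}$'' — to the same effect.
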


We call $q \colon E \to X$ a \emph{corresponding normal vector bundle}.

\begin{proof}
Let $p\colon N\to X$ be a tubular neighbourhood for $X$. Recall that we have $\int N=N\sms \partial_0 N$. Consider
$W:=M\sms \int N$. This is a compact  $4$-manifold.
Pick a collar neighbourhood $\partial W\times [0,1]$ and set $E:=N\cup (\partial_0 N\times [0,\frac{1}{2}))$. We have an obvious projection map $q\colon E\to X$ turning $q$ into a bundle map where the fibre is given by the open $(4-k)$-ball of radius $\tmfrac{3}{2}$. We leave it to the reader to turn $q\colon N\to X$ into an internal linear bundle, to show that it is in fact extendable (at this point one has to use that in the definition of $E$ we only used ``half'' of the collar neighbourhood $\partial_0 N \times [0,1]$),  and to equip $N$  with a positive definite form $g$ such that $N=E(g)$.
\end{proof}

We conclude the chapter with the proof of the uniqueness theorem for tubular neighbourhoods.
\begin{proof}[Proof of Theorem~\ref{thm:uniqueness-of-general-tubular-neighbourhoods}]
Let~$M$ be a $4$-manifold and let $X$ be a compact proper $k$-dimen\-si\-on\-al submanifold. Furthermore let $p_i\colon N_i\to X$, $i=1,2$   be two tubular neighbourhoods of $X$.
For $i=1,2$, let $q_i\colon E_i\to X$ be two corresponding normal vector bundles and let $g_i$ be the positive definite forms provided by Lemma~\ref{lem:tubular-implies-normal-vector-bundle}.
It follows from Theorem~\ref{thm:uniquenormal} that  there exists a bundle isomorphism $f\colon E_1\xrightarrow{\cong} E_2$ such that $\iota_2\circ f$ and $\iota_1$ are ambiently isotopic rel.~$X$. It follows from the definitions that $N_2$ is equivalent to the
disc bundle defined by $f^*g_2$ on $E_1$.
It follows from the
Lemma~\ref{lem:tubular-nhds-in-vector-bundles} (2)
together with the
Isotopy Extension Theorem~\ref{thm:isotopy-extension-theorem} that $f^*g_2$ and $g_1$ define equivalent tubular neighbourhoods. (Strictly speaking we did not formulate the Isotopy Extension Theorem~\ref{thm:isotopy-extension-theorem} for submanifolds with corner, but it is not difficult to prove a generalization.)
\end{proof}

%===========================================================
\chapter{Background on bundle structures}\label{chapter:bundlestructures}
In this chapter we recall the bundle technologies we will need to use in later chapters. The three standard manifold categories smooth ($\Diff$), piecewise linear ($\PL$) and topological ($\TOP$) each have corresponding bundle types, with fibre $\R^n$. We first discuss the topological groups $\TOP(n)$ and $\O(n)$ which are structure groups for fibre bundles with fibre $\R^n$, corresponding to the $\TOP$ and $\Diff$ categories. There are natural topologies on $\TOP(n)$ and $\O(n)$, so that it is relatively straightforward to discuss the classification of these bundle types. We next discuss the $\R^n$-bundles that correspond to the $\PL$ category. In contrast to $\TOP(n)$ and $\O(n)$ there is no obvious appropriate topology on the group of piecewise-linear homeomoprhisms of $\R^n$ that fix the origin. For this reason, in the $\PL$ category it appears there is no choice but to delve into a more sophisticated approach to classify the bundles of interest, and we approach this via simplicial groups. We finish the section with a discussion of topological \emph{microbundles}.

%========================================
\section{Topological, smooth, and piecewise linear $\R^n$-bundles}
Before we turn to the different flavours of $\R^n$-bundles let us first recall some general facts about bundles.

\begin{definition}
Let $G$ be a topological group $G$. A \emph{universal principal $G$-bundle} is a principal $G$-bundle $p\colon \up{E}G\to\up{B}G$ such that the following two conditions are satisfied.
\bnm
    \item The space $\up{B}G$ is a CW complex.
    \item Given any principal $G$-bundle $q\colon F\to C$, where $C$ is a topological space that is homotopy equivalent to a CW complex,
there exists a map $f\colon C\to \up{B}G$, unique up to homotopy, such that $q$ is isomorphic to the pullback bundle $f^*\up{E}G$.
\enm
The base space $\up{B}G$ is called a \emph{classifying space for $G$}.
\end{definition}

\begin{proposition}\label{prop:bg}
Given a topological group $G$, there exists a
universal principal $G$-bundle  $p\colon \up{E}G\to\up{B}G$.
This principal bundle is unique up to fibre homotopy equivalence.
\end{proposition}

\begin{proof}
The  Milnor join construction~\cite{Miln1956c}, \cite{Hus1993} or \cite{Stas1971}, or alternatively the geometric bar construction~\cite{May-classifying-spaces}, gives an explicit principal $G$-bundle $p\colon E\to B$
which has the universal property for all numerable principal $G$-bundles. Since principal $G$-bundles over CW complexes are numerable, the Milnor bundle has the universal property for principal $G$-bundles over CW complexes.

The topological space $B$ is not necessarily a CW complex.
Thus we apply the CW approximation theorem to get a CW complex $\wti{B}$ and a weak homotopy equivalence $\varphi\colon \wti{B}\to B$. The pullback bundle $\varphi^*E$ over $\wti{B}$ has the desired properties.

The uniqueness of universal principal $G$-bundles over CW complexes follows from a standard argument about universal objects.
\end{proof}

\begin{definition}\label{defn:TOP}
Given $n\in \N_0$ let $\TOP(n)$ be the subgroup of homeomorphisms of $\R^n$
that fix the origin, topologised using the compact open topology. A principal $\TOP(n)$-bundle
has an associated fibre bundle with fibre $\R^n$ and a preferred 0-section.
Call such a bundle a \emph{topological $\R^n$-bundle}.
Let $\TOP$ be the colimit $\colim \TOP(n)$, in the category of topological groups, under the inclusions
\[ \ba{rcl}
\TOP(n) &\to &\TOP(n+1)\\
(f\colon \R^n\to \R^n)&\mapsto & (f\times \Id_{\R}\colon \R^{n}\times \R\to \R^{n}\times \R).
\ea\]
We obtain the corresponding classifying spaces $\BTOP(n)$ and $\BTOP$.
\end{definition}

\begin{definition}
Given $n\in \N_0$ let $\O(n)$ be the orthogonal homeomorphisms of $\R^n$ that fix the origin, topologised in the standard way as a subspace of a vector space
$\O(n)\subseteq \op{M}(n\times n,\R)\cong\R^{n^2}$.
A principal $\O(n)$-bundle
has an associated fibre bundle with fibre $\R^n$ and a preferred 0-section, and such a bundle is in particular a vector bundle.
Define $\O$, $\operatorname{BO}(n)$ and $\operatorname{BO}$ analogously to Definition~\ref{defn:TOP}.
\end{definition}

We also introduce the following structure group, for comparison, and use in Chapter~\ref{chapter:SW-classes}.

\begin{definition}
Given $n\in \N_0$ let
    $\Diff(n)$ be the subgroup of diffeomorphisms of $\R^n$
that fix the origin, topologised using the weak $C^\infty$ topology~\cite[\textsection~2.1]{Hi76} (also called the (weak) \emph{Whitney} topology). A principal $\Diff(n)$-bundle
has an associated fibre bundle with fibre $\R^n$ and a preferred 0-section.
Call such a bundle a \emph{$\Diff$ $\R^n$-bundle}. Define $\Diff$, $\operatorname{BDiff}(n)$ and $\operatorname{BDiff}$ analogously to Definition~\ref{defn:TOP}.
\end{definition}

\begin{remark}\label{rem:vectorbundle}~
%\leavevmode
\bnm
\item\label{item:first}
The pullback topology on $\Diff(n)$ under the inclusion map $\Diff(n)\to\TOP(n)$ is by definition the coarsest topology such that $\Diff(n)\to\TOP(n)$ is continuous. It equals the compact open topology on the set $\Diff(n)$ (equivalent to the weak $C^0$ topology), not the weak~$C^\infty$ topology. In other words,  one does not topologise $\Diff(n)$ as a subspace of $\TOP(n)$.
\item It is nevertheless the case that $\Diff(n)\to\TOP(n)$ is a continuous map with respect to the  weak~$C^\infty$ topology on $\Diff(n)$ and the compact open topology on $\TOP(n)$.   There are induced maps $\operatorname{BDiff}(n) \to \BTOP(n)$ for each $n$, and $\operatorname{BDiff} \to \BTOP$.
\item
On the other hand, we now argue that \emph{both} inclusion maps $\op{GL}(n,\R)\to \TOP(n)$ and $\op{GL}(n,\R)\to \Diff(n)$ induce the standard topology on $\op{GL}(n,\R)$ via pullback (which is a little surprising at first glance). In the former case, simply note that the standard topology on $\op{GL}(n,\R)$ as a vector subspace of $\R^{n^2}$ is equivalent to the compact open topology, or equivalently the weak $C^0$ topology. In the latter case, note that as the maps in $\op{GL}(n,\R)$ are linear, and the weak $C^r$ topologies for $0\leq r <\infty$ are defined in terms of partial derivatives~\cite[\textsection~2.1]{Hi76}, they are all equivalent topologies on $\op{GL}(n,\R)$. As the weak $C^\infty$ topology is the limit of the $C^r$ topologies, this proves the statement.
\item The map
\[ \ba{rcl} \Diff(n)\times [0,1] &\to &\Diff(n)\\
(f,t)&\mapsto & \left(\ba{rcl} \R^n&\to &\R^n\\
x&\mapsto & \left\{ \ba{ll} \frac{1}{t}\cdot f(tx), &\mbox{ if }t\ne 0,\\
\op{D}f_0\cdot x\ea \right.\ea\right)\ea\]
is a deformation retraction from  $\Diff(n)$ to $\op{GL}(n,\R)$. Finally note that the Gram-Schmidt process can be used to determine a homotopy equivalence $\op{GL}(n,\R)\simeq \O(n)$. Thus $\R^n$-bundles with structure group $\Diff(n)$ always admit a (unique) structure group reduction to ordinary vector bundles whose transition functions lie in~$\O(n)$.
    \enm
\end{remark}

The following proposition shows that $\BTOP$ and $\operatorname{BDiff}$ classify $\CAT$ $\R^n$-bundles when $\CAT$ is $\TOP$ or $\Diff$.

\begin{proposition}\label{prop:universal-bundle}
For $\CAT=\TOP$ or $\Diff$, and for each $n \geq 0$, the space $\BCAT(n)$ is homotopy equivalent to a CW complex, and there exists a $\CAT$ $\R^n$-bundle~$\gamma_n^{\CAT}$ over $\BCAT(n)$, such that for every $\CAT$ $\R^n$-bundle $\xi$ over a CW complex, there is a $\CAT$ $\R^n$-bundle map $F \colon \xi \to \gamma_n^{\CAT}$, unique up to homotopy of $\CAT$ $\R^n$-bundle maps. In particular $F^*(\gamma_n^{\CAT}) \cong \xi$.
\end{proposition}

\begin{proof}
    Let $X$ be a space homotopy equivalent to a CW complex. There is a 1:1 correspondence between isomorphism classes of $\CAT(n)$ $\R^n$-bundles and isomorphism classes of principal $\CAT(n)$-bundles over~$X$. For more details, see e.g.~\cite[Proposition~11.22]{MR1886843}.

    Applying this to the universal principal $\CAT(n)$-bundle of Proposition~\ref{prop:bg}, we obtain a $\CAT$ $\R^n$-bundle~$\gamma_n^{\CAT}$ over $\BCAT(n)$. The desired universal property for~$\gamma_n^{\CAT}$ is inherited from the universal property of the universal principal $\CAT(n)$-bundle over~$\BCAT(n)$
\end{proof}

We finally move on to the piecewise linear category. For background on piecewise linear topology see \cite{Rourke-Sand-book}.

\begin{definition}
  A continuous map $f\colon K \to L$ between two simplicial complexes $K$ and $L$ is \emph{piecewise linear} ($\PL$) if there are subdivisions $K'$ of $K$ and $L'$ of $L$ such that $f \colon K' \to L'$ is a simplicial map.
\end{definition}

\begin{definition}\label{defn:PL-Rn-bundle}
%Given $n\in\N_0$, define $\PL(n)$ to be the subgroup of homeomorphisms of $\R^n$ that fix the origin and are $\PL$ with respect to the standard simplicial structure on~$\R^n$.
%
A \emph{$\PL$ $\R^n$-bundle} is a topological $\R^n$-bundle $p\colon E\to B$, where both $E$ and $B$ are simplicial complexes, where $p\colon E\to B$ and the $0$-section are both $\PL$ maps, and where for every simplex $\Delta\subseteq B$ there exists a $\PL$ homeomorphism $\varphi$ such that the composition $p^{-1}(\Delta)\xrightarrow{\varphi}\Delta\times\R^n\xrightarrow{\operatorname{pr}_1}\Delta$ is equal to the projection $p$.

\end{definition}

The definition of the classifying space for $\PL$ $\R^n$-bundles is a little more involved than the constructions for $\O(n)$ and $\TOP(n)$, using the technology of semi-simplicial groups. For a gentle introduction to simplicial sets, see \cite{Friedman-SS}. The canonical reference for classifying spaces constructed using simplicial groups is~\cite{May-classifying-spaces}.
%The next definition comes from \cite{RoSa-Block-III}.
% PO: I don't think it does.

\begin{remark}
    When navigating the various references we use below, the reader should be aware that the terminology for simplicial groups and simplicial sets has changed over the years and there are some clashes between current usage and previous usage. What we are calling a \emph{simplicial set} is, for example, what is defined in \cite[Definition~3.2]{Friedman-SS}. In particular, note that this has both \emph{face} and \emph{degeneracy} maps. This object has historically been called a \emph{complete semi-simplicial set (c.s.s.)}, which has unfortunately in the past been abbreviated to just ``semi-simplicial set''. The modern terminology reserves \emph{semi-simplicial set} for a simplicial set without degeneracy maps as part of the data. To add to the confusion, a simplicial set, but without degeneracies, has also historically been called a $\Delta$-set, although in modern terminology this is usually reserved for the semi-simplical version of a simplicial \emph{complex}.
\end{remark}
Let $\Delta^k$ be the standard $k$-simplex. Recall a \emph{simplicial group} is a simplicial object in the category of groups, that is a contravariant functor from the simplicial category to the category of groups $\Delta\to\mathbf{\operatorname{\textbf{Group}}}$.

\begin{definition}\label{def:ss}
Given $n\in \N_0$ let $\PL(n)_{\bullet}$ be the simplicial group defined as follows.
  \begin{enumerate}[leftmargin=1cm]
    \item[(i)]\label{item:ss1} The group $\PL(n)_k$ assigned to the $k$-simplex is the group of $\PL$ $\R^n$-bundle isomorphisms of the trivial bundle $f \colon \R^n \times \Delta^k \to \R^n \times \Delta^k$. That is, $f$ is a $\PL$ homeomorphism preserving the 0-section and commuting with the projection to $\Delta^k$.
    %In more detail, $f|_{\R^n \times \{t\}}$ fixes the origin in $\R^n$ for every $t \in \Delta^k$, and the following commutes
 % \[\xymatrix{\R^n \times \Delta^k \ar[dr]^-{\op{pr}_2} \ar[rr]^-{f} & & \R^n \times \Delta^k \ar[dl]_-{\op{pr}_2} \\ & \Delta^k &}\]
  %commutes.
  \item[(ii)] A morphism $\lambda\colon\Delta^{\ell}\to\Delta^k$ is sent to the morphism $\lambda^{\#}$ which assigns to $f \colon \R^n \times \Delta^k \to \R^n \times \Delta^k$ the map such that the diagram below commutes
  \[
  \begin{tikzcd}
    \R^n \times \Delta^{\ell} \ar[r, "\lambda^{\#}(f)"]\ar[d,"\lambda\times 1"] &\R^n \times \Delta^{\ell}\ar[d,"\lambda\times 1"]\\
    \R^n \times \Delta^{k} \ar[r, "f"] &\R^n \times \Delta^{k}.
  \end{tikzcd}
  \]
\end{enumerate}

Define $\PL(n)$ as the topological group~\cite[Theorem~3]{Milnor-geom-real} realising the simplicial group $\PL(n)_{\bullet}$.
Then define $\BPL(n)$ using Proposition~\ref{prop:bg}.
%\fotnote{PO: In the rest of the book, perhaps we should be careful about saying ``structure group $\PL(n)$'', because people will think it means the group of 0-fixing PL homeos.}
%\fotnote{[SF] this is very confusing, we have a topological group, but we do NOT use the general definition for BG. I am very puzzled. We should add a few sentences why this discussion now diverges from the above discussion of BG.}
Define $\PL$ and $\BPL$ as colimits, analogously to Definition~\ref{defn:TOP}.
\end{definition}

Equivalently, one can define $\BPL(n)$ by first using the geometric bar construction level-wise on $\PL(n)_{\bullet}$ to obtain a simplicial space $\BPL(n)_{\bullet}$, and then geometrically realising to obtain a space $\BPL(n)$; see e.g.~\cite[\textsection 1.2]{MR3995026} for the equivalence to the previous definition.

\begin{remark}
Consider the subgroup of $\TOP(n)$ consisting of $\PL$ homeomorphisms (note this is \emph{not} the definition of what we called $\PL(n)$ above. In particular $\PL(n)$ is a much larger set). One might think that this group, together with the subspace topology from $\TOP(n)$, is a realistic way to circumvent the construction above with simplicial groups. This would certainly produce a topological group that classifies \emph{some} category of $\R^n$ bundles, but we do not know whether it classifies the $\PL$ $\R^n$-bundles defined above (it seems unlikely, cf.~Remark~\ref{rem:vectorbundle}~\eqref{item:first}).
\end{remark}

The following proposition can now be viewed as an analogue of
Proposition~\ref{prop:bg}.

\begin{proposition}\label{prop:universal-bundlePL}
For each $n \geq 0$, the space $\BPL(n)$ is homotopy equivalent to a CW complex, and there exists a $\PL$ $\R^n$-bundle~$\gamma_n^{\PL}$ over $\BPL(n)$, such that for every $\PL$ $\R^n$-bundle $\xi$ over a CW complex, there is a $\PL$ $\R^n$-bundle map $F \colon \xi \to \gamma_n^{\PL}$, unique up to homotopy of $\PL$ $\R^n$-bundle maps. In particular $F^*(\gamma_n^{\PL}) \cong \xi$.
\end{proposition}

\begin{proof}
In~\cite[p.~24]{milnor4}, Milnor constructed a simplicial group $(\PL_n)_\bullet$. It is defined similarly to $\PL(n)_\bullet$, but differs from what we have done in Definition~\ref{def:ss}(i) by specifying that the maps $f$ are rather $\PL$ microbundle isomorphisms
of the trivial $\PL$ microbundle over the simplex (in particular are \emph{germs} of the types of maps $f$ we are using). The simplicial space $(\BPL_n)_\bullet$ that results from the level-wise geometric bar construction is a classifying space for simplicial principal $(\PL_n)_\bullet$-bundles. In~\cite[\textsection 5]{milnor4}, Milnor showed that the geometric realisation $\BPL_n:=|(\BPL_n)_\bullet|$ is homotopy equivalent to a locally finite simplicial complex (so in particular a CW complex), and that there is a universal rank $n$ $\PL$ microbundle over $\BPL_n$.

Kuiper-Lashof~\cite[Theorem~1]{MR216507} proved that each rank $n$ $\PL$ microbundle is the underlying microbundle of a unique $\PL$ $\R^n$-bundle. Apply this to the universal rank $n$ $\PL$ microbundle over $\BPL_n$, to obtain a universal $\PL$ $\R^n$-bundle over $\BPL_n$. The map $g\colon \PL(n)_\bullet\to (\PL_n)_\bullet$, given by simplex-wise taking the germs of maps $f$ as in Definition~\ref{def:ss}(i), is a homotopy equivalence of simplicial sets~\cite[Lemma~1.6(f)]{MR216507}. This induces a homotopy equivalence $\BPL(n)\xrightarrow{\simeq} \BPL_n$. Use this latter homotopy equivalence to pull back the universal rank $n$ $\PL$ microbundle over $\BPL_n$ to the desired base space~$\BPL(n)$.
\end{proof}

\begin{remark}
\mbox{}
\bnm
\item
The simplicial method used above for $\PL$ $\R^n$-bundles can be used in the smooth and topological categories as well, giving a uniform treatment. The resulting classifying spaces for the smooth and topological categories are homotopy equivalent to the spaces $\BO(n)$ and $\BTOP(n)$, defined earlier, by the universal property.
\item
An alternative uniform proof of
Propositions~\ref{prop:universal-bundle} and~\ref{prop:universal-bundlePL}, for all three categories $\TOP$, $\PL$ and $\Diff$ simultaneously,
was given by Kirby-Siebenmann~\cite[Essay~IV, Proposition~8.1, p.~181]{KS77}. Instead of a simplicial approach, they use E.~H.~Brown's theory of representability to obtain universal $\CAT$ \emph{microbundles} (cf.~Section~\ref{sec:microbundles}) over classifying spaces which are locally finite simplicial complexes. The theorems of Kister (Theorem~\ref{thm:kistmaz}) and Kuiper-Lashof~\cite[Theorem~1]{MR216507}, together with the analogue for smooth microbundles, means that the Kirby-Siebenman universal $\CAT$ microbundles can then be upgraded to universal $\CAT$ $\R^n$-bundles with the same base space, similarly to how we worked at the end of the proof of Proposition~\ref{prop:universal-bundlePL}.
%These will be homotopy equivalent to the ones described above by a universal property of classifying spaces.
The Kirby-Siebenmann proof also includes a statement relative to a closed subspace of the base.
%Moreover, given an open subset $U \subseteq B$ with a closed subset $C \subseteq U$, and a $\CAT$ bundle map $G \colon \xi|_U \to \gamma_n^{\CAT}$, we may assume that $F = G$ in some open neighbourhood $V$ with $C \subseteq V \subseteq U$.
\enm
\end{remark}

\section{Microbundles}\label{sec:microbundles}

All smooth manifolds have tangent vector bundles and all smooth submanifolds have normal vector bundles. This is one reason that vector bundles, corresponding to the structure group $\O(n)$ are the de facto bundle technology in the smooth category. A general difficulty we will face when talking about manifold transversality in Chapter \ref{chapter:transversality} is that we will need to use some well-defined notion of normal structure for a submanifold and, outside of the smooth category, submanifolds do not necessarily admit normal vector bundles. However, various weaker bundle technologies have been developed, which replace this crucial concept in the topological category.

This subsection is devoted to a discussion of \emph{microbundles}, which were introduced by Milnor in~\cite{Milnor63}. The existence and uniqueness of tangent and (stable) normal microbundles leads to the existence and uniqueness of tangent and (stable) normal $\TOP$ $\R^n$-bundles, via Kister's Theorem (Theorem~\ref{thm:kistmaz}). Source material on microbundles is not hard to find in the literature, but has been included here for the convenience of the reader, in order for this survey to be more self-contained.

The interaction between the weaker structure groups $\PL(n)$ and $\TOP(n)$ for tangent and (stable) normal $\R^n$-bundles, and the topological/$\PL$/smooth structures on the manifold itself are the topic of \emph{smoothing theory}, to which we turn in Chapter \ref{chapter:smoothing}.

\begin{definition}
An $n$-dimensional \emph{microbundle} $\xi$ consists of a base space~$B$ and a total space~$E$ sitting in a diagram
\[ B \xrightarrow{i} E \xrightarrow{p} B, \]
such that $p \circ i = \id_B$, and that is \emph{locally trivial} in the following sense: for every point~$b \in B$, there exists an open neighbourhood~$U$ of $B$, an open neighbourhood~$V$ of $i(b)$ and a homeomorphism~$\phi_b \colon V \to U \times \R^n$ such that
\[\xymatrix@R0.6cm{
& V \ar[rd]^-{p} \ar[dd]^-{\phi_b} &\\
U \ar[ur]^-{i} \ar[dr]_-{u\mapsto (u,0)} & & U\\
&U \times \R^n \ar[ur]_-{(u,v)\mapsto u} & \\
}
\]
commutes. We refer to $i$ as the \emph{inclusion map} of the microbundle and we refer to $p$ as the \emph{projection}.
\end{definition}

Note that we only require  neighbourhoods of the points $i(b)$
to be trivial, and not all of the fibre~$p^{-1}(b)$. In fact, we only care about
neighbourhoods~$i(B) \subseteq E$, and declare two microbundles $B \xrightarrow{i} E \xrightarrow{p} B$ and $B \xrightarrow{i'} E' \xrightarrow{p'} B$
to be \emph{equivalent}, if $i(B)$ and $i'(B)$ have homeomorphic neighbourhoods such that the homeomorphism commutes with both the inclusion map and the
restriction of the projection map.

\begin{definition}
Let $B\xrightarrow{i}E\xrightarrow{p}  B$ be a microbundle~$\xi$ and let $f \colon A \to B$ be a map.
The \emph{pullback} of $\xi$ under $f$ is the microbundle~$f^*\xi$ with total space
\[ f^*E = \big \{ (a,e) \in A \times E \mid f(a) = p(e) \big \}, \]
projection~$(f^*p) (a,e) = a$, and injection~$(f^*i) (a) = \big(a, i( f(a) ) \big)$.
In the case that $f$ is an inclusion, also consider the
microbundle~$\xi|_A$, which has total space~$p^{-1}(A) \subseteq E$, and projection~$p|_{p^{-1}(A)} \colon p^{-1}(A) \to A$ and injection~$i_A \colon A \to p^{-1}(A)$ are both the restrictions of $p$, $i$. In this case, the map of total spaces~$(a,e) \mapsto e$ gives a preferred isomorphism~$f^*\xi$ to~$\xi|_A$.
\end{definition}

A topological $\R^n$-bundle clearly has an underlying microbundle.
Kister proved the surprising result that \emph{every} microbundle over a manifold
is equivalent to such an underlying microbundle~\cite[Theorem 2 and Corollary 1]{Kister64}.

\begin{theorem}\textbf{\textup{(Kister's Theorem)}}\label{thm:kistmaz}
Let $B$ be a manifold and $B\xrightarrow{i}E\xrightarrow{p} B$ be an $n$-dimensional microbundle $\xi$. Then there exists an open set $F\subseteq E$ containing $i(B)$ such that $p|_F\colon F\to B$ is the projection map of a topological $\R^n$-bundle, whose $0$-section is $i$ and whose underlying microbundle is $\xi$. Moreover, if $F_1$ and $F_2$ are any two topological $\R^n$-bundles over $B$ such that the underlying microbundles are equivalent, then $F_1$ and $F_2$ are isomorphic as topological $\R^n$-bundles.
\end{theorem}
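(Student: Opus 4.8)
The plan is to follow Kister~\cite[Theorem~2]{Kister64}, whose argument was found independently by Mazur. The technical heart, which I would isolate first as a lemma, is a \emph{straightening result} for embeddings of Euclidean space: equip the set of topological embeddings $\R^n\to\R^n$ fixing the origin with the compact-open topology; then the inclusion of the subspace $\TOP(n)$ of homeomorphisms should admit a deformation retraction, realised by a continuous rule $e\mapsto\widehat e$ that sends an embedding $e$ to a homeomorphism $\widehat e$ agreeing with $e$ near $0$, restricts to the identity on $\TOP(n)$, and is natural enough to apply fibrewise to families of embeddings parametrised by an auxiliary space. To build $\widehat e$ I would use a radial engulfing construction, telescoping $e$ along an exhaustion of $\R^n$ by balls $D_{r_1}^n\subset D_{r_2}^n\subset\cdots$ for which the images $e(D_{r_k}^n)$ are again sandwiched between round balls; since $e$ itself provides a bicollar of each embedded sphere $e(\partial D_{r_k}^n)$ in $\R^n$, the generalised Schoenflies theorem~\cite[Theorem~5]{Brown60} and the Alexander trick (Lemma~\ref{lem:alexander-trick}) allow one to match the images up into a single self-homeomorphism of $\R^n$ coinciding with $e$ near the origin, with convergence controlled by the choice of radii.

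Granting the straightening lemma, I would prove existence by globalising over $B$. First choose a locally finite atlas $\{U_\alpha\}$ of $B$ over which $\xi$ is trivial, with trivialisations $\phi_\alpha\colon V_\alpha\to U_\alpha\times\R^n$ on neighbourhoods $V_\alpha$ of $i(U_\alpha)$ in $E$; over each overlap the transition data $\phi_\alpha\circ\phi_\beta^{-1}$ determines, at each point, a germ at $0$ of an embedding of $\R^n$, which extends to an honest embedding by Lemma~\ref{lem:extend-embedding-of-disc}. I would then feed these into the parametrised straightening lemma and run an induction over the atlas, modifying the trivialisations one chart at a time so that the transition maps become an honest $\TOP(n)$-valued cocycle, while at every stage only shrinking total spaces. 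The fibre bundle with fibre $\R^n$ associated to the resulting cocycle is then an open subset $F\subset E$ containing $i(B)$, with $0$-section $i$ and underlying microbundle $\xi$.

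For the uniqueness clause I would reduce to a relative version of the existence argument. Given two topological $\R^n$-bundles $F_1,F_2$ over $B$ with an isomorphism $\psi$ of underlying microbundles, glue $F_1\times[0,\tfrac23)$ to $F_2\times(\tfrac13,1]$ over $B\times(\tfrac13,\tfrac23)$ by means of $\psi$ near the zero sections; this yields a microbundle over $B\times I$ carrying the bundle structure $F_1$ over $B\times\{0\}$ and $F_2$ over $B\times\{1\}$. Running the existence argument relative to these two ends produces a topological $\R^n$-bundle $F$ over $B\times I$ restricting to $F_1$ and $F_2$, and since $B$ is paracompact the covering homotopy theorem for fibre bundles then gives $F_1\cong F|_{B\times\{0\}}\cong F|_{B\times\{1\}}\cong F_2$ as topological $\R^n$-bundles.

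The step I expect to be the real obstacle is the straightening lemma, and especially its parametrised form: the telescoping construction has to be performed continuously and compatibly in $e$, and checking that the infinite composition converges to a homeomorphism depending continuously on $e$ (and on any parameters) is the delicate point. By contrast the patching in the second paragraph and the covering-homotopy reduction in the third are essentially formal once the lemma is in hand.
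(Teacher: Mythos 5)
The paper does not prove this theorem at all: it simply cites Kister~\cite[Theorem~2]{Kister64} (Mazur's argument being independent and unpublished in the same form), so there is no in-paper proof to compare against. Your outline does reproduce the architecture of Kister's actual proof --- a fibrewise ``straightening'' statement for open embeddings $(\R^n,0)\to(\R^n,0)$, globalised over a locally finite trivialising atlas by an induction that repairs the transition data one chart at a time, with uniqueness reduced to a relative/parametrised form of the same machinery --- and your identification of the parametrised straightening lemma as the crux is exactly right.

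However, the mechanism you propose for that crux is a genuine gap. Kister's deformation of the embedding space onto $\TOP(n)$ is \emph{not} built from the generalised Schoenflies theorem plus the Alexander trick. Brown's theorem tells you that $S^n\sm e(\Int D^n_r)$ is a ball, but it provides no canonical homeomorphism from a standard ball, and hence no way to make the resulting $\widehat e$ depend continuously on $e$, let alone on an auxiliary parameter ranging over $B$ --- and continuity in families is precisely what the globalisation step consumes. Kister instead constructs a canonical ``stretching'' isotopy directly: using only compositions of the given embeddings with radial self-homeomorphisms of $\R^n$, he pushes the image of $e$ out to exhaust its target through an inductively chosen sequence of balls, each stage being an explicit formula in $e$, so that continuity in $e$ and in parameters is manifest and the infinite composition stabilises on compacta. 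So the telescoping idea is right, but the Schoenflies/Alexander input should be excised and replaced by this canonical engulfing; as written, the step ``match the images up into a single self-homeomorphism'' is exactly the point where the argument would fail. (Two smaller remarks: Lemma~\ref{lem:extend-embedding-of-disc} is about locally flat discs in manifolds and is not the right tool for promoting germs of transition maps to embeddings --- one simply shrinks the trivialising neighbourhoods; and your $B\times I$ gluing plus the covering homotopy theorem is a legitimate, if slightly different, route to uniqueness from Kister's direct fibrewise straightening of the microbundle isomorphism.)
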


Every manifold admits a tangent microbundle.

\begin{definition}
The \emph{tangent microbundle} of an $n$-dimensional manifold $M$ is
the microbundle $M \xrightarrow{\Delta}M\times M \xrightarrow{(x,y)\mapsto x}  M$ where $\Delta$ is the diagonal map. Kister's theorem implies this corresponds to a unique \emph{topological tangent bundle} $\tau_M\colon M\to \BTOP(n)$, with corresponding \emph{stable topological tangent bundle} $\tau_M\colon M\to \BTOP$.
\end{definition}

More subtle is the concept of a normal microbundle.

\begin{definition}
A \emph{normal microbundle} of a submanifold~$S$ of a manifold $M$ is
a  microbundle $S \to E \to S$ such that $E$ is a neighbourhood of $S$ in $M$ and such that $S\to E$ is the inclusion.
\end{definition}

It is immediate from the definition of normal microbundle that the local flatness in the definition of a submanifold $S$ is a necessary condition for the existence of a normal microbundle. For example
wild knots and the Alexander horned sphere do not admit normal microbundles. Indeed, it is generally far from straightforward to prove the existence of normal microbundles at all. Here is an existence and uniqueness result due to Stern~\cite[Theorem~4.5]{Stern75}. See also~\cite{Hi66}, \cite[p.~65]{Hi68}, and~\cite[Essay~IV,~Appendix~A, p.~203]{KS77}.

\begin{theorem}\label{thm:microexist}
  Let $M^{n+q}$ be a manifold, and let $N^{n} \subseteq M^{n+q}$ be a proper submanifold of codimension $q$. Suppose that $n \leq q+1+j$ and $q \geq 5+j$ for some $j=0,1,2$.  Then $N$ admits a normal microbundle restricting to a normal microbundle of $\partial N \subseteq \partial M$.

If in addition  $n \leq q+j$, then this normal microbundle is unique up to isotopy.
\end{theorem}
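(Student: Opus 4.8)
The plan is to deduce this from Stern's theorem~\cite[Theorem 4.5]{Stern75}; for orientation I sketch the mechanism, which is obstruction theory in the style of Hirsch~\cite{Hi66,Hi68} and Kirby--Siebenmann~\cite[Essay~IV, Appendix~A]{KS77}, and whose overall structure parallels that of the proof of Theorem~\ref{thm:existnormal} above. Fixing the submanifold structure on $N$, a normal microbundle of $N$ in $M$ is the same, by Kister--Mazur (Theorem~\ref{thm:kistmaz}), as a suitably adapted $\R^q$-bundle neighbourhood, and the set of germs along $0$ of such structures for the model inclusion $\R^{n}\subset\R^{n+q}$ forms a space $\mathcal{N}_q$; a normal microbundle of $N$ is then a section of the associated bundle over $N$ with fibre $\mathcal{N}_q$. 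The geometric input one needs is that $\mathcal{N}_q$ is highly connected --- at least $(q+j)$-connected --- and this is exactly where the engulfing hypotheses enter: unknotting of embedded $k$-discs in codimension $q$, which is where $q\ge 5+j$ is consumed and which produces the metastable improvement indexed by $j\in\{0,1,2\}$.

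Granting this, existence is a standard skeleton-by-skeleton (equivalently handle-by-handle) extension over $N$: the obstruction to extending a section over an $(i+1)$-cell lies in $\pi_i(\mathcal{N}_q)$, and the resulting obstruction classes in $H^{i+1}(N;\pi_i(\mathcal{N}_q))$ all vanish once $\dim N = n \le q+1+j$. Uniqueness is the analogous relative argument for two sections over $N\times[0,1]$ rel $N\times\{0,1\}$: the obstruction classes now live in $H^{i}(N;\pi_i(\mathcal{N}_q))$ and vanish once $n\le q+j$, giving uniqueness up to concordance; one then upgrades concordance to isotopy, either by a microbundle form of the concordance-implies-isotopy theorem or directly by an Alexander-trick argument on the bundle.

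The clause that the normal microbundle restrict to a normal microbundle of $\partial N\subset\partial M$ is handled exactly as in the proof of Theorem~\ref{thm:existnormal}: first produce a normal microbundle of the codimension-$q$ embedding $\partial N\subset\partial M$ (the hypotheses are inherited, with $\dim\partial N=n-1$), then, using the Collar neighbourhood theorem for proper submanifolds (Theorem~\ref{thm:collar-boundary}), choose a collar $\partial M\times[0,1]\subset M$ in which $\partial N\times[0,1]$ is a collar of $\partial N$ in $N$, install the product of the boundary normal microbundle with $[0,1]$ over this collar, and run the extension (and, for uniqueness, the concordance) above over the rest of $N$ relative to the collar; the relative obstruction groups $H^{*}(N,\partial N\times[0,1];\pi_*(\mathcal{N}_q))$ vanish in the same range.

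The main obstacle --- and the technical heart of Stern's argument --- is the connectivity estimate for $\mathcal{N}_q$ in the metastable range. This is a \emph{genuine} engulfing problem, not a formality, and it is the reason the statement is phrased with the auxiliary parameter $j$, which trades a larger lower bound on $q$ for a wider permissible range of $n$, rather than with a single clean inequality; everything else in the argument is routine obstruction theory once that input is in hand.
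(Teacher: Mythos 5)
The paper offers no proof of this statement: it is quoted verbatim as Stern's Theorem~4.5, with pointers to Hirsch and to Kirby--Siebenmann, Essay~IV, Appendix~A, as the sources. Your proposal ultimately rests on the same foundation --- the genuinely hard input, namely the metastable connectivity of the space of normal-microbundle germs established by engulfing, is taken from Stern --- and the obstruction-theoretic scaffolding you wrap around it (section extension over skeleta for existence, the relative argument over $N\times[0,1]$ for uniqueness, and the collar argument for the boundary clause) is a fair summary of how those sources actually argue, so this is essentially the same approach as the paper's.
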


\begin{remark}[Unique up to isotopy]
	For a submanifold $N \subseteq M$ we say a normal microbundle $N\xrightarrow{i}\nu(N)\xrightarrow{p} N$ is \emph{unique up to isotopy} if whenever there is another normal microbundle $N\xrightarrow{i'}\nu'(N)\xrightarrow{p'}N$, there exists a microbundle equivalence $f$ between $\nu(N)$ and $\nu'(N)$ such that $p' \circ f$ is isotopic to $p$ relative to $N$.	
\end{remark}

We exploit these theorems to define a stable topological normal structure on any closed manifold, that will play an important role in Chapter \ref{chapter:smoothing}.
Every closed smooth $n$-manifold $M$ can be embedded in $\R^k$, for some $k$. The stable class of a normal vector bundle gives rise to  \emph{stable normal vector bundle of $M$}, denoted $\mu_M \colon M \to \BO$. For $k$ large enough, the embedding of $M$ in $\R^k$ is unique up to isotopy, and using this one can show that the stable normal bundle is uniquely determined up to isomorphism.   We seek to adapt this construction to the topological category.

Consider that any closed $n$-manifold $M$ can be embedded as a submanifold $M\subseteq \R^m$ for large $m$ (this follows e.g.\ from \cite[Corollary A.9]{Hat02} together with \cite[Theorem~5]{MillR1972}). For large enough $m$, any two such embeddings are isotopic. For large enough $m$, Theorem~\ref{thm:microexist} implies there is a normal microbundle $\xi$. After possibly increasing $m$ further, the last sentence of Theorem \ref{thm:microexist} implies this normal microbundle $\xi$ is unique. By Kister's Theorem this defines a unique topological $\R^{m-n}$-bundle. We remove the dependence on $m$ by passing to the stable bundle $\TOP(m-n)\subseteq \TOP$. Thus the process described gives a well-defined classifying map $\nu_M\colon M\to \BTOP$. Summarising, we have the following.

\begin{definition}\label{def:topnormal} Given any closed $n$-manifold, the topological $\R^{\infty}$-bundle  $\nu_M\colon M\to \BTOP$, described above, is called the \emph{stable topological normal bundle}. It is well-defined and unique.
\end{definition}

%\fotnote{[SF] so this is not a normal vector bundle in the above sense?! PO: no. When you are doing surgery theory you need some sort of ``stable bundle structure'' to go along with your manifold. For smooth manifolds, you either use the stable tangent bundle, or you embed the manifold in a high-dim Euclidean space, take the normal bundle, then stabilise that. These are equivalent data as stable bundles because they are inverses. I guess Mark and I are very used to using the stable normal bundle, and the construction above is meant to recover this concept in the topological category. I think the same effect could be achieved if we used the stable topological tangent bundle (which always exists). Perhaps there is an argument that this is easier to define here and we should do that instead, then note for later that the stable inverse is also a thing. Although personally, I am happy to keep the description above.}\fotnote{[SF] we never use the term ``stable topological normal bundle'' again, do we use the concept with different names? PO: looks like we use ``stable normal microbundle'' later, but I think this is an error. We should be saying stable topological normal bundle, because we are passing from a microbundle to an $\R^\infty$ bundle using Kister. I will change that instance, look for other instances, and we should all be alert to how this terminology is used.}

The next example shows that outside the hypotheses of Theorem \ref{thm:microexist}, we should expect that normal microbundles can be very badly behaved.

\begin{example}\label{ex:notnormal}
Normal microbundles do not necessarily exist.
Rourke and Sanderson~\cite[Example 2]{RoSa67} construct $S^{19}$ as a submanifold of a certain
$28$-dimensional $\PL$ manifold $M$ in such a way that it does not admit a topological normal microbundle. The embedding is even piecewise linear.

Hirsch's example of a 4-submanifold of $S^7$ from \cite[Theorem~4]{Hi68} mentioned in Remark~\ref{remark:Hirsch-no-tub-nbhd-example} also does not admit a normal microbundle. Such a normal microbundle would contain a $\TOP(3)$ bundle, and every $\TOP(3)$ bundle can be improved to an $\O(3)$ bundle, which in turn contains a tubular neighbourhood. So it follows that there can be no normal microbundle in Hirsch's example. Note that historically this deduction was not possible until Hatcher~\cite{Hatcher-Smale-conj} proved that $\operatorname{BO}(3) \to \BTOP(3)$ is a homotopy equivalence.
%\footnote{MP: new paragraph.}

Even when topological normal microbundles exist, they are not always unique: Rourke and Sanderson consider the smooth standard embedding $S^{18}\subseteq S^{27}$ \cite[Theorem 3.12]{RoSa-Block-III} and construct a certain normal microbundle $\xi$ of $S^{18}\subseteq S^{27}$. The construction of $\xi$ is such that if $\xi$ were concordant to the trivial normal microbundle, this concordance would induce a normal microbundle structure back on the embedding $S^{19}\subseteq M^{28}$ of the previous paragraph. As this is not possible, $\xi$ is nontrivial. Note that the normal vector bundle~$\nu S^{18}$ of the standard embedding is trivial, so $S^{18}\subseteq S^{27}$ admits at least two different normal microbundles.
\end{example}

The following theorem ensures the issues of the previous example are not seen in dimension 4.

\begin{theorem}\label{thm:normalmicroexist} Let $X$ be a proper submanifold of a $4$-manifold $M$. Then $X$ admits a normal microbundle. Moreover, if $\xi$ is a normal microbundle of $X$, it is the underlying microbundle to a normal vector bundle.
\end{theorem}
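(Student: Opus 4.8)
The plan is to obtain a normal microbundle as the underlying microbundle of a normal vector bundle, and to prove the second assertion by using the Kister--Mazur theorem to replace an arbitrary normal microbundle by a genuine topological $\R^n$-bundle neighbourhood and then reducing its structure group to $\O(n)$, which is possible because the base has dimension at most $3$.

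For existence, apply Theorem~\ref{thm:existnormal} to obtain a normal vector bundle $p\colon E\to X$. By definition $E$ is a codimension zero submanifold of $M$, the map $p\colon E\to X$ is a vector bundle with $p(x)=x$ for $x\in X$, and $\partial E=p^{-1}(\partial X)$. Since $E$ has codimension zero, every point of $X\sms\partial X$ receives a full Euclidean neighbourhood inside $E$ (namely a neighbourhood in the total space, which is an open subset of $M$) and every point of $\partial X$ receives a half-space neighbourhood; hence $E$ is a neighbourhood of $X$ in $M$, and the underlying microbundle $X\xrightarrow{0}E\xrightarrow{p}X$ is a normal microbundle of $X$.

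For the ``moreover'' clause, let $\xi=(X\xrightarrow{i}E_\xi\xrightarrow{r}X)$ be a normal microbundle; after shrinking, $E_\xi$ may be taken open in $M$. By the Kister--Mazur Theorem~\ref{thm:kistmaz} there is an open set $F\subseteq E_\xi$ with $i(X)\subseteq F$ such that $r|_F\colon F\to X$ is a topological $\R^n$-bundle with $0$-section $i$ and underlying microbundle $\xi$. The base $X$ has dimension $4-n$, so $n\le 4$. For $n\le 3$ one has $\TOP(n)\simeq\O(n)$ (classical for $n\le 2$, and by Hatcher's solution of the Smale conjecture for $n=3$), so the classifying map $X\to\BTOP(n)$ lifts through $\BO(n)\to\BTOP(n)$; for $n=4$ the base $X$ is $0$-dimensional and every bundle over it is trivial; the cases $n=0$ and $n\ge 5$ are trivial or vacuous. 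In every case $F$ is isomorphic, as a topological $\R^n$-bundle over $X$, to an honest vector bundle, and transporting that linear structure along a bundle isomorphism covering $\id_X$ (which preserves $0$-sections) we may assume $r|_F\colon F\to X$ is itself an $n$-plane bundle with $0$-section the inclusion $X\hookrightarrow M$. Now $F$ is a codimension zero submanifold of $M$ with $\partial F=F\cap\partial M$, and it is extendable because it is the total space of an honest linear $\R^n$-bundle rather than a truncated one (contrast Figure~\ref{fig:non-extendable}); when $\partial X=\emptyset$ we have $X\subseteq\int M$, so after shrinking $F$ into $\int M$ we get $\partial F=\emptyset=r^{-1}(\partial X)$, so $F$ is an internal linear bundle, hence a normal vector bundle, whose underlying microbundle is $\xi$. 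When $\partial X\neq\emptyset$ I would reduce to this case as in the proof of Theorem~\ref{thm:existnormal}: use Theorem~\ref{thm:collar-boundary} to pick a collar $\partial M\times[0,1]$ with $(\partial M\times[0,1])\cap X=\partial X\times[0,1]$, put $\xi$ into a standard product form near $\partial X$ (using that $\partial X\subset\partial M$ is a codimension $n$ submanifold of a $3$-manifold, hence has a unique smooth normal vector bundle, together with the Isotopy Extension Theorem~\ref{thm:isotopy-extension-theorem}), apply the argument above to $X\sms(\partial X\times[0,\tfrac12))$ inside $M\sms(\partial M\times[0,\tfrac12))$, and glue the two normal vector bundles along the overlap.

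I expect the real difficulty to be exactly this boundary step: an arbitrary normal microbundle of $X$ need not restrict to a normal microbundle of $\partial X\subset\partial M$, so arranging compatibility of $\xi$ with a collar of $\partial M$ — equivalently forcing the defining identity $\partial(\text{total space})=r^{-1}(\partial X)$ of an internal linear bundle — is where care is genuinely needed. Everything else is either formal or a direct appeal to the Kister--Mazur theorem, to Theorem~\ref{thm:existnormal}, and to the coincidence $\TOP(n)\simeq\O(n)$ for $n\le 3$.
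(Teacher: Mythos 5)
Your argument follows the paper's proof essentially step for step: existence comes straight from Theorem~\ref{thm:existnormal}, and the ``moreover'' clause is handled by applying Kister--Mazur to produce an embedded topological $\R^n$-bundle underlying $\xi$ and then lifting its structure group from $\TOP(n)$ to $\O(n)$ using the low-dimensional coincidences (the paper cites $\TOP(n)/\O(n)\simeq *$ for $n\le 2$ and $\pi_k(\TOP(3)/\O(3))=0$ for $k\le 4$ from Kirby--Siebenmann, rather than the Smale conjecture, but since the base has dimension $4-n$ the obstruction-theoretic content is the same).

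There is, however, one step where your justification is wrong even though the conclusion is salvageable: you assert that the embedded bundle $F$ ``is extendable because it is the total space of an honest linear $\R^n$-bundle rather than a truncated one.'' Extendability is not a property of the abstract bundle structure on $F$; it is a property of how the frontier of $F$ sits inside $M$. An embedded full $\R^n$-bundle can still fail to be extendable if its closure in $M$ immerses rather than embeds (this is exactly the phenomenon of Figure~\ref{fig:non-extendable}, and carrying a complete linear structure does nothing to rule it out, since any open disc bundle is abstractly isomorphic to the full bundle anyway). The correct repair, which is what the paper does, is to restrict to an open convex disc sub-bundle $D\subset F$ and rescale $D$ to a full vector bundle: then the complement $F\sms D$ supplies the collar into which any radial homeomorphism onto $D$ can be extended, so $D$ is extendable by construction. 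With that one-line substitution your proof is fine. Your closing remarks about the boundary case are well taken --- the paper's own proof of the ``moreover'' clause is silent on arranging $\partial(\text{total space})=r^{-1}(\partial X)$, and the reduction you sketch (standardise $\xi$ near $\partial X$ using the smooth normal bundle of $\partial X\subset\partial M$ and a compatible collar, then glue) is the natural way to fill that in, mirroring the proof of Theorem~\ref{thm:existnormal}.
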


\begin{proof} The existence of normal microbundles in ambient dimension 4 is an immediate consequence of the existence of normal vector bundles (Theorem \ref{thm:existnormal}), and this is the only proof of which we are aware for this fact. (It would be interesting to know of a more elementary proof.)

We denote by $n$ the codimension of $X$ in $M$.
Given a normal microbundle $\xi$, we apply Kister's theorem \ref{thm:kistmaz} to obtain an embedded $\R^n$-bundle with underlying microbundle $\xi$. For $n\leq 3$, the homotopy fibre $\TOP(n)/\O(n)$ for the forgetful map $\BO(n)\to \BTOP(n)$ is contractible; see Proposition~\ref{prop:homotopy-type-TOPn-mod-On} for the relevant citations.
Using these facts, and checking the obstructions in each of the cases $n=0, 1, 2, 3, 4$, we see in each case the embedded topological $\R^n$-bundle can be upgraded to an embedded vector bundle. Choose such a vector bundle refinement. By restricting to an open disc bundle and rescaling we can ensure this internal linear bundle is extendable and thus is a normal vector bundle in the sense of Definition \ref{def:extendable}.
\end{proof}

We will make use of our discussion of normal microbundles in Chapter~\ref{chapter:transversality} on topological transversality.

%============================================================
\chapter{Stiefel-Whitney classes}\label{chapter:SW-classes}

%Here is an outline about SW classes, with the aim of giving what we need to show that intersection forms of spin 4-manifolds are even.
The well-known treatment of Stiefel-Whitney classes in Milnor-Stasheff~\cite{MS74} is for vector bundles, that is bundles over a space $B$ with linear transition functions. In this chapter we discuss the analogous characteristic classes for $\TOP$ and $\PL$ $\R^n$-bundles.  These arise, in particular, for tangent bundles of $\PL$ and topological manifolds respectively.
%(However note that in the $\PL$ category we also have the theory of block bundles~\cite{Block-bundles-I,Block-bundles-II,RoSa-Block-III}.)\footnote{PO: I don't understand this comment. I think block bundles exist in all categories. It's just that block bundles have been most useful in the PL category?}
Throughout this chapter we will use the terminology \emph{$\CAT$ $\R^n$-bundle}, where
\[
\CAT \in \{\Diff, \PL,\TOP\}.
\]
When $\CAT =\TOP$, this will mean  an $\R^n$-bundle with structure group $\TOP(n)$. When $\CAT=\PL$, this will mean $\PL$ $\R^n$-bundles in the sense of Definition~\ref{defn:PL-Rn-bundle}.  When $\CAT=\Diff$, we will mean an $\R^n$-bundle with structure group $\O(n)$, taking advantage of the structure group reduction described in Remark~\ref{rem:vectorbundle}.
%Note that the topological group $\Diff(n)$ of diffeomorphisms of $\R^n$ that fix the origin, with the Whitney topology, deformation retracts to $\op{GL}(n,\R)$, which in turn deformation retracts to $\O(n)$. Thus it makes sense to conflate $\Diff$ $\R^n$-bundles and ordinary vector bundles whose transition functions lie in $\O(n)$.

Generalising work of Thom~\cite{Thom-espaces-fibre} in the case of orthogonal structure group,
Fadell~\cite[Definition 6.1]{Fadell} (see also Stern~\cite{Stern75}) defined Stiefel-Whitney classes of $\CAT$ $\R^n$-bundles, where $\CAT \in \{\Diff, \PL, \TOP\}$.
%Here a $G$ $\R^n$-bundle is a mild abuse of notation, and refers to a spherical fibration with fibres homotopy equivalent to $S^{n-1}$.
As we shall see, the definitions of Stiefel-Whitney classes in the $\PL$ and $\TOP$ cases are analogous to the classical definition for vector bundles.

\begin{remark}
Even more generally, one can also develop Stiefel-Whitney classes for spherical fibrations, using an analogous definition. Passing to the underlying $S^{n-1}$-bundle of a $\CAT$ $\R^n$-bundle, the spherical fibration definition will recover all definitions we give below. However, we have chosen not work in this level of generality.
\end{remark}

Let $\xi = (p \colon E \to B)$ be a  $\CAT$ $\R^n$-bundle over some topological space $B$.  Let $E_0$ denote the complement in the total space $E$ of the zero section. Recall that the Thom isomorphism~\cite{Thom-espaces-fibre} is an isomorphism
\[\Phi \colon H^i(B;\Z/2) \xrightarrow{\cong} H^{n+i}(E,E_0;\Z/2).\]
Fadell~\cite[Theorem~5.2]{Fadell} checked that the Thom isomorphism holds in the present context, with $\PL(n)$ or $\TOP(n)$ structure group.
The Steenrod squares are homomorphisms
\[\Sq^i \colon H^n(E,E_0;\Z/2) \to H^{n+i}(E,E_0;\Z/2).\]
Here, let us recall that given a pair of spaces $(X,A)$, for each $i \geq 0$ and for each $n \geq 0$ the Steenrod square $\Sq^i$ is a homomorphism $\Sq^i \colon H^n(X,A;\Z/2) \to H^{n+i}(X,A;\Z/2)$.  The Steenrod squares  have the following properties, which we will use; see e.g.\ \cite{Steenrod-Epstein}.
\begin{enumerate}[leftmargin=1cm]
    \item The $\Sq^i$ are natural with respect to maps of pairs $f \colon (X,A) \to (Y,B)$, for all $i \geq 0$.
    \item $\Sq^0 = \Id$;
    \item $\Sq^i(x) = 0$ for $x \in H^n(X,A;\Z/2)$ with $n < i$;
    \item $\Sq^i(x) = x \cup x$ for $x \in H^i(X,A;\Z/2)$.
\end{enumerate}

Let $1 \in H^0(B;\Z/2)$ denote the unit of the cohomology ring $H^*(B;\Z/2)$.

\begin{definition}[Stiefel-Whitney classes~{{\cite[Definition~6.1]{Fadell}}, \cite[p.~262]{Stern75}}]\label{defn:SW-classes-Stern}
  The $i$th Stiefel-Whitney class of $\xi$ is
  \[w_i(\xi) := \Phi^{-1} \circ \Sq^i \circ \Phi(1) \in H^i(B;\Z/2).\]
\end{definition}

The definition uses the sequence of maps
\[H^0(B;\Z/2) \xrightarrow{\Phi,\cong} H^n(E,E_0;\Z/2) \xrightarrow{\Sq^i} H^{n+i}(E,E_0;\Z/2) \xrightarrow{\Phi^{-1},\cong} H^i(B;\Z/2).\]
Since $\Sq^j \colon H^n(E,E_0;\Z/2) \to H^{n+j}(E,E_0;\Z/2)$ is the zero map for $j >n$, by the third property of Steenrod squares, it follows that $w_j(\xi) = 0$ for $j>n$.

We now restrict ourselves to bundles over spaces that are homotopy equivalent to CW complexes. In this context it was shown by \cite{Fadell} and ~Stern~\cite[Theorem~2.0]{Stern75} that the Stiefel-Whitney classes $w_i(\xi)$ for $0 \leq i \leq n$ satisfy the following properties:

\begin{proposition}\label{prop:Stern-SW-axioms}
Let $B$ be a space homotopy equivalent to a CW complex and let $\xi = (p \colon E \to B)$ be a  $\CAT$ $\R^n$-bundle over $B$.
Define the total Stiefel-Whitney class \[w(\xi) := \sum_{i=0}^n w_i(\xi) \in H^*(B;\Z/2).\]
\begin{enumerate}[leftmargin=1cm,font=\normalfont]
    \item For a $\CAT$ $\R^n$-bundle map $f=(f_E,f_B) \colon \xi \to \eta$, which consists of maps \[\begin{tikzcd}
    E(\xi) \ar[d,"p(\xi)"] \ar[r,"f_E"] & E(\eta) \ar[d,"p(\eta)"] \\ B(\xi) \ar[r,"f_B"] & B(\eta),
\end{tikzcd}
\]
we have that $f_B^*(w(\eta)) = w(\xi)$.
\item If $\xi = \eta^q \oplus \varepsilon^{n-q}$, where $\eta$ is a $\CAT$ $\R^q$-bundle over $B$ and $\varepsilon^{n-q}$ is a trivial $\CAT$ $\R^{n-q}$-bundle over $B$, then $w(\xi)= w(\eta)$.
\item For each $n$ there is a $\CAT$ $\R^n$-bundle such that $w_n(\xi)\neq 0$.
\end{enumerate}
\end{proposition}

Under the assumption that either $\CAT \in\{\PL,\Diff\}$ or $\CAT = \TOP$ and $q \neq 4,5$, Stern also proved that the properties in Proposition~\ref{prop:Stern-SW-axioms} characterise the Stiefel-Whitney classes. We are not sure whether the assumption that $q \neq 4,5$ can now be removed in the $\TOP$ case using Quinn's work.
%{MP: I thought about stating that these characterise, but Stern has some conditions on $q \neq 4,5$. I thought FQ might have provided enough connectivity estimates to render this unnecessary, but I couldn't ascertain that.}

Recall the universal $\CAT$ $\R^n$-bundle $\gamma_n^{\CAT}$ over $\BCAT(n)$ from Propositions~\ref{prop:universal-bundle} and~\ref{prop:universal-bundlePL}.  Here we are abusing notation, and using the deformation retract $\Diff(n)\simeq \O(n)$ (Remark~\ref{rem:vectorbundle}) to conflate these structure groups. We denote the \emph{universal Stiefel-Whitney classes} by \[\ol{w}_k^{\CAT} := w_k(\gamma_n^{\CAT}) \in H^k(\BCAT(n);\Z/2),\] for some $n \geq k$. We also write $\ol{w}_k^{\CAT} \colon \BCAT(n) \to K(\Z/2,k)$ for the corresponding map to the Eilenberg-Maclane space.  For $F \colon B \to \BCAT(n)$, classifying a $\CAT$ $\R^n$-bundle $\xi$, we have by Proposition~\ref{prop:Stern-SW-axioms} that
\[w_k(\xi) = F^*(\ol{w}_k^{\CAT}) \in H^k(B;\Z/2).\]

The definitions for each value of $\CAT$ are compatible in the following sense.

\begin{proposition}\label{prop:compatibility-SW-classes-defns}
Let $\xi$ be a $\CAT$ $\R^n$-bundle over $B$, for some $\CAT \in  \{\Diff,\PL\}$.  Let $\xi_{\TOP}$ be the underlying $\TOP$ $\R^n$-bundle.  Then $w_k(\xi) = w_k(\xi_{\TOP})$ for every $k \geq 0$.
In fact, the diagram
\[\begin{tikzcd}
B \ar[r,"\xi"] \ar[d,"\xi_{\TOP}"'] & \BCAT \ar[d,"\ol{w}_k^{\CAT}"] \ar[dl] \\  \BTOP \ar[r,"\ol{w}_k^{\TOP}"'] & K(\Z/2,k).
\end{tikzcd}\]
commutes up to homotopy.
\end{proposition}

\begin{proof}
    This is a consequence of the fact that the definitions in all three cases are directly analogous; cf.~\cite[Th\'{e}or\`{e}me~III.8]{Thom-espaces-fibre} and \cite[Theorem~6.10]{Fadell}, which consider the case of tangent bundles.
\end{proof}

From now on we will often take the previous proposition to heart, and omit the $\CAT$ superscript, writing $\ol{w}_k$ instead of $\ol{w}_k^{\CAT}$. For the rest of this chapter this will in any case only be used with $\CAT=\TOP$.

Recall that $\TOP(n)/\O(n)$ is by definition the homotopy fibre of $\BO(n) \to \BTOP(n)$.

\begin{proposition}\label{prop:homotopy-type-TOPn-mod-On}
  $\TOP(n)/\O(n)$ is contractible for $n \leq 3$, while for $n \geq 4$ there is a 5-connected map $\TOP(n)/\O(n) \to K(\Z/2,3)$.
\end{proposition}

\begin{proof}
    According to \cite[Essay V,~Section~5.0,~p.~246]{KS77}, the homotopy fibre $\TOP(2)/\O(2)$ is contractible,   $\pi_i(\TOP(3)/\O(3)) =0$ for $i \geq 4$, and $\pi_i(\TOP(3)/\O(3)) \cong \pi_i(\Diff(D^3,\partial D^3))$ for $i \geq 4$.  The latter group is trivial for all $i$, by Hatcher's theorem~\cite{Hatcher-Smale-conj}.  Thus all the homotopy groups of $\TOP(3)/\O(3)$ vanish, and therefore $\BO(3) \to \BTOP(3)$ induces an isomorphism on homotopy groups $\pi_i(\BO(3)) \to \pi_i(\BTOP(3))$ for all $i$.
    Since $\BO(3) \to \BTOP(3)$ is a map between spaces homotopy equivalent to a CW complex, we deduce that this map is a homotopy equivalence by Whitehead's theorem. Thus  $\TOP(3)/\O(3)$ is the homotopy fibre of a homotopy equivalence and so is contractible.

The reference \cite[Essay V,~Section~5.0, p.~246]{KS77} also includes the statement that for $n \geq 5$ and $i \leq 7$ we have $\pi_i(\TOP/\O,\TOP(n)/\O(n)) =0$.  In addition $\pi_i(\TOP/\O) \cong \pi_i(K(\Z/2,3))$ for $i \leq 6$, with the isomorphism induced by the map $\TOP/\O \to \TOP/\PL \simeq K(\Z/2,3)$, which is therefore a 5-connected map.  The homotopy equivalence $\TOP/\PL \simeq K(\Z/2,3)$ is from~\cite[Essay~IV,~Section 10.12, p.~200]{KS77}. Since the composition of two 5-connected maps is 5-connected, it follows that there is a $5$-connected map $\TOP(n)/\O(n) \to K(\Z/2,3)$ for $n \geq 5$, as claimed.

It remains to consider $n=4$. For this we appeal to \cite[Theorem~8.7A]{FQ90}, which states that $\TOP(4)/\O(4) \to \TOP/\O$ is 5-connected.  Combined with the fact already discussed that there is a 5-connected map $\TOP/\O \to K(\Z/2,3)$, we obtain the sought-for 5-connected map $\TOP(4)/\O(4) \to K(\Z/2,3)$.
\end{proof}

%TOP/o ->   BO  ->  BTOP
% |         |
%TOP/PL -> BPL ->   BTOP

\begin{proposition}\leavevmode
\begin{enumerate}[leftmargin=1cm,font=\normalfont]
    \item  For $n \geq 2$, $\pi_1(\BTOP(n)) = \Z/2$.
    \item The corresponding unique homotopically nontrivial map $\BTOP(n) \to K(\Z/2,1)$ is the universal first Stiefel-Whitney class $\ol{w}_1$.
    \item    We have that $\pi_2(\BTOP(2)) \cong \Z$.
    \item  For $n \geq 3$, $\pi_2(\BTOP(n)) \cong \Z/2$.
    \item The corresponding unique homotopically nontrivial map $\BTOP(n) \to K(\Z/2,2)$ is the universal second Stiefel-Whitney class $\ol{w}_2$.
\end{enumerate}
 \end{proposition}

\begin{proof}
  For $n \leq 3$ we have $\TOP(n) \simeq \O(n)$ by Proposition~\ref{prop:homotopy-type-TOPn-mod-On}, and so the homotopy groups of $\TOP(n)$ are isomorphic to those of $\O(n)$. For $n \geq 4$ we have the long exact sequence in homotopy groups:
  \begin{align*}
   \pi_2(\TOP(n)/\O(n)) &\to \pi_2(\BO(n)) \to \pi_2(\BTOP(n)) \to  \\
    \pi_1(\TOP(n)/\O(n)) &\to \pi_1(\BO(n)) \to \pi_1(\BTOP(n)) \to \{\ast\}.
   \end{align*}
  Since $\pi_i(\TOP(n)/\O(n))=0$ for $i=1,2$ by Proposition~\ref{prop:homotopy-type-TOPn-mod-On}, we deduce that $\pi_i(\BO(n)) \cong \pi_i(\BTOP(n))$ for $i=1,2$, with the map induced by the canonical forgetful map.  Since $\pi_i(\BO(n)) \cong \pi_{i-1}(\O(n)) \cong \Z/2$ for $i=1,2$, the result follows from this and Proposition~\ref{prop:compatibility-SW-classes-defns}.
\end{proof}

%The proposition implies that for a bundle $\xi$ with classifying map $f_\xi \colon B \to \BTOP$, we have that $w_i(\xi) = f_{\xi}^*(\ol{w}_i)$, for $i=1,2$.
We see that $\TOP(n)$ has two connected components, which are homotopy equivalent because $\TOP(n)$ is a topological group, and $\pi_1(\TOP(n),\Id) \cong \Z/2$ for $n \geq 3$.

\begin{definition}
  We define $\STOP(n)$ to be the subgroup of $\TOP(n)$ consisting of orientation preserving homeomorphisms.
We define $\TOPSpin(n)$ to be the universal cover of $\STOP(n)$. Define $\STOP$ and $\TOPSpin$ as corresponding colimits.
\end{definition}

This definition is analogous to the definition of $\SO(n)$ as the subgroup of $\O(n)$ of orientation preserving orthogonal matrices, and of $\Spin(n)$ as the connected double cover of $\SO(n)$; this is the universal cover for $n \geq 3$.

\begin{theorem}
    The topological group $\STOP(n)$ is the connected component of $\TOP(n)$ containing the identity.
\end{theorem}

\begin{proof}
We saw above that $\TOP(n)$ has two connected components. For a homeomorphism $f \colon \R^n \to \R^n$, and another such homeomorphism $g$, if $f$ and $g$ are isotopic, then $f$ and $g$ are either both orientation preserving (o.p.) or both orientation reversing (o.r.). The map $\pi_0(\TOP(n)) \to \{\text{o.p.}, \text{o.r.}\}$ is a surjective map from a set with two elements to another set with two elements, hence is a bijection.

(Alternatively, the theorem can be seen as a consequence of the Stable Homeomorphism Theorem~\ref{thm:SHT}, which says that every orientation-preserving homeomorphism of $\R^n$ is stable. Using that every homeomorphism of $\R^n$ that is the identity on some subset is isotopic to the identity, via the (inverted) Alexander trick, as in Corollary~\ref{cor:homeo-sn}, we deduce the result. The computations of the homotopy type of $\TOP(n)/\O(n)$ also used the Stable Homeomorphism Theorem, so this alternative proof is not independent of the first.)
    \end{proof}

We have the following commutative diagram of classifying spaces, together with the universal Stiefel-Whitney classes.

\[\begin{tikzcd}
    \BSpin(n) \ar[r] \ar[d] & \BTOPSpin(n) \ar[d] & \\
    \BSO(n)  \ar[r] \ar[d] & \BSTOP(n) \ar[r,"\ol{w}_2"] \ar[d] & K(\Z/2,2) \\
    \BO(n)  \ar[r]  & \BTOP(n) \ar[r,"\ol{w}_1"] & K(\Z/2,1)
  \end{tikzcd}\]
The horizontal maps in the bottom row induce isomorphisms on $\pi_1$, while the horizontal maps in the middle row induce isomorphisms on $\pi_2$.
Thus up to homotopy equivalence, $\BSTOP(n)$ is the 1-connected cover of $\BTOP(n)$, and $\BTOPSpin(n)$ is the 2-connected cover.
In other words, $\BSTOP(n)$ is the homotopy fibre of $\ol{w}_1$ and $\BTOPSpin(n)$ is the homotopy fibre of $\ol{w}_2$.

The analogous statements hold in the case of $\O(n)$.

\begin{definition}
Let $M$ be a space homotopy equivalent to a CW complex and let $\xi$ be a $\TOP$ $\R^n$-bundle over $M$ classified by a map which we also denote $\xi \colon M \to \BTOP(n)$.  An \emph{orientation} on $\xi$ is a lift $M \to \BSTOP(n)$ of $\xi$, and two orientations are equivalent if the lifts are homotopic over $\BSTOP(n) \to \BTOP(n)$.  If an orientation exists then we say $\xi$ is \emph{orientable}.
\end{definition}

\begin{proposition}\label{prop:orientation}
    An orientation for an $n$-manifold $M$ is equivalent to an orientation on the topological tangent bundle $\tau_M\colon M\to \BTOP(n)$. In particular $M$ is orientable if and only if $\tau_M$ is orientable.
\end{proposition}

\begin{proof}
    An orientation for $M$ is equivalent to a homology orientation for $M$, i.e.\ a coherent choice of generators of $H_n(M,M \sm \{x\};\Z)$, for $x \in M$. In turn, a homology orientation for $M$ is equivalent to a coherent system of orientations of the fibre of the tangent microbundle $M \xrightarrow{\Delta} M \times M \xrightarrow{\pr_1} M$, rel.\ the zero section $\Delta(M)$. This is because both are, by definition, a coherent system of generators of $H_n(M,M \sm\{x\};\Z)$, for each $x \in M$.  See \cite[Lemma~11.6]{MS74} for details.

Next, the latter notion is equivalent, via Kister's theorem~\cite{Kister64} and excision, to a coherent choice of generators of $H_n(F_x,F_x \sm \{0\};\Z)\cong \Z$. Here, $F_x$ is the fibre over $x$ in the topological tangent $\R^n$-bundle of $M$; see \cite[Lemma~11.7]{MS74}, but replace the exponential map with the embedding from Kister's theorem.

 To see that a coherent choice of isomorphisms $H_n(F_x,F_x \sm \{0\};\Z)\cong\Z$ is equivalent to a choice of lift $M \to \BSTOP(n)$, recall that the two connected components of $\TOP(n)$ correspond to whether a homeomorphism preserves or changes a fixed generator of $H_n(\R^n,\R^n \sm \{0\};\Z)$, so the structure group reduces to $\STOP(n)$ if and only if there is a coherent choice of generators for the homology groups $H_n(F_x,F_x \sm \{0\};\Z)$, for $x\in M$.
\end{proof}

\begin{definition}
For an $n$-manifold $M$, with topological tangent bundle $\tau_M$, we define $w_i(M) := w_i(\tau_M)$.
\end{definition}

\begin{proposition}
Let $M$ be a space homotopy equivalent to a CW complex and let $\xi$ be a $\TOP$ $\R^n$-bundle over $M$. The bundle $\xi$ is orientable if and only if $w_1(\xi)=0$.
In particular, by Proposition~\ref{prop:orientation}, a manifold $M$ is orientable if and only if $w_1(M)=0$.
\end{proposition}

\begin{proof}
Since $\BSTOP(n) \to \BTOP(n) \xrightarrow{\ol{w}_1} K(\Z/2,1)$ is a fibration sequence, we have an exact sequence of pointed sets
\[[M,\BSTOP(n)] \to [M,\BTOP(n)] \to [M,K(\Z/2,1)] \cong H^1(M;\Z/2).\]
The bundle $\xi$ is orientable if and only if the classifying map $\xi \in [M,\BTOP(n)]$ is homotopic to a map in the image of  $[M,\BSTOP(n)]$. The latter is equivalent to $\ol{w}_1 \circ \xi \in [M,K(\Z/2,1)]$ being null-homotopic, using the sequence.
Translating to cohomology groups this is equivalent to $w_1(\xi) = \xi^*(\ol{w}_1) =0$.  Here we used that $w_1(\xi)$ is equal to the pullback of the universal bundle along the classifying map for $\xi$.
%, which corresponds to the map $\xi \circ \ol{w}_1$, vanishes i.e.\ $w_1(\xi)=0$.
%\fotnote{[SF] this seems to be a little quick. If I understand everything correctly, then
%$\ol{w}_1\in H^1(\op{BTOP};\Z_2)$ is nontrivial,
%$w_1(\text{universal bundle over } \op{BTOP})\in H^1(\op{BTOP};\Z_2)$ is nontrivial, and
%$\ol{w}_1\in H^1(\op{BTOP};\Z_2)\cong \Z_2$, thus $w_1(\text{universal bundle over } \op{BTOP})=\ol{w}_1$. Is that the right argument? \\
%PO: If you don't object to me weighing in... Mark showed $\ol{w}_1 \circ \tau_M \in [M,K(\Z/2,1)]=H^1(M;\Z/2)$ is trivial if and only if the desired lift exists. This cohomology class is the first Stiefel Whitney class of $M$ by Prop 8.2 and the discussion just under Prop 8.3.  [SF] I think we need to see more about the ``i.e. $w_1(\xi)=0$'' part.  MP: I'm not sure what's going on with this fotnote anymore, but I tried to rewrite again. }
%
 \end{proof}

\begin{definition}
Let $M$ be a space homotopy equivalent to a CW complex and let $\xi$ be a $\TOP$ $\R^n$-bundle over $M$ classified by a map which we also denote $\xi \colon M \to \BTOP(n)$.  Suppose that $w_1(M)=0$, so $\xi$ is orientable.  A \emph{spin structure} on $\xi$ is a lift $M \to \BTOPSpin(n)$ of $\xi$, and two spin structures are equivalent if the lifts are homotopy equivalent.  If a spin structure exists then we say $\xi$ is \emph{spin}.

Now suppose that $M$ is a topological $n$-manifold with topological tangent bundle classified by $\tau_M \colon M \to \BTOP(n)$.  Suppose that $w_1(M)=0$, so $M$ is orientable.  A \emph{spin structure} on $M$ is a spin structure on $\tau_M$. If a spin structure on $\tau_M$ exists then we say $M$ is \emph{spin}.
\end{definition}

\begin{proposition}
Let $M$ be a space homotopy equivalent to a CW complex and let $\xi$ be a $\TOP$ $\R^n$-bundle over $M$. Suppose that $\xi$ is orientable.
   Then $\xi$ is spin if and only if $w_2(\xi)=0$.
In particular, an orientable manifold $M$ is spin if and only if $w_2(M)=0$.
\end{proposition}

\begin{proof}
    This holds because $\BTOPSpin(n) \to \BSTOP(n) \xrightarrow{\ol{w}_2} K(\Z/2,2)$ is a fibration sequence. Thus a lift of $\xi \colon M \to \BSTOP(n)$ to $\BTOPSpin(n)$ exists if and only if $\ol{w}_2 \circ \tau_M$ is null-homotopic, i.e.\ if and only if $w_2(\xi)=0$.
    Here we are again using that a fibration sequence gives rise to an exact sequence of sets \[[M,\BTOPSpin(n)] \to [M,\BSTOP(n)] \xrightarrow{} [M,K(\Z/2,2)] \cong H^2(M;\Z/2),\]
and hence a lift to $\BTOPSpin(n)$ exists if and only if $w_2(\xi)= \xi^*(\ol{w}_2) = 0$.
\end{proof}

We now recall the Wu classes and the Wu formulae for the Stiefel-Whitney classes of compact manifolds.  The key point here is that while the treatment in \cite[Chapter~11]{MS74} is presented for smooth manifolds, in fact it uses only Poincar\'{e} duality and algebraic topology, so works just as well for topological manifolds.  This has been observed by Thom~\cite{Thom-espaces-fibre} and Fadell~\cite{Fadell}.
We proceed to summarise the treatment in Milnor-Stasheff.

Let $M$ be a compact $n$-manifold with $\Z/2$-fundamental class $[M,\partial M] \in H_n(M,\partial M;\Z/2)$.
Consider the homomorphism
\begin{align*}
   \theta\colon  H^{n-k}(M,\partial M;\Z/2) &\to \Z/2 \\
    x &\mapsto \langle \Sq^k(x), [M,\partial M]\rangle.
\end{align*}
By Poincar\'e duality, there is a unique class $v_k(M) \in H^k(M;\Z/2)$ with \[\langle v_k(M) \cup x, [M,\partial M]\rangle = \theta(x)\] for all $x \in H^{n-k}(M,\partial M;\Z/2)$.  In fact $v_k(M) \cup x = \Sq^k(x) \in H^n(M,\partial M;\Z/2)$ for every $x \in H^{n-k}(M,\partial M;\Z/2)$.

\begin{definition}
The class $v_k(M)$ is the \emph{$k$th Wu class of $M$}.
\end{definition}

\begin{proposition}\label{prop:Wu-formulae}
  Let $M$ be a compact $n$-manifold. Then the Wu formulae
  \[w_k(M) = \sum_{i+j=k} \Sq^i(v_j)\]
  hold for $k=0,\dots,n$.
\end{proposition}

\begin{proof}
    The proof in \cite[Theorem~11.11,~Lemma~11.13,~and~Theorem~11.14]{MS74}, which relies on \cite[Theorem~11.11~and~Lemma~11.13
]{MS74}, uses only Poincar\'{e} duality and products from algebraic topology, and so proceeds exactly as in Milnor-Stasheff.
\end{proof}

%\fotnote{PO: I added new stuff from this point onwards.}

Here is a sample, and often used, application for the second Stiefel-Whitney class.

\begin{definition} Let $M$ be a compact, oriented $n$-manifold and $\Sigma^{n-2}\subseteq M$ a proper submanifold. We say $\Sigma$ is \emph{characteristic} if $\PD(w_2(M))=j_*([\Sigma])\in H_{n-2}(M,\partial M;\Z/2)$, where $j_*\colon H_{n-2}(\Sigma,\partial \Sigma;\Z/2)\to H_{n-2}(M,\partial M;\Z/2)$ is the inclusion-induced map.
\end{definition}

\begin{proposition}\label{prop:characteristicsurface}
Let $M$ be a compact $n$-manifold and $\Sigma\subseteq M$ a proper submanifold. Then $w_2(M\setminus \Sigma)=0$ if and only if $\Sigma$ is chracteristic.
\end{proposition}
%\fotnote{PO: I restricted to $w_2(M)\neq 0$, as I think the thing I wrote before about $w_2(M)=0$ didn't make sense. I also removed the word spin from everywhere, because it's not actually necessary for the proof that $w_1$ vanishes. This result could show Pin structures exist if you cut out certain surfaces, for example.}

\begin{proof}
Throughout the proof, $\Z/2$-coefficients are understood. By the Collar Neighbourhood Theorem (\ref{thm:topological-collar}), we may take a boundary collar on $M$ that restricts on $\Sigma$ to a boundary collar on $\Sigma$.
Set $K:=M\sm(\partial M\times[0,1))$, the complement of the open collar on $\partial M$; note $K$ is compact. Write $\nu\Sigma$ for a open tubular neighbourhood of $\Sigma$. Tubular neighbourhoods were proved to exist in codimension 2 when $n\neq 4$ by Kirby-Siebenmann~\cite{KS75} and for $n=4$ by Freedman-Quinn (Theorem~\ref{thm:FQ-exist-normal}). Next we draw a diagram, then define the maps and justify that it has exact rows, is commutative, and that the variously claimed isomorphisms indeed are so.
\[
\begin{tikzcd}
H^2(M, M\sm \nu\Sigma)\ar[d,"\cong"']\ar[r, "\beta"] &H^2(M)\ar[r, "\alpha"]\ar[d,"\cong"'] &H^2(M\sm\nu \Sigma)\ar[d,"\cong"']\\
H^2(K, K\sm \nu\Sigma)\ar[r, "\beta' "]\ar[d, "{-\cap\mathcal{O}}", "\cong"'] &H^2(K)\ar[r, "\alpha' "] \ar[d, "-\cap\mathcal{O}", "\cong"']&H^2(K\sm\nu \Sigma)\\
H_{n-2}(\partial M\cup\nu \Sigma,\partial M)\ar[r,"\text{incl}"]&H_{n-2}(M,\partial M)&\\
H_{n-2}(\nu \Sigma, \partial M\cap \nu\Sigma	)\ar[u,"\cong", "\text{incl}"']&H_{n-2}(\Sigma,\partial \Sigma)\ar[u,"j_*"']\ar[l,"\text{incl}","\cong"']&
\end{tikzcd}
\]
The top row is a section of the long exact sequence of the pair $(M,M\sm \nu \Sigma)$, and the central row is a section of the long exact sequence of the pair $(K,K\sm\nu\Sigma)$, thus both are exact. The downwards maps from the top row to the middle row are by definition the maps induced by inclusion, which is a homotopy equivalence, justifying these isomorphisms and the fact that the subdiagram consisting of the top two rows commutes.

The class $\mathcal{O}$ is the $\Z/2$-orientation class; see ~\cite[\textsection VI.8]{Br93}.
Replacing $M$ by $M \sm \partial M$, we obtain a (non-compact) 4-manifold $M'$ with empty boundary.
By~\cite[\textsection VI Theorem~8.3]{Br93}, capping with $\mathcal{O}$ induces isomorphisms \[H^2(K,L) \xrightarrow{\cong} H_{n-2}(M' \sm L,M' \sm K),\] both for $L:=K\sm\nu\Sigma$ and for $L := \emptyset$. Thus, we obtain isomorphisms
\[H^2(K, K\sm \nu\Sigma)) \xrightarrow{\cong} H_{n-2}(\partial M \times (0,1) \cup \nu \Sigma,\partial M \times (0,1)) \cong H_{n-2}(\partial M \cup \nu \Sigma,\partial M)\]
and
\[H^2(K) \xrightarrow{\cong} H_{n-2}(M',\partial M \times (0,1)) \cong H_{n-2}(M,\partial M).\]
These define the middle vertical maps labelled $-\cap \mathcal{O}$.  By naturality of the cap product, the left middle square commutes.
The commutativity of the bottom left square is clear, as is the isomorphism of the left-bottom arrow.
We also note that the composite of the top two central downwards arrows is the Poincar\'{e}-Lefschetz duality isomorphism $PD \colon H^2(M) \xrightarrow{\cong} H_{n-2}(M,\partial M)$.
Having established the relevant properties of the diagram, we now prove the lemma.

First note that by naturality of Stiefel-Whitney classes, we have $\alpha(w_2(M))=w_2(M\setminus \nu \Sigma)$. Note as well that $H_{n-2}(\partial M\cup \overline{\nu} \Sigma, \partial M;\Z/2)$ is isomorphic to $\Z/2$ and generated by the image of~$[\Sigma]$.

For one direction of the lemma, assume that $w_2(M\setminus \Sigma)=0$. By exactness of the top row, this implies $w_2(M)$ lies in the image of $\beta$. But as $H^2(M, M\sm \nu\Sigma))\cong \Z/2$ and $w_2(M)\neq 0$, this implies the generator of $\Z/2$ is sent to $w_2(M)$ by $\beta$. By commutativity of the diagram, we get $\PD(w_2(M))\equiv j_*([\Sigma])\in H_{n-2}(M,\partial M;\Z/2)$.

Conversely, assume that $\PD(w_2(M))\equiv j_*([\Sigma])\in H_{n-2}(M,\partial M;\Z/2)$. By commutativity of the diagram, this implies $w_2(M)$ is in the image of $\beta$.
Thus $w_2(M\setminus \nu\Sigma)= \alpha(w_2(M)) = 0$ by exactness of the top row.
\end{proof}

\begin{remark}
    We note that the previous proposition is valid when none of the manifolds involved is orientable and also $\Sigma$ is closed.
    We also note that the proof given above is fairly robust and could be easily adapted to other Stiefel-Whitney classes, provided the tubular neighbourhood conditions are met.
    %\footnote{PO: I wonder if I am being stupid here and tubular neighbourhood is unnecessary. I DO seem to need $\Sigma$ htpy equiv to the open neighbourhood $\nu\Sigma$...}
    For example, the same idea shows that the complement of a codimension 1 submanifold is orientable if and only if that submanifold is Poincar\'{e} dual to the first Stiefel-Whitney class.
\end{remark}

%===========================================================
\chapter{Intersection forms and smooth 4-manifolds}\label{chapter:intersection-form}
In this chapter we introduce and study one of the most interesting invariants of 4-manifolds, namely the intersection form.
Later, in Theorem~\ref{thm:classn-simply-connected-4-mfld} we will see that any unimodular symmetric form over $\Z$ occurs as the intersection form of a closed oriented 4-manifold. In contrast we will see in this chapter that  not all  unimodular
symmetric form  over~$\Z$ can be realised as the intersection forms of closed oriented \emph{smooth} 4-manifolds.

%======================
\section{Intersection forms}
We start out with the definition of the intersection form.

\begin{definition}\label{defn:intersection-form}
\mbox{}
\bnm
\item Given a finitely generated abelian group $H$ we write $\tf H:=H/\mbox{torsion subgroup}$.
\item
Given a compact oriented $n$-manifold~$M$  we denote by $[M]\in H_{n}(M,\partial M;\Z)$ its fundamental class. Given a decomposition $\partial M=A\cup B$ where $A$ and $B$ are compact codimension-zero submanifolds of $\partial M$ with $A\cap B=\partial A=\partial B$ we denote the Poincar\'e duality isomorphism by
\[ \begin{array}{rcl} \PD\colon H^l(M,A;\Z)&\to & H_{n-l}(M,B;\Z)\\
{} \phi&\mapsto & \varphi\acap [M].\end{array}\]
\item
Given a compact oriented 4-manifold~$M$ we refer to the map
\[ \ba{rcl} Q_M\colon \tf H_2(M;\Z)\times \tf H_2(M;\Z)&\to& \Z\\
(a,b)&\mapsto &Q_M(a,b):=\langle \PD_M^{-1}(a)\acup \PD_M^{-1}(b),[M]\rangle\ea\]
as the \emph{intersection form}. (Here $\langle-,-\rangle$ denotes the Kronecker pairing.) Using Poincar\'e Duality one can easily show that if $M$ is closed, then  $Q_M$ is nonsingular.
\enm
\end{definition}

Let $E_8$ denote the even $8\times 8$ Cartan matrix of the eponymous exceptional Lie algebra; that is,
\[
E_8=\begin{pmatrix}
    2 & 1 &0 &0&0&0&0&0 \\
    1& 2 & 1 & 0&0&0&0&0 \\
  0& 1 & 2& 1 &0 &0 &0&0 \\
  0 &0 &1 & 2& 1 & 0&0&0 \\
 0&0&0&1&2&1& 0 & 1 \\
 0&0&0&0&1&2&1&0 \\
 0&0&0&0&0&1&2&0 \\
 0&0&0&0&1&0&0&2
   \end{pmatrix}. \]
Note that this is a symmetric integral matrix with determinant one.

\begin{example}
  Here are some important closed, smooth 4-manifolds.
  \begin{enumerate}[leftmargin=1cm]
    \item The 4-sphere $S^4$. This is simply connected and has $H_2(S^4;\Z)=\{0\}$.
    \item The complex projective plane $\cp^2$, which comes with a canonical orientation.  The same underlying manifold with the opposite orientation is $\overline{\cp^2}$. They are simply-connected manifolds with $H_2(\cp^2;\Z) \cong \Z$. The intersection form of $\cp^2$ is represented by the $1\times 1$-matrix  $(1)$ and the intersection form of $\overline{\cp^2}$ is represented by the $1\times 1$-matrix $(-1)$.
  %  \item The real projective space $\mathbb{R}\textup{P}^4$, which is nonorientable and not simply-connected.
 \item  The manifold $S^2 \times S^2$ is simply-connected and $H_2(S^2 \times S^2;\Z) \cong \Z \oplus \Z$. The intersection form of $S^2 \times S^2$ is represented by the standard hyperbolic form
     $H:= \begin{pmatrix}
       0 & 1 \\ 1 & 0
     \end{pmatrix}$.
 \item The $K3$ surface or Kummer surface
\[ K3\,\,:=\,\, \big\{ [z_1:z_2:z_3:z_4]\in \cp^3\,\big|\, z_1^4+z_2^4+z_3^4+z_4^4=0\big\}\]
  This is a simply connected, smooth, spin, closed 4-manifold with $H_2(K3;\Z) \cong \Z^{22}$. As is shown in
  \cite[Theorem~1.3.8]{GS99} or alternatively \cite[p.~176]{McDuff-Salamon}, the intersection form of K3 is isometric to $E_8 \oplus E_8 \oplus H \oplus H \oplus H$.
          \end{enumerate}
\end{example}

The next proposition shows that the intersection form is well behaved under the connected sum operation.

\begin{proposition}\label{prop:additivity-of-intersection-form}
Let $M$ and $N$ be two oriented compact $4$-manifolds. Then
there is an isomorphism $H_2(M)\oplus H_2(N)\to H_2(M\# N)$ that induces an isometry of
 $Q_M\oplus Q_N$ and $Q_{M\# N}$.
\end{proposition}

\begin{proof}
The usual tools of algebraic topology, namely  a Mayer-Vietoris argument and the excision theorem, show that there exists
an isomorphism $\Theta\colon H_2(M)\oplus H_2(N)\to H_2(M\# N)$.

The statement that this isomorphism  $\Theta$ induces an isometry between  $Q_M\oplus Q_N$ and $Q_{M\# N}$
can be deduced from the functoriality of the cup and cap products \cite[Theorem~VI.5.2.(4)]{Br93} for maps between pairs of topological spaces.  Full details are provided in \cite[Proposition~153.12]{Fr23}.

In the smooth case the statement that the isomorphism $\Theta$ induces an isometry of forms
follows immediately from the fact that any class in second homology can be represented by an embedded oriented submanifold \cite[Proposition~1.2.3]{GS99} and the fact that one can calculate the intersection form in terms of algebraic intersection numbers of embedded oriented surfaces \cite[Theorem VI.11.9]{Br93}.
To apply this approach to general manifolds, one needs to use topological transversality, which holds, as discussed in Chapter~\ref{chapter:transversality}.
\end{proof}

%========================================
\section{Intersection forms of spin manifolds}
Using the results from the previous chapter we can prove the following proposition.

\begin{proposition}\label{proposition-int-form-spin-even}
  Let $M$ be a compact, connected, oriented 4-manifold. If $M$ is spin then the intersection form of $M$ is even.
\end{proposition}

\begin{proof}
%Write $\op{Sq}^k$ for the $k$th Steenrod square operation.
In this proof we use the properties of the Steenrod squares introduced just before Definition~\ref{defn:SW-classes-Stern}.

By definition the $k$th Wu class $v_k\in H^k(M;\Z/2)$ satisfies $\Sq^k(a)=v_k\cup a$, for every class $a\in H^{4-k}(M,\partial M;\Z/2)$. Hence if $a\in H^2(M,\partial M;\Z/2)$ then $v_2\cup a=\Sq^2(a)=a\cup a$.
By Proposition~\ref{prop:Wu-formulae}, the $n$th Stiefel-Whitney class of $M$ is given by $w_n=\sum_i\op{Sq}^i(v_{n-i})$.
Since $M$ is oriented, we have \[0=w_1=\op{Sq}^0(v_1) + \Sq^1(v_0) =v_1.\] Since $M$ is spin, we have that \[0=w_2=\op{Sq}^0(v_2)+\op{Sq}^1(v_1) + \Sq^2(v_0)=v_2.\] So for any $a\in H^2(M,\partial M;\Z/2)$, we have $a\cup a=0\cup a= 0\in H^4(M,\partial M;\Z/2) =\Z/2$. But this implies that for any $x\in FH_2(M;\Z)$ we have that $Q_M(x,x)=\langle \PD^{-1}(x)\cup \PD^{-1}(x),[M,\partial M]\rangle \equiv 0\pmod 2$. In other words $Q_M$ is an even form.
\end{proof}

\begin{proposition}\label{prop:divisible-by-8}
Let $M$ be a closed, oriented, connected, spin 4-manifold. Then the signature $\op{sign}(M)$ is divisible by 8.
\end{proposition}

\begin{proof}
%Write $\op{Sq}^k$ for the $k$th Steenrod square operation. The $k$th Wu class $v_k\in H^k(M;\Z/2)$ satisfies $\op{Sq}^k(a)=v_k\cup a$, for every class $a\in H^{4-k}(M;\Z/2)$. Hence if $a\in H^2(M;\Z/2)$ then $v_2\cup a=\op{Sq}^2(a)=a\cup a$.
%But the $n$th Stiefel-Whitney class of $M$ is given by $w_n=\sum_i\op{Sq}^i(v_{n-i})$ (see~\cite[Theorem~11.14]{MS74}).
%Since $M$ is oriented and spin, we have $0=w_1=\op{Sq}^0(v_1)=v_1$, and $0=w_2=\op{Sq}^0(v_2)+\op{Sq}^1(v_1)=v_2$. So for any $a\in H^2(M;\Z/2)$, we have $a\cup a=0\cup a= 0\in\Z/2$. But this implies that for any $x\in FH_2(M;\Z)$ we have that $Q_M(x,x)=\langle \PD^{-1}(x)\cup \PD^{-1}(x),[M]\rangle \equiv 0\pmod 2$. In other words $Q_M$ is an even form.
%
In Proposition \ref{proposition-int-form-spin-even} we just proved that the intersection form $Q_M$ is even.  Since $M$ is closed we know that  that $Q_M$ is nonsingular.
Finally note that it is  an algebraic fact, shown for example in~\cite[Theorem~5.1]{MH}, that for any symmetric nonsingular, bilinear, even form $Q$, the signature of $Q$ is divisible by~8.
\end{proof}

\section{Twisted intersection forms and twisted signatures}\label{chap:twisted-int-form}

In this section we introduce twisted intersection forms for topological manifolds and discuss some properties of the corresponding twisted signatures.

Let $M$ be a compact, orientable, connected $4m$-dimensional manifold. We write $\pi := \pi_1(M)$. Let $\alpha\colon \pi\to U(k)$ be a unitary representation. We view the elements of  $\C^k$ as row vectors. Given $g\in \pi$ and $v\in \C^k$, define $v\cdot g:=v\cdot \alpha(g)$.
Thus we can view $\C^k$ as a right $\Z[\pi]$-module. Denote this module by $\C^k_\alpha$.
Define the \emph{twisted intersection form} of $(M,\alpha)$ to be the form
\[ \ba{rcl} Q_M\colon H_{2m}(M;\C^k_\alpha)\times H_{2m}(M;\C^k_\alpha)\,\xrightarrow{\op{PD}^{-1}\times \op{PD}^{-1}} & H^{2m}(M,\partial M;\C^k_\alpha)\times H^{2m}(M,\partial M;\C^k_\alpha)\\[0.1cm]
& \hspace{13pt}\downarrow \acup \\[0.1cm]
& H^{4m}(M,\partial M;\C^k_\alpha\otimes \C^k_\alpha)\\[0.1cm]
&\hspace{23pt}\downarrow {\langle\,,\,\rangle}\\[0.1cm]
 & H^{4m}(M,\partial M;\C) \\[0.1cm]
&\hspace{23pt}\downarrow \PD  \\[0.1cm]
& \hspace{10pt} H_0(M;\C)=\C.\ea\]
Here the first and the last map are given by the isomorphisms from the Poincar\'e Duality Theorem~\ref{thm:poincareduality} and the second map is given by Lemma~\ref{lem:cup-product}.
Note that in the bottom we view $\C$ as a trivial $\Z[\pi]$-module. The third map is induced by the following homomorphism of right $\Z[\pi]$-modules:
\[ \ba{rcl} \C^k_\alpha \otimes \C^k_\alpha&\to & \C\\
(v,w)&\mapsto & \langle v,w\rangle=\ol{v}{w}^T.\ea\]
It follows easily from the definitions that $Q_M$ is sesquilinear, namely $\C$-conjugate linear in the first entry and $\C$-linear in the second entry.
The usual proof for the (anti-) symmetry of the cup product e.g.\ \cite[Theorem~3.14]{Hat02}, can be modified to show that $Q_M$ is  hermitian, that is for every $v,w\in H^{2m}(M;\C^k_\alpha)$ we have $Q_M(v,w)=\ol{Q_M(w,v)}$.
Since $Q_M$ is hermitian, its signature is defined as the difference in the number of positive and negative eigenvalues. We refer to the signature of $Q_M$ as the \emph{twisted signature} $\sigma(M,\alpha)$.

For a group homomorphism $\gamma\colon \pi_1(M)\to \Gamma$, denote the corresponding $L^2$-signature by $\sigma^{(2)}(M,\gamma)$, as defined in say~\cite{At76,Lu02} and \cite[Chapter~5]{COT03}.

\begin{theorem}\label{thm:multiplicativity-of-signature}
Let $M$ be a closed, oriented,  connected  $4$-manifold.
\bnm
\item For every finite cover $p\colon \wti{M}\to M$ we have $\sigma(\wti{M})=[\wti{M}:M]\cdot \sigma(M)$.
\item For every unitary representation $\alpha\colon \pi_1(M)\to U(k)$ we have $\sigma(M,\alpha)=k\cdot \sigma(M)$.
\item For every group homomorphism $\gamma\colon \pi_1(M)\to \Gamma$ we have $\sigma^{(2)}(M,\gamma)=\sigma(M)$.
\enm
\end{theorem}

\begin{remark}\leavevmode
\bnm
\item
The same statement does not hold for $4$-dimensional Poincar\'e complexes in general. More precisely, Wall~\cite[Corollary~5.4.1]{Wa67} gave examples of $4$-dimensional Poincar\'e complexes for which the signature is not multiplicative under finite covers.
\item Alternative proofs for the first  and the
third statement are provided by  Schafer~\cite[Theorem~8]{Schafer70}
and L\"uck-Schick~\cite[Theorem~0.2]{LS01}. The approach taken in
L\"uck-Schick~\cite{LS01}
and Teleman \cite{Teleman1984} should also provide a proof of the second statement.
In fact these papers are also valid for manifolds of any dimension $4m$.
\enm
\end{remark}

\begin{proof}
First we give references for these three statements  for smooth manifolds.
\bnm
\item This  statement is a consequence of the Hirzebruch Signature Theorem (see e.g.\ \cite{MS74}).
\item This  statement was proven in \cite{APS75} (in fact the second statement contains the first statement as a special case).
\item This  statement was proven in \cite[p.~44]{At76}.
\enm
We now turn to manifolds that are not necessarily smooth.
We will prove the second statement of the theorem. The other statements can be proved in a  similar fashion. We refer to \cite[Lemma~5.9]{COT03} for a proof of (3).

So let $M$ be a closed oriented  connected $4$-manifold
and let $\alpha\colon \pi_1(M)\to U(k)$ be a  unitary representation.
By Theorem~\ref{thm:connect-sum-is-smooth}, there exists a closed orientable simply-connected 4-manifold $N$ such that $M\# N$ is smooth. We have $\pi_1(M\# N)=\pi_1(M)*\pi_1(N) \cong \pi_1(M)$ since $\pi_1(N)=\{1\}$.
Let $\beta\colon \pi_1(N)\to U(k)$ be the trivial representation.
We also write $\alpha*\beta \colon \pi_1(M\# N)=\pi_1(M)\to U(k)$ for the representation uniquely determined by $\alpha$ on $\pi_1(M)$.

By  Proposition~\ref{prop:additivity-of-intersection-form}, we have $\sigma(M\# N)=\sigma(M)+\sigma(N)$. Furthermore a slight generalisation of  Proposition~\ref{prop:additivity-of-intersection-form} shows that $\sigma(M\# N,\alpha*\beta)=\sigma(M,\alpha)+\sigma(N,\beta)$. Finally, we have $\sigma(N,\beta)=k\cdot \sigma(N)$.
The desired statement follows from these equalities and from the formula for twisted signatures of the closed smooth manifold $M\# N$.
\end{proof}

%=====================================
\section{Intersection forms of smooth 4-manifolds}
In Theorem~\ref{thm:classn-simply-connected-4-mfld} we will see that any unimodular symmetric form occurs as the intersection form of a closed oriented 4-manifold.
In the following we survey results on intersection forms of closed oriented smooth 4-manifolds. As we will see, the results in the smooth setting differ dramatically from the results in the topological setting.

In Proposition~\ref{prop:divisible-by-8} we saw that the signature of any closed, oriented, connected, spin 4-manifolds is divisible by 8.
The Rochlin Theorem~\cite{Rohlin} gives an extra restriction on the signatures of intersection forms of spin 4-manifolds that admit a smooth structure.

\begin{theorem}\textbf{\textup{(Rochlin Theorem)}}\label{thm:rokhlin}
Let~$M$ be a closed, oriented, connected, spin, smooth 4-manifold. Then the signature $\op{sign}(M)$ is divisible by 16.
\end{theorem}

\begin{remark}\label{rem:evenimpliesspin}
Let $M$ be a closed oriented 4-manifold with an even intersection form and such that $H_1(M;\Z)$ has no 2-torsion. This implies $H^2(M;\Z/2)\cong \Hom(H_2(M;\Z),\Z/2)$ and that the mod 2 reduction of $Q_M$ is isomorphic to the pairing $(a,b)=\langle a\cup b,[M]\rangle$ on $H^2(M;\Z/2)$. As $Q_M$ is even, this implies that $(a,a)=0\in \Z/2$ for any $a\in H^2(M;\Z/2)$. But we saw in the proof of Proposition~\ref{prop:divisible-by-8}
that $a\cup a=v_2\cup a$, so we must have that $v_2=0$ as this pairing is nondegenerate. We also saw in the proof of
Proposition~\ref{prop:divisible-by-8}  that $v_2=w_2$ when $M$ is oriented, so in fact $w_2=0$ and $M$ admits a spin structure.

It is not true that simply having an even intersection form implies $M$ is spin. Indeed, it is possible to construct a closed oriented $4$-manifold $M$ that has $Q_M=0$ (which is in particular an even form), but has nonvanishing $w_2$ \cite[Exercise 5.7.7(a)]{GS99}. In a similar spirit, by \cite{habegger82,FS84} there exists a closed oriented 4-dimensional smooth manifold $M$ with an even intersection form $Q_M$ that satisfies $\op{sign}(M)=8$. Hence this must also fail to be spin, now by the
Rochlin Theorem~\ref{thm:rokhlin}.
\end{remark}

\begin{theorem}[Freedman~\cite{Freedman-82}]
There exists a  closed orientable connected 4-manifold that does not admit a smooth structure.
\end{theorem}

\begin{proof}
By Theorem~\ref{thm:classn-simply-connected-4-mfld} there exists a simply connected closed oriented 4-manifold $M$ with $Q_M\cong E_8$.
By the Rochlin Theorem~\ref{thm:rokhlin} this manifold does not admit a smooth structure.
\end{proof}

In a remarkable twist, shortly after Freedman proved Theorem~\ref{thm:classn-simply-connected-4-mfld}, Donaldson \cite[Theorem~A]{Donaldson83} \cite[Theorem~1]{donaldson87}, proved the following result regarding intersection forms of  smooth 4-manifolds.

\begin{theorem}\textbf{\textup{(Donaldson's Theorem)}}\label{thm:donaldson}
Let~$M$ be a closed oriented connected smooth 4-manifold. If $Q_M$ is positive-definite, then $Q_M$ can be represented by the identity matrix.
\end{theorem}

To understand the significance of Donaldson's Theorem it is helpful to consider the following table from \cite[p.~28]{MH}, which basically says that there are lots of isometry types of nonsingular positive definite forms.
\[
\ba{rcccccc}
\mbox{Dimension:}&\,\,\,\,8\,\,\,\,&\,\,16\,\,&\,\,24\,\,&\,\,32\,\,&\,\,40\,\,\\
\hline
\ba{r}\mbox{Number of isometry types of nonsingular}\\
\mbox{positive definite even symmetric forms:}\ea&1&2&24&\geq 10^7& \geq 10^{51}\\
\ea
\]

\begin{remark}
Note that if
$M$ be a closed oriented smooth 4-manifold such that  $Q_M$ is negative-definite,
then $Q_{-M}=-Q_M$ is positive-definite. Thus we see that Donaldson's Theorem implies that $Q_M$ is represented by $-\id$.
\end{remark}

It follows from  \cite[Theorem II.5.3]{MH} that every nonsingular indefinite \emph{odd} symmetric form is isometric to $k\cdot (1)\oplus \ell\cdot (-1)$. These are realised by $k\cdot \cp^2\# \ell\cdot \ol{\cp}^2$.
Therefore we only need to discuss the realisability of nonsingular indefinite \emph{even} symmetric forms.
Again by  \cite[Theorem II.5.3]{MH}, every nonsingular \emph{even} indefinite symmetric form is isometric to $n\cdot E_8 \oplus  m\cdot H$ for some $(m,n) \in \N_0 \times \Z \sms \{(0,0)\}$.
The following theorem, proven by Furuta \cite{furuta01}, gives some restrictions on the possible values of $m$ and $n$.

\begin{theorem}\textbf{\textup{(Furuta's 10/8 Theorem)}}
If $M$ is a closed oriented connected smooth 4-manifold with \emph{indefinite} even intersection form, then
\[ b_2(M)\,\,\geq\,\, \tmfrac{10}{8}\cdot |\op{sign}(M)|+2.\]
In particular $Q_M\cong n\cdot E_8 \oplus m\cdot H$ for some $n \in 2\Z$ and $m \in \N$ with $m\geq |n|+1$.
\end{theorem}

Furuta's 10/8 Theorem  does not quite close the gap between the forms we can realise by smooth manifolds and the forms  we can exclude.
More precisely, it follows from the calculation of the intersection form of the K3 surface and of $S^2\times S^2$ that for any $n = 2p \in \Z$ and every $m\geq 3|p|$ there exists a closed oriented simply connected 4-dimensional smooth manifold with intersection form isometric to $n\cdot E_8\oplus m\cdot H$. In other words, we have
\[ \mbox{intersection form of $p\cdot \textup{K}3\,\# \,(m-3|p|)\cdot (S^2\times S^2)$}\,\,\cong\,\, 2p\cdot E_8\oplus m \cdot H.\]
The following conjecture predicts that this result is optimal.

\begin{conjecture}\textbf{\textup{(11/8-Conjecture)}}
If $M$ is a closed oriented connected smooth 4-manifold with indefinite even intersection form, then
\[ b_2(M)\,\,\geq\,\, \tmfrac{11}{8}\cdot |\op{sign}(M)|.\]
Equivalently, if $Q_M\cong 2p\cdot E_8 \oplus m\cdot H$ with $p\ne 0$, then $m\geq 3|p|$.
\end{conjecture}

\begin{remark}
\leavevmode
\bnm
\item
A proof of the 11/8-Conjecture would imply, by Freedman's Theorem~\ref{thm:classn-simply-connected-4-mfld}, that any closed oriented simply connected  smooth 4-manifold is \emph{homeomorphic} to either a connected sum of the form
$k\cdot \cp^2\# \ell\cdot \ol{\cp}^2$ or to a connected sum of the form $n\cdot \textup{K}3\# m \cdot (S^2\times S^2)$.
\item Currently the best known result in the direction of the 11/8-Conjecture is \cite[Corollary~1.13]{hopkins2018intersection}, which says that if $M$ is a closed oriented simply-connected 4-manifold
 that is not homeomorphic to $S^4$, $S^2\times S^2$ or the K3 surface
 and whose intersection form is indefinite and even, then
 $b_2(M)\geq \frac{10}{8}\cdot |\op{sign}(M)|+4$.
\enm
\end{remark}

%===========================================================
\chapter{Smoothing 4-manifolds}\label{chapter:smoothing}
In this chapter we present three theorems which associate a smooth manifold to a given 4-manifold. Often these theorems can be used to reduce proofs about 4-manifolds to the case of smooth 4-manifolds, where the standard tools of differential topology are available.

%===========================================================
\section{Smoothing noncompact 4-manifolds}
The first of our smoothing theorems~\cite[Corollary~2.2.3]{Qu82}, \cite[p.~116]{FQ90}, which is due to Freedman and Quinn, says that noncompact connected 4-manifolds admit a smooth structure.

\begin{theorem}\label{thm:smooth-outside-a-point}
Every connected, noncompact 4-manifold is smoothable. Thus every $4$-manifold $M$ has a smooth structure in the complement of
any closed set that has at least one point in each compact component of $M$.
\end{theorem}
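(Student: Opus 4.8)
The plan is to treat the two sentences of Theorem~\ref{thm:smooth-outside-a-point} separately; the second follows formally from the first, so I would dispose of that reduction first. Let $M$ be an arbitrary $4$-manifold and let $C\subseteq M$ be a closed subset meeting every compact component of $M$. Since manifolds are locally connected, $M\sms C$ is an open $4$-manifold each of whose connected components is open in $M$. If some component $N$ of $M\sms C$ were compact, then $N$ would be both open and closed in $M$, hence a single compact component of $M$ disjoint from $C$, contradicting the hypothesis on $C$. Thus every component of $M\sms C$ is a connected \emph{noncompact} $4$-manifold, and applying the first sentence to each one furnishes a smooth structure on all of $M\sms C$. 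It therefore remains to prove the first sentence: a connected noncompact $4$-manifold $W$ is smoothable.

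For this I would follow Quinn~\cite[Corollary~2.2.3]{Qu82}, \cite[p.~116]{FQ90}, whose argument has two parts. (i) \emph{Handle decomposition:} every connected noncompact $4$-manifold $W$ admits a (possibly infinite) topological handle decomposition; when $\partial W\neq\emptyset$ this is taken relative to a collar of the boundary, using Theorem~\ref{thm:collar}. (ii) \emph{Smoothing the handles:} a topological handle decomposition of a $4$-manifold is upgraded to a smooth one by the standard inductive construction over the handles (see \cite[\S8.7]{FQ90}); in ambient dimension $4$ this is unobstructed because all the attaching and straightening data one manipulates is at most $3$-dimensional, and is controlled by the essentially unique smoothing theory of $3$-manifolds (Moise) together with the fact that a smooth $3$-sphere bounds the standard smooth $D^4$; replacing the attaching maps by smooth approximations does not change $W$ up to homeomorphism. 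I would quote (i) as a black box and treat (ii) as a routine verification, or simply cite it.

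Part (i) is the real content, and it is where noncompactness is indispensable. One cannot reduce to high dimensions: the Product structure theorem~\ref{theorem:product-structure} requires dimension $\ge 5$, so smoothability of $W\times\R$ does not yield smoothability of $W^4$. Moreover the compact analogue of the first sentence is false --- by Donaldson's Theorem~\ref{thm:donaldson} (applied to the form $E_8\oplus E_8$) and by Furuta's $10/8$ theorem~\cite{furuta01} there are closed $4$-manifolds with vanishing Kirby--Siebenmann invariant that admit no smooth structure, and which therefore, in view of step (ii), admit no handle decomposition at all. Quinn's proof uses the noncompact end of $W$ to run the usual $5$-dimensional handle manipulations --- finger moves, the Whitney trick, engulfing --- one dimension lower, via Freedman's disc embedding theorem, iterating the construction out to the end so as to absorb the obstructions that a compact $4$-manifold could not shed. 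I expect that reproducing this controlled handle theory is the main difficulty; in a survey I would cite \cite{Qu82,FQ90} for it and write out in detail only the formal reduction and the smoothing step.
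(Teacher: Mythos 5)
Your formal reduction of the second sentence to the first is correct: each component of $M\sms C$ is open in $M$ by local connectivity, and a compact component would be clopen in $M$, hence a compact component of $M$ missed by $C$. Your observation that one cannot detour through $W\times\R$ because the Product Structure Theorem needs dimension at least five is also on target, and your step (ii) is a genuine standard fact: a topological handle decomposition of a $4$-manifold can always be smoothed, since all attaching data live in dimension $3$ where $\TOP=\op{DIFF}$.

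The gap is step (i). The assertion that every connected noncompact $4$-manifold admits a topological handle decomposition is not an available black box: given your (correct) step (ii), it is \emph{equivalent} to the theorem you are trying to prove, and in the literature it is obtained as a \emph{corollary} of smoothability (smooth the manifold, then apply Morse theory), never the other way around. Quinn's handle decomposition theorem is a $5$-dimensional result; in dimension $4$ no direct construction of handle decompositions of topological manifolds is known, precisely because --- as you yourself note for $E_8\oplus E_8$ --- such a construction would smooth the manifold. Quinn's actual argument, which the discussion following Theorem~\ref{thm:smooth-outside-a-point} surveys, does not pass through a global handle decomposition of $W$ at all. Using Freedman's disc embedding theorem he proves a handle \emph{straightening} statement (a local result about a single topological handle $D^k\times\R^{4-k}$ embedded in a smooth $4$-manifold, smooth near the attaching region), from which it follows that $\TOP(4)/\O(4)\to\TOP/\O\simeq K(\Z/2,3)$ is $5$-connected~\cite[Theorem~8.7A]{FQ90}. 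Hence the sole obstruction to lifting the unstable tangent microbundle $\tau_W\colon W\to\BTOP(4)$ to $\BO(4)$ lies in $H^4(W;\Z/2)$, which vanishes for $W$ connected and noncompact. The smoothing theory of Lashof and Kirby--Siebenmann for \emph{open} $4$-manifolds (which assembles the local straightenings immersion-theoretically, with no handle decomposition of $W$ required) then converts this lift into a smooth structure. Your "push to the end" intuition is the geometric shadow of the vanishing of $H^4(W;\Z/2)$, but as written your step (i) begs the question and should be replaced by this obstruction-theoretic argument.
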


There are some related statements in the literature on smoothing 4-manifolds in the complement of a point, that appeared prior to Freedman's work~\cite{Freedman-82} and prior to \cite{Qu82}.  We discuss them briefly here.
For the case of $\PL$ structures on noncompact 4-manifolds, given a lift of the (unstable) tangent microbundle classifying map $M \to \BTOP(4)$ to $M \to \BPL(4)$ (see Chapter~\ref{chapter:bundlestructures}), the result can be found in~\cite[p.~54]{Lashof-immersion-approach-2} and~\cite[Essay~V,~Addendum~1.4.1, p.~222]{KS77}. The analogous result for smooth bundle structures and smooth structures on noncompact manifolds was stated in~\cite[p.~156]{Lashof-immersion-approach-1}.
Alternatively, \cite{Hirsch-Mazur74}, \cite[Theorem~8.3B]{FQ90} apply to improve a $\PL$ structure to a smooth structure, unique up to isotopy, for any manifold of dimension at most six.
Again, in ~\cite{Lashof-immersion-approach-1} Lashof assumes a lift of the (unstable) tangent microbundle classifying map $M \to \BTOP(4)$ to a map $M \to \BO(4)$.  For noncompact connected 4-manifolds, such a lift always exists, as was later shown by Quinn~\cite{Qu82,Quinn-smooth-structures},~\cite[p.~116]{FQ90} using the full disc embedding theorem~\cite{Freedman-82}, and giving rise to Theorem~\ref{thm:smooth-outside-a-point}.

Due to the seminal nature of Freedman's Fields medal winning paper~\cite{Freedman-82}, it is well worth clarifying the details of some citations therein.
In the proof of Corollary~1.2, in the proof of Theorem~1.5 on page~369, in the proof of Theorem~1.6, and at the start of Section 10, Freedman uses that smoothing theory is available for noncompact 4-manifolds. In particular, smoothing for noncompact contractible 4-manifolds plays a vital r\^{o}le in Freedman's proof of the topological 4-dimensional Poincar\'{e} conjecture~\cite[Theorem~1.6]{Freedman-82}. Freedman cites~\cite{KS77} for this fact, however  \cite[Essay~V,~Remarks~1.6~(A), p.~230]{KS77} specifically excludes smooth structures (but for a stronger result). Nevertheless, as mentioned above, Lashof~\cite[p.~156]{Lashof-immersion-approach-1} proved the smooth version of \cite[Essay~V,~Addendum~1.4.1, p.~222]{KS77}, or one can use $\PL$ smoothing theory \cite{Hirsch-Mazur74}, \cite[Theorem~8.3B]{FQ90} to improve a PL structure from \cite[Essay~V,~Addendum~1.4.1, p.~222]{KS77} to a smooth structure, essentially uniquely.

Freedman only applies smoothing theory in cases, such as for contractible $M$, that he can ensure the existence of a lift of $\tau_M \colon M \to \BTOP(4)$ to $\BO(4)$.
Later, Quinn~\cite[Corollary~2.2.3]{Qu82} showed that such a lift always exists for connected noncompact 4-manifolds. In fact, he showed that the map $\TOP(4)/\O(4) \to \TOP/\O$ is 5-connected~\cite[Theorem~8.7A]{FQ90}, where only 3-connected is needed for Theorem~\ref{thm:smooth-outside-a-point}. In other words, it was shown prior to Freedman's work that homotopy 4-spheres admit a smooth structure in the complement of a point, so the results that Freedman required were indeed known. However, smoothing in the complement of a point was not known for general connected, compact 4-manifolds until after the work of Quinn in 1982.  Further discussion can also be found in Quinn~\cite{Quinn-smooth-structures} and Lashof-Taylor~\cite{Lashof-Taylor-84}.

Below we will give applications of Theorem~\ref{thm:smooth-outside-a-point};
see e.g.\ the proof of Theorem~\ref{thm:represent-homology-by-submanifolds}.

%===========================================================
\section{The Kirby-Siebenmann invariant and stable smoothing of 4-manifolds}

The formulation of the other two statements on smoothing 4-manifolds that we will give (Theorems~\ref{thm:smooth-outside-a-point} and~\ref{thm:connect-sum-is-smooth}) make use of the Kirby-Siebenmann invariant.
The Kirby-Siebenmann invariant $\op{ks}(M)\in \Z/2$ of a compact 4-manifold is defined in~\cite[Section~10.2B]{FQ90}, or alternatively by \cite[p.~318]{KS77} or \cite[Definition~3.4.2]{Rudyak16}, and we describe the construction now.

The homotopy fibre $\TOP/\PL$ of the forgetful map $\BPL \to \BTOP$ has the homotopy type of a $K(\Z/2,3)$~\cite[Essay~IV, \S 10, p.~194]{KS77} %,~\cite[Theorem 1.9.9]{Rudyak16}
and has the structure of a loop space, permitting the construction of the delooping $\B(\TOP/\PL)$~\cite[Theorem~C]{Boardman-Vogt-68}, \cite{Boardman-Vogt-book73}, which is an Eilenberg-Maclane space of type $K(\Z/2,4)$. A connected topological 4-manifold has a unique smooth structure on its boundary. Using the homotopy fibre sequence
\[\TOP/\PL \to \BPL \to \BTOP \to \B(\TOP/\PL),\]
the unique obstruction to a lift of the classifying map $\tau_M \colon M \to \BTOP$ of the stable topological tangent bundle  to $\BPL$ is therefore a homotopy class  in
\[[(M,\partial M),(\us{=K(\Z/2,4)}{\ub{\B(\TOP/\PL)}},*)] \cong H^4(M,\partial M;\Z/2) = \Z/2.\]
Here, we used again that 4-manifolds have the homotopy type of a CW complex (Theorem~\ref{thm:topological-manifold-CW complex}).
We refer to the corresponding element of $\Z/2$ as the  \emph{Kirby-Siebenmann  invariant}, $\op{ks}(M)$,  of the compact, connected manifold $M$.
For disconnected compact 4-manifolds, $M = \bigsqcup_{i=1}^n M_i$, define \[\ks(M) := \sum_{i=1}^n \ks(M_i) \in \Z/2.\]

For comparision, we note that in every dimension, a compact $n$-manifold with a $\PL$ structure on its boundary has a Kirby-Siebenmann invariant, which is a homotopy class of maps in $[(M,\partial M),(\B(\TOP/\PL),*)]$ determined by its topological tangent bundle.
In the following theorem we summarise some key properties of the Kirby-Siebenmann invariant.

\begin{theorem}\label{thm:ks-basics}
Let $M$ and $N$ be compact 4-manifolds.
\bnm
\item\label{item-ks-basics-1}
If $M\times \R$ admits a smooth structure $($e.g.\ if $M$ admits a smooth structure$)$, then $\op{ks}(M)=0$.
\item\label{item-ks-basics-4} The Kirby-Siebenmann invariant gives rise to a surjective homomorphism $\Omega_4^{\TOP} \to \Z/2$. In particular for $M$ a closed 4-manifold that bounds a compact 5-manifold, $\ks(M)=0$.
\item\label{item-ks-basics-2} The Kirby-Siebenmann invariant is additive under the connected sum operation.
\item\label{item-ks-basics-6} There is a short exact sequence
\[
0 \to \Omega^{\op{Spin}}_4 \to \Omega_4^{\op{TOPSpin}} \xrightarrow{\sigma/8} \Z/2 \to 0,
\]
with the first map the forgetful map and last map given by the signature divided by 8, modulo 2. This sequence does not split, so $\Omega_4^{\op{TOPSpin}} \cong \Z$. Moreover, for a closed spin manifold, the signature divided by 8, modulo 2, is equal to the Kirby-Siebenmann invariant, i.e.\ the map which takes $\sigma/8$ modulo $2$ equals the composition $\Omega_4^{\op{TOPSpin}} \to \Omega_4^{\op{STOP}} \xrightarrow{\op{ks}} \Z/2$.
\item\label{item-ks-basics-5}
If $S\subseteq \partial M$ and $T\subseteq \partial N$ are compact codimension zero submanifolds with a homeomorphism $S \cong T$, then \[\op{ks}(M\cup_{S\cong T} N) = \op{ks}(M)+\op{ks}(N).\]
\item\label{item-ks-basics-3}
If there exists a compact 5-manifold with $\partial W=M\cup_{\partial M\cong \partial N} N$, for some homeomorphism $\partial M \cong \partial N$, then $\op{ks}(M)=\op{ks}(N)$.
\enm
\end{theorem}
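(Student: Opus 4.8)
The plan is to prove the six items in a logical rather than the stated order, deriving~(\ref{item-ks-basics-2}) and~(\ref{item-ks-basics-3}) formally from~(\ref{item-ks-basics-5}) and~(\ref{item-ks-basics-4}) and reserving the real work for~(\ref{item-ks-basics-6}). Throughout I use the description of $\ks$ from just above the theorem: $\ks(M)\in H^4(M,\partial M;\Z/2)\cong\Z/2$ is the obstruction to lifting the stable tangent microbundle $\tau_M\colon M\to\BTOP$ through $\BPL\to\BTOP$, equivalently (for closed $M$) $\ks(M)=\langle\tau_M^*k,[M]\rangle$ for the universal class $k\in H^4(\BTOP;\Z/2)$ pulled back from the fundamental class of $\B(\TOP/\PL)\simeq K(\Z/2,4)$; the class $k$ is stable and, the fibration $\TOP/\PL\to\BPL\to\BTOP$ being one of infinite loop spaces, primitive. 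Item~(\ref{item-ks-basics-1}) is then immediate: a smooth or $\PL$ structure on $M$ gives a $\BPL$-lift of $\tau_M$, and a smooth structure on $M\times\R$ gives a $\BPL$-lift of $\tau_{M\times\R}$, whose restriction to $M\times\{0\}$ is stably $\tau_M$; either way $\ks(M)=0$.

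For~(\ref{item-ks-basics-4}): $\ks$ is a mod $2$ characteristic number of the stable tangent bundle, hence a bordism invariant, and it is additive over disjoint unions by definition, so it descends to a homomorphism $\Omega_4^{\op{TOP}}\to\Z/2$; the ``in particular'' is then immediate, a manifold bounding a compact $5$-manifold being zero in $\Omega_4^{\op{TOP}}$. Surjectivity holds because Freedman's topological $E_8$-manifold $M_{E_8}$ satisfies $\ks(M_{E_8})=1$: were it $0$, Theorem~\ref{thm:connect-sum-is-smooth} would make some $M_{E_8}\#k(S^2\times S^2)$ a closed smooth spin $4$-manifold of signature $8$, contradicting Rochlin's Theorem~\ref{thm:rokhlin}. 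For~(\ref{item-ks-basics-5}), additivity of $\ks$ under gluing along a codimension-zero piece of the boundary, I would cite \cite[Essay~V]{KS77} and \cite[Section~10.2B]{FQ90}; the underlying point is that $\ks$ may be computed using a smooth structure on a collar of the boundary and that these obstructions add. Granting~(\ref{item-ks-basics-5}), item~(\ref{item-ks-basics-2}) follows by writing $M\#N=(M\sms\int D^4)\cup_{S^3}(N\sms\int D^4)$ and noting $\ks(M\sms\int D^4)=\ks(M)$ (apply~(\ref{item-ks-basics-5}) to $M=(M\sms\int D^4)\cup_{S^3}D^4$, using $\ks(D^4)=0$); and item~(\ref{item-ks-basics-3}) follows since $\partial W=M\cup_{\partial M=\partial N}N$ gives $\ks(M)+\ks(N)=\ks(\partial W)=0$ by~(\ref{item-ks-basics-5}) and~(\ref{item-ks-basics-4}).

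Item~(\ref{item-ks-basics-6}) is the substantive part. Recall $\Omega_4^{\op{Spin}}\cong\Z$ (classically), detected by the signature, with $K3$ a generator and $|\sigma(K3)|=16$, so that $\sigma/16$ is an isomorphism $\Omega_4^{\op{Spin}}\to\Z$ (this uses Rochlin's Theorem~\ref{thm:rokhlin} and the $K3$ computation). I would then establish the topological Rochlin congruence $\ks(M)\equiv\sigma(M)/8\pmod2$ for every closed spin $4$-manifold $M$: when $\ks(M)=0$, Theorem~\ref{thm:connect-sum-is-smooth} makes some $M\#k(S^2\times S^2)$ smooth and spin of signature $\sigma(M)$, so $16\mid\sigma(M)$ by Theorem~\ref{thm:rokhlin}; when $\ks(M)=1$, apply this to $M\#\overline{M_{E_8}}$ (which has $\ks=\ks(M)+1=0$ by~(\ref{item-ks-basics-2})), obtaining $\sigma(M)\equiv8\pmod{16}$. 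Hence the homomorphism $\ks$ on $\Omega_4^{\op{TOPSpin}}$ (the composite with the forgetful map to $\Omega_4^{\op{STOP}}$ and the homomorphism of~(\ref{item-ks-basics-4})) equals $\sigma/8\bmod2$. Next, the forgetful map $\Omega_4^{\op{Spin}}\to\Omega_4^{\op{TOPSpin}}$ is injective, since a smooth spin $M$ bounding a compact topological spin $5$-manifold has $\sigma(M)=0$ and hence is zero in $\Omega_4^{\op{Spin}}\cong\Z$; and its image is exactly $\ker(\ks\colon\Omega_4^{\op{TOPSpin}}\to\Z/2)$, the inclusion being~(\ref{item-ks-basics-1}) and the reverse holding because $\ks(M)=0$ makes some $M\#k(S^2\times S^2)$ smooth spin, which one checks is topologically spin-bordant to $M$ (using that $S^2\times S^2=\partial(S^2\times D^3)$ is spin-null-bordant). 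This gives the short exact sequence; since $\sigma$ is injective on $\ker(\ks)\cong\Omega_4^{\op{Spin}}$, the element $2[M_{E_8}]$ (of signature $16$) generates $\ker(\ks)$, so $[M_{E_8}]$ generates $\Omega_4^{\op{TOPSpin}}$, which is therefore infinite cyclic with $\sigma/8$ an isomorphism and the sequence the non-split extension $0\to\Z\xrightarrow{\times2}\Z\to\Z/2\to0$.

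The main obstacle is item~(\ref{item-ks-basics-6}): it is the one place genuinely requiring Freedman's machinery---to produce $M_{E_8}$ and to run the connected-sum smoothing of Theorem~\ref{thm:connect-sum-is-smooth}---together with a topological strengthening of Rochlin's theorem, and one must take care that the forgetful map $\Omega_4^{\op{Spin}}\to\Omega_4^{\op{TOPSpin}}$ is multiplication by $2$, not an isomorphism. A secondary point, if one does not simply quote \cite{KS77,FQ90}, is a self-contained proof of the gluing additivity~(\ref{item-ks-basics-5}), from which~(\ref{item-ks-basics-2}) and~(\ref{item-ks-basics-3}) are then purely formal.
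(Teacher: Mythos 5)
Your overall architecture matches the paper's quite closely: item (1) via stabilising a lift of the tangent (micro)bundle, item (4) via bordism invariance of $\ks$ plus the $E_8$-manifold and Rochlin for surjectivity, item (6) via the topological Rochlin congruence $\ks\equiv\sigma/8$, exactness from Theorem~\ref{thm:connect-sum-is-smooth}, and the identification of $\Omega_4^{\op{TOPSpin}}\cong\Z$ generated by $[E_8]$ with $[K3]\mapsto 2[E_8]$. Some of your local arguments are in fact cleaner than the paper's: for bordism invariance you use the standard characteristic-number argument directly rather than the paper's surgery-plus-arc-representation of classes in $H_1(W,M;\Z/2)$, and for injectivity of $\Omega_4^{\op{Spin}}\to\Omega_4^{\op{TOPSpin}}$ you use that the signature of a topological null-bordant $4$-manifold vanishes, where the paper instead smooths the null-bordism by high-dimensional smoothing theory. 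These variants are all sound.

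The genuine gap is item (\ref{item-ks-basics-5}). You propose to quote \cite{KS77} and \cite{FQ90} for additivity of $\ks$ under gluing along codimension-zero pieces of the boundary, but the paper states explicitly that it could not find explicit proofs of these facts in the literature, and your fallback heuristic (``compute $\ks$ using a smooth collar and add the obstructions'') does not obviously work: the relative obstruction for $M\cup_{S\cong T}N$ lives in $H^4(M\cup N,\partial(M\cup N);\Z/2)$, and there is no natural map from $H^4(M,\partial M;\Z/2)\oplus H^4(N,\partial N;\Z/2)$ to this group along which one could ``add'' the two obstructions, since the boundary changes under the gluing. The paper's actual proof identifies $\ks$ with the isomorphism $\Omega_4^{\{\O\to\op{TOP}\}}\cong\coker(\Omega_4^{\O}\to\Omega_4^{\op{TOP}})\cong\Z/2$ coming from the exact sequence $\Omega_4^{\O}\to\Omega_4^{\op{TOP}}\to\Omega_4^{\{\O\to\op{TOP}\}}\to\Omega_3^{\O}=0$, and then exhibits an explicit $5$-dimensional bordism rel a smooth vertical boundary from $M\sqcup N$ to $M\cup_{S\cong T}N$; some such argument is needed. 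Note also that because you derive item (\ref{item-ks-basics-2}) from (\ref{item-ks-basics-5}) (and then use (\ref{item-ks-basics-2}) inside your proof of (\ref{item-ks-basics-6}) when computing $\ks(M\#\overline{M_{E_8}})$), this gap propagates; the paper avoids this by proving (\ref{item-ks-basics-2}) independently of (\ref{item-ks-basics-5}), using only bordism invariance from (\ref{item-ks-basics-4}) together with the explicit cobordism $(M\times I\sqcup N\times I)\cup_{S^0\times D^4}(D^1\times D^4)$ from $M\sqcup N$ to $M\# N$. You could adopt that route for (\ref{item-ks-basics-2}) and confine the missing argument to (\ref{item-ks-basics-5}) and (\ref{item-ks-basics-3}) alone.
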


While they are certainly well-known to the experts, and frequently used, we could not find explicit proofs of these facts in the literature, so we give some details.

%\begin{lemma}\label{lemma:cool-cobordism}
%Let $M$ and $N$ be compact 4-manifolds with boundary. The disjoint union $M \sqcup N$ is bordant to $M \cup_{S\cong T} N$, where $S\subseteq \partial M$ and $T\subseteq \partial N$ are compact codimension zero submanifolds with $S \cong T$.
%\end{lemma}
%
%\begin{proof}
%
%This completes the proof of the lemma.
%\end{proof}
%

\begin{proof}[Proof of Theorem~\ref{thm:ks-basics}]
Let us prove (\ref{item-ks-basics-1}).
The topological tangent bundle of $M \times \R$ is isomorphic to $\tau_M \oplus \varepsilon$, where $\tau_M$ is the tangent microbundle of $M$ and $\varepsilon$ denotes a rank one trivial bundle over $M$.  If $M \times \R$ admits a smooth structure, then there is a lift $\tau_{M \times\R}^{\Diff} \colon M \times \R \to \BO(5)$, the smooth tangent bundle to $M \times \R$. Let $p \colon \BO(5) \to \BTOP(5)$ be the canonical map. Then $\tau_M \oplus \varepsilon = p \circ \tau_{M \times\R}^{\Diff}$.  Passing to the stable classifying spaces, we obtain a lift $M \to \BO$ whose composition with the canonical map $\BO \to \BTOP$ agrees with $\tau_M \oplus \varepsilon^{\infty}$, the stable tangent microbundle of $M$.  Since the map $\BO \to \BTOP$ factors through $\BPL \to \BTOP$, we have a stable lift of $\tau_M$ and so $\op{ks}(M)=0$.  This completes the proof of (\ref{item-ks-basics-1}).

%Next, (\ref{item-ks-basics-2}) is a formal consequence of (\ref{item-ks-basics-5}).   However our proof of (\ref{item-ks-basics-5}) follows from (\ref{item-ks-basics-4}), and we use (\ref{item-ks-basics-2}) in our proof of (\ref{item-ks-basics-4}). So we must give an independent proof of (\ref{item-ks-basics-2}) for closed 4-manifolds. To do this we prove a more general statement on the $\ks$ invariant of disjoint unions first.

%we also remark that (\ref{item-ks-basics-2}) follows from (\ref{item-ks-basics-3}) via a more direct argument.
%
%Next, (\ref{item-ks-basics-2}) is a consequence of (\ref{item-ks-basics-5}).   However since (\ref{item-ks-basics-5}) follows from (\ref{item-ks-basics-3}), we also remark that (\ref{item-ks-basics-2}) follows from (\ref{item-ks-basics-3}) more easily. To see this observe that a disjoint union $M \sqcup N$ is cobordant to $M \# N$ via the cobordism
%\[(M \times I \sqcup N \times I) \cup_{S^0 \times D^4} (D^1 \times D^4),\]
%with $\{-1\} \times D^4$ embedded in the interior of $M \times \{1\}$, and $\{1\} \times D^4$ embedded in the interior of $N \times \{1\}$. Then by (\ref{item-ks-basics-3}) the Kirby-Siebenmann invariant vanishes on $M \# N \sqcup M \sqcup N$ and therefore $\ks(M\# N) = \ks(M) + \ks(N) \in \Z/2$.

Now to prove~(\ref{item-ks-basics-4}),
suppose that a closed 4-manifold $M = \bigsqcup_{i=1}^k M_i$ bounds a compact 5-manifold $W'$. Perform 0 and 1-surgeries on $W'$ to obtain a path connected, simply connected, compact 5-manifold $W$ with $\partial W = M$.
We prove that \[\ks(M) := \sum_{i=1}^k \ks(M_i) =0.\]
Consider the diagram
\[\xymatrix@R+0.5cm @C+0.5cm{M_i \ar[r] \ar[ddrrr]_{\ks(M_i)} & M \ar[r] \ar[ddrr]^(0.3){\ks(M)} & W \ar[r]^{\tau_W} \ar[ddr]^{\ks(W)} & \BTOP  \ar[dd] &  \\ &&&& \\
&&&\B(\TOP/\PL) \ar[r]^{\simeq}  & K(\Z/2,4). }\]
The restriction $M_i \to W \to \BTOP$ equals the stable tangent microbundle of $M$, since $M$ has a collar $M \times [0,1] \subseteq W$ by Theorem~\ref{thm:collar}.
Therefore the diagram commutes.  It follows that the top left horizontal map in the next diagram sends $\ks(W)$ to $(\ks(M_1),\dots,\ks(M_k))$, so the map $H^4(W;\Z/2) \to \Z/2$ sends $\ks(W)$ to $\sum_{i=1}^k \ks(M_i) = \ks(M)$.
\[\xymatrix{H^4(W;\Z/2) \ar[r] \ar[dd]^-{\cong}_-{\PD} & H^4(M;\Z/2)  \ar[r]^-{\cong} \ar[dd]^-{\cong}_-{\PD} & \bigoplus_{i=1}^k H^4(M_i;\Z/2) \ar[dr]^{\cong} \ar[dd]^-{\cong}_-{\PD} &  & \\
& & & \bigoplus_{i=1}^k \Z/2 \ar[r]^-{(1,\dots,1)} &\Z/2 \\
H_1(W,M;\Z/2) \ar[r]  & H_0(M;\Z/2) \ar[r]^-{\cong} & \bigoplus_{i=1}^k H_0(M_i;\Z/2) \ar[ur]^-{=} & &
 }\]
The left square of this diagram commutes by Poincar\'{e}-Lefschetz duality.  The middle square and the triangle commute trivially.  But since $W$ is connected and simply connected, every element of $H_1(W,M;\Z/2)$ can be represented by a (possibly empty) union of arcs with boundary on $M$. Thus the image of $\ks(W)$ in $\bigoplus_{i=1}^k H_0(M_i;\Z/2)$ is nonzero in evenly many summands, and therefore its image in $\Z/2$ on the far right is zero.  By commutativity of the diagram it follows that $\ks(M)=0$, as desired.

Now (\ref{item-ks-basics-4}) follows. First note that the addition on $\Omega_4^{\TOP}$ is by disjoint union, so $\ks$ is additive by definition. We have just shown that the map $\ks \colon \Omega_4^{\TOP} \to \Z/2$ is well-defined, since for $M$ a closed 4-manifold that bounds a compact 5-manifold, $\ks(M)=0$.  Therefore $\ks \colon \Omega_4^{\TOP} \to \Z/2$ is a homomorphism as desired.

Freedman~\cite[Theorem~1.7]{Freedman-82}  showed that there exists
a closed 4-manifold with intersection pairing that is isometric to $E_8$.
This manifold is called the \emph{$E_8$ manifold}.
Since the $E_8$ form has signature $8$, $\sigma(E_8)=8$, so $\sigma(E_8)/8=1$. Since the $E_8$ form is even, and $H_1(E_8;\Z)=0$, $E_8$ is spin.
The  construction of the $E_8$ manifold was a key step in the proof of the Classification Theorem~\ref{thm:classn-simply-connected-4-mfld}.
We will now show  that  $\ks(E_8) =1$.  To see this note that $E_8$ cannot be smoothed, even after adding copies of $S^2 \times S^2$, by the Rochlin Theorem~\ref{thm:rokhlin} that every closed spin smooth 4-manifold has signature divisible by 16. Whereas if $\ks(E_8)=0$, then $E_8$ would be stably smoothable by Theorem~\ref{thm:connect-sum-is-smooth}.  Therefore $\ks \colon \Omega_4^{\TOP} \to \Z/2$ is surjective.

Now we can prove (\ref{item-ks-basics-2}) easily.
Observe that a disjoint union $M \sqcup N$ is cobordant to $M \# N$ via the cobordism
\[(M \times I \sqcup N \times I) \cup_{S^0 \times D^4} (D^1 \times D^4),\]
with $\{-1\} \times D^4$ embedded in the interior of $M \times \{1\}$, and $\{1\} \times D^4$ embedded in the interior of $N \times \{1\}$. Then we have just shown that the Kirby-Siebenmann invariant vanishes on $M \# N \sqcup M \sqcup N$ and therefore $\ks(M\# N) = \ks(M) + \ks(N) \in \Z/2$.\\

%Next, on the way to proving (\ref{item-ks-basics-4}), \fotnote{This paragraph should be combined with the previous paragraph, this one is superfluous. I got carried away.}we prove that a closed 4-manifold $M$ (possibly disconnected) that bounds a compact 5-manifold has vanishing $\ks(M)$.  First, if $M$ is disconnected then as above there is a cobordism from $M = \bigsqcup_i M_i$ to $\#_i M_i$, and $\ks(M) = \ks(\#_i M_i)$ by (\ref{item-ks-basics-2}). Glue this cobordism with the null-bordism for $M$ to obtain a null-bordism $W$ for $\#_i M_i$. Perform surgery on $W$ so that $W$ is connected and simply connected.
%Then
%\[H^4(W;\Z/2) \cong H_1(W,\partial W;\Z/2) = 0,\]
%so
%\[[W,B(\TOP/\PL)] \cong H^4(W;\Z/2) =0.\]
%It follows from the homotopy fibre sequence associated to $\BTOP \to \BPL$ that the classifying map in $[W,\BTOP]$ of the stable tangent microbundle lifts to a map in $[W,\BPL]$.  Therefore there is a lift $W \to \BPL$ of the stable tangent microbundle.
%The restriction $M\to W \to \BTOP$ equals the stable tangent microbundle of $M$, since $M$ has a collar $M \times [0,1] \subseteq W$. It follows that the composition $M \to W \to \BPL$ is a stable lift of $\tau_M$, so that $\ks(M)=0$.

%This completes the proof of (\ref{item-ks-basics-3}).
%Now we are ready to prove (\ref{item-ks-basics-3}).  %Suppose that $M = \partial W$ where $W$ is a compact 4-manifold.  As above there`
%By Lemma~\ref{lemma:cool-cobordism},

 To prove~(\ref{item-ks-basics-6}), we consider the following diagram. The maps between bordism groups are structure forgetting maps, so the diagram commutes.
 \[\xymatrix@R0.76cm{ 0 \ar[r] & \Omega_4^{\op{Spin}}  \ar[r] \ar[d]^{\cdot 16} & \Omega_4^{\op{TOPSpin}}  \ar[r]^-{\ks} \ar[d] & \Z/2 \ar[r] \ar[d]^{=} & 0 \\
  0 \ar[r] & \Omega_4^{\op{SO}}  \ar[r] & \Omega_4^{\op{STOP}}  \ar[r]^-{\ks}  & \Z/2 \ar[r]  & 0 \\
   }\]
 Recall that $\Omega_4^{\op{SO}} \cong \Z$ given by the signature and generated by $\mathbb{CP}^2$. The signature provides a splitting homomorphism, so $\Omega_4^{\op{STOP}} \cong \Z \oplus \Z/2$. Also $\Omega_4^{\op{Spin}} \cong \Z$ given by the signature divided by 16 and generated by the $K3$ surface, so the forgetful map $\Omega_4^{\op{Spin}} \to \Omega_4^{\op{SO}}$ becomes, on identifying domain and codomain with $\Z$, multiplication by $16$.

Both sequences are exact: a smooth manifold has vanishing $\ks$ invariant, and vanishing $\ks(M)$ implies smoothable after adding copies of $S^2 \times S^2$ by Theorem~\ref{thm:connect-sum-is-smooth} below. Since $M \# (S^2 \times S^2)$ is (spin) bordant to $M$, the sequences are exact at their middle terms. The maps labelled $\ks$ are surjective because the $E_8$ manifold is spin and has $\ks(E_8)= 1$, as discussed in the proof of (\ref{item-ks-basics-4}).  Finally, after surgery to make it 1-connected,  a topological null bordism of a compact smooth 4-manifold can be smoothed by high dimensional smoothing theory, so the left hand maps are injective.

 We claim that the sequence in the upper row does not split. Consider the $K3$ surface generating $\Omega_4^{\op{Spin}} \cong \Z$. By the down-then-left route, $[K3]$ maps to $(16,0) \in \Z \oplus \Z/2 \cong \Omega_4^{\op{STOP}}$.  On the other hand the $E_8$ manifold represents a class in $\Omega_4^{\op{TOPSpin}}$ and maps to $(8,1) \in \Z \oplus \Z/2 \cong \Omega_4^{\op{STOP}}$.

Since $\ks$ is a homomorphism by (\ref{item-ks-basics-4}), we see that $2\cdot [E_8]$, which equals $[E_8 \#E_8]$ by \eqref{item-ks-basics-2}, maps to $0 \in \Z/2$ and so has trivial $\ks$ invariant. By exactness of the top row it lies in the image of $\Omega_4^{\op{Spin}}$. Let $N$ be a closed spin smooth 4-manifold $\op{TOPSpin}$-bordant to $E_8 \# E_8$.  Since $\sigma(E_8 \# E_8) = 16 = \sigma(K3)$, we have $[N] = [K3] \in \Omega_4^{\op{Spin}}$. It follows that $K3$, the generator of $\Omega_4^{\op{Spin}} \cong \Z$, maps to $2 \cdot [E_8] \in \Omega_4^{\op{TOPSpin}}$.  Thus we have a diagram with exact rows:
\[\xymatrix @R+0.25cm @C+0.85cm { 0 \ar[r] & \Z  \ar[r]^-{\cdot 2} \ar[d]^{=} & \Z  \ar[r] \ar[d]^-{1 \mapsto [E_8]} & \Z/2 \ar[r] \ar[d]^-{=} & 0 \\
  0 \ar[r] & \Z  \ar[r]^-{1 \mapsto 2\cdot[E_8]} & \Omega_4^{\op{TOPSpin}}  \ar[r]^-{\ks}  & \Z/2 \ar[r]  & 0. \\
   }\]
Since $\ks(E_8)=1$, the diagram commutes. Then by the five lemma, $\Omega_4^{\op{TOPSpin}} \cong \Z$, generated by $E_8$, and the sequence does not split, as claimed.
For a topological spin, compact 4-manifold, $\sigma/8$ is an integer, by Proposition~\ref{proposition-int-form-spin-even}.
By Proposition~\ref{prop:additivity-of-intersection-form}
we know that the signature is additive. It follows from this observation
and  the fact that $\Omega_4^{\op{TOPSpin}}$ is generated by $E_8$ that $M \mapsto \sigma(M)/8$ gives rise to the isomorphism  $\Omega_4^{\op{TOPSpin}} \cong \Z$.

The diagram
\[\xymatrix@C1.5cm@R0.76cm{\Omega_4^{\op{TOPSpin}} \ar[r]^{\cong}_{\sigma/8} \ar[d] & \Z \ar[d]_-{1 \mapsto (8,1)} \ar@{->>}[dr] & \\
 \Omega_4^{\op{STOP}} \ar[r]^-{\cong}  & \Z \oplus \Z/2  \ar[r]^-{\pr_2} & \Z/2,}\]
which commutes by computing on the generator $E_8$ of $\Omega_4^{\op{TOPSpin}} \cong \Z$, shows that $\ks(M) = \sigma(M)/8\in \Z/2$ for $\op{TOPSpin}$ manifolds $M$.  This completes the proof of (\ref{item-ks-basics-6}).

To prove (\ref{item-ks-basics-5}), it was suggested by Jim Davis to consider the exact sequence
\[\Omega_4^{\O} \to \Omega_4^{\op{\TOP}} \to \Omega_4^{\{{\O} \to \op{\TOP}\}} \to \Omega_3^{\O} =0.\]
Here elements  of $\Omega_4^{\{{\O} \to \op{\TOP}\}}$ are represented by compact topological 4-manifolds with smooth boundary, considered up to 5-dimensional cobordism relative to a smooth cobordism on the boundary. That is, 4-manifolds with boundary $(M,\partial M)$ and $(N,\partial N)$ are equivalent if there is a compact 5-manifold $W$ with boundary
\[\partial W = M \cup_{\partial M} \partial_{\op{vert}} W \cup_{\partial N} N,\]
for some smooth 4-dimensional cobordism $\partial_{\op{vert}} W$ with boundary $\partial M \sqcup \partial N$.

By the exact sequence, $\Omega_4^{\{{\O} \to \TOP\}}$ is isomorphic to the cokernel of $\Omega_4^{\O} \to \Omega_4^{\op{\TOP}}$. We claim that this cokernel is isomorphic to $\Z/2$ via the Kirby-Siebenmann invariant.  To see this, by (\ref{item-ks-basics-4}) there is a surjective homomorphism $\ks \colon \Omega_4^{\TOP} \to \Z/2$.
If $\ks(M) =0$ then $M$ is stably smoothable by Theorem~\ref{thm:connect-sum-is-smooth}, so $M$ is bordant to a smooth manifold and therefore lies in the image of $\Omega^{\O}_4$.  If $M$ is smooth, then $\ks(M)$ is zero, so the sequence $\Omega_4^{\O} \to \Omega_4^{\op{\TOP}} \xrightarrow{\ks} \Z/2 \to 0$ is exact, and we may identify this sequence with the given sequence.

To prove (\ref{item-ks-basics-5}), we therefore need that the disjoint union $M \sqcup N$ is bordant to $M \cup_{S\cong T} N$, where $S\subseteq \partial M$ and $T\subseteq \partial N$ are compact codimension zero submanifolds with a choice of homeomorphism $S \cong T$.
 Here is a  construction of such a bordism. For $I=[0,1]$, take
\[(M \times I) \,\sqcup \, (S \times I \times [1/2,1])\, \sqcup\, (N \times I),\]
identify
\[S \times \{0\} \times [1/2,1]\,\, \sim\,\,  S \times [1/2,1]\,\, \subseteq\,\, ( M \times [1/2,1]),\]
and, using the identification $S \cong T$, identify
\[S \times \{1\} \times [1/2,1]\, \sim\, T \times [1/2,1] \subseteq N \times [1/2,1].\]
Let $W$ be the result of this gluing and some rounding of corners.  The boundary of $W$ is
\[(M \sqcup N) \cup_{\partial M \,\sqcup\, \partial N} \partial_{\op{vert}} W \cup_{\partial(M \cup_{S\cong T} N)} M \cup_{S\cong T} N,\]
where
\begin{align*}
  \partial_{\op{vert}} W \,\,= \,\,&(\partial M \times [0,1/2]) \cup (\overline{\partial M \sm S} \times [1/2,1]) \\ \cup &( S \times I \times\{1/2\}) \cup (\partial S \times I \times [1/2,1]) \\ &(\partial N \times [0,1/2]) \cup  (\overline{\partial N \sm T} \times [1/2,1]).
  \end{align*}
This shows that $M \sqcup N$ and  $M \cup_{S\cong T} N$ are equal in $\Omega_4^{\{\O \to \TOP\}}$, and therefore have the same Kirby-Siebenmann invariants. Since $\ks(M \sqcup N) = \ks(M) + \ks(N)$, this completes the proof of (\ref{item-ks-basics-5}).

Finally we prove (\ref{item-ks-basics-3}).  If $M \cup_{\partial M=\partial N} N$ bounds a compact 5-manifold, then by (\ref{item-ks-basics-4}) we have that $\ks(M \cup_{\partial} N) =0$. By (\ref{item-ks-basics-5}), $\ks(M) + \ks(N) = \ks(M \cup_{\partial} N) \in \Z/2$. Therefore $\ks(M) = \ks(N)$ as required. This proves  (\ref{item-ks-basics-3}) and therefore completes the proof of Theorem~\ref{thm:ks-basics}.
\end{proof}

The following theorem says that the converse to Theorem~\ref{thm:ks-basics} (\ref{item-ks-basics-1}) holds for $M$ connected.

\begin{theorem}\label{thm:MxR}
If $M$ is a compact, connected 4-manifold with vanishing Kirby-Siebenmann invariant, then $M\times \R$ admits a smooth structure.
\end{theorem}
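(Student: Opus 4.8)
The plan is to view $M\times\R$ as a $5$-manifold whose stable tangent microbundle admits a lift to $\BPL$, and to feed this into topological smoothing theory in dimensions $\geq 5$.

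First I would reformulate the hypothesis. By Theorem~\ref{thm:topological-manifold-CW complex}, $M$ has the homotopy type of a finite $4$-dimensional CW complex, so the obstruction theory underlying the definition of the Kirby--Siebenmann invariant applies, and $\ks(M)=0$ means precisely that the classifying map $\tau_M\colon M\to\BTOP$ of the stable tangent microbundle admits a lift, up to homotopy, through $\BPL\to\BTOP$, relative to the canonical $\PL$ structure on the $3$-manifold $\partial M$. Set $N:=M\times\R$ and let $\pr\colon N\to M$ be the projection. Since $N=M\times\R$ we have $\tau_N=\pr^{*}\tau_M\oplus\varepsilon$ with $\varepsilon$ a trivial line bundle, so the stable tangent microbundle of $N$ is classified by $\tau_M\circ\pr\colon N\to\BTOP$. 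As $\pr$ is a homotopy equivalence of pairs $(N,\partial N)=(M\times\R,\partial M\times\R)\to(M,\partial M)$, composing the lift from the previous sentence with $\pr$ produces a lift of the stable tangent microbundle of $N$ through $\BPL\to\BTOP$, relative to the canonical (product) $\PL$ structure on $\partial N$.

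Now I would apply smoothing theory to $N$, a noncompact $5$-manifold with boundary $\partial M\times\R$, whose boundary is canonically $\PL$ because the compact $3$-manifold $\partial M$ is. By the Kirby--Siebenmann structure theory of \cite[Essay~V]{KS77}, whose dimension-$5$ case is due to Quinn \cite{Qu82} (using his handle decomposition theorem for $5$-manifolds relative to a collar of the boundary), the lift constructed above is realised by a genuine $\PL$ structure on $N$ restricting to the given one on $\partial N$; the noncompactness causes no trouble, and if preferred one may instead exhaust $N$ by the compact pieces $M\times[-k,k]$ and patch. Finally, a $\PL$ manifold of dimension $5$ carries a compatible smooth structure by classical low-dimensional $\PL$ smoothing theory (Hirsch--Mazur; \cite[Theorem~8.3B]{FQ90}). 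Therefore $N=M\times\R$ is smoothable, which is what we wanted.

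The reformulation of $\ks(M)=0$ and the passage from $M$ to $M\times\R$ are routine obstruction theory; the substantive step, and the only real obstacle, is the invocation of structure theory in dimension $5$, since the existence of handle decompositions for $5$-manifolds --- and hence the smoothing/structure theory used above --- is the deep ingredient, resting ultimately on Freedman's disc embedding theorem. This is precisely the ingredient that has no counterpart one dimension lower, consistently with the remark after Theorem~\ref{thm:ks-basics} that $\ks(M)=0$ does not force $M$ itself to be smoothable. As an alternative to this step, one can smooth the $6$-manifold $M\times\R^{2}$ using the classical theory of \cite{KS77} alone, arranging the smooth structure to be a product with the last $\R$-coordinate near $\partial M\times\R^{2}$, and then apply the Product Structure Theorem~\ref{theorem:product-structure} with ``$M$'' taken to be $M\times\R$ and $s=1$, using the canonical smooth structure on a collar of $\partial M\times\R$ for the required open set $U$.
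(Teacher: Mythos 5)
Your proof is correct, but your primary route is genuinely different from the paper's. The paper does not stop at $\BPL$: using that $\PL/\O$ is $6$-connected it lifts $\tau_M$ all the way to $\BO$, destabilises to a lift $M\to\BO(4+n)$, smooths the $(4+n)$-manifold $M\times\R^n$ directly by \cite[Essay~V, Theorem~1.4]{KS77} (dimension $\geq 6$ once $n\geq 2$, so only classical smoothing theory is needed there), and only then descends to $M\times\R$ via the Product Structure Theorem~\ref{theorem:product-structure}; the sole dimension-$5$ input is the $n=5$ case of that theorem. You instead run structure theory directly on the $5$-manifold $M\times\R$ -- a $\PL$ structure from the lift to $\BPL$, then Hirsch--Mazur/\cite[Theorem~8.3B]{FQ90} to smooth it -- which is a heavier invocation. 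Note, though, that since $\partial(M\times\R)=\partial M\times\R$ carries a canonical $\PL$ structure coming from the compact $3$-manifold $\partial M$, the dimension-$5$ classification \emph{rel} an already-$\PL$ boundary is available in \cite{KS77} itself; your attribution of this step to Quinn's handle decompositions and ultimately to Freedman's disc embedding theorem is more pessimistic than necessary (those are needed, e.g., for $5$-dimensional handlebody structures and the stable $s$-cobordism theorem, not for smoothing a $5$-manifold whose boundary is already smooth). What your write-up buys that the paper's does not: your closing alternative, which is essentially the paper's argument, explicitly arranges the smooth structure on the stabilised manifold to be a product near the boundary before invoking Theorem~\ref{theorem:product-structure}, thereby discharging the $n=5$ hypothesis $\partial M\subset U$ -- a point the paper's own proof passes over in silence when $M$ has nonempty boundary.
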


\begin{proof}
  The vanishing of the Kirby-Siebenmann invariant implies that there is a lift of $\tau_M \colon M \to \BTOP$ to a map $M \to \BPL$. Since $\PL/{\O}$ is 6-connected~\cite[Theorem~8.3B]{FQ90}, \cite[Proof~of~4.13]{Hirsch-Mazur74}, there is in fact a lift $\widetilde{\tau}_M \colon M \to \BO$. This corresponds to a lift $\widetilde{\tau}_M \oplus \varepsilon^n \colon M \to \BO(4+n)$, for some $n$. This in turn corresponds to a lift \[\wt{\tau}_{M \times \R^n} \colon M \times \R^n \to {\BO}(4+n)\]
  of the tangent microbundle $\tau_{M \times \R^n} \colon M \times \R^n \to \BTOP$.   By \cite[Essay~V, Theorem~1.4~p.~222]{KS77}, there exists a corresponding smooth structure on $M\times \R^n$.  Then apply the Product Structure Theorem~\ref{theorem:product-structure}~\cite[Essay~I, Theorem~5.1, p.~31]{KS77}, to deduce the existence of a smooth structure on $M \times \R$, using that the dimension of $M \times \R$ is at least five.
\end{proof}

\begin{example}
  Here is an application of Theorem~\ref{thm:MxR}.  By the classification of simply connected, closed 4-manifolds~\cite[Section~10.1]{FQ90} (see also our Theorem~\ref{thm:classn-simply-connected-4-mfld}), there is a simply connected, closed 4-manifold $N$ with intersection form $E_8 \oplus E_8$. As the form is even, the manifold is spin; see Remark~\ref{rem:evenimpliesspin}. Since this form is not diagonalisable over $\Z$,  by Donaldson's Theorem~\cite{Donaldson83} (Theorem~\ref{thm:donaldson})  this 4-manifold does not admit a smooth structure.  However the Kirby-Siebenmann invariant of $N$ vanishes, since for a closed simply connected 4-manifold $M$ with even intersection form, the Kirby-Siebenmann invariant $\ks(M)$ coincides with $\sigma(M)/8 \mod{2}$ (Theorem~\ref{thm:ks-basics}\eqref{item-ks-basics-6}), and $E_8 \oplus E_8$ is rank 16 and positive definite, with signature 16.  Therefore $N \times \R$ admits a smooth structure by Theorem~\ref{thm:MxR}, even though $N$ does not.
\end{example}

We give a straightforward consequence of Theorem~\ref{thm:ks-basics} in the following. We first recall a definition. Let $N$ be an integral homology $3$-sphere. As the $3$-dimensional spin bordism group is trivial $\Omega_3^{\Spin}=0$ \cite[Theorem 5.7.14]{GS99}, we may pick a smooth, compact, orientable, spin $4$-manifold $M$ with boundary $N$. The \emph{Rochlin invariant} $\mu(N)\in\Z/2$ is defined as the quantity $\sigma(M)/8\mod2$. This is well-defined, as a consequence of Novikov Additivity \cite[Theorem~5.3]{Ki89} and Theorem~\ref{thm:rokhlin}.

\begin{theorem}\label{thm:FQ165}
Let $M$ be a compact, oriented, spin manifold with boundary $N$ an integral homology $3$-sphere. Then $\op{ks}(M)=\sigma(M)/8+\mu(N)\in\Z/2$.
\end{theorem}

\begin{proof}
As $\Omega_3^{\Spin}=0$ \cite[Theorem 5.7.14]{GS99}, we may pick a smooth, compact, orientable, spin $4$-manifold $X$ with boundary $N$. Form the closed, spin $4$-manifold $Z=-X\cup_N M$. By Theorem~\ref{thm:ks-basics}~(\ref{item-ks-basics-5}), and using that $X$ is smooth, we have $\ks(Z)=\ks(X)+\ks(M)=\ks(M)\in\Z/2$. By Theorem~\ref{thm:ks-basics}~(\ref{item-ks-basics-6}), we have $\ks(Z)=\sigma(Z)/8\in\Z/2$. Using Novikov additivity and the facts so far, we have
\[
\ks(M)=\ks(Z)=\sigma(Z)/8=\sigma(X)/8+\sigma(M)/8= \mu(N)+\sigma(M)/8\in\Z/2. \qedhere
\]
\end{proof}

\begin{lemma}
There exists a unique closed $4$-manifold that is homotopy equivalent to $\cp^2$ and with nontrivial Kirby-Siebenmann invariant, and so which in particular is  not homeomorphic to $\cp^2$.
\end{lemma}

\begin{proof}
Let $K\subseteq \partial D^4$ be any knot with $\op{Arf}(K)=1\in\Z/2$ (for example, the trefoil). Attach a 2-handle $D^2 \times D^2$ to $D^4$ by identifying $S^1 \times D^2$ with a tubular neighbourhood of $K$, via the $+1$ framing of $K$, to obtain the $4$-manifold with boundary $X(K)$. The boundary $\partial X(K)$ is an integral homology sphere. Freedman proved that every integral homology sphere bounds a compact, contractible 4-manifold~\cite[Theorem~1.4$'$]{Freedman-82}, ~\cite[Corollary~9.3C]{FQ90}. Write $Y$ for this contractible $4$-manifold with $\partial Y\cong\partial X(K)$ and form the closed $4$-manifold $Z=-Y\cup X(K)$.

The manifold $Z$ is homotopy equivalent to $\cp^2$. Indeed, $H^2(Z;\Z)\cong\Z$ so we obtain a map $Z\to\cp^\infty=K(\Z,2)$ representing a generator. This map can now be homotoped to have domain $\cp^2\subseteq\cp^\infty$. This map will be a homology equivalence (this is clear on $H_1$, $H_2$, and $H_3$, and can be deduced on $H_4$ by considering that the cohomology ring of $Z$ agrees with that of $\cp^2$ by Poincar\'e duality). As $Z$ is simply connected, Whitehead's theorem now implies we have a homotopy equivalence.

We show that $\ks(Z)=1\in\Z/2$. The Rochlin invariant of $+1$ surgery on a knot in $S^3$ is equal to its Arf invariant (see e.g.~\cite[Example 2.5]{Saveliev2002}), and thus  $\mu(\partial X(K))=1\in\Z/2$. Applying Theorem~\ref{thm:FQ165} to the contractible manifold $Y$, we obtain $\ks(Y)=\mu(\partial X(K))=1\in\Z/2$. On the other hand, $\ks(X(K))=0$ because this manifold is smooth. By Theorem~\ref{thm:ks-basics}~(\ref{item-ks-basics-5}), $\ks(Z)=\ks(Y)+\ks(X(K))=1+0=1$.

By Freedman's classification (Theorem~\ref{thm:classn-simply-connected-4-mfld}), $Z$ is the unique closed manifold that is homotopy equivalent to $\cp^2$ with $\ks(Z)=1$.
\end{proof}

\begin{definition}\label{def:Chern}
The unique closed $4$-manifold that is homotopy equivalent to $\cp^2$ but not homeomorphic to $\cp^2$ (constructed above) is called the \emph{Chern manifold} and denoted~$\ast\cp^2$.
\end{definition}

\begin{remark}
The Chern manifold was first constructed in~\cite[p.~370]{Freedman-82}. It is not smoothable because $\ks(\ast\cp^2)=1$. For further discussion of star partners, see \cite[Section~10.4]{FQ90}, \cite{Stong-conn-sum}, and \cite{Teichner-star}.
\end{remark}

%\begin{construction}\label{construction:chern-manifold}
%Here is a construction of the Chern manifold $*\cp^2$.
%The Chern manifold $*\cp^2$ is homotopy equivalent to $\cp^2$ but is not homeomorphic.
%
%\end{construction}

The following theorem says in particular that given any compact $4$-manifold~$M$ there exists a closed, orientable, simply-connected $4$-manifold $N$ such $M\# N$ is smoothable.

\begin{theorem}\label{thm:connect-sum-is-smooth}
Let $M$ be compact, connected $4$-manifold. There exists a closed, orientable, simply connected $4$-manifold $N$ such $M\# N$ admits a smooth structure.
If moreover the Kirby-Siebenmann invariant of $M$ is zero, then there exists a $k \in \mathbb{N}_0$ such that $M\#^k S^2\times S^2$ admits a smooth structure.
\end{theorem}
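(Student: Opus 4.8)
The plan is to reduce both statements to a single sum‑stable smoothing theorem of Freedman--Quinn. Throughout we assume $M$ is connected, which is what makes the connected sum meaningful; note that the second statement is genuinely false for disconnected $M$, since $\ks$ is additive over components while stabilising one component leaves the others untouched.

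First I would reduce to the case $\ks(M)=0$. If $\ks(M)=1$, replace $M$ by $M':=M\#*\cp^2$, where $*\cp^2$ is the Chern manifold of Construction~\ref{construction:chern-manifold}: it is closed, orientable and simply connected with $\ks(*\cp^2)=1$, so by additivity of the Kirby--Siebenmann invariant under connected sum (Theorem~\ref{thm:ks-basics}\,(\ref{item-ks-basics-2})) we get $\ks(M')=0$. Since $M'\#_k(S^2\times S^2)=M\#\big(*\cp^2\#_k(S^2\times S^2)\big)$, it suffices to establish the second statement; the manifold $N$ demanded by the first statement is then $\#_k(S^2\times S^2)$ if $\ks(M)=0$ and $*\cp^2\#_k(S^2\times S^2)$ if $\ks(M)=1$, and in both cases this is closed, orientable and simply connected.

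For the second statement, suppose $\ks(M)=0$. By Theorem~\ref{thm:MxR} the manifold $M\times\R$ is smoothable; equivalently, $\ks(M)=0$ supplies a vector bundle reduction $\widetilde{\tau}_M\colon M\to\BO$ of the stable tangent microbundle, rel.\ the unique smooth structure on $\partial M$. Now I would invoke the sum‑stable smoothing theorem \cite[Section~8.6]{FQ90}: a compact connected topological $4$-manifold whose tangent microbundle admits a vector bundle reduction rel.\ its smooth boundary becomes smoothable after connected sum with finitely many copies of $S^2\times S^2$. Applied to $M$ this produces a $k\in\N_0$ with $M\#_k(S^2\times S^2)$ smoothable, which together with the previous paragraph finishes the proof.

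All of the content is in the cited theorem, and that is where I expect the only real difficulty. The obvious attempt --- smooth $M$ minus a point via Theorem~\ref{thm:smooth-outside-a-point} and then straighten the induced smooth structure across the point --- cannot succeed, because straightening is equivalent to a smooth $4$-dimensional $h$-cobordism statement which is unavailable. Freedman--Quinn instead work in the smooth $5$-manifold $M\times\R$, where the Whitney trick succeeds smoothly (a $2$-dimensional Whitney disc in an ambient $5$-manifold is generically embedded), so $5$-dimensional smooth surgery is available; passing back down to a level set reintroduces, in dimension four, precisely the handle pairs that fail to cancel smoothly, and these assemble into the $S^2\times S^2$ summands. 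This is the same mechanism underlying the stable diffeomorphism theorem (Theorem~\ref{thm:stably-diffeo-homeo-4-mflds}); for the present statement we simply quote \cite[Section~8.6]{FQ90}.
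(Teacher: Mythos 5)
Your proof is correct and follows essentially the same route as the paper: kill the Kirby--Siebenmann invariant by connect-summing with $*\cp^2$, then quote the Freedman--Quinn sum-stable smoothing theorem, whose hypothesis (a stable vector bundle reduction of the tangent microbundle) is exactly what $\ks=0$ provides. The extra care you take about connectedness and about verifying the hypothesis of the cited theorem is a welcome refinement of the paper's terser citation of the discussion in \cite[p.~164]{FQ90}.
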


\begin{proof}
Let $M$ be compact $4$-manifold. Perform the connected sum with
an appropriate number of copies of $*\cp^2$, in order to obtain a manifold with every connected component having zero Kirby-Siebenmann invariant.
It follows from the discussion on \cite[p.~164]{FQ90} and
the Sum-Stable Smoothing Theorem~\cite[p.~125]{FQ90},
that performing the connected sum with enough copies of $S^2\times S^2$ produces a manifold that admits a smooth structure.
\end{proof}

\begin{remark}
  Given a lift of the classifying map of the (unstable) tangent microbundle of $M$ to $\BO(4)$, Lashof-Shaneson~\cite{Lashof-Shaneson-71} showed that there exists a $k \in \mathbb{N}_0$ such that $M\#^k S^2\times S^2$ admits a smooth structure.  The result quoted in the previous proof extended this to require only a lift of the corresponding \emph{stable} maps to deduce the same result.  The existence of a stable lift is significantly easier to verify.
\end{remark}

%======================================
\chapter{Topological transversality}\label{chapter:transversality}

We turn to the subject of transversality in the topological category. Some discussion of this concept is in order. There are two important contexts for transversality: submanifold transversality and map transversality. In this book, map transversality will be deduced from submanifold transversality. Submanifold transversality when none of the manifolds involved has dimension 4 is due to Marin~\cite{Marin77}; cf.~\cite[Essay~III, Section~1, p.~83]{KS77}. Transversality in the remaining cases is due to Quinn~\cite{Qu82,Quinn88}; see also~\cite[Section 9.5]{FQ90}.

A naive definition of submanifold transversality in the topological category is that manifolds are \emph{locally transverse} if around any intersection point there is a chart in which the submanifolds appear as perpendicular planes.
On the other hand, there are examples (in the relative setting, in high dimensions) of submanifolds which cannot be made locally transverse via ambient isotopy; see Remark \ref{rem:Hudson}. Thus one cannot generally use this definition.

In light of this, in order to make general statements, one passes to some notion of \emph{global transversality}. Global transversality means that transversality statements are made with respect to a given choice of normal structure on one of the submanifolds involved. Of course, this forces one to engage with the question of existence and uniqueness of whatever normal structure is used, and the `correct' choice of normal structure is still not fully settled in the topological category. We refer the reader to~\cite[Sections 9.4, 9.6C]{FQ90} for a brief discussion of the competitors.

The most general statement of transversality \cite[Theorem]{Quinn88} uses  microbundles to describe normal structure, and this is the technology we will use. As discussed in Chapter~\ref{chapter:bundlestructures}, for general manifolds, tangent microbundles always exist but normal microbundles do not (see Example \ref{ex:notnormal}).

The case of dimension 4 is special, since here the normal vector bundles of Section~\ref{subsec:normalvector}, which are a stronger notion than normal microbundles, always exist. In fact, the results obtained for these normal vector bundles in dimension~$4$ are strong enough to ensure that submanifold transversality holds in ambient dimension 4 with the naive, local transversality definition discussed above. The reader may therefore wonder why we even introduce normal microbundles into a discussion primarily focused on 4-manifold transversality. The answer is that the `submanifold transversality implies map transversality' argument of Section \ref{subsec:maps} requires a bundle technology that works in all dimensions, and microbundles appear to be the most convenient.

%=================================
\section{Transversality for submanifolds}\label{section:transversality-submanifolds}

\begin{definition}
Consider proper submanifolds~$X,Y$ of an ambient manifold and a normal microbundle~$\nu X$ for $X$,
with projection~$r_X \colon E(\nu X) \to X$. The proper submanifold~$Y$ is \emph{transverse to~$\nu X$}
if there exists a neighbourhood~$U \subseteq E(\nu X)$ of~$X$ such that $Y\cap U = r^{-1}_X(X \cap Y) \cap U$.
\end{definition}

\begin{figure}
\includegraphics{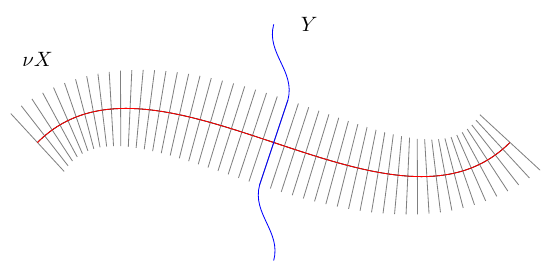}
\caption{Sketch of a transverse intersection of $Y$ to $\nu X$.}
\end{figure}

\begin{lemma}
Let $X,Y$ be submanifolds of  a manifold $M$.
Let $Y$ be transverse to a normal microbundle~$\nu X$ of $X$.
Then $X\cap Y$ is a submanifold of $Y$ with
normal microbundle~$(\nu X)|_{X\cap Y}$.
\end{lemma}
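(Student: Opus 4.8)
The plan is to verify the submanifold and normal microbundle claims by working in the local trivialization supplied by the transversality hypothesis, and then show that these local descriptions glue. First I would record the setup: write $Z := X \cap Y$, let $r_X \colon E(\nu X) \to X$ be the retraction of the normal microbundle, and let $U \subseteq E(\nu X)$ be the neighbourhood of $X$ from the definition of transversality, so that $Y \cap U = r_X^{-1}(Z) \cap U$. Since $U$ is a neighbourhood of $X$, it is in particular a neighbourhood of $Z$, and the equality above shows that locally near $Z$ the submanifold $Y$ agrees with $r_X^{-1}(Z)$.

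Next I would establish that $Z$ is a submanifold of $Y$ with the asserted normal microbundle, by producing charts. Fix $P \in Z$. Using local triviality of $\nu X$, pick an open neighbourhood $W$ of $P$ in $X$ and a homeomorphism $\phi \colon V \to W \times \R^q$ (where $q$ is the codimension of $X$), compatible with the section and retraction, with $V$ an open neighbourhood of $P$ in $E(\nu X)$; shrinking, we may assume $V \subseteq U$. Under $\phi$, the retraction $r_X$ becomes the projection $W \times \R^q \to W$, and $X$ corresponds to $W \times \{0\}$. Hence $\phi(Y \cap V) = \phi(r_X^{-1}(Z) \cap V) = (Z \cap W) \times \R^q$. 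Composing $\phi$ with a chart for $X$ at $P$ adapted to the submanifold $Z \subseteq X$ (i.e.\ one sending $Z \cap W$ into a coordinate subspace, possibly a half-subspace if $P \in \partial Z$, which happens exactly when $P \in \partial Y$ since $Y$ is proper), we obtain a chart for the ambient manifold in which $Y$ appears as a coordinate (half-)subspace and $Z$ appears as a smaller coordinate (half-)subspace of it. Restricting this chart to $Y$ — which is legitimate since $Y \cap V$ is exactly the coordinate slice $(Z \cap W) \times \R^q$, an open piece of $Y$ — exhibits $Z$ as a submanifold of $Y$ near $P$, with the properness condition $\partial Z = Z \cap \partial Y$ coming along for free from properness of $X$ and $Y$.

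The identification of the normal microbundle of $Z$ in $Y$ with $(\nu X)|_Z$ is then essentially built into the same local picture. On the total space side, $(\nu X)|_Z$ has total space $r_X^{-1}(Z) \cap E(\nu X)$, with retraction and section restricted from $\nu X$; intersecting with $U$ gives $r_X^{-1}(Z) \cap U = Y \cap U$, which is a neighbourhood of $Z$ in $Y$. Thus $r_X^{-1}(Z) \cap U$, with the restricted maps, is literally a microbundle $Z \to E \to Z$ whose total space is a neighbourhood of $Z$ in $Y$ — that is, a normal microbundle of $Z$ in $Y$ — and it is by definition the restriction $(\nu X)|_Z$ (up to the canonical isomorphism between $f^*\xi$ and $\xi|_A$ for an inclusion $f$, as discussed after the pullback definition). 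The only point needing a word of care is local triviality of this restricted microbundle as a stand-alone object, but that is immediate: the charts $\phi$ above restrict to the desired local trivializations $r_X^{-1}(Z) \cap V \cong (Z \cap W) \times \R^q$.

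I expect the main obstacle — really the only nonroutine point — to be bookkeeping the boundary and properness conditions: one must check that the coordinate slices produced above are the half-space type exactly when $P$ lies on $\partial Y$ (equivalently $\partial X$, equivalently $\partial Z$), and that $\partial Z = Z \cap \partial Y$, so that the phrase ``submanifold of $Y$'' is correctly interpreted and $(\nu X)|_Z$ genuinely restricts to a normal microbundle of $\partial Z \subset \partial Y$ as well. Everything else is a direct unwinding of the definitions of normal microbundle, transversality, and restriction, carried out in a single adapted chart and then observed to be independent of choices because the resulting object is characterized intrinsically as ``a microbundle structure on a neighbourhood of $Z$ in $Y$''.
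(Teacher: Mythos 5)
Your core argument is the same as the paper's: transversality gives $Y \cap U = r_X^{-1}(X\cap Y)\cap U$, so after shrinking, the total space of $(\nu X)|_{X\cap Y}$ lies inside $Y$ and is a neighbourhood of $X\cap Y$ there, which is exactly the definition of a normal microbundle of $X\cap Y$ in $Y$; the submanifold structure then falls out of the local trivialisations. The paper's proof consists of precisely these two observations, stated in the opposite order (microbundle first, then the remark that the trivialisation supplies the charts).

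One step of your chart construction is not justified as written: you compose $\phi$ with ``a chart for $X$ at $P$ adapted to the submanifold $Z \subseteq X$''. The hypotheses do not give you that $Z = X\cap Y$ is a submanifold of $X$ --- that is the same kind of assertion the lemma is proving (with $Y$ in place of $X$), and the transversality hypothesis is asymmetric (only $\nu X$ is given), so you cannot obtain it by symmetry. The paper sidesteps this by reading the charts for $Z\subseteq Y$ directly off the trivialisations of the restricted microbundle, i.e., off the homeomorphisms $Y\cap V \cong (Z\cap W)\times \R^q$, never routing through an adapted chart of $X$. Since you produce exactly these homeomorphisms anyway (and reuse them in your last paragraph for local triviality of the restriction), the fix is simply to delete the detour through $X$ and take these as the required charts of $Y$; with that change your argument coincides with the paper's. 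Your attention to the boundary/properness bookkeeping goes beyond the paper, which is silent on that point.
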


\begin{proof}
Let $X\xrightarrow{i}\nu X\xrightarrow{p} X$ be a normal microbundle of $X$.
Once we have established that $(\nu X)|_{X\cap Y}$ is a normal microbundle of~$X\cap Y$ in $Y$, the subspace~$X\cap Y$ will automatically be a submanifold since the trivialisation of the microbundle~$(\nu X)|_{X\cap Y}$ gives the required charts for~$X\cap Y$.

At least after shrinking the total space of $E(\nu X)$, each fibre~$p^{-1}(x)$ for $x \in X\cap Y$ will be contained in $Y$  by the definition of transversality. That is~$E((\nu X)|_{X\cap Y})$ is a subset of $Y$ and neighbourhood of $X\cap Y$. This shows that $(\nu X)|_{X\cap Y}$ is a normal microbundle of $X\cap Y \subseteq Y$.
\end{proof}

Transversality in high dimensions is due to Marin~\cite{Marin77}, cf.~\cite[Essay~III, Section~1, p.~83]{KS77}. The formulation below is from Quinn~\cite{Quinn88}.
Recall Definition~\ref{defn:ProperSubmanifold} of a proper submanifold.
Note that in the next theorem there is no restriction on dimensions. The manifold $M$ is allowed to be noncompact and have nonempty boundary.

\begin{theorem}\textbf{\textup{(Transversality Theorem for submanifolds)}}\label{thm:TransvSubmanifolds}
Let $X$ and $Y$ be proper submanifolds of a compact manifold~$M$.
Let $\nu Y$ be a normal microbundle for $Y$.
Let $C \subseteq M$ be a closed subset such that $X$
is transverse to $\nu Y$ in a neighbourhood of $C$.
Let $U$ be a neighbourhood of the set~$\big( M \sm C \big) \cap X \cap Y$.
Then there exists an isotopy of $X$ supported in~$U$ to a proper submanifold~$X'$
such that~$X'$ is transverse to~$\nu Y$.
\end{theorem}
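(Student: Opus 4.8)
My plan is to follow the route of Marin~\cite{Marin77} (valid when none of $X$, $Y$, $M$ has dimension~$4$) and of Quinn~\cite{Qu82,Quinn88}, \cite[Section~9.5]{FQ90} (in the remaining cases), and I will indicate the main steps. First I would observe that transversality to $\nu Y$ is a local condition and is automatic away from $X\cap Y$: if $p\in X$ and $p\notin Y$ then, since $Y$ is closed in the compact manifold~$M$, some neighbourhood of $p$ misses the total space $E(\nu Y)$ altogether. Hence it suffices to modify $X$ on arbitrarily small neighbourhoods of the points of $(M\sm C)\cap X\cap Y$; since $X$ is already transverse to $\nu Y$ near the closed set $C$, the isotopy can be kept fixed near $C$ and supported inside the given neighbourhood~$U$. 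I would then set up an induction — over the handles of a handle decomposition of $X$ when $\dim X\neq 4$, and over the pieces of a sufficiently fine finite cover of $\overline{(M\sm C)\cap X\cap Y}$ by charts of $M$ in general — where at each stage transversality has been achieved over $C$ together with the previously treated region, and at each stage $X$ is modified only inside the next chart and inside~$U$.

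The inductive step would be reduced, after shrinking, to a local model: the relevant piece of $X$ is (the image of) a handle $h\cong D^i\times D^{a-i}$, with $a=\dim X$, lying in a single chart of $M$ in which $Y$ appears as a coordinate plane $\R^b\times\{0\}\subset\R^b\times\R^{n-b}=\R^n$ (where $b=\dim Y$ and $n=\dim M$), $\nu Y$ is the standard product microbundle, and $X|_h$ is already transverse near the attaching region $\partial D^i\times D^{a-i}$. Composing with the projection $\R^n\to\R^{n-b}$, the task becomes that of isotoping a map $h\to\R^{n-b}$, rel the attaching region, so that $0\in\R^{n-b}$ becomes a regular value in the appropriate topological sense: its preimage should be a locally flat submanifold of $h$ along which $X$ carries the product $\nu Y$-structure. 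In the smooth category this would be immediate from Sard's theorem, but topologically one has to build the transversality by hand. Away from dimension~$4$ I would do this by the handle-straightening technique of Kirby--Siebenmann together with the Product Structure Theorem~\ref{theorem:product-structure}: the handle in question can be smoothed, one then applies smooth transversality, and transfers the conclusion back; this is essentially Marin's argument.

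The hard case is the one in which a $4$-dimensional stratum is involved — notably when $\dim M=4$, or $\dim M=5$ near the boundary — since there the smoothing step is unavailable, and this is where I expect the real obstacle to lie. Following Quinn, I would replace the smoothing by the controlled surgery and $h$-cobordism machinery of Freedman--Quinn: the ambient moves that put $X$ into the required normal form are middle-dimensional surgeries whose Whitney discs are supplied by the disc embedding theorem~\cite{Freedman-82}, \cite[Section~9.5]{FQ90}, with the ``control'' guaranteeing that the resulting isotopy is small and hence supported in~$U$. Granting this local transversality statement, the induction closes: one patches the local isotopies together over the handles, promotes them to ambient isotopies of $M$ via the Isotopy Extension Theorem~\ref{thm:isotopy-extension-theorem}, keeps all supports inside~$U$ and away from~$C$, and checks that $X$ remains a proper submanifold throughout, arriving at a proper submanifold~$X'$ transverse to $\nu Y$.
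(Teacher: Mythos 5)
Your outline reproduces, at the level of strategy, what Marin and Quinn actually prove, and since the paper itself quotes \cite{Quinn88} as a black box for almost all dimension combinations, there is no objection in principle to deferring the handle-by-handle induction and the local straightening to those references. The substantive difference lies in the residual four-dimensional case, and there your sketch has a gap. Quinn's theorem covers every case except $\dim M=4$, $\dim X=\dim Y=2$. For that case the paper does not run a direct controlled-surgery argument in a local model; instead it observes that $X\cap Y$ is then a discrete set of points, invokes the local transversality statement of \cite[Section~9.5]{FQ90} to obtain charts near those points, extends the normal structure coming from those charts to a normal vector bundle $\nu Y'$ over all of $Y$ by \cite[Theorem~9.3A]{FQ90}, and then --- this is the step your proposal does not address --- reconciles $\nu Y'$ with the \emph{given} $\nu Y$ using Theorem~\ref{thm:normalmicroexist} (the microbundle $\nu Y$ underlies a normal vector bundle) and the uniqueness of normal vector bundles up to ambient isotopy (Theorem~\ref{thm:uniquenormal}).

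The gap, concretely: in your inductive step you fix a chart in which the given $\nu Y$ is the standard product microbundle and propose to achieve transversality to that product structure by Freedman--Quinn technology. But the local transversality machinery of \cite[Section~9.5]{FQ90} produces its \emph{own} coordinate chart in which $X$ and $Y$ appear as perpendicular planes; it does not deliver transversality to a prescribed normal microbundle, and (as Remark~\ref{rem:Hudson} illustrates in the $\PL$ setting) local and global transversality are genuinely different notions. Transversality in the sense of Theorem~\ref{thm:TransvSubmanifolds} is a condition relative to the given $\nu Y$, and passing from ``transverse to some normal structure near the intersection points'' to ``transverse to the given $\nu Y$'' is exactly where the dimension-four existence and uniqueness theorems for normal vector bundles are needed. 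Without that reconciliation step, or some substitute for it, the inductive step does not close in the $(4;2,2)$ case.
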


\begin{proof}
	See Quinn~\cite{Quinn88} for all cases but $\dim M = 4$, $\dim X = 2$ and $\dim Y =2$.
	For the remaining case, first establish local transversality using~\cite[Section 9.5]{FQ90}.
	Note that $X\cap Y$ is a discrete collection of points. Therefore, the coordinate chart, witnessing local transversality, defines a normal neighbourhood of $Y$ near $X\cap Y$.
	This normal vector bundle can be extended to a normal vector bundle~$\nu Y'$ on all of $Y$ by~\cite[Theorem 9.3A]{FQ90}. The submanifold $X$ is now transverse to~$\nu Y'$,
	but (possibly) not to $\nu Y$. By Theorem \ref{thm:normalmicroexist}, our
	microbundle~$\nu Y$ comes from a normal vector bundle. By uniqueness of normal vector bundles
	(Theorem~\ref{thm:uniquenormal}), there is an isotopy from $\nu Y'$ to $\nu Y$. Apply this isotopy to~$X$. Now~$X$ is transverse to $\nu Y$.
\end{proof}

\begin{remark}\label{rem:Hudson}
The analogous statement to Theorem \ref{thm:TransvSubmanifolds} is false for local transversality. Examples of this failure even exist in the $\PL$ category: Hudson~\cite{Hud69} constructs, for certain large $n$, closed $\PL$ submanifolds $X,Y\subseteq \R^n$, that are topologically unknotted Euclidean spaces of codimension~$\geq 3$, in such a way that $X$ and $Y$ are $\PL$ locally transverse near a closed neighbourhood $K$ of infinity but also so that it is impossible to move $X$ and $Y$ by isotopy relative to $K$ to make them locally transverse everywhere.
\end{remark}

Although transversality for submanifolds (Theorem~\ref{thm:TransvSubmanifolds}) is only stated for a pair of submanifolds,
it can be used to make collections of submanifolds transverse.
\begin{lemma}
	Let $M$ be an $2m$-dimensional manifold for $m \geq 1$, and let $X_1, \ldots, X_n$ be $m$-dimensional compact submanifolds with normal microbundles~$\nu X_i$. Then the submanifolds~$X_i$ can be isotoped such that there are no triple intersection points and the submanifolds intersect $($pairwise$)$ transversely.
\end{lemma}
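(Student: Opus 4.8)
The plan is to put the $X_i$ in general position one at a time, by repeated use of the relative transversality theorem (Theorem~\ref{thm:TransvSubmanifolds}), exploiting that in ambient dimension $2m$ the transverse intersection of two $m$-dimensional submanifolds is $0$-dimensional. Precisely: if a compact $m$-dimensional submanifold $X$ is transverse to a normal microbundle $\nu Y$ of a compact $m$-dimensional submanifold $Y$, then by the lemma preceding Theorem~\ref{thm:TransvSubmanifolds} the intersection $X\cap Y$ is a submanifold of $X$ whose normal microbundle in $X$ is $(\nu Y)|_{X\cap Y}$, of fibre dimension $\op{codim}_M Y=m$; since $\dim X=m$ this forces $\dim(X\cap Y)=0$, and, $X\cap Y$ being compact, it is a finite set of points. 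I will also use that an $m$-dimensional submanifold can be pushed off any prescribed finite set of points by a small ambient isotopy supported in disjoint balls about those points; this uses $m\ge 1$, so that $m<2m$.

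First I would prove, by induction on $k$, that after an ambient isotopy of $M$ (carrying along the microbundles $\nu X_i$) the submanifolds $X_1,\dots,X_k$ satisfy: $X_j$ is transverse to $\nu X_i$ for all $1\le i<j\le k$, and no point of $M$ lies in three of the $X_i$. The case $k=1$ is vacuous. For the step from $k$ to $k+1$ one isotopes only $X_{k+1}$, as follows. Put $G:=\bigcup_{1\le a<b\le k}(X_a\cap X_b)$, a finite set of points (each $X_a\cap X_b$ is finite by the remark above, possibly empty); by the no-triple-point hypothesis no $X_c$ with $c\le k$, $c\notin\{a,b\}$, meets $X_a\cap X_b$. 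Begin by pushing $X_{k+1}$ off the finite set $G$ by a small ambient isotopy, so that $X_{k+1}\cap G=\varnothing$. Then make $X_{k+1}$ transverse to $\nu X_1$, then to $\nu X_2$, and so on up to $\nu X_k$: assuming $X_{k+1}$ is already transverse to $\nu X_1,\dots,\nu X_{j-1}$ and disjoint from $G$, note that $X_{k+1}\cap X_j$ is compact and disjoint from $G$ and from every $X_i$ with $i\le k$, $i\ne j$ — a common point would lie in some $X_a\cap X_b\subseteq G$ with $\{a,b\}\subseteq\{1,\dots,k\}$ and also in $X_{k+1}$, contradicting $X_{k+1}\cap G=\varnothing$. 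Now apply Theorem~\ref{thm:TransvSubmanifolds} with $X:=X_{k+1}$, $Y:=X_j$ and $C=\varnothing$, choosing the support $U$ of the isotopy to be a neighbourhood of $X_{k+1}\cap X_j$ whose closure is disjoint from $G$ and from each $X_i$ with $i\le k$, $i\ne j$ (possible by the disjointness just noted and the compactness of $X_{k+1}\cap X_j$). Since the isotopy is supported in $U$, it leaves $X_{k+1}$ unchanged near every $X_i$ with $i\ne j$ and never moves $X_{k+1}$ onto $G$; hence $X_{k+1}$ stays transverse to $\nu X_1,\dots,\nu X_{j-1}$, stays off $G$, and becomes transverse to $\nu X_j$.

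After the step $j=k$ the submanifold $X_{k+1}$ is transverse to all the $\nu X_i$ and still misses $G$, so there is no triple point among $X_1,\dots,X_{k+1}$: one not involving $X_{k+1}$ is forbidden by the inductive hypothesis (those $X_i$ were untouched), and one involving $X_{k+1}$ would lie in $X_{k+1}\cap X_a\cap X_b\subseteq X_{k+1}\cap G=\varnothing$ for some $a<b\le k$. This completes the induction, and the case $k=n$ is the assertion of the lemma.

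The hard part is the bookkeeping of this inductive step: each call to Theorem~\ref{thm:TransvSubmanifolds} that arranges transversality to a new $\nu X_j$ threatens the transversalities already achieved and risks creating triple points. The way around this is the observation that, in ambient dimension $2m$, all the previously produced intersection loci are $0$-dimensional, so one may first push $X_{k+1}$ clear of the finite set $G$ of existing pairwise intersections and then, in each transversality move, shrink the support neighbourhood $U$ furnished by the theorem so that it simultaneously avoids $G$ and all the other $X_i$; there is room to do this precisely because $0+m<2m$.
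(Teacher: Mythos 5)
Your argument is correct, but it is organised differently from the paper's proof, so let me compare the two. The paper also inducts and also relies on the same two ingredients you isolate (Theorem~\ref{thm:TransvSubmanifolds} with control on the support, and the fact that in the middle-dimensional situation transverse intersections are finite), but the roles are reversed: in the paper's inductive step the \emph{last} submanifold $Y=X_n$ is held fixed, all of $X_1,\dots,X_{n-1}$ are first made transverse to $\nu Y$, the resulting triple points \emph{on $Y$} are then destroyed by an explicit damped translation $v\mapsto(\eta(\|v\|)u_i,v)$ inside microbundle charts of $\nu Y$ around each point of $Y\cap(X_1\cup\dots\cup X_{n-1})$, and only afterwards is the inductive hypothesis invoked to sort out $X_1,\dots,X_{n-1}$ among themselves, with all further isotopies supported in an open set $U_Y$ whose complement is a neighbourhood of $Y$. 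You instead add the submanifolds one at a time and move \emph{only} the newcomer $X_{k+1}$: you first clear it off the finite set $G$ of existing double points (by the same damped-translation trick, deployed earlier in the process) and then achieve transversality to each $\nu X_j$ in turn, shrinking the support $U$ of each application of Theorem~\ref{thm:TransvSubmanifolds} so that it avoids $G$ and the other $X_i$ — which is possible precisely because $X_{k+1}\cap X_j$ is compact and, thanks to $X_{k+1}\cap G=\varnothing$, disjoint from the closed set $G\cup\bigcup_{i\le k,\,i\ne j}X_i$. The trade-off: the paper must argue that the later "internal" isotopies of $X_1,\dots,X_{n-1}$ can be kept away from $Y$, whereas you must argue that each successive transversality move preserves the transversalities already achieved; your observation that $X_{k+1}$ is unchanged on the open set $M\sm\overline{U}\supseteq X_i$ (and that transversality to $\nu X_i$ is a condition near $X_i$) handles this, and avoiding triple points in the first place rather than removing them afterwards makes the bookkeeping arguably cleaner. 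Both arguments are valid proofs of the lemma.
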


\begin{proof}
We give a proof by induction. When $n=1$, there is nothing to show, since every submanifold is embedded.
For the inductive step, denote $X_n$ by $Y$. The inductive hypothesis states that we can isotope any $n-1$ submanifolds $X_1,\dots,X_{n-1}$ so there are no triple points and that they intersect pairwise transversely.
We will prove that the submanifolds~$X_1,\dots,X_{n-1}$ can be further isotoped so that they are transverse to~$\nu Y$ and $Y$ is free of triple points. Note that having no triple points on $Y$ implies that there exists an open set~$U_Y$ such that: $X_i \cap X_j \subseteq U_Y$ for~$1 \leq i < j \leq n-1$, and $M \setminus U_Y$ is a neighbourhood of $Y$. To obtain the lemma apply the inductive hypothesis, picking all further isotopies to be supported in $U_Y$.

We proceed by showing the inductive step: we can isotope every~$X_i$ to be transverse to $\nu Y$ such that no triple points lie on $Y$.
For each $i = 1, \ldots, n-1$, apply Theorem~\ref{thm:TransvSubmanifolds} to arrange that $Y$ and $X_i$ intersect transversely. By compactness of the submanifolds, the subset
\[ T_Y = Y \cap (X_1 \cup \cdots \cup X_{n-1})\]
is compact. Pick disjoint open neighbourhoods~$V_y \subseteq Y$ around each point~$y \in T_Y$.
Pick a chart~$\phi$ of $Y$ around $\phi(0) = y$ contained in $V_y$, and a microbundle chart around $y \in Y$. In the local model, $Y$ corresponds to $\R^m \times \{ 0 \}$ and the~$X_i$ that intersect~$Y$ in $y$ will be mapped to $0 \times \R^m$.
\begin{figure}
\includegraphics{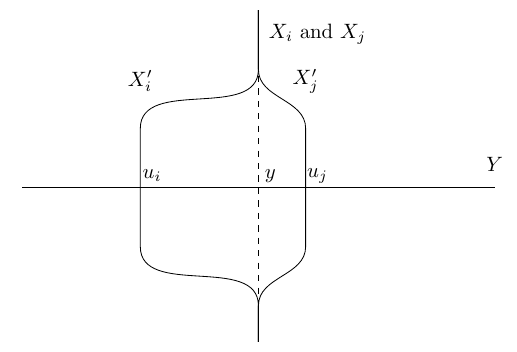}
\caption{Displacing a triple point~$y$ in a microbundle chart.}
\end{figure}
For those $X_i$, pick disjoint points~$u_i \in \R^m$ (here we use $m > 0$), and pick a continuous function~$\eta \colon \R_{\geq 0} \to [0,1]$ with $\eta(t) = 1$ for $0 \leq t \leq 1$ and $\eta(t) = 0$ for~$t \geq 2$. Replace $X_i$ in the chart with the image of
\begin{align*}
	\R^m &\to \R^m \times \R^m\\
	v &\mapsto \big( \eta\big( \| v \| \big) u_i , v \big).
\end{align*}
Call this new submanifold~$X_i'$. It agrees with $X_i$ outside the ball of radius~$2$, and is isotopic to $X_i$. In $V_y$, the submanifold~$X_i'$ intersects~$Y$ only in $\phi(u_i)$ and there it intersects~$Y$ transversely with respect to~$\nu Y$. The collection~$\{ X_i' \}$ has no triple intersection points in the set~$V_y$ anymore.
\end{proof}

Here is another result on submanifold transversality.
It might often happen that one can find a continuous map of, for example, a disc $D^2$ into a 4-manifold $M$, perhaps if fundamental group computations yield a null homotopy of a circle. Then this disc can be isotoped to a generic immersion.  If $M$ were smooth, this would be a consequence of standard differential topology, an observation that we leverage.

\begin{theorem}\label{thm:generic-immersions-are-generic}
  Let $\Sigma$ be a connected, compact 1 or 2 dimensional manifold and let $f \colon (\Sigma,\partial \Sigma) \to (M,\partial M)$ be a continuous map of $\Sigma$ into a connected 4-manifold ~$M$, such that $f$ is a smooth embedding near $\partial \Sigma$.
  %Let $C$ be a closed subset of $\Sigma$ such that $f|_C$ is an immersion.
  Then there is an homotopy of $f$ rel.\ a (possibly smaller) neighbourhood of $\partial \Sigma$  to a generic immersion $f' \colon (\Sigma,\partial \Sigma) \to (M,\partial M)$.
\end{theorem}

\begin{proof}
We start out with the following claim.
%\fotnote{We need to work on the proof to match the new statement.}

\begin{claim}
The map $f\colon \Sigma\to M$ is homotopic to a map that misses a point $P\in M$.
\end{claim}

We pick $P\in M\sms \partial M$. Since $\dim(\Sigma)\leq 2$ we can equip  $\Sigma$ with a smooth structure. Using a chart we can equip an open neighbourhood $V$ of $P$ with a smooth structure.
We pick another open neighbourhood $U$ of $P$ with $\ol{U}\subseteq V$. Since $f^{-1}(V\sms \ol{U})$ is an open subset of $\Sigma$ we can find a compact submanifold $F\subseteq \Sigma$
with $f^{-1}(\ol{U})\subseteq \op{Int}(F)\subseteq F\subseteq f^{-1}(V)$. The map $f\colon F\to V$ is now a map between smooth manifolds. Thus, using Whitney approximation, we can find a homotopy rel.\ $\partial F$ from $f$
to a smooth map $g\colon F\to V$. Since $\dim(F)<\dim(U)$ we see that this map misses a point in $U$. Since $f|_{\partial F}=g|_{\partial F}$, and $\partial F$ is fixed throughout the homotopy, we can extend, by a constant homotopy, the homotopy from $f\colon F\to V$ to $g\colon F\to V$ to a homotopy from $f\colon \Sigma\to M$ to a map $g\colon \Sigma\to M$ such that
$f$ and $g$ agree outside of $F$.  Thus $g$ also misses the point $P$.
This concludes the proof of the claim.

Using Theorem~\ref{thm:smooth-outside-a-point} we smooth $M$ in the complement of that point $P$. Now by \cite[Theorem~2.2.6 and Theorem~2.2.12]{Hi76} we can isotope $f$ rel.\ $\partial \Sigma$ to a smooth immersion, which we can then isotope rel.\ $\partial \Sigma$ to a smooth generic immersion $f'$, i.e.\ a map that is  self-transverse by~\cite[Theorem~4.2.1]{Hi76}, \cite[Theorem~4.6.6]{Wall16}, with no triple points by general position~\cite[Theorem~4.7.7]{Wall16} or \cite[Chapter~III, Corollary~3.3]{GoGu}.
We have now shown that $f\colon \Sigma\to M$ is homotopic to a map $f'\colon \Sigma\to M\sms \{P\}$ which is a smooth generic immersion. But this implies that $f'\colon \Sigma\to M\sms \{P\}$ is in particular a generic  immersion. But then $f'\colon \Sigma\to M$ is also an generic immersion.
\end{proof}

No purely topological proof of this is known.
When manipulating generic immersions of surfaces, it is helpful to have control on the homotopies between them.

\begin{definition}
A \emph{generic homotopy} between generic immersions $F_0,F_1 \colon \Sigma \to M$ as in Definition~\ref{def:gen_immersion} is sequence of ambient isotopies, finger moves, Whitney moves, and cusp homotopies.
\end{definition}

We finish by quoting the following theorem, which was stated in \cite{FQ90}, and proven in \cite{PRT19}.

\begin{theorem}
    Every homotopy $H \colon \Sigma \times [0,1] \to M$ between generic immersions $F_0,F_1 \colon \Sigma \to M$ is homotopic rel.\ $\Sigma \times \{0,1\}$ to a generic homotopy.
\end{theorem}

%=========================================
\section{Transversality for maps}\label{subsec:maps}

\begin{definition}
Let $f \colon M \to N$ be a continuous map between two manifolds and let $X$ be a submanifold of
$N$ with normal microbundle~$\nu X$.
The map~$f$ is said to be \emph{transverse} to~$\nu X$ if $f^{-1}(X)$ is
a submanifold admitting a normal microbundle~$\nu f^{-1}(X)$
and
\begin{align*}
f \colon \nu f^{-1}(X) &\to f^*\nu X\\
m &\mapsto (r(m), f(m))
\end{align*}
is an isomorphism of microbundles.
\end{definition}

In the next theorem, we show how to reduce transversality for
maps to transversality for submanifolds.  Again, there are restrictions neither on dimensions nor codimensions.

\begin{theorem}\label{thm:TransveralityMaps}
Let $M$ and $N$ be a manifolds, let $Y \subseteq N$ be a proper submanifold with normal microbundle~$\nu Y$, let $f \colon M \to N$ be a map such that $f^{-1}(Y)$ is a submanifold of $M$,
and let $U$ be a neighbourhood of the
set
\[\ograph f \cap (M \times Y) \subseteq M \times N.\] Then
there exists a homotopy~$F \colon M\times I \to N$ such that
\begin{enumerate}[leftmargin=1cm,font=\normalfont]
\item\label{item:LeftSide} $F(m, 0) = f(m)$ for all $m \in M$;
\item\label{item:Transversality} $F_1
\colon m \mapsto F(m,1)$ is transverse to $\nu Y$; and
\item\label{item:Control} for $m \in M$ either
\begin{enumerate}[leftmargin=0.8cm,font=\normalfont]
\item $(m, f(m)) \notin U$, in which case $F(m,t) = f(m)$ for all $t \in I$, or
\item $(m, f(m)) \in U$, in which case $(m, F(m,t)) \in U$ for all $t \in I$.
\end{enumerate}
\end{enumerate}
\end{theorem}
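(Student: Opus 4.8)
The plan is to reduce map transversality to submanifold transversality (Theorem~\ref{thm:TransvSubmanifolds}) by replacing the map $f$ with its graph. First I would pass to the manifold $M \times N$ and consider the graph $\Gamma_f := \ograph f = \{(m, f(m)) : m \in M\} \subset M \times N$. Since $f$ is continuous, $\Gamma_f$ is a locally flat (indeed proper, when $M$ is compact; in general one works in the interior or uses the proper hypotheses) submanifold of $M \times N$: the homeomorphism $m \mapsto (m, f(m))$ onto its image, together with the chart-straightening coming from product charts on $M \times N$, shows $\Gamma_f$ is a submanifold of dimension $\dim M$. The target submanifold is $M \times Y \subset M \times N$, which is proper with normal microbundle $\pr_N^*(\nu Y)$, i.e.\ $\id_M \times \nu Y$; here $\pr_N \colon M \times N \to N$ is the projection, and the microbundle chart data for $\nu Y$ pulls back to microbundle chart data for $M \times Y$. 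Note $\Gamma_f \cap (M \times Y) = \{(m, f(m)) : f(m) \in Y\}$, precisely the set appearing in the hypothesis, and $U$ is an open neighbourhood of it.

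Next I would apply Theorem~\ref{thm:TransvSubmanifolds} to the pair $\Gamma_f$, $M \times Y$ in the ambient manifold $M \times N$, with the normal microbundle $\id_M \times \nu Y$ on $M \times Y$, taking the closed set $C = \emptyset$ (or, if one wants the relative refinement, $C$ equal to the complement of a neighbourhood of $U$) and the control neighbourhood $U$. This produces an ambient isotopy $G_t$ of $M \times N$, supported in $U$, carrying $\Gamma_f$ to a submanifold $\Gamma'$ transverse to $\id_M \times \nu Y$. The key point is then to recover a homotopy of $f$ from this isotopy of graphs. For this I would observe that $\Gamma_0 = \Gamma_f$ is a graph, hence intersects each slice $\{m\} \times N$ in exactly one point; since $G_t$ is an ambient isotopy it need not preserve this property, so instead I would define $F(m,t)$ by following the isotopy: set $F(m,t) := \pr_N\big(G_t(m, f(m))\big)$. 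Property~(\ref{item:LeftSide}) is immediate since $G_0 = \id$. For~(\ref{item:Control}), supportedness of $G_t$ in $U$ gives both alternatives directly. The substantive point is~(\ref{item:Transversality}): I must check that $F_1$ is transverse to $\nu Y$ in the sense of the map-transversality definition, i.e.\ that $F_1^{-1}(Y)$ is a submanifold with a normal microbundle and that $F_1$ induces a microbundle isomorphism onto $F_1^*\nu Y$.

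The main obstacle is exactly this last verification, and it is where one must be careful that $\Gamma' := G_1(\Gamma_f)$ is still a graph. In general an ambient isotopy destroys the graph property, so the honest approach is: after achieving transversality of $\Gamma'$ to $\id_M \times \nu Y$, use that $\Gamma'$ is $C^0$-close to (isotopic to) $\Gamma_f$ to see that $\pr_M|_{\Gamma'} \colon \Gamma' \to M$ is still a homeomorphism (it is a proper injective local homeomorphism onto $M$ — injectivity and surjectivity survive because the isotopy is supported in the open set $U$, which projects properly, and one can arrange the ambient isotopy of Theorem~\ref{thm:TransvSubmanifolds} to be fibre-preserving over $M$ by applying that theorem \emph{in the category of manifolds over $M$}, since all the normal-vector-bundle technology of Sections~\ref{subsec:normalvector}--\ref{sec:bundlestructures} can be carried out fibrewise over the $M$-factor). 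Granting that $G_t$ can be taken fibre-preserving over $M$ — which I would justify by noting the dimension-$4$ case of Theorem~\ref{thm:TransvSubmanifolds} uses only local transversality plus existence/uniqueness of normal vector bundles, all of which have parametrised versions — then $\Gamma' = \Gamma_{F_1}$ is genuinely the graph of $F_1$, the submanifold $F_1^{-1}(Y)$ corresponds under $\pr_M$ to $\Gamma' \cap (M \times Y)$, the normal microbundle of $\Gamma' \cap (M\times Y)$ in $\Gamma'$ (provided by the Lemma preceding Theorem~\ref{thm:TransvSubmanifolds}) is $(\id_M \times \nu Y)|$, and pulling back along the homeomorphism $\pr_M|_{\Gamma'}$ gives $F_1^{-1}(Y)$ its normal microbundle together with the required bundle isomorphism $F_1 \colon \nu F_1^{-1}(Y) \to F_1^*\nu Y$. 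I would close by remarking that since everything was done over $M$, the control condition~(\ref{item:Control}) is inherited verbatim from the support condition in Theorem~\ref{thm:TransvSubmanifolds}.
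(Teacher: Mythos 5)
Your overall strategy --- replacing $f$ by its graph, applying Theorem~\ref{thm:TransvSubmanifolds} to the pair $\ograph f$, $M\times Y$ inside $M\times N$ with normal microbundle $\pr_N^*\nu Y$, and setting $F(m,t)=\pr_N\big(G_t(m,f(m))\big)$ --- is exactly the paper's. The gap is in your verification of condition~(\ref{item:Transversality}). You identify as the ``main obstacle'' that the isotoped submanifold $\Gamma'=G_1(\ograph f)$ must again be a graph over $M$, and to secure this you assert that the isotopy produced by Theorem~\ref{thm:TransvSubmanifolds} can be taken fibre-preserving over $M$ because the normal-bundle technology ``has parametrised versions.'' That assertion is not justified: Theorem~\ref{thm:TransvSubmanifolds} as stated gives no control over the $M$-coordinate, and the present theorem has no dimension restrictions, so in general you would need parametrised versions of Marin's and Quinn's results, not merely of the $4$-dimensional normal vector bundle theory. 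Nothing in the paper (or in your argument) supplies this, and your appeal to ``$C^0$-closeness'' does not repair it either, since the isotopy supported in $U$ need not be small.

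Fortunately the detour is unnecessary, and this is the point you missed. Transversality of $F_1$ does not require $\Gamma'$ to be a graph. Write $q\colon M\to T:=G_1(\ograph f)$ for the homeomorphism $m\mapsto G_1(m,f(m))$; then $F_1=\pr_N\circ q$ by your own definition of $F$. Transversality of the submanifold $T$ to $M\times\nu Y$ says precisely that $Z:=T\cap(M\times Y)$ is a submanifold of $T$ with normal microbundle $(M\times\nu Y)|_Z$, and that $\pr_N$ carries this microbundle isomorphically onto $\pr_N^*\nu Y$; in other words $\pr_N|_T\colon T\to N$ is transverse to $\nu Y$ in the sense of the map-transversality definition. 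Pulling everything back along the homeomorphism $q$ shows that $F_1^{-1}(Y)=q^{-1}(Z)$ is a submanifold of $M$ with normal microbundle $q^*\big((M\times\nu Y)|_Z\big)\cong F_1^*\nu Y$, which is exactly the statement that $F_1$ is transverse to $\nu Y$. With this replacement the argument closes; your treatment of conditions~(\ref{item:LeftSide}) and~(\ref{item:Control}) is correct as written.
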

\begin{figure}
\includegraphics[width=16cm]{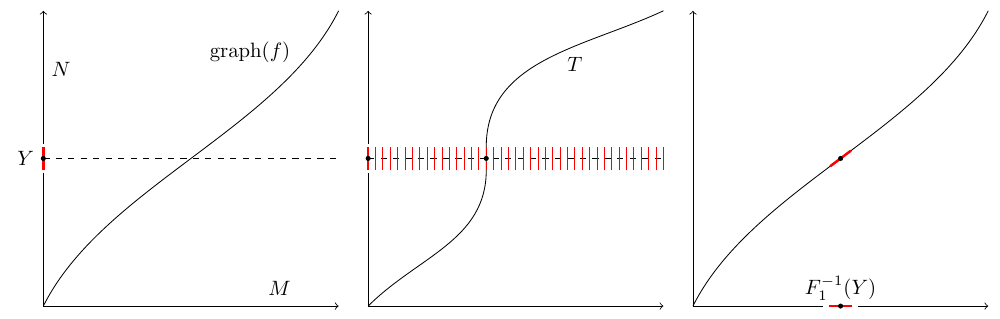}
\caption{Transversality for maps from transversality for submanifolds.}
\label{fig:TransMfd}
\end{figure}

\begin{proof}
Note that $M \times Y \subseteq M \times N$ is a proper submanifold with
normal microbundle~$M \times \nu Y = \op{pr}_Y^* \nu Y$.
Also $\ograph f$ is a proper submanifold of $M\times N$.
By Theorem~\ref{thm:TransvSubmanifolds}, there exists an
isotopy
\[G \colon \ograph f \times I \to M\times N,\]
supported in $U$, of the submanifold~$\ograph f$
to a submanifold~$T \subseteq M\times N$ such that $T$ is transverse to
$M \times \nu Y$ over $M \times Y$.

Define the map~$F$ as the composition
\[ F \colon M \times I \to \ograph f \times I \xrightarrow{G} M \times N \xrightarrow{\op{pr}_N} N.\]
Since the isotopy $G$ is supported in $U$, statement~(\ref{item:Control}) holds.
By construction, $F(x, 0) = \op{pr}_N (x, f(x)) = f(x)$, which proves statement~(\ref{item:LeftSide}).

Now we prove statement~(\ref{item:Transversality}).
Let $F_1 \colon M \to N$ be the map that sends $x \mapsto F(x,1)$.
We keep track of the preimages through the maps of the composition that defines $F_1$; see Figure~\ref{fig:TransMfd}.
By transversality of $T$ to $M\times \nu Y$, we see that $Z = T \cap( M\times Y) = \op{pr}_N^{-1}(Y)$ is a submanifold of $T$ with normal microbundle~$M\times \nu Y |_Z$, and that the projection to $N$ induces a microbundle isomorphism~$M \times \nu Y|_Z \xrightarrow{\sim} \op{pr}_N^* \nu Y$. By definition, $\op{pr}_N \colon T \to N$ is transverse to $\nu Y$.

We transport the submanifold~$Z$ back to $M$. Consider the commutative diagram
\[ \begin{tikzcd}
& & N\\
M \ar[r,"\cong"] \arrow[bend left=20]{urr}{F_1} \arrow[bend right=30,swap]{rr}{q} & \ograph f \times \{1\} \ar[r,"\cong"] & T \ar[u,"\op{pr}_N"]
\end{tikzcd},\]
where~$q$ is the composition, which is a homeomorphism. Now $F^{-1}_1(Y) = q^{-1}(Z)$ is a submanifold with normal microbundle
\[ q^* \big( M\times \nu Y|_Z \big) = q^* \op{pr}_N^*\nu Y = F^*_1 \nu Y.\]
That is $F_1 \colon M \to N$ is transverse to $\nu Y$.
\end{proof}

%==============================================
\section{Representing homology classes by submanifolds}

Our first goal in this section is to prove the following theorem.

\begin{theorem}\label{thm:represent-homology-by-submanifolds-all-dims}
Let $X$ be a compact orientable $n$-manifold. Let $k=n-2$ or $k=n-1$ and let $\sigma\in H_k(X,\partial X;\Z)$. Then the class $\sigma$ can be represented by a $k$-dimensional submanifold $Y$ with $\partial Y\subseteq \partial X$.
\end{theorem}

We will prove the theorem using purely topological methods, in particular the topological transversality arguments from Chapter~\ref{chapter:transversality}, in particular
Sections~\ref{section:transversality-submanifolds} and~\ref{subsec:maps}.
Then we will prove a refined version in dimension four that uses the trick of smoothing away from a point.  We will need the notion of a Thom class.

%In the following we will use
%topological transersality, which we discussed

\begin{definition}
Let $\xi = S \xrightarrow{i} E \xrightarrow{p} S$ be a $k$-dimensional microbundle over $S$. For each $x\in S$ we write $E_x:=p^{-1}(\{x\})$.
A \emph{Thom class} of $\xi$ is a class~$\tau(\xi) \in H^k(E, E \setminus i(S); \Z)$ that restricts
to a generator~$H^k(E_x, E_x \setminus i(x); \Z) \cong H^k(\R^k ; \R^k \setminus \{0 \}; \Z) \cong \Z$
for all $x \in S$. The microbundle $\xi$ together with a Thom class is called an \emph{oriented microbundle}.

A Thom class of a topological $\R^n$ bundle over $S$ is by definition a Thom class of the underlying microbundle.
\end{definition}

% \begin{definition}
% Let $\xi = S \xrightarrow{i} E \xrightarrow{p} S$ be a $k$-dimensional topological $\R^n$ bundle over $S$. For each $x\in S$ we write $E_x:=p^{-1}(\{x\})$.
% A \emph{Thom class} of $\xi$ is a class~$\tau(\xi) \in H^k(E, E \setminus i(S); \Z)$ that restricts
% to a generator~$H^k(E_x, E_x \setminus i(x); \Z) \cong H^k(\R^k ; \R^k \setminus \{0 \}; \Z) \cong \Z$
% for all $x \in S$. The microbundle $\xi$ together with a Thom class is called an \emph{oriented microbundle}.
% \end{definition}

\begin{remark}
As in the smooth case, consider the orientation
bundle~$\pi \colon \op{Or}(\xi) \to S$
with fibre over $x \in S$ the discrete
set
$\op{Or}(\xi)_x = \big\{ \text{primitive classes of }H_k(E_x, E_x \setminus i(x); \Z) \big\}$.
This is a $\Z/2$-principal bundle, and a Thom class~$\tau(\xi)$ determines
a global section~$s \in \Gamma(\op{Or}(\xi))$ by enforcing~$\langle \tau(\xi), s(x) \rangle = 1$ for every~$x \in S$.
By the same equation, a global section~$\Gamma(\op{Or}(\xi))$ determines
a Thom class.
\end{remark}

\begin{remark}\label{rem:OrientedNormal}
Let $X$ be an oriented manifold. Let $S$ be a submanifold with
normal microbundle~$\nu S$. A standard argument, similar to the construction of the fundamental class of an oriented manifold, shows that an orientation of $S$ determines
a unique Thom class  of $\nu S$
compatible with the ambient orientation
and vice versa.
\end{remark}

To prove Theorem~\ref{thm:represent-homology-by-submanifolds-all-dims}, we will consider a map~$f \colon X \to Y$ that is transverse to
an oriented submanifold~$S$ of $Y$. By Remark~\ref{rem:OrientedNormal}, $\nu S$ is an oriented microbundle carrying the Thom class~$\tau$.
Note as $f \colon \nu f^{-1}(S) \to f^* \nu S$ is an isomorphism, also $f^* \tau$ is a Thom
class of $\nu f^{-1}(S)$ and we orient $f^{-1}(S)$ accordingly. Before we proceed with
the proof, we recall the following compatibility between Thom classes and
Poincar\'e duality~\cite[Definition~VI.11.1, Corollary~VI.11.6]{Br93},
interpreted for microbundles.

\begin{lemma}\label{lem:ThomPD}
Let $X$ be a compact oriented $n$-manifold with fundamental class~$[X]\in H_n(X,\partial X)$, and let $i \colon S \to X$ be an oriented proper $k$-dimensional submanifold of $X$ with normal microbundle~$\nu S$.
The composition
\[ H^{n-k} \big(\nu S, \nu S \setminus i(S) \big) \us{\cong}{\xleftarrow{\op{Exc.}}} H^{n-k}(X, X \sm S)
\to H^{n-k}(X)  \xrightarrow{\PD_X} H_k(X,\partial X)\]
maps the Thom class~$\tau$ of $\nu S$, that is determined by the orientations of $X$ and $S$, to the fundamental class~$i_*[S]$.
\end{lemma}
%\footnote{Does anyone know how to make the spacing in this lemma smaller. I think it's getting thrown off by the large diagram on the next page. }

\begin{proof}
We start out with a general piece of notation.
Given an oriented $n$-dimensional topological manifold $W$  and given a compact subset $K\subseteq W\subseteq W$ we write $[W]\in H_n(W,(W\sms K)\cup \partial E)$ for the unique element which, for each $x\in W\sms (K\sms \partial W)$ is sent to the generator of
$H_n(W,W\sms \{x\})$ given by  the orientation  of $W$.
Recall that  if $W$ is compact, then  Poincar\'e duality map $\PD_W$ is~$\acap [W]$.
(We refer to \cite[Section~VII.12]{Dold95} for the definition and precise nature of the cap product on relative (co-) homology.)

We set $d:=n-k$ and make a few preliminary observations.
\bnm
\item It suffices to prove the lemma for connected $S$.
\item We identify $S$ with $i(S)$.
\item
By Kister's Theorem~\ref{thm:kistmaz} there exists an open subset $E\subseteq \nu S$ containing $S$ such that $p\colon E\to B$ is a projection map of a topological $\R^d$-bundle whose $0$-section is $i$.
\item
Pick an $x \in S\sm\partial S$ and a closed $k$-disc~$U \subseteq S\sms \partial S$ containing~$x$
such that there is a local trivialisation~$\Phi\colon p^{-1}(U) \to  U \times \R^{d}$
 for the $\R^d$-bundle  in a neighbourhood of $x$. We can and will choose $\Phi$ such that
 it preserves the orientation of the fibres.
 \item We let $j\colon U\to U\times \{0\}$ and $k\colon U\times \R^d\to  \R^d$ denote the obvious maps.
\enm
Next we consider the following diagram.
\[ \xymatrix@C0.9cm@R0.65cm{
 H^{d}(X, X \sm S)\ar[rr]^{\acap [X]} \ar[d]^{\op{Exc.}}_\cong&& H_k(X,\partial X)\\
 H^{d} \big(\nu S, \nu S \setminus S \big) \ar[rr]^{\acap \, [\nu S]} \ar[d]^{\op{Exc.}}_\cong&&
  H_k(\nu S,\partial \nu S)\ar[u]\\
H^{d}(E,E\sms S)\ar[d]^{\operatorname{Id}}_{\cong} \ar[rr]^{\acap\, [E]}&&H_k(E,\partial E)\ar[u]\ar[d]&\ar[l]_{i_*}^\cong H_k(S,\partial S)\ar[d]^\cong\\
H^{d}(E,E\sms S)\ar[rr]^{\acap \, [E]}\ar[d]^{\op{Exc.}}_\cong&&H_k(E,E\sms p^{-1}(x))&\ar[l]_-{i_*}^-\cong
H_k(S,S\sms x)\\
H^{d}(p^{-1}(U),p^{-1}(U)\sms S)\ar[rr]^{\acap \, [p^{-1}(U)]}&&H_k(p^{-1}(U),p^{-1}(U\sms x))\ar[d]^{\Phi_*}_\cong
\ar[u]^{\op{Exc.}}_\cong
&\ar[l]_-{i_*}^-\cong
H_k(U,U\sms x)\ar[u]^{\op{Exc.}}_\cong\ar[dl]^{j_*}\\
\ar[u]^{\Phi^*}_\cong H^{d}(U\!\times\! \R^{d},U\!\times\! (\R^{d}\! \sms \!\{0\}))
\ar[rr]_{\cong}^-{\acap \, [U\times \R^{d}]}&& H_k(U\!\times\! \R^{d},(U\!\sms \! \{x\})\!\times\! \R^{d})&\\ \ar[u]^{k^*}_\cong
H^d(\R^d,\R^d\!\sms \!\{0\})
}
\]
The bottom vertical maps are given by the local trivialisation $\Phi$. All other vertical maps are the obvious maps of pairs of topological spaces.
The maps decorated with  ``Exc.''  are isomorphisms by the excision theorem.
The horizontal maps $i_*$ to the right are isomorphisms by the Serre spectral sequence (note that  a priori we do not know whether $i\colon S\to E$ is a homotopy equivalence.)
Note that bottom vertical map is an isomorphism since $\{x\}$ is a deformation retract of the disc $U$.

We make the following observations.
\bnm
\item By definition of the Thom class, it is sent, under the left vertical maps, to the standard generator $[\R^d]^*\in H^d(\R^d,\R^d\sms \{0\})$.
\item It follows from the compatibility of the cross product with the cap product~\cite[Section~VII.12.17]{Dold95} (see also \cite[Proposition~138.2]{Fr23}) that
\[k^*([\R^d]^*)\acap [U\times \R^d] = j_*([S]). \]
\enm
Hence the Thom class in $ H^{d}(\nu S, \nu S \setminus S )$ and the fundamental class $[S,\partial S] \in H_k(S,\partial S)$ have the same image in $H_k(U\!\times\! \R^{d},(U\!\sms \! \{x\})\!\times\! \R^{d})$.

Standard facts about the relative cap product show that the diagram commutes.  In particular note that the map $H_k(E,\partial E) \to H_k(E,E \sms p^{-1}(x))$ is an isomorphism.
It is then not too hard to chase the diagram to deduce that the Thom class and $[S,\partial S]$ also have the same image in $H_k(X,\partial X)$, and so the lemma holds.
\end{proof}

%We give an alternative proof of
%Theorem~\ref{thm:represent-homology-by-submanifolds} (1) without resorting to smoothing away from a point, for a compact oriented manifold~$X$, with $A = \partial X$.

\begin{proof}[Proof of Theorem~\ref{thm:represent-homology-by-submanifolds-all-dims}]\label{pf:represent-homology-by-submanifolds-2}
First we let $k=n-1$.
Let $\alpha \in H^{1}(X;\Z)$ be the Poincar\'{e} dual to $\sigma \in H_{n-1}(X,\partial X)$. Recall that in
Theorem~\ref{thm:topological-manifold-CW complex} we showed that $X$ is homotopy equivalent to a CW complex.
Therefore we have the following correspondence between homotopy classes of maps to Eilenberg-Maclane spaces and cohomology classes of $X$:
\begin{align*}
[X, S^1] = [X, K(\Z,1)] & \xrightarrow{\cong} H^1(X; \Z)\\
f &\mapsto  f^* \theta,
\end{align*}
where $\theta$ is the Hom dual of the fundamental class of $S^1$. Note that we used here that $X$ is homotopy equivalent to a CW complex. Pick an arbitrary point~$\pt \in S^1$
and denote a tubular neighbourhood by $\nu (\pt)$.
Note that the Thom class~$\tau_{\pt}$ for $\nu (\pt)$ is mapped under $H^1(S^1, S^1 \setminus \pt) \to H^1(S^1)$ to $\theta = \PD_{S^1}^{-1} [\pt]$.
Let $f \colon X \to S^1$ be a map
corresponding to $\alpha$, so $f^* \theta = \alpha$.
Make $f$ transverse to a tubular neighbourhood
of $\pt \in S^1$ using Theorem~\ref{thm:TransveralityMaps}.
Consequently, $S := f^{-1}(\pt)$ is an $(n-1)$-dimensional submanifold of $X$.
By definition, $f$ induces a bundle isomorphism~$f \colon \nu S \to f^*\nu (\pt)$.
We have, as elements in $H^{1}(X; \Z)$, that
\[ \alpha = f^* \theta = f^* \PD_{S^1}^{-1} [\pt] = f^* \tau_{\pt}. \]
Note that  $f^* \tau_{\pt}$ is the image of the Thom class of $\nu S$ under the first two maps in the composition displayed in the statement of Lemma~\ref{lem:ThomPD}.  Lemma~\ref{lem:ThomPD} thus tells us the last equality of:
\[\alpha = f^*\tau_{\pt} = \tau_S = \PD_X^{-1} [S] \in H^1(X;\Z). \]
We have thus shown that $[S]=\op{PD}_X(\alpha)=\sigma$.

A similar proof works for the codimension two case, i.e.\ $k=n-2$.  Let $\alpha \in H^2(X;\Z)$ be Poincar\'{e} dual to $\sigma \in H_{n-2}(X,\partial X)$. Recall that
\begin{align*}
[X, \CP^\infty] = [X, K(\Z,2)] & \xrightarrow{\cong} H^2(X; \Z)\\
f &\mapsto  f^* \theta,
\end{align*}
where $\theta \in H^2(\CP^\infty;\Z)$ is the Hom dual of the fundamental class of $\CP^1 \subseteq \CP^\infty$. Since $X$ is homotopy equivalent to an $n$-dimensional CW complex we can homotope a given representing map $f$ to have image in $\CP^{m} \subseteq \CP^\infty$, where $m = \lfloor n/2 \rfloor$.  We abuse notation and denote the image of $\theta$ under the restriction map $H^2(\CP^\infty;\Z) \to H^2(\CP^m;\Z)$ also by~$\theta$.

Let $f \colon X \to \CP^m$ be a map
corresponding to $\alpha$, so $f^* \theta = \alpha$.
Make $f$ transverse to a normal microbundle
of $\CP^{m-1} \subseteq \CP^m$ using Theorem~\ref{thm:TransveralityMaps}.
The inverse image $S := f^{-1}(\CP^{m-1})$ is an $(n-2)$-dimensional submanifold of $X$.

Let $\tau_{\CP^{m-1}} \in H^2(\CP^m,\CP^m \sms \CP^{m-1})$ denote a Thom class for the normal bundle $\nu \CP^{m-1}$.
By Lemma~\ref{lem:ThomPD}, the map $H^2(\CP^{m},\CP^m \sms \CP^{m-1};\Z) \to H^2(\CP^m;\Z)$ sends~$\tau_{\CP^{m-1}}$ to~$\theta = \PD_{\CP^{m}}^{-1} [\CP^{m-1}]$.

By definition of $S$ and $\nu S$, the map $f \colon X \to \CP^m$ induces a bundle isomorphism~$f \colon \nu S \to f^*\nu \CP^{m-1}$.
We have, as elements in $H^{2}(X; \Z)$, that
\[ \alpha = f^* \theta = f^* \PD_{\CP^{m}}^{-1} [\CP^{m-1}] = f^* \tau_{\CP^{m-1}}. \]
Note that  $f^* \tau_{\CP^{m-1}}$ is the image of the Thom class of $\nu S$ under the first two maps in the composition displayed in the statement of Lemma~\ref{lem:ThomPD}.
Thus Lemma~\ref{lem:ThomPD} gives  the last equality of:
\[\alpha = f^*\tau_{\CP^{m-1}} = \tau_S = \PD_X^{-1} [S] \in H^2(X;\Z). \]
We have now shown that $[S]=\op{PD}_X(\alpha)=\sigma$.
\end{proof}

\begin{remark}
We have seen that $f^! [\pt] = \PD_X \circ f^* \circ \PD^{-1}_{S^1} [\pt] = [f^{-1} (\pt)]$, when $f$ is transverse to $\pt$, and similarly that $f^! [\CP^{m-1}] = f^{-1}(\CP^{m-1})$.
\end{remark}

% \fotnote{Planned rewrite:  a better order for this section would be as follows. \\
% a) State theorem 10.11 (1) for all dimensional manifolds, codimension 1 and 2. \\ b) Prove it, using the Thom PD lemma and the current Second proof, which can be quite easily adapted to all dimensions. \\
% c) Give the smooth away from a point proof for dimension 4, with the extra part about controlling the boundary in codim 1. Then when that proof refers to 10.19 to say the smooth proof is like the top proof, but even more so, this lemma will be in the past.
% }

Next we offer the following promised refinement of Theorem~\ref{thm:represent-homology-by-submanifolds-all-dims} in the 4-dimensional case, together with an alternative proof that uses smoothing away from a point.

\begin{theorem}\label{thm:represent-homology-by-submanifolds}
Let $X$ be a compact orientable $4$-manifold and let $A$ be a union of components of $\partial X$. Let $k=2$ or $k=3$ and let $\sigma\in H_k(X,A;\Z)$.
\bnm
\item The class $\sigma$ can be represented by a $k$-dimensional submanifold $Y$ with $\partial Y\subseteq A$.
\item In the case $k = 3$, the boundary of $Y$ can be specified: if $B\subseteq A$ is an oriented closed $2$-dimensional smooth submanifold contained in $A$ such that $\partial(\sigma)=[B]\in H_{2}(A;\Z)$, then $\sigma$ can be represented by an oriented compact $3$-dimensional submanifold $Y$ with $\partial Y=B$.
\enm
\end{theorem}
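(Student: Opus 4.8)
The strategy is the classical Pontryagin--Thom argument, adapted to the topological category using the tools assembled in the paper. For part~(1): since $k=2$ or $k=3$ means the submanifold has codimension $2$ or $1$ in the ambient $4$-manifold, I would first identify the relevant Thom spaces. For $k=3$ (codimension one) the relevant classifying object is $\R\textup{P}^\infty = K(\Z/2,1)$ together with its orientation double cover; for $k=2$ (codimension two) it is $\cp^\infty$ (or rather the Thom space $MSO(2)\simeq \cp^\infty$), reflecting the fact that an oriented codimension-two submanifold is classified by its normal Euler class in $H^2$. Concretely, I would invoke Poincar\'e--Lefschetz duality (Section~\ref{section:poincare-duality}) to convert $\sigma\in H_k(X,A;\Z)$ into a cohomology class $\alpha\in H^{4-k}(X;\Z)$ (using that $X$ is orientable); represent $\alpha$ by a map $f\colon X\to K(\Z,4-k)$; and then, since $4-k \in\{1,2\}$, use that $K(\Z,1)=S^1$ and that $\cp^\infty$ has a $2$-skeleton $S^2$ carrying the fundamental class, so $f$ can be homotoped into a low-dimensional submanifold-like model. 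The honest geometric input is that $S^1\subset S^1$ and $S^2\subset\cp^\infty$ (or $S^1\times\R\subset\ S^1$, $\nu\subset\cp^1$) are submanifolds with normal bundles, so Theorem~\ref{thm:TransveralityMaps} (transversality for maps) applied to $f$ and this submanifold produces $Y:=f^{-1}(S)$ as a submanifold of $X$ with a normal microbundle, hence by Theorem~\ref{thm:normalmicroexist} a genuine normal vector bundle. One then checks $[Y]$ is Poincar\'e--Lefschetz dual to $\alpha$, i.e.\ $[Y]=\sigma$. One must be slightly careful that the preimage is \emph{proper} — that $\partial Y\subset A$ — which is why one arranges the map $f$ to be already transverse along a collar of $\partial X$, using the relative form of Theorem~\ref{thm:TransveralityMaps}.

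For part~(2), the extra requirement is to pin down $\partial Y = B$ exactly, not merely up to the condition $\partial\sigma=[B]$. Here I would first use that $\partial X$ is a $3$-manifold, hence smooth, so the codimension-one submanifold $B\subset A$ already has a genuine smooth normal bundle, i.e.\ a map $A\to S^1$ (or its oriented version) realising $B$ as a preimage of a point. Then I would do the relative Pontryagin--Thom construction: build a map $f\colon X\to S^1$ whose restriction to a collar $A\times[0,1]\subset X$ is the chosen map $A\to S^1$ (pulled back along the projection), extend over the rest of $X$ (possible because the obstruction to extending, living in $H^1(X,A;\Z)$, is exactly controlled by the hypothesis $\partial\sigma=[B]$ together with the long exact sequence of $(X,A)$), and then apply Theorem~\ref{thm:TransveralityMaps} rel the collar to make $f$ transverse to a point without disturbing it near $A$. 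The preimage $Y$ is then a proper $3$-submanifold with $\partial Y=B$ on the nose, and orientability of $Y$ follows from orientability of the normal bundle (coorientation), which we have built in by using $S^1$ rather than $\R\textup{P}^1$.

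The main obstacle I anticipate is not the homotopy-theoretic bookkeeping but ensuring that the transversality machinery of Theorem~\ref{thm:TransveralityMaps} is being applied legitimately in ambient dimension $4$ — in particular that the preimage really is a locally flat submanifold carrying a normal microbundle, and that in the relative/boundary case the transversality can be achieved \emph{without perturbing near $A$}. This forces a careful choice of the closed set $C$ and neighbourhood $U$ in the statement of Theorem~\ref{thm:TransveralityMaps} (take $C$ to be a slightly shrunk collar of $A$ on which $f$ is already transverse, and $U$ a neighbourhood of $\ograph f\cap(X\times S)$ away from that collar), and a verification that the normal microbundle produced by transversality, when restricted to the boundary, matches the smooth normal bundle of $B$ in $A$ — which is where the smoothness of $3$-manifolds and Theorem~\ref{thm:normalmicroexist} together do the work. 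A secondary subtlety is the identification of $[Y]$ with $\sigma$ under Poincar\'e--Lefschetz duality with the correct orientation conventions; I would handle this by naturality of the duality isomorphism under the inclusion of a tubular neighbourhood of $Y$, reducing it to the standard computation in the normal disc bundle.
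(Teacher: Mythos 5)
Your proposal follows essentially the same route as the paper's second proof: Poincar\'e--Lefschetz dualise $\sigma$, represent the dual class by a map to an Eilenberg--MacLane space, apply transversality for maps (Theorem~\ref{thm:TransveralityMaps}) relative to a collar of the boundary, and identify $[Y]$ with $\sigma$ via the compatibility of Thom classes with duality (the paper's Lemma~\ref{lem:ThomPD} is exactly the ``standard computation in the normal disc bundle'' you defer to). The paper also gives a first, independent proof that you do not mention: smooth $X$ in the complement of an interior point via Theorem~\ref{thm:smooth-outside-a-point}, note that $\sigma$ is carried by a compact subset of $X\sms\{P\}$, enclose that subset in a compact smooth codimension-zero submanifold, and quote the smooth statement. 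That route trades the topological transversality machinery for Quinn's smoothing theorem and is arguably the quicker of the two.

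One step in your codimension-two case is wrong as written. A map $f\colon X\to \cp^\infty$ representing $\alpha\in H^2(X;\Z)$ cannot in general be compressed into the $2$-skeleton $\cp^1=S^2$: the cells of $\cp^\infty$ occur in dimensions $0,2,4,\dots$, and since $X$ is homotopy equivalent to a $4$-complex the cellular approximation only lets you land in the $4$-skeleton $\cp^2$ (indeed, if $f$ factored through $S^2$ up to homotopy then the normal bundle of $Y$ would be trivial, which fails whenever $\alpha|_Y\neq 0$). The correct move, as in the paper's Proposition~\ref{prop:represent-homology-by-smooth-submanifolds}, is to homotope $f$ into $\cp^k$ for some finite $k\geq 2$ and take the preimage of the codimension-two submanifold $\cp^{k-1}\subset\cp^k$; taking $f^{-1}(S^2)$ for $S^2\subset\cp^\infty$ makes no sense, since $\cp^1$ has infinite codimension there. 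With that correction the rest of your argument, including the relative construction for part~(2) and the use of the collar to keep the transversality homotopy away from $A$, matches the paper.
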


The submanifold $B$ can be assumed to be smooth, since $\partial X$ is a $3$-manifold and so has a unique smooth structure by \cite{Mo52}, \cite[p.~252--253]{Mo77}.

Note that Theorem~\ref{thm:represent-homology-by-submanifolds} also holds for $k=0$ and $k=1$. This is trivial for $k=0$. To see this for $k=1$, remove a point from each connected component to get a smooth 4-manifold by Theorem~\ref{thm:smooth-outside-a-point}. Note that  $H_1(X \sm \{\pt\},A;\Z) \cong H_1(X,A;\Z)$. Then by smooth approximation and general position, every 1-dimensional homology class can be represented by a $1$-dimensional submanifold of $X$.

\begin{example}\mbox{}
\bnm
\item If we apply the theorem to $A=\partial X$, we see that any homology class in $H_2(X,\partial X;\Z)$  and $H_3(X,\partial X;\Z)$ can be represented by a properly embedded submanifold. For $A=\emptyset$ we obtain the analogous statement for absolute homology groups.
\item Let $F$ be a properly embedded $2$-dimensional submanifold of $D^4$ and let~$S$ be a surface in $\partial D^4=S^3$ with $\partial S=\partial F$. Consider the $4$-manifold $X := D^4\sms \nu F$.
In the boundary of $X$ we have the surface $B=S\cup F\times \{1\}$. It follows from the long exact sequence of the pair $(X,\partial X)$ and Poincar\'e duality that the map $H_3(X,\partial X;\Z)\to H_2(\partial X;\Z)$ is an epimorphism.
It follows from Theorem~\ref{thm:represent-homology-by-submanifolds}, applied to $A=\partial X$, that there exists a 3-dimensional submanifold $Y$ of $D^4\sms \nu F$ with $\partial Y=B$.
This statement is folklore, and a proof using topological transversality for maps was written down by
Lewark-McCoy~\cite{LM15}.
\enm
\end{example}

% We will prove  Theorem~\ref{thm:represent-homology-by-submanifolds} by reducing the statement to the smooth case. On page~\pageref{pf:represent-homology-by-submanifolds-2} we will give an alternative proof of
% Theorem~\ref{thm:represent-homology-by-submanifolds}  (2)  (with $A=\partial X$) using  topological transversality arguments from Chapter~\ref{chapter:transversality}.

For $n=4$ the statement of the following theorem is precisely the statement of Theorem~\ref{thm:represent-homology-by-submanifolds} in the smooth category.

\begin{proposition}\label{prop:represent-homology-by-smooth-submanifolds}
Let $X$ be a compact, orientable, smooth $n$-manifold and let $A$ be a union of components of $\partial X$. Let $\ell=1$  or $\ell=2$ and let $\sigma\in H_{n-\ell}(X,A;\Z)$. Then the following holds.
\bnm
\item The class $\sigma$ is represented by an $(n-\ell)$-dimensional smooth orientable submanifold~$Y$ with $\partial Y\subseteq A$.
\item Suppose $\ell=1$ and suppose we are given a closed, oriented $(n-2)$-dimensional smooth submanifold  $B$ of $A$ such that $\partial(\sigma)=[B]\in H_{n-2}(A)$.  Then $\sigma$ is represented by an oriented compact $(n-1)$-dimensional smooth submanifold $Y$ with $\partial Y=B$.
\enm
\end{proposition}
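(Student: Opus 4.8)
The plan is to deduce this from classical obstruction-theoretic representability of cohomology classes, via the Thom–Pontryagin construction, being careful about the boundary conditions. Recall that smooth closed oriented codimension-one submanifolds (i.e.\ $\ell=1$) are classified by maps to $S^1 = K(\Z,1)$, and codimension-two submanifolds with trivial normal bundle, or more precisely with a complex line bundle structure on the normal bundle, correspond to maps to $\C\textup{P}^\infty = K(\Z,2)$; in both cases every integral cohomology class in degree $\ell=1,2$ is realised because $S^1$ and $\C\textup{P}^\infty$ are (up to the relevant skeleton) the relevant Eilenberg–MacLane spaces and manifolds have the homotopy type of CW complexes. So the strategy is: Poincaré–Lefschetz dualise $\sigma \in H_{n-\ell}(X,A;\Z)$ to a class $\sigma^\vee \in H^\ell(X, \partial X \sms A;\Z)$, represent $\sigma^\vee$ by a map $f\colon (X,\partial X \sms A) \to (K(\Z,\ell),*)$, make $f$ smooth and transverse to a point (for $\ell=1$) or to $\C\textup{P}^{N}\subset \C\textup{P}^\infty$ (for $\ell=2$) using ordinary smooth transversality, and take $Y := f^{-1}(\text{pt})$, respectively $Y := f^{-1}(\C\textup{P}^{N-1})$. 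Then $Y$ is a smooth submanifold of codimension $\ell$, it carries an orientation induced from the coorientation pulled back from the target, $\partial Y \subset A$ because $f$ maps $\partial X \sms A$ to the basepoint, and $[Y] = \sigma$ by naturality of Poincaré–Lefschetz duality under the Thom–Pontryagin collapse.

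For part (2), where $\ell=1$ and the boundary $B\subset A$ is prescribed with $\partial(\sigma)=[B]\in H_{n-2}(A)$, the plan is to do the construction relative to $A$. First represent $B\subset A$ itself, as a codimension-one smooth submanifold of the closed manifold $A$, by a smooth map $g\colon A\to S^1$ transverse to a point with $g^{-1}(\text{pt})=B$; this exists and, moreover, the homotopy class of $g$ is the dual of $[B]\in H_{n-2}(A)$, which by hypothesis is $i^*\sigma^\vee$ where $i\colon A\hookrightarrow X$ (up to the identification of $\partial(\sigma)$ with restriction of the dual class). Then I would extend $g$ over a collar $A\times[0,1]\subset X$ as $g\circ \pr_A$, and extend further over all of $X$ to a map $f\colon X\to S^1$ representing $\sigma^\vee$ — the obstruction to extending lies in $H^1(X,A;\Z)$ mapping $\sigma^\vee$ appropriately, which works out precisely because $\partial(\sigma)=[B]$; I should phrase this as: choose $f$ representing $\sigma^\vee$ in $H^1(X;\Z)$ whose restriction to $A$ is $g$, which is possible because the restriction map $H^1(X;\Z)\to H^1(A;\Z)$ sends $\sigma^\vee\mapsto g^*[S^1]$ by the compatibility of duality with the connecting homomorphism. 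Finally, smooth $f$ rel.\ the collar and make it transverse to the basepoint rel.\ $A$ (standard relative smooth transversality, keeping $f$ fixed near $A$ where it is already transverse), and set $Y:=f^{-1}(\text{pt})$; then $\partial Y = g^{-1}(\text{pt}) = B$ and $[Y]=\sigma$.

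The main obstacle I anticipate is the bookkeeping around Poincaré–Lefschetz duality with the two different boundary pieces $A$ and $\partial X\sms A$, and in particular verifying the precise compatibility between the connecting homomorphism $\partial\colon H_{n-1}(X,A;\Z)\to H_{n-2}(A;\Z)$ on the homology side and the restriction-to-$A$ map on the dual cohomology side — this is what makes the prescribed-boundary statement in (2) come out correctly, and it requires the standard but slightly fiddly commutative square relating $\PD_X$, $\PD_A$, and the long exact sequences of the pairs $(X,A)$ and $(X,\partial X)$. The transversality inputs themselves are entirely classical smooth differential topology (e.g.\ \cite[Theorem~4.2.1]{Hi76} for transversality, relative versions included), so those steps are routine; likewise the existence of the classifying maps uses only that a smooth compact manifold has the homotopy type of a finite CW complex and the low-degree Eilenberg–MacLane identifications. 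I would also remark that an alternative, perhaps cleaner, route for $\ell=2$ is to represent $\sigma^\vee\in H^2(X;\Z)$ as the Euler class of a complex line bundle $L\to X$, take a smooth section transverse to zero, and let $Y$ be its zero set; the relative version for (2) does not arise since (2) only concerns $\ell=1$.
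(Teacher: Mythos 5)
Your proposal is correct and follows essentially the same route as the paper: Poincar\'e--Lefschetz dualise, represent the dual class by a map to $S^1$ (for $\ell=1$) or to $\mathbb{CP}^k\subset\mathbb{CP}^\infty$ (for $\ell=2$), smooth, apply transversality, and take the preimage; for part (2) the paper likewise builds the prescribed map on $A$ from a tubular neighbourhood of $B$ and splices it to $\varphi$ through a collar using the homotopy guaranteed by $\partial(\sigma)=[B]$. The only cosmetic difference is that the paper constructs the boundary map $\psi\colon A\to S^1$ explicitly via $(b,t)\mapsto e^{\pi i(t-1)}$ rather than invoking the Thom--Pontryagin description abstractly.
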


\begin{example}
Let $K\subseteq S^3$ be a knot. We write $X=S^3\sms \nu K$. Let $\lambda\subseteq \partial X$ be a longitude of $K$, i.e.\ $\lambda$ is a curve that represents a generator
of $\op{ker}(H_1(\partial X;\Z)\to H_1(X;\Z)$.   There exists a homology class $\sigma\in H_2(X,\partial X;\Z)$ with $\partial(\sigma)=[\lambda]\in H_1(\partial X;\Z)$. It follows from Proposition~\ref{prop:represent-homology-by-smooth-submanifolds} that there exists an orientable surface $F$ in $X$ with $\partial F=K$.
\end{example}

\begin{proof}
Let $X$ be a compact orientable smooth $n$-manifold and let $A$ be a union of components of $\partial X$.
First we prove statements (1) and (2) for the  case  $\ell=1$. Let $\sigma\in H_{n-1}(X,A;\Z)$.
\bnm
\item
Write $\wti{A}=\partial X\sms A$.
Let $\PD \colon H^1(X,\wti{A};\Z) \to H_{n-1}(X,A;\Z)$ be the Poincar\'e duality isomorphism.
We have $H^{1}(X,\wti{A};\Z)\cong [X/\wti{A}, S^1]$ and any such class can be represented by a continuous map $\varphi\colon X\to S^1$ that is constant on $\wti{A}$, and uniquely determined up to homotopy rel.\ $\wti{A}$.  We can and shall homotope $\varphi$ to a smooth map.
Furthermore, arrange that $-1 \in S^1$ is a regular value of~$\varphi$.
Then $Y:=\varphi^{-1}(-1)$ is an $(n-1)$-dimensional submanifold whose boundary lies on $\partial X\sms \wti{A}$, that is the boundary lies on $A$.
The manifold $Y= \varphi^{-1}(-1)$ satisfies $[Y]=\sigma$ (this follows from Lemma~\ref{lem:ThomPD}, as explained in the proof of Theorem~\ref{thm:represent-homology-by-submanifolds-all-dims}).
\item
Now suppose that we are given an oriented closed $(n-2)$-dimensional submanifold  $B$  of $A$ such that $\partial(\sigma)=[B]\in H_{n-1}(A)$.
Pick a collar neighbourhood $\partial X\times [0,1]$ and choose a continuous map $\varphi\colon X\sms \big(\partial X\times [0,1)\big) \to S^1$ as above. Also choose a tubular neighbourhood $B\times [-1/2,1/2]$ of $B$ in $A$. Consider the map sending $(b,t)$ to $e^{\pi i(t-1)}$, $b\in B, t\in [-1/2,1/2]$ and extend it to a smooth map $\psi\colon A\to S^1$ by sending all other points into $\{ e^{\pi i t} \in S^1 \mid t \in [-2/3, 2/3]\}$. Since $\partial(\sigma)=[B]\in H_{n-2}(A)\cong H^1(A;\Z)$, we see that the restriction of $\varphi$ to $A\times \{1\}=A$ is homotopic to $\psi\colon A\to S^1$.
Therefore, using this homotopy in the interval $[\frac{1}{2},1]$, we can extend $\varphi$ to a function on $X$ that restricts to $\psi$ on each $A\times \{s\}$ with $s\in [0,\frac{1}{2}]$. Finally, smoothen $\varphi$ without changing it on $A\times [0,\frac{1}{4}]$ to obtain a smooth map $X\to S^1$ in the same homotopy class. This is possible since the original $\varphi$ was already smooth on $A\times [0,\frac{1}{2}]$. Put differently, the new smooth map $\varphi\colon X\to S^1$ restricts to $\psi$ on $A=A\times \{0\}$.

Note that $-1$ is a regular value of $\psi$, and by changing $\varphi$ outside $A \times [0, \frac{1}{4}]$, we can also arrange $-1$ to be a regular value of $\varphi$.
The manifold $Y= \varphi^{-1}(-1)$ satisfies $[Y]=\sigma$ (as in (1) this follows from Lemma~\ref{lem:ThomPD} below) and $\partial Y=B\times \{0\} = B$.
\enm
For $\ell=2$ the argument is similar: we have to replace the argument using $S^1$ by the argument of \cite[Proposition~1.2.3]{GS99}. Recall from Theorem~\ref{thm:topological-manifold-CW complex} that $X$ is homotopy equivalent to a finite CW complex. Therefore, we represent a codimension~$2$ homology class $\sigma \in H_{n-2}(X,\partial X; \Z) \cong H^2(X; \Z)$ by a map $X \to \mathbb{CP}^{\infty}$, and homotope into the $k$-skeleton to a map $f \colon X \to \mathbb{CP}^k$ for $k \geq 2$.
Now arrange $f$ to be transverse to the codimension~$2$ submanifold $\mathbb{CP}^{k-1} \subseteq \mathbb{CP}^k$.
The desired submanifold is the preimage $Y = f^{-1}(\mathbb{CP}^{k-1})$. We leave further details to the reader. Again the argument is similar to that in the proof of Theorem~\ref{thm:represent-homology-by-submanifolds-all-dims}.
\end{proof}

\begin{lemma}\label{lem:contained-in-submanifold}
Let $W$ be a smooth $n$-manifold and let $C$ be a compact subset. There exists a compact smooth $n$-dimensional submanifold $X$ of $W$ that contains ~$C$.
\end{lemma}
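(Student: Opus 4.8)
The plan is to realise $X$ as a compact sublevel set of a proper smooth function.

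First I would fix a proper smooth function $f\colon W\to[0,\infty)$. Such a function exists because $W$ is second countable (part of Definition~\ref{def:manifold}): choose a countable locally finite open cover $\{U_i\}_{i\geq 1}$ of $W$ by sets with compact closure, together with a subordinate smooth partition of unity $\{\rho_i\}_{i\geq 1}$, and set $f:=\sum_{i\geq 1} i\,\rho_i$. This sum is locally finite, so $f$ is smooth and nonnegative, and it is proper: if $x\notin\overline{U_1}\cup\dots\cup\overline{U_N}$ then $\rho_1(x)=\dots=\rho_N(x)=0$, whence $f(x)=\sum_{i>N}i\,\rho_i(x)\geq (N+1)\sum_{i>N}\rho_i(x)=N+1$; thus $f^{-1}([0,N])$ is a closed subset of the compact set $\overline{U_1}\cup\dots\cup\overline{U_N}$ and hence compact.

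Next, since $C$ is compact, $a:=\sup_{x\in C}f(x)$ is finite, and by Sard's theorem the regular values of $f$ are dense, so I can pick a regular value $b$ with $b>a$. I would then take
\[ X\,:=\,f^{-1}\big([0,b]\big). \]
Properness of $f$ makes $X$ compact. Since $b$ is a regular value, $f^{-1}(b)$ is a smooth $(n-1)$-dimensional submanifold of $W$ near which $f$ is a submersion; hence $X$ is a smooth $n$-dimensional submanifold of $W$ with boundary $\partial X=f^{-1}(b)$ and interior $f^{-1}\big([0,b)\big)$, which is open in $W$. Finally $f(C)\subseteq[0,a]\subseteq[0,b)$, so $C$ lies in the interior of $X$, and in particular $C\subseteq X$, as required.

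I do not expect any real obstacle. The only nonroutine ingredient is the construction of the proper smooth function $f$ in the first step, which is precisely where second countability of $W$ is used; everything afterwards is a direct application of Sard's theorem and the regular value theorem. If one preferred to avoid a global proper function, one could instead cover $C$ by finitely many relatively compact coordinate balls and take an appropriate sublevel set of a finite sum of adapted bump functions — but the proper-function argument is cleaner and yields compactness of $X$ for free.
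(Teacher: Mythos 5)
Your argument is correct, and its skeleton coincides with the paper's: produce a proper smooth function $W\to[0,\infty)$, bound its values on $C$, choose a regular value above that bound via Sard, and take the corresponding sublevel set. The only real difference is how the proper function is manufactured. The paper invokes the Whitney embedding theorem to get a proper embedding $f\colon W\to\R^{2n+1}$, picks a point $P$ off the image, and uses the Euclidean distance to $P$ composed with $f$; you instead build the proper function intrinsically from a countable locally finite cover by relatively compact sets and a subordinate partition of unity. Your route is more elementary and self-contained (no appeal to Whitney embedding), at the cost of a slightly longer verification of properness; the paper's route outsources that verification to the properness of the embedding. One caveat applies equally to both proofs: in this paper a ``manifold'' may have nonempty boundary (and in the application $W=M\sms\{P\}$ does), so to conclude that $f^{-1}([0,b])$ is a smooth submanifold one should also arrange $b$ to be a regular value of $f|_{\partial W}$ and accept a manifold with corners along $f^{-1}(b)\cap\partial W$; since the paper's own proof silently elides the same point, this is not a gap specific to your write-up.
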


\begin{proof}
By the Whitney Embedding Theorem (see e.g.\ \cite[Theorem~6.15]{Le13}), there exists a proper embedding $f\colon W\to \R^{2n+1}$. Recall that in this context proper means that the preimage of a compact set is compact. Pick a point $P\in \R^{2n+1}$ that does not lie in the image of $f$.
Denote the Euclidean distance to the point $P$ by $d\colon \R^{2n+1}\to \R_{\geq 0}$.
This map is smooth outside $P$, so in particular $d\circ f\colon W\to \R_{\geq 0}$ is smooth.  Since $C$ is compact, there exists an $r\in \R_{\geq 0}$ such that $(d\circ f)(C)\subseteq [0,r]$. By Sard's Theorem, there exists a regular value $x>r$. Then $X:=(d\circ f)^{-1}([0,x])$ has the desired properties.
\end{proof}

\begin{proof}[Proof of Theorem~\ref{thm:represent-homology-by-submanifolds}]
Let $M$ be a compact orientable connected $4$-manifold and let $A$ be a union of components of $\partial M$. Let $k=2$ or $k=3$ and let $\sigma\in H_k(X,A;\Z)$.

Pick a point $P\in M\sms \partial M$
and pick an open ball $B\subseteq M\sms \partial M$ containing $P$.
It follows from a Mayer-Vietoris argument applied to $M=(M\sms \{P\})\cup B$ that the inclusion induced map $H_k(M\sms \{P\},A)\to H_k(M,A)$ is an isomorphism for $k=2,3$.

Now let $\sigma\in H_k(M,A)$. By the previous paragraph we can view $\sigma$ as an element in $H_k(M\sms \{P\},A)$. By Theorem~\ref{thm:smooth-outside-a-point} the manifold $M\sms \{P\}$ is smooth.
There exists a compact subset $K$ of $M\sms \{P\}$ such that $\sigma$ lies in the image of $H_k(K,A)\to H_k(M\sms \{P\},A)$, since one can take the union of the images of the singular simplices in a singular chain representing $\sigma$.
By Lemma~\ref{lem:contained-in-submanifold}, there exists a compact 4-dimensional smooth submanifold $X$ of $M\sms \{P\}$ that contains the compact set $K\cup \partial M$. Note that $A$ is again a union of components of $\partial X$.
The desired statement of Theorem~\ref{thm:represent-homology-by-submanifolds}
is now an immediate consequence of Proposition~\ref{prop:represent-homology-by-smooth-submanifolds}~(1), with $\sigma$ the image of $\sigma \in H_k(K,A)$ under the inclusion induced map to $H_k(X,A)$.
\end{proof}

% Now we collect the tools to conduct the proof of
%Theorem~\ref{thm:represent-homology-by-submanifolds} entirely in the topological
%category without the trick of using
%Theorem~\ref{thm:smooth-outside-a-point}, which reduced the desired statement to a problem in the smooth setting.

%The previous proof is can be adapted to manifolds of all dimensions, provided the .

%=============================================================================
\chapter{Tubing of surfaces}\label{chapter:tubing}

As an example of the use of the technology we have discussed thus far, we show that one can tube together two locally flat embedded surfaces in a 4-manifold, to obtain an embedding of the connected sum.  This operation is standard in the smooth category, but as ever in the topological category one should take some care.

The following situation is by no means the most general such result possible.  We wish to illustrate two things. First, that operations on surfaces that can be performed in the smooth category can usually also be performed in general 4-manifolds with locally flat surfaces (although performing these operations in a parametrised way seems to be beyond current knowledge). Second, we want to show the level of detail required to demonstrate that such operations work.

\begin{proposition}\textbf{\textup{(Tubing Theorem)}}\label{prop:tubing}
  Let $S$ and $T$ be 2-dimensional proper submanifolds of a connected 4-manifold $M$, that is $S$ and $T$ are locally flat embedded surfaces.
Pick a point $P \in S \sm \partial S$ and $Q \in T \sm \partial T$.  Let $[\gamma] \in H_1(M,\{P,Q\};\Z)$ be a relative homology class.
There is a locally flat embedded arc $C$ joining $P$ and $Q$, satisfying the following.
 \begin{enumerate}[leftmargin=1cm,font=\normalfont]
 \item[(i)] We have $[C]= [\gamma] \in H_1(M,\{P,Q\};\Z)$.
 \item[(ii)] The interior of $C$ is disjoint from $S \cup T$.
\item[(iii)] The arc $C$ extends to a neighbourhood $C \times D^2$ embedded in $M$ such that $E_S:= \{P\} \times D^2 \subseteq S$ and $E_T:= \{Q\} \times D^2 \subseteq T$.
\item[(iv)] We have $(C \times D^2) \setminus (E_S \cup E_T) \subseteq M \sm(S \cup T)$.
\item[(v)] The intersection of $C \times D^2$ with a normal disc bundle $D(S)$ of $S$ is such that for every $d$, $(C \times \{d\}) \cap D(S)$ is a ray in a single fibre of $D(S)$, and similarly for $T$. Moreover there is a trivialisation of the normal vector bundle over $E_S$ as $E_S \times D^2$ such that for every $c \in C$ with $(\{c\} \times D^2) \cap D(S) \neq \varnothing$, we have that $\{c\} \times D^2 = E_S \times \{e\}$ for some $e \in D^2$, and all such $e$ that arise this way lie on a fixed ray from the origin of $D^2$.
 \end{enumerate}
\end{proposition}

These data allow us to perform tubing of surfaces ambiently.

\begin{proposition} \label{prop-tubing-gives-submanifold}
Given data $S$, $T$, $C \times D^2$, $E_S$ and $E_T$ as in Proposition~\ref{prop:tubing}, the subset
\[ (S\sm \tmfrac{1}{2} E_S)  \cup (T \sm \tmfrac{1}{2}E_T) \cup C \times \smfrac{1}{2}S^1\]
is a  2-dimensional submanifold abstractly homeomorphic to $S\# T$.
\end{proposition}

\begin{proof}
  The surfaces and the tube are locally flat by assumption, or by construction from Proposition~\ref{prop:tubing}.  The circles where the tube is glued to the surface are locally flat points. To see this observe that we have arranged a coordinate system in which this gluing is a completely standard attachment at angle $\pi/2$.
\end{proof}

\begin{figure}[h]
\begin{center}
\includegraphics{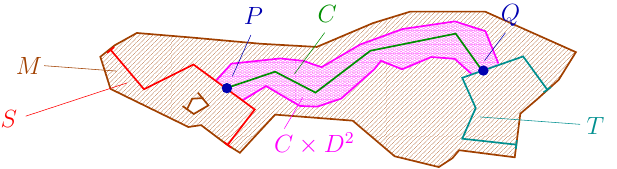}
\caption{Illustration of Proposition~\ref{prop:tubing}.}\label{fig:tubing}
\end{center}
\end{figure}

\begin{proof}[Proof of Proposition~\ref{prop:tubing}]
  Since $S$ and $T$ are proper submanifolds, they have normal vector bundles by Theorem~\ref{thm:existnormal}. Pick normal disc bundles $D(S)$ and $D(T)$, and remove the interiors of $\tsmfrac{1}{2}D(S)$ and $\tsmfrac{1}{2}D(T)$ i.e.\ smaller disc bundles inside the normal disc bundles.  We obtain a manifold with boundary
  \[X := M \sm \big(\Int\tmfrac{1}{2}D(S) \cup \Int\tmfrac{1}{2}D(T)\big)\]
  together with a collar neighbourhood of the boundary arising from $D(S) \sm \Int\tsmfrac{1}{2}D(S)$, and the same with $T$ replacing~$S$, extended using Theorem~\ref{thm:collar} to a collar neighbourhood for all of $\partial X$. Choose a closed disc neighbourhood $E_S$ of $P$ in $S$.
  We write $\partial_S X$ for the fibrewise boundary of $\tsmfrac{1}{2}D(S)$, $\partial_T X$ for the fibrewise boundary of $\tsmfrac{1}{2}D(T)$, and $\partial_1 X$ for  $\partial_S X \cup \partial_T X = \overline{\partial X \sm \partial M}$.

  Choose a trivialisation of the normal vctor bundle $\nu S$ in a neighbourhood $N(E_S)$ of $E_S$, as $N(E_S) \times D^2$.  A ray in $D^2$ from the origin to the boundary determines an embedding $E_S \times [0,1] \subseteq \tsmfrac{1}{2}D(S)$.  We obtain in particular a disc $E_S \times \{1\} \in N(E_S) \times \{\op{pt}\} \subseteq N(E_S) \times S^1$.  Choose a smooth structure on $\partial X$ (which we may do since $\partial X$ is a 3-manifold), and choose a smoothly embedded neighbourhood $F_S \cong D^3$ in $\partial_S X$ that contains $E_S \times \{1\}$ in its interior.

 Make the analogous set of choices and constructions for $T$, to obtain $E_T$, $N(E_T)$, $E_T \times [0,1] \subseteq \tsmfrac{1}{2}D(T)$, and $F_T \cong D^3$ in $\partial_T X$ that contains $E_T \times \{1\}$ in its interior.

Remove a point $r$ from $X$, and using Theorem~\ref{thm:smooth-outside-a-point} choose a smooth structure on $X \sm \{r\}$ extending the chosen smooth structure on $\partial X$.  Choose a smoothly embedded path $C_X \subseteq X$ between the centres of $E_S \times \{1\}$ and $E_T \times \{1\}$, such that $C_X$ extends along the previously chosen rays inside the normal vector bundles to a path $C$ between $P$ and $Q$ such that $[C]= [\gamma] \in H_1(M,\{P,Q\};\Z)$.
Extend $C_X$ to a codimension zero submanifold $N(C_X)$ homeomorphic to $I \times D^3$, with $I \times \{0\} \subseteq I \times D^3$ mapping to $C_X$, and such that $\{0\} \times D^3$ maps to $F_S \subseteq \partial_S X$ and $\{1\} \times D^3$ maps to $F_T \subseteq \partial_T X$.

\begin{figure}[h]
\begin{center}
\includegraphics{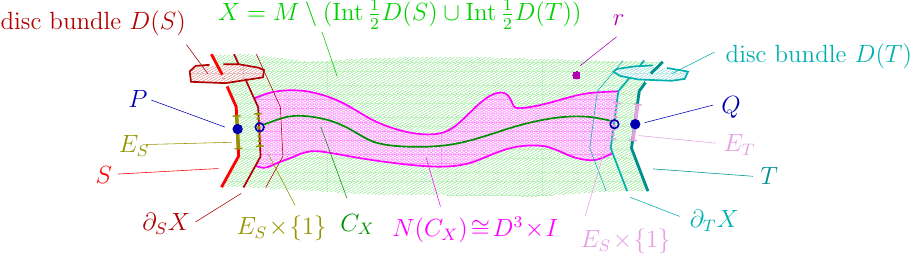}
\caption{Illustration for the proof of Proposition~\ref{prop:tubing}.}\label{fig:tubing-proof}
\end{center}
\end{figure}

Now, for small $\varepsilon$, $[0,\varepsilon] \times D^3$ and $[1-\varepsilon,1] \times D^3$ give rise to collar neighbourhoods of the closed subsets $F_S$ and $F_T$ of $\partial_1 X$.  Use Theorem~\ref{thm:collar} to extend this collar neighbourhood to a collar neighbourhood over all of $\partial X$.

We now have two collar neighbourhoods  of $\partial X$, the collar $\Psi_1 \colon \partial X \times [0,1] \hookrightarrow X$ we have just constructed which is compatible with $N(C_X)$, and the collar neighbourhood $\Psi_2 \colon \partial X \times [0,1] \hookrightarrow X$ constructed above from $D(S) \sm \Int\tsmfrac{1}{2}D(S)$ and $D(T) \sm \Int\tsmfrac{1}{2}D(T)$.
By Theorem~\ref{thm:topological-collar-unique}, there is an isotopy $H_t \colon M \to M$ starting from the identity, such that $H_1 \circ \Psi_1 = \Psi_2$, i.e.\ sends the first collar to the second.

We now obtain a codimension zero submanifold $C_X \times D^3$ homeomorphic to $I \times D^3$ such that, with respect to the collar neighbourhood $\Psi_2$, we have:
\begin{itemize}[leftmargin=0.7cm]
  \item For all $c \in C_X$ such that $\{c\} \times D^3 \cap \Psi_2(\partial X \times [0,1]) \neq \varnothing$, we have that $\{c\} \times D^3 \subseteq \Psi_2(\partial X \times \{t\})$ for some $t \in [0,1]$.
  \item For every $d \in D^3$, $(C \times \{d\}) \cap(\partial X \times [0,1]) = \Psi_2(\{x\} \times [0,1])$ for some $x$ in either $F_S$ or $F_T$.
      \end{itemize}
In addition, above we constructed two discs $E_S \subseteq F_S$ and $E_T \subseteq F_T$. Any two embedded discs in a 3-ball are ambiently isotopic: place this isotopy inside $C_X \times D^3$ to obtain a locally flat embedding $C_X \times D^2 \cong I \times D^2 \subseteq C_X \times D^3$.

Now consider $X \subseteq M$ and take the union
\[(E_S \times [0,1]) \cup (C_X \times D^2) \cup (E_T \times [0,1])\,\, \subseteq\,\, M\]
to obtain an embedding $C \times D^2 \cong I \times D^2$ whose intersection with $S$ equals $E_S$ and whose intersection with $T$ equals $E_T$. The core $C = C \times \{0\}$ is  a locally flat embedded path in $M$ from $P$ to $Q$ with interior in $M \sm (S \cup T)$ and with the correct relative homology class in $H_1(M,\{P,Q\};\Z)$. We may then perform the tubing $S\#  T := (S\sm E_S)  \cup (T \sm E_T) \cup C \times S^1$ as promised.
\end{proof}

%==================================================================================
\chapter{Classification results for 4-manifolds}\label{chapter:classification-simply-conn-4-mflds}

It is well-known (e.g.\ \cite[Theorem~5.1.1]{CZ}) that any finitely presented group is the fundamental group of a closed orientable smooth 4-manifold.
Markov \cite{Markov} used this fact to show that closed 4-manifolds cannot be classified up to homeomorphism.  To circumvent this group theoretic issue one aims to classify 4-manifolds with a given isomorphism type of a fundamental group.

In this chapter we present the known $4$-manifold classification results that have been obtained using the techniques of classical, or modified, surgery theory in the topological category, combined with Freedman's Disc Embedding Theorem~\cite{Freedman-82,FQ90,DETbook}. The use of this theorem requires the fundamental group of the $4$-manifold be ``good'' \cite[Part II, Introduction]{FQ90}, a condition that has a precise geometric description using the ``$\pi_1$-null disc property''. We will not reproduce that description here, but will instead note which groups are currently known to be good. Freedman showed that the infinite cyclic group and finite groups are good~\cite[pp.~658-659]{Freedman-82} (see also \cite[Section 5.1]{FQ90}). In addition, by \cite[Lemma~1.2]{Freedman-Teichner-95-I}  the class of good groups is closed under extensions, direct limits, subgroups and quotients.  It follows that solvable groups are good. Furthermore in \cite[Theorem~0.1]{Freedman-Teichner-95-I} and \cite{Krushkal-Quinn-00}
it was shown that groups with subexponential growth are good.

\section{Simply connected $4$-manifolds}

The following theorem was the first noteworthy result towards a classification of 4-dimensional manifolds.

\begin{theorem}\label{thm:milnor-whitehead}
Suppose $M$ and~$N$ are two closed oriented simply-connected 4-dimensional  manifolds. If the intersection forms are isometric, then $M$ and~$N$ are homotopy equivalent.
\end{theorem}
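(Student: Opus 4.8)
The plan is to reduce the statement to a purely homotopy-theoretic computation about attaching a $4$-cell to a wedge of $2$-spheres. Write $b:=b_2(M)=b_2(N)$; the Betti numbers agree because the intersection forms are isometric, hence have the same rank, and by Poincar\'e duality (Theorem~\ref{thm:poincareduality}) the groups $H_2(M;\Z)$ and $H_2(N;\Z)$ are free abelian of this rank, while $H_1=H_3=0$. First I would remove an open $4$-ball from $M$ to obtain a compact $4$-manifold $M_0$ with $\partial M_0\cong S^3$. A long exact sequence argument applied to $M=M_0\cup_{S^3}D^4$ shows $M_0$ has the integral homology of $\bigvee_{i=1}^{b}S^2$, and by van Kampen $M_0$ is simply connected. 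Since $M_0$ is a compact manifold with boundary it is homotopy equivalent to a finite CW complex (Theorem~\ref{thm:topological-manifold-CW complex}), so, choosing maps $S^2\to M_0$ representing a basis of $\pi_2(M_0)\cong H_2(M_0;\Z)$ via Hurewicz, one assembles a map $\bigvee_{i=1}^{b}S^2\to M_0$ which is an integral homology isomorphism between simply connected spaces, hence a homotopy equivalence by Whitehead's theorem. Gluing the top cell back on and using homotopy invariance of the mapping cone, I obtain $M\simeq W\cup_{\phi}D^4$ with $W:=\bigvee_{i=1}^{b}S^2$ and some $\phi\in\pi_3(W)$; likewise $N\simeq W\cup_{\phi'}D^4$.

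Next I would identify $\phi$ and $\phi'$ in terms of the intersection forms. By the Hilton--Milnor theorem, $\pi_3(W)$ is free abelian with basis $\{(\iota_i)_*\eta : 1\le i\le b\}\cup\{[\iota_i,\iota_j] : 1\le i<j\le b\}$, where $\iota_i\colon S^2\hookrightarrow W$ is the $i$-th inclusion, $\eta\in\pi_3(S^2)$ is the Hopf map, and $[\,\cdot\,,\,\cdot\,]$ is the Whitehead product; the essential relation is $[\iota_i,\iota_i]=2(\iota_i)_*\eta$. A classical computation of cup products in a complex built from $2$-cells and a single $4$-cell (Milnor, and J.H.C.~Whitehead) shows that if $\phi=\sum_i a_i(\iota_i)_*\eta+\sum_{i<j}b_{ij}[\iota_i,\iota_j]$ then, for the basis $x_1,\dots,x_b$ of $H^2(W\cup_\phi D^4;\Z)$ dual to the spheres, one has $x_i\cup x_i=a_i\mu$ and $x_i\cup x_j=b_{ij}\mu$ (for $i<j$) with $\mu\in H^4$ a generator, and conversely the symmetric integer matrix with these entries determines $\phi$. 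So, writing $\phi_Q$ for the attaching map associated to a symmetric integer matrix $Q$ in this way, Poincar\'e duality gives $M\simeq W\cup_{\phi_{Q_M}}D^4$ and $N\simeq W\cup_{\phi_{Q_N}}D^4$ once we fix bases in which the intersection forms are represented by matrices $Q_M$ and $Q_N$.

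Finally I would realize the isometry geometrically. Let $A\in\GL(b,\Z)$ realize the isometry, say $A^{T}Q_M A=Q_N$. Using the co-$H$ structure on $W$, the matrix $A$ determines a self-map $f_A\colon W\to W$ (sending the $i$-th sphere to the map with ``degree vector'' the $i$-th column of $A$), which is a homotopy equivalence, with $f_{A^{-1}}$ a homotopy inverse. One computes on $\pi_3(W)$ that $(f_A)_*[\iota_i,\iota_j]=\sum_{a,c}A_{ai}A_{cj}[\iota_a,\iota_c]$ and $(f_A)_*(\iota_i)_*\eta=\sum_a A_{ai}^2(\iota_a)_*\eta+\sum_{a<c}A_{ai}A_{ci}[\iota_a,\iota_c]$, the second formula reflecting the quadratic composition law for the Hopf map under a degree-vector map. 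Substituting into $\phi_{Q_M}$ and using $[\iota_a,\iota_a]=2(\iota_a)_*\eta$ to absorb the cross terms, a direct (if fiddly) calculation yields $(f_A)_*(\phi_{Q_M})=\phi_{A^{T}Q_M A}=\phi_{Q_N}$. Hence $f_A$ carries the attaching map of $M$ to that of $N$ up to homotopy and extends over the top cells to a map $F\colon M\to N$. Since $F_*$ is $A$ on $H_2$ and multiplication by $\det A=\pm1$ on $H_4$ (both $M$ and $N$ being closed oriented, so the unimodularity of $Q_M$, $Q_N$ forces the degree to be a unit), $F$ is an integral homology isomorphism between simply connected spaces, hence a homotopy equivalence by Whitehead's theorem. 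The main obstacle is the middle step: pinning down the precise dictionary between the homotopy class of the attaching map and the intersection form, together with the parallel computation of $(f_A)_*$ on $\pi_3(W)$, where the self-Whitehead-product relation $[\iota,\iota]=2\eta$ and the quadratic behaviour of the Hopf invariant must be tracked carefully; the rest is routine given the CW-structure results and Poincar\'e duality already available.
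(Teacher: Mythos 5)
Your argument is correct and is precisely the classical Milnor--Whitehead proof that the paper cites (\cite[Chapter V, Theorem 1.5]{MH} and Milnor's 1958 paper): puncture the manifold to get a wedge of $2$-spheres, read off the attaching element of the top cell from the intersection form via the Hilton decomposition of $\pi_3(\bigvee S^2)$, and realise the isometry by a self-equivalence of the wedge. You correctly supply the topological-category inputs (CW homotopy type of $M_0$ via Theorem~\ref{thm:topological-manifold-CW complex}, collars for the gluing), so nothing is missing.
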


\begin{proof}
This theorem was proved for smooth manifolds by
Milnor~\cite[Theorem~3]{Milnor-58}, building on work of Whitehead~\cite{whitehead-49}. A proof that works in the general case is given in \cite[Chapter V, Theorem~1.5]{MH}.
\end{proof}

We state Freedman's classification for closed, simply connected 4-manifolds~\cite[Theorem~1.5]{Freedman-82}. We give the statement as in \cite[Theorem~10.1]{FQ90}.
We note that Freedman's original statement only applied to $4$-manifolds smoothable away from a point, and so the statement below requires the subsequent developments by Quinn. The last sentence comes from~\cite{Quinn-isotopy}.

\begin{theorem}\label{thm:classn-simply-connected-4-mfld}
  Fix a triple $(F,\theta,k)$, where $F$ is a finitely generated free abelian group, $\theta$ is a symmetric, nonsingular, bilinear form $\theta \colon F \times F \to \Z$, and $k \in \Z/2$.  If $\theta$ is even, that is $\theta(x,x) \in \Z$ is even for every $x \in F$, then suppose that $\sigma(\theta)/8 \equiv k  \in \Z/2$.

Then there exists a closed, simply connected, oriented $4$-manifold $M$ with $H_2(M;\Z) \cong F$, with intersection form isometric to $\theta$ and with Kirby-Siebenmann invariant equal to~$k$.

Let $M$ and $M'$ be two closed, simply connected, oriented $4$-manifolds and let $\phi \colon H_2(M;\Z) \xrightarrow{\cong} H_2(M';\Z)$ be an isometry of the intersection forms. Suppose that $\ks(M) = \ks(M')$.  Then there is an orientation preserving homeomorphism $M \xrightarrow{\cong} M'$ inducing $\phi$ on second homology. This homeomorphism is unique up to isotopy.
\end{theorem}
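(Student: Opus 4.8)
\emph{Overview.} The plan is to prove the two halves of the theorem --- existence and uniqueness --- separately; both rest ultimately on Freedman's disc embedding theorem \cite{Freedman-82}, which is available here because the trivial group is good.

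\emph{Existence.} First I would realize the abstract form $\theta$ smoothly with boundary. Choose a framed link $L = L_1 \cup \cdots \cup L_r$ in $S^3$ whose linking matrix, with the framings placed on the diagonal, is the matrix of $\theta$ in some basis, and let $W$ be the trace of the $2$-handle attachments to $D^4$ along $L$. Then $W$ is a compact smooth simply connected $4$-manifold (attaching $2$-handles to $D^4$ does not alter $\pi_1$), with $H_2(W;\Z) \cong \Z^r \cong F$ and intersection form $\theta$, and since $\det\theta = \pm 1$ the boundary $\Sigma := \partial W$ is an integral homology $3$-sphere. Second, cap off: by Freedman's theorem that every integral homology $3$-sphere bounds a compact contractible topological $4$-manifold \cite[Theorem~1.4$'$]{Freedman-82}, \cite[Corollary~9.3C]{FQ90}, choose such a $\Delta$ with $\partial\Delta = \Sigma$ and set $M := W \cup_\Sigma \Delta$. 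A van Kampen and a Mayer--Vietoris argument show that $M$ is a closed oriented simply connected $4$-manifold with $H_2(M;\Z) \cong F$ and intersection form $\theta$. Third, pin down the Kirby--Siebenmann invariant: by Theorem~\ref{thm:ks-basics} we have $\ks(M) = \ks(W) + \ks(\Delta) = \ks(\Delta)$, and comparing $\Delta$ with any smooth spin $4$-manifold bounding $\Sigma$ (which exists since $\Omega_3^{\op{Spin}} = 0$) shows, via Theorem~\ref{thm:ks-basics} and Rochlin's Theorem~\ref{thm:rokhlin}, that $\ks(\Delta)$ equals the Rochlin invariant $\mu(\Sigma) \in \Z/2$. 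If $\theta$ is even then $W$ is spin, so Rochlin forces $\mu(\Sigma) \equiv \sigma(\theta)/8 \equiv k$ by the standing hypothesis; if $\theta$ is odd then some diagonal framing is odd, $\mu(\Sigma)$ depends on $L$, and it can be switched between its two possible values without changing the linking matrix --- e.g.\ by forming the connected sum of an odd-framed component of $L$ with a trefoil inside a ball, which flips the Rochlin invariant --- so one chooses $L$ with $\mu(\Sigma) = k$. In all cases $\ks(M) = k$.

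\emph{Uniqueness.} Given $M$, $M'$ and an isometry $\phi$ with $\ks(M) = \ks(M')$, I would first apply Theorem~\ref{thm:milnor-whitehead}, in the sharpened form guaranteeing that $\phi$ itself is induced by an orientation-preserving homotopy equivalence $f \colon M \to M'$, and then show $f$ can be homotoped to a homeomorphism. For this I would run the topological surgery exact sequence
\[ L_5(\Z) \longrightarrow \mathcal{S}^{\TOP}(M') \longrightarrow [M', G/\TOP] \xrightarrow{\ \Theta\ } L_4(\Z), \]
which is valid because $\pi_1(M') = 1$ is good. Here $L_5(\Z) = 0$ and $L_4(\Z) \cong \Z$ (one eighth of the signature), and, using that $M'$ has the homotopy type of a finite $4$-complex (Theorem~\ref{thm:topological-manifold-CW complex}) together with $\pi_2(G/\TOP) \cong \Z/2$, $\pi_4(G/\TOP) \cong \Z$ and $\pi_1(G/\TOP) = \pi_3(G/\TOP) = 0$, one gets a short exact sequence $0 \to H^4(M';\Z) \to [M', G/\TOP] \to H^2(M';\Z/2) \to 0$ on which $\Theta$ restricts to an isomorphism $H^4(M';\Z) \xrightarrow{\cong} L_4(\Z)$; hence $\mathcal{S}^{\TOP}(M') = \ker\Theta \cong H^2(M';\Z/2)$. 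The class of $f$ in $\mathcal{S}^{\TOP}(M')$ is its normal invariant, and since $L_5(\Z) = 0$, $f$ is homotopic to a homeomorphism precisely when that class vanishes. Now the homotopy equivalences $M \to M'$ inducing $\phi$ form a coset under the group of self-homotopy-equivalences of $M'$, and the induced (affine) action on $\mathcal{S}^{\TOP}(M') \cong H^2(M';\Z/2)$ has, by the normal-invariant computation of \cite[Section~10.2]{FQ90}, orbit space detected exactly by the Kirby--Siebenmann invariant --- concretely, moving by $v \in H^2(M';\Z/2)$ changes the Kirby--Siebenmann invariant of the associated manifold by $\langle v \cup v, [M']\rangle \in \Z/2$, and every $v$ with $\langle v \cup v, [M']\rangle = 0$ is realized by a self-homotopy-equivalence of $M'$. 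Since $\ks(M) = \ks(M')$, the class of $f$ lies in the orbit of $0$; adjusting $f$ by a self-homotopy-equivalence then makes its normal invariant vanish, so $f$ is homotopic to an orientation-preserving homeomorphism $M \xrightarrow{\cong} M'$ inducing $\phi$. Finally, that this homeomorphism is unique up to isotopy is Quinn's isotopy theorem \cite{Quinn-isotopy}.

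\emph{Main obstacle.} The deep external input is Freedman's disc embedding theorem: it underlies both the bounding-contractible-manifold theorem used in the existence half and the validity, in ambient dimension $4$, of the surgery exact sequence (and of the $5$-dimensional $s$-cobordism theorem underlying the structure set) used in the uniqueness half, for the good group $\pi_1 = 1$. Granting that, the genuinely delicate bookkeeping is on the uniqueness side: computing $[M', G/\TOP]$ and the surgery obstruction map $\Theta$ for simply connected $M'$, and above all showing that the action of the homotopy self-equivalences collapses the structure set exactly along the Kirby--Siebenmann invariant, so that $\ks$ is the sole obstruction beyond the isometry type of the intersection form.
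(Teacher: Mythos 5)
The paper does not prove this theorem at all: it is stated as a black-box citation to \cite[Theorem~1.5]{Freedman-82} and \cite[Theorem~10.1]{FQ90}, with the isotopy statement attributed to \cite{Quinn-isotopy}. So your proposal cannot be compared against an internal argument; what you have written is a sketch of the standard Freedman--Quinn proof, and as such it is essentially correct in outline. The existence half is exactly the construction in \cite[Section~10.2]{FQ90} (of which the paper's Construction~\ref{construction:chern-manifold} of $*\cp^2$ is a special case): realise $\theta$ by a $2$-handlebody, cap the homology-sphere boundary with Freedman's contractible manifold, and identify $\ks$ with the Rochlin invariant of the boundary. Two points there deserve care. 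First, your trefoil trick flips $\mu(\Sigma)$ only when the trefoil is tied into a component of the \emph{characteristic sublink}; an odd-framed component need not belong to it, although oddness of $\theta$ does guarantee the characteristic sublink is nonempty, so the adjustment is always available. Second, your appeal to Theorem~\ref{thm:ks-basics}(5)--(6) is fine for a survey-level argument, but note that the paper's proof of those facts itself invokes the existence of the $E_8$-manifold and stable smoothing, so in a genuinely self-contained development one must order the logic as Freedman--Quinn do.

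On the uniqueness half, your route through the surgery exact sequence is correct but differs in presentation from \cite{FQ90}, whose argument constructs an $h$-cobordism between $M$ and $M'$ realising $\phi$ and then applies the $5$-dimensional topological $h$-cobordism theorem for the good group $\{1\}$; the two are equivalent in content since the $h$-cobordism theorem underlies the exactness of the sequence. The genuinely delicate step you correctly isolate --- that the normal invariants of homotopy self-equivalences of $M'$ realise exactly the classes $v \in H^2(M';\Z/2)$ with $\langle v \cup v,[M']\rangle = 0$, so that the structure set collapses modulo this action onto $\Z/2$ detected by $\ks$ --- is a theorem in its own right (Cochran--Habegger, ``On the homotopy theory of simply connected four manifolds''; see also \cite[Section~10.2]{FQ90}) and should be cited as such rather than asserted. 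You also silently strengthen Theorem~\ref{thm:milnor-whitehead} to the statement that a \emph{prescribed} isometry $\phi$ is induced by a homotopy equivalence; this is true (it follows from the Milnor--Whitehead identification of the homotopy type with a wedge of $2$-spheres union a $4$-cell) but is more than the paper's Theorem~\ref{thm:milnor-whitehead} states. With those citations and caveats supplied, the sketch is a faithful account of the proof the paper defers to.
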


In other words, every even, symmetric, integral matrix with determinant $\pm 1$ is realised as the intersection form of a unique closed, simply connected, oriented $4$-manifold. For such matrices which are odd instead, there are precisely two closed, simply connected, oriented $4$-manifolds up to homeomorphism, exactly one of which has vanishing Kirby-Siebenmann invariant and is therefore stably smoothable. These two manifolds are homotopy equivalent by Theorem~\ref{thm:milnor-whitehead}.

In particular, the last paragraph with $M=M'$ implies that every automorphism of the intersection form of a closed, simply connected, oriented $4$-manifold is realised by a self-homeomorphism of $M$.

The following special case of  Theorem~\ref{thm:classn-simply-connected-4-mfld},  when $F=0$, is worth pointing out explicitly.

\begin{corollary}\textbf{\textup{(4-dimensional Poincar\'{e} conjecture)}}
  If $N$ is a $4$-manifold homotopy equivalent to $S^4$ then $N$ is homeomorphic to $S^4$.
\end{corollary}

\begin{proof}
Note $N$ and $S^4$ are closed, simply connected, and oriented. Furthermore, $H_2(N; \Z) = 0$ and the zero map $H_2(N; \Z) \to H_2(S^4; \Z)$ is an isometry (between zero forms).
By the last paragraph of Theorem~\ref{thm:classn-simply-connected-4-mfld}, there is a homeomorphism $N \cong S^4$ realising this isometry. Note that since $N$ has trivial and therefore even intersection form, $\ks(N) = \sigma(N)/8=0$ by Theorem~\ref{thm:ks-basics}~(\ref{item-ks-basics-6}).
\end{proof}

\section{Non simply-connected $4$-manifolds}

 We summarise known classification results for different types of nontrivial fundamental groups.

\subsection{Infinite cyclic group}
First, we present a classification result~\cite[Theorem~10.7A]{FQ90} for closed, oriented 4-manifolds with fundamental group $\Z$
which is quite similar to Theorem~\ref{thm:classn-simply-connected-4-mfld}. To state the theorem we need some extra definitions.

\begin{definition}
For a finitely generated free  $\Z[\Z]$ module $F$, a hermitian sesquilinear form $\theta \colon F \times F \to \Z[\Z]$  is called \emph{even} if there is a left $\Z[\Z]$-module homomorphism $q \colon F \to \overline{\Hom_{\Z[\Z]}(F,\Z[\Z])}$ with $q+ q^* \colon F \to \overline{\Hom_{\Z[\Z]}(F,\Z[\Z])}$ equal to the adjoint of $\theta$. Otherwise we call the form \emph{odd}.
\end{definition}

\begin{definition}Two homeomorphisms $h_0,h_1 \colon M \to N$ are \emph{pseudo-isotopic} if there is a homeomorphism $H \colon M \times I \to N \times I$ with $H|_{M \times \{i\}} = h_i \colon M \times \{i\} \to N \times \{i\}$ for $i=0,1$.
\end{definition}

An isotopy of homeomorphisms gives rise to a pseudo-isotopy. Perron and Quinn proved that the converse holds for compact simply connected 4-manifolds~\cite{Perron-isotopy}, \cite{Quinn-isotopy}).
Budney and Gabai~\cite{Budney-Gabai-23} showed that pseudo-isotopy does not in general imply isotopy for homeomorphisms between 4-manifolds with nontrivial fundamental groups.

\begin{theorem}\label{thm:classn-4-mfld-z}
  Fix a triple $(F,\theta,k)$, where $F$ is a finitely generated free $\Z[\Z]$-module, $\theta$ is a hermitian, nonsingular, sesquilinear form $\theta \colon F \times F \to \Z[\Z]$, and $k \in \Z/2$.
If $\theta$ is even, then suppose that $\sigma(\R \otimes \theta)/8 \equiv k  \in \Z/2$.

  Then there exists a closed, oriented $4$-manifold $M$ with $\pi_1(M) \cong \Z$, with $H_2(M;\Z[\Z])$  isomorphic to $F$, whose equivariant intersection form
  \[\lambda_M \colon H_2(M;\Z[\Z]) \times H_2(M;\Z[\Z]) \to \Z[\Z]\] is isometric to $\theta$, and with $\ks(M)=k$.

Let $M$ and $M'$ be two closed, oriented $4$-manifolds with $\pi_1(M) \cong \Z \cong \pi_1(M')$ and let $\phi \colon H_2(M;\Z[\Z]) \xrightarrow{\cong} H_2(M';\Z[\Z])$ be an isometry of the equivariant intersection forms. Suppose that $\ks(M) = \ks(M')$.  Then there is an orientation and basepoint  preserving homeomorphism $M \xrightarrow{\cong} M'$ inducing the given identification of the fundamental groups and inducing $\phi$ on $\Z[\Z]$ coefficient second homology. There are exactly two pseudo-isotopy classes of such homeomorphisms.
 \end{theorem}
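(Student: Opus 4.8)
The plan is to run the topological surgery programme of Freedman and Quinn, which is legitimate here because $\Z$ is a good group (recall the class of good groups contains the infinite cyclic group), together with the Shaneson splitting $L_n(\Z[\Z]) \cong L_n(\Z) \oplus L_{n-1}(\Z)$. This gives $L_4(\Z[\Z]) \cong \Z$, detected by $\tfrac{1}{8}\sigma$, and $L_5(\Z[\Z]) \cong \Z$. For existence, realise the algebraic data first by a $4$-dimensional Poincar\'e complex: take $S^1 \vee \bigvee^r S^2$ with $r = \operatorname{rk}_{\Z[\Z]} F$ and attach a $4$-cell along a class in $\pi_3$ realising $\theta$ together with its self-intersection data; that such an attaching map exists is precisely guaranteed by the parity hypothesis on $\theta$, which pins down the second Wu class and hence $w_2$. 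Choosing a $\BTOP$-reduction of the Spivak normal fibration and surgering below the middle dimension yields a $2$-connected degree-one normal map $M_0 \to X$ whose middle-dimensional surgery obstruction lies in $L_4(\Z[\Z]) \cong \Z$ and equals $\tfrac{1}{8}(\sigma(M_0) - \sigma(X))$; varying the bundle reduction over the $H^4(X;\Z) \cong \Z$ worth of choices, and, when $k = 1$, using the constraint of Theorem~\ref{thm:ks-basics}~(\ref{item-ks-basics-6}) that $\ks$ equals $\sigma/8 \bmod 2$ on the spin-type part, one arranges simultaneously that the obstruction vanishes --- so the final surgery produces $M$ with $H_2(M;\Z[\Z]) \cong F$ and equivariant form $\theta$ --- and that $\ks(M) = k$. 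When $k=0$ one may instead build $M$ concretely by attaching $r$ two-handles to $S^1 \times D^3$ along a framed link whose equivariant linking matrix represents $\theta$, checking that the boundary $3$-manifold is $S^1 \times S^2$, and capping off with $S^1 \times D^3$; this $M$ is smooth, so $\ks(M) = 0$.

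For uniqueness, the first step is to realise the isometry $\phi$ by a homotopy equivalence $f \colon M' \to M$ --- this is the $\pi_1 = \Z$ analogue of Theorem~\ref{thm:milnor-whitehead}, obtained by constructing $f$ skeleton by skeleton over a minimal CW structure $M \simeq (S^1 \vee \bigvee^r S^2) \cup e^4$, the only obstruction (to extending over the top cell) being governed by the form, which $\phi$ preserves. The second step is to check that $f$ is homotopic to a homeomorphism: arrange $f$ to be a degree-one normal map and compute its normal invariant in $\mathcal{N}(M) = [M, G/\TOP]$. For a homotopy equivalence of closed oriented $4$-manifolds this class is determined by the intersection form (which controls $w_2$, a homotopy invariant by the Wu formula $x \mapsto x \cup x$), the signature (a homotopy invariant), and the Kirby--Siebenmann invariant, all of which agree here --- $w_2$ and the signature because $\phi$ is an isometry and homotopy equivalences preserve signatures, and $\ks$ by hypothesis. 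Hence the normal invariant of $f$ vanishes, so by exactness of the four-dimensional topological surgery sequence
\[ L_5(\Z[\Z]) \longrightarrow \mathcal{S}^{\TOP}(M) \longrightarrow \mathcal{N}(M) \longrightarrow L_4(\Z[\Z]) \]
--- valid in dimension four exactly because $\pi_1 = \Z$ is good, which is where Freedman's disc embedding theorem enters --- the structure $[f]$ lies in the $L_5(\Z[\Z])$-orbit of $[\operatorname{id}_M]$.

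Since a five-dimensional $s$-cobordism with fundamental group $\Z$ is a product (the $s$-cobordism theorem applies: $\operatorname{Wh}(\Z) = 0$ and $\Z$ is good), every structure in that orbit has underlying manifold homeomorphic to $M$; applying this to $[f]$ produces a homeomorphism $M \xrightarrow{\cong} M'$ inducing $\phi$, and it is orientation-preserving because $\phi$ respects the intersection forms and hence the fundamental classes. Finally, two homeomorphisms inducing $\phi$ differ by a self-homeomorphism of $M$ acting trivially on $\pi_1$ and on $H_2(-;\Z[\Z])$; modulo pseudo-isotopy these form the stabiliser of $[\operatorname{id}_M]$ under the $L_5(\Z[\Z])$-action, and Quinn's analysis of pseudo-isotopy in dimension four for $\pi_1 = \Z$ shows this action factors through the surjection $\Z \cong L_5(\Z[\Z]) \twoheadrightarrow \Z/2$, so there are exactly two such classes. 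I expect the main obstacle to be, as always in dimension four, the appeal to Freedman's embedding theorem, which is what makes both the surgery exact sequence and the five-dimensional $s$-cobordism theorem available for $\pi_1 = \Z$; the rest is bookkeeping with $L$-groups and normal invariants. The next most delicate point --- and a genuinely non-simply-connected one, in contrast with the uniqueness up to isotopy in Theorem~\ref{thm:classn-simply-connected-4-mfld} --- is the precise count of pseudo-isotopy classes, which rests on Quinn's identification of the relevant $L_5$-action.
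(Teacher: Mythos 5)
The paper offers no proof of this statement: Theorem~\ref{thm:classn-4-mfld-z} is quoted directly from \cite[Theorem~10.7A]{FQ90}, with the count of pseudo-isotopy classes corrected by \cite{Stong-Wang-2000}. Your overall architecture --- topological surgery for the good group $\Z$, the Shaneson splitting of $L_*(\Z[\Z])$, and the five-dimensional $s$-cobordism theorem --- is the standard one, but two of your steps are wrong as written. The first is the model $(S^1 \vee \bigvee^r S^2) \cup e^4$. A closed oriented $4$-manifold with $\pi_1 \cong \Z$ has $H_3(M;\Z) \cong H^1(M;\Z) \cong \Z$ by Poincar\'e duality, whereas a complex with cells only in dimensions $0,1,2,4$ has $H_3 = 0$; a $3$-cell dual to the $1$-cell is unavoidable (the Euler characteristics also disagree: $r+1$ versus $r$). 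This invalidates both your existence construction by attaching a single $4$-cell to $S^1 \vee \bigvee^r S^2$, and the skeleton-by-skeleton construction of the homotopy equivalence realising $\phi$: that the equivariant intersection form determines the homotopy type when $\pi_1 \cong \Z$ is a genuine theorem, not routine obstruction theory over a two-cells-plus-top-cell model. Your handlebody construction for $k=0$ is essentially the correct existence argument, but its boundary is only a \emph{homology} $S^1 \times S^2$, and capping it off requires a further Freedman-type input rather than a ``check.''

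The second gap is the normal invariant step. One has $[M, G/\TOP] \cong H^2(M;\Z/2) \oplus H^4(M;\Z)$; the $H^4$-component of $\nu(f)$ is indeed $(\sigma(M')-\sigma(M))/8 = 0$, but the $H^2(M;\Z/2)$-component is a codimension-two Kervaire--Arf invariant that is \emph{not} determined by $w_2$, the signature and $\ks$ --- only a certain characteristic-number combination of it enters the formula for $\ks(M')-\ks(M)$. Homotopy equivalences between homeomorphic $4$-manifolds with nonvanishing degree-two normal invariant exist in abundance (this is exactly how the structure sets of simply connected $4$-manifolds become large), so $\nu(f)=0$ does not follow from your hypotheses; and composing $f$ with self-equivalences to kill the normal invariant risks destroying the condition that $f$ induce the prescribed isometry $\phi$. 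Relatedly, Wall realisation of $L_5(\Z[\Z]) \cong \Z$ produces normal cobordisms, not $s$-cobordisms, so ``every structure in the orbit of $[\op{id}_M]$ has underlying manifold $M$'' also needs an argument. Freedman and Quinn sidestep all of this by constructing an $s$-cobordism from $M$ to $M'$ directly and invoking the topological $s$-cobordism theorem (using $\op{Wh}(\Z)=0$ and that $\Z$ is good); and the precise count of two pseudo-isotopy classes, which you assert via an unexplained surjection $L_5(\Z[\Z]) \twoheadrightarrow \Z/2$, is exactly the delicate point that \cite{Stong-Wang-2000} had to supply.
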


The last sentence of this theorem is a correction to \cite[Theorem~10.7A]{FQ90} by Stong and Wang~\cite{Stong-Wang-2000}.\smallskip

\subsection{Baumslag-Solitar groups}
Here is another family of groups for which a complete classification of closed orientable 4-manifolds up to homeomorphism is known. This is the family of solvable Baumslag-Solitar groups
\[B(k) := \langle a,b \mid aba^{-1}b^{-k} \rangle. \]
Note that $B(0)=\Z$ and $B(1)=\Z^2$.
Baumslag-Solitar groups are solvable and, as we pointed out above, solvable groups are good. The next classification result was proven by Hambleton, Kreck, and Teichner in~\cite{HKT09}.

\begin{definition}
The \emph{$w_2$-type} of a closed, oriented 4-manifold $M$ with universal covering $\widetilde{M}$ is type I, II, III, as follows:  (I) $w_2(\wt{M}) \neq 0$; (II) $w_2(M)=0$; and (III) $w_2(M) \neq 0$ but $w_2(\wt{M})=0$.
 \end{definition}

\begin{theorem}\label{thm:BS-groups-classification}
  Let $B(k)$ be a solvable Baumslag-Solitar group and let $M$ and $N$ be closed, oriented 4-manifolds with fundamental group isomorphic to $B(k)$.
  Suppose that there is an isomorphism $\phi \colon H_2(M;\Z[B(k)]) \to H_2(N;\Z[B(k)])$ of $\Z[B(k)]$-modules such that:
  \begin{enumerate}[leftmargin=1cm,font=\normalfont]
  \item The map $\phi$ induces an isometry between the intersection form $\lambda \colon H_2(M;\Z[B(k)]) \times H_2(M;\Z[B(k)]) \to \Z[B(k)]$ and the corresponding intersection form on $H_2(N;\Z[B(k)])$.
  \item The Kirby-Siebenmann invariants agree $\ks(M)=\ks(N)$.
  \item The $w_2$-types of $M$ and $N$ coincide.
  \end{enumerate}
Then $M$ and $N$ are homeomorphic via an orientation preserving homeomorphism that induces $\phi \colon H_2(M;\Z[B(k)]) \to H_2(N;\Z[B(k)])$.
\end{theorem}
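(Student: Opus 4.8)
The plan is to run Kreck's modified surgery program, following~\cite{HKT09} and in close analogy with the $\pi_1 \cong \Z$ case of Theorem~\ref{thm:classn-4-mfld-z}. The solvability hypothesis enters in two essential ways: it makes $B(k)$ a good group in the sense of~\cite{Freedman-82,Freedman-Teichner-95-I}, so that four-dimensional surgery and the Freedman--Quinn $s$-cobordism theorem are available; and, since $B(k) = BS(1,k)$ is an ascending HNN extension of $\Z$, Waldhausen's computations give $\mathrm{Wh}(B(k)) = 0 = \wt{K}_0(\Z[B(k)])$, removing torsion issues from the $s$-cobordism theorem.

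First I would identify the normal $1$-type. The group $B(k)$ is torsion-free and has an aspherical classifying space of dimension at most two, so $H_i(B(k);\Z) = 0$ for $i \geq 3$. The trichotomy of $w_2$-types~I,~II,~III corresponds exactly to the three possible normal $1$-types $\xi\colon B \to \BTOP$ of a closed oriented $4$-manifold with fundamental group $B(k)$ (type~II being the spin case with $B = K(B(k),1) \times \BTOPSpin$, types~I and~III involving the appropriate twist detecting $w_2$). Each of $M$ and $N$ admits a normal $1$-smoothing into the type determined by its $w_2$-type, and since these agree, $M$ and $N$ share a common normal $1$-type $(B,\xi)$.

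Next I would compute the bordism group $\Omega_4(B,\xi)$. Via the Atiyah--Hirzebruch spectral sequence, using $H_i(B(k);\Z) = 0$ for $i \geq 3$ and, in the spin case, the relation $\ks = \sigma/8 \bmod 2$ from Theorem~\ref{thm:ks-basics}~(\ref{item-ks-basics-6}), the bordism class of a normal $1$-smoothing $(M,\bar\nu)$ is detected by the signature, the Kirby--Siebenmann invariant, and the image of the fundamental class in $H_4(B(k);\Z) = 0$, together with controlled low-degree data. The isometry $\phi$ (which in particular equates signatures) and the equality $\ks(M) = \ks(N)$ then let one choose normal $1$-smoothings of $M$ and $N$ compatible with $\phi$ under $H_2(M;\Z[B(k)]) \xrightarrow{\cong} H_2(N;\Z[B(k)])$, and conclude that $M$ and $N$ are bordant over $(B,\xi)$.

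Finally, given such a bordism $W^5$, handle trading makes $W \to B$ $2$-connected; the obstruction to improving $W$ to an $s$-cobordism then lies in Kreck's monoid $\ell_5(\Z[B(k)])$, and the isometry $\phi$ is precisely the algebraic datum that makes it vanish. Realizing this geometrically means performing the middle-dimensional surgeries on $W$ --- equivalently, embedding the Whitney discs that realize $\phi$, which is possible by the disc embedding theorem because $B(k)$ is good. The resulting $s$-cobordism has trivial torsion since $\mathrm{Wh}(B(k)) = 0$, so the Freedman--Quinn $s$-cobordism theorem produces an orientation-preserving homeomorphism $M \xrightarrow{\cong} N$ inducing $\phi$ on $H_2(-;\Z[B(k)])$. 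The hard part is this last step: showing that once $W$ is $2$-connected the obstruction in $\ell_5(\Z[B(k)])$ really vanishes, which needs both Freedman's disc embedding theorem for the good group $B(k)$ and enough control of the $L$-theory of $\Z[B(k)]$ for these solvable groups --- this is the technical core of~\cite{HKT09}.
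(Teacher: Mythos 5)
The paper does not prove this theorem: it is stated as a result of Hambleton--Kreck--Teichner and attributed directly to~\cite{HKT09}, so there is no in-paper argument to compare against. Your outline does correctly capture the strategy of the cited reference (normal $1$-type determined by the $w_2$-type, bordism over $(B,\xi)$, modified surgery to an $s$-cobordism, Freedman--Quinn for the good group $B(k)$), and the supporting facts you invoke --- goodness of solvable groups, $\mathrm{Wh}(B(k))=0$ via Waldhausen, $H_i(B(k);\Z)=0$ for $i\geq 3$ --- are all correct and are all used in~\cite{HKT09}.

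However, as a proof the proposal has a genuine gap exactly where you flag it, and the way you phrase that step is misleading. The claim that ``the isometry $\phi$ is precisely the algebraic datum that makes'' the obstruction in Kreck's monoid $\ell_5(\Z[B(k)])$ vanish is not correct as stated: in modified surgery the obstruction must be shown to be \emph{elementary}, and an isometry of intersection forms does not by itself achieve this --- this is precisely why Theorem~\ref{thm:kreck} only yields a \emph{stable} homeomorphism in general. The content of~\cite{HKT09} is an analysis of quadratic forms over $\Z[B(k)]$ together with a cancellation theorem exploiting the specific ring-theoretic structure of these group rings; without that input the argument stalls at the stage where one has $M \#_k (S^2\times S^2) \cong N \#_k (S^2\times S^2)$. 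A second, smaller gap: your bordism computation waves at ``controlled low-degree data,'' but the James spectral sequence for $B(k)$ has nontrivial terms $H_p(\op{B}B(k);\Omega_q)$ with $1 \leq p \leq 2$ (e.g.\ $H_1 \cong \Z$, and $H_2$ depends on $k$), and one must check that normal $1$-smoothings can be chosen so that these contributions agree; this requires the realisation analysis in~\cite{HKT09} rather than just the signature, $\ks$, and $H_4(\op{B}B(k);\Z)=0$.
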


There is also a precise realisation result for these invariants~\cite[Theorem~B]{HKT09} and 4-manifolds with fundamental group $B(k)$.

\subsection{Finite cohomological dimension}
In the same paper as that discussed in the previous section~\cite{HKT09}, further classification results were given for 4-manifolds with geometrically 2-dimensional fundamental groups.

Some partial results towards a classification for 4-manifolds whose fundamental groups are good and have cohomological dimension 3 appear in Hambleton-Hildum~\cite{Hambleton-Hildum}.

Kasprowski-Land~\cite{Kasprowski-Land} studied 4-manifolds $M$ with 4-dimensional fundamental group, under the assumption that the classifying map $M \to B\pi$ is degree one, i.e.\ induces an isomorphism $H_4(M;\Z) \xrightarrow{\cong} H_4(B\pi;\Z)$.

\subsection{Finite groups}

Next, $4$-manifolds with finite fundamental groups were studied by Hambleton and Kreck in \cite{Hambleton-Kreck:1988, Hambleton-Kreck-93}.
Given a finitely generated abelian group $G$, let $TG$ be its torsion subgroup and let $\tf G:= G/TG$.

The most complete result was for 4-manifolds with finite cyclic fundamental group, given below.

\begin{theorem}\label{thm:classification-finite-cyclic}
  Let $G$ be a finite cyclic group and let $M$ and $N$ be closed, oriented 4-manifolds with fundamental group isomorphic to $G$.
  Suppose that there is an isomorphism $\phi \colon \tf H_2(M;\Z) \to \tf H_2(N;\Z)$ such that the following hold.
  \begin{enumerate}[leftmargin=1cm,font=\normalfont]
  \item The map $\phi$ induces an isometry between the intersection form $\lambda_M \colon \tf H_2(M;\Z) \times \tf H_2(M;\Z) \to \Z$ and the intersection form $\lambda_N \colon \tf H_2(N;\Z) \times \tf H_2(N;\Z) \to \Z$.
  \item The Kirby-Siebenmann invariants agree $\ks(M)=\ks(N)$.
  \item The $w_2$-types of $M$ and $N$ coincide.
 \end{enumerate}
Then $M$ and $N$ are homeomorphic via an orientation preserving homeomorphism that induces $\phi \colon \tf H_2(M;\Z) \to \tf H_2(N;\Z)$.
\end{theorem}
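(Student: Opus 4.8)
The plan is to prove this by surgery theory, following Hambleton--Kreck~\cite{Hambleton-Kreck:1988, Hambleton-Kreck-93}; it is the non-simply-connected analogue of Theorem~\ref{thm:classn-simply-connected-4-mfld}, except that classical surgery must be replaced by Kreck's modified surgery (equivalently, one combines the surgery exact sequence with the homotopy classification of closed $4$-manifolds by their quadratic $2$-type). The crucial enabling fact is that a finite group $G$ is a good fundamental group~\cite{FQ90}, so that topological surgery and the topological $s$-cobordism theorem are available for $4$-manifolds, and for $5$-dimensional cobordisms, with fundamental group $G$. The target is therefore to build from the given data an $s$-cobordism between $M$ and $N$ realising $\phi$, and almost all of the work goes into arranging the bordism input so that the middle-dimensional surgery obstruction vanishes.

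First I would identify the \emph{normal $1$-type} of $M$: a fibration $\xi\colon B\to\BSTOP$ through which the stable normal bundle $\nu_M\colon M\to\BSTOP$ factors by a $2$-equivalence $\bar\nu_M\colon M\to B$. This depends only on $\pi_1(M)\cong G$ together with the $w_2$-type, and in each of the three cases it is an explicit fibre product of $\B G$ with an appropriate cover of $\BSTOP$ over $K(\Z/2,2)$: for type II it is $\B G\times\BTOPSpin$; for type I it involves $\BSTOP$ itself (the class $w_2$ of the universal cover being nonzero); for type III it is a genuinely twisted fibre product using the class $w\in H^2(\B G;\Z/2)$, which is nonzero only when $|G|$ is even. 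Since the $w_2$-types of $M$ and $N$ agree, they share one normal $1$-type $B$, and I would fix normal $1$-smoothings $\bar\nu_M$ and $\bar\nu_N$.

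Next comes the homotopy-theoretic heart of the cyclic case. Using $\phi$ together with the structure of finitely generated $\Z[G]$-modules and the (periodic) Tate cohomology of the cyclic group $G$, one shows that the equivariant intersection form on $\pi_2(M)$ — and more generally the whole quadratic $2$-type, the triple $(\pi_1,\pi_2,k)$ with $k\in H^3(G;\pi_2)$ together with the equivariant form — is already determined by $\pi_1(M)$, the $w_2$-type, and the \emph{ordinary} intersection form $\lambda_M$ on $\tf H_2(M;\Z)$. This is exactly where one uses that $G$ is cyclic rather than an arbitrary finite group; for general finite $G$ the clean statement fails, and this reduction is what lets the theorem be phrased with the ordinary form. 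Using it, I would adjust $\bar\nu_M$ and $\bar\nu_N$ so that the isomorphism they induce on $\pi_2$ is compatible with $\phi$, and then check that $[M,\bar\nu_M]=[N,\bar\nu_N]$ in the bordism group $\Omega_4^{\STOP}(B,\xi)$. By the Atiyah--Hirzebruch spectral sequence and the known low-dimensional groups $\Omega_{\leq 4}^{\STOP}$, a class there is detected by the signature, the Kirby--Siebenmann invariant, and (in the even non-spin cases) one further $\Z/2$-invariant, all carried by $\lambda_M$, the $w_2$-type and $\ks(M)$; there is no residual characteristic number from the image of the fundamental class, since $H_4(\B G;\Z)=0$ for cyclic $G$. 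As these invariants agree with those of $N$ by hypothesis, there is a bordism $W$ from $M$ to $N$ over $B$.

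Finally I would do surgery on $W$ rel boundary below the middle dimension to make $\bar\nu_W\colon W\to B$ a $3$-equivalence, which is possible precisely because $\bar\nu_M$ and $\bar\nu_N$ are $2$-equivalences and $B$ is the normal $1$-type. The obstruction to improving $W$ further to an $s$-cobordism lies in Kreck's monoid $\ell_5(\Z[G])$; for $G$ finite cyclic this monoid, and its action on the set of normal $1$-smoothings of a fixed manifold, are computed in~\cite{Hambleton-Kreck:1988, Hambleton-Kreck-93}, with the outcome that the obstruction is detected by exactly the invariants already matched and hence vanishes. The topological $s$-cobordism theorem for the good group $G$ then yields a homeomorphism $M\cong N$, which by construction is orientation preserving and induces $\phi$ on $\tf H_2(-;\Z)$. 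The main obstacle is the algebra in the last two steps: pinning down precisely which $\Z[G]$-quadratic data is forced by $(\lambda_M,\text{the }w_2\text{-type},\ks)$, and showing that the modified-surgery monoid $\ell_5(\Z[G])$ contributes nothing beyond it; everything else — existence of normal $1$-smoothings, surgery below the middle dimension, and the $s$-cobordism theorem — is routine once $G$ is known to be good.
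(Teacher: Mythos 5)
The paper does not prove this theorem: it is stated as a citation of Hambleton--Kreck~\cite{Hambleton-Kreck:1988, Hambleton-Kreck-93}, so there is no in-paper argument to compare against line by line. Your outline is, however, a faithful reconstruction of the strategy of the cited proof: fix the common normal $1$-type determined by $G$ and the $w_2$-type, show the two manifolds represent the same class in $\Omega_4^{\operatorname{TOP}}(B,\xi)$, surger the bordism below the middle dimension, and resolve the remaining obstruction using that finite groups are good. Two remarks on where the weight really sits. First, your claim that the bordism class is detected by $\sigma$, $\ks$ and one further $\Z/2$ is not automatic from $H_4(\B G;\Z)=0$ alone: for $|G|$ even the terms $H_p(\B G;\Omega_q^{ST})$ with $p+q=4$, $q=1,2,3$ contribute $\Z/2$'s a priori, and one must run the James spectral sequence differentials (dual to $\op{Sq}^2$, etc.) to see that what survives is carried by the stated invariants; this is a genuine computation in the references, not a formality. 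Second, the step from ``the obstruction in Kreck's $\ell_5(\Z[G])$ is elementary'' to the unstable homeomorphism is exactly the cancellation theorem for hyperbolic forms over $\Z[G]$ of~\cite{Hambleton-Kreck-93}, which is the deepest input and the reason the cyclic hypothesis cannot be dropped. You correctly flag both points as the main obstacles, so as a proposal deferring to the literature for those two steps, this is the right proof.
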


A full realisation result for the invariants in Theorem~\ref{thm:classification-finite-cyclic} is not known, however Hambleton-Kreck showed how to realise in the majority of cases.
The following relations between the invariants hold.
\begin{enumerate}[leftmargin=1cm]
    \item If $w_2(M)=0$, then $\ks(M) \equiv \sigma(M)/8 \in \Z/2$ and $\lambda_M$ is even.
    \item If $M$ is type I, then $\lambda_M$ is odd.
    \item If the order of $G$ is odd, then $w_2(\wt{M})=0$ implies $w_2(M)=0$, so there are no 4-manifolds with $w_2$-type III.
\end{enumerate}

We outline a construction that realises all configurations of the invariants, with the restriction that in $w_2$-type III, the intersection form $\lambda_M$ is even.
The key is a construction of rational homology 4-spheres. For every finite cyclic group $G$, \cite[Proposition~4.1]{Hambleton-Kreck-93} produces the following manifolds.
\begin{enumerate}[leftmargin=1cm]
    \item A rational homology sphere $\Sigma_G^{II}$ with $w_2$-type II and with fundamental group $G$. Note that $\ks(\Sigma_G^{II}) \equiv \sigma(\Sigma_G^{II})/8 =0$.
   \item A rational homology sphere $\Sigma_G^{III,0}$ with $w_2$-type III, trivial Kirby-Siebenmann invariant, and fundamental group $G$.
   \item  A rational homology sphere $\Sigma_G^{III,1}$ with $w_2$-type III, nontrivial Kirby-Siebenmann invariant, and fundamental group $G$.
\end{enumerate}
%In $w_2$-type II, a rational homology sphere must have Kirby-Siebenmann invariant zero, since for spin manifolds $M$ $\ks(M) \equiv \sigma(M)/8 \in \Z/2$, and the signature of a rational homology sphere vanishes.

There can be no rational homology sphere with $w_2$-type I by \cite[Theorem~4.2]{Hambleton-Kreck:1988}; in the notation of that theorem, $Q''(\pi_1,0)$ gives rise to manifolds with nontrivial $H_2(-;\Q)$.
%In $w_2$-type III, \cite[Proposition~4.1]{Hambleton-Kreck-93} gives two rational homology spheres, one with vanishing Kirby-Siebenmann invariant, and one with nonvanishing Kirby-Siebenmann invariant.
Now we describe the partial realisation of the invariants from Theorem~\ref{thm:classification-finite-cyclic}.

\begin{enumerate}[leftmargin=1cm]
  \item By taking the connected sum of $\Sigma_G^{II}$  with a closed, spin, simply connected manifold, we can realise any even, nonsingular, symmetric, bilinear form as the intersection form $\lambda_M \colon \tf H_2(M;\Z) \times \tf H_2(M;\Z) \to \Z$ of a closed, oriented 4-manifold $M$ with fundamental group $G$ and with $w_2$ type II. In this case $\ks(M)$ is determined by the signature of $\lambda_M$.
  \item  Likewise, taking connected sum of $\Sigma_G^{III,0}$ or $\Sigma_G^{III,1}$ with a  closed, spin, simply connected manifold,  we can realise every even $\lambda_M$ as the intersection form of a closed, oriented 4-manifold $M$ with fundamental group $G$ and with $w_2$ type III, with prescribed Kirby-Siebenmann invariant.
  \item Finally, by taking connected sum of $\Sigma_G^{III,0}$ or $\Sigma_G^{III,1}$  with a closed, oriented, simply connected 4-manifold, we can realise any odd, nonsingular, symmetric, bilinear form as the intersection form $\lambda_M \colon \tf H_2(M;\Z) \times \tf H_2(M;\Z) \to \Z$ of a closed, oriented 4-manifold $M$ with fundamental group $G$ and with $w_2$ type I, with prescribed Kirby-Siebenmann invariant.
\end{enumerate}

\begin{question}
Must the intersection form in $w_2$-type III be even?
If not, how can we realise all intersection forms and Kirby-Siebenmann invariants in $w_2$-type III?
\end{question}

%It is not clear how to realise $w_2$-type III and odd intersection form.

In his survey paper, Hambleton~\cite[Theorem~5.2]{Hambleton-gokova} also outlined a homeomorphism classification for closed, spin 4-manifolds with finite odd order fundamental group.

The following result on 4-manifold with finite fundamental group from ~\cite[Theorem~B]{Hambleton-Kreck-93-b-cancellation} also deserves to be mentioned.

\begin{theorem}
    Let $M$ and $N$ be closed, oriented, topological 4-manifolds with finite fundamental group. Suppose that $M \#^r (S^2 \times S^2)$ and  $N\#^r (S^2 \times S^2)$ are homeomorphic for some $r \in \N$. Suppose that $X = X_0 \# (S^2 \times S^2)$. Then $X$ is homeomorphic to $Y$.
\end{theorem}

\subsection{Nonorientable 4-manifolds}
For nonorientable closed 4-manifolds, the homeomorphism classification results we are aware of are for fundamental group $\Z/2$ in~\cite{HKT-nonorientable} and for fundamental group $\Z$ in~\cite{Wang-nonorientable}.
For nonorientable closed 4-manifolds with fundamental group $\Z/2$, the paper~\cite{HKT-nonorientable} gives a complete list of invariants for distinguishing such manifolds up to homeomorphism~\cite[Theorem~2]{HKT-nonorientable}, and gives a list of the possible manifolds~\cite[Theorem~3]{HKT-nonorientable}.

\subsection{4-manifolds with nonempty boundary}

Simply-connected compact 4-manifolds with a fixed 3-manifold as boundary were classified by Boyer in \cite{Boyer-86,Boyer-93}, with an independent contribution by Stong~\cite{Stong-manifolds-boundary}.
Homeomorphisms of such 4-manifolds were classified up to isotopy by Orson-Powell~\cite{Orson-Powell-23}.  Since the statements are somewhat involved, we refer the reader to the original articles.

For compact 4-manifolds with fundamental group $\Z$, an analogous  classification was given by Conway-Powell~\cite{Conway-Powell-23} and Conway-Piccirillo-Powell~\cite{CPP22}, under the assumptions that $\pi_1(\partial M) \to \pi_1(M) \cong \Z$ is surjective, and that the homology $H_1(\partial M;\Z[\Z])$ of the corresponding $\Z$-cover is a $\Z[\Z]$-torsion module.

%===========================================================
\chapter{Stable smoothing of homeomorphisms}\label{chapter:stablesmoothing}

Wall~\cite{Wall-stable-diff} proved that simply connected, closed, smooth 4-manifolds with isometric intersection forms are stably diffeomorphic. It follows that every pair of simply connected, closed, homeomorphic smooth 4-manifolds are stably diffeomorphic.
We shall discuss the analogous statement without the simply connected hypothesis.

\begin{definition}\leavevmode
\begin{enumerate}[leftmargin=1cm]
\item Let $M$ and $N$ be connected, smooth 4-manifolds.  We say that $M$ and $N$ are \emph{stably diffeomorphic} if there is an integer $k$ such that the connected sums $M \#^k (S^2 \times S^2)$ and $N \#^k (S^2 \times S^2)$ are diffeomorphic.
\item Let $M$ and $N$ be connected 4-manifolds.  We say that $M$ and $N$ are \emph{stably homeomorphic} if there is an integer $k$ such that the connected sums $M \#^k (S^2 \times S^2)$ and $N \#^k (S^2 \times S^2)$ are homeomorphic.
 \end{enumerate}
\end{definition}

The next theorem is due to Gompf~\cite{Gompf84}.

\begin{theorem}\label{thm:stably-diffeo-homeo-4-mflds}
Every homeomorphic pair of compact, connected, orientable, smooth 4-manifolds with diffeomorphic boundaries are stably diffeomorphic.

Moreover, let $f \colon M \to N$ be a homeomorphism between two such 4-manifolds, that restricts to a diffeomorphism $f| \colon \partial M \to \partial N$. Then $f|$ extends to a stable diffeomorphism.
\end{theorem}

One might imagine a stronger statement, that given a homeomorphism $f\colon M\to N$ we can smoothen it stably up to isotopy.  However such a statement is only known for simply connected 4-manifolds~\cite[Chapter~8]{FQ90}, and does not hold in general.

 For non simply-connected manifolds, one must consider the bundle map of stable tangent microbundles induced by $f$, and lift it to a bundle map between the stable tangent bundles.  Such a lift does not exist in general; there is a Casson-Sullivan obstruction in $H^3(M,\partial M;\Z/2)$ to its existence.  Cappell-Shaneson~\cite{Cappell-Shaneson-4D-surgery}, using unpublished work of R.~Lee, showed that there is a homeomorphism of $(S^1 \times S^3) \# (S^2 \times S^2)$ for which this obstruction is nontrivial. Hence this homeomorphism cannot be stably smoothed.
If a lift does exist, then for each lift there is a stabilisation by $\#^k (S^2 \times S^2)$, and then a pseudo-isotopy of the stabilised $F$ to a diffeomorphism~\cite[Chapter~8]{FQ90}. So even when the Casson-Sullivan invariant vanishes, one only has a stable smoothing up to pseudo-isotopy.

 The proof of Theorem~\ref{thm:stably-diffeo-homeo-4-mflds} that we shall give using Kreck's modified surgery~\cite{surgeryandduality} was outlined in Teichner's thesis~\cite[Theorem~5.1.1]{teichnerthesis}.
We think this proof is worth publicising with expanded details, because the method is arguably more conceptual than Gompf's original, and because it allows us to expand on Gompf's statement in the nonorientable case.

Gompf also proved that for every pair of compact, connected, nonorientable, smooth 4-manifolds $M$ and $N$ that are homeomorphic, $M \# S^2 \wt{\times} S^2$ and $N \# S^2 \wt{\times} S^2$ are stably diffeomorphic.  We shall slightly improve on this statement.

\begin{theorem}\label{thm:stable-diff-nonorientable}
  Let $M$ and $N$ be compact, connected, nonorientable, smooth 4-manifolds. Suppose that $M$ and $N$ are homeomorphic via a homeomorphism restricting to a diffeomorphism $\partial M \cong \partial N$.  If $w_2(\wt{M}) \neq 0 \neq w_2(\wt{N})$, that is the universal covers of $M$ and $N$ are not spin, then $M$ and $N$ are stably diffeomorphic via a stable diffeomorphism extending the given diffeomorphism $\partial M \cong \partial N$.
\end{theorem}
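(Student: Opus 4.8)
The plan is to deduce the statement from Gompf's non-orientable stabilisation theorem \cite{Gompf84} recalled just above, combined with the classical observation that a connected summand $S^2\wt\times S^2$ may be traded for $S^2\times S^2$ as soon as the ambient $4$-manifold contains a smoothly embedded $2$-sphere with odd normal Euler number. The point of the hypothesis $w_2(\wt M)\neq 0\neq w_2(\wt N)$ is precisely to make such spheres available (after a preliminary stabilisation): for $w_2$-type III manifolds the relevant $\Z/2$-homology class need not be spherical, and this is exactly why the improvement over Gompf is confined to $w_2$-type I.

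First I would produce, after some number of $S^2\times S^2$ summands, a smoothly embedded sphere with odd normal Euler number in $M$ and in $N$. Since $\wt M$ is not spin there is a \emph{spherical} class $\alpha\in H_2(M;\Z/2)$ with $\langle w_2(M),\alpha\rangle=1$: by naturality of the Hurewicz map for $p\colon\wt M\to M$ the image of $\pi_2(M)\to H_2(M;\Z/2)$ equals $p_*\bigl(H_2(\wt M;\Z/2)\bigr)$, and for $\tilde\beta\in H_2(\wt M;\Z/2)$ one has $\langle w_2(M),p_*\tilde\beta\rangle=\langle w_2(\wt M),\tilde\beta\rangle$, which is nonzero for suitable $\tilde\beta$ exactly because $w_2(\wt M)\neq0$. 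Represent $\alpha$ by a generic smooth immersion $g\colon S^2\to M$. Comparing total Stiefel--Whitney classes in $TS^2\oplus\nu_g=g^*TM$ gives $w_2(\nu_g)=g^*w_2(M)$, so the normal Euler number satisfies $e(\nu_g)\equiv\langle w_2(M),\alpha\rangle\equiv 1\pmod 2$; in particular it is odd. If $g$ has $d$ transverse double points, tubing each one into a copy of $S^2\times\{q\}$ inside a new summand $S^2\times S^2$ (the standard "tubing into a dual sphere") replaces $g$ by a smoothly embedded sphere $\Sigma\subset M\#^{d}(S^2\times S^2)$ whose class differs from $\alpha$ only by classes of such $S^2\times\{q\}$, on which $w_2$ vanishes; hence $\Sigma$ still has odd normal Euler number. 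The same construction applied to $N$ yields an integer $e\ge 0$ and an embedded odd-normal-Euler sphere in $N\#^{e}(S^2\times S^2)$.

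Next I would invoke the twist-absorption identity: if a smooth $4$-manifold $X$ contains a smoothly embedded $2$-sphere with odd normal Euler number, then $X\#(S^2\times S^2)\cong X\#(S^2\wt\times S^2)$, the two $S^2$-bundles over $S^2$ being interchanged by a diffeomorphism supported near such a sphere. Applied to $X=M\#^{j}(S^2\times S^2)$ for $j\ge d$ (and to $N$ for $j\ge e$) this gives
\[ M\#^{j+1}(S^2\times S^2)\;\cong\;\bigl(M\#(S^2\wt\times S^2)\bigr)\#^{j}(S^2\times S^2),\qquad j\ge d,\]
and the analogous identity for $N$. Gompf's theorem \cite{Gompf84}, applied with the given diffeomorphism $\partial M\cong\partial N$ and with the boundary handled exactly as in the proof of Theorem~\ref{thm:stably-diffeo-homeo-4-mflds}, provides an integer $k$ with $\bigl(M\#(S^2\wt\times S^2)\bigr)\#^{k}(S^2\times S^2)\cong\bigl(N\#(S^2\wt\times S^2)\bigr)\#^{k}(S^2\times S^2)$, which persists after adjoining further $S^2\times S^2$ summands. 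Choosing $j\ge\max(d,e,k)$ and setting $\ell=j+1$, and using associativity and commutativity of connected sum freely, one chains the three facts to obtain
\[ M\#^{\ell}(S^2\times S^2)\cong\bigl(M\#(S^2\wt\times S^2)\bigr)\#^{j}(S^2\times S^2)\cong\bigl(N\#(S^2\wt\times S^2)\bigr)\#^{j}(S^2\times S^2)\cong N\#^{\ell}(S^2\times S^2),\]
so $M$ and $N$ are stably diffeomorphic.

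The substantive points, and where the real work lies, are: a careful statement and proof of the twist-absorption identity in the compact, possibly non-orientable, with-boundary setting; the verification that the tubing construction genuinely yields an embedded sphere retaining odd normal Euler number; and the bookkeeping that turns Gompf's $S^2\wt\times S^2$-stable diffeomorphism plus the two one-sided absorptions into a single common exponent $\ell$. None of this is deep, but step one is the step that actually uses (and explains the necessity of) the hypothesis $w_2(\wt M)\neq 0$. Alternatively, one could bypass Gompf and rerun Kreck's modified surgery directly as in the proof of Theorem~\ref{thm:stably-diffeo-homeo-4-mflds}: the normal $1$-type of a $w_2$-type I manifold admits $S^2\times S^2$ as a normally structured stabiliser, so the surgery-theoretic stabilisation is by $S^2\times S^2$ rather than by $S^2\wt\times S^2$; in that route the main obstacle is promoting the topological $B$-bordism given by the mapping cylinder of the homeomorphism to a smooth one, which again is where the type I condition enters.
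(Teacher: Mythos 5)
Your proposal is correct in outline, but it takes a genuinely different route from the paper. The paper proves this theorem by rerunning Kreck's modified surgery programme: it identifies the normal $1$-type $\xi\colon \op{B\pi}\times\BSO\to\BO$ (twisted by $w_1$) of a nonorientable manifold with $w_2(\wt{M})\neq 0$, and then uses the James spectral sequence to show that the forgetful map $\Omega_4(B,\xi)\to\Omega_4^{\op{TOP}}(B,\xi)$ is injective (the key point being that the term $H_0(\op{B\pi};\Omega_4^{w_1})\cong\Z/2$, detected by $\chi\bmod 2$ and realised by adding copies of $\cp^2$, survives to $E^\infty$ in both categories); Theorem~\ref{thm:kreck} then converts topological bordism over $(B,\xi)$ into stable diffeomorphism. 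You instead take Gompf's nonorientable theorem as a black box and remove the $S^2\wt{\times}S^2$ summand by hand, via the Norman trick (to convert the spherical class dual to $w_2$, which exists precisely because $w_2(\wt{M})\neq 0$, into an embedded sphere of odd normal Euler number after finitely many $S^2\times S^2$-stabilisations) followed by Wall's twist-absorption lemma $X\#(S^2\times S^2)\cong X\#(S^2\wt{\times}S^2)$. Your route is more elementary and makes geometrically transparent why the $w_2$-type I hypothesis is exactly what is needed, whereas the paper's route is uniform with the orientable case and does not rely on Gompf's theorem as external input (indeed the paper derives Corollary~\ref{cor:stably-diffeomorphic-non-orientable-manifolds}, i.e.\ Gompf's statement, from this theorem rather than the other way around — your citation of Gompf's original paper rather than that corollary keeps the logic non-circular). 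The two genuinely classical inputs you defer — that the Norman trick preserves the mod $2$ normal Euler number because the dual spheres in the new summands have trivial normal bundles, and that Wall's lemma holds for compact, nonorientable manifolds with boundary because its handle-calculus proof is supported in a neighbourhood of the odd sphere together with the new summand — are both standard and hold in the required generality, so I see no gap. Your closing remark that one could instead promote the topological $B$-bordism to a smooth one within Kreck's framework is precisely the strategy the paper carries out.
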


Gompf's statement \cite[p.~116]{Gompf84} for the nonorientable case, given in the next corollary, follows easily from Theorem~\ref{thm:stable-diff-nonorientable}. However note that Theorem~\ref{thm:stable-diff-nonorientable}  shows that for many nonorientable 4-manifolds, the extra summand given by the twisted bundle $S^2 \wt{\times} S^2$ is not necessary.

\begin{corollary}\label{cor:stably-diffeomorphic-nonorientable-manifolds}
   Let $M$ and $N$ be compact, connected, nonorientable, smooth 4-manifolds. Suppose that $M$ and $N$ are homeomorphic via a homeomorphism restricting to a diffeomorphism $\partial M \cong \partial N$.  Then $M \# S^2 \wt{\times} S^2$ and $N \# S^2 \wt{\times} S^2$ are stably diffeomorphic.
\end{corollary}

\begin{proof}
  Taking the connected sum of any 4-manifold with $S^2 \wt{\times} S^2 \cong \mathbb{CP}^2 \# \overline{\mathbb{CP}^2}$ gives rise to a 4-manifold whose universal cover is not spin. The corollary therefore follows from Theorem~\ref{thm:stable-diff-nonorientable}.
\end{proof}

The hypothesis in Theorem~\ref{thm:stable-diff-nonorientable} that $w_2(\wt{M}) \neq 0 \neq w_2(\wt{N})$ cannot be dropped in general. Cappell and Shaneson found an example of a smooth 4-manifold $R$ that is homotopy equivalent to the real projective space  $\mathbb{R}\textup{P}^4$ but that is not stably diffeomorphic to $\rp^4$~\cite{Cappell-Shaneson-4D-surgery, Cappell-Shaneson-new-4-manifolds}. When these papers were published, it was not possible to prove that the fake $\mathbb{R}\textup{P}^4$ manifold $R$ is homeomorphic to $\mathbb{R}\textup{P}^4$, but this was later established~\cite[p.~221]{ruberman-invariant-knots} as a consequence of the work of Freedman and Quinn~\cite{FQ90}, and the fact that the Whitehead group of $\Z/2$ is trivial.

Later, Kreck~\cite{Kreck-nonorientable} showed a much more general statement in this direction.   Let $K3:=\{[z_0:z_1:z_2:z_3]\in \cp^3 \mid z_0^4+z_1^4+z_2^4+z_3^4=0\}$  denote the Kummer surface.
As discussed in \cite[Chapter~1.3]{GS99}, this is a closed, smooth, spin $4$-manifold with signature 16, $b_2(K3)= 22$ and intersection form $3\cdot H\oplus 2\cdot E_8$. Here is Kreck's result from~\cite{Kreck-nonorientable}. These were the first known examples of exotic pairs of $4$-manifolds.

\begin{theorem}\label{thm:kreck-counterexamples}
  Let $\pi$ be a finitely presented group with a surjective homomorphism $w \colon \pi \to \Z/2$. Then there exists a closed, smooth, connected $4$-manifold $W$ with fundamental group~$\pi$ and orientation character~$w$, with the property that $W \# K3$ and $W \#^{11} (S^2 \times S^2)$ are homeomorphic $4$-manifolds that are not stably diffeomorphic.
\end{theorem}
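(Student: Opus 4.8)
The plan is to construct $W$ from the data $(\pi,w)$ in the standard way, to recognize $W\#K3$ and $W\#^{11}(S^2\times S^2)$ as homeomorphic by checking that all the invariants of the surgery classification of $4$-manifolds agree, and then to rule out a stable diffeomorphism between them with a gauge-theoretic invariant that is stable under connected sum with $S^2\times S^2$. For the construction, start from a finite presentation $\pi=\langle x_1,\dots,x_m\mid r_1,\dots,r_n\rangle$, form the connected sum of $m$ copies of the $3$-sphere bundle over $S^1$, one per generator, using the trivial bundle $S^1\times S^3$ when $w(x_i)=0$ and the nonorientable $S^3$-bundle otherwise, and then do surgery on embedded circles representing the relators; by general position in dimension four this realises $\pi_1\cong\pi$ with orientation character $w$ without further change. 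One connected sum with $\cp^2$ arranges $w_2$ of the universal cover to be nonzero, as in the conclusion. A few further choices inside the construction are needed to make the gauge-theoretic invariant below nonzero on $W\#K3$; these I would suppress. Call the result $W$.

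For the homeomorphism: all invariants entering the classification agree automatically. Since $K3$ and $S^2\times S^2$ are spin, the $w_2$-type equals that of $W$; the Kirby--Siebenmann invariant is additive and vanishes on $K3$ and on $S^2\times S^2$; the Euler characteristics coincide (each side adds $22$); and the $w$-twisted equivariant intersection forms over $\Z[\pi]$ agree because, choosing $g\in\pi$ with $w(g)=-1$, the element $g$ satisfies $g\overline g=-1$ in $\Z[\pi]$ with the $w$-twisted involution, and such a norm-$(-1)$ unit absorbs $2E_8$ into hyperbolic planes. This is the algebraic shadow of $E_8\oplus(-E_8)\cong H^{\oplus 8}$ (from the classification of indefinite even unimodular lattices, \cite[Theorem~II.5.3]{MH}) and, geometrically, of dragging the $K3$-summand around a loop $\gamma$ with $w(\gamma)=-1$. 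Feeding these matching data into the surgery-theoretic homeomorphism classification of nonorientable $4$-manifolds --- Freedman--Quinn theory \cite{FQ90} together with Kreck's modified surgery \cite{surgeryandduality}, in the style of \cite{HKT09,HKT-nonorientable} --- would give $W\#K3\cong W\#^{11}(S^2\times S^2)$; the $S^2\times S^2$-summands inside $K3$ and the eleven explicit ones on the other side should supply enough room to run the argument without assuming $\pi$ good.

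For the non-existence of a stable diffeomorphism: suppose $W\#K3\#^{k}(S^2\times S^2)\cong W\#^{11+k}(S^2\times S^2)$ for some $k$. This cannot be detected on a finite cover, since the orientation double cover replaces $K3$ by $K3\#\overline{K3}$, which has $b_2^+(\overline{K3})>0$, so ordinary Seiberg--Witten invariants of the covers vanish and, by Wall's theorem \cite{Wall-stable-diff}, the covers genuinely are stably diffeomorphic. One must therefore work on the nonorientable manifold itself, with a $\mathrm{Pin}^-$-equivariant Bauer--Furuta stable-cohomotopy invariant, a refinement of Seiberg--Witten theory in the spirit of Furuta's $10/8$ theorem \cite{furuta01} that is not killed by a single $S^2\times S^2$. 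By the connected-sum formula the invariant of $W\#K3$ carries the nontrivial factor contributed by $K3$, a class reflecting $b_2^+(K3)=3$ that does not become trivial under further multiplication by the equivariant Euler class, whereas the invariant of $W\#^{11}(S^2\times S^2)$ only involves the much simpler factors of the $S^2\times S^2$-summands; these remain distinct after any number of further connected sums with $S^2\times S^2$, which is the contradiction.

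The essential difficulty is this last step: it requires a smooth invariant of the \emph{nonorientable} manifold $W\#K3$ that both survives stabilisation by $S^2\times S^2$ and still remembers the $K3$-summand, precisely because any invariant pulled back from a finite cover is blind to the phenomenon; this is where the input genuinely beyond the material developed in this survey lies. The topological steps are routine by comparison, although verifying that the $w$-twisted equivariant intersection forms can be matched, and that the classification applies for an arbitrary finitely presented $\pi$, does require some care with twisted $L$-theory.
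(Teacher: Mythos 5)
Your plan has a fatal internal contradiction at the very first step. You arrange, by connected sum with $\cp^2$, that $w_2$ of the universal cover of $W$ is nonzero. But then both $W\# K3$ and $W\#^{11} S^2\times S^2$ are homeomorphic nonorientable smooth $4$-manifolds whose universal covers are not spin, and Theorem~\ref{thm:stable-diff-nonorientable} (proved in this paper) says precisely that such manifolds \emph{are} stably diffeomorphic. So for your $W$ the conclusion you are trying to prove is false. Kreck's examples necessarily have $w_2(\wt{W})=0$; the entire point of the theorem, as the surrounding text explains, is to show that the spin-universal-cover hypothesis in Theorem~\ref{thm:stable-diff-nonorientable} cannot be dropped. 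Relatedly, your proposed Pin$^-$ Bauer--Furuta obstruction cannot exist in the generality you need, since it would contradict Theorem~\ref{thm:stable-diff-nonorientable}; the actual proof of non-stable-diffeomorphism in \cite{Kreck-nonorientable} is not gauge-theoretic at all, but is a bordism computation over the normal $1$-type $\op{B\pi}\times\BSpin$ with $w_1$-twisted coefficients, using Theorem~\ref{thm:kreck}: one shows $[W\# K3]\neq[W\#^{11}S^2\times S^2]$ in $\Omega_4(B,\xi)$, which is a stable diffeomorphism invariant. (The paper itself does not reprove this half; it cites Kreck.)

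Your route to the homeomorphism is also not viable as stated: you feed matching invariants into a ``surgery-theoretic homeomorphism classification'' for an arbitrary finitely presented $\pi$, but no such classification exists --- Freedman--Quinn theory requires $\pi$ to be good, and Kreck's modified surgery only yields \emph{stable} homeomorphism, which is not enough here. The paper's argument avoids fundamental-group input entirely and localises to the simply connected summands: by Theorem~\ref{thm:classn-simply-connected-4-mfld} one has $K3\cong E_8\# E_8\#^3 S^2\times S^2$; since $W$ is nonorientable, $W\# E_8\cong W\#\overline{E_8}$ (slide one summand around an orientation-reversing loop, which is the geometric content of your ``$g\bar g=-1$'' remark); and $E_8\#\overline{E_8}\cong\#^8 S^2\times S^2$, again by the simply connected classification, giving $W\# K3\cong W\#^{11}S^2\times S^2$ directly. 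You should replace your classification argument with this chain and discard the $\cp^2$ summand and the gauge-theoretic step.
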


One part of this is easy to see: if $W$ is nonorientable then there are homeomorphisms
\begin{align*}
  W \# K3\,\, &\cong \,\,W \# E_8 \# E_8 \#^3 (S^2 \times S^2) \,\,\cong\,\, W \# E_8 \# \overline{E}_8 \#^3 (S^2 \times S^2) \\  &\cong \,\,W \#^8 (S^2 \times S^2) \#^3 (S^2 \times S^2) \,\,\cong\,\, W \#^{11} (S^2 \times S^2). \end{align*}
Here we used Theorem~\ref{thm:classn-simply-connected-4-mfld} that simply connected closed 4-manifolds with Kirby-Sieben\-mann invariant vanishing are determined by their intersection forms, and we used that the connected sum $M \# N$ of an oriented manifold $M$ with a nonorientable manifold $N$ is homeomorphic to $\overline{M} \# N$.

In the following three sections we will prove Theorems~\ref{thm:stably-diffeo-homeo-4-mflds} and~\ref{thm:stable-diff-nonorientable}. To keep the notation manageable we will only provide a proof  for closed manifolds, and descirbe the case of nonempty boundary in Section~\ref{section:homeo-4-mflds-with-bdy-stable-homeo-vs-stable-diffeo}.

%=======================================================
\section{Kreck's modified surgery}\label{section:modified-surgery}
Below we will state a theorem due to Kreck that relates stable diffeomorphisms of 4-manifolds with bordism theory. This came as a corollary of Kreck's  modified surgery theory~\cite{surgeryandduality}. First we need some definitions from~\cite{surgeryandduality}.

Recall that a topological space $A$ is \emph{$m$-connected} if $\pi_k(A)=0$ for $1\leq k\leq m$ and is \emph{$m$-coconnected} if $\pi_k(A)=0$ for $k\geq m$. A map of spaces $f\colon A\to B$ is \emph{$m$-connected} if the homotopy cofibre (i.e.~the mapping cone) is $m$-connected; equivalently $f_*\colon\pi_k(A)\to \pi_k(B)$ is an isomorphism for $k<m$ and is surjective for $k=m$. A map of spaces $f\colon A\to B$ is \emph{$m$-coconnected} if the homotopy fibre is $m$-coconnected; equivalently $f_*\colon\pi_k(A)\to \pi_k(B)$ is an isomorphism for $k>m$ and is injective for~$k=m$.

\begin{definition}
  A \emph{normal 1-type} of a closed, connected, smooth 4-manifold $M$ is a 2-coconnected fibration $\xi \colon B \to \BO$ for which there is a 2-connected lift $\wt{\nu}_M \colon M \to B$ of the stable normal vector bundle $\nu_M \colon M \to \BO$ such that $\xi \circ \wt{\nu}_M = \nu_M \colon M \to \BO$.  We call such a choice of lift  $\wt{\nu}_M \colon M \to B$ a \emph{normal 1-smoothing}.
\end{definition}

\begin{remark}\leavevmode
The data of a normal 1-type is $\xi \colon B \to \BO$. The existence of $\wt{\nu}_M$ is a condition on that data.
\end{remark}

\begin{definition}
  A \emph{normal 1-type} of a closed, connected 4-manifold $M$ is a 2-coconnected fibration $\xi^{\operatorname{TOP}} \colon B^{\operatorname{TOP}} \to \BTOP$ for which there is a 2-connected lift $\wt{\nu}_M \colon M \to B^{\operatorname{TOP}}$ of the stable topological normal bundle $\nu_M \colon M \to \BTOP$ (Definition \ref{def:topnormal}) such that $\xi^{\operatorname{TOP}} \circ \wt{\nu}_M = \nu_M \colon M \to \BTOP$.  We call such a choice of lift  $\wt{\nu}_M \colon M \to B^{\operatorname{TOP}}$ a \emph{normal $\TOP$ 1-smoothing}.
\end{definition}

Normal 1-types $\xi \colon B \to \BO$ of a closed, connected smooth 4-manifold are fibre homotopy equivalent over $\BO$, and using this we abuse notation and refer to \emph{the} normal 1-type of a smooth 4-manifold, and similarly for the topological version.  Here are some of the key examples in the oriented case.
We will give the details of the nonorientable case in Section~\ref{section:nonorientable}.

Write $\pi = \pi_1(M)$ and let $w_2 \in H^2(M;\Z/2)$ be the second Stiefel-Whitney class of $M$.  There are three main cases for the normal 1-types of oriented, closed smooth 4-manifolds. For more details, see~\cite[Sections~2~and~3]{KLPT-17}.

\begin{lemma}\label{lemma:smooth-normal-1-types}
Let $M$ be a closed, oriented, connected, smooth 4-manifold with universal covering $\wti{M}$.  We write $\pi:=\pi_1(M)$.
  \begin{enumerate}[leftmargin=1cm,font=\normalfont]
  \item\label{item:tot-non-spin-smooth} Suppose that we have $w_2(\wt{M}) \neq 0$. Then $\xi \colon B = \op{B\pi} \times \BSO \to \BO$ is the normal 1-type of $M$, with the map $\xi$ given by projection to $\BSO$ followed by the canonical map $\BSO \to \BO$.
  \item\label{item:spin-case-smooth}  Suppose that $w_2(M)=0$, i.e.\ $M$ is spin. Then $\xi \colon B = \op{B\pi} \times \BSpin \to \BO$ is the normal 1-type of $M$, with the map $\xi$ given by projection to $\BSpin$ followed by the canonical map $\BSpin \to \BO$.
 \item \label{item:almost-spin-case-smooth} Suppose that we have $w_2(M) \neq 0$ but $w_2(\wt{M}) = 0$. Then there is a model for $\op{B\pi}$ and a fibration $w_2  \colon \op{B\pi} \to K(\Z/2,2)$ that pulls back along $M \to \op{B\pi}$ to $w_2(M)$. The fibration $\xi \colon B \to \BO$ is obtained from pulling back $w_2$ along the universal class $\ol{w}_2 \colon \BSO \to K(\Z/2,2)$, to obtain the space $B$ and a fibration $B \to \BSO$, and then composing with $\BSO \to \BO$.  Since we have a fibration sequence $\BSpin \to \BSO \xrightarrow{\ol{w}_2} K(\Z/2,2)$, the pullback gives rise to a fibration sequence $\BSpin \to B \to \op{B\pi}$. Then $\xi \colon B \to \BO$ is the normal 1-type of~$M$.
%, with $B$ as in the spin case, but the map $\xi$ twisted using a complex line bundle $E \to \op{B\pi}$ defined in terms of $w_2(M)$.
\end{enumerate}
\end{lemma}

%\footnote{Daniel pointed out that in the almost spin case $B$ is not always a product. For that one needs a certain complex line bundle to exist, and it doesn't always exist. So I changed it to the more general formulation. It doesn't affect the arguments, because the $E^2$ pages of the James SS are still the same. Analogous change in the TOP lemma below. Patrick, check you are happy and then remove this footnote (or complain). }

%Similarly there are three main cases for the normal 1-types of closed, oriented, connected 4-manifolds. Let $\STOP(n)$ be the group of orientation preserving homeomorphisms of $\R^n$ fixing the origin and let $\STOP$ be the corresponding colimit of $\STOP(n)$. Let $\TOPSpin$ be the universal (2-fold) cover of $\STOP$.
%\fotnote{PO: commented out a bunch of definitions because they are now defined earlier.}

Recall $\pi_1(\STOP) \cong \pi_1(\TOP) \cong \pi_1(\O)\cong \Z/2$; see Chapter~\ref{chapter:SW-classes}.
%To see that $\pi_1(\TOP) \cong \pi_1(\O)$ we use that there is a $6$-connected map $\TOP/\O \to K(\Z/2,3)$~\cite[Essay~V,~Section~5, p.~246]{KS77}.

\begin{lemma}\label{lemma:top-normal-1-types}
Let $M$ be a closed, oriented, connected 4-manifold with universal covering $\wti{M}$. We write $\pi:=\pi_1(M)$.
\begin{enumerate}[leftmargin=1cm,font=\normalfont]
  \item\label{item:tot-non-spin-homeo} Suppose that we have $w_2(\wt{M}) \neq 0$. Then $\xi^{\operatorname{TOP}} \colon B^{\operatorname{TOP}} = \op{B\pi} \times \BSTOP \to \BTOP$ is the normal 1-type of $M$,
  with the map given by projection to $\BSTOP$ followed by the canonical map $\BSTOP \to \BTOP$.
  \item\label{item:spin-case-homeo}  Suppose that $w_2(M)=0$, i.e.\ $M$ is spin. Then $\xi^{\operatorname{TOP}} \colon B^{\operatorname{TOP}} = \op{B\pi} \times \BTOPSpin \to \BTOP$ is the normal 1-type of $M$,
  with the map given by projection to $\BTOPSpin$ followed by the canonical map $\BTOPSpin \to \BTOP$.
 \item\label{item:almost-spin-case-homeo} Suppose that we have $w_2(M) \neq 0$ but $w_2(\wt{M}) = 0$.
Then there is a model for $\op{B\pi}$ and a fibration $w_2  \colon \op{B\pi} \to K(\Z/2,2)$ that pulls back along $M \to \op{B\pi}$ to $w_2(M)$.
The fibration $\xi^{\TOP} \colon B^{\TOP} \to \BTOP$ is obtained from pulling back $w_2$ along the universal class $\ol{w}_2 \colon \BSTOP \to K(\Z/2,2)$, to obtain the space $B^{\TOP}$ and a fibration $B^{\TOP} \to \BSTOP$, and then composing with $\BSTOP \to \BTOP$.  Since we have a fibration sequence $\BTOPSpin \to \BSTOP \xrightarrow{\ol{w}_2} K(\Z/2,2)$, the pullback gives rise to a fibration sequence $\BTOPSpin \to B \to \op{B\pi}$. Then $\xi^{\TOP} \colon B^{\TOP} \to \BTOP$ is the normal 1-type of~$M$.
%
%
% Then $\xi^{\operatorname{TOP}} \colon B^{\operatorname{TOP}} = \op{B\pi} \times \BTOPSpin \to \BTOP$ is the normal 1-type of $M$, with $B$ as in the spin case, but the map $\xi$ twisted using a complex line bundle  $E \to \op{B\pi}$ defined in terms of $w_2(M)$.
\end{enumerate}
\end{lemma}

Here is the relevant theorem of Kreck~\cite[Theorem~C]{surgeryandduality}, which relates bordism over the normal 1-type to stable diffeomorphism.  We write $\Omega_4(B,\xi)$ for the group under disjoint union of closed 4-manifolds $M$ together with a lift  $\ol{\nu} \colon M \to B$ along $\xi$ of the stable normal vector bundle, considered up to bordism over $(B,\xi)$.

\begin{theorem}\label{thm:kreck}
  Two closed, connected, smooth 4-manifolds $M$ and $N$ with $\chi(M) = \chi(N)$ and normal 1-types both fibre homotopy equivalent to a fixed fibration $\xi \colon B \to \BO$ are stably diffeomorphic if and only if \[[(M,\wt{\nu}_M)] = [(N,\wt{\nu}_N)] \in \Omega_4(B,\xi)\]
   for some choices of normal 1-smoothings $\wt{\nu}_M$ and $\wt{\nu}_N$.
\end{theorem}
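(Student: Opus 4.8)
The strategy is to treat the two implications separately: the ``only if'' direction is elementary bookkeeping, while the ``if'' direction is the substance of Kreck's modified surgery~\cite{surgeryandduality}.

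For the forward direction, suppose $M\#^k S^2\times S^2$ and $N\#^k S^2\times S^2$ are diffeomorphic for some $k$. Since $S^2\times S^2$ is simply connected and spin, connected sum with it alters neither $\pi_1$, nor $w_2$ of the manifold, nor $w_2$ of its universal cover, so $M$, $N$ and their stabilisations all have fibre homotopy equivalent normal $1$-types; and since $\chi(A\# S^2\times S^2)=\chi(A)+2$, a stable diffeomorphism forces $\chi(M)=\chi(N)$. Moreover $S^2\times S^2=\partial(S^2\times D^3)$, and any normal $1$-smoothing of $S^2\times S^2$ extends over $S^2\times D^3$ since the latter is simply connected with stably trivial normal bundle; hence a disjoint union of copies of $S^2\times S^2$ is null-bordant over $(B,\xi)$. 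As connected sum is bordant to disjoint union, $[(M\#^k S^2\times S^2,\wt{\nu})]=[(M,\wt{\nu}_M)]$ and likewise for $N$ in $\Omega_4(B,\xi)$; transporting a normal $1$-smoothing across the given diffeomorphism of the stabilisations then yields $[(M,\wt{\nu}_M)]=[(N,\wt{\nu}_N)]$. The Euler characteristic hypothesis is genuinely needed here: $S^4$ and $S^2\times S^2$ are bordant over the spin normal $1$-type $\BSpin\to\BO$ but are not stably diffeomorphic, precisely because $\chi$ is a stable diffeomorphism invariant.

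For the reverse direction, an equality $[(M,\wt{\nu}_M)]=[(N,\wt{\nu}_N)]\in\Omega_4(B,\xi)$ provides a compact $5$-dimensional bordism $W$ from $M$ to $N$ together with a lift $\wt{\nu}_W\colon W\to B$ of its stable normal bundle that restricts to the given normal $1$-smoothings on the two ends. The first step is surgery below the middle dimension, performed in the interior of $W$ on embedded spheres of dimension at most $2$: exploiting that $\xi\colon B\to\BO$ is $2$-coconnected and that the $B$-structure supplies the necessary framings, one makes $\wt{\nu}_W$ a $2$-equivalence and arranges $M\hookrightarrow W$ to be $2$-connected, so that, after handle trading and using $\chi(M)=\chi(N)$ to balance handle counts, $W$ admits a handle decomposition relative to $M$ with handles only in the middle indices $2$ and $3$. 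The second step, which is the crux, is the cancellation of these $2$- and $3$-handles: their belt and attaching $2$-spheres sit in the $4$-dimensional middle level, where the smooth Whitney trick is unavailable, so direct cancellation is impossible. Kreck's resolution extracts an algebraic obstruction in the monoid $\ell_5(\Z[\pi_1 M],w_1)$, shows that this obstruction vanishes after stabilisation, and then uses the dual spheres created by the additional $S^2\times S^2$ summands to realise the resulting algebraic cancellation geometrically. The outcome is that for some $k$ the stabilised bordism is a product $(M\#^k S^2\times S^2)\times I$, whose two boundary components furnish the desired stable diffeomorphism.

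The main obstacle is the stable middle-dimensional handle cancellation in the second step: this is an intrinsically $4$-dimensional issue, and it is exactly here that one must both control Kreck's $\ell$-monoid obstruction and invoke the dual-sphere/stabilisation mechanism. Everything else is a standard surgery-below-the-middle-dimension argument together with invariant bookkeeping. A reader content to cite Kreck's theorem directly may take it from~\cite{surgeryandduality}; the specialisation to the present setting is written out in Teichner's thesis~\cite[Theorem~5.1.1]{teichnerthesis}.
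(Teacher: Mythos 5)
Your ``only if'' direction matches the paper's: the paper likewise reduces this to checking that $M$ and $M \# S^2 \times S^2$ are bordant over the normal $1$-type, and your null-bordism $S^2\times D^3$ plus the observation that connected sum is bordant to disjoint union is exactly the right bookkeeping. Your aside on why the Euler characteristic hypothesis is indispensable ($S^4$ versus $S^2\times S^2$) is correct and worth having.

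In the ``if'' direction you take a genuinely different route from the paper. The paper's argument is: after surgery below the middle dimension on $W$, represent $\ker(\pi_2(W)\to\pi_2(B))$ by framed embedded $2$-spheres, excise their thickenings $S^2\times D^3$ \emph{tubed to the boundary} (distributing them between the two ends so as to preserve $\chi(M)=\chi(N)$); this simultaneously stabilises $M$ and $N$ by copies of $S^2\times S^2$ and converts $W$ into an $s$-cobordism $W'$, at which point Quinn's stable $s$-cobordism theorem~\cite{Quinn-stable-s-cob} finishes. You instead pass to a handle decomposition with $2$- and $3$-handles and invoke Kreck's obstruction monoid $\ell_5(\Z[\pi_1 M],w_1)$. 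That machinery belongs to the \emph{unstable} part of \cite{surgeryandduality} (deciding when $W$ itself is bordant rel boundary to an $s$-cobordism); the stable statement is proved there without it, essentially by the excision/middle-level argument above. Your route can be made to work, but two points need repair. First, ``shows that this obstruction vanishes after stabilisation'' is not the right assertion: $\ell_5$ is a monoid with no zero, and the correct statement is that the obstruction becomes \emph{elementary} after adding hyperbolic summands, which corresponds geometrically to stabilising the ends. Second, even once the obstruction is elementary, what one obtains is an $s$-cobordism between the stabilised ends, and in dimension $5$ the $s$-cobordism theorem is not available; the ``dual-sphere'' cancellation you gesture at in the $4$-dimensional middle level \emph{is} the content of Quinn's stable $s$-cobordism theorem, which must be cited (and costs further $S^2\times S^2$ stabilisation). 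So your proof is not wrong, but it routes through heavier machinery than necessary and, as written, elides the one genuinely $4$-dimensional input that both approaches ultimately rest on.
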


\begin{proof}[Sketch of the proof]
One direction is quite easy: one has to check that $M$ and $M \# (S^2 \times S^2)$ are bordant over the normal 1-type of $M$.

For the other direction, start with a 5-dimensional bordism $W$ over $(B,\xi)$ and perform surgery below the middle dimension~\cite[Section~3]{surgeryandduality} to arrange that the map~$W \to B$ is 1-connected. We outline the procedure next.
First  perform surgery on pairs of discs, extending the map to $B$ over the new copies of $D^1 \times S^4$ to make the map $\pi_1(W) \to \pi_1(B)$ surjective. Then represent normal generators of the kernel of $\pi_1(W) \to \pi_1(B))$ by framed circles using Chapters~\ref{chapter:tubular} and~\ref{chapter:transversality}. Since $\pi_1(W)$ and $\pi_1(B)$ are finitely presented, this can be done with finitely many circles. To prove that the normal vector bundles admit framings one must use the bundle data $\xi$; for this we refer to \cite[Section~3]{surgeryandduality}.   Perform surgery on the framed circles to make the map to~$B$ 1-connected.  This completes the surgery below the middle dimension step.

Now represent the elements of $\ker(\pi_2(W) \to \pi_2(B))$ by framed embedded spheres, and remove thickenings of these spheres.  Also, for each sphere, remove a tube $D^1 \times D^4$ connecting that sphere to either $M$ or $N$. Choose whether to tube to $M$ or $N$ so as to preserve the Euler characteristic equality.  This operation of removing copies of $S^2 \times D^3$, tubed to the boundary, has the effect of adding copies of $S^2 \times S^2$ to $M$ and $N$ giving rise to $M'$ and $N'$ respectively. The operation also converts $W$ to an $s$-cobordism $W'$. That $(W';M',N')$ is an $s$-cobordism means by definition that the inclusion maps $M' \to W'$ and $N' \to W'$ are simple homotopy equivalences. The stable $s$-cobordism theorem~\cite{Quinn-stable-s-cob} states that every $5$-dimensional $s$-cobordism becomes diffeomorphic to a product after adding copies of $(S^2 \times S^2) \times I$ along a smoothly embedded interval $I \subseteq W'$ with one endpoint on each of $M$ and $N$.  This completes the sketch proof of Theorem~\ref{thm:kreck}.
\end{proof}

The proof of the topological version is similar.

\begin{theorem}\label{thm:kreck-homeo}
  Two closed, topological 4-manifolds $M$ and $N$ with $\chi(M) = \chi(N)$ and
 normal 1-types both fibre homotopy equivalent to a fixed fibration $\xi^{\operatorname{TOP}} \colon B^{\operatorname{TOP}} \to \BTOP$ are stably homeomorphic if and only if
   \[[(M,\wt{\nu}_M)] \,=\, [(N,\wt{\nu}_N)]\,\, \in\, \Omega_4^{\operatorname{TOP}}(B^{\operatorname{TOP}},\xi^{\operatorname{TOP}})\] for some choices of normal 1-smoothings $\wt{\nu}_M$ and $\wt{\nu}_N$.
\end{theorem}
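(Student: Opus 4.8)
The plan is to follow the proof of the smooth Theorem~\ref{thm:kreck} essentially verbatim, substituting each smooth input by its topological counterpart; a direct reduction to the smooth case is awkward precisely because $M$ and $N$ need not be smoothable. Kreck's modified surgery \cite{surgeryandduality} is formulated over an arbitrary stable bundle type, and nothing in the formal structure of the argument distinguishes $\BO$ from $\BTOP$. What must be checked is that the geometric steps --- handle decompositions, surgery below the middle dimension, general position, and the stable $s$-cobordism theorem --- are available for topological manifolds in dimension $5$, and they are, by the work of Quinn. So the proof is largely a matter of assembling the correct references.

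First the easy direction: I would show that $M$ is bordant to $M \# (S^2 \times S^2)$ over the topological normal $1$-type $\xi^{\TOP}\colon B^{\TOP}\to\BTOP$. Removing a tubular neighbourhood $S^2 \times D^3$ of a standardly embedded $2$-sphere --- tubed by an arc to the boundary --- from $M \times I$ produces the desired bordism; the sphere is null-homotopic, so the normal $\TOP$ $1$-smoothing extends over it, and the construction is identical to the smooth one. Hence $[(M\#_k S^2\times S^2,\wt{\nu})] = [(M,\wt{\nu}_M)]\in\Omega_4^{\TOP}(B^{\TOP},\xi^{\TOP})$ for every $k$. Since moreover $\chi(M\#_k S^2\times S^2) = \chi(M)+2k$ and homeomorphic manifolds have equal Euler characteristics, normal $1$-types, and bordism classes, the implication ``stably homeomorphic $\Rightarrow$ equal bordism class and $\chi$'' follows immediately.

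For the main direction, suppose $[(M,\wt{\nu}_M)] = [(N,\wt{\nu}_N)]$ and choose a compact topological $5$-dimensional bordism $W$ with a lift $\wt{\nu}_W\colon W\to B^{\TOP}$ extending $\wt{\nu}_M\sqcup\wt{\nu}_N$. By Quinn~\cite[Theorem~2.3.1]{Qu82}, which extends Kirby-Siebenmann~\cite{KS77} to dimension $5$, $W$ admits a handle decomposition rel $M$. Using handle trading and surgery below the middle dimension --- all of it in the interior of a $5$-manifold, hence high-dimensional and unobstructed --- I would make $\wt{\nu}_W$ $2$-connected, so that $\pi_1(W)\cong\pi_1(B^{\TOP})$ and $\pi_2(W)\twoheadrightarrow\pi_2(B^{\TOP})$; the submanifolds involved are circles and $2$-spheres coming from the handle structure, so they carry normal microbundles, which the Kister-Mazur theorem~\ref{thm:kistmaz} promotes to honest product neighbourhoods. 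The kernel $K:=\ker(\pi_2(W)\to\pi_2(B^{\TOP}))$ is a finitely generated $\Z[\pi_1]$-module, framed by $\wt{\nu}_W$. Rather than surgering its generators, I would remove tubular neighbourhoods $S^2\times D^3$ of embedded spheres representing a generating (Lagrangian) collection, each tubed to $M$ or to $N$ with the counts on the two sides kept equal. This replaces the two ends by $M' = M\#_k S^2\times S^2$ and $N' = N\#_k S^2\times S^2$ with $\chi(M')=\chi(N')$, and --- because each removal-and-tube is an elementary expansion followed by a collapse --- converts $W$ into an $s$-cobordism $W'$ from $M'$ to $N'$. Finally, the topological stable $s$-cobordism theorem (Quinn~\cite{Quinn-stable-s-cob}) says $W'$ becomes a product after connect-summing with finitely many copies of $S^2\times S^2\times I$ along an arc from $M'$ to $N'$; therefore $M$ and $N$ are stably homeomorphic.

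The reason one cannot simply invoke a $5$-dimensional topological $s$-cobordism theorem --- and the main obstacle --- is that $\pi_1$ is arbitrary, not ``good'', which forces the use of the \emph{stable} $s$-cobordism theorem throughout and requires that the cobordism produced be \emph{simple}, not merely an $h$-cobordism; tracking Whitehead torsion through the handle manipulations and the removal-and-tubing is the delicate bookkeeping. The rest is friction of references: confirming that handle decompositions, framed surgery, general position, and normal microbundles of the low-dimensional submanifolds that occur all behave in ambient dimension $5$ exactly as in the smooth theory. Since the excerpt has already provided all the dimension-$4$ and dimension-$5$ topological technology needed, no genuinely new difficulty should arise.
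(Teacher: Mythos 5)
Your proposal is correct and follows essentially the same route as the paper: the easy direction via bordism of $M$ with $M\# S^2\times S^2$ over the normal $1$-type, and the hard direction by running Kreck's smooth argument using topological handle decompositions of $5$-manifolds (the paper cites \cite[Theorem~9.1]{FQ90} where you cite Quinn) to improve the bordism to an $s$-cobordism, then invoking the stable $s$-cobordism theorem in the topological category, justified by topological transversality and the existence of normal bundles in dimension $4$.
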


From now on, to ease notation, we will sometimes abbreviate $\Omega_4^{\operatorname{TOP}}(B^{\operatorname{TOP}},\xi^{\operatorname{TOP}})$ to $\Omega_4^{\operatorname{TOP}}(B,\xi)$.

\begin{proof}
  One direction is again quite easy: we need that homeomorphic manifolds are bordant over $B$, and that $M$ and $M \# (S^2 \times S^2)$ are bordant in $\Omega_4^{\TOP}(B,\xi)$.
  For the other direction, apply the same argument as above to improve a cobordism $W$ to an $s$-cobordism. The stable
  $s$-cobordism Theorem applies to topological $s$-cobordisms as well as to smooth $s$-cobordisms. This is not written in \cite{Quinn-stable-s-cob}, but the same proof applies, with the following additions (see the Exercise on \cite[p.~107]{FQ90}). First, $5$-dimensional cobordisms admit
  a topological handle structure~\cite[Theorem~9.1]{FQ90}.  The proof of \cite{Quinn-stable-s-cob} consists of simplifying a handle decomposition, and tubing surfaces in 4-manifolds around and into parallel copies of one another to remove intersections. This is possible in the topological category by using transversality (Theorem~\ref{thm:TransvSubmanifolds}) to arrange that intersections between surfaces are isolated points, and the existence of normal vector bundles (Theorem~\ref{thm:existnormal}) to take parallel copies using sections.
  \end{proof}

\section{Stable diffeomorphism of homeomorphic orientable 4-manifolds}\label{section:orientable}

Now we will explain the proof of Theorem~\ref{thm:stably-diffeo-homeo-4-mflds}. For the convenience of the reader, we recall the statement.

\begin{theorem*}\textbf{\textup{\ref{thm:stably-diffeo-homeo-4-mflds}.}}
Every homeomorphic pair of closed, connected, orientable, smooth 4-manifolds are stably diffeomorphic.
\end{theorem*}

The proof will rest on the following proposition.

\begin{proposition}\label{prop:forgetful-injective}
 Let $(B,\xi)$ be one of the oriented smooth normal 1-types from Lemma~\ref{lemma:smooth-normal-1-types}, and let $(B^{\operatorname{TOP}}, \xi^{\operatorname{TOP}})$ be the corresponding topological normal 1-type from Lemma~\ref{lemma:top-normal-1-types} obtained by replacing $\BSO$ with $\BSTOP$ or $\BSpin$ with $\BTOPSpin$ as appropriate.
The forgetful map
 \[F \colon \Omega_4 (B,\xi) \to \Omega_4^{\operatorname{TOP}}(B^{\operatorname{TOP}},\xi^{\operatorname{TOP}}) = \Omega_4^{\operatorname{TOP}}(B,\xi)\]
 is injective.
\end{proposition}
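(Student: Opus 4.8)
The plan is to compute both bordism groups via the Atiyah-Hirzebruch spectral sequence (AHSS) and show that the forgetful map $F$ is an isomorphism, or at least injective, by comparing the two spectral sequences. In all three cases of Lemma~\ref{lemma:smooth-normal-1-types}, the base $B$ has the form $\op{B\pi} \times B'$ where $B'$ is $\BSO$, $\BSpin$ (with possibly a twist), and $\xi$ is the evident map to $\BO$; the topological version replaces $B'$ by $\BSTOP$ resp.\ $\BTOPSpin$. The first observation is that the fibre of $\STOP \to \STOP/\SO$, equivalently the comparison $\BSO \to \BSTOP$, is highly connected: since $\TOP/\O \to K(\Z/2,3)$ is $5$-connected \cite[Essay~V,~Section~5]{KS77}, the map $\BSO \to \BSTOP$ (and $\BSpin \to \BTOPSpin$) is an isomorphism on homotopy groups in degrees $\leq 3$ and a surjection in degree $4$, with the only difference appearing via $\pi_4(\TOP/\O) \cong \pi_3(\O)\backslash\!\!\backslash\cdots$; more precisely $H_j(\BSTOP;\Z/2) \cong H_j(\BSO;\Z/2)$ for $j \leq 3$ and one must track what happens in degree $4$.

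First I would set up the map of AHSS's for the fibrations $\xi$ and $\xi^{\op{TOP}}$, both converging to $\Omega_4(B,\xi)$ resp.\ $\Omega_4^{\op{TOP}}(B^{\op{TOP}},\xi^{\op{TOP}})$, with $E^2_{p,q} = H_p(B; \Omega_q^{\xi\text{-str}}(\pt))$ where the coefficient groups are the bordism groups of a point with the relevant structure ($\Omega_*^{\op{SO}}$, $\Omega_*^{\op{Spin}}$, $\Omega_*^{\op{STOP}}$, $\Omega_*^{\op{TOPSpin}}$). The key input is the explicit low-degree computation of these coefficient groups, which is recalled in Theorem~\ref{thm:ks-basics}: $\Omega_4^{\op{SO}}\cong\Z$, $\Omega_4^{\op{STOP}}\cong\Z\oplus\Z/2$, $\Omega_4^{\op{Spin}}\cong\Z$, $\Omega_4^{\op{TOPSpin}}\cong\Z$, and in degrees $0,1,2,3$ the smooth and topological versions agree ($\Omega_0 = \Z$, $\Omega_1^{\op{Spin}}=\Z/2$, $\Omega_2^{\op{Spin}}=\Z/2$, $\Omega_3 = 0$, etc.), because $3$-manifolds have unique smooth structures. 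Thus the map of $E^2$-pages is an isomorphism on the columns $q = 0,1,2,3$ and the relevant filtration quotients of $\Omega_4$ built from those columns ($E^\infty_{4,0}, E^\infty_{3,1}, E^\infty_{2,2}, E^\infty_{1,3}$) match; the only discrepancy is the bottom row $q=4$, i.e.\ $E^2_{0,4} = \Omega_4^{\xi\text{-str}}(\pt)$, where topologically there is an extra $\Z/2$ detected by $\ks$.

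The heart of the argument is then to show that this extra $\Z/2$ in the topological coefficient group does not cause $F$ to fail injectivity — equivalently, that every class in $\Omega_4(B,\xi)$ that dies in the topological group was already zero. I would argue this directly: a smooth $4$-manifold $M$ with normal $1$-smoothing that is null-bordant topologically over $(B^{\op{TOP}},\xi^{\op{TOP}})$ bounds a topological $5$-manifold $W$ over $B^{\op{TOP}}$; since $\dim W = 5$, by Theorem~\ref{thm:n-not-equal-4-CW structure} and Theorem~\ref{thm:smooth-outside-a-point}-type smoothing results (high-dimensional smoothing theory, using that $\TOP/\O$ is $5$-connected, so a $5$-manifold with smooth boundary admits a smooth structure rel boundary) one can smooth $W$ rel $\partial W = M$, upgrading $W$ to a smooth null-bordism over $B$. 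One must check the normal $1$-smoothing extends over the smoothed $W$, which follows since the smoothing obstruction lives in $H^*(W,M;\pi_*(\TOP/\O))$ and $\TOP/\O$ is $5$-connected in the relevant range so there is no obstruction, and the lift to $B$ is then adjusted by the same connectivity. Hence $[M, \wt\nu_M] = 0$ in $\Omega_4(B,\xi)$, proving injectivity of $F$. I would handle the three cases of Lemma~\ref{lemma:smooth-normal-1-types} uniformly, noting the twisted case (iii) only changes the $\op{B\pi}$-bundle and not the fibre $\BSpin$ vs $\BTOPSpin$, so the same connectivity argument applies.

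The main obstacle I anticipate is making the "smooth a $5$-dimensional topological bordism rel its $4$-dimensional boundary" step airtight: one needs that a compact topological $5$-manifold with a given smooth structure on its boundary admits a smooth structure restricting to the given one, together with compatibility of the $B$-structure. This is where Theorem~\ref{thm:MxR}-style smoothing theory, the product structure theorem, and the $5$-connectivity of $\TOP/\O$ all get used, and the bookkeeping of the normal $1$-smoothing through the smoothing is the delicate part; I expect the authors either invoke \cite{surgeryandduality} or \cite{KS77} directly here, or run the AHSS comparison with the five lemma as a cleaner alternative.
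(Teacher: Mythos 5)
Your opening spectral-sequence comparison is in fact the route the paper takes (via the James spectral sequence $E^2_{p,q}=H_p(\op{B\pi};\Omega_q^S)$ — note the base is $\op{B\pi}$, not $B$), but as written it is incomplete, and the ``direct'' argument you substitute for its heart contains a genuine error. The claim that $\TOP/\O$ is $5$-connected is false: there is a $6$-connected map $\TOP/\O\to K(\Z/2,3)$, so $\pi_3(\TOP/\O)\cong\Z/2$ and $\TOP/\O$ is only $2$-connected. Consequently, the obstruction to smoothing your topological null-bordism $W^5$ rel the given smooth structure on $\partial W=M$ is the relative Kirby--Siebenmann class in $H^4(W,M;\pi_3(\TOP/\O))=H^4(W,M;\Z/2)\cong H_1(W;\Z/2)$, which does \emph{not} vanish in general: after surgering $W$ so that $W\to B$ is $2$-connected one has $\pi_1(W)\cong\pi$, and $H_1(\pi;\Z/2)$ is typically nonzero. (If your argument worked as stated, it would show $\Omega_4^{\O}\to\Omega_4^{\TOP}$ is surjective onto classes bounding topologically, and more tellingly it would apply verbatim in settings where the conclusion is known to fail.) So the step ``smooth $W$ rel $M$'' cannot be carried out without further modification of $W$, and no such modification is supplied.

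The paper avoids this entirely by staying with the spectral sequence, and the part you gloss over is exactly the work required. Knowing that the coefficient maps $\Omega_q^S\to\Omega_q^{ST}$ are isomorphisms for $0\le q\le 3$ and injective for $q=4$ does not by itself give injectivity on $E^\infty_{0,4}$, since that term is a quotient of $E^2_{0,4}$ by images of differentials. The paper therefore proves that $H_0(\op{B\pi};\Omega_4^S)$ and $H_0(\op{B\pi};\Omega_4^{ST})$ survive to $E^\infty$, by exhibiting manifolds within a fixed normal $1$-type realising all signatures (adding copies of $\cp^2$ or $K3$) and, topologically, both values of $\ks$ (adding $E_8$ or $\ast\cp^2$), so no differential can hit these terms. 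This yields two short exact sequences $0\to\Omega_4^S\to\Omega_4(B,\xi)\to\wt\Omega_4(B,\xi)\to 0$ and its topological analogue, with the outer vertical maps injective (the right one an isomorphism by naturality of the differentials), and injectivity of $F$ follows from a diagram chase. If you restore this survival argument and drop the smoothing-of-$W$ step, your first paragraph becomes essentially the paper's proof.
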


The combination of this proposition with Theorems~\ref{thm:kreck} and~\ref{thm:kreck-homeo} implies the following corollary, which is the closed version of Theorem~\ref{thm:stably-diffeo-homeo-4-mflds}, with a slightly more precise statement concerning orientations.

\begin{corollary}
Every pair of smooth, closed, connected, oriented 4-manifolds that are homeomorphic via an orientation preserving homeomorphism are stably diffeomorphic via an orientation preserving diffeomorphism.
\end{corollary}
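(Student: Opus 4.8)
The plan is to deduce the statement from Kreck's modified-surgery dictionary (Theorem~\ref{thm:kreck}) together with the injectivity of the forgetful map on bordism groups over the normal $1$-type (Proposition~\ref{prop:forgetful-injective}). Let $f\colon M\to N$ be an orientation preserving homeomorphism of closed, connected, oriented, smooth $4$-manifolds. First I would record the homeomorphism-invariant data: $\chi(M)=\chi(N)$, $f_*$ identifies $\pi_1(M)$ with $\pi_1(N)$, and $M$ and $N$ have the same $w_2$-type. The last point holds because $w_2$ is determined by the mod $2$ cohomology ring and the Steenrod squares through the Wu formula, and because $f$ lifts to a homeomorphism $\wt f\colon\wt M\to\wt N$ of universal covers over $f$, so $w_2(\wt M)$ vanishes iff $w_2(\wt N)$ does. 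By Lemmas~\ref{lemma:smooth-normal-1-types} and~\ref{lemma:top-normal-1-types}, $M$ and $N$ therefore have the same smooth normal $1$-type $\xi\colon B\to\BO$ and the same topological normal $1$-type $\xi^{\TOP}\colon B^{\TOP}\to\BTOP$, where $B^{\TOP}$ is obtained from $B$ by replacing $\BSO$ with $\BSTOP$ (resp.\ $\BSpin$ with $\BTOPSpin$); here the canonical map $B\to B^{\TOP}$ is an isomorphism on $\pi_1$ and $\pi_2$, using $\pi_1(\O)\cong\pi_1(\TOP)$.

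Next I would fix a normal $1$-smoothing $\wt\nu_M\colon M\to B$ of the smooth oriented manifold $M$ which induces its given orientation, and push it forward along $B\to B^{\TOP}$ to a normal $\TOP$ $1$-smoothing $\wt\nu_M^{\TOP}$. Since $f$ is orientation preserving, $f^*$ carries the oriented stable normal microbundle of $N$ to that of $M$, so $\wt\nu_N^{\TOP}:=\wt\nu_M^{\TOP}\circ f^{-1}$ is a normal $\TOP$ $1$-smoothing of $N$, and the mapping cylinder of $f$ — a product topological cobordism — shows that $(M,\wt\nu_M^{\TOP})$ and $(N,\wt\nu_N^{\TOP})$ are equal in $\Omega_4^{\TOP}(B,\xi)$. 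Because $N$ is smooth and oriented, its stable normal microbundle factors through $\BSO$ compatibly with its orientation, so $\wt\nu_N^{\TOP}$ lifts to a map $\wt\nu_N\colon N\to B$ along $B\to B^{\TOP}$; since $B\to B^{\TOP}$ is a $\pi_1$- and $\pi_2$-isomorphism, $\wt\nu_N$ is again $2$-connected, hence a normal $1$-smoothing inducing the given orientation of $N$. By construction the forgetful map $F\colon\Omega_4(B,\xi)\to\Omega_4^{\TOP}(B,\xi)$ of Proposition~\ref{prop:forgetful-injective} sends $[(M,\wt\nu_M)]$ and $[(N,\wt\nu_N)]$ to the two equal topological classes.

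From here the conclusion is immediate: by Proposition~\ref{prop:forgetful-injective} the map $F$ is injective, so $[(M,\wt\nu_M)]=[(N,\wt\nu_N)]\in\Omega_4(B,\xi)$, and since also $\chi(M)=\chi(N)$, Theorem~\ref{thm:kreck} produces an integer $k$ and a diffeomorphism $M\#^kS^2\times S^2\cong N\#^kS^2\times S^2$. Finally, the $s$-cobordism built in the proof of Theorem~\ref{thm:kreck} is a cobordism over the normal $1$-type $(B,\xi)$, whose base lies over $\BSO$, so the orientations of $M$ and $N$ induced by $\wt\nu_M$ and $\wt\nu_N$ — which are the given ones — extend across it; hence the resulting stable diffeomorphism is orientation preserving. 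I do not expect any step to be deep, since the essential input, injectivity of $F$, is Proposition~\ref{prop:forgetful-injective} and is taken as given; I expect the main fiddly point to be the bookkeeping of the second paragraph — checking that passing a normal $1$-smoothing from the smooth to the topological setting and lifting it back for $N$ stays within normal $1$-smoothings and keeps track of the orientation. It is also worth stating explicitly, as with Theorems~\ref{thm:kreck} and~\ref{thm:kreck-homeo}, that no hypothesis on the fundamental group is needed, because Kreck's machinery and the stable $s$-cobordism theorem are only used after stabilising.
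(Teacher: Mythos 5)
Your argument is correct and takes essentially the same route as the paper's proof: identical normal 1-types, equality of the classes in $\Omega_4^{\TOP}(B,\xi)$, injectivity of the forgetful map $F$ (Proposition~\ref{prop:forgetful-injective}), and then Theorem~\ref{thm:kreck}. You supply more detail than the paper's rather terse proof — notably the bookkeeping that transfers a normal $1$-smoothing between the smooth and topological settings via the mapping cylinder of $f$, the check that $\chi(M)=\chi(N)$, and the orientation statement — but the underlying argument is the same.
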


\begin{proof}
  We prove the corollary assuming Proposition~\ref{prop:forgetful-injective}. Homeomorphic 4-manifolds are in particular stably homeomorphic and have the same normal 1-types. Therefore two homeomorphic smooth  4-manifolds as in the statement of the corollary are bordant over the normal 1-type, so give rise to equal elements in $\Omega_4^{\operatorname{TOP}}(B,\xi)$. By Proposition~\ref{prop:forgetful-injective}, they give rise to equal elements of $\Omega_4 (B,\xi)$. Then by Theorem~\ref{thm:kreck}, the two 4-manifolds are stably diffeomorphic, as asserted.
\end{proof}

\begin{proof}[Proof of Proposition~\ref{prop:forgetful-injective}]
  Let $S$ be $\SO$ in case \eqref{item:tot-non-spin-smooth} of the smooth list of 1-types given in
  Lemma~\ref{lemma:top-normal-1-types}, and let $S$ denote $\op{Spin}$ in cases \eqref{item:spin-case-smooth} and \eqref{item:almost-spin-case-smooth}.

  Let $ST$ be $\STOP$ in case \eqref{item:tot-non-spin-homeo} of the topological list of 1-types above, and let $ST$ denote $\TOPSpin$ in cases \eqref{item:spin-case-homeo} and \eqref{item:almost-spin-case-homeo}.

  The James spectral sequence~\cite[Theorem~3.1.1]{teichnerthesis}, \cite[Section~3]{KLPT-17} is of the form:
  \[E^2_{p,q} = H_p(\op{B\pi};\Omega^S_q) \, \Rightarrow \, \Omega_{p+q}(B,\xi).\]
We have that $\Omega^S_4 \cong \Z$, detected by the signature.  Indeed, the signature is a $\Z$-valued invariant that agrees for stably diffeomorphic 4-manifolds.
The signature of a 4-manifold with a normal 1-smoothing into $B$ gives rise to an element of $\Omega_{4}(B,\xi)$.
The $E^2$ term $H_0(\op{B\pi};\Omega_4^S) \cong \Z$ is computed using $\Omega^S_4 \cong \Z$.

\begin{claim}
 This term $H_0(\op{B\pi};\Omega_4^S)$ survives to the $E^{\infty}$ page.  That is, all differentials with this as codomain are trivial.
\end{claim}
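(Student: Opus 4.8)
The plan is to identify the incoming differentials that could affect the $(0,4)$-slot and to show they all vanish by a signature argument. In the homological first-quadrant James spectral sequence there are no differentials \emph{out} of $E^r_{0,4}$, so $E^\infty_{0,4}$ is obtained from $E^2_{0,4}=H_0(\op{B\pi};\Omega^S_4)\cong\Z$ by successively dividing out the images of the differentials $d_r\colon E^r_{r,5-r}\to E^r_{0,4}$, $r\ge 2$. For $r\ge 6$ we have $E^r_{r,5-r}=0$ since then $5-r<0$; and since $\Omega^S_3=0$ (for $S=\SO$ and $S=\Spin$) we also get $E^2_{2,3}=0$, so $d_2=0$; in the case $S=\SO$ the vanishing $\Omega^S_1=\Omega^S_2=0$ likewise forces $d_3=d_4=0$. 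So the content is really the vanishing of $d_3,d_4,d_5$ in the spin cases and of $d_5$ in general, and for this it suffices to show that the edge epimorphism $E^2_{0,4}\twoheadrightarrow E^\infty_{0,4}$ is an isomorphism.

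To this end I would use the edge homomorphism. Writing the James spectral sequence as the Atiyah--Hirzebruch spectral sequence computing the ($\xi$-twisted) $S$-bordism of $\op{B\pi}$, the composite
\[ E^2_{0,4}=\Omega^S_4\;\longrightarrow\; E^\infty_{0,4}\;\hookrightarrow\;\Omega_4(B,\xi)\]
is the natural map $j$ induced by the inclusion $\pt\to\op{B\pi}$ of a point; explicitly $j[X]=[(X,\wt\nu_X)]$, where $X$ is a closed $S$-manifold mapped constantly to $\op{B\pi}$ and equipped with the evident lift of its stable normal bundle. Since $j$ factors through the surjection $E^2_{0,4}\twoheadrightarrow E^\infty_{0,4}$ followed by the injection $E^\infty_{0,4}\hookrightarrow\Omega_4(B,\xi)$, it is enough to show that $j$ is injective, as then the surjection must be an isomorphism.

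For injectivity of $j$, recall that each of the fibrations $\xi$ in Lemma~\ref{lemma:smooth-normal-1-types} factors through $\BSO$, so every $(B,\xi)$-manifold is in particular oriented; since the signature is an oriented bordism invariant and $\Omega^{\SO}_4\cong\Z$, there is a well-defined homomorphism $\sigma\colon\Omega_4(B,\xi)\to\Z$ with $\sigma[(M,\wt\nu_M)]=\sigma(M)$. Its composite $\sigma\circ j\colon\Omega^S_4\to\Z$ is the signature homomorphism on $\Omega^S_4$, which is injective because $\Omega^S_4\cong\Z$ is detected by the signature (as recalled above, with generators $\cp^2$ in the $\SO$ case and the $K3$ surface in the $\Spin$ cases). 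Hence $j$ is injective, the surjection $E^2_{0,4}\twoheadrightarrow E^\infty_{0,4}$ is an isomorphism, and all differentials landing in the $(0,4)$-slot are trivial, which is the claim.

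I expect the only genuinely delicate point to be matching conventions: verifying, in the setup of \cite{teichnerthesis}, \cite{KLPT-17}, that the displayed composite really is the point-inclusion map $j$ — equivalently, that the edge homomorphism of the James spectral sequence admits the standard description — and keeping track of the twisting in case~\eqref{item:almost-spin-case-smooth}. Once this identification is granted, the signature computation finishes the argument with no further work.
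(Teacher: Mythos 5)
Your proof is correct, but it runs along a genuinely different track from the paper's. The paper first disposes of $d_2,d_3,d_4$ by observing that their sources $H_r(\op{B\pi};\Omega^S_{5-r})$ are torsion while the target $H_0(\op{B\pi};\Omega_4^S)\cong\Z$ is torsion-free, and then rules out a nonzero $d_5\colon H_5(\op{B\pi};\Omega_0^S)\to H_0(\op{B\pi};\Omega_4^S)$ by contradiction: such a differential would make $E^\infty_{0,4}$ finite, hence (all other graded pieces in the filtration being finite) only finitely many signatures could occur among manifolds with the given normal 1-type and a fixed value of $c_*[M]\in H_4(\op{B\pi};\Omega_0^S)$ — contradicted by adding copies of $\cp^2$ or $K3$. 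You instead identify the composite $E^2_{0,4}\twoheadrightarrow E^\infty_{0,4}\hookrightarrow \Omega_4(B,\xi)$ as the edge homomorphism $j$ induced by $\op{pt}\to\op{B\pi}$ and show $j$ is injective because $\sigma\circ j$ is injective on $\Omega_4^S\cong\Z$; this kills all incoming differentials uniformly, with no case analysis on $r$ and no contradiction argument. Both proofs ultimately rest on the same two facts — the signature descends to a homomorphism on $\Omega_4(B,\xi)$ because $\xi$ factors through $\BSO$, and it detects $\Omega_4^S$ — but yours is the more direct and economical packaging. What the paper's route buys in exchange is that it never needs the standard description of the edge homomorphism (the one genuinely delicate convention-check you rightly flag, since it requires unwinding the construction of the James spectral sequence from \cite{teichnerthesis}), and its auxiliary observations — the identification of the other infinite term with $c_*[M]$, and the realization of signatures within a fixed normal 1-type by connected sum — are both reused immediately afterwards in the rest of the proof of Proposition~\ref{prop:forgetful-injective}, in particular in the topological case where the Kirby--Siebenmann $\Z/2$ must also be shown to survive.
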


Let us prove the claim. Since $\Omega^S_q$ is torsion for $q=1,2,3$, no terms from those $q$-lines can map to $H_0(\op{B\pi};\Omega_4^S)$ under a differential.

Aside from $H_0(\op{B\pi};\Omega_4^S)$, there is one other potentially infinite term on the 4-line of the  $E^{\infty}$ page, namely the subgroup of $H_4(\op{B\pi};\Omega_0^S)$ arising as the kernel of relevant differentials. Since there are no differentials with the $(4,0)$ term as codomain, this subgroup is a quotient of $\Omega_4(B,\xi)$.  The image of $[M,c] \in \Omega_4(B,\xi)$ is the image $c_*([M])$ of the fundamental class under the classifying map $c_* \colon H_4(M;\Z) \to H_4(\op{B\pi};\Z) \cong H_4(\op{B\pi};\Omega^S_0)$.

There could be a nontrivial differential $d_{5,0}^5 \colon H_5(\op{B\pi};\Omega_0^S) \to H_0(\op{B\pi};\Omega_4^S)$.  However if there were a nonzero differential, then only finitely many signatures would occur for 4-manifolds with normal 1-type $B$ and fixed invariant in $H_4(\op{B\pi};\Omega^S_0)$.  But we can add copies of the $K3$-surface, mapping to a point in $B$, to a given fixed element of $\Omega_{4}(B,\xi)$, keeping the normal 1-type and $c_*([M])$ the same, but changing the signature by $+16$ for each copy of $K3$.  This contradiction implies that $d_{5,0}^5$ is the zero map.
This completes the proof of the claim that the term $H_0(\op{B\pi};\Omega_4^S)$ survives to the $E^{\infty}$ page.

%We note that in case \eqref{item:almost-spin-case-smooth}, the entry in $\Omega^{\op{Spin}}_4$ is \emph{not} necessarily the signature of the manifold; this entry could be a multiple of the signature.

Since $H_0(\op{B\pi};\Omega_4^S)$ survives to the $E^{\infty}$ term, we have a short exact sequence:
\[0 \to \Omega_4^S \to \Omega_4(B,\xi) \to \wt{\Omega}_4(B,\xi) \to 0,\]
where $\wt{\Omega}_4(B,\xi)$ denotes the quotient. That is, there is a filtration with iterated graded quotients given by the $E^{\infty}$ page:
\[0 \subseteq E^{\infty}_{4,0} = \Omega^S_4 \subseteq \cdots \subseteq \Omega_4(B,\xi),\]
and it is the quotient by the $E^{\infty}_{4,0}$ subgroup that we denote $\wt{\Omega}_4(B,\xi)$.

Similarly, for the topological case, we have
\[0 \to \Omega_4^{ST} \to \Omega_4^{\TOP}(B,\xi) \to \wt{\Omega}_4^{\TOP}(B,\xi) \to 0. \]
The only difference in the proof from the smooth case is that
%we use the $E_8$ manifold in place of $K3$ in the argument that no differential has nontrivial image in $H_0(\op{B\pi};\Omega_4^{ST})$, and
we also have to argue that the Kirby-Siebenmann invariant $\Z/2 \subseteq \Omega_4^{ST}$ survives to the $E^{\infty}$ page.  But the Kirby-Siebenmann invariant is additive, and realised on simply connected manifolds, by the $E_8$ manifold.  Thus there exist bordism classes  (i.e.\ stable homeomorphism classes) realising both trivial and nontrivial Kirby-Siebenmann invariants  within a normal 1-type, and so this $\Z/2$ cannot be killed by a differential.

Since the structure forgetting map $\Omega_q^S \to \Omega_q^{ST}$ is an isomorphism for $0 \leq q \leq 3$, we have an isomorphism
$\wt{\Omega}_4(B,\xi) \cong \wt{\Omega}_4^{\TOP}(B,\xi)$.   This uses that the differentials agree, by naturality of the James spectral sequence with respect to homology theories.  Indeed, note that the differentials depend only on the classifying space $\op{B\pi}$, and on the complex line bundle $E \to \op{B\pi}$ in case \eqref{item:almost-spin-case-smooth}.  Both are category independent.

Then there is a map of short exact sequences:
\[\xymatrix@R0.75cm{0 \ar[r] & \Omega_4^{S} \ar[r] \ar[d] & \Omega_4(B,\xi) \ar[r] \ar[d]& \wt{\Omega}_4(B,\xi) \ar[r] \ar[d]^-{\cong} & 0\phantom{.} \\
0 \ar[r] & \Omega_4^{ST} \ar[r] & \Omega_4^{\TOP}(B,\xi) \ar[r] & \wt{\Omega}_4^{\TOP}(B,\xi) \ar[r] & 0.}\]
The left vertical map is injective, either inclusion into the first summand $\Z \to \Z \oplus \Z/2$ for non-spin or $16\Z \to 8\Z$ in the spin case, when  $B = \op{B\pi} \times \BSpin$ and
$B^{\operatorname{TOP}} = \op{B\pi} \times \BTOPSpin$.
Since the left and right vertical maps are injective, it follows from a diagram chase that the central vertical map is also injective, as required.
\end{proof}

\section{Nonorientable 4-manifolds and stable diffeomorphism}\label{section:nonorientable}

For the convenience of the reader, we recall the statement of Theorem~\ref{thm:stable-diff-nonorientable}.

\begin{theorem*}\textbf{\textup{\ref{thm:stable-diff-nonorientable}.}}
  Let $M$ and $N$ be closed, connected, nonorientable, smooth 4-manifolds. Suppose that $M$ and $N$ are homeomorphic.  If $w_2(\wt{M}) \neq 0 \neq w_2(\wt{N})$, that is the universal covers of $M$ and $N$ are not spin, then $M$ and $N$ are stably diffeomorphic.
\end{theorem*}

Here is the normal 1-type for nonorientable manifolds with a certain $w_2$-type~\cite[Chapter~2]{teichnerthesis}, first in the smooth and then in the topological case.

\begin{lemma}\label{lemma:normal-1-types-nonorientable}
  Let $M$ be a nonorientable closed, connected smooth 4-manifold with $w_2(\wt{M}) \neq 0$. We set $\pi:=\pi_1(M)$. Then the normal 1-type of $M$ is $\xi \colon B= \op{B\pi} \times \BSO \to \BO$ with the map $\xi = w_1 \oplus \up{B}i$ given by the Whitney sum of a bundle on $\op{B\pi}$ determined by $w_1 \colon \pi \to \Z/2$
 and the canonical map $\up{B}i \colon \BSO \to \BO$ induced by the inclusion $i \colon \SO \to \O$.
\end{lemma}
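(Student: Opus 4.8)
The plan is to verify that the claimed triple $(B,\xi) = (\op{B\pi} \times \BSO, w_1 \oplus Bi)$ really is a normal 1-type for a nonorientable $M$ with $w_2(\wt M) \neq 0$. By the definition of normal 1-type, I need to produce a 2-connected lift $\wt\nu_M \colon M \to B$ of the stable normal bundle $\nu_M \colon M \to \BO$ such that $\xi \circ \wt\nu_M \simeq \nu_M$, and I need to check that $\xi$ is 2-coconnected (i.e.\ an isomorphism on $\pi_i$ for $i \geq 3$ and injective on $\pi_2$). The latter is a homotopy-theoretic computation using the fibration $\SO \to \O \to \O/\SO \simeq K(\Z/2,0)$ combined with the classifying space version, so that the homotopy fibre of $Bi\colon \BSO \to \BO$ is $\O/\SO \simeq \Z/2$, which is 0-coconnected; adding the $w_1$ factor $\op{B\pi} = K(\pi,1)$ and using that $\BSO$ is 1-connected shows $B \to \BO$ is 2-coconnected exactly as in the oriented cases of Lemma~\ref{lemma:smooth-normal-1-types}.

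First I would construct the lift. The map $\nu_M\colon M \to \BO$ together with the classifying map $c\colon M \to \op{B\pi}$ for the universal cover gives a candidate $M \to \op{B\pi} \times \BO$; the content is to factor the second coordinate through $\BSO$ compatibly with $w_1$. Concretely, the first Stiefel--Whitney class $w_1(M)\in H^1(M;\Z/2)$ is the pullback $c^* w_1$ of the class corresponding to $w_1\colon \pi \to \Z/2$, since $\pi_1(M) \to \Z/2$ is the orientation character. Therefore $\nu_M \oplus (\text{line bundle pulled back from }\op{B\pi})$ has vanishing $w_1$, hence lifts to $\BSO$; this lift together with $c$ assembles to $\wt\nu_M\colon M \to \op{B\pi} \times \BSO = B$, and by construction $\xi \circ \wt\nu_M \simeq \nu_M$. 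Then I would check 2-connectivity of $\wt\nu_M$: on $\pi_1$ it is an isomorphism because $c$ induces an isomorphism $\pi_1(M) \xrightarrow{\cong} \pi = \pi_1(\op{B\pi})$ and $\BSO$ is simply connected; on $\pi_2$ it is surjective precisely because of the hypothesis $w_2(\wt M)\neq 0$. Here the point is that $\pi_2(B) = \pi_2(\BSO) = H_2(\BSO;\Z) = \Z/2$ detected by $w_2$, and the composite $\pi_2(M) \to \pi_2(\wt M) \to \pi_2(\BSO) = \Z/2$ is onto exactly when $w_2$ of the universal cover is nonzero (this is the usual mechanism, identical to case \eqref{item:tot-non-spin-smooth} of Lemma~\ref{lemma:smooth-normal-1-types}, and I would just cite \cite[Sections~2~and~3]{KLPT-17} or \cite[Chapter~2]{teichnerthesis} for the detailed verification).

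The main obstacle, and the place where the nonorientable case genuinely differs from Lemma~\ref{lemma:smooth-normal-1-types}, is bookkeeping the $w_1$-twist correctly: one must be careful that the relevant bundle on $\op{B\pi}$ is the \emph{real line bundle} $L_{w_1}$ classified by $w_1\colon \pi \to \Z/2 \to \BO(1)$, and that the Whitney sum $\xi = w_1 \oplus Bi$ is the map sending $(x,y)\in \op{B\pi}\times\BSO$ to the stable class $[L_{w_1}|_x] + [y]$. Checking that $\xi$ is still 2-coconnected after this twist requires noting that $L_{w_1}$ is pulled back from $\op{B\pi}$, which is aspherical, so the twist contributes nothing to $\pi_i$ for $i \geq 2$; the homotopy fibre of $\xi$ is still $\O/\SO \simeq \Z/2$. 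I would also remark that uniqueness of the normal 1-type up to fibre homotopy equivalence over $\BO$ follows formally, as in the oriented case, so that one may indeed speak of \emph{the} normal 1-type. Everything else is routine, and I would leave those details to the cited references.

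\begin{proof}
The argument is entirely parallel to the discussion of the oriented cases in Lemma~\ref{lemma:smooth-normal-1-types}; see~\cite[Chapter~2]{teichnerthesis} and \cite[Sections~2~and~3]{KLPT-17} for complete details. We sketch the verification. Let $c \colon M \to \op{B\pi}$ classify the universal cover, so that $c^*$ of the canonical class in $H^1(\op{B\pi};\Z/2)$ corresponding to $w_1 \colon \pi \to \Z/2$ is $w_1(M)$, since $w_1 \colon \pi_1(M) \to \Z/2$ is precisely the orientation character. Writing $L$ for the real line bundle on $\op{B\pi}$ determined by $w_1$, the bundle $\nu_M \oplus c^*L$ then has trivial first Stiefel--Whitney class, hence admits a lift to $\BSO$; combined with $c$ this produces a map $\wt\nu_M \colon M \to \op{B\pi}\times\BSO = B$ with $\xi \circ \wt\nu_M \simeq \nu_M$. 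The map $\xi = w_1 \oplus Bi$ is 2-coconnected: its homotopy fibre is that of $Bi \colon \BSO \to \BO$, namely $\O/\SO \simeq K(\Z/2,0)$, because the line bundle $L$ is pulled back from the aspherical space $\op{B\pi}$ and so the twist contributes nothing to $\pi_i$ for $i \geq 1$. Finally $\wt\nu_M$ is $2$-connected: it is an isomorphism on $\pi_1$ since $c$ induces $\pi_1(M) \xrightarrow{\cong} \pi$ and $\BSO$ is simply connected; and it is surjective on $\pi_2$ because $\pi_2(B) = \pi_2(\BSO) \cong \Z/2$ is detected by $w_2$, and the hypothesis $w_2(\wt M) \neq 0$ guarantees that the composite $\pi_2(M) \to \pi_2(\wt M) \to \pi_2(\BSO)$ is onto. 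Uniqueness up to fibre homotopy equivalence over $\BO$ is formal. This identifies $\xi \colon B = \op{B\pi}\times\BSO \to \BO$ as the normal 1-type of $M$.
\end{proof}
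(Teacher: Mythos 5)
The paper itself offers no proof of this lemma, citing only Teichner's thesis, so your sketch is supplying the standard argument; your verification of 2-coconnectivity of $\xi$ on homotopy groups and your criterion for $\pi_2$-surjectivity of $\wt{\nu}_M$ via $w_2(\wt{M})\neq 0$ are correct in substance. However, the construction of the lift contains a step that fails as written. You take the $\BSO$-coordinate of $\wt{\nu}_M$ to classify a lift of $\nu_M \oplus c^*L$, and then assert $\xi\circ\wt{\nu}_M \simeq \nu_M$. With $\xi(x,y)=L_x\oplus y$ this composite classifies $c^*L\oplus\nu_M\oplus c^*L=\nu_M\oplus 2c^*L$, and $2c^*L$ is not stably trivial in general: for example if $\pi\cong\Z/2$ and $c^*L$ pulls back the tautological line bundle, then $2([c^*L]-1)\neq 0$ in $\wt{KO}(M)$ (compare $\wt{KO}(\rp^4)\cong\Z/8$), so the composite is not $\nu_M$. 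The repair is to use the stable inverse rather than the bundle itself: since $M$ is a finite complex, $c^*L$ admits a stable inverse $(c^*L)^{-1}$, and $w_1\big((c^*L)^{-1}\big)=w_1(c^*L)=w_1(M)=w_1(\nu_M)$, so $\nu_M\oplus(c^*L)^{-1}$ is orientable and lifts to $\BSO$; with this choice $\xi\circ\wt{\nu}_M$ classifies $c^*L\oplus\nu_M\oplus(c^*L)^{-1}\simeq\nu_M$, as required. The rest of your argument (restriction to $\wt{M}$ kills the $\op{B\pi}$-twist, so $\pi_2(M)\to\pi_2(\BSO)\cong\Z/2$ is evaluation of $w_2(\nu_{\wt M})=w_2(\wt M)$ on spherical classes) then goes through.

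A smaller inaccuracy: the homotopy fibre of $\xi=w_1\oplus Bi$ is not that of $Bi$ alone. The long exact sequence shows it is a $K\big(\ker(w_1\colon\pi\to\Z/2),1\big)$, connected because $w_1$ is onto for nonorientable $M$; it is not $\O/\SO\simeq K(\Z/2,0)$. This does not damage your conclusion, since 2-coconnectivity only requires $\pi_i(\text{fibre})=0$ for $i\geq 2$, which holds, but the stated identification of the fibre should be corrected.
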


\begin{lemma}\label{lemma:normal-1-types-nonorientable-topological}
  Let $M$ be a nonorientable closed, connected 4-manifold with $w_2(\wt{M}) \neq 0$. We set $\pi:=\pi_1(M)$. Then the normal 1-type of $M$ is $\xi \colon B= \op{B\pi} \times \BSTOP \to \BTOP$ with the map $\xi = w_1 \oplus \up{B}i$ given by the Whitney sum of a bundle on $\op{B\pi}$ determined by the orientation character $w_1 \colon \pi \to \Z/2$ and the canonical map $\up{B}i \colon \BSTOP \to \BTOP$ induced by the inclusion $i \colon \STOP \to \TOP$.
\end{lemma}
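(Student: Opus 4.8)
The plan is to follow the proof of the smooth analogue Lemma~\ref{lemma:normal-1-types-nonorientable} (see~\cite[Chapter~2]{teichnerthesis}), replacing $\O$, $\SO$, $\BSO$ by $\TOP$, $\STOP$, $\BSTOP$, and using the stable topological normal bundle $\nu_M\colon M\to\BTOP$ of Definition~\ref{def:topnormal} in the role of the smooth stable normal bundle. This substitution is harmless because $\STOP\subset\TOP$ is the inclusion of the identity component (so $\pi_0(\TOP)=\Z/2$ and $\STOP\hookrightarrow\TOP$ induces an isomorphism on $\pi_i$ for $i\geq 1$), and because $\TOP/\O$ is $2$-connected, whence $\pi_i(\BTOP)\cong\pi_i(\BO)$ for $i\leq 3$; thus the low-degree homotopy groups entering a $2$-type computation coincide with those in the smooth case.

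First I would check that $\xi=w_1\oplus Bi\colon\op{B\pi}\times\BSTOP\to\BTOP$ is $2$-coconnected. After replacing $\xi$ by a fibration one reads off $\pi_1(B)=\pi$, $\pi_2(B)=\pi_1(\STOP)=\Z/2$, and $\pi_i(B)=\pi_{i-1}(\TOP)$ for $i\geq 3$, while $\xi_*$ is the orientation character $w_1\colon\pi\to\Z/2=\pi_1(\BTOP)$ in degree $1$ (surjective because $M$ is nonorientable) and the isomorphism induced by $\STOP\hookrightarrow\TOP$ in degrees $\geq 2$. Hence $\xi$ is even an isomorphism on $\pi_{\geq 2}$, with homotopy fibre $K(\ker w_1,1)$, and in particular $2$-coconnected.

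Next I would build the $2$-connected lift. Let $c\colon M\to\op{B\pi}$ classify the universal cover and let $\ell=c^*L$, where $L\to\op{B\pi}$ is the real line bundle with $w_1(L)$ the orientation character; then $w_1(\ell)=w_1(M)=w_1(\nu_M)$ by Whitney duality, so $w_1(\nu_M\ominus\ell)=0$ and the stable topological bundle $\nu_M\ominus\ell$ admits a classifying map $g\colon M\to\BSTOP$. Set $\wt\nu_M=(c,g)$; by construction $\xi\circ\wt\nu_M=\ell\oplus(\nu_M\ominus\ell)\simeq\nu_M$. Since $c$ is already a $\pi_1$-isomorphism and $\pi_2(\op{B\pi})=0$, the map $\wt\nu_M$ is a $\pi_1$-isomorphism, and the only remaining point is that $g_*\colon\pi_2(M)\to\pi_2(\BSTOP)=\Z/2$ is onto. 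Under the identification $\pi_2(\BSTOP)\cong\pi_2(\BTOP)$ the map $g_*$ coincides with $(\nu_M)_*=(\tau_M)_*$ (the contribution of $\ell$ vanishing because $\ell$ is pulled back from the aspherical $\op{B\pi}$), and for $f\colon S^2\to M$ the class $\tau_M\circ f\in\pi_2(\BTOP)=\Z/2$ is detected by $\langle w_2(M),f_*[S^2]\rangle$, a stable topological bundle over $S^2$ being trivial precisely when its $w_2$ vanishes.

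The crux, and the one place the hypothesis $w_2(\wt M)\neq 0$ is used, is therefore to produce a spherical class in $H_2(M;\Z/2)$ pairing nontrivially with $w_2(M)$. Since $\wt M$ is simply connected, $\pi_2(M)=\pi_2(\wt M)$ surjects onto $H_2(\wt M;\Z/2)$; as $p^*w_2(M)=w_2(\wt M)\neq 0$ there is $\beta\in H_2(\wt M;\Z/2)$ lifting to some $\alpha\in\pi_2(M)$ with $\langle w_2(\wt M),\beta\rangle\neq 0$, and then $\langle w_2(M),p_*\beta\rangle=\langle w_2(\wt M),\beta\rangle\neq 0$, so $g_*(\alpha)$ generates $\Z/2$. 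Hence $\wt\nu_M$ is $2$-connected, $(B,\xi)$ is a normal $1$-type of $M$, and uniqueness up to fibre homotopy equivalence over $\BTOP$ is the general fact recorded just before the lemma. I expect this final $\pi_2$-surjectivity argument to be the only genuinely delicate point; everything else is a transcription of the smooth case exploiting the $2$-connectivity of $\TOP/\O$.
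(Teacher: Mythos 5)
Your proof is correct, and it is essentially the standard argument that the paper defers to its references (Teichner's thesis and the computations in KLPT) rather than writing out: check $2$-coconnectedness from $\pi_0(\TOP)\cong\Z/2$ and $\pi_1(\STOP)\cong\pi_1(\TOP)$, split off the orientation line bundle pulled back from $\op{B\pi}$ to get the lift, and use $w_2(\wt M)\neq 0$ to produce a spherical class on which $w_2$ evaluates nontrivially, giving surjectivity on $\pi_2$. You correctly isolate the only delicate point, and your use of the universal coefficient identification $H^2(\wt M;\Z/2)\cong\Hom(H_2(\wt M;\Z/2),\Z/2)$ (rather than duality on the possibly noncompact $\wt M$) is the right way to handle it.
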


These normal 1-types give rise to a James spectral sequence (see \cite[Chapter~II]{Teichner93} for details) governing the bordism groups of $(B,\xi)$
\[E^2_{p,q} = H_p(\op{B\pi};\Omega_q^{w_1}) \,  \Rightarrow \, \Omega_{p+q}(B,\xi).\]
Note that the coefficients are twisted using $\Z^{w_1} \otimes \Omega_q$, where by definition, $g \in \pi$ acts on $\Z^{w_1}$ by multiplication by $(-1)^{w_1(g)}$.
The corresponding topological James spectral sequence is:
\[E^2_{p,q} = H_p(\op{B\pi};(\Omega_q^{\op{\STOP}})^{w_1}) \,  \Rightarrow \, \Omega_{p+q}^{\TOP}(B,\xi).\]
As in the previous section, here we abbreviate $\Omega_4^{\operatorname{TOP}}(B^{\operatorname{TOP}},\xi^{\operatorname{TOP}})$ to $\Omega_4^{\operatorname{TOP}}(B,\xi)$.

By Kreck's Theorem~\ref{thm:kreck}) and the argument in the proof of Theorem~\ref{thm:stably-diffeo-homeo-4-mflds}, in order to prove Theorem~\ref{thm:stable-diff-nonorientable} it suffices to prove the next injectivity statement, which is an analogue of
Proposition~\ref{prop:forgetful-injective}.

\begin{proposition}\label{proposition:injectivity-non-or-bordism}
  Let $(B,\xi)$ be one of the normal 1-types in Lemma~\ref{lemma:normal-1-types-nonorientable} and let $(B^{\TOP},\xi^{\TOP})$ be the corresponding topological normal 1-type over $\BTOP$.  The forgetful map
  \[F \colon \Omega_4(B,\xi) \to \Omega_4^{\TOP}(B^{\TOP},\xi^{\TOP}) = \Omega^{\TOP}(B,\xi)\]
  is injective.
\end{proposition}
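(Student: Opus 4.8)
The plan is to mimic exactly the proof of Proposition~\ref{prop:forgetful-injective} from the orientable case, adapting the James spectral sequence argument to the twisted coefficient setting. First I would set up the comparison between the smooth and topological James spectral sequences: the smooth one has $E^2_{p,q} = H_p(\op{B\pi};\Omega_q^{w_1}) \Rightarrow \Omega_{p+q}(B,\xi)$, and the topological one has $E^2_{p,q} = H_p(\op{B\pi};(\Omega_q^{\op{\STOP}})^{w_1}) \Rightarrow \Omega_{p+q}^{\op{TOP}}(B,\xi)$, and the structure-forgetting map induces a map of spectral sequences. The key input is that the coefficient map $\Omega_q^{\op{\SO}} \to \Omega_q^{\op{\STOP}}$ is an isomorphism for $0 \leq q \leq 3$ (all are finite in degrees $1,2,3$, with $\Omega_1 = \Omega_3 = 0$, $\Omega_2^{\op{\SO}} = 0$, and in fact these groups vanish), so the only difference between the two spectral sequences occurs in the $q=4$ row, where $\Omega_4^{\op{\SO}} = \Z \hookrightarrow \Omega_4^{\op{\STOP}} = \Z \oplus \Z/2$, the extra $\Z/2$ being the Kirby-Siebenmann invariant.

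Next I would establish the analogue of the Claim: the bottom term $H_0(\op{B\pi};\Omega_4^{w_1})$ (which, since $w_1$ acts trivially on $\Omega_4^{\op{\SO}} \cong \Z$ — orientation reversal negates the signature, but the twisting is built into the coefficient system; here one must be slightly careful whether $H_0$ of the twisted system is $\Z$ or $\Z/2$, and this is exactly where the nonorientable case differs) survives to the $E^\infty$ page. The argument is as before: the $q=1,2,3$ lines are torsion (indeed zero), so no differential can hit the codomain from below; and a possible differential $H_5(\op{B\pi};\Omega_0^{w_1}) \to H_0(\op{B\pi};\Omega_4^{w_1})$ would force only finitely many signatures to occur among manifolds with fixed normal 1-type and fixed image in $H_4(\op{B\pi};\Omega_0^{w_1})$, which is absurd since one can connect-sum with arbitrarily many copies of $\mathbb{CP}^2$ (or $K3$) to change the signature while preserving the normal 1-type (the connected-sum point lies in a chart, so the $w_1$-data is unaffected) and the fundamental-class invariant. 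On the topological side the same argument works using $E_8$ in place of $K3$, and additionally one must check that the Kirby-Siebenmann $\Z/2 \subset \Omega_4^{\op{\STOP}}$ survives: it is additive and is realised by both the Chern manifold $*\cp^2$ and the $E_8$-manifold on simply connected summands, so it cannot be killed by a differential. This yields short exact sequences
\[0 \to \Omega_4^{\op{\SO}} \to \Omega_4(B,\xi) \to \wt{\Omega}_4(B,\xi) \to 0\]
and its topological counterpart, with $\wt{\Omega}_4(B,\xi) \cong \wt{\Omega}_4^{\op{TOP}}(B,\xi)$ because the two spectral sequences agree away from the $q=4$ row and their differentials agree by naturality (the differentials depend only on $\op{B\pi}$ and the twisting bundle determined by $w_1$, both category-independent).

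Finally I would run the diagram chase: the map of short exact sequences has left vertical map the inclusion $\Z \hookrightarrow \Z \oplus \Z/2$ (injective) and right vertical map an isomorphism, so by the five-lemma-style argument the central map $F$ is injective. Combined with Kreck's Theorem~\ref{thm:kreck} and the argument used for Theorem~\ref{thm:stably-diffeo-homeo-4-mflds}, this gives Theorem~\ref{thm:stable-diff-nonorientable}.

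The main obstacle I anticipate is handling the twisted coefficients correctly at the bottom of the $q=4$ row: one needs to identify $H_0(\op{B\pi};\Z^{w_1} \otimes \Omega_4^{\op{\SO}})$ — since $\pi \to \Z/2$ is surjective and acts on $\Z$ by sign, this $H_0$ is actually $\Z/2$ rather than $\Z$, which changes the bookkeeping compared to the orientable case. I would need to check that the signature, while not itself well-defined on nonorientable manifolds, still gives enough of a $\Z$-valued gadget (e.g.\ the signature of the orientation double cover, or the signature of the relevant piece detected by a higher filtration term) to run the "finitely many values is absurd" argument, and that the relevant infinite term for the argument may actually sit in $H_4(\op{B\pi};\Omega_0^{w_1})$ or require a more careful analysis of which graded pieces are infinite. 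Getting the exact short exact sequence with the right groups, and making sure the left vertical map is genuinely injective in the twisted setting, is the delicate point; everything else is a faithful transcription of the orientable argument.
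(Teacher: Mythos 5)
Your overall strategy coincides with the paper's: compare the smooth and topological James spectral sequences, show the bottom term $E^2_{0,4}$ survives, and run a diagram chase on the resulting short exact sequences. But there is a genuine gap at exactly the point you flag as delicate, and the fix you propose goes in the wrong direction. Because $\pi$ acts on $\Omega_4^{\SO}\cong\Z$ by the sign of $w_1$, the coinvariants are $H_0(\op{B\pi};\Omega_4^{w_1})\cong\Z/2$, so the short exact sequence you write, $0\to\Omega_4^{\SO}\to\Omega_4(B,\xi)\to\wt\Omega_4(B,\xi)\to 0$ with left term $\Z$, and the left vertical map "$\Z\hookrightarrow\Z\oplus\Z/2$", are both wrong; the correct left-hand terms are $\Z/2$ and $\Z/2\oplus\Z/2$. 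More importantly, the orientable survival argument ("a nonzero differential would force only finitely many signatures, which is absurd") cannot be transcribed: the target $E^2_{0,4}\cong\Z/2$ is already finite, so "finitely many values" is no obstruction, and the signature is not the detecting invariant here. Your fallback gadget, the signature of the orientation double cover, detects nothing: the double cover of a nonorientable manifold carries an orientation-reversing free involution, so its signature vanishes identically.

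The paper's resolution is that $H_0(\op{B\pi};\Omega_4^{w_1})\cong\Z/2$ is detected by the Euler characteristic modulo $2$ (a bordism invariant, being the top Stiefel--Whitney number). Both residues are realised within the fixed normal $1$-type by $M$ and $M\#\cp^2$ --- note that for nonorientable $M$ the sums with $\cp^2$ and $\overline{\cp^2}$ agree, and adding $\cp^2$ preserves the normal $1$-type precisely because $w_2(\wt M)\neq 0$ --- so the $\Z/2$ injects into $\Omega_4(B,\xi)$ and hence survives to $E^\infty$. On the topological side the term is $\Z/2\oplus\Z/2$ (Euler characteristic mod $2$ and Kirby--Siebenmann), realised by adding copies of $\cp^2$ and $*\cp^2$, and the forgetful map $\Z/2\to\Z/2\oplus\Z/2$ is injective because $\Omega_4^{\SO}\to\Omega_4^{\STOP}$ is split. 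With these corrections the rest of your argument (isomorphism of the quotients $\wt\Omega_4$ via the low-degree coefficient isomorphisms and naturality of the differentials, then the diagram chase) goes through as in the orientable case.
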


\begin{proof}[Proof of Theorem~\ref{thm:stable-diff-nonorientable} assuming Proposition~\ref{proposition:injectivity-non-or-bordism}]
  Homeomorphic 4-manifolds have the same normal 1-types and are trivially $\TOP$ bordant over this normal 1-type, injectivity of $F$ implies that homeomorphic nonorientable, closed, connected, smooth 4-manifolds are smoothly bordant over their normal 1-type, and therefore by Theorem~\ref{thm:kreck} are stably diffeomorphic.
\end{proof}

\begin{proof}[Proof of Proposition~\ref{proposition:injectivity-non-or-bordism}]
The structure of the proof is very similar to that of the proof of Proposition~\ref{prop:forgetful-injective}. This proof is therefore somewhat terse.
In the smooth James spectral sequence computing $\Omega_4(B,\xi)$, we consider the term on the $E^2$ page $H_0(\op{B\pi};\Omega_4^{w_1}) \cong \Z/2$. This is detected by the Euler characteristic of the manifold modulo two.

Since we can always perform connected sum with a copy of $\mathbb{CP}^2$ (note that for nonorientable manifolds connected sum with $\cp^2$ and $\overline{\cp^2}$ is the same), both mod 2 Euler characteristics are realised by bordism classes over $(B,\xi)$. Also note that adding $\cp^2$ does not change the normal $1$-type when $w_2(\wt{M}) \neq 0$.  Therefore $H_0(\op{B\pi};\Omega_4^{w_1}) \cong \Z/2$ survives to the $E^{\infty}$ page.

In the topological case, the corresponding term in the James spectral sequence computing $\Omega_4^{\TOP}(B,\xi)$ is \[H_0(\op{B\pi};(\Omega_4^{\op{\STOP}})^{w_1}) \cong \Z/2 \oplus \Z/2.\]
We can add copies of $\cp^2$ and $*\cp^2$ to a given element of $\Omega_4^{\TOP}(B,\xi)$ to show that this term survives to the $E^{\infty}$ page.  The structure forgetting map $\Z/2 \cong H_0(\op{B\pi};\Omega_4^{w_1}) \to H_0(\op{B\pi};(\Omega_4^{\op{\STOP}})^{w_1}) \cong \Z/2 \oplus \Z/2$ is injective.

Therefore the filtrations of $\Omega_4(B,\xi)$ and $\Omega_4^{\TOP}(B,\xi)$ arising from the spectral sequence give rise to short exact sequences, that form the rows of the following commutative diagram:
\[\xymatrix{0 \ar[r] & \Z/2 \ar[r] \ar[d] & \Omega_4(B,\xi) \ar[r] \ar[d]& \wt{\Omega}_4(B,\xi) \ar[r] \ar[d]^-{\cong} & 0 \\
0 \ar[r] & \Z/2 \oplus \Z/2 \ar[r] & \Omega_4^{\TOP}(B,\xi) \ar[r] & \wt{\Omega}_4^{\TOP}(B,\xi) \ar[r] & 0.}\]
We noted above that the left vertical map is injective. Since $\Omega_q \to \Omega_q^{\op{\STOP}}$ is an isomorphism for $0 \leq q \leq 3$, the right vertical map is an isomorphism and is therefore injective.  It follows from a diagram chase that the central vertical map is also injective, as required.
\end{proof}

For the other normal 1-types of nonorientable 4-manifolds, the $E^2_{0,4}$ terms are given by the homology with twisted coefficients $H_0(\op{B\pi};(\Omega_4^{\op{\Spin}})^{w_1})$ and $H_0(\op{B\pi};(\Omega_4^{\op{\TOPSpin}})^{w_1})$. The forgetful map $H_0(\op{B\pi};(\Omega_4^{\op{\Spin}})^{w_1}) \to H_0(\op{B\pi};(\Omega_4^{\op{\TOPSpin}})^{w_1})$ is not injective, because $16\Z \cong \Omega_4^{\op{\Spin}} \to \Omega_4^{\op{\TOPSpin}} \cong 8\Z$ is not surjective.  Not only does the proof break down in these cases, but the examples in Theorem~\ref{thm:kreck-counterexamples} show that the hypothesis on the $w_2$-type cannot be removed.

%===============================================================
\section{Homeomorphic 4-manifolds with boundary}\label{section:homeo-4-mflds-with-bdy-stable-homeo-vs-stable-diffeo}

In this section we briefly explain how to extend the proofs to manifolds with boundary.
Here is the relevant version of Kreck's theorem~\cite{surgeryandduality}.

\begin{theorem}\label{thm:kreck-manifolds-with-bdy}
    Let $M$ and $N$ be compact $($smooth$)$ 4-manifolds, let $f \colon \partial M \to \partial N$ be a diffeomorphism, and suppose that the normal 1-types of $M$ and $N$ are both fibre homotopy equivalent to the same fibration  $\xi^{\TOP} \colon B^{\TOP} \to \BTOP$ $(\xi \colon B \to \BO)$.

    Then $M$ and $N$ are stably homeomorphic $($diffeomorphic$)$ via a stable homeomorphism $($diffeomorphism$)$ extending $f$ if and only if $M$ and $N$ admit normal 1-smoothings $\ol{\nu}_M$ and $\ol{\nu}_N$ into $B^{\TOP}$ $(B)$ such that $\ol{\nu}_M = \ol{\nu}_N \circ f \colon \partial M \to B^{\TOP}$ $(B)$, for which the union $(M \cup_f N,\ol{\nu}_M \cup -\ol{\nu}_N)$ represents the trivial element of $\Omega_4(B^{\TOP},\xi^{\TOP})$ $(\Omega_4(B,\xi))$.
\end{theorem}

In Propositions~\ref{prop:forgetful-injective} and~\ref{proposition:injectivity-non-or-bordism} we proved injectivity of the forgetful maps in bordism $\Omega_4 (B,\xi) \to \Omega_4^{\operatorname{TOP}}(B^{\operatorname{TOP}},\xi^{\operatorname{TOP}})$ relevant to Theorems~\ref{thm:stably-diffeo-homeo-4-mflds} and \ref{thm:stable-diff-nonorientable}.

\begin{proof}[Proof of Theorems~\ref{thm:stably-diffeo-homeo-4-mflds} and \ref{thm:stable-diff-nonorientable}]
Let $M$ and $N$ be smooth, compact 4-manifolds with nonempty boundary. Let $(B^{\operatorname{TOP}},\xi^{\operatorname{TOP}})$ and $(B,\xi)$ be the relevant normal 1-types. Let $f \colon \partial M \to \partial N$ be a diffeomorphism that extends to a homeomorphism from $M$ to $N$. Then by Theorem~\ref{thm:kreck-manifolds-with-bdy} in the topological category, there exist normal 1-smoothings $\ol{\nu}_M^{\TOP} \colon M \to B^{\TOP}$ and $\ol{\nu}_N^{\TOP} \colon N \to B^{\TOP}$ such that $(M \cup_f N,\ol{\nu}_M^{\TOP} \cup -\ol{\nu}_N^{\TOP})$ represents the trivial element of $\Omega_4(B^{\TOP},\xi^{\TOP})$.
Since $M$ and $N$ are smooth, their stable normal vector bundle classifying maps  lift along $\BO \to \BTOP$, and we obtain normal 1-smoothings $\ol{\nu}_M \colon M \to B$ and $\ol{\nu}_N \colon N \to B$ such that $(M \cup_f N,\ol{\nu}_M \cup -\ol{\nu}_N) \in \Omega_4 (B,\xi)$ maps under the forgetful map to
$(M \cup_f N,\ol{\nu}_M^{\TOP} \cup -\ol{\nu}_N^{\TOP}) = 0 \in \Omega_4(B^{\TOP},\xi^{\TOP})$.
By injectivity of the forgetful maps
$\Omega_4 (B,\xi) \to \Omega_4^{\operatorname{TOP}}(B^{\operatorname{TOP}},\xi^{\operatorname{TOP}})$,
as proven in Propositions~\ref{prop:forgetful-injective} and~\ref{proposition:injectivity-non-or-bordism}, it follows that $(M \cup_f N,\ol{\nu}_M \cup -\ol{\nu}_N) =0 \in \Omega_4 (B,\xi)$. By Theorem~\ref{thm:kreck-manifolds-with-bdy}, we learn that indeed~$M$ and~$N$ are stably diffeomorphic via a stable diffeomorphism extending the given diffeomorphim $f \colon \partial M \to \partial N$.
\end{proof}

%================================
\chapter{Reidemeister torsion in the topological category}

%================================
\section{The simple homotopy type  of a manifold}\label{section:simple-homotopy-type}
In the following we need the notion of a \emph{simple} homotopy equivalence.
We will not give a definition, instead we refer to \cite[p.~40]{Turaev:2001-1} for details.
Roughly, a simple homotopy equivalence between CW complexes is a sequence of elementary expansions and collapses of pairs of cells whose dimension differs by one.

As we discussed in Chapter~\ref{chapter:CW structures}, it is not clear whether topological 4-manifolds have CW-structure. Fortunately the following definition allows us to define a simple homotopy type even for topological spaces which are not homeomorphic to a CW complex.

\begin{definition}\label{defn:SimpleHomotopyType}
	Let  $(W,V)$ be a pair of topological spaces. Consider tuples $(W,V,f, X,Y)$, where $(X,Y)$ is a finite CW complex pair with $Y \subseteq X$, and $f \colon W\to X$ and $f|_V \colon V \to Y$  homotopy equivalences. Two such tuples $(W,V,f,X,Y)$ and $(W,V,f',X',Y')$, with $(X',Y')$ another finite CW pair and $f'\colon (W,V) \to (X',Y')$, are \emph{equivalent} if there exists a simple homotopy equivalence of pairs~$s \colon (X,Y) \to (X',Y')$ such that $s \circ f$ is homotopic to $f'$ and $s|_Y \circ f|_V \colon V \to Y'$ is homotopic to $f'|_{V}$. Such an equivalence class of $(W,V,f,X,Y)$ is called a \emph{simple homotopy type} of $(W,V)$. In particular, a simple homotopy type of $(W,\emptyset)$ is called a \emph{simple homotopy type} of $W$.\label{def:simple-homotopy-type}
\end{definition}

Now consider a compact, connected $n$-manifold~$M$.
If $M$ admits a smooth structure, then
by Theorem~\ref{thm:n-not-equal-4-CW structure} we know that
$M$ admits in particular a CW structure, and we equip $M$ with the simple homotopy type given by $(M, \emptyset,\id,M,\emptyset)$.
By Chapman's Theorem~\cite[p.~488]{Chapman-74} below, this simple homotopy type is independent of the
choice of CW structure on~$M$.

\begin{theorem}\textbf{\textup{(Chapman's Theorem)}}
Let $W$ be a compact topological space. Any two CW structures on $W$ are simple homotopy equivalent.
\end{theorem}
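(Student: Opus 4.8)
The plan is to reduce the statement to the assertion that every homeomorphism between finite CW complexes is a simple homotopy equivalence, and then to invoke the theory of Hilbert cube manifolds. First I would note that if the compact space $W$ carries two CW structures, exhibited by homeomorphisms $h_i\colon W \xrightarrow{\cong} X_i$ onto CW complexes $X_1,X_2$ (which are automatically finite, since a CW complex is compact if and only if it is finite), then it suffices to show that $h_2\circ h_1^{-1}\colon X_1\to X_2$ is a simple homotopy equivalence, i.e.\ that its Whitehead torsion in $\mathrm{Wh}(\pi_1(X_2))$ vanishes. Equivalently, one must prove that the Whitehead torsion of a homeomorphism of finite CW complexes is always trivial; this is the content of Chapman's theorem in its usual formulation.

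For this I would use the Hilbert cube $Q=\prod_{n\geq 1}[0,1]$. The key inputs are: (i) for any finite CW complex $X$ the product $X\times Q$ is a compact $Q$-manifold (combining West's theorem, already cited above, that compact ANRs have finite CW homotopy type, with the fact that $X\times Q$ is a $Q$-manifold when $X$ is a compact ANR); (ii) Chapman's triangulation theorem, that every compact $Q$-manifold is homeomorphic to $K\times Q$ for some finite polyhedron $K$; and (iii) the homeomorphism classification of $Q$-manifolds, namely that $K\times Q$ and $L\times Q$ are homeomorphic if and only if $K$ and $L$ are simple homotopy equivalent. Granting (i)--(iii), a homeomorphism $h\colon X\to Y$ of finite CW complexes yields a homeomorphism $h\times\mathrm{id}_Q\colon X\times Q\to Y\times Q$; after choosing polyhedral models via (ii) this forces, by (iii), that $X$ and $Y$ are simple homotopy equivalent, and a small bookkeeping argument with the product formula for Whitehead torsion under crossing with $Q$ (under which torsion is stable) shows that the torsion of $h$ itself is zero.

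The hard part is entirely (ii) and (iii): this is genuine infinite-dimensional geometric topology. The triangulation theorem rests on handle-straightening arguments and on the $\alpha$-approximation theorem — that a map between $Q$-manifolds which is sufficiently close to a homeomorphism (controlled by an open cover $\alpha$) is itself approximable by homeomorphisms — while the nontrivial ``if'' half of (iii) requires realizing the elementary expansions and collapses witnessing a simple homotopy equivalence geometrically after stabilizing by $Q$ and then upgrading the resulting near-homeomorphism to a genuine homeomorphism, again via $\alpha$-approximation. Since all of this machinery is developed at length by Chapman, in a survey of this kind one simply states the theorem and cites \cite{Chapman-74}; I would not attempt to reproduce the arguments here, but the outline above is the conceptual route.
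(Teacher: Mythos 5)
Your outline is correct and matches how the paper treats this result: the survey states Chapman's theorem as a black box with a citation to \cite{Chapman-74}, and your reduction to the vanishing of the Whitehead torsion of a homeomorphism of finite CW complexes, followed by the Hilbert cube manifold strategy (stabilise by $Q$, triangulate, and use the classification of compact $Q$-manifolds up to homeomorphism by simple homotopy type of a polyhedral model), is precisely Chapman's own route, with the genuinely hard infinite-dimensional input appropriately deferred to the reference. One small remark: you do not need West's theorem in step (i) -- $X$ is already a finite CW complex, hence a compact ANR, so the fact that $X\times Q$ is a $Q$-manifold follows directly (and in Chapman's original treatment is proved directly for finite complexes rather than via the general ANR statement).
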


As we pointed out in Chapter~\ref{chapter:CW structures}, it is unknown whether every compact manifold admits a CW structure. In the remainder of this section, we will nonetheless introduce the simple homotopy type of a compact manifold $M$ following \cite[Essay~III,~Section~4, p.~117]{KS77}. The first step is to construct a disc bundle~$D(M) \to M$ together with a $\PL$ structure on the total space~$D(M)$.
We will work with a compact $m$-dimensional manifold $M$ with boundary $\partial M$, and seek to construct the simple homotopy type of $(M,\partial M)$.

\begin{construction}\label{const:SimpleType}
	We deal with the case $\partial M = \emptyset$ first, and then later address the additional complications arising from having nonempty boundary.	

	As a first step to constructing the disc bundle $D(M) \subseteq \R^n$, we need an embedding of $M$ into $\R^{n-1}$ for some large integer~$n-1 > 2m+5$. For a closed $m$--manifold $M$ such an embedding is readily available \cite[Corollary A.9]{Hat02}.
	It follows from Theorem~\ref{thm:microexist} that for $n-1 > 2m+5$ all such embeddings of $M$ are isotopic, and that they admit a normal microbundle~$\nu_{\R^{n-1}}(M)$ that is unique up to isotopy.  By Theorem~\ref{thm:kistmaz} this normal microbundle~$\nu_{\R^{n-1}}(M)$ can be upgraded to a topological $\R^{n-1- m}$--bundle. By taking the product with $\R$, construct an embedding $M \subseteq \R^n$ whose normal microbundle is $\nu(M) = \nu_{\R^{n-1}}(M)\times \R$. Since we stabilised once, the normal microbundle $\nu(M)$ contains a normal disc bundle~$B(M)$ \cite[Essay~III,~Proposition~4.4, p.~120]{KS77}.
	
The next big step will be to upgrade $B(M) \subseteq \R^n$ from a submanifold to a $\PL$ submanifold. Since the interior is codimension~$0$, the interior of $B(M)$ is automatically also a $\PL$ submanifold. However, we have to arrange $\partial B(M)$ to be a $\PL$ submanifold of $\R^n$ itself. In the next paragraphs, we modify the $\PL$ structure on $\R^n$ such that $\partial B(M)$ becomes a $\PL$ submanifold and then isotope this new $\PL$ structure on $\R^n$ back to the standard $\PL$ structure.

Using the Collar Neighbourhood Theorem~\ref{thm:collar}, pick a collar $W_\partial = \partial B(M) \times (-1,1)$ and $D_\partial = \partial B(M) \times [-\frac{1}{2},\frac{1}{2}]$. The Local Product Structure Theorem~\ref{theorem:local-product-structure}~\cite[Essay~I,~Theorem~5.2, p.~36]{KS77}, applied with $W=W_\partial$ and $D=D_\partial$, gives a $\PL$ structure $\sigma_\partial$ on $\R^n$ such that $\partial B(M)$ is a $\PL$ submanifold and $\sigma_\partial$ is concordant to the standard $\PL$ structure $\sigma_\text{std}$.

\begin{figure}[h]
\begin{center}
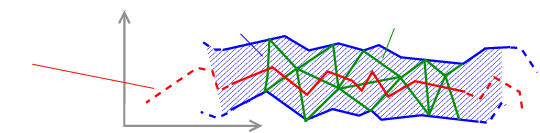
\caption{Illustration of $B(M)$.}\label{fig:simple-homotopy-type-closed-mfd}
\end{center}
\end{figure}

Now we will isotope the pair $\partial B(M)\subseteq B(M)$ so that they become $\PL$ submanifolds of $(\R^n, \sigma_\text{std})$. The $\PL$ structure~$\sigma_\partial$ is concordant to $\sigma_\text{std}$. Since concordance implies isotopy \cite[Essay~I,~Theorem~4.1, p.~25]{KS77} in dimension $m\geq 6$, there is an isotopy $\phi_t \in \op{Homeo}(\R^n)$ such that $\phi_0 = \id$, and $\phi_1^*\sigma_\text{std} = \sigma_\partial$. Consequently, $D(M) := \phi_1(B(M))$ and $D(\partial M) := \phi_1(B(\partial M))$ are $\PL$ submanifolds of $(\R^n, \sigma_\text{std})$, which defines a simple type of $M$, the tuple $(M, z, D(M))$, where $z \colon M \to D(M)$ is the zero section.

Having finished the case $\partial M \neq \emptyset$, next we discuss the procedure for a manifold $M$ with nonempty boundary.

Take the union of $M$ with an external open collar $\partial M \times [0,1)$ of its boundary. Write $M' := M \cup_{\partial M} \partial M \times [0,1)$.
Embed~$M'$ into $\R^n$ as in the closed case \cite[Corollary A.9]{Hat02}.
Note that $M'$ has empty boundary and so  it is properly embedded. As in the closed case, obtain a disc bundle $B(M')$ and let $B(M)$ be the restriction of this disc bundle to $M$.

Now we have to take much more care. Note that $\partial B(M)$ decomposes as $\partial B(M) = B(\partial M) \cup_X B_\partial (M)$. Here $B_\partial(M)$ denotes the fibrewise boundary and $X = \partial \big( B(\partial M) \big)$ denotes the intersection of $B(\partial M)$ and $B_\partial(M)$. As above, we will find a $\PL$ structure $\sigma_\partial$ of $\R^n$ such that each subset $B(\partial M)$, $X$  and $B_\partial (M)$ is $\PL$--submanifold of $\R^n$.

Our first goal is to modify the $\PL$ structure on $\R^n$ so that the corners~$X$ become a $\PL$ submanifold of $\R^n$.
Denote the standard $\PL$ structure on $\R^n$ by $\sigma_\text{std}$.
Pick a bicollar $\partial B(M) \times [-1,1] \subseteq \R^n$ of the boundary of the codimension~$0$ submanifold~$B(M)$.
Again by the Collar Neighbourhood Theorem~\ref{thm:collar}, we can pick a bicollar $X \times [-1,1] \subseteq \partial B(M)$. We consider the open set $W_X := X \times (-1,1)^2 \subseteq \partial B(M) \times (-1,1)\subseteq \R^n$, and $D_X = X \times [-\frac{1}{2},\frac{1}{2}]^2$.
The Local Product Structure Theorem~\ref{theorem:local-product-structure}~\cite[Essay~I,~Theorem~5.2, p.~36]{KS77}, applied with $W=W_X$ and $D=D_X$, gives a $\PL$ structure on $X$ and a $\PL$ structure~$\sigma_X$ on $\R^n$, which is concordant to $\sigma_\text{std}$ rel.\ $\R^n \sm \big( X \times (-\frac{2}{3}, \frac{2}{3})^2 \big)$. This $\PL$ structure~$\sigma_X$ has the property that it agrees with the product $\PL$ structure on $X \times (-1,1)^2$ in a neighbourhood of $D_X$. Thus $X$ is a $\PL$ submanifold of $(\R^n, \sigma_X)$.

\begin{figure}[h]
\begin{center}
\includegraphics{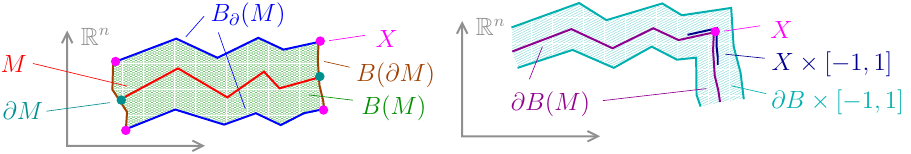}
\caption{Illustration of construction of $D(M)$ if $\partial M\ne \emptyset$.}\label{fig:simple-homotopy-type-mfd-with-boundary}
\end{center}
\end{figure}

Now we arrange the next stratum~$\partial B(M) \supseteq X$ to be a $\PL$ submanifold of $\R^n$. Near $D_X = X \times [-\frac{1}{2},\frac{1}{2}]$, the $\PL$ structure~$\sigma_X$ is the product $\PL$ structure, and therefore $\partial B(M) \cap \Int D_X = X \times (-\frac{1}{2},\frac{1}{2}) \times \{0\}$ is already a $\PL$ submanifold of $(\R^n,\sigma_X)$. Furthermore, $\sigma_X$ is a product along $(-\frac{1}{2},\frac{1}{2})$ near $X \times [-\frac{1}{3},\frac{1}{3}]$.
Pick $W_\partial = \partial B(M) \times (-1,1)$, $C_\partial = X \times [-\frac{1}{3},\frac{1}{3}] \times [-\frac{1}{3},\frac{1}{3}]$ and $D_\partial = \partial B(M) \times [-\frac{1}{2},\frac{1}{2}]$. As above, the Local Product Structure Theorem~\ref{theorem:local-product-structure}~\cite[Essay~I,~Theorem~5.2, p.~36]{KS77}, applied with $W=W_\partial$, $C=C_{\partial}$, and $D=D_\partial$, gives a $\PL$ structure $\sigma_\partial$ on $\R^n$ such that $\partial B(M)$ is a $\PL$ submanifold and $\sigma_\partial$ is concordant to $\sigma_X$ rel.\ $\big( \R^n \sm \partial B(M) \times (-\frac{2}{3},\frac{2}{3}) \big) \cup C_\partial$. Since $X \subseteq C_\partial$ the submanifold~$X$ is still a $\PL$ submanifold of $(\R^n, \sigma_\partial)$.

As in the closed case, use a concordance from $\sigma_\partial$ to $\sigma_\text{std}$ to obtain an isotopy $\phi_t \in \op{Homeo}(\R^n)$ such that $\phi_0 = \id$, and $\phi_1^*\sigma_\text{std} = \sigma_\partial$. Define $D(M) := \phi_1(B(M))$ and $D(\partial M) := \phi_1(B(\partial M))$, which are both $\PL$ submanifolds of $(\R^n, \sigma_\text{std})$.
We obtain a simple homotopy type $(M,\partial M,z,D(M),D(\partial M)$, where again $z$ is the zero section of the disc bundle.
This finishes the case where $M$ has nonempty boundary.

In both cases, $\partial M$ empty and nonempty, our construction involved many choices.
Let $D'(\partial M) \subseteq D'(M)$ be obtained by other choices. Following the discussion \cite[p.~123]{KS77}, we can suitably stabilise the bundles and find a commutative diagram of $\PL$ maps:
	\[ \begin{tikzcd}
			D(M) \times D^s \ar[r, "\cong"] & D'(M) \times D^r \\
			D(\partial M) \times D^s \ar[r, "\cong"] \ar[u] & D'(\partial M) \times D^r \ar[u],
	\end{tikzcd} \]
	where $D^k$ denotes the disc with its standard $\PL$ structure and the horizontal maps are $\PL$ isomorphisms that preserve the zero sections up to homotopy.
\end{construction}

\begin{definition}\label{defn:SimpleHomotopyManifold}
	The \emph{simple homotopy type} of a compact connected $n$-manifold $M$ is given by $(M,s)$, where $s \colon M \to D(M)$ is the inclusion of the $0$-section. The simple homotopy type of the pair~$(M, \partial M)$ is given by the square
	\[ \begin{tikzcd} M \ar[r,"s"] & D(M) \\ \partial M \ar[r, "s|_{\partial M}"] \ar[u] & D(\partial M), \ar[u]
	\end{tikzcd} \]
	where $D(\partial M) \subseteq D(M)$ are the disc bundles from Construction~\ref{const:SimpleType}, with CW structures arising from a choice of $\PL$ triangulations corresponding to the $\PL$ structures.
\end{definition}

By the commutative square at the end of Construction~\ref{const:SimpleType}, the simple homotopy type of $(M,\partial M)$ is well-defined.
Here we use that $\PL$ isomorphisms are simple: for any choice of triangulations underpinning the $\PL$ structures, the resulting homeomorphism is a simple homotopy equivalence. Also stabilising by $D^s$ does not change the simple homotopy type, since as $\PL$ manifolds $D^s \cong D^{s-1} \times [-1,1]$, and $D^{s-1} \times \{0\} \to D^{s-1} \times [-1,1]$ is a simple equivalence.

\begin{remark}
  Why is the simple homotopy type of $\partial M$ obtained in this way the same as that obtained by applying Construction~\ref{const:SimpleType} with $\partial M$ considered as a manifold without boundary?

  For suitably high $n$, we may assume  that the embedding of $(M,\partial M)$ into $\R^n$ is isotopic, and thus by Theorem~\ref{thm:isotopy-extension-theorem} ambiently isotopic, to an embedding with $i \colon \partial M \hookrightarrow \{\vec{x} \in \R^n \mid x_1=0\}\cong \R^{n-1}$ and an (interior) collar $\partial M \times [0,1]$ embedded as a product in $\{\vec{x} \in \R^n \mid 0 \leq x_1 \leq 1\} $ with $(x,t) \mapsto (i(x),t)$, as in Theorem~\ref{thm:collar-boundary}. Such an isotopy does not affect the simple homotopy type obtained, by the argument sketched above, which can also be found on \cite[p.~123]{KS77}.   The simple homotopy type of $\partial M$ obtained from Construction~\ref{const:SimpleType}, via an embedding of $\partial M$ into $\R^{n-1}$, uses a disc bundle $D(\partial M)$ that stabilises using the $x_1$ direction to a disc bundle $D'(\partial M)$, with fibre a disc of one dimension higher, for $\partial M$ embedded in $\R^n$. This latter disc bundle gives rise to the canonical simple homotopy type of $\partial M$ from Definition~\ref{defn:SimpleHomotopyManifold}.
\end{remark}

\begin{remark}
	If $M$ is a smooth manifold, then $M$ has an underlying $\PL$ structure, and with a bit more care in Construction~\ref{const:SimpleType}, we can arrange that the bundle $D(M)$ is a $\PL$ bundle. Note that this is stronger than just a $\PL$ structure on the total space.
	For $\PL$ bundles, the bundle projection $D(M) \rightarrow M$ is a simple homotopy equivalence. Indeed, for trivial bundles this is discussed above, and in general the projection is an $\alpha$-equivalence (a notion defined in \cite{Ferry77}) for any cover~$\alpha$ of $M$ and so is simple \cite[Corollary~3.2]{Ferry77}.
	It follows that the simple homotopy type defined by $(M, \id)$ agrees with the one of $(M,s)$, and the same holds for the relative simple homotopy type of the pair $(M, \partial M)$.

According to \cite[Essay~III,~Theorem~5.11, p.~123]{KS77}, if a manifold has a triangulation, then the simple homotopy type of the manifold agrees with the simple homotopy type of that triangulation.  It is not clear to us whether the analogous statement holds if $M$ has a CW structure not coming from a triangulation.
\end{remark}

%================================
\section{The cellular chain complex and Poincar\'{e} triads}\label{section:based-chain-complex}
Throughout this section let $M$ be a compact connected $n$-manifold.
Furthermore assume that we are given a decomposition $\partial M=R_-\cup R_+$ into codimension zero submanifolds such that $\partial R_-=R_-\cap R_+=\partial R_+$.

The following proposition follows from the argument of Construction~\ref{const:SimpleType}, applied with even more iterations to deal with corners of corners. See also the proof of \cite[Essay~III,~Theorem~5.13, p.~136]{KS77}.

\begin{proposition}\label{prop:CW triads}
There exists a finite CW complex triad $(X,X_-,X_+)$
and a  homotopy equivalence of triads $f\colon (M,R_-,R_+)\to (X,X_-,X_+)$ such that
the following two statements hold:
\bnm
\item The restrictions of $f$ to $M$, $R_\pm$ and $R_-\cap R_+$ give the simple homotopy types of these manifolds, as defined in Defintion~\ref{def:simple-homotopy-type}.
\item The restrictions of $f$ to the pairs $(M,\partial M)$, $(\partial M,R_\pm)$ and $(R_\pm,R_-\cap R_+)$ give the simple homotopy types of these pairs of manifolds, as defined in the previous section.
\enm
\end{proposition}

We continue with a general definition regarding CW complexes.

\begin{definition}
Let $(X,Y)$ be a pair of CW complexes such that $X$ is connected. We write $\pi=\pi_1(X)$
and we denote  the universal covering by $p\colon \wti{X}\to X$.
The group $\pi$ acts on the left on the cells of the CW complex $(\wti{X},p^{-1}(Y))$.
This equips $C_*^{\op{cell}}(\wti{X},p^{-1}(Y))$ with the structure of a left $\Z[\pi]$-module.
We define
\[\ba{rcl} C_*^{\op{cell}}(X,Y;\Z[\pi])&:=&
\Z[\pi]\otimes_{\Z[\pi]} C_*^{\op{cell}}(\wti{X},p^{-1}(Y))\\
C^*_{\op{cell}}(X,Y;\Z[\pi])&:=&\hom_{\opnormal{right-}\Z[\pi]}(\ol{C_*^{\op{cell}}(\wti{X},p^{-1}(Y))},\Z[\pi]).\ea\]
Here, given a left $\Z[\pi]$-module $M$ we denote  the right $\Z[\pi]$-module given by $m\cdot g:=g^{-1}\cdot m$ by $\ol{M}$.
Note that the  group $\pi$ acts freely on the left on the cells of the CW complex $(\wti{X},p^{-1}(Y))$.
For each cell in $X\sms Y$,  pick a lift to $\wti{X}$. This
turns
$C_*^{\op{cell}}(X,Y;\Z[\pi])$ and $C^*_{\op{cell}}(X,Y;\Z[\pi])$
into based left $\Z[\pi]$-module (co-) chain complexes.
\end{definition}

Now we can state the main theorem of this section.

\begin{theorem}\label{thm:simple-homotopy-equivalence-chain-complexes}
The finite CW complex triad $(X,X_-,X_+)$ from
Proposition~\ref{prop:CW triads}
is a simple Poincar\'e triad, meaning that
there is a chain level representative $\sigma\in  C_{n}^{\op{cell}}(X,X_-\cup X_+)$ of the fundamental class $[X] \in H_n(X,X_+ \cup X_-;\Z)\cong H_n(M,\partial M;\Z)$ such that
\[- \acap \sigma \colon C^{n-r}_{\op{cell}}(X,X_-;\Z[\pi_1(X)]) \,\,\to\,\, C_{r}^{\op{cell}}(X,X_+;\Z[\pi_1(X)])\]
is a simple chain homotopy equivalence.
\end{theorem}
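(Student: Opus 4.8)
The plan is to reduce to the $\PL$ category through the disc bundle $D(M)$ of Construction~\ref{const:SimpleType}, where the corresponding statement is classical, and then transport the conclusion back along the simple homotopy equivalences already in hand. By chain-level Poincar\'e--Lefschetz duality for the topological manifold triad $(M;R_-,R_+)$ (see Theorem~\ref{thm:poincareduality-chain-complex} and its triad refinement), capping with a fundamental cycle is a $\Z[\pi_1(M)]$-chain homotopy equivalence; since the Whitehead torsion of a chain homotopy equivalence depends only on its chain homotopy class, it is independent of the choice of fundamental cycle $\sigma$, and the whole content of the theorem is that this torsion vanishes, i.e.\ that $-\acap\sigma$ is \emph{simple}.

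First I would use that a compact $\PL$ manifold triad is a simple Poincar\'e triad: for a $\PL$ triangulation, the dual cell subdivision realises the Lefschetz duality chain map as an isomorphism of based $\Z[\pi_1]$-chain complexes, and subdivisions of $\PL$ triangulations are simple homotopy equivalences; the triad-with-corners version is handled by the same iterated construction that underlies Proposition~\ref{prop:cw-triads} (cf.\ \cite[Essay~III,~Theorem~5.13]{KS77}). Apply this to the compact $\PL$ manifold-with-corners $D(M)$, of dimension $n+q$ where $q$ is the fibre dimension, with the boundary decomposition $\partial D(M)=A\cup B$, $A:=D_\partial(M)\cup D(R_-)$, $B:=D(R_+)$, where $D_\partial(M)$ denotes the fibrewise boundary; this gives that
\[-\acap[D(M),\partial D(M)]\colon C^{(n+q)-r}_{\op{cell}}(D(M),A)\,\longrightarrow\, C_r^{\op{cell}}(D(M),B)\]
is a simple chain homotopy equivalence over $\Z[\pi_1(D(M))]=\Z[\pi_1(M)]$.

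Next I would identify $-\acap\sigma$ for $(M;R_-,R_+)$, read off through its canonical simple homotopy type, with this $\PL$ duality map up to pre- and post-composition by simple chain equivalences. On the source, the relative Thom isomorphism of the disc bundle pair $(D(M),D_\partial(M))$ over $M$ identifies $C^{n-r}_{\op{cell}}(M,R_-)$ with $C^{(n+q)-r}_{\op{cell}}(D(M),A)$; on the target, the bundle projection $\pi\colon D(M)\to M$ --- a homotopy inverse to the zero-section $s\colon M\to D(M)$, which is simple by the very definition of the canonical simple homotopy type (Construction~\ref{const:SimpleType}, Definition~\ref{defn:SimpleHomotopyManifold}) --- identifies $C_r^{\op{cell}}(D(M),B)$ with $C_r^{\op{cell}}(M,R_+)$; under these identifications the $\PL$ Lefschetz map becomes $-\acap\sigma$. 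The projection-induced maps are simple because $s$ is, and the Thom isomorphism is simple because, after subdividing so that $\pi$ is cellular over a cell structure on the base, it becomes an isomorphism of based complexes. Hence $-\acap\sigma$ for $(M;R_-,R_+)$ is a composite of simple chain equivalences, so it is simple. Finally, the homotopy equivalence of triads $f$ of Proposition~\ref{prop:cw-triads}, being simple on $M$, on $R_\pm$, on $R_-\cap R_+$ and on all the relevant pairs, carries this conclusion over to $(X,X_-,X_+)$ with $\sigma=f_*[M,\partial M]$, which proves the theorem.

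The main obstacle is the identification in the third step: matching, at the chain level and respecting the based structures, the $\PL$ Lefschetz duality of $D(M)$ --- with its iterated corner decomposition coming from $(M;R_-,R_+)$ and from the fibrewise boundary --- with the cap-product pairing of $(M;R_-,R_+)$, and in particular checking that the relative Thom isomorphism can genuinely be arranged to be a simple chain equivalence even though the zero-section of $D(M)\to M$ need not be a $\PL$ submanifold. This is precisely the point treated in \cite[Essay~III,~Theorem~5.13]{KS77}; the remaining inputs --- chain-level Poincar\'e--Lefschetz duality, simplicity of $\PL$ subdivisions, and the definition-level simplicity of the zero-section --- are available off the shelf.
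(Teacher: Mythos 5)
Your proposal takes essentially the same route as the paper, whose entire proof of this theorem is the citation to \cite[Essay~III,~Theorem~5.13]{KS77} (the text only establishes the non-simple chain homotopy equivalence, in Theorem~\ref{thm:poincareduality-chain-complex}). Your outline is a faithful and more detailed account of what that citation contains --- reduction to $\PL$ Lefschetz duality via dual cells on the disc bundle $D(M)$ of Construction~\ref{const:SimpleType}, with the chain-level Thom-isomorphism identification correctly flagged as the one point that still has to be taken from Kirby--Siebenmann --- so there is nothing to object to.
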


The theorem is proved in \cite[Essay~III,~Theorem~5.13,~p.~136]{KS77}.
In the Universal Poincar\'e Duality Theorem~\ref{thm:poincareduality-chain-complex} we will prove that there exists a chain homotopy equivalence between the two chain complexes. But we will not prove that there
exists a \emph{simple} homotopy equivalence; for that the reader will need to consult \cite{KS77}.

%================================
\section{Reidemeister torsion}\label{section:reidemeister-torsion}
In this section we introduce  Reidemeister torsion invariants  for compact manifolds
and  discuss some of the key properties of these invariants.

Let $M$ be a compact connected $n$-manifold and write $\pi=\pi_1(M)$.
Let $R_-$ be a compact codimension 0 submanifold of $\partial M$. In many applications $R_-=\emptyset$ or $R_-=\partial M$. We write $R_+=\ol{\partial M\sms R_-}$.
Let $F$ be a field and let $\alpha \colon \pi \to \GL(d,F)$ be a  representation of the fundamental group of $M$.  With respect to this representation, we consider the twisted homology $H_k(M,R_-;F^d)$, as defined in
Section~\ref{section:twisted-invariants}.

\begin{assumption}\label{assumption}
  Suppose that $H_k(M,R_-;F^d)=0$ for all $k$.
\end{assumption}

Pick a homotopy equivalence of triads $f\colon (M,R_-,R_+)\to (X,X_-,X_+)$
as in Proposition~\ref{prop:CW triads}. We use the homotopy equivalence $f$ to make the identification $\pi_1(X)=\pi$.
By a serious abuse of notation,
we refer to the cellular chain complex $C_*^{\op{cell}}(X,X_-; \Z[\pi])$ of $(X,X_-)$ as the \emph{cellular chain complex $C_*^{\op{cell}}(M,R_-; \Z[\pi])$ of $(M,R_-)$}.
As in Section~\ref{section:based-chain-complex} we view $C_*^{\op{cell}}(M,R_-; \Z[\pi])$ as a based left $\Z[\pi]$-module chain complex.
Equip the $F$-module chain complex $C_*^{\op{cell}}(M,R_-; F^d) = F^d \otimes_{\Z[\pi]} C_*^{\op{cell}}(M,R_-; \Z[\pi])$ with the basing given by the tensor products of the $\Z[\pi]$-bases of $C_*^{\op{cell}}(M,R_-; \Z[\pi])$ and the canonical $F$-basis for $F^d$.

We write $\sim_\alpha$ for the equivalence relation on $F^{\times}:=F\sms \{0\}$ that is
given by the subgroup $\{ \pm \det(\alpha (g)) \mid  g \in \pi_1(M)\}\subseteq F^\times$.
We define $\tau(M,R_-,\alpha)\in F^{\times}/\sim_\alpha$ to be the Reidemeister torsion of the above  acyclic, based $F$-module chain complex. We refer to  \cite[Section~6]{Turaev:2001-1} for the definition of the Reidemeister torsion of an acyclic, based $F$-module chain complex.
It follows from a slight generalisation of~\cite[Theorem~9.1]{Turaev:2001-1} that
$\tau(M,R_-,\alpha)\in F^{\times}/\sim_\alpha$ is well-defined,  in that it is independent of the choice of the representative of the simple homotopy type of $(X,X_-,X_+)$ and it is independent of the choice of the lifts of the cells.  If $R_-=\emptyset$ then we write $\tau(M,\alpha):=\tau(M,\emptyset,\alpha)$.

The following two theorems give the two arguably most important properties of Reidemeister torsion.

\begin{theorem}\label{thm:torsion-multiplicative}
Let $M$ be a compact connected $n$-manifold,  let $R_-$ be a compact codimension zero submanifold of $\partial M$ and let $\alpha \colon \pi_1(M) \to \GL(d,F)$ be a representation.
Let $R_-^1,\dots,R_-^m$ be the components of $R_-$.
By abuse of notation we also write $\alpha$ for the composition $\alpha \colon \pi_1(R_-^i) \to \pi_1(M) \to \GL(d,F)$ defined  using a path from the base point of $M$ to  a base point of  $R_i^i$. If $H_*(R_-^1;F^d) =\cdots = H_*(R_-^m;F^d) = H_*(M;F^d)=0$,  then
\[\tau(M,\alpha)\,\, =\,\, \prod\limits_{i=1}^{m}\, \tau(R_-^i,\alpha) \cdot \tau(M,R_-,\alpha)\,\,\in\, F^\times/\sim_\alpha.\]
\end{theorem}

\begin{proof}
 We have the following short exact sequence of chain complexes with compatible bases:
  \[0\,\,\to\,\, C_*^{\op{cell}}(X_-;F^d) \,\,\to\,\, C_*^{\op{cell}}(X;F^d) \,\,\to\,\, C_*^{\op{cell}}(X,X_-;F^d) \,\,\to\,\, 0.\]
Given such a short exact sequence, the multiplicativity of the torsion is proven in \cite[Theorem~3.4]{Turaev:2001-1}.
\end{proof}

\begin{definition}
Let $F$ be a field with (possibly trivial) involution.
Given a representation $\alpha\colon \pi \to \GL(d,F)$ we denote the representation
$g\mapsto \ol{\alpha(g^{-1})}^T$ by $\alpha^\dagger$. We say that $\alpha$ is \emph{unitary} if $\alpha=\alpha^\dagger$.
\end{definition}

\begin{example}
Let $\phi\colon \pi\to \Z$ be a group homomorphism. Equip $\Q(t)$ with the usual involution given by $\ol{t}=t^{-1}$. The representation
$\alpha\colon \pi\to \GL(1,\Q(t))$ given by $g\mapsto t^{\phi(g)}$ is unitary.
\end{example}

\begin{theorem}\label{thm:torsion-PD}
Let $M$ be a compact $n$-manifold with $($possibly empty$)$ boundary. Assume that we are given a decomposition $\partial M=R_-\cup R_+$ into codimension zero submanifolds such that $\partial R_-=R_-\cap R_+=\partial R_+$. Furthermore let $F$ be a field with $($possibly trivial$)$ involution.
Let $\alpha \colon \pi_1(M) \to \GL(d,F)$ be a representation such that $H_*(\partial M;F^d) =0 = H_*(M;F^d)$. Then
\[\tau(M,R_-,\alpha)\,\, = \,\,\overline{\tau(M,R_+,\alpha^\dagger)}^{(-1)^{n+1}} \in F^\times/\sim_\alpha.\]
In particular, if $\alpha$ is unitary we have
\[\tau(M,R_-,\alpha)\,\, = \,\,\overline{\tau(M,R_+,\alpha)}^{(-1)^{n+1}} \in F^\times/\sim_\alpha.\hspace{0.2cm}\]
\end{theorem}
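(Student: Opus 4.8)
The plan is to deduce Theorem~\ref{thm:torsion-PD} from Poincar\'e--Lefschetz duality at the chain level, combined with the simple Poincar\'e triad structure established in Theorem~\ref{thm:simple-homotopy-equivalence-chain-complexes}. The key observation is that Reidemeister torsion behaves predictably under dualising an acyclic based chain complex: if $C_*$ is an acyclic based chain complex of finitely generated free modules over a ring $R$ with involution, then its dual $C^{n-*} := \ol{\Hom_R(C_*,R)}$ (with the dual bases) satisfies $\tau(C^{n-*}) = \ol{\tau(C_*)}^{(-1)^{n+1}}$; this is a standard computation with torsion, essentially because the boundary matrices of the dual complex are the conjugate-transposes of the originals, reindexed, and each transposition/conjugation/reindexing contributes the appropriate sign or bar (see e.g.\ \cite[Section~14]{Turaev:2001-1}). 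The only subtlety is keeping track of the exponent $(-1)^{n+1}$, which comes from how the reindexing $r \mapsto n-r$ permutes even and odd degrees.

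First I would apply Theorem~\ref{thm:simple-homotopy-equivalence-chain-complexes} to the triad $(X,X_-,X_+)$ provided by Proposition~\ref{prop:cw-triads} for the decomposition $\partial M = R_- \cup R_+$: there is a chain-level fundamental class $\sigma \in C_n^{\op{cell}}(X, X_-\cup X_+)$ such that cap product
\[
-\acap \sigma \colon C^{n-*}_{\op{cell}}(X,X_-;\Z[\pi]) \to C_*^{\op{cell}}(X,X_+;\Z[\pi])
\]
is a \emph{simple} chain homotopy equivalence of based $\Z[\pi]$-module chain complexes. Tensoring with $F^d$ along $\alpha$ (respectively along $\alpha^\dagger$ on the dual side, to match the involution) turns this into a simple homotopy equivalence of based $F$-module chain complexes. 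Under Assumption~\ref{assumption} — which holds here since $H_*(\partial M;F^d)=0=H_*(M;F^d)$ forces $H_*(M,R_\pm;F^d)=0$ by the long exact sequence of the pair together with the fact that $H_*(R_\pm;F^d)=0$ (this last point itself follows from $H_*(\partial M;F^d)=0$ and a Mayer--Vietoris argument over $\partial M = R_-\cup_{R_-\cap R_+} R_+$, noting $H_*(R_-\cap R_+;F^d)=0$ by Poincar\'e duality for the closed manifold $R_-\cap R_+$ applied degreewise) — both chain complexes are acyclic, so torsions are defined.

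Next I would chase the multiplicativity of torsion under simple homotopy equivalence: a simple homotopy equivalence between acyclic based complexes has trivial torsion, so $\tau\big(C^{n-*}_{\op{cell}}(M,R_-;F^d)\big) = \tau\big(C_*^{\op{cell}}(M,R_+;F^d)\big)$ in $F^\times/\!\sim_\alpha$, where the dualising uses $\alpha^\dagger$. Then I invoke the duality formula for torsion of a dual complex recalled above to get $\tau\big(C^{n-*}_{\op{cell}}(M,R_-;F^d)\big) = \ol{\tau(M,R_-,\alpha^\dagger)}^{(-1)^{n+1}}$; combining, $\ol{\tau(M,R_-,\alpha^\dagger)}^{(-1)^{n+1}} = \tau(M,R_+,\alpha)$, i.e.\ $\tau(M,R_+,\alpha) = \ol{\tau(M,R_-,\alpha^\dagger)}^{(-1)^{n+1}}$. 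Swapping the roles of $R_-$ and $R_+$ (which just reverses which complex we dualise) yields the stated $\tau(M,R_-,\alpha) = \ol{\tau(M,R_+,\alpha^\dagger)}^{(-1)^{n+1}}$. The unitary case is then immediate from $\alpha = \alpha^\dagger$.

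The main obstacle will be bookkeeping: making sure that the conjugation on $F$, the dual bases, the equivalence relation $\sim_\alpha$ versus $\sim_{\alpha^\dagger}$ (note $\{\pm\det\alpha(g)\} $ and $\{\pm\det\alpha^\dagger(g)\} = \{\pm\ol{\det\alpha(g)^{-1}}\}$ agree as subgroups of $F^\times$ after applying the bar, so the quotient is the same), and the sign exponent $(-1)^{n+1}$ all align consistently. I would be careful to cite \cite[Section~14]{Turaev:2001-1} (or the analogous discussion of duality of torsion) for the precise form of the dual-complex torsion formula rather than rederiving it, and to note that the basing of $C_*^{\op{cell}}(M,R_-;F^d)$ used throughout is the canonical one coming from lifts of cells tensored with the standard basis of $F^d$, which is exactly the basing for which $\tau(M,R_-,\alpha)$ was defined. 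One should also remark that the independence of $\tau$ from the choices of lifts and of simple-homotopy representative (already noted after Assumption~\ref{assumption}) is what makes the identity well-defined in $F^\times/\!\sim_\alpha$.
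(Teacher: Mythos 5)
Your argument follows the same route as the paper's proof: chain-level simple Poincar\'e--Lefschetz duality for the triad (Theorem~\ref{thm:simple-homotopy-equivalence-chain-complexes}), invariance of torsion under simple chain homotopy equivalence, the identification of the $\alpha$-twisted cochain complex with the conjugate $F$-dual of the $\alpha^\dagger$-twisted chain complex, and Turaev's algebraic duality for torsion (the paper cites \cite[Theorem~1.9]{Turaev:2001-1}) to produce the bar and the exponent $(-1)^{n+1}$. The only structural difference is that you dualise the $(M,R_-)$ side and recover the statement by swapping $R_\mp$ at the end, whereas the paper dualises the $(M,R_+)$ side directly; this is immaterial.

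One subsidiary step does fail, namely your verification of Assumption~\ref{assumption}. You assert that $H_*(R_-\cap R_+;F^d)=0$ ``by Poincar\'e duality for the closed manifold $R_-\cap R_+$'', but Poincar\'e duality only gives $H_k(R_-\cap R_+;F^d)\cong H^{n-2-k}(R_-\cap R_+;F^d)$, not vanishing. Indeed, the vanishing of $H_*(R_\pm;F^d)$ (equivalently, via your Mayer--Vietoris sequence, of $H_*(R_-\cap R_+;F^d)$) does not follow from the stated hypotheses: take $M=S^1\times D^2$ with $\alpha\colon \pi_1(M)=\Z\to \GL(1,\Q(t))$ sending the generator to $t$, and let $R_-$ be an annular neighbourhood of a meridian in $\partial M=T^2$. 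Then $H_*(M;\Q(t))=0=H_*(T^2;\Q(t))$, yet $H_*(R_-;\Q(t))\neq 0$ because $\alpha$ restricts trivially to $\pi_1(R_-)$, so $H_*(M,R_-;\Q(t))\cong H_{*-1}(R_-;\Q(t))\neq 0$ and $\tau(M,R_-,\alpha)$ is not even defined. The paper's own proof silently assumes the relevant complexes are acyclic; to be watertight one should add $H_*(R_\pm;F^d)=0$ as a hypothesis (as Theorem~\ref{thm:torsion-multiplicative} does for $R_-$), which holds automatically in the cases $R_-=\emptyset$ or $R_-=\partial M$ arising in the applications. Apart from this point, your outline is correct.
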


\begin{proof}
We write $\pi=\pi_1(M)$.
Write $C_*^\pm=C_*^{\op{cell}}(M,R_\pm; \Z[\pi])$, recalling the convention described below Assumption~\ref{assumption}.

It follows from Theorem~\ref{thm:simple-homotopy-equivalence-chain-complexes} that the torsion of the  based $F$-module chain complex
$F^d\otimes_{\Z[\pi]} C_*^{-}$ agrees with the torsion
of the based $F$-module chain complex
\[F^d\otimes_{\Z[\pi]}\hom_{\opnormal{right-}\Z[\pi]}(\ol{C_{n-*}^+},\Z[\pi]).\]
Consider the following isomorphism of based left $F$-module chain complexes
\[ \ba{rcl}F^d_{\alpha}\otimes_{\Z[\pi]}\hom_{\opnormal{right-}\Z[\pi]}(\ol{C_{n-*}^+},\Z[\pi])
&\to&\ol{\hom_{\opnormal{left-} F}(F^d_{\alpha^\dagger}\otimes_{\Z[\pi]} C_{n-*}^+,F)}\\
v\otimes \varphi&\mapsto & \left( \ba{rcl} F^d_{\alpha^\dagger}\otimes_{\Z[\pi]} \ol{C^+_{n-*}}&\to & F\\
(w\otimes \sigma)&\mapsto & \ol{v\alpha(\varphi(\sigma))\ol{w}^T}\ea\right)\ea\]
Using this isomorphism $\tau(M,R_-,\alpha)$ also equals the torsion of the chain complex on the right hand side.
It follows from algebraic duality for torsions~\cite[Theorem~1.9]{Turaev:2001-1} that the torsion of the based chain complex on the right hand side equals
$\overline{\tau(M,R_+,\alpha^\dagger)}^{(-1)^{n+1}}$.
\end{proof}

%=================================
\chapter{Obstructions to being topologically slice}

%===========================================
\section{The Fox-Milnor Theorem}
In this section we provide an example of the use of many of the theorems described  in the previous chapters by applying them to obtain an obstruction for a knot to be topologically slice.

\begin{definition}
Let $Y$ be a  homology $3$-sphere that is the boundary of  an integral homology $4$-ball~$X$.
\bnm
\item We say a knot $K$ in  $Y$ is \emph{topologically slice in $X$} if  $K$ bounds a \emph{slice disc}, that is a proper submanifold of $X$ homeomorphic to a disc.
\item Suppose $X$ is equipped with a smooth structure, e.g.\ $X=D^4$.
We say a knot $K$ in  $Y$ is \emph{smoothly slice in $X$} if  $K$ bounds a \emph{smooth slice disc}, that is a proper smooth submanifold of $X$ diffeomorphic to a disc.
\enm
\end{definition}

There are many classical obstructions to a knot in $S^3$ being smoothly slice in $D^4$. For example, there are obstructions based on the Alexander polynomial \cite{Fox-Milnor:1966-1} and the Levine-Tristram signatures~\cite{Tristram:1969-1,Le69} and there are the more subtle Casson-Gordon~\cite{Casson-Gordon:1978-1,Casson-Gordon:1986-1} obstructions. It is not hard to see that these results also apply if we replace $S^3$ by any integral homology 3-sphere and if we replace $D^4$ by any smooth homology 4-ball.

Even though these results, having appeared prior to the work of Freedman and Quinn, were formulated as obstructions to being smoothly slice, it has been understood for many years that the original proofs can be modified to prove that these are in fact obstructions to being topologically slice.

In this section we will prove a sample theorem on the Alexander polynomial $\Delta_K(t)$ of a knot $K$. (On page~\pageref{def:alexander-polynomial} we recall the definition of the Alexander polynomial of a knot.)
The following theorem, which in the smooth setting was first proved by Fox-Milnor~\cite{Fox-Milnor:1966-1},
is arguably the most basic obstruction to a knot being  topologically slice knot.

\begin{theorem}\textbf{\textup{(Fox-Milnor)}}\label{theorem:fox-milnor}
Suppose that $K$ is a knot in a homology $3$-sphere $Y$ that bounds an integral homology $4$-ball~$X$. If $K$ is topologically slice in $X$, then the Alexander polynomial $\Delta_K(t)$ of $K$ factors as $\Delta_K(t) = \pm t^k\cdot f(t)\cdot  f(t^{-1})$ for some $k \in \Z$ and for some $f(t) \in \Z[t,t^{-1}]$ such that $f(1) = \pm 1$.
\end{theorem}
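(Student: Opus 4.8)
The plan is to run the classical Fox--Milnor argument, being careful that every ingredient works in the topological category, where it is licensed by the results collected above. Let $K \subset Y$ bound a locally flat slice disc $D$ in the integral homology $4$-ball $X$. By Theorem~\ref{thm:tubular-neighbourhood} (existence of tubular neighbourhoods in $4$-manifolds) the disc $D$ has a tubular neighbourhood $\nu D \cong D \times D^2$, since $D$ is contractible. Set $W := X \sm \nu D$, the exterior of the slice disc, which is a compact $4$-manifold with $\pi_1$ mapping onto $\Z$ via the abelianisation of $\pi_1(W)$ (the meridian of $D$ generates $H_1(W;\Z) \cong \Z$, by an Alexander-duality/Mayer--Vietoris computation using that $X$ is a homology ball). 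First I would record these homological facts: $H_1(W;\Z) \cong \Z$, and the boundary of $W$ is the zero-framed surgery $M_K$ on $K$ (or, if $Y \neq S^3$, the obvious analogue), obtained by gluing the knot exterior in $Y$ to the boundary of $\nu D$.

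Next I would pass to the infinite cyclic cover. Let $\phi \colon \pi_1(W) \to \Z = \langle t \rangle$ be the abelianisation map and consider $H_1$ of the associated $\Z$-cover with $\Q[t,t^{-1}]$-coefficients; more precisely I would work with the Blanchfield-type pairing, or equivalently with Reidemeister torsion. Here is where the torsion technology of Section~\ref{section:reidemeister-torsion} enters: take $F = \Q(t)$ with the involution $\overline{t} = t^{-1}$ and the unitary one-dimensional representation $\alpha \colon \pi_1(W) \to \GL(1,\Q(t))$, $g \mapsto t^{\phi(g)}$ (unitary by the Example following the definition of $\alpha^\dagger$). The key input is that $H_*(M_K;\Q(t)) = 0$ (because $\Delta_K(t) \neq 0$), and that $H_*(W;\Q(t)) = 0$ as well --- this last vanishing is exactly where I would use that $X$ is an \emph{integral homology ball}, so that $W$ is $\Q(t)$-acyclic by a half-lives-half-dies argument combined with the homology-ball hypothesis. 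Then the relevant torsions $\tau(W,\alpha)$, $\tau(M_K,\alpha)$, $\tau(\text{knot exterior},\alpha)$ are defined. By the multiplicativity of torsion (Theorem~\ref{thm:torsion-multiplicative}) applied to $M_K = \partial W$, one has $\tau(M_K,\alpha) \doteq \tau(W,\alpha) \cdot \tau(W, M_K, \alpha)$, and by the duality theorem for torsion (Theorem~\ref{thm:torsion-PD}), with $n = 4$ so that $(-1)^{n+1} = -1$, one gets $\tau(W, M_K, \alpha) \doteq \overline{\tau(W, \emptyset, \alpha)}^{-1} = \overline{\tau(W,\alpha)}^{-1}$. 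Combining, $\tau(M_K,\alpha) \doteq \tau(W,\alpha) \cdot \overline{\tau(W,\alpha)}^{-1}$ up to the indeterminacy $\sim_\alpha$, i.e.\ up to a factor $\pm t^k$.

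The final step is to identify $\tau(M_K,\alpha)$ with the Alexander polynomial. It is a standard computation (Milnor, Turaev) that for the zero-surgery $M_K$ the Reidemeister torsion with respect to the canonical $\Z$-cover equals $\Delta_K(t)/(t-1)$ up to units, and hence $\tau(M_K,\alpha) \doteq \Delta_K(t)$ up to multiplication by $\pm t^j$ (the $(t-1)$ factors cancel in the appropriate normalisation, or can be absorbed into the indeterminacy; I would be careful here). Writing $\tau(W,\alpha) = f(t)/g(t)$ as a ratio of coprime polynomials, the relation above gives $\Delta_K(t) \doteq \pm t^k \, f(t)g(t^{-1}) / \big(g(t)f(t^{-1})\big)$; but $\Delta_K(t)$ is a polynomial, and an elementary divisibility argument (using that $f,g$ are coprime and that $\Delta_K$ is $\pm t^j$-symmetric) forces $g$ to divide $f(t^{-1})$ up to units and vice versa, yielding $\Delta_K(t) \doteq \pm t^k f(t) f(t^{-1})$ for a single polynomial $f \in \Z[t,t^{-1}]$; integrality of $f$ and the normalisation $f(1) = \pm 1$ follow by evaluating at $t=1$ and using $\Delta_K(1) = \pm 1$, after clearing denominators to land in $\Z[t,t^{-1}]$ (one can always rechoose $f$ to be an integer Laurent polynomial since $\Z[t,t^{-1}]$ has the requisite factoriality).

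\textbf{The main obstacle.} The topological-category subtleties --- existence of the tubular neighbourhood $\nu D$, the fact that $W$ has the homotopy type of a finite CW complex so that torsion is defined, and the simple-homotopy invariance needed for torsion to be well-defined --- are all supplied by the earlier sections (Theorems~\ref{thm:tubular-neighbourhood}, \ref{thm:topological-manifold-CW complex}, and the machinery of Section~\ref{section:reidemeister-torsion}), so these are not where the difficulty lies. The genuine technical heart is the bookkeeping of indeterminacies and the passage from the torsion identity $\tau(M_K,\alpha) \doteq \tau(W,\alpha)/\overline{\tau(W,\alpha)}$ to the clean factorisation statement: one must control the $\pm t^k$ ambiguities, verify the $\Q(t)$-acyclicity of $W$ from the homology-ball hypothesis, and make the coprimality/integrality argument rigorous so that $f$ can be taken in $\Z[t,t^{-1}]$ with $f(1) = \pm 1$ rather than merely in $\Q(t)$.
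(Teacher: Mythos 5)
Your strategy is exactly the paper's second proof: exterior $W_D$ of the slice disc via the tubular neighbourhood theorem, $\Q(t)$-acyclicity from the homology-ball hypothesis, then multiplicativity and duality of Reidemeister torsion, and Turaev's computation $\tau(N_K,\phi)\doteq \Delta_K(t)/((t-1)(t^{-1}-1))$. (The paper also gives a first proof via orders of Alexander modules and the universal coefficient spectral sequence, which you do not pursue.) The supporting facts you assert -- $\partial W_D = N_K$ with the zero framing, $H_1(W_D;\Z)\cong\Z$, finite generation and $\Lambda$-torsionness of the twisted homology -- are precisely the paper's Propositions on the slice disc exterior; the torsion argument for $H_*(W_D;\Z[t^{\pm1}])$ uses chain homotopy lifting over the localisation at polynomials with $p(1)=\pm1$, not a half-lives-half-dies count, but the input (that $\mu_K\to W_D$ is a $\Z$-homology equivalence, hence the homology-ball hypothesis) is the one you identify.

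There is, however, a concrete error in your application of multiplicativity. Theorem~\ref{thm:torsion-multiplicative} with $M=W_D$ and $R_-=N_K=\partial W_D$ reads $\tau(W_D,\alpha)=\tau(N_K,\alpha)\cdot\tau(W_D,N_K,\alpha)$; you have written it with $W_D$ and $N_K$ interchanged. Combined with duality $\tau(W_D,N_K,\alpha)=\overline{\tau(W_D,\alpha)}^{-1}$, the correct conclusion is $\tau(N_K,\alpha)\doteq\tau(W_D,\alpha)\cdot\overline{\tau(W_D,\alpha)}$, a \emph{norm}, whereas your version yields the quotient $\tau(W_D,\alpha)\cdot\overline{\tau(W_D,\alpha)}^{-1}$. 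This is not a harmless sign: from $\Delta_K\doteq f\overline{g}/(g\overline{f})$ with $f,g$ coprime, the divisibility argument forces $g\mid\overline{g}$ and $\overline{f}\mid f$, i.e.\ $f\doteq\overline{f}$ and $g\doteq\overline{g}$, which collapses the right-hand side to a unit rather than producing $f(t)f(t^{-1})$. With the corrected identity one gets $\Delta_K\doteq \big(f\overline{f}/(g\overline{g})\big)\cdot(t-1)\overline{(t-1)}$ and the argument by irreducible factors goes through. Relatedly, the factor $(t-1)(t^{-1}-1)$ cannot be "absorbed into the indeterminacy" -- it is not a unit of $\Z[t^{\pm1}]$ -- but since it is itself a norm this does not obstruct the conclusion; the evaluation $\Delta_K(1)=\pm1$ then lets you clear denominators and normalise $f(1)=\pm1$ as you describe.
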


Even though this result is very well known we want to provide a detailed proof. In particular we want to highlight where some of the results discussed in this book are used.
The reader is encouraged to go through the above  papers
\cite{Tristram:1969-1,Le69,Casson-Gordon:1978-1,Casson-Gordon:1986-1}
and to modify the proofs to deal with topologically slice knots.

%===========================================
\section{A proof of the Fox-Milnor Theorem}
For the proof of the Fox-Milnor Theorem~\ref{theorem:fox-milnor} we adopt the following notation.
\bnm
\item  Let $Y$ be a  homology $3$-sphere   bounding some integral homology $4$-ball~$X$.
\item Given a knot $K$ in  $Y$, denote its zero framed surgery by $N_K$.
\item Given an oriented knot $K$ let $\mu_K$ be an oriented meridian.
\item For a slice disc $D$ in $X$, let $N(D)$ be a tubular neighbourhood provided by Theorem~\ref{thm:tubular-neighbourhood}.
We refer to $W_D=\ol{X\sms N(D)}$ as the \emph{exterior of $D$}.
\item The ring of integral Laurent polynomials in one variable is denoted $\Z[t, t^{-1}]$ or $\Z[t^{\pm 1}]$.
\enm
Many topological slicing obstructions, such as knot signatures \cite{Tristram:1969-1}, the Fox-Milnor condition \cite{Fox-Milnor:1966-1}, the Blanchfield form~\cite{Ke75}, Casson-Gordon invariants~\cite{Casson-Gordon:1978-1,Casson-Gordon:1986-1}, $L^2$-signature defects~\cite{COT03}  and $L^{(2)}$-von Neumann $\rho$-invariants~\cite{CT07}, rely implicitly and explicitly  on the next three propositions  or slight variations thereof.

\begin{proposition}\label{prop:exterior-slice-disc-h1}
Let $K$ be an oriented knot in $Y$ and let $D$ be a slice disc in $X$.
\bnm
\item We have $\partial W_D=N_K$.
\item The inclusion map $\mu_K\to W_D$ induces a $\Z$-homology equivalence.
\enm
\end{proposition}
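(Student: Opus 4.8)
The statement has two parts. For (1), the claim $\partial W_D = N_K$ comes down to unwinding the definitions. The exterior $W_D = \overline{X \sm N(D)}$ has boundary made of two pieces: the part of $\partial X = Y$ left over after removing the tubular neighbourhood of $\partial D = K$, and the fibrewise boundary of the tubular neighbourhood $N(D)$ of the disc $D$. By Theorem~\ref{thm:tubular-neighbourhood} applied to the proper submanifold $D \subset X$, and by Proposition~\ref{prop:trivial-normal-bundle} (since $D$ is a disc with nonempty boundary, its normal bundle is trivial), we may write $N(D) \cong D \times D^2$ with $N(D) \cap Y = K \times D^2$ a tubular neighbourhood of $K$ in $Y$. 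Then $\partial W_D = (Y \sm \mathrm{int}(K \times D^2)) \cup_{K \times S^1} (D \times S^1)$, where the gluing identifies $K \times S^1 \subset Y$ with the restriction of $\partial(D \times D^2) = (D \times S^1) \cup (K \times D^2)$ to the $D \times S^1$ piece. The framing of $D^2 \times S^1$ coming from the trivialisation of the normal bundle of a slice disc is the $0$-framing (this is the standard fact that a slice disc induces the Seifert/$0$-framing on $K$, which one checks using that $K$ is null-homologous in $W_D$; I would cite or recall this). Hence the result is exactly $0$-framed surgery $N_K$ on $K$ in $Y$.

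For (2), I want to show the inclusion $\mu_K \hookrightarrow W_D$ induces an isomorphism on $H_*(-;\Z)$. Since both spaces are connected, $H_0$ is clear; and $\mu_K \simeq S^1$, so I need $H_1(W_D;\Z) \cong \Z$ generated by $\mu_K$ and $H_k(W_D;\Z) = 0$ for $k \geq 2$. The main computation is a Mayer--Vietoris (or Alexander/Lefschetz duality) argument in the integral homology $4$-ball $X = W_D \cup_{N_K \text{-ish}} N(D)$. Decompose $X = W_D \cup N(D)$ with $W_D \cap N(D) \cong K \times S^1 \times \{pt\}$-thickened, i.e.\ homotopy equivalent to $T^2$ — actually the intersection is the fibrewise boundary $D \times S^1 \simeq S^1$ union a collar, so I would set it up carefully as $W_D \cap N(D) \simeq S^1$. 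With $H_*(X;\Z) \cong H_*(\mathrm{pt};\Z)$, $H_*(N(D);\Z) \cong H_*(D \times D^2;\Z) \cong H_*(\mathrm{pt};\Z)$, and $H_*(W_D \cap N(D);\Z) \cong H_*(S^1;\Z)$, the Mayer--Vietoris sequence forces $H_1(W_D;\Z) \cong \Z$ and $H_k(W_D;\Z) = 0$ for $k \geq 2$. Tracking generators shows the $\Z$ in $H_1$ is carried by $\mu_K$, since $\mu_K$ bounds a disc fibre in $N(D)$ and generates $H_1$ of the intersection region.

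The main obstacle, I expect, is getting the boundary identification in (1) clean and correct — in particular verifying that the trivialisation of the normal bundle of the slice disc produces precisely the $0$-framing, so that the glued-up boundary is honestly $N_K$ and not surgery with some other framing. Once the normal bundle is pinned down via Theorem~\ref{thm:tubular-neighbourhood} and Proposition~\ref{prop:trivial-normal-bundle}, and the $0$-framing identified (using that $[K] = 0 \in H_1(W_D;\Z)$, which is part of what (2) establishes, so there is a mild circularity to untangle by doing the homology computation with a generic framing first), both parts follow from routine Mayer--Vietoris. I would also note that the hypothesis $X$ is an \emph{integral} homology ball (not just rational) is exactly what makes the $H_1(W_D;\Z) \cong \Z$ conclusion hold on the nose.
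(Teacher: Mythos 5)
Your part (2) is correct and is essentially the paper's argument in a different guise: you run Mayer--Vietoris on $X = W_D \cup N(D)$ with intersection $D\times S^1 \simeq S^1$, whereas the paper computes $H_*(W_D,\mu_K;\Z) \cong H_*(W_D, D\times S^1;\Z) \cong H_*(X, D\times D^2;\Z)=0$ by excision and then uses the long exact sequence of the pair; these are interchangeable, and both correctly isolate where the \emph{integral} homology ball hypothesis enters. Where you genuinely diverge is the framing verification in part (1). The paper closes up $D$ with a pushed-in Seifert surface $\Sigma$ to form a closed surface $F=\Sigma\cup -D$ in the double (written there for $S^4$), and computes the Euler number of $\nu F$ two ways: it vanishes because $[F]\cdot[F]=0$ in a homology $4$-sphere, and it equals the difference of the framings induced by $\nu\Sigma|_K$ and $\nu D|_K$, forcing the disc framing to be the $0$-framing. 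You instead propose the intrinsic homological argument: the disc-framed longitude $\lambda_D$ bounds a parallel copy $D\times\{\mathrm{pt}\}$ and is therefore null-homologous in $W_D$, the $0$-framed longitude is also null-homologous there, and since $H_1(W_D;\Z)\cong\Z$ is generated by $\mu_K$, the difference $n\mu_K$ must vanish, so $n=0$. This is valid, and your remark about the apparent circularity is correctly resolved: the homology computation of $W_D$ never uses which framing the surgery realises, so it may be done first. Your route has the advantage of staying inside $X$ and generalising immediately to arbitrary homology balls without forming a double; the paper's route avoids any whiff of circularity and is self-contained at the level of intersection numbers. Do spell out the longitude bookkeeping ($\lambda_D=\lambda_0+n\mu_K$ on $\partial\nu K$, both $\lambda_D$ and $\lambda_0$ die in $H_1(W_D)$, $\mu_K$ has infinite order), since as written you only gesture at ``$K$ is null-homologous in $W_D$'', which is not quite the statement you need.
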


In the remainder of this section, given an oriented knot, we use $\phi\colon \pi_1(N_K)\to \langle t\rangle$ and  $\phi\colon \pi_1(W_D)\to \langle t\rangle$ to denote the unique homomorphisms that send the oriented meridian to~$t$. These homomorphisms allow us to view $\Z[t^{\pm 1}]$ and $\Q(t)$ as a $\Z[\pi_1(N_K)]$-module and a $\Z[\pi_1(W_D)]$-module.

\begin{proof}
First note that it follows from Proposition~\ref{prop:trivial-normal-bundle} (or more directly, the fact that $D$ is contractible) that the tubular neighbourhood of $D$ is trivial, thus we can identify it with $D\times D^2$.
\bnm
\item We have to check that the framing of $K$ induced by the unique trivialisation $N(D)\cong D^2 \times D$ is the $0$--framing. Consider the double $DX=X\cup_YX$ .
Note that $Y$ splits  $DX$ into two copies of $X$. Let $D$ be contained in  one copy of $X$, and push a Seifert surface~$\Sigma$ into the other copy of $X$. By picking a collar neighbourhood for $Y=\partial X$ we obtain a bicollar neighbourhood $Y \times [-1,1]\subseteq DX$. Using Theorem~\ref{thm:collar-boundary} we can arrange that $D \cap (Y\times [-1,1]) = K \times [-1,0]$, and $\Sigma \cap (Y\times [0,1])= K \times [0,1]$. Let $F = \Sigma \cup -D \subseteq DX$. We compute the Euler number $e(F) \in \Z$ in two ways. First, note that $e(F) = [F] \cdot [F] = 0$, since $H_2(DX; \Z) = 0$.
	On the other hand, the number $e(F)$ is also the difference between the induced framings of $N(\Sigma)|_K$ and $N(D) |_K$. Consequently, the two framings agree and $N(D)$ induces the $0$--framing, which by definition is the framing induced by $N( \Sigma)|_K$.
\item
Let $\mu_K \to W_D$ be the inclusion of the meridian $\mu_K$ of $K$.
Then we have $H_*(W_D,*\times \mu_K;\Z)=H_*(W_D,D\times S^1;\Z)=H_*(X,D\times D^2)=0$  by excision and the hypothesis that $X$ be a homology $4$-ball. By the homology long exact sequence for the pair $(W_D,\mu_K)$, the meridional map $\mu_K\to W_D$ induces a homology equivalence, so $W_D$ is a homology circle.\qedhere
\enm
\end{proof}

\begin{proposition}\label{prop:exterior-slice-disc-mfd}
The exterior $W_D$ of a slice disc $D$ is homotopy equivalent to a finite $3$-dimensional CW complex. In particular the homology groups \[ H_*(W_D;\Z[t^{\pm 1}]),\,\, H_*(W_D,N_K;\Z[t^{\pm 1}]) \text{ and } H_*(N_K;\Z[t^{\pm 1}])\] are all finitely generated.
\end{proposition}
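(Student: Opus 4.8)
\textbf{Proof proposal for Proposition~\ref{prop:exterior-slice-disc-mfd}.}

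The plan is to reduce everything to results already established in the excerpt. First I would observe that $W_D = \overline{X \sm N(D)}$ is a compact $4$-manifold: indeed $N(D)$ is a tubular neighbourhood of the proper submanifold $D \subset X$ by Theorem~\ref{thm:tubular-neighbourhood}, so $N(D)$ is a codimension zero submanifold with corners, and removing its interior leaves a compact $4$-manifold with boundary. In the language of Section~\ref{sec:tubular}, $W_D$ is exactly the exterior $E_D$ of the submanifold $D$. Since $D$ is a properly embedded disc, it is in particular a compact connected $4$-manifold with nonempty boundary; wait — rather, the point is that $W_D$ is a connected compact $4$-manifold with nonempty boundary (its boundary is $N_K$ by Proposition~\ref{prop:exterior-slice-disc-h1}(1), which is nonempty). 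So Theorem~\ref{thm:topological-manifold-CW complex}~(\ref{item:topcwcx2}) applies directly: every connected compact $n$-manifold with nonempty boundary is homotopy equivalent to an $(n-1)$-dimensional finite CW complex. With $n = 4$ this gives that $W_D$ is homotopy equivalent to a finite $3$-dimensional CW complex, which is the first assertion.

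For the second assertion, I would argue as follows. Let $X_3$ be a finite $3$-dimensional CW complex homotopy equivalent to $W_D$, and identify $\pi := \pi_1(W_D) \cong \pi_1(X_3)$ via the homotopy equivalence. The homomorphism $\phi \colon \pi \to \langle t \rangle$ makes $\Z[t^{\pm 1}]$ into a $\Z[\pi]$-module, and the associated cellular chain complex $C_*^{\op{cell}}(\wti{X}_3;\Z) \otimes_{\Z[\pi]} \Z[t^{\pm 1}]$ is a chain complex of finitely generated $\Z[t^{\pm 1}]$-modules concentrated in degrees $0,1,2,3$, since $X_3$ has finitely many cells in each of those dimensions and none in higher dimensions. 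Its homology computes $H_*(W_D;\Z[t^{\pm 1}])$, and a bounded complex of finitely generated modules over the Noetherian ring $\Z[t^{\pm 1}]$ has finitely generated homology; hence $H_*(W_D;\Z[t^{\pm 1}])$ is finitely generated over $\Z[t^{\pm 1}]$. Since $N_K = \partial W_D$ is a closed $3$-manifold, by Theorem~\ref{thm:n-not-equal-4-CW structure}~(1) it has a finite CW structure (indeed a simplicial structure), so the same Noetherian argument shows $H_*(N_K;\Z[t^{\pm 1}])$ is finitely generated. Finally $H_*(W_D, N_K;\Z[t^{\pm 1}])$ is finitely generated either by the long exact sequence of the pair together with the two previous finiteness statements, or directly from the relative cellular chain complex of the CW pair $(W_D, N_K)$ — here one should note that $(W_D, \partial W_D)$ is also homotopy equivalent to a finite CW pair, which follows from the refined statement of Proposition~\ref{prop:cw-triads} (or, more elementarily, from the fact that $\partial W_D$ has a collar in $W_D$ by Theorem~\ref{thm:collar}, so the pair is a finite CW pair up to homotopy).

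The main obstacle, such as it is, is purely bookkeeping: making sure that the finite $3$-dimensional CW model coming from Theorem~\ref{thm:topological-manifold-CW complex}~(\ref{item:topcwcx2}) can be taken \emph{compatibly} with the boundary inclusion $N_K \hookrightarrow W_D$, so that the relative group $H_*(W_D, N_K;\Z[t^{\pm 1}])$ can be read off a finite chain complex. I expect the cleanest route is to cite Proposition~\ref{prop:cw-triads}, which provides a homotopy equivalence of triads realising the simple homotopy types of $W_D$, of $\partial W_D$, and of the pair — giving finite CW structures on everything in sight simultaneously. Once that is in place, every homology group in the statement is the homology of a bounded complex of finitely generated free $\Z[t^{\pm 1}]$-modules, and finite generation is immediate from Noetherianity of $\Z[t^{\pm 1}]$. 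No delicate point-set or transversality input is needed beyond what has already been assembled.
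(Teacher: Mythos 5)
Your proof is correct and follows essentially the same route as the paper: the first claim is Theorem~\ref{thm:topological-manifold-CW complex}~(2) applied to the compact $4$-manifold with nonempty boundary $W_D$, and the finite-generation claims are exactly the content of Proposition~\ref{prop:twisted-homology-fg}, which the paper simply cites and whose proof (via Proposition~\ref{prop:finite-chain-complex}) is the Noetherian-plus-finite-free-chain-complex argument you unroll. The bookkeeping issue you raise about a CW model compatible with the boundary is handled there by a mapping-cylinder construction rather than by Proposition~\ref{prop:cw-triads}, but your alternative via the long exact sequence of the pair also works.
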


\begin{proof}
Note  that  $W_D$ is a compact 4-manifold with nonempty boundary.
It follows from Theorem~\ref{thm:topological-manifold-CW complex} that $W_D$ is homotopy equivalent to a 3-dimensional CW complex. The statements regarding the homology groups follow from
Proposition~\ref{prop:twisted-homology-fg}.
\end{proof}

\begin{proposition}\label{prop:alexander-module-w-torsion}\mbox{}
\bnm
\item For any knot $K$ in a homology 3-sphere the modules $H_*(N_K;\Z[t^{\pm 1}])$  are $\Z[t^{\pm 1}]$-torsion.
\item If  $D$ is a slice disc, then all the modules $H_*(W_D;\Z[t^{\pm 1}])$ are $\Z[t^{\pm 1}]$-torsion.
\enm
\end{proposition}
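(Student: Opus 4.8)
The key input is that $\Z[t^{\pm1}]$ is a Noetherian domain, so a finitely generated module over it is torsion if and only if it becomes zero after tensoring with the quotient field $\Q(t)$. By Proposition~\ref{prop:exterior-slice-disc-mfd} (for part (2)) and Proposition~\ref{prop:twisted-homology-fg} (cited there, for part (1)) the modules $H_*(N_K;\Z[t^{\pm1}])$ and $H_*(W_D;\Z[t^{\pm1}])$ are finitely generated over $\Z[t^{\pm1}]$. By the universal coefficient spectral sequence (or just flatness of $\Q(t)$ over $\Z[t^{\pm1}]$), we have $H_*(N_K;\Q(t))\cong \Q(t)\otimes_{\Z[t^{\pm1}]} H_*(N_K;\Z[t^{\pm1}])$, and similarly for $W_D$. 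So it suffices to show that the $\Q(t)$-homology of $N_K$ and of $W_D$ vanishes in all degrees.

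For part (1): I would first handle $H_0$. The augmentation $\Z[t^{\pm1}]\to\Z$, $t\mapsto 1$, shows $H_0(N_K;\Z[t^{\pm1}])\cong \Z$ with trivial $t$-action (since $N_K$ is connected and $\phi$ is surjective), hence $t-1$ acts as multiplication by $0$; this is $(t-1)$-torsion, so $H_0(N_K;\Q(t))=0$. For the higher degrees, use that $N_K$ is a closed oriented $3$-manifold which is a rational homology $S^1\times S^2$: indeed $H_*(N_K;\Q)\cong H_*(S^1\times S^2;\Q)$, so $\chi(N_K)=0$. Since $\Q(t)$ is a field and the $\Q(t)$-Euler characteristic of the chain complex equals the ordinary Euler characteristic $\chi(N_K)=0$, the alternating sum of the $\Q(t)$-Betti numbers is zero. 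Combined with $H_0(N_K;\Q(t))=0$ and Poincar\'e duality over the field $\Q(t)$ (with the involution $\bar t=t^{-1}$, using Theorem~\ref{thm:poincareduality}), which gives $H_3(N_K;\Q(t))\cong H^0(N_K;\Q(t))\cong\overline{H_0(N_K;\Q(t))}=0$, one forces $H_1(N_K;\Q(t))\cong H_2(N_K;\Q(t))$ from duality and then the Euler characteristic equation forces both to be $0$. (Alternatively, and more cleanly, one invokes the standard fact that for a knot in a homology sphere the Alexander module $H_1(N_K;\Z[t^{\pm1}])$ is torsion because $H_1(N_K;\Q(t))=0$, which follows from a Mayer--Vietoris computation using the knot exterior and a solid torus; I would cite this rather than rederive it.)

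For part (2): by Proposition~\ref{prop:exterior-slice-disc-h1}, the inclusion $\mu_K\hookrightarrow W_D$ is a $\Z$-homology equivalence, so $W_D$ is a homology circle; in particular $H_*(W_D;\Q)\cong H_*(S^1;\Q)$ and $\chi(W_D)=0$. Again $H_0(W_D;\Q(t))=0$ by the augmentation argument. For the higher groups I would run the $\Q(t)$-coefficient homology long exact sequence of the pair $(W_D,N_K)$ together with Poincar\'e--Lefschetz duality $H_k(W_D,N_K;\Q(t))\cong H^{4-k}(W_D;\Q(t))\cong \overline{H_{4-k}(W_D;\Q(t))}$ (Theorem~\ref{thm:poincareduality}, valid since $W_D$ is a compact $4$-manifold with a CW structure by Proposition~\ref{prop:exterior-slice-disc-mfd}). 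Knowing from part (1) that $H_*(N_K;\Q(t))=0$ in all degrees, the long exact sequence gives $H_k(W_D;\Q(t))\cong H_k(W_D,N_K;\Q(t))\cong\overline{H_{4-k}(W_D;\Q(t))}$ for all $k$; this is a "half lives, half dies"-type symmetry which, together with $H_0(W_D;\Q(t))=0=H_4(W_D;\Q(t))$ (the latter since $W_D$ has nonempty boundary) and $\chi_{\Q(t)}(W_D)=\chi(W_D)=0$, forces all $H_k(W_D;\Q(t))=0$. The main obstacle is being careful with the involution and the precise statement of twisted Poincar\'e--Lefschetz duality needed from the Appendix; once the $\Q(t)$-acyclicity of $N_K$ is in hand, the rest is bookkeeping with the long exact sequence and Euler characteristics.
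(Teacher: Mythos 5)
There is a genuine gap, and it occurs in the same place in both parts: your Euler-characteristic-plus-duality bookkeeping does not actually force the higher $\Q(t)$-homology to vanish. For $N_K$, writing $b_i=\dim_{\Q(t)}H_i(N_K;\Q(t))$, your constraints are $b_0=b_3=0$ and $b_1=b_2$ (from duality), and then $\chi=b_0-b_1+b_2-b_3=-b_1+b_1=0$ is satisfied identically, for \emph{any} value of $b_1$. Similarly for $W_D$: you get $b_0=b_4=0$, $b_1=b_3$ from the duality/long-exact-sequence symmetry, and then $\chi(W_D)=0$ yields only $b_2=2b_1$, which is consistent with, say, $b_1=b_3=1$, $b_2=2$. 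So neither "the Euler characteristic equation forces both to be $0$" nor "forces all $H_k(W_D;\Q(t))=0$" follows from what you have written. The numerical constraints are too weak; some genuinely homological input is required to kill $b_1$.

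The missing ingredient is exactly the step the paper supplies via chain homotopy lifting (\cite[Proposition~2.10]{COT03}, or \cite[Lemma~3.1]{Nagel-Powell-17}): the inclusion of a meridian $\mu_K\to W_D$ is a $\Z$-homology equivalence (Proposition~\ref{prop:exterior-slice-disc-h1}), and since $C_*(W_D,\mu_K;\Z[\pi])$ is a finite complex of finitely generated free modules (Proposition~\ref{prop:finite-chain-complex}), acyclicity over $\Z$ via the augmentation propagates to acyclicity over any localisation of $\Z[t^{\pm1}]$ in which every polynomial $p$ with $p(1)=\pm1$ is invertible --- in particular over $\Q(t)$, and in fact over $Q=P^{-1}\Z[t^{\pm1}]$, which gives the sharper conclusion that the homology is $P$-torsion. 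The paper then handles $N_K$ by running the same argument for the knot exterior $Y\sm\nu K$ (also a homology circle) and a Mayer--Vietoris argument for the zero surgery; this is essentially the route you gesture at in your parenthetical "alternative", which is in fact the only working route you offer for part (1), but it too ultimately rests on the homology-circle-implies-$\Q(t)$-acyclic principle rather than on Euler characteristics. Your reduction to $\Q(t)$-coefficients via flatness and finite generation, and your treatment of $H_0$, are both fine; the proof can be repaired by replacing the Euler-characteristic step with the chain homotopy lifting argument.
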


\begin{proof}
We start out with the proof of the second statement.
Let $P \subseteq \Z[t^{\pm 1}]$ be the multiplicative subset of Laurent polynomials that augment to $\pm 1$, that is $p(1) = \pm 1$ if and only if $p \in P$.
We shall prove the slightly stronger statement, that  $H_k(W_D;\Z[t^{\pm 1}])$ is $P$-torsion for $k>0$.  Since $H_0(W_D;\Z[t^{\pm 1}]) \cong \Z[t^{\pm 1}]/(t-1)$ is $\Z[t^{\pm 1}]$-torsion, the result will follow.
We write $\pi=\pi_1(W_D)$.
Let \[Q:= P^{-1}\Z[t^{\pm 1}]\] be the result of inverting the polynomials in $P$.
By Proposition~\ref{prop:finite-chain-complex} there exists a chain complex $C_*$ of finite length consisting of finitely generated free left $\Z[\pi]$-modules such that for any  ring $R$ and any $(R,\Z[\pi])$-bimodule $A$ we have
\[ H_k(W_D,\mu_K;A)\,\,\cong\,\,H_k(A\otimes_{\Z[\pi]} C_*).\]
By Proposition~\ref{prop:exterior-slice-disc-h1} we know that
$H_k(\Z\otimes_{\Z[\pi]} C_*)=H_k(W_D,\mu_K;\Z)=0$.
Since $C_*$ is  a chain complex of finite length consisting of finitely generated free left $\Z[\pi]$-modules
we obtain from  chain homotopy lifting \cite[Proposition~2.10]{COT03}, see also \cite[Lemma~3.1]{Nagel-Powell-17}, that
$H_k(Q\otimes_{\Z[\pi]} C_*)=0$. A straightforward calculation shows that
 $H_*(S^1,\pt;Q)=0$. It follows that $H_*(W_D,\pt;Q) =0$, so that $H_k(W_D;\Z[t^{\pm 1}])$ is $P$-torsion for $k >0$.

The first statement is very well known. One of the many proofs would be to use the above argument and the fact that $S^1\to Y\sms \nu K$ is a homology equivalence to show that the modules  $H_*(Y\sms \nu K;\Z[t^{\pm 1}])$ are torsion.
A basic Mayer-Vietoris argument then shows that the modules $H_*(N_K;\Z[t^{\pm 1}])$ are also torsion.
\end{proof}

We want to recall the definition of the Alexander polynomial of a knot.
To do so we need the notion of the order of a module.

\begin{definition}
Let $H$ be a finitely generated free abelian group and let $M$ be a finitely generated $\Z[H]$-module. By~\cite[Corollary~IV.9.5]{Lang02} the ring $\Z[H]$ is Noetherian which implies that
$M$ admits a free resolution
\[ \Z[H]^r\,\,\xrightarrow{\cdot A}\,\,\Z[H]^s\,\,\to\,\, M\,\,\to\,\,0.\]
Without loss of generality we can assume that $r>s$.
Since $\Z[H]$ is unique factorisation domain, see \cite[Lemma~IV.2.3]{Lang02}, the order $\ord(M)$ is defined as the greatest common divisor of the $s\times s$-minors of $A$. By \cite[Lemma~4.4]{Turaev:2001-1} the order is well-defined, i.e.\ independent of the choice of the free resolution, up to multiplication by a unit in $\Z[H]$.
\end{definition}

The fact that  the order is only well-defined up to multiplication by a unit leads us to the following notation:

\begin{notation}
Let $H$ be a free abelian group. Given $p,q\in \Z[H]$   we write $p\doteq q$ if
$p=\pm h\cdot q$ for some $h\in H$.
\end{notation}

In the proof of the Fox-Milnor Theorem we will need the following lemma, collecting basic facts about orders of finitely generated $\Z[H]$-modules.

\begin{lemma}\label{lem:orders}
Let $H$ be a finitely generated free abelian group.
\bnm
\item If $0\to A\to B\to C\to 0$ is a short exact sequence of finitely generated $\Z[H]$-modules, then $\op{ord}(B)\doteq \op{ord}(A)\cdot \op{ord}(C)$.
\item If $0\to C_k\to \dots \to C_0\to 0$ is an exact sequence of
 finitely generated $\Z[H]$-torsion modules, then the alternating product of the orders is a unit in $\Z[H]$.
\item For any finitely generated $\Z[H]$-module $A$ we have $\op{ord}(\ol{A})\doteq \ol{\op{ord}(A)}$.
Here we write $f\mapsto \ol{f}$ for the natural involution on $\Z[H]$ given by $h\mapsto h^{-1}$ for $h\in H$, and given a $\Z[H]$-module $A$ we write $\ol{A}$ for the $\Z[H]$-module given by
$f\cdot a:=\ol{f}\cdot a$.
\item
 For any finitely generated torsion $\Z[H]$-module $A$ we have $\op{ord}(\ext^1_{\Z[H]}(A,\Z[H]))\doteq \op{ord}(A)$.
\enm
\end{lemma}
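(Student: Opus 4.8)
The statement to prove is Lemma~\ref{lem:orders}, collecting four basic facts about orders of finitely generated $\Z[H]$-modules. I would treat the four items in turn, using only standard commutative algebra over the Noetherian unique factorisation domain $\Z[H]$ (its Noetherianity and the UFD property were already invoked in the definition of $\op{ord}$ above, citing \cite[Corollary~IV.9.5,~Lemma~IV.2.3]{Lang02}). The unifying idea is to work with the localisation $S^{-1}\Z[H]$ at $S = \Z[H]\setminus\{0\}$, i.e.\ the quotient field $\mathbb{F} = \operatorname{Frac}(\Z[H])$, and with localisations at height-one primes: for a torsion module $A$, $\op{ord}(A)$ is (up to units) characterised by the property that for every height-one prime $\mathfrak{p} = (\pi)$ with $\pi$ irreducible, the exponent of $\pi$ in $\op{ord}(A)$ equals the length of the finite-length $\Z[H]_{\mathfrak{p}}$-module $A_{\mathfrak{p}}$. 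This ``additivity of length'' is the workhorse.

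For (1), given a short exact sequence $0\to A\to B\to C\to 0$ of finitely generated modules, I would first dispose of the case where one of them is not torsion (in which case the order is $0$ by convention and both sides vanish), and otherwise localise at each height-one prime $\mathfrak{p}$: localisation is exact, length is additive in short exact sequences of finite-length modules, so $\ell(B_{\mathfrak p}) = \ell(A_{\mathfrak p}) + \ell(C_{\mathfrak p})$, which gives the exponent count $\op{ord}(B) \doteq \op{ord}(A)\cdot\op{ord}(C)$ up to a unit. Alternatively one can give a direct matrix proof using a compatible pair of presentations, but the length argument is cleaner. For (2), I would split the exact sequence $0\to C_k\to\cdots\to C_0\to 0$ into short exact sequences via the cycle modules $Z_i := \ker(C_i\to C_{i-1})$ (note each $Z_i$ is finitely generated since $\Z[H]$ is Noetherian, and torsion since it is a submodule of the torsion module $C_i$), apply (1) repeatedly, and observe that in the alternating product $\prod \op{ord}(C_i)^{(-1)^i}$ every $\op{ord}(Z_i)$ appears with both exponents $+1$ and $-1$ and hence cancels, leaving a unit.

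For (3), $\op{ord}(\ol A) = \ol{\op{ord}(A)}$: applying the involution to a free resolution $\Z[H]^r\xrightarrow{\cdot A}\Z[H]^s\to A\to 0$ of $A$ yields a free resolution of $\ol A$ with presentation matrix $\ol A$ (entrywise-conjugated), and the greatest common divisor of the $s\times s$-minors of $\ol A$ is the conjugate of the gcd of the minors of $A$, since the involution is a ring automorphism. For (4), $\op{ord}(\ext^1_{\Z[H]}(A,\Z[H])) \doteq \op{ord}(A)$ for $A$ finitely generated torsion: I would take a finite free presentation, compute $\ext^1_{\Z[H]}(A,\Z[H])$ from the dual complex, reduce again by localisation at height-one primes to the case $H = \Z$ (or more precisely to the discrete valuation ring $\Z[H]_{\mathfrak p}$), where by the structure theorem $A_{\mathfrak p}$ is a finite direct sum of cyclic modules $\Z[H]_{\mathfrak p}/(\pi^{n_i})$, for which $\ext^1$ against the ring is $\Z[H]_{\mathfrak p}/(\pi^{n_i})$ again, so lengths agree. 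This is essentially \cite[Lemma~4.4,~Lemma~4.5]{Turaev:2001-1} or the classical fact that $\ext^1$ of a torsion module over a PID (applied fibrewise) is non-canonically isomorphic to the module itself; I would cite Turaev for the details rather than reprove it.

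\textbf{Main obstacle.} The genuinely technical point is item (4): one must be careful that $\ext^1_{\Z[H]}(A,\Z[H])$ is again torsion and finitely generated (so that its order makes sense) and that the localisation-and-reduce-to-PID argument is legitimate — i.e.\ that $\ext$ commutes with the relevant localisations, which holds because $\Z[H]$ is Noetherian and the modules are finitely presented. The other three items are essentially formal once the ``order via length at height-one primes'' dictionary is set up, and I expect to simply cite \cite{Turaev:2001-1} for any step that amounts to repeating standard torsion bookkeeping.
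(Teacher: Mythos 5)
Your proposal is correct and follows essentially the same route as the paper: items (2) and (3) are handled identically, and for the key item (4) you use exactly the paper's strategy of localising at height-one (principal) primes, noting that $\ext^1$ commutes with this flat localisation for finitely presented modules, and invoking that a finitely generated torsion module over the resulting DVR is isomorphic to its own $\ext^1$ against the ring. The only cosmetic difference is item (1), where the paper simply cites Levine and Hillman while you sketch the standard length-additivity argument at height-one primes; both are fine.
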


\begin{proof}
Statement  (1) is proven for $H\cong \Z$ in \cite[Lemma~5]{Le67}. The general case follows from  \cite[Theorem~3.12]{Hillman}. Note that (2) is an immediate consequence of (1), by separating the long exact sequence into short exact sequences such as $0 \to \im C_{j} \to C_{j-1} \to \im C_{j-1} \to 0$, applying (1), and performing substitutions using the resulting equations involving orders.

Next (3) follows immediately from the definition.
Finally (4) is well-known to the experts, but we could not find a reference, therefore we sketch the key ingredients in the proof.
We introduce the following notation.
\bnm
\item[(a)] Given any prime ideal $\mathfrak{p}$ of $\Z[H]$, let $\Z[H]_{\mathfrak{p}}$ be the localisation at $\mathfrak{p}$, that is we invert all elements that do not lie in $\mathfrak{p}$. We view $\Z[H]$ as a subring of $\Z[H]_{\mathfrak{p}}$.
\item[(b)] Given a ring $R$ and $f,g\in R$ we write $f\doteq_R g$ if $f$ and $g$ differ by multiplication by a unit in $R$.
\enm

Now we sketch the proof of (4). We will use the following five observations.
\bnm
\item[(i)] Since $\Z[H]$ is a unique factorisation domain, for any prime element $p\in \Z[H]$ the ideal $(p)$ is a prime ideal.
\item[(ii)] Being a unique factorisation domain and being Noetherian are preserved under localisation~ \cite[Theorem~7.53]{Peskine}, \cite[Corollary~8.8']{Rowen}. In particular each  $\Z[H]_{\mathfrak{p}}$ is a Noetherian unique factorisation domain. This allows us, by the same definitions as above, to define the order of a finitely generated module over  $\Z[H]_{\mathfrak{p}}$.
\item[(iii)] Localisation is flat \cite[Proposition~XVI.3.2]{Lang02}. It follows that for any finitely generated $\Z[H]$-module $M$ and any prime element $p\in \Z[H]$ one has $\ord(M)\doteq_{\Z[H]_{(p)}} \ord(\Z[H]_{(p)}\otimes_{\Z[H]}M)$ and
 $\Z[H]_{(p)}\otimes_{\Z[H]} \ext_{\Z[H]}^1(M,\Z[H]) \cong \ext_{\Z[H]_{(p)}}^1(\Z[H]_{(p)}\otimes_{\Z[H]}M,\Z[H]_{(p)})$ as $\Z[H]_{(p)}$-modules.
\item[(iv)] By \cite[Corollary~A.14]{Osborne} every commutative ring with the property that every prime ideal is principal, is a PID. It follows easily that for each prime element $p$, the localisation $\Z[H]_{(p)}$ is a PID.
\item[(v)] Let $L$ be a torsion $\Z[H]_{(p)}$-module. Since $\Z[H]_{(p)}$ is a PID every two elements have a greatest common divisor. We can therefore perform row and column operations to find a resolution for $L$ such that the presentation matrix is diagonal.  From this observation one easily deduces that  $L \cong \ext_{\Z[H]_{(p)}}^1(L,\Z[H]_{(p)})$ as left $\Z[H]_{(p)}$-modules.
    Since $L$ is torsion the presentation matrix is injective and so its transpose presents the $\ext$ group. To convert the $\ext$ group to a left module, we use the trivial involution, which we may do since $\Z[H]_{(p)}$ is a commutative ring.
\item[(vi)] Suppose that $f$ and $g$ are in $\Z[H]$.  If $f\doteq_{\Z[H]_{(p)}} g$ for all prime elements  $p\in \Z[H]$, then
since $\Z[H]$ is a unique factorisation domain we must have  $f\doteq_{\Z[H]} g$.
\enm

Now with $L = \Z[H]_{(p)} \otimes_{\Z[H]} A$ a finitely generated $\Z[H]$-torsion module, we have \[\Z[H]_{(p)} \otimes_{\Z[H]} A \cong \ext_{\Z[H]_{(p)}}^1(\Z[H]_{(p)} \otimes_{\Z[H]} A,\Z[H]_{(p)})\]
for every prime element $p$, by (iv).  On the other hand, again for each prime element $p$, we have
\[\ord(A)\doteq_{\Z[H]_{(p)}} \ord(\Z[H]_{(p)}\otimes_{\Z[H]} A )\]
by (iii).  Combining these two observations yields
\[\ord(A)\doteq_{\Z[H]_{(p)}} \ord(\ext_{\Z[H]_{(p)}}^1(\Z[H]_{(p)} \otimes_{\Z[H]} A,\Z[H]_{(p)})). \]
By the second part of (iii) we have that
\[\ord(\ext_{\Z[H]_{(p)}}^1(\Z[H]_{(p)} \otimes_{\Z[H]} A,\Z[H]_{(p)})) \doteq_{\Z[H]_{(p)}} \ord(\Z[H]_{(p)}\otimes_{\Z[H]} \ext^1_{\Z[H]}(A,\Z[H])).   \]
By the first part of (iii) again we have
\[\ord(\Z[H]_{(p)}\otimes_{\Z[H]} \ext^1_{\Z[H]}(A,\Z[H]) ) \doteq_{\Z[H]_{(p)}} \ord(\ext^1_{\Z[H]}(A,\Z[H])).  \]
Thus combining the last three equalities we have
\[\ord(A)\doteq_{\Z[H]_{(p)}} \ord(\ext^1_{\Z[H]}(A,\Z[H]))\]
for all prime elements $p$.  Now (4) follows by applying (vi).
\end{proof}

We use the notion of order to define the Alexander polynomial of a knot in a homology $3$-sphere.

\begin{definition}
The \emph{Alexander polynomial $\Delta_K(t)$} of a knot $K$ is defined as the order of the Alexander module $H_1(N_K;\Z[t^{\pm 1}])$. Note that this polynomial is only well-defined up to units in $\Z[t^{\pm}]$.\label{def:alexander-polynomial}
\end{definition}

After these preparations we turn to the actual proof of the Fox-Milnor Theorem~\ref{theorem:fox-milnor}.
We need the following elementary lemma.

\begin{lemma}\label{lem:switch-to-lambda-chain-complex}
Let $\pi$ be a group, let $C_*$ be a chain complex of left free $\Z[\pi]$-modules and let $\phi\colon \pi\to \langle t\rangle$ be a homomorphism.
The map
\[ \ba{rcl}  \hom_{\op{right-}\Z[\pi]}(\ol{C_*},\Z[t^{\pm 1}])&\to &
  \ol{\hom_{\op{left-}\Z[t^{\pm 1}]}(\Z[t^{\pm 1}]\otimes_{\Z[\pi]} C_*, \Z[t^{\pm 1}])}\\
f&\mapsto & (p\otimes \sigma\mapsto p\cdot \ol{f(\sigma)})
\ea
\]
is well-defined and is an isomorphism of left $\Z[t^{\pm 1}]$-cochain complexes.\qed
\end{lemma}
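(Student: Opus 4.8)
The plan is to recognise the displayed arrow as a twisted form of the standard extension--restriction of scalars adjunction, and to verify it is a chain isomorphism by writing down an explicit two-sided inverse degree by degree. Write $\Lambda:=\Z[t^{\pm1}]$, let the involution on $\Z[\pi]$ be $g\mapsto g^{-1}$ and the involution on $\Lambda$ be $t\mapsto t^{-1}$, and note that $\phi$ makes $\Lambda$ a $\Z[\pi]$-bimodule with each $g\in\pi$ acting on either side as multiplication by $t^{\phi(g)}$, compatibly with the two involutions in the sense that $\ol{q\cdot g}=\ol g\cdot\ol q$ for $q\in\Lambda$, $g\in\pi$. First I would fix one chain degree, writing $C$ for $C_*$ there, and reduce to the free case (which, since the modules are free, is no harder than $C=\Z[\pi]$). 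I would check that $f\mapsto\bigl(p\otimes\sigma\mapsto p\cdot\ol{f(\sigma)}\bigr)$ is well defined: the formula is $\Z$-bilinear in $(p,\sigma)$, it is balanced over $\Z[\pi]$ because $f$ is right $\Z[\pi]$-linear and $\ol{\,\cdot\,}$ reverses products while commutativity of $\Lambda$ lets one slide $t^{\phi(g)}$ past $\ol{f(\sigma)}$, and the resulting functional is left $\Lambda$-linear because the $\Lambda$-action on $\Lambda\otimes_{\Z[\pi]}C$ lives on the left tensor factor. Conjugating places the image in the stated target, and a one-line manipulation using $\ol{qa}=\ol a\,\ol q$ shows that $f\mapsto\Phi(f)$ is itself left $\Lambda$-linear for the module structures recorded by the overlines.

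Next I would construct the inverse: to $g\in\hom_{\op{left-}\Lambda}(\Lambda\otimes_{\Z[\pi]}C,\Lambda)$ assign $\Psi(g)\colon\sigma\mapsto\ol{g(1\otimes\sigma)}$, and verify $\Psi(g)$ is right $\Z[\pi]$-linear as a map $\ol C\to\Lambda$ using the tensor relation $1\otimes g\sigma=t^{\phi(g)}\otimes\sigma$. A direct computation, using that $1\otimes\sigma$ generates $\Lambda\otimes_{\Z[\pi]}C$ over $\Lambda$ for $C$ free together with $\ol{\ol a}=a$, gives $\Psi\circ\Phi=\id$ and $\Phi\circ\Psi=\id$. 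Finally I would observe that both sides are cochain complexes whose coboundary is precomposition with the boundary map $\partial$ of $C_*$ (up to the usual sign convention), and that $\Phi$ intertwines them because $\partial$ is $\Z[\pi]$-linear and $\Phi$ is given by a functorial formula: $\Phi(f\circ\partial)=\Phi(f)\circ(\id_\Lambda\otimes\partial)$. Assembling the degreewise statements, $\Phi$ is an isomorphism of left $\Lambda$-cochain complexes.

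There is no genuine obstacle here: the content is purely formal, and the only thing that requires care is the bookkeeping of the left/right module conventions and of the three interacting involutions (on $\Z[\pi]$, on $\Lambda$, and on $\langle t\rangle$ transported along $\phi$), in particular using the compatibility $\ol{p\cdot g}=\ol g\cdot\ol p$ consistently. Because $\Lambda$ is commutative many of these distinctions collapse, which is exactly why the naive formula in the statement already works on the nose.
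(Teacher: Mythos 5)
Your verification is correct, and it is exactly the routine check the paper intends: the lemma is stated with a \qed and no proof (it is introduced as ``the following elementary lemma''), so there is no argument in the paper to compare against. One small remark: the freeness of $C_*$ is never actually needed in your argument --- the two composites $\Psi\circ\Phi$ and $\Phi\circ\Psi$ are the identity by direct computation on generators $p\otimes\sigma$ of the tensor product, which span it for any left $\Z[\pi]$-module $C$, so the reduction to the free case is harmless but superfluous.
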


\begin{proof}[First proof of the Fox-Milnor Theorem~\ref{theorem:fox-milnor}]
In this proof we abbreviate $\Lambda := \Z[t^{\pm 1}]$.
We start out with the following three observations.
\bnm
\item[(a)] We have $H_0(W_D;\Lambda)\cong H_0(N_K;\Lambda)\cong \Lambda/(t-1)$.
\item[(b)] We have $H_0(W_D,N_K;\Lambda)=0$.
\item[(c)] By Proposition~\ref{prop:alexander-module-w-torsion} and Proposition~\ref{prop:exterior-slice-disc-mfd} we  know that for all $k$
\[ \Ext^0_{\Lambda}(H_k(W_D,N_K;\Lambda),\Lambda)\,\,=\,\,\hom_{\Lambda}(H_k(W_D,N_K;\Lambda),\Lambda)\,\,=\,\,0.\]
\enm

\begin{claim}
For any $i\in \N$ we have
\[ \ba{rcl} H_i(N_K;\Lambda)&\cong& \ol{\Ext^1_{\Lambda}(H_{2-i}(N_K;\Lambda),\Lambda)}\\
 H_i(W_D;\Lambda)&\cong& \ol{\Ext^1_{\Lambda}(H_{3-i}(W_D,N_K;\Lambda),\Lambda)}.\ea \]
\end{claim}

We prove the second statement of the claim. The proof of the first statement is almost identical.
By the Poincar\'e Duality Theorem~\ref{thm:poincareduality} we have an isomorphism
$H_i(W_D;\Lambda) \cong H^{4-i}(W_D,N_K;\Lambda)$ of $\Lambda$-modules.
By Lemma~\ref{lem:switch-to-lambda-chain-complex}, applied to $C_*=C_*(W_D,N_K; \Z[\pi])$,
we know that
\[ H^{4-i}(W_D,N_K;\Lambda)\,\,\cong\,\, \ol{H_{4-i}(\hom_{\Lambda}(\Lambda\otimes_{\Z[\pi]} C_*(W_D,N_K; \Z[\pi]),\Lambda))}.\]
Finally we apply the universal coefficient spectral sequence \cite[Theorem~2.3]{Le77}
to the
$\Lambda$-module chain complex $C_*(W_D,N_K;\Lambda)$.
It follows from the above observations (b) and (c) that the spectral sequence collapses and that we have an isomorphism
\[ H_{4-i}\big(\hom_{\Lambda}(\Lambda\otimes_{\Z[\pi]} C_*(W_D,N_K;\Lambda)\big)\,\,
\cong\,\, \ext^1_{\Lambda}(H_{3-i}(W_D,N_K;\Lambda),\Lambda).\]
This concludes the proof of the claim.

Next we consider the long exact sequence of the pair $(W_D,N_K)$ of twisted homology with $\Lambda$-coefficients:
\[
\cdots
 \to  H_2(W_D;\Lambda) \to H_2(W_D,N_K;\Lambda)  \to H_1(N_K;\Lambda) \to H_1(W_D;\Lambda) \to H_1(W_D,N_K;\Lambda) \to \cdots\]
It follows from Propositions~\ref{prop:exterior-slice-disc-mfd} that all the above modules are finitely generated. Thus it makes sense to consider their orders.
Also note that in Proposition~\ref{prop:alexander-module-w-torsion} we saw that the modules for $N_K$ and $W_D$ are all $\Lambda$-torsion. It follows from the long exact sequence that the relative homology groups $H_*(W_D,N_K;\Lambda)$ are also $\Lambda$-torsion. By  Lemma~\ref{lem:orders} (3) the alternating product of the orders equals $\pm t^k$.

By the above claim and Lemma~\ref{lem:orders} (3) and (4) the orders are anti-symmetric around $H_1(N_K;\Lambda)$. More precisely, we have
\[ \ord(H_2(W_D,N_K;\Lambda))\,\,\doteq\,\,\ord\big(\ol{\ext_{\Lambda}^1(H_1(W_D;\Lambda),\Lambda)}\big)\,\,\doteq\,\, \ol{\ord(H_1(W_D;\Lambda))},\]
and the same type of relation holds as we progress further from the middle term $H_1(N_K;\Lambda)$ in the above long exact sequence.
But this implies that there exist nonzero polynomials $f,g\in \Lambda$ with
$f\cdot \ol{f}\doteq \Delta_K(t)\cdot g\cdot \ol{g}$.
It follows easily from the fact that  $\Lambda=\Z[t^{\pm 1}]$ is a UFD that there exists an $h\in \Lambda$ with $h\cdot \ol{h}\doteq \Delta_K(t)$.
\end{proof}

%\fotnote{PO: here we should add the corollary about alexander polynomials of ribbon discs. I think Stefan's math overflow post is the only place where this folklore statement is written down.}

A knot $K$ in $Y$ is \emph{homotopy ribbon} if there is a slice disc $D$ in $X$ such that $\pi_1(Y \sm \nu K) \to \pi_1(W_D)$ is surjective.

\begin{corollary}
    Let $D \subseteq X$ be a homotopy ribbon disc for $K \subseteq Y$. Let $f(t) = \ord H_1(W_D;\Lambda)$. Then $\Delta_K(t) \doteq f(t)f(t^{-1})$.
\end{corollary}

\begin{proof}
    Since $\pi_1(S^3 \sm \nu K) \to \pi_1(W_D)$ factors through $\pi_1(N_K)$, the map $\pi_1(N_K) \to \pi_1(W_D)$ is surjective.  Hence $\pi_1(N_K)^{(1)} \to \pi_1(W_D)^{(1)}$, the map on commutator subgroups is surjective. The respective homology groups with $\Lambda$ coefficients are the abelianisations of the commutator subgroups, and so $H_1(N_K;\Lambda) \to H_1(W_D;\Lambda)$ is surjective.  Hence $H_1(W_D,N_K;\Lambda)=0$, and so
    \[1 = \ord (H_1(W_D,N_K;\Lambda)) = \ol{\ord(H_2(W_D;\Lambda)}.\]
    The pervious proof then implies that \[\Delta_K(t) \doteq \ord(H_1(N_K;\Lambda)) \doteq \ord H_1(W_D;\Lambda) \cdot \ol{\ord H_1(W_D;\Lambda)} = f(t)f(t^{-1})\]
    as required.
\end{proof}

We conclude with an alternative argument for the Fox-Milnor Theorem in the topological category using Reidemeister torsion.
The advantage of the Reidemeister torsion invariant is that proofs are often easier, and it has in general a smaller indeterminacy than the order of homology, although this will not manifest itself in the upcoming proof.

\begin{proof}[Second proof of Theorem~\ref{theorem:fox-milnor}]
We continue with the notation introduced above.
As before we have a homomorphism $\alpha \colon \pi_1(W_D) \to H_1(W_D;\Z) \toiso \Z$, sending an oriented meridian of $K$ to $1 \in \Z$.  As usual $\Q(t)$ denotes the field of fractions of the Laurent polynomial ring $\Z[t,t^{-1}]$.  We take $d=1$, and so obtain a representation $\phi \colon \pi_1(W_D) \to \GL(1,\Q(t))$, that sends $g \mapsto (t^{\alpha(g)})$.
In the previous proof we had already seen that the modules $H_*(N_K;\Z[t^{\pm 1}])$,
$H_*(W_D;\Z[t^{\pm 1}])$ and $H_*(N_K;\Z[t^{\pm 1}])$ are $\Z[t^{\pm 1}]$-torsion.
Since $\Q(t)$ is flat over  $\Z[t^{\pm 1}]$ it follows that the corresponding twisted homology groups with $\Q(t)$-coefficients are zero.

By the discussion in Section~\ref{section:reidemeister-torsion} we can consider the Reidemeister torsions $\tau(W_D,\phi)$,  $\tau(N_K,\phi)$ and $\tau(W_D,N_K,\phi)$.  By Theorem~\ref{thm:torsion-PD}, $\tau(W_D,N_K,\phi) \doteq  \overline{\tau(W_D,\phi)}^{(-1)^5} \doteq \overline{\tau(W_D,\phi)}^{-1}$.  Since the torsion is multiplicative in short exact sequences by Theorem~\ref{thm:torsion-multiplicative}, we have that
\[\tau(W_D,\phi) \,\,\doteq\,\, \tau(N_K,\phi) \cdot \tau(W_D,N_K,\phi) \,\,\doteq\,\, \tau(N_K,\phi) \cdot \overline{\tau(W_D,\phi)}^{-1}.\]
By \cite[Theorem~14.12]{Turaev:2001-1} the torsion of the zero surgery of a knot is equal to $\Delta_K(t)/((t-1)(t^{-1}-1))$. It follows that $\Delta_K(t)$ is a norm as claimed.
\end{proof}

\begin{remark} The two proofs presented above avoid the use of the smooth category, and so are in keeping with the spirit of this book. However, one can give a further alternative proof by allowing smooth techniques. First one can use Theorem~\ref{thm:connect-sum-is-smooth} to find a simply connected 4-manifold $W'$ such that $W:= W_D \# W'$ is smoothable.  Then one can triangulate~$W$ and apply Reidemeister torsion machinery without appealing to~\cite[Essay~III]{KS77}.  The disadvantage of this approach is that typically $H_2(W';\Z)$ will be nontrivial, so that $W$ is not acyclic over $\Q(t)$.  One can proceed by choosing a self-dual basis for homology, so that one can still obtain a torsion invariant that is well-defined up to norms.  Apply~\cite[Theorem~2.4]{Cha-Friedl:2013-1}, and argue that since the intersection form of $W$ is nonsingular, the contribution of $W'$ to the torsion is a norm.
\end{remark}

%==================================================
\begin{appendix}

\chapter{Poincar\'e Duality with twisted coefficients}\label{appendix:poincare-duality}

Surveying the literature, we felt it would be beneficial to have a more detailed proof of Poincar\'{e} duality with twisted coefficients for manifolds with boundary, but without a smooth or $\PL$ structure, so we offer one in this appendix.  One can find other proofs of Poincar\'{e} duality for some subsets of these conditions,
e.g.\  Sun \cite{Sun17} and Kwasik-Sun \cite{KwasikSun18} provide a proof in the closed case. A detailed discussion of twisted (co-) homology and Poincar\'e duality can also be found in \cite[Part XXIV]{Fr23}.

%======================
\section{Twisted homology and cohomology groups}\label{section:twisted-invariants}
We start out with the following notation.

\begin{notation}
Given a group $\pi$ and a left $\Z[\pi]$-module $A$,
write $\ol{A}$ for the right $\Z [\pi]$-module that has the same underlying abelian group but for which the right action of $\Z[\pi]$ is defined by $a\cdot g:=g^{-1}\cdot a$ for $a\in A$ and $g\in \pi$.  The same notation is also used with the r\^{o}les of left and right reversed and $g \cdot a := a \cdot g^{-1}$.
\end{notation}

We recall the definition of twisted homology and cohomology groups.

\begin{definition}
Let $X$ be a connected topological space that admits a universal cover $p\colon \wti{X}\to X$.
Write $\pi=\pi_1(X)$.
Let $Y$ be a subset of $X$ and let $A$ be a right $\Z[\pi]$-module.
Let $\pi$ act on $\wti{X}$ by deck transformations, which is naturally a left action. Thus, the singular chain complex  $C_*(\wti{X},p^{-1}(Y))$ becomes a left $\Z[\pi]$-module chain complex.
Define the \emph{twisted chain complex}
\[ C_*(X,Y;A)\,\,:=\,\,\big( A\otimes_{\Z[\pi]} C_*(\wti X, p^{-1}(Y)),\id\otimes \partial_*\big).\]
The corresponding \emph{twisted homology groups} are $H_k(X,Y;A)$.
With~$\delta^k = \hom(\partial_k,\id)$ define the \emph{twisted cochain complex} to be
\[ C^*(X,Y;A)\,\,:=\,\,\big(\hom_{\opnormal{right-}\Z[\pi]}\big(\ol{C_*(\wti{X},p^{-1}(Y)}), A\big), \delta^*\big).\]
The corresponding \emph{twisted cohomology groups} are $H^k(X,Y;A)$.
\end{definition}

Note that if $R$ is some ring (not necessarily commutative) and if  $A$ is an $(R,\Z[\pi])$-bimodule, then the above twisted homology and cohomology groups are naturally left $R$-modules.

Given a CW complex one can similarly define twisted cellular (co-)
chain complexes and twisted cellular (co-) homology groups. The following
proposition implies  that twisted singular (co-) homology groups
are isomorphic to twisted cellular (co-) homology groups.

\begin{proposition}\label{prop:sing-vs-cell}
Let $(X,Y)$ be a CW complex pair and write $\pi=\pi_1(X)$.
The singular chain complex $C_*^{\op{sing}}(X,Y; \Z[\pi])$ and  the cellular chain complex $C_*^{\op{cell}}(X,Y; \Z[\pi])$ are
chain homotopy equivalent as chain complexes of left $\Z[\pi]$-modules.
\end{proposition}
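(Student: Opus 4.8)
The plan is to reduce the statement to a $\pi$-equivariant comparison of singular and cellular chains on the universal cover. Taking $A = \Z[\pi]$, viewed as a $(\Z[\pi],\Z[\pi])$-bimodule, in the definitions of the twisted chain complexes yields canonical identifications of left $\Z[\pi]$-module chain complexes $C_*^{\op{sing}}(X,Y;\Z[\pi]) \cong C_*(\wti{X},p^{-1}(Y))$ and $C_*^{\op{cell}}(X,Y;\Z[\pi]) \cong C_*^{\op{cell}}(\wti{X},p^{-1}(Y))$, where in each case $\pi = \pi_1(X)$ acts by deck transformations; here $\wti{X}$ carries the CW structure whose cells are the lifts of the cells of $X$, and $p^{-1}(Y)$ is a subcomplex. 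Thus it suffices to prove that the singular chain complex $C_*(\wti{X},p^{-1}(Y))$ and the cellular chain complex $C_*^{\op{cell}}(\wti{X},p^{-1}(Y))$ are chain homotopy equivalent as complexes of $\Z[\pi]$-modules.

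Next I would observe that both complexes are bounded below and consist of \emph{free} $\Z[\pi]$-modules. For singular chains this is because $\pi$ acts freely on $\wti{X}$, so it acts freely on the set of singular simplices of $\wti{X}$ (a deck transformation fixing a simplex fixes a point of its image, hence is the identity), and the simplices supported in $p^{-1}(Y)$ form a $\pi$-invariant subset; so $C_n(\wti{X},p^{-1}(Y))$ is free on a set of $\pi$-orbit representatives. For cellular chains, the $n$-cells of $\wti{X}$ not contained in $p^{-1}(Y)$ form a free $\pi$-set. I will then invoke the standard fact that a chain map between bounded-below chain complexes of projective modules inducing an isomorphism on homology is a chain homotopy equivalence (its mapping cone is a bounded-below acyclic complex of projectives, hence contractible). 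So the remaining task is to produce a $\pi$-equivariant chain map between the two complexes and check it is a quasi-isomorphism.

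For this I would run the classical ``cellular homology computes singular homology'' argument equivariantly, using the skeletal filtration $F_p \subseteq C_*(\wti{X},p^{-1}(Y))$ consisting of singular chains whose simplices land in $\wti{X}_p \cup p^{-1}(Y)$, where $\wti{X}_p$ denotes the $p$-skeleton. This is an exhaustive filtration by $\Z[\pi]$-subcomplexes, with each $F_p$ a $\Z[\pi]$-direct summand having free quotient, since $\pi$ preserves skeleta. Via excision and the identification $(\wti{X}_p\cup p^{-1}(Y))/(\wti{X}_{p-1}\cup p^{-1}(Y)) \simeq \bigvee S^p$, a wedge indexed by the $p$-cells outside $p^{-1}(Y)$, the associated graded subquotient $F_p/F_{p-1}$ has homology concentrated in degree $p$, equal there to $C_p^{\op{cell}}(\wti{X},p^{-1}(Y))$ as a $\Z[\pi]$-module, and the induced differential $d^1$ is the cellular boundary. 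To obtain an honest chain map I would choose, one per $\pi$-orbit of cells, a singular chain representing the relative fundamental class of each cell; this defines a $\Z[\pi]$-chain map $C_*^{\op{cell}}(\wti{X},p^{-1}(Y)) \to C_*(\wti{X},p^{-1}(Y))$ whose compatibility with boundaries follows from the commutativity of the connecting homomorphisms in the long exact sequences of the skeletal pairs, and the filtration computation shows it is a quasi-isomorphism. Combined with the homological-algebra fact above, this gives the desired chain homotopy equivalence.

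The main obstacle is essentially bookkeeping rather than conceptual: all the content sits in the classical comparison of cellular and singular homology, and the new feature is tracking the free $\pi$-action. This is painless precisely because $\pi$ acts freely and cellularly on $\wti{X}$ and preserves the skeletal filtration, so every excision, boundary map, and choice of cell representative in the classical proof can be made $\pi$-equivariantly. The one point requiring mild care is verifying that the chosen singular representatives of cell fundamental classes assemble into a chain map (equivalently, that they match the cellular boundary); once the map exists, freeness of both complexes promotes the homology isomorphism to a chain homotopy equivalence automatically.
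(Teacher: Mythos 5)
Your overall architecture coincides with the paper's: reduce to a $\pi$-equivariant comparison of $C_*(\wti{X},p^{-1}(Y))$ with $C_*^{\op{cell}}(\wti{X},p^{-1}(Y))$, observe that both are bounded-below complexes of free $\Z[\pi]$-modules, and upgrade an equivariant quasi-isomorphism to a chain homotopy equivalence via the mapping-cone argument -- that last step is exactly the paper's Lemma~\ref{lem:whiteheadtrick}. The difference is the source of the comparison map: the paper simply cites the existence of a \emph{natural} chain homotopy equivalence between singular and cellular chains (Schubert; L\"uck, Lemma~4.2) and notes that naturality forces $\pi$-equivariance, whereas you attempt to build the map from scratch.

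That construction is where there is a genuine gap. Choosing, per $\pi$-orbit, a singular chain representing the relative fundamental class of each cell does \emph{not} automatically produce a chain map, and the justification you offer -- ``commutativity of the connecting homomorphisms in the long exact sequences of the skeletal pairs'' -- only shows that $\partial(c_e)$ and the image of the cellular boundary $de$ are \emph{homologous} in the appropriate relative group, not equal as chains. Trying to repair this by induction on skeleta runs into a real obstruction: for a $p$-cell $e$ with characteristic map $\Phi_e$ and fundamental chain $\zeta$, the difference $\phi_{p-1}(de)-(\Phi_e)_*\partial\zeta$ is a cycle in $C_{p-1}(\wti{X}_{p-1})$ whose class dies in $H_{p-1}(\wti{X}_{p-1},\wti{X}_{p-2})$, but it need not be a boundary in $\wti{X}_{p-1}$ (its class may lie in the image of $H_{p-1}(\wti{X}_{p-2})$), so one cannot simply correct the representative. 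This is precisely why the classical skeletal-filtration argument proves an isomorphism of homology groups by a diagram chase rather than by exhibiting a chain map, and why constructing an actual (natural) chain equivalence requires the more careful inductive argument with chain homotopies found in the cited references. The fix is either to carry out that construction in full (strengthening the inductive hypothesis to control the absolute homology classes, essentially reproving L\"uck's Lemma~4.2) or to do as the paper does and quote the natural equivalence, after which your freeness observation and the Whitehead-trick lemma finish the proof exactly as you describe.
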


The proof of Proposition~\ref{prop:sing-vs-cell} relies on the following very useful lemma.

\begin{lemma}\label{lem:whiteheadtrick}\label{lem:algebraiclem}
Let $\map{f}{C_*}{D_*}$ be a chain map of chain complexes of  free left $\Z[\pi]$-modules $($here chain complexes are understood to start in degree 0$)$ that induces an isomorphism on homology. Then $f$ is a chain equivalence.
\end{lemma}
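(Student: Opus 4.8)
The plan is to run the classical mapping-cone argument. I would write $E_* := \cone(f)$ for the algebraic mapping cone of $f$, so that $E_n = C_{n-1}\oplus D_n$ with the usual differential built from $\partial_C$, $\partial_D$ and $f$; then $E_*$ is again a complex of free $\Z[\pi]$-modules, and since $C_*$ and $D_*$ start in degree $0$ we have $E_n = 0$ for $n<0$. The short exact sequence of complexes $0\to D_*\to E_*\to C_{*-1}\to 0$ yields a long exact homology sequence whose connecting map is (up to sign) exactly $f_*$; as $f_*$ is an isomorphism in every degree, $E_*$ is acyclic.

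So it suffices to prove: every acyclic chain complex $E_*$ of free (indeed projective) $\Z[\pi]$-modules with $E_n=0$ for $n<0$ is chain contractible. I would construct a contracting homotopy $s_n\colon E_n\to E_{n+1}$ with $\partial s + s\partial = \id_{E_*}$ by induction on $n$. For $n=0$: acyclicity gives that $\partial_1\colon E_1\to E_0$ is surjective, and $E_0$ is projective, so this surjection splits, providing $s_0$ with $\partial_1 s_0 = \id_{E_0}$ (and $s_{-1}=0$). For the inductive step, given the $s_{<n}$ satisfying the identity in lower degrees, set $\phi_n := \id_{E_n} - s_{n-1}\partial_n\colon E_n\to E_n$; a one-line computation with the inductive hypothesis gives $\partial_n\phi_n = 0$, so $\im\phi_n\subseteq \ker\partial_n = \im\partial_{n+1}$ by acyclicity. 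Since $E_n$ is projective and $\partial_{n+1}$ surjects onto $\im\partial_{n+1}$, the map $\phi_n$ lifts along $\partial_{n+1}$ to $s_n\colon E_n\to E_{n+1}$ with $\partial_{n+1}s_n = \phi_n$, i.e. $\partial s_n + s_{n-1}\partial = \id_{E_n}$. This completes the induction, so $E_*$ is contractible.

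Finally I would translate a contraction of $\cone(f)$ back into a chain homotopy inverse of $f$: writing the contracting homotopy of $E_*$ in block form relative to $E_n = C_{n-1}\oplus D_n$ and expanding $\partial s + s\partial = \id$ into its four matrix entries, one reads off a chain map $g\colon D_*\to C_*$ together with chain homotopies $gf \simeq \id_{C_*}$ and $fg \simeq \id_{D_*}$ — this is the standard fact that $f$ is a chain homotopy equivalence if and only if $\cone(f)$ is contractible. Equivalently, one can bypass the cone altogether and build $g$ and the two homotopies directly by a simultaneous induction on degree, lifting at each stage using projectivity of the $C_n$ and $D_n$; that is really the same argument.

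The only genuinely fiddly point is this last step — tracking the signs in the mapping-cone differential and correctly extracting $g$ and the homotopies from the block decomposition of the contraction — but it is entirely routine bookkeeping. The mathematical content sits in the contractibility statement of the second paragraph, which uses nothing about $\Z[\pi]$ beyond the fact that free modules are projective, and nothing about the complexes beyond their being bounded below.
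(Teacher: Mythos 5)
Your proof is correct and follows essentially the same route as the paper: pass to the mapping cone, observe it is an acyclic bounded-below complex of free modules, contract it (the paper invokes the uniqueness of free resolutions of $0$ where you spell out the inductive lifting via projectivity), and read off the homotopy inverse and homotopies from the block decomposition of the contraction. The only difference is that you supply more detail at the contractibility step; the structure of the argument is identical.
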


\begin{proof}
	Since $f$ induces an isomorphism of homology groups we know that the mapping cone $\cone(f)_\ast$ is acyclic. By assumption $C_\ast$ and $D_\ast$ are free left $\Z[\pi]$-modules. It follows that  $\cone(f)_\ast$ is also a chain complex of free left $\Z[\pi]$-modules. But this guarantees the existence of a chain homotopy $\id_{\cone{(f)}_\ast}\simeq_P 0$, since we can view $\cone(f)_*$ as a free resolution of $0$ and any two such resolutions are chain homotopic.
	Recall that chain homotopy means
	\begin{align}\label{eq:chainhomotopy} \partial^{\cone{(f)}_\ast} \circ P + P\circ \partial^{\cone{f}_\ast} = \id_{\cone{(f)}_\ast} \end{align}
	If we write $P$ as a matrix
	\[
	P_n=\begin{pmatrix}
	P^{11}_n & P^{12}_n \\
	P^{21}_n & P^{22}_n
	\end{pmatrix}\colon
	C_{n-1} \oplus  D_n \to
	C_{n} \oplus  D_{n+1}
	\]
	then one easily verifies using Equation~\eqref{eq:chainhomotopy}, that $P^{12}_*\colon D_*\to C_*$ is a chain homotopy inverse of $f_*$, where the chain homotopies are given by $P^{11}_*$ and $P^{22}_*$.
\end{proof}

\begin{proof}[Proof of Proposition~\ref{prop:sing-vs-cell}]
Given a CW complex $A$ we consider  the intermediate chain complex
$C_*^{\op{int}}(A):=\ker(C_n(A^n)\xrightarrow{\partial}C_{n-1}(A^n)\to C_{n-1}(A^n,A^{n-1}))$. Given a subcomplex $B$ of $A$ we set
$C_*^{\op{int}}(A,B):=\coker(C_n^{\op{int}}(A)\to
C_n^{\op{int}}(B))$.

Let  $p \colon \wti X \to X$ denote the universal cover. We write $\wti{Y}:=p^{-1}(Y)$.
In  \cite[p.~303]{Schubert} (see also  \cite[Lemma~4.2]{Lu98}) it is shown
that the natural maps
$\iota\colon C_*^{\op{int}}(\wti{X},\wti{Y})\to C_*(\wti{X},\wti{Y})$ and
$\pi\colon C_*^{\op{int}}(\wti{X},\wti{Y})\to C_*^{\op{cell}}(\wti{X},\wti{Y})$
induce isomorphisms of homology groups.

Note that $C_*(\wti{X},\wti{Y})$ and $C_*^{\op{cell}}(\wti{X},\wti{Y})$ are free $\Z[\pi]$-left modules.
But it is not clear whether each $C_*^{\op{int}}(\wti{X},\wti{Y})$ is a  free $\Z[\pi]$-left module. But it  is straightforward to show that there exists a chain complex $F_*$ consisting of free $\Z[\pi]$-left modules and a chain map
$\varphi\colon F_*\to C_*^{\op{int}}(\wti{X},\wti{Y})$ of left $\Z[\pi]$-modules which induces isomorphisms of homology groups.

It follows from Lemma~\ref{lem:whiteheadtrick}
that $\iota\circ \varphi\colon F_*\to
C_*(\wti{X},\wti{Y})$ and $\pi\circ \varphi\colon
F_*\to C_*^{\op{cell}}(\wti{X},\wti{Y})$ are chain homotopy equivalences of left $\Z[\pi]$-modules. It follows that
$C_*(\wti{X},\wti{Y})$ and $C_*^{\op{cell}}(\wti{X},\wti{Y})$
are chain homotopy equivalent as chain complexes of let $\Z[\pi]$-modules,
which is equivalent to  $C_*^{\op{sing}}(X,Y; \Z[\pi])$ and    $C_*^{\op{cell}}(X,Y; \Z[\pi])$  being
chain homotopy equivalent as chain complexes of left $\Z[\pi]$-modules.
\end{proof}

\begin{proposition}\label{prop:finite-chain-complex}
Let $M$ be a compact $n$-manifold and let $N\subseteq M$ be a subspace that is a compact manifold in its own right. Write $\pi=\pi_1(M)$.
There exists a chain complex $C_*$ of finite length consisting of finitely generated free left $\Z[\pi]$-modules such that for any  ring $R$, for any $(R,\Z[\pi])$-bimodule $A$ and for any $k\in \N_0$ we have left $R$-module isomorphisms
\[ H_k(M,N;A)\,\,\cong\,\,H_k(A\otimes_{\Z[\pi]} C_*)\]
and
\[ H^k(M,N;A)\,\,\cong\,\,H_k(\Hom_{\Z[\pi]}(\overline{C_*},A)).\]
\end{proposition}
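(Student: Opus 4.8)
\textbf{Proof proposal for Proposition~\ref{prop:finite-chain-complex}.}

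The plan is to produce the chain complex $C_*$ as the cellular chain complex of a suitable finite CW pair and then verify that it computes all the twisted (co)homology groups functorially in the coefficient bimodule. First I would reduce the geometry to combinatorics. By Theorem~\ref{thm:topological-manifold-CW complex} (or more precisely by the discussion in Section~\ref{chapter:cw-structures} giving $M$ the homotopy type of a finite CW complex, applied relative to $N$), the pair $(M,N)$ is homotopy equivalent to a finite CW pair $(X,Y)$; here one uses that both $M$ and $N$ are compact manifolds and that $N\subset M$ is a subspace which is itself a manifold, so the inclusion is a cofibration up to homotopy, or one appeals directly to Proposition~\ref{prop:cw-triads} to get a homotopy equivalence of pairs realising the simple homotopy types. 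Fix such an $f\colon (M,N)\to (X,Y)$ and use it to identify $\pi_1(M)=\pi_1(X)=\pi$. Then define $C_*:=C_*^{\op{cell}}(X,Y;\Z[\pi])=\Z[\pi]\otimes_{\Z[\pi]}C_*^{\op{cell}}(\wti X,p^{-1}(Y))$ as in the definition in Section~\ref{section:based-chain-complex}. Since $X$ is a \emph{finite} CW complex, each $C_k$ is a finitely generated free left $\Z[\pi]$-module (one free generator per cell of $X\sms Y$, after choosing lifts), and $C_*$ has finite length because $X$ is finite-dimensional. This is the complex claimed in the statement.

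Next I would establish the two isomorphisms. For homology: for an $(R,\Z[\pi])$-bimodule $A$, there is a natural identification $C_*(X,Y;A)=A\otimes_{\Z[\pi]}C_*^{\op{sing}}(\wti X,p^{-1}(Y))$, and by Proposition~\ref{prop:sing-vs-cell} the singular and cellular $\Z[\pi]$-chain complexes of $(\wti X,p^{-1}(Y))$ are chain homotopy equivalent as complexes of \emph{free} left $\Z[\pi]$-modules. Applying the additive functor $A\otimes_{\Z[\pi]}-$ preserves chain homotopy equivalences, so $H_k(X,Y;A)\cong H_k(A\otimes_{\Z[\pi]}C_*)$ as left $R$-modules. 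Finally, the homotopy equivalence $f$ induces a $\pi$-equivariant chain homotopy equivalence on singular chains of the universal covers (by naturality of the universal cover construction and covering homotopy), hence $H_k(M,N;A)\cong H_k(X,Y;A)$ compatibly with the $R$-action. Composing gives the first displayed isomorphism. For cohomology the argument is parallel but one must be slightly careful with variance: by definition $H^k(M,N;A)=H_k\big(\Hom_{\op{right-}\Z[\pi]}(\ol{C_*^{\op{sing}}(\wti X,p^{-1}(Y))},A)\big)$, and since $\ol{(-)}$ and $\Hom_{\op{right-}\Z[\pi]}(-,A)$ both send $\Z[\pi]$-chain homotopy equivalences of free complexes to chain homotopy equivalences, the same two replacements (cellular for singular via Proposition~\ref{prop:sing-vs-cell}, and $(X,Y)$ for $(M,N)$ via $f$) yield $H^k(M,N;A)\cong H_k(\Hom_{\Z[\pi]}(\ol{C_*},A))$.

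The routine but slightly fiddly points I would need to spell out are: (i) that $H_*(X,Y;A)$ really is computed by $A\otimes_{\Z[\pi]}C_*^{\op{sing}}(\wti X,p^{-1}(Y))$ for an arbitrary bimodule $A$ — this is just unwinding the definition, with the $R$-module structure coming from the left $R$-action on $A$; and (ii) that one may choose lifts of cells once and for all so that the based structure, while not needed for this proposition, is at least well-defined — here only freeness and finite generation matter. The main obstacle, such as it is, is locating the relative homotopy equivalence $(M,N)\simeq(X,Y)$ with the correct properties when $N$ has its own boundary and corners relative to $M$; I would handle this by invoking Proposition~\ref{prop:cw-triads} (or, if $N$ is merely a subspace that happens to be a manifold and not necessarily a submanifold in the strong sense, by a mapping-cylinder/collar argument together with Theorem~\ref{thm:topological-manifold-CW complex}) rather than reproving it. Everything else is a formal consequence of Proposition~\ref{prop:sing-vs-cell} and the fact that additive functors preserve chain homotopies.
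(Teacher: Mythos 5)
Your proposal is correct and follows essentially the same route as the paper: replace $(M,N)$ up to homotopy by a finite CW pair, take $C_*=C_*^{\op{cell}}(X,Y;\Z[\pi])$, and conclude via Proposition~\ref{prop:sing-vs-cell} together with the fact that $A\otimes_{\Z[\pi]}-$ and $\Hom_{\Z[\pi]}(\overline{(-)},A)$ preserve chain homotopy equivalences of complexes of free modules. One caution: neither Theorem~\ref{thm:topological-manifold-CW complex} (an absolute statement) nor Proposition~\ref{prop:cw-triads} (which concerns only decompositions of $\partial M$) directly supplies a homotopy equivalence of \emph{pairs} for a general compact manifold subspace $N\subset M$; your fallback — cellularly approximating the square relating $i\colon N\to M$ to $j\colon Y\to X$ and passing to the mapping cylinders to create cofibrations — is exactly what the paper does, and should be the primary argument rather than an afterthought.
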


\begin{remark}
Note that we do \emph{not} demand that $N$ be a submanifold of $M$. For example $N$ could be a union of boundary components of $M$, or $N$ could be a submanifold of the boundary. Evidently $N$ could also be the empty set.
\end{remark}

\begin{proof}
By Theorem~\ref{thm:topological-manifold-CW complex} the manifolds $M$ and $N$ are homotopy equivalent to finite CW complexes $X$ and $Y$ respectively.
Let $i\colon N\to M$ be the inclusion map. By the Cellular Approximation Theorem there exists a cellular map  $j\colon Y\to X$ such that the following diagram commutes up to homotopy:
\[ \xymatrix@C1.42cm@R0.65cm{ N \ar[r]^i \ar[d]_(0.45)\simeq & M \ar[d]^(0.45)\simeq \\
Y\ar[r]^j & X.}\]
Next we replace $M$ and $X$ by the mapping cylinders of $i$ and $j$ respectively, to create cofibrations.
Given a map $f\colon U\to V$ between topological spaces let $\op{cyl}(f)$ be the mapping cylinder. We view $U$ as a subset of $\op{cyl}(f)$ in the obvious way. With this notation we have
\[ H_k(M,N;A)\,\cong\,H_k(\op{cyl}(i\colon N\to M),N;A)\,\cong\, H_k(\op{cyl}(j\colon Y\to X),Y;A).\]
The mapping cylinder $Z:=\op{cyl}(j\colon Y\to X)$ admits the structure of a finite CW complex such that $Y$ is a subcomplex.
Thus we can compute the twisted homology groups $ H_k( \op{cyl}(j\colon X\to Y);A)$
using the relative twisted cellular chain complex, and similarly for cohomology. Put differently,
$C_*= C_*^{\op{cell}}(Z,Y; \Z[\pi])$ has the desired properties.
\end{proof}

In order to give a criterion for twisted homology modules to be finitely generated, we need the notion of a Noetherian ring.

\begin{definition}
A ring $R$ is said to be \emph{left Noetherian} if for any descending chain
\[ R\,\,\supseteq \,\, I_1\,\,\supseteq \,\, I_2\,\,\supseteq \,\, I_3\,\,\supseteq \,\,\dots\]
of left $R$-ideals the inclusions eventually become equality. If $R$ is commutative, then we just say Noetherian.
\end{definition}

\begin{example}
The following rings are left Noetherian:
\bnm
\item The ring $\Z$ is Noetherian.
\item
Any (skew) field is left Noetherian.
\item If $A$ is a commutative Noetherian ring, then the multivariable Laurent polynomial ring $A[t_1^{\pm 1},\dots,t_k^{\pm 1}]$ is also Noetherian \cite[Corollary~IV.9.5]{Lang02}.
\enm
\end{example}

The following theorem is often implicitly  used.

\begin{proposition}\label{prop:twisted-homology-fg}
Let $M$ be a compact $n$-manifold, let $N\subseteq M$ be a subspace that is a compact manifold in its own right, let $R$ be a ring and let $A$ be an $(R,\Z[\pi])$-bimodule.
If $R$ is left Noetherian and if $A$ is finitely generated as a left  $R$-module, then all the twisted homology modules $H_*(M,N;A)$ are finitely generated left $R$-modules.
\end{proposition}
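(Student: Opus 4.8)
The plan is to use Proposition~\ref{prop:finite-chain-complex} to reduce the statement to a purely algebraic fact about Noetherian rings. First I would invoke that proposition to obtain a chain complex $C_*$ of finite length consisting of \emph{finitely generated} free left $\Z[\pi]$-modules, with $H_k(M,N;A)\cong H_k(A\otimes_{\Z[\pi]} C_*)$ as left $R$-modules, for any ring $R$ and any $(R,\Z[\pi])$-bimodule $A$. This is the key structural input; it packages all the manifold topology (existence of finite CW structures via Theorem~\ref{thm:topological-manifold-CW complex}, mapping cylinders to get cofibrations, cellular approximation) into a single finitely generated free resolution-type object.

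Next I would carry out the algebra. Each $C_k$ is a finitely generated free left $\Z[\pi]$-module, say $C_k\cong \Z[\pi]^{n_k}$ with $n_k<\infty$, and the complex has finite length, so $C_k=0$ for $k$ outside a bounded range. Then $A\otimes_{\Z[\pi]} C_k\cong A^{n_k}$ is a finitely generated left $R$-module, since $A$ is finitely generated over $R$ by hypothesis and a finite direct sum of finitely generated modules is finitely generated. Now the point is that $R$ is left Noetherian: over a left Noetherian ring, every submodule and every quotient of a finitely generated left $R$-module is again finitely generated. In particular, for each $k$, the cycles $Z_k:=\ker(\partial_k\colon A\otimes C_k\to A\otimes C_{k-1})$ form a submodule of the finitely generated module $A\otimes C_k$, hence are finitely generated; and $H_k(A\otimes_{\Z[\pi]} C_*)=Z_k/\partial_{k+1}(A\otimes C_{k+1})$ is a quotient of $Z_k$, hence finitely generated. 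Therefore $H_k(M,N;A)$ is a finitely generated left $R$-module for all $k$, and it vanishes outside the finite range where $C_*$ is supported.

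I do not anticipate a serious obstacle here: the proof is essentially a two-line deduction once Proposition~\ref{prop:finite-chain-complex} is in hand. The only points requiring a word of care are (i) recalling the standard fact that over a left Noetherian ring finitely generated modules are Noetherian as modules, so submodules and quotients stay finitely generated --- this can be cited or noted as standard; and (ii) making sure one uses the left-module structure consistently, which is exactly the structure provided by the $(R,\Z[\pi])$-bimodule $A$ in the statement of Proposition~\ref{prop:finite-chain-complex}. If anything is a ``main obstacle,'' it is simply remembering that the real content was already extracted in establishing Proposition~\ref{prop:finite-chain-complex} (and ultimately Theorem~\ref{thm:topological-manifold-CW complex}); the present proposition is a clean corollary.

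\begin{proof}
By Proposition~\ref{prop:finite-chain-complex} there exists a chain complex $C_*$ of finite length consisting of finitely generated free left $\Z[\pi]$-modules such that
\[ H_k(M,N;A)\,\,\cong\,\,H_k(A\otimes_{\Z[\pi]} C_*)\]
as left $R$-modules. For each $k$ we have $C_k\cong \Z[\pi]^{n_k}$ for some $n_k\in \N_0$, and $C_k=0$ for all but finitely many $k$. Hence $A\otimes_{\Z[\pi]} C_k\cong A^{n_k}$ is a finite direct sum of copies of $A$, and since $A$ is a finitely generated left $R$-module, so is $A\otimes_{\Z[\pi]} C_k$. Because $R$ is left Noetherian, every finitely generated left $R$-module is a Noetherian module; in particular every submodule and every quotient of $A\otimes_{\Z[\pi]} C_k$ is again finitely generated over $R$. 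The module of cycles $Z_k:=\ker\big(\partial_k\colon A\otimes_{\Z[\pi]} C_k\to A\otimes_{\Z[\pi]} C_{k-1}\big)$ is a submodule of $A\otimes_{\Z[\pi]} C_k$, hence finitely generated, and $H_k(A\otimes_{\Z[\pi]} C_*)$ is a quotient of $Z_k$, hence finitely generated. Therefore $H_*(M,N;A)$ is a finitely generated left $R$-module, and it vanishes in all but finitely many degrees.
\end{proof}
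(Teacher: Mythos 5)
Your proof is correct and follows essentially the same route as the paper: invoke Proposition~\ref{prop:finite-chain-complex} to get a finite-length complex of finitely generated free $\Z[\pi]$-modules, identify $A\otimes_{\Z[\pi]}C_k\cong A^{n_k}$, and conclude via the Noetherian property (which the paper isolates as Lemma~\ref{lem:module-over-noetherian}) that a quotient of a submodule of a finitely generated module is finitely generated. No gaps.
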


In the proof of Proposition~\ref{prop:twisted-homology-fg} we will need the following lemma; cf.\
\cite[Proposition~1.21]{Lam91} or \cite[Proposition~X.1.4]{Lang02}.

\begin{lemma}\label{lem:module-over-noetherian}
Let $R$ be a left Noetherian ring. If $P$ is a finitely generated left $R$-module, then any left submodule of $P$ is also a finitely generated left $R$-module.
\end{lemma}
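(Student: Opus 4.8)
The plan is to run the classical dévissage argument: reduce the statement for an arbitrary finitely generated module to the case of the free modules $R^n$, and then induct on $n$. First I would record the standard reformulation of the hypothesis, namely that $R$ is left Noetherian if and only if every left ideal of $R$ is finitely generated. One direction is immediate from the chain condition; for the other, if a left ideal $I$ failed to be finitely generated, one could inductively choose $a_1, a_2, \dots \in I$ with $Ra_1 + \dots + Ra_k \subsetneq Ra_1 + \dots + Ra_{k+1}$ for all $k$, contradicting the ascending chain condition on left ideals. This reformulation is precisely the case $n = 1$ of the intermediate claim: \emph{every left $R$-submodule of $R^n$ is finitely generated}, since submodules of $R$ are exactly left ideals.

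Next I would prove that intermediate claim by induction on $n$. For the inductive step, let $N \subseteq R^n$ be a submodule, let $p \colon R^n \to R$ be projection onto the last coordinate, and identify $\ker p$ with $R^{n-1}$. Then $p(N)$ is a left ideal of $R$, hence finitely generated by the base case, while $N \cap R^{n-1}$ is a submodule of $R^{n-1}$, hence finitely generated by the inductive hypothesis. Picking finitely many elements of $N$ that map under $p$ onto a generating set of $p(N)$, together with a finite generating set of $N \cap R^{n-1}$, produces a finite generating set of $N$: given $x \in N$, one subtracts a suitable left $R$-combination of the first family to land in $N \cap R^{n-1}$, then applies the second family.

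Finally, for a general finitely generated left $R$-module $P$, choose a surjection $\phi \colon R^n \to P$. For any submodule $Q \subseteq P$, the preimage $\phi^{-1}(Q)$ is a submodule of $R^n$, hence finitely generated by the previous paragraph, and therefore $Q = \phi(\phi^{-1}(Q))$ is finitely generated, being the image of a finitely generated module under $\phi$.

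There is no real obstacle here; this is routine module theory, and the only points requiring mild care are the passage from the chain condition to the ``finitely generated ideals'' characterization and the bookkeeping of generators in the inductive step. (I would also note in passing that the notion of left Noetherian intended in the text is the \emph{ascending} chain condition on left ideals; the displayed ``descending chain'' should read ``ascending chain'', as the subsequent examples $\Z$ and $A[t_1^{\pm 1}, \dots, t_k^{\pm 1}]$, which are not Artinian, make clear.)
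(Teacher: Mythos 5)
Your proof is correct and is exactly the standard d\'evissage argument found in the references the paper cites for this lemma (\cite[Proposition~1.21]{Lam91}, \cite[Proposition~X.1.4]{Lang02}); the paper itself gives no proof, deferring entirely to those sources. Your parenthetical observation is also well taken: the paper's definition of ``left Noetherian'' is stated with a \emph{descending} chain, which is the Artinian condition, whereas the ascending chain condition is what is intended and what your reduction to ``every left ideal is finitely generated'' requires -- the examples $\Z$ and $A[t_1^{\pm 1},\dots,t_k^{\pm 1}]$ confirm this.
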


\begin{proof}[Proof of Proposition~\ref{prop:twisted-homology-fg}]
By Proposition~\ref{prop:finite-chain-complex}, there exists a chain complex $C_*$ of finite length consisting of finitely generated free left $\Z[\pi]$-modules such that
\[ H_k(M,N;A)\,\,\cong\,\,H_k(A\otimes_{\Z[\pi]} C_*).\]
Given $k\in \N_0$ we denote the rank of $C_k$ as  a free left $\Z[\pi]$-module  by ~$r_k$.
Then we have $A\otimes_{\Z[\pi]} C_k \cong A\otimes_{\Z[\pi]} \Z[\pi]^{r_k}\cong A^{r_k}$.
In particular $H_k(M,N;A)$ is isomorphic to a quotient of a submodule of a finitely generated $R$-module.  The desired statement follows from
Lemma~\ref{lem:module-over-noetherian}.
\end{proof}

%======================
\subsection{Cup and cap products on twisted (co-) chain complexes}
Throughout this section let $X$ be  a connected topological space admitting a universal cover, and write $\pi=\pi_1(X)$.
We want to introduce the cup product and the cap product on twisted (co-) chain complexes.
Given an $n$-simplex $\sigma$, define the $p$-simplices $\sigma\lfloor_{p}$ and $\sigma\rfloor_{p}$ by
\begin{align*}
\sigma\rfloor_{p}(t_0,\ldots,t_p)&:=\sigma(t_0,\ldots,t_p,0,\ldots,0),\\
\sigma\lfloor_{p}(t_0,\ldots,t_p)&:=\sigma(0,\ldots,0,t_0,\ldots,t_p).
\end{align*}
Throughout this section let $A$ and $B$ be
two right $\Z[\pi]$-modules $A$. We view  $A\otimes_\Z B$ as a right $\Z[\pi]$-module via the diagonal action of $\pi$.

First we introduce the cup product on twisted cohomology. The following lemma can be verified easily by hand, say along the lines of the proof of \cite[Lemma~3.6]{Hat02}.

\begin{lemma}\label{lem:cup-product}
Let $Y$ be a subset of $X$.
For all $p,q\in \N_0$ We consider the map
\begin{align*}
{ \acup}\colon\coCh{p}{X,Y}{A} \times \coCh{q}{X,Y}{B}&\longrightarrow \coCh{p+q}{X,Y}{{A}\otimes_\Z B}\\
(\phi,\psi) &\longmapsto \big(\sigma\mapsto \varphi(\sigma\lfloor_p)\otimes_\Z  \psi(\sigma\rfloor_{k-p}) \big).
\end{align*}
$($Note that  the right-hand side is indeed  a $\Z[\pi]$-homomorphism, i.e.\ it defines an element
$\coCh{p+q}{X}{{A}\otimes_\Z B}$.$)$
 Furthermore the map descends to a well defined map
\[ \acup \colon H^p(X,Y;A) \times H^q(X,Y;B)\,\,\longrightarrow\,\, H^{p+q}(X,Y;A\otimes_\Z B).\]
We refer to this map as the \emph{cup product}.
\end{lemma}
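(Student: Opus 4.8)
The plan is to run the classical construction of the cup product on singular cochains --- in the form of the proof of \cite[Lemma~3.6]{Hat02} --- while checking at each step that $\Z[\pi]$-equivariance is preserved. The only geometric input needed is that the deck transformation action of $\pi$ on singular simplices of $\wti X$ commutes with the front- and back-face operators: for $g\in\pi$ one has $(g\cdot\sigma)\lfloor_p = g\cdot(\sigma\lfloor_p)$ and $(g\cdot\sigma)\rfloor_q = g\cdot(\sigma\rfloor_q)$, since $\sigma\lfloor_p$ and $\sigma\rfloor_q$ are obtained from $\sigma$ by precomposition with an affine inclusion of simplices, whereas a deck transformation acts by postcomposition. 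I would also record that the front and back faces of a simplex with image in $p^{-1}(Y)$ again have image in $p^{-1}(Y)$.

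Given these, I would first check that $\varphi\acup\psi$, defined on a $(p+q)$-simplex $\sigma$ by $\sigma\mapsto\varphi(\sigma\lfloor_p)\otimes_\Z\psi(\sigma\rfloor_q)$ and extended $\Z$-linearly, is a morphism of right $\Z[\pi]$-modules from $\overline{C_{p+q}(\wti X)}$ to $A\otimes_\Z B$, the latter carrying the diagonal action. Recalling that the right action on $\overline{C_*}$ is $\sigma\cdot g = g^{-1}\cdot\sigma$, the equivariance of the face operators together with the right $\Z[\pi]$-linearity of $\varphi$ and $\psi$ gives
\[(\varphi\acup\psi)(\sigma\cdot g) = \varphi\big((\sigma\lfloor_p)\cdot g\big)\otimes_\Z\psi\big((\sigma\rfloor_q)\cdot g\big) = \big(\varphi(\sigma\lfloor_p)\cdot g\big)\otimes_\Z\big(\psi(\sigma\rfloor_q)\cdot g\big) = \big((\varphi\acup\psi)(\sigma)\big)\cdot g,\]
which is precisely equivariance for the diagonal action; this is the parenthetical assertion in the statement. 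When furthermore $\varphi$ and $\psi$ vanish on $\overline{C_*(p^{-1}(Y))}$, the vanishing $\varphi(\sigma\lfloor_p)=0$ for $\sigma$ with image in $p^{-1}(Y)$ shows that $\varphi\acup\psi$ does too, so the formula also makes sense relative to $Y$, landing in $\coCh{p+q}{X}{A\otimes_\Z B}$.

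Finally, to descend to cohomology I would prove the Leibniz rule $\delta(\varphi\acup\psi) = (\delta\varphi)\acup\psi + (-1)^p\,\varphi\acup(\delta\psi)$ in the twisted cochain complex. Since the coboundary $\delta$ is by definition $\hom(\partial,\id)$ with $\partial$ the ordinary simplicial boundary, this is the same combinatorial identity relating the boundary of a simplex to the front and back faces of its boundary as in the untwisted case; no new input beyond the equivariance already verified is required, so the computation is the one of \cite[Lemma~3.6]{Hat02} carried out symbol for symbol. From the Leibniz rule it follows at once that the cup product of two cocycles is a cocycle and that cupping a cocycle with a coboundary on either side yields a coboundary, whence $\acup$ induces a well-defined bilinear pairing on the relative cohomology groups as claimed.

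I do not anticipate a genuine difficulty: the content is entirely formal once the face operators are seen to be $\pi$-equivariant, and the only place demanding attention is the bookkeeping of the left/right module conventions and of the bar-functor $\overline{(-)}$, so that the diagonal $\Z[\pi]$-structure on $A\otimes_\Z B$ is matched correctly.
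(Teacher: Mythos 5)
Your proposal is correct and is exactly the argument the paper has in mind: the paper gives no written proof, stating only that the lemma ``can be verified easily by hand, say along the lines of the proof of \cite[Lemma~3.6]{Hat02}'', and your write-up supplies precisely those details --- the $\pi$-equivariance of the front/back face operators, the compatibility with the diagonal right action on $A\otimes_\Z B$, the vanishing on $p^{-1}(Y)$ for the relative version, and the Leibniz rule to descend to cohomology.
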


Next we introduce the cap product. As with cup product, first we define it on the chain level.

\begin{lemma}\label{lem:cap-product}
Let $S,T\subseteq X$ be subsets. We write $C_k(X,\{S,T\})=C_k(X)/(C_k(S)+C_k(T))$.
The map
\begin{align*}
\acap\colon\coCh{p}{X,S}{A} \times \Ch{k}{X,\{S,T\}}{A}&\longrightarrow \Ch{k-p}{X,T}{{A}\otimes_\Z B}\\
(\psi, b\otimes_{\Z[\pi]} \sigma) &\longmapsto (\psi(\sigma\lfloor_p)\otimes_\Z b) \otimes_{\Z[\pi]} \sigma\rfloor_{k-p}.
\end{align*}
is well-defined. We refer to this map as the \emph{cap product}.
\end{lemma}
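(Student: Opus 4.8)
Unwinding the definitions, the three modules in the statement are
\[
\coCh{p}{X,S}{A}=\Hom_{\opnormal{right-}\Z[\pi]}\!\big(\ol{C_*(\wt X,p^{-1}S)},A\big),\qquad
\Ch{k}{X,\{S,T\}}{B}=B\otimes_{\Z[\pi]}\frac{C_k(\wt X)}{C_k(p^{-1}S)+C_k(p^{-1}T)},
\]
and $\Ch{k-p}{X,T}{A\otimes_\Z B}=(A\otimes_\Z B)\otimes_{\Z[\pi]}C_{k-p}(\wt X,p^{-1}T)$, where $A\otimes_\Z B$ carries the diagonal right $\pi$-action. A priori the formula defines a $\Z$-bilinear map out of $\coCh{p}{X,S}{A}\times\big(B\otimes_\Z \Z\{\text{singular simplices of }\wt X\}\big)$; to see it is well-defined there are exactly two things to check: (i) it respects the balancing relation $bg\otimes\sigma=b\otimes g\sigma$ over $\Z[\pi]$, so that it descends to $B\otimes_{\Z[\pi]}C_k(\wt X)$; and (ii) it respects passage to the relative quotients, i.e.\ it kills the subcomplexes coming from $p^{-1}S$ and $p^{-1}T$ and lands in the $T$-relative quotient. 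Bilinearity over $\Z$ is immediate from the shape of the formula, so these two points are the whole content.

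For (i), the main computation, fix $g\in\pi$ and a singular $k$-simplex $\sigma$ of $\wt X$. The front/back face operations commute with deck transformations, i.e.\ $(g\sigma)\lfloor_p=g\cdot(\sigma\lfloor_p)$ and $(g\sigma)\rfloor_{k-p}=g\cdot(\sigma\rfloor_{k-p})$, since a deck transformation is just post-composition with a homeomorphism of $\wt X$. Because $\psi$ is a homomorphism of right $\Z[\pi]$-modules with respect to the $\ol{(-)}$-convention, applying it to the left action gives $\psi\big(g\cdot(\sigma\lfloor_p)\big)=\psi(\sigma\lfloor_p)\cdot g^{-1}$. Therefore the image of $b\otimes g\sigma$ is $\big(\psi(\sigma\lfloor_p)g^{-1}\otimes_\Z b\big)\otimes_{\Z[\pi]} g\cdot(\sigma\rfloor_{k-p})$, and using the diagonal right $\pi$-action one rewrites $\psi(\sigma\lfloor_p)g^{-1}\otimes_\Z b=\big(\psi(\sigma\lfloor_p)\otimes_\Z bg\big)\cdot g^{-1}$; sliding the $g^{-1}$ across the $\otimes_{\Z[\pi]}$ cancels the $g$ in $g\cdot(\sigma\rfloor_{k-p})$ and yields $\big(\psi(\sigma\lfloor_p)\otimes_\Z bg\big)\otimes_{\Z[\pi]}\sigma\rfloor_{k-p}$, which is exactly the image of $bg\otimes\sigma$. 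So the formula descends over $\Z[\pi]$. The only place any care is needed is keeping the left/right module structures and the bar-convention straight; the diagonal action on $A\otimes_\Z B$ is precisely the mechanism that makes the two balancing relations compatible, and that is the essential (and only mildly delicate) point.

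For (ii), suppose first that $\sigma$ has image in $p^{-1}(T)$. Then its front face $\sigma\rfloor_{k-p}$ also has image in $p^{-1}(T)$, so $\big(\psi(\sigma\lfloor_p)\otimes_\Z b\big)\otimes_{\Z[\pi]}\sigma\rfloor_{k-p}$ is zero in $\Ch{k-p}{X,T}{A\otimes_\Z B}$; this simultaneously shows the output lies in the $T$-relative quotient and that the $p^{-1}(T)$-part of a chain is killed. If instead $\sigma$ has image in $p^{-1}(S)$, then the back face $\sigma\lfloor_p$ lies in $C_p(p^{-1}S)$, and since $\psi$ is, by definition of $\coCh{p}{X,S}{A}$, a homomorphism on $\ol{C_*(\wt X,p^{-1}S)}$, which annihilates $C_*(p^{-1}S)$, we get $\psi(\sigma\lfloor_p)=0$ and the output vanishes. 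Hence the map factors through $\Ch{k}{X,\{S,T\}}{B}$ and has image in $\Ch{k-p}{X,T}{A\otimes_\Z B}$, which is the assertion. I expect no real obstacle beyond the bookkeeping in step (i); everything else is formal.
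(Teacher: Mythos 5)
Your proof is correct and follows essentially the same route as the paper: the substantive step is the verification that the formula respects the balancing relation over $\Z[\pi]$ (your computation in (i) is the same calculation, with the same handling of the bar convention and the diagonal action), and the paper dismisses your step (ii) with "it follows easily from the definitions." Your explicit treatment of (ii) — the front face of a simplex in $p^{-1}(T)$ stays in $p^{-1}(T)$, and $\psi$ kills back faces landing in $p^{-1}(S)$ because it is defined on the relative cochain complex — is exactly the intended argument.
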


\begin{proof}
We verify that the given map respects the tensor product. Thus let $\psi\in C^p(X;A)$,  $\sigma\in C_k(\wti{X})$, $\gamma\in \pi$ and $b\in B$.
We calculate that
\[ \ba{rll}
\psi\acap b\otimes_{\Z[\pi]} \gamma\sigma &=\,\, \left(\psi(\gamma\sigma\lfloor_p)\otimes_\Z b\right) \otimes_{\Z[\pi]} \gamma\sigma\rfloor_{k-p}
&=\,\,(\gamma\psi(\sigma\lfloor_p)\otimes_\Z b)\cdot\gamma \otimes_{\Z[\pi]} \sigma\rfloor_{k-p}\\[0.1cm]
&=\,\,(\gamma^{-1} \gamma\psi(\sigma\lfloor_p)\otimes_\Z b\gamma) \otimes_{\Z[\pi]} \sigma\rfloor_{k-p}&=\,\,\psi\acap b\gamma\otimes_{\Z[\pi]} \sigma.
\ea
\]
It follows easily from the definitions that the cap product descends to the given quotient (co-) chain complexes.
\end{proof}

\begin{lemma}\label{lem:boundary-cap}\label{lem:capboundary}
Let $f \in C^p(X;A)$ and let $c \in C_k(X;B)$.  We have
\[\partial (f\acap c)\,\,=\,\,(-1)^p\cdot (-\delta(f)\acap c + f\acap\partial c) \in C_{k-1}(X;A \otimes_{\Z} B). \]
\end{lemma}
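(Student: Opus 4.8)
This is the Leibniz rule for the cap product on twisted chain complexes, and the plan is to prove it by a direct computation at the chain level, exactly mirroring the classical untwisted proof (e.g.\ \cite[p.~240]{Hat02}) while being careful about where the $\Z[\pi]$-equivariance and the signs enter. So first I would fix a cocycle representative $f \in C^p(X;A) = \hom_{\opnormal{right-}\Z[\pi]}(\ol{C_*(\wti X)},A)$ and a chain $c = b\otimes_{\Z[\pi]} \sigma \in C_k(X;B)$ with $\sigma$ a singular $k$-simplex in $\wti X$ (by linearity it suffices to treat such a generator). I would then write out all three terms $\partial(f\acap c)$, $\delta(f)\acap c$, and $f\acap \partial c$ in terms of the faces $\sigma\circ d_i$, using the explicit truncation formulas for $\sigma\lfloor_p$ and $\sigma\rfloor_{k-p}$ from Lemma~\ref{lem:cap-product} together with the simplicial boundary formula $\partial\sigma = \sum_{i=0}^k (-1)^i \sigma\circ d_i$.

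The combinatorial heart is the standard bookkeeping: $f\acap\partial c$ contributes $\sum_{i=0}^{k} (-1)^i f\big((\sigma\circ d_i)\lfloor_p\big)\otimes \,(\sigma\circ d_i)\rfloor_{k-1-p}$; the faces with $i \le p$ combine (after re-indexing) with $\delta(f)\acap c$ — recall $\delta(f)(\tau) = \sum (-1)^j f(\tau\circ d_j)$ — to leave only the $i=p+1,\dots,k$ boundary faces acting on the ``back'' simplex, while $\partial(f\acap c) = \sum_{j=0}^{k-p}(-1)^j (f(\sigma\lfloor_p)\otimes b)\otimes_{\Z[\pi]} \sigma\rfloor_{k-p}\circ d_j$ accounts for those, up to the shift $(-1)^p$ coming from the fact that the first $p$ vertices of $\sigma$ are ``used up'' by $f$. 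Collecting terms gives precisely $\partial(f\acap c) = (-1)^p\big(-\delta(f)\acap c + f\acap \partial c\big)$. I would present this as a telescoping cancellation rather than writing every index explicitly; the key is to split $\partial\sigma$ at the overlap index $i=p$.

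Two points need a little extra care, and these are the only places the twisted setting differs from the untwisted one. First, one must check the identity lands in the claimed quotient/submodule structure — but this is already built into Lemma~\ref{lem:cap-product}, which established that $\acap$ is well-defined on $C_*(X;A\otimes_\Z B) = A\otimes_\Z B \otimes_{\Z[\pi]} C_*(\wti X)$, so no new equivariance verification is required; the computation takes place entirely in $C_*(\wti X)$ before tensoring down. Second, the coefficient $A\otimes_\Z B$ and the asymmetry between how $f$ (a $\Z[\pi]$-homomorphism into $A$) and $b\in B$ enter the formula must be tracked, but since $\delta$ acts only through $f$ and $\partial$ acts only through $\sigma$, the $B$-factor is a passive spectator throughout and the signs are identical to the classical case. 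I do not anticipate a genuine obstacle here — the statement is essentially formal once the truncation and boundary formulas are unwound — so the ``hard part'' is purely the sign/index bookkeeping, which I would streamline by citing the structure of the classical argument and only highlighting that naturality of the Alexander–Whitney-type truncations is $\pi$-equivariant.

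\begin{proof}
By linearity it suffices to verify the formula on a generator $c = b\otimes_{\Z[\pi]}\sigma$ of $C_k(X;B)$, where $\sigma$ is a singular $k$-simplex in the universal cover $\wti X$ and $b\in B$. Recall $\partial\sigma = \sum_{i=0}^{k}(-1)^i\sigma\circ d_i$ where $d_i$ is the $i$th face inclusion, and that for a cochain $g\in C^p(X;A)$ one has $\delta(g)(\tau) = \sum_{j=0}^{p+1}(-1)^j g(\tau\circ d_j)$ for a $(p{+}1)$-simplex $\tau$. Using the truncation operators from Lemma~\ref{lem:cap-product}, observe the elementary simplicial identities
\[ (\sigma\circ d_i)\lfloor_p = \sigma\lfloor_p\circ d_i \ \ (i\le p+1), \qquad (\sigma\circ d_i)\rfloor_{k-1-p} = \begin{cases} (\sigma\rfloor_{k-p})\circ d_{i-p} & i\ge p \\ \sigma\rfloor_{k-1-p} & i\le p,\end{cases}\]
with agreement when $i=p$ in the first family (the $(p{+}1)$st vertex of the front face equals the $p$th vertex of $\sigma$). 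Now expand. On one hand,
\[ \partial(f\acap c) = \sum_{j=0}^{k-p}(-1)^j\,\big(f(\sigma\lfloor_p)\otimes_\Z b\big)\otimes_{\Z[\pi]}(\sigma\rfloor_{k-p}\circ d_j).\]
On the other hand, $f\acap\partial c = \sum_{i=0}^{k}(-1)^i\,\big(f((\sigma\circ d_i)\lfloor_p)\otimes_\Z b\big)\otimes_{\Z[\pi]}(\sigma\circ d_i)\rfloor_{k-1-p}$. Splitting the sum at $i=p$ and applying the identities above, the terms with $i\ge p$ give $(-1)^p\sum_{\ell=0}^{k-p}(-1)^\ell (f(\sigma\lfloor_p)\otimes_\Z b)\otimes_{\Z[\pi]}(\sigma\rfloor_{k-p}\circ d_\ell)$, which is exactly $(-1)^p\,\partial(f\acap c)$; while the terms with $i\le p$ give $\sum_{i=0}^{p}(-1)^i\big(f(\sigma\lfloor_p\circ d_i)\otimes_\Z b\big)\otimes_{\Z[\pi]}\sigma\rfloor_{k-1-p} = \big(\delta(f)(\sigma\lfloor_{p+1})\otimes_\Z b\big)\otimes_{\Z[\pi]}\sigma\rfloor_{k-1-p} = \delta(f)\acap c$, using the cochain boundary formula and that $\sigma\lfloor_{p+1}\circ d_i = \sigma\lfloor_p\circ d_i$ for $i\le p$ together with $\sigma\lfloor_{p+1}\rfloor_{\,\cdot}$ reducing as indicated. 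Since both sides of the displayed computation live in $C_{k-1}(X;A\otimes_\Z B)$ by Lemma~\ref{lem:cap-product}, rearranging yields
\[ f\acap\partial c = (-1)^p\,\partial(f\acap c) + \delta(f)\acap c,\]
which is equivalent to the asserted identity $\partial(f\acap c) = (-1)^p\big(-\delta(f)\acap c + f\acap\partial c\big)$.
\end{proof}
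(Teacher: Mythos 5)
Your overall strategy---a direct chain-level expansion of $f\acap\partial c$, split into the faces absorbed by $f$ and the faces surviving in the output chain---is the right one, and it is what the paper's own one-line proof defers to the cited reference for. But the execution does not match the conventions actually set up in this paper, and the ``elementary simplicial identities'' on which your computation rests are wrong as written. First, $(\sigma\circ d_i)\lfloor_p = \sigma\lfloor_p\circ d_i$ is dimensionally inconsistent: the left side is a $p$-simplex (the last $p+1$ vertices of the $(k-1)$-simplex $\sigma\circ d_i$), while the right side is a $(p-1)$-simplex. Second, and more importantly, you have the two truncations reversed. In this paper $\sigma\lfloor_p(t_0,\dots,t_p)=\sigma(0,\dots,0,t_0,\dots,t_p)$ is the \emph{back} face $\sigma|[v_{k-p},\dots,v_k]$ and $\sigma\rfloor_{k-p}$ is the \emph{front} face $\sigma|[v_0,\dots,v_{k-p}]$, so the cap product of Lemma~\ref{lem:cap-product} feeds the \emph{back} $p$-face to $f$ and keeps the front face. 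Hence the correct split of $\sum_{i=0}^k(-1)^i(\cdots)$ is at $i=k-p$, not at $i=p$: for $i\le k-p-1$ one has $(\sigma\circ d_i)\lfloor_p=\sigma\lfloor_p$ and $(\sigma\circ d_i)\rfloor_{k-1-p}=(\sigma\rfloor_{k-p})\circ d_i$, and these reproduce $\partial(f\acap c)$ with \emph{no} $(-1)^p$ shift; for $i\ge k-p$ one has $(\sigma\circ d_i)\rfloor_{k-1-p}=\sigma\rfloor_{k-p-1}$ and $(\sigma\circ d_i)\lfloor_p=\sigma\lfloor_{p+1}\circ d_{i-(k-p-1)}$, and these assemble into $\delta(f)\acap c$ only after accounting for the missing $j=0$ face $\sigma\lfloor_{p+1}\circ d_0=\sigma\lfloor_p$, whose contribution must be checked to cancel against the $j=k-p$ term of $\partial(f\acap c)$. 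Your bookkeeping, with the split at $i=p$ and the $(-1)^p$ shift attributed to ``the first $p$ vertices being used up by $f$,'' is the computation for the opposite (Hatcher-style) convention; you also sum only $p+1$ of the $p+2$ faces of $\sigma\lfloor_{p+1}$ when identifying $\delta(f)$, glossing over the junction term.

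This matters because the signs are the entire content of the lemma. If you redo the computation with the paper's literal conventions (ordinary $\partial$, $\delta=\hom(\partial,\id)$ with no Koszul sign, and the cap product as defined), the junction terms cancel and you land on an identity of the form $\partial(f\acap c)=f\acap\partial c\pm\delta(f)\acap c$ whose signs depend on $k-p$ rather than matching the displayed formula for all $p$ and $k$; this is exactly the convention-dependence the paper flags when it warns that ``the precise signs differ from similar formulas in some textbooks.'' So either carry out the bookkeeping carefully with the stated conventions and record the sign you actually obtain, or identify the additional sign convention (in $\delta$ or in $\acap$) under which the displayed formula holds, as is done in the reference the paper cites. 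As it stands, the two identities your proof rests on are false for the paper's definitions, so the derivation of the asserted signs is not justified.
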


\begin{proof}
The lemma follows from a calculation using the definition of the cap product and the boundary maps, see e.g.\ \cite[Lemma~192.9]{Fr23} for details. Note that the precise signs differ from similar formulas in some textbooks in algebraic topology since there are many different sign conventions in usage.
\end{proof}

\begin{corollary}\label{cor:cap-well-defined}
Let  $S,T\subseteq X$ be  subsets, let $R$ be a ring and let $A$ be an $(R,\Z[\pi_1(M)])$-bimodule.
For any cycle $\sigma\in C_n(X,\{S,T\};\Z)$  the cap product
\[\ba{rcl} \acap [\sigma] \colon H^k(X,S;A)&\to & H_{n-k}(X,T;A)=H_{n-k}(X,T;A\otimes_{\Z} \Z)\\
{}[\varphi]&\mapsto & [\varphi\acap \sigma]\ea\]
is well-defined. Furthermore this map only depends on the homology class $[\sigma]\in H_n(X,S\cup T;\Z)$.
\end{corollary}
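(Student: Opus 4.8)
The plan is to run the whole argument at the chain level, using Lemma~\ref{lem:capboundary} (the Leibniz rule $\partial(\varphi\acap c)=(-1)^{p}(-\delta\varphi\acap c+\varphi\acap\partial c)$) as essentially the only input. Specialising Lemma~\ref{lem:cap-product} to $B=\Z$ with trivial $\pi$-action gives, for each $p$, a $\Z[\pi]$-equivariant chain-level pairing
\[
\acap\colon C^{p}(X,S;A)\times C_k(X,\{S,T\};\Z)\longrightarrow C_{k-p}(X,T;A),
\]
where $C_k(X,\{S,T\};\Z)=C_k(X;\Z)/\bigl(C_k(S;\Z)+C_k(T;\Z)\bigr)$. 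Two elementary facts will be used repeatedly: an element of $C^{k}(X,S;A)$ is a twisted cochain that kills chains supported over $S$, and in the formula of Lemma~\ref{lem:cap-product} a cap $\varphi\acap\sigma$ pairs $\varphi$ against the front face $\sigma\lfloor_{k}$ and retains the back face $\sigma\rfloor_{k-p}$, while front faces of simplices over $S$ (resp.\ back faces of simplices over $T$) again lie over $S$ (resp.\ $T$).

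First I would fix a cocycle $\varphi\in C^{k}(X,S;A)$ and a cycle $\sigma\in C_n(X,\{S,T\};\Z)$, choose a representing chain $\widehat\sigma\in C_n(X;\Z)$, and write $\partial\widehat\sigma=s+t$ with $s$ supported over $S$ and $t$ supported over $T$ — possible precisely because $\sigma$ is a cycle modulo $C_*(S)+C_*(T)$. Then $\varphi$ kills $s\lfloor_{k}$ termwise, so $\varphi\acap s=0$ already in $C_*(X;A)$; and $\varphi\acap t\in C_{n-1-k}(T;A)$ maps to $0$ in the quotient $C_{n-1-k}(X,T;A)$. Lemma~\ref{lem:capboundary} with $\delta\varphi=0$ then gives $\partial(\varphi\acap\widehat\sigma)=\pm(\varphi\acap s+\varphi\acap t)$, hence $\partial(\varphi\acap\widehat\sigma)=0$ in $C_{n-1-k}(X,T;A)$. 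The same two observations show that altering $\widehat\sigma$ by a chain in $C_n(S)+C_n(T)$ alters $\varphi\acap\widehat\sigma$ by $0$ in $C_{n-k}(X,T;A)$, so $\varphi\acap\sigma$ is a well-defined cycle and determines $[\varphi\acap\sigma]\in H_{n-k}(X,T;A)$.

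Next I would verify the two independences. Replacing $\varphi$ by $\varphi+\delta\psi$ with $\psi\in C^{k-1}(X,S;A)$, Lemma~\ref{lem:capboundary} gives $\partial(\psi\acap\widehat\sigma)=\pm(-\delta\psi\acap\widehat\sigma+\psi\acap\partial\widehat\sigma)$, and $\psi\acap\partial\widehat\sigma=\psi\acap s+\psi\acap t$ is $0$ in $C_*(X,T;A)$ by the computation above; thus $\delta\psi\acap\sigma$ is a boundary there and $[\varphi\acap\sigma]$ is unchanged, so $\acap[\sigma]\colon H^{k}(X,S;A)\to H_{n-k}(X,T;A)$ is well defined. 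Replacing $\sigma$ by a homologous cycle $\sigma'=\sigma-\partial\rho$ in $C_*(X,\{S,T\};\Z)$ and lifting, $\varphi\acap\widehat{\sigma'}$ and $\varphi\acap\widehat\sigma$ differ by $\varphi\acap\partial\widehat\rho$ plus a term that $\varphi$ kills modulo a $T$-supported chain; applying Lemma~\ref{lem:capboundary} to $\varphi\acap\widehat\rho$ together with $\delta\varphi=0$ shows $\varphi\acap\partial\widehat\rho=\pm\partial(\varphi\acap\widehat\rho)$ is a boundary, so $\acap[\sigma]$ depends only on the class of $\sigma$ in $H_n(X,\{S,T\};\Z)$.

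Finally, to upgrade this to dependence on $[\sigma]\in H_n(X,S\cup T;\Z)$, I would invoke the standard fact — proved by iterated barycentric subdivision as in the small-simplices theorem, carried out $\pi$-equivariantly on the universal cover — that for an excisive couple $\{S,T\}$, which is the case in every manifold application of this corollary (e.g.\ $S=R_-$, $T=R_+$), the inclusion $C_*(p^{-1}S)+C_*(p^{-1}T)\hookrightarrow C_*(p^{-1}(S\cup T))$ is a $\Z[\pi]$-chain homotopy equivalence, so that $H_n(X,\{S,T\};\Z)\xrightarrow{\ \cong\ }H_n(X,S\cup T;\Z)$. I expect the main obstacle to be not any single hard step but the combinatorial bookkeeping of the second paragraph: correctly attributing the vanishing of $\varphi\acap\partial\widehat\sigma$ to two different mechanisms — the cochain being relative to $S$ kills the $S$-part outright, while the $T$-part dies only after projecting to the quotient relative to $T$ — and keeping all lifts to the universal cover and the $\Z[\pi]$-actions straight throughout.
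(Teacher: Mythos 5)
Your proof is correct and is essentially the argument the paper intends: the corollary is deduced directly from the Leibniz rule of Lemma~\ref{lem:capboundary}, with the two vanishing mechanisms (the cochain relative to $S$ killing the $S$-part of $\partial\widehat\sigma$, and the $T$-part dying in the quotient $C_*(X,T;A)$) handled exactly as you describe. Your closing observation is also apt: independence of the class in $H_n(X,S\cup T;\Z)$, as opposed to $H_n(X,\{S,T\};\Z)$, genuinely requires $\{S,T\}$ to be an excisive couple (which holds in every application in the paper), a hypothesis the corollary's statement leaves implicit.
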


%======================
\section{The Poincar\'e Duality Theorem}
The following theorem is a generalisation of the familiar Poincar\'e duality for untwisted coefficients to the case of twisted coefficients.

\begin{theorem}\label{thm:poincareduality}
\textbf{\textup{(Twisted Poincar\'e Duality Theorem)}}
Let $M$ be a   compact, oriented, connected $n$-dimensional manifold. Let $S$ and $T$ be codimension 0 compact submanifolds of $\partial M$ such that $\partial S=\partial T=S\cap T$ and $\partial M=S\cup T$.
Let $[M]\in H_n(M,\partial M;\Z)$ be the fundamental class of $M$. If $R$ is a ring and if  $A$ is an  $(R,\Z [\pi_1(M)])$-bimodule, then the map
\[-\acap [M]\colon H^k(M,S;{A})\,\,\rightarrow \,\,H_{n-k}(M,T;A)\]
 defined by Lemma~\ref{lem:boundary-cap} is an isomorphism of left $R$-modules.
\end{theorem}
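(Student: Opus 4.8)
The standard strategy is a Mayer--Vietoris induction on the number of charts needed to cover $M$, reducing the global statement to a local computation in $\R^n$ or the half-space $H_n$. First I would reduce to the case $R = \Z[\pi_1(M)]$ and $A = \Z[\pi_1(M)]$ itself: once the cap product is an isomorphism of $\Z[\pi_1(M)]$-module chain complexes at the level of the finite free complexes provided by Proposition~\ref{prop:finite-chain-complex}, the general coefficient case follows by tensoring and applying Lemma~\ref{lem:whiteheadtrick} (a chain equivalence of finite free $\Z[\pi]$-complexes stays a chain equivalence after $A \otimes_{\Z[\pi]} -$ and after $\Hom_{\Z[\pi]}(\overline{-}, A)$). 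Concretely, I would show that there is a chain-level fundamental cycle $\sigma$ representing $[M]$ so that $-\acap \sigma$ is a chain homotopy equivalence from $C^{n-*}(M,S;\Z[\pi])$ to $C_*(M,T;\Z[\pi])$; since both are bounded complexes of free modules, it suffices by Lemma~\ref{lem:whiteheadtrick} to prove it induces an isomorphism on homology, and then the theorem follows on applying the appropriate functors.

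Second, I would set up the induction. Cover $M$ by finitely many open sets $U_1,\dots,U_m$, each of which is homeomorphic to $\R^n$ or to $H_n$ (using that $M$ is compact, hence admits a finite atlas, and refining so each chart domain is a ball or half-ball). The inductive statement is a \emph{local} Poincar\'e duality: for every open $U \subseteq M$ that is a union of some of the $U_i$, the cap product with the image of $[M]$ in $H_n(M, M\sm U;\Z)$ (a locally finite fundamental class, or more precisely the class in $\varinjlim H_n(M, M\sm K)$ for $K \subseteq U$ compact) gives an isomorphism $H^k_c(U;A) \xrightarrow{\cong} H^{lf}_{n-k}(U;A)$, where one works with compactly supported cohomology and Borel--Moore (locally finite) homology so that excision and the colimit formalism behave well. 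The base case is $U$ a single chart: for $\R^n$ one has $H^k_c(\R^n;A) = A$ concentrated in degree $n$ and $H^{lf}_{n-k}(\R^n;A) = A$ concentrated in degree $0$, and one checks by hand (using the explicit formula for $\acap$ from Lemma~\ref{lem:cap-product} and the naturality/boundary formula Lemma~\ref{lem:capboundary}) that cap product with the local orientation class is the identity; the half-space case $H_n$ is the relative version and is equally explicit. The inductive step glues $U = V \cup W$ with $V, W, V\cap W$ smaller: one writes down the two Mayer--Vietoris sequences (for $H^*_c$ and for $H^{lf}_*$), checks that cap product with the fundamental class commutes with the connecting maps up to sign --- this is where Lemma~\ref{lem:capboundary} and a careful bookkeeping of orientation classes is needed --- and concludes by the five lemma. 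Finally, since $M$ is compact, $H^*_c = H^*$ and $H^{lf}_* = H_*$, so the local statement for $U = M$ is exactly the theorem (with $S,T$ handled by carrying the boundary decomposition through the half-space base case and the excisions).

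The main obstacle, and the part requiring genuine care rather than routine work, is the treatment of the boundary and the decomposition $\partial M = S \cup T$ with $\partial S = \partial T = S \cap T$. One must choose the atlas compatibly with $S$, $T$, and $\partial S$, so that every chart sees the stratification $(M; S, T)$ in one of a short list of standard local models (interior; near $\int S$; near $\int T$; near the corner $S \cap T$), and then verify the local duality statement in each model, including the "corner" model where one is essentially proving a duality isomorphism for $H_n \times$ (half-space) relative to appropriate pieces of the boundary. Keeping the identifications of fundamental classes and their restrictions coherent through all the excisions and colimits --- and getting the signs in the Mayer--Vietoris compatibility right, given the sign in Lemma~\ref{lem:capboundary} --- is the delicate bookkeeping that the proof will spend most of its effort on. A secondary subtlety is making sure the colimit $\varinjlim_{K} H_n(M, M\sm K)$ argument is valid in the twisted setting; this is fine because the chain-level cap product is natural and the relevant complexes are free, but it should be stated carefully. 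Everything else --- passing to finite free complexes via Proposition~\ref{prop:finite-chain-complex}, the change-of-rings reduction, and the five-lemma step --- is standard homological algebra.
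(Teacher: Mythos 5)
Your proposal is a correct strategy, but it is organised differently from the paper's proof, which follows Bredon: rather than inducting over open sets covered by charts with compactly supported cohomology, the paper proves a duality $\check{H}^p(K;A)\cong H_{n-p}(M,M\sm K;A)$ for arbitrary compact subsets $K\subseteq M$ (Theorem~\ref{thm: pdclosed}), where $\check{H}^p(K;A)=\varinjlim_{U\supseteq K}H^p(U;A)$, and verifies the three hypotheses of a ``bootstrap lemma'' over compact sets (local computation at a point, Mayer--Vietoris plus five lemma for unions, direct limits for nested intersections). The two routes are dual colimit formalisms and each needs the same local computation and the same sign-checked compatibility of $\acap$ with connecting maps. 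Where they genuinely diverge is the boundary: you propose building $S$, $T$ and the corner $S\cap T$ into the local models of the atlas, whereas the paper avoids all corner analysis by first obtaining the absolute case $H^p(M;A)\cong H_{n-p}(M,\partial M;A)$ via a collar, and then deducing the $(S,T)$ version from the ladder of long exact sequences of the pairs together with duality for the closed-up boundary piece and the five lemma (Lemma~\ref{lem:pd-commutative-diagram}). That reduction is cleaner than a corner-by-corner chart argument and you could adopt it verbatim. Your change-of-rings reduction also runs in the opposite direction to the paper: the paper proves the module-coefficient statement directly for all $(R,\Z[\pi])$-bimodules $A$ (the bootstrap carries $A$ along throughout) and only afterwards derives the chain-level statement \ref{thm:poincareduality-chain-complex}; your direction (chain level first, then tensor/Hom) is also viable, but note that passing from $\Z[\pi]$ to $\Hom_{\Z[\pi]}(\overline{-},A)$ requires replacing the singular complex by the finitely generated free complex of Proposition~\ref{prop:finite-chain-complex}, exactly because the natural map $A\otimes_{\Z[\pi]}\Hom(C_*,\Z[\pi])\to\Hom(C_*,A)$ is an isomorphism only for finitely generated free $C_*$.

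One slip to correct: the pairing in your inductive statement is mismatched. For an open $U$ the duality reads $H^k_c(U;A)\cong H_{n-k}(U;A)$ with \emph{ordinary} homology (or dually $H^k(U;A)\cong H^{lf}_{n-k}(U;A)$), not $H^k_c\to H^{lf}_{n-k}$. As written your base case fails: $H^k_c(\R^n;A)$ is $A$ concentrated at $k=n$, while Borel--Moore homology $H^{lf}_{n-k}(\R^n;A)$ is $A$ concentrated at $k=0$. The colimit induction only runs smoothly for the $(H^*_c,H_*)$ pair, since both commute with increasing unions of open sets; with $(H^*,H^{lf}_*)$ you would face inverse limits and $\lim^1$ problems. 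Once the pairing is fixed the rest of your plan goes through.
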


We also have the following Poincar\'e Duality statement on the (co-) chain level.

\begin{theorem}\textbf{\textup{(Universal Poincar\'e Duality Theorem)}} \label{thm:poincareduality-chain-complex}
Let $M$ a compact, oriented, connected $n$-dimensional manifold. Let $S$ and $T$ be codimension 0 compact submanifolds of $\partial M$ such that $\partial S=\partial T=S\cap T$ and $\partial M=S\cup T$.
Let $\sigma\in C_n(M,\{S,T\};\Z)$ be a representative of the fundamental class of $M$. If $R$ is a ring and if  $A$ is an  $(R,\Z [\pi_1(M)])$-bimodule, then the map
\[-\acap \sigma\colon C^k(M,S;\Z[\pi_1(M)])\rightarrow C_{n-k}(M,T;\Z[\pi_1(M)])\]
 defined by Lemma~\ref{lem:boundary-cap} is a chain homotopy equivalence of left $R$-chain complexes.
\end{theorem}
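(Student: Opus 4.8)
The plan is to deduce the chain-level statement from the homology statement Theorem~\ref{thm:poincareduality} by means of the Whitehead trick (Lemma~\ref{lem:whiteheadtrick}), after replacing the singular (co)chain complexes of $M$ by finite complexes of finitely generated free $\Z[\pi]$-modules, where $\pi=\pi_1(M)$. It is enough to treat the cap product chain map $-\acap\sigma\colon C^k(M,S;\Z[\pi])\to C_{n-k}(M,T;\Z[\pi])$ for a general triad decomposition $\partial M=S\cup T$ (the form of the statement as written is the special case $S=\partial M$, $T=\emptyset$), and I would organise the proof in three stages: (i) reduce from singular to cellular (co)chains of a finite CW model, for arbitrary coefficient bimodule $A$; (ii) within the finite cellular model, reduce to the case $R=A=\Z[\pi]$; (iii) conclude in the $\Z[\pi]$-linear case via Lemma~\ref{lem:whiteheadtrick} and Theorem~\ref{thm:poincareduality}.

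For stage (i): apply Proposition~\ref{prop:cw-triads} to the decomposition $\partial M=S\cup T$ to obtain a finite CW triad $(X,X_-,X_+)$ and a homotopy equivalence of triads $f\colon(M,S,T)\to(X,X_-,X_+)$, and fix the induced identification $\pi_1(X)=\pi$. By Proposition~\ref{prop:sing-vs-cell} and its proof there is, for each CW pair $(X,Y)$, a natural and $\pi$-equivariant chain homotopy equivalence $C_*^{\op{sing}}(\wti X,p^{-1}(Y))\to C_*^{\op{cell}}(\wti X,p^{-1}(Y))$ of free $\Z[\pi]$-module complexes. Applying the additive functors $A\otimes_{\Z[\pi]}-$ and $\Hom_{\Z[\pi]}(\ol{\,\cdot\,},A)$ to this equivalence — which preserves chain homotopy equivalences — produces chain homotopy equivalences $C_*^{\op{sing}}(X,Y;A)\simeq C_*^{\op{cell}}(X,Y;A)$ and $C^*_{\op{cell}}(X,Y;A)\simeq C^*_{\op{sing}}(X,Y;A)$. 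Pulling everything back along $f$, transporting $\sigma$ to a cellular cycle $\sigma_X\in C_n^{\op{cell}}(X,X_-\cup X_+;\Z)$ representing the fundamental class $[X]$, and using naturality of the cap product, one obtains a $\Z[\pi]$-chain map $-\acap\sigma_X$ between finite complexes of finitely generated free $\Z[\pi]$-modules, fitting into a square with $-\acap\sigma$ on the singular side that commutes up to chain homotopy. Consequently $-\acap\sigma$ is a chain homotopy equivalence iff $-\acap\sigma_X$ is.

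For stages (ii) and (iii): by definition $C_{n-k}^{\op{cell}}(X,X_+;A)=A\otimes_{\Z[\pi]}C_{n-k}^{\op{cell}}(X,X_+;\Z[\pi])$, and since the cellular chain modules of $\wti X$ are finitely generated and free, also $C^k_{\op{cell}}(X,X_-;A)\cong A\otimes_{\Z[\pi]}C^k_{\op{cell}}(X,X_-;\Z[\pi])$; under these identifications $\id_A\otimes(-\acap\sigma_X)$ is the cap product with $\sigma_X$ in $A$-coefficients, as one checks from the explicit formula of Lemma~\ref{lem:cap-product}. As $A\otimes_{\Z[\pi]}-$ preserves chain homotopy equivalences (and the resulting homotopies are $R$-linear since $A$ is a left $R$-module), it therefore suffices to show that $-\acap\sigma_X$ is a chain homotopy equivalence over $\Z[\pi]$. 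The reindexed dual complex $C^{n-*}_{\op{cell}}(X,X_-;\Z[\pi])$ and the complex $C_*^{\op{cell}}(X,X_+;\Z[\pi])$ are bounded complexes of finitely generated free $\Z[\pi]$-modules, so by Lemma~\ref{lem:whiteheadtrick} it is enough that $-\acap\sigma_X$ is an isomorphism on homology; by the compatibility square of stage (i) together with Corollary~\ref{cor:cap-well-defined}, this induced map on homology is $-\acap[M]\colon H^k(M,S;\Z[\pi])\to H_{n-k}(M,T;\Z[\pi])$, which is an isomorphism by Theorem~\ref{thm:poincareduality} applied with $R=A=\Z[\pi]$. This establishes the $\Z[\pi]$-case, and stage (ii) upgrades it to the general statement.

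The one step I expect to require real care is the compatibility asserted in stage (i): that under the natural chain equivalences of Proposition~\ref{prop:sing-vs-cell} and the homotopy equivalence $f$, capping with the singular chain $\sigma$ corresponds — up to $\pi$-equivariant chain homotopy — to capping with the cellular cycle $\sigma_X$. This is the familiar naturality bookkeeping for cap products; it amounts to comparing the Alexander–Whitney diagonal with a cellular diagonal approximation up to $\pi$-equivariant chain homotopy and checking that the cap product is natural with respect to the specific chain equivalences of Proposition~\ref{prop:sing-vs-cell}. Everything else — the Whitehead trick, the stability of chain homotopy equivalences under additive functors, and the change of coefficient ring — is formal, and the substantive duality input is entirely imported from Theorem~\ref{thm:poincareduality}.
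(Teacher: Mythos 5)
Your proposal is correct and follows the same route as the paper, whose entire proof is the single sentence that the result ``follows immediately from Theorem~\ref{thm:poincareduality} together with Lemma~\ref{lem:whiteheadtrick}''. The extra work you do in stage (i) --- passing to a finite cellular model so that both complexes are bounded and consist of finitely generated free $\Z[\pi]$-modules --- is not spelled out in the paper but is genuinely needed for Lemma~\ref{lem:whiteheadtrick} to apply, since the singular cochain complex $\hom_{\Z[\pi]}(\ol{C_*(\wti{M},p^{-1}(\partial M))},\Z[\pi])$ is neither free nor of the bounded form that lemma assumes.
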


Note that Theorem~\ref{thm:simple-homotopy-equivalence-chain-complexes} can be used to give an alternative proof that the chain complexes of the theorem are chain homotopy equivalent.

\begin{proof}[Proof of Theorem~\ref{thm:poincareduality-chain-complex}
using Theorem~\ref{thm:poincareduality}]
The Universal Poincar\'e Duality Theorem~\ref{thm:poincareduality-chain-complex} follows immediately from
the Twisted Poincar\'e Duality Theorem~\ref{thm:poincareduality}
together with Lemma~\ref{lem:whiteheadtrick}.
\end{proof}

In the following sections  we will provide a proof of the Twisted Poincar\'e Duality Theorem~\ref{thm:poincareduality}.
But just for fun we would like to show that
the Universal Poincar\'e Duality Theorem~\ref{thm:poincareduality-chain-complex} also implies
the Twisted Poincar\'e Duality Theorem~\ref{thm:poincareduality}, in other words, the two theorems are equivalent:

\begin{proof}[Proof of Theorem~\ref{thm:poincareduality}
using Theorem~\ref{thm:poincareduality-chain-complex}]
Let $M$ be a compact, oriented, connected $n$-dimensional manifold. To simplify the discussion we just deal with the case that $S=\partial M$ and $T=\emptyset$. We pick a representative $\sigma$ for $[M]$ and we write $\pi=\pi_1(M)$.
Let $A$ be an  $(R,\Z [\pi])$-bimodule. Given a chain complex $D_*$ of right $\Z[\pi]$-modules we consider the cochain map
\[ \ba{rcl} \Xi \colon A\otimes_{\Z[\pi]} \hom_{\opnormal{right-}\Z[\pi]}(D_*;\Z[\pi]))&\to & \hom_{\opnormal{right-} \Z[\pi]}(D_*,A)\\
a\otimes f&\mapsto & (\sigma \mapsto a\cdot f(\sigma)).\ea\]
Note that $\Xi$ is an isomorphism if each $D_k$ is a finitely generated free $\Z[\pi]$-module. But in general $\Xi$ is not an isomorphism.

Furthermore we consider the following diagram
\[ \xymatrix@C2.1cm@R0.65cm{ H^k(A\otimes_{\Z[\pi]} C^*(M,\partial M;\Z[\pi]))\ar[r]^-{\id_A\otimes (\acap \sigma)} \ar[d]^(0.45){\Xi_*}&  H_{n-k}(A\otimes_{\Z[\pi]} C_*(M; \Z[\pi]))=H_{n-k}(M;A)\\
H^k(C^*(M,\partial M;A))=H^k(M,\partial M;A).\ar[ur]_{\acap [M]}}\]
One easily verifies that the diagram commutes. The top horizontal map is an isomorphism by
the Universal Poincar\'e Duality Theorem~\ref{thm:poincareduality-chain-complex}. It remains to show that the vertical map is an isomorphism. As we had pointed out above, on the chain level $\Xi$ is in general not an isomorphism.

As in the proof of Proposition~\ref{prop:finite-chain-complex} we can use Theorem~\ref{thm:topological-manifold-CW complex} to find a pair $(X,Y)$ of finite CW complexes and a homotopy  equivalence $f\colon (X,Y) \to (M,\partial M)$.
By Proposition~\ref{prop:sing-vs-cell}  there exists a homotopy equivalence $\Theta\colon
C_*^{\op{cell}}(X,Y; \Z[\pi])\to C_*(X,Y; \Z[\pi])$ of $\Z[\pi]$-chain complexes.
 We consider the following diagram where all tensor products and homomorphism are over $\Z[\pi]$:
\[ \xymatrix@C0.6cm@R0.65cm{
 A\otimes C^*_{\op{cell}}(X,Y;\Z[\pi])\ar[d]^(0.45){\Xi_*}\ar[r]^{\Theta^*} &
 A \otimes C^*(X,Y;\Z[\pi])\ar[d]^(0.45){\Xi_*}\ar[r]^{f^*}&
 A \otimes C^*(M,\partial M;\Z[\pi])\ar[d]^(0.45){\Xi_*}\\
C^*_{\op{cell}}(X,Y;A)\ar[r]^{\Theta^*}&C^*(X,Y;A)\ar[r]^{f^*}&C^*(M,\partial M;A).}\]
One easily verifies that the diagram commutes. As pointed out above, the horizontal maps are chain homotopy equivalences over $\Z[\pi]$. Since $X$ is a finite CW complex we see that  each  $C_k^{\op{cell}}(X,Y; \Z[\pi])$ is a finitely generated  free $\Z[\pi]$-module.
Thus we obtain from the above that the left vertical map is an isomorphism.
Therefore the right vertical map is a chain homotopy equivalence.
In particular it induces an isomorphism of homology groups.
\end{proof}

The remainder of this appendix is dedicated to the proof of the Twisted Poincar\'e Duality Theorem~\ref{thm:poincareduality}.
Even though the theorem is well-known and often used, there are not many satisfactory proofs in the literature. The proof which is closest to ours in spirit is the proof of Sun \cite{Sun17}.
For closed manifolds Kwasik-Sun \cite{KwasikSun18} provide a proof by using the work of Kirby-Siebenmann to reduce the proof to the case of triangulated manifolds.

The proof of the  Twisted Poincar\'e Duality Theorem~\ref{thm:poincareduality}
is modelled on the proof of untwisted  Poincar\'e Duality that is given in  Bredon's book~\cite[Chapter VI.8]{Br93}. The logic of his proof is unchanged, but some arguments and definitions have to be adjusted for the twisted setting.

%=========================================
\section[Preparations for the proof]{Preparations for the proof of the Twisted Poincar\'e Duality Theorem}
%~\ref{thm:poincareduality}}

We fix some notation that we will use for the remainder of the appendix. Let $M$ be a \emph{connected} manifold, let $x_0\in M$ and denote by $\pi:=\pi_1(M,x_0)$ the fundamental group.
%Finally let $A$ be a {right} $\Z[\pi]$ module.
Finally let $R$ be a ring and let $A$ be an  $(R,\Z[\pi])$-bimodule.

We write $\map{p}{\widetilde{M}}{M}$ for the universal cover of $M$.  For a subset $X\subseteq M$ (not necessarily connected) we consider the (co)-homology of $X$ with respect to the coefficient system coming from $M$ by setting
\begin{align*}
\Ch{*}{X}{A}&:= A\otimes_{\Z[\pi]} \Ch{*}{p^{-1}(X)}{\Z},\\
\coCh{*}{X}{A}&:= \Hom_{\Z[\pi]} \left(\overline{\Ch{*}{p^{-1}(X)}{\Z}}, A\right),
\end{align*}
with generalisation to pairs $Y \subseteq X \subseteq M$ by
\begin{align*}
\Ch{*}{X,Y}{A}&:= A\otimes_{\Z[\pi]} \Ch{*}{p^{-1}(X),p^{-1}(Y)}{\Z},\\
\coCh{*}{X,Y}{A}&:= \Hom_{\Z[\pi]} \left(\overline{\Ch{*}{p^{-1}(X),p^{-1}(Y)}{\Z}}, A\right).
\end{align*}

We summarise the basic properties of  (co-) homology with twisted coefficients in the following theorem, which should be compared to the untwisted case.

\begin{theorem}\label{thm:basicproperties}
Let $M$ be a connected manifold with fundamental group $\pi$, and let $A$ be an $(R,\Z[\pi])$-bimodule.
\begin{enumerate}[font=\normalfont,leftmargin=0.9cm]
	\item Given $Y\subseteq X\subseteq M$ there is a long exact sequence of pairs in homology
	\[\begin{tikzcd}[column sep=small] \cdots\arrow[r]& H_k(Y;A) \arrow[r] & H_k(X; A) \arrow[r]& H_k(X,Y;A)\arrow[r] & H_{k-1}(Y;A)\arrow[r] &\cdots
	\end{tikzcd}		
	\]
	and cohomology
	\[\begin{tikzcd}[column sep=small] \cdots\arrow[r]& H^k(X,Y;A) \arrow[r] & H^k(X;A) \arrow[r]& H^k(Y;A)\arrow[r] & H^{k+1}(X,Y;A)\arrow[r] &\cdots
	\end{tikzcd}.		
	\]
	\item Suppose we have a chain of subspaces $Z\subseteq Y\subseteq X\subseteq M$ such that  the closure of $Z$ is contained in the interior of $Y$. Then the inclusion $(X\setminus Z, Y\setminus Z)\to (X,Y)$ induces an isomorphism in homology and cohomology i.e.\ for all $k\in \N_0$ we have
	\[ H_k(X\setminus Z,Y\setminus Z;A)\xrightarrow{\cong} H_k(X,Y;A)\quad\mbox{and}\quad H^k(X\setminus Z,Y\setminus Z;A)\xleftarrow{\cong} H^k(X,Y;A) \]
	\item If $U_1\subseteq U_2\subseteq M$ and $ V_1\subseteq V_2\subseteq M$ are open subsets in $M$, then there are long exact sequences in homology
	\[\begin{tikzcd}[column sep=tiny, row sep = tiny] \ldots\arrow[r]& H_k(U_1\cap V_1, U_2\cap V_2;A) \arrow[r] &\begin{array}{c} H_k(U_1,U_2;A)\\ \oplus\\ H_k(V_1,V_2;A)\end{array} \arrow[r]& H_k(U_1\cup V_2, U_2\cup V_2;A) \\ \arrow[r]& H_{k-1}(U_1\cap V_1, U_2\cap V_2;A)\arrow[r] &\ldots
	\end{tikzcd}		
	\]
	and cohomology%\footnote{MP: presumably this is supposed to be with cohomology groups here. I changed it.}
		\[\begin{tikzcd}[column sep=tiny, row sep = tiny]
 & & \cdots \arrow[r] & H^{k-1}(U_1\cap V_1, U_2\cap V_2;A) & \\
  \arrow[r] &  H^k(U_1\cup V_1, U_2\cup V_2;A)  \arrow[r] &  \begin{array}{c} H^k(U_1,U_2;A)\\ \oplus\\ H^k(V_1,V_2;A)\end{array} \arrow[r] & H^k(U_1\cap V_2, U_2\cap V_2;A)  \arrow[r] & \cdots
	\end{tikzcd}		
	\]	
 %  \[\begin{tikzcd}[column sep=tiny, row sep = tiny] \cdots\arrow[r]& H_k(U_1\cup V_1, U_2\cup V_2;A) \arrow[r] &\begin{array}{c} H_k(U_1,U_2;A)\\ \oplus\\ H_k(V_1,V_2;A)\end{array} \arrow[r]& H_k(U_1\cap V_2, U_2\cap V_2;A) \\ \arrow[r] & H_{k-1}(U_1\cup V_1, U_2\cup V_2;A)\arrow[r] &\cdots
	% \end{tikzcd}		
	% \]
	\item Suppose the inclusion $Y\to X$ is a homotopy equivalence, then the inclusion induced maps
\[ H_k(Y; A)\xrightarrow{\cong} H_k(X;A)\quad\mbox{and}\quad H^k(Y;A)\xleftarrow{\cong} H^k(X;A) \]
are isomorphisms.
\item Let $U_1\subseteq U_2 \subseteq \ldots$ be a sequence of open  sets in $M$ and let $U=\bigcup_{i\in\N} U_i$, then for each $k\in \N_0$ the inclusions induce an isomorphism
\[ {\varinjlim}_{i\in\N} H_k(U_i;A) \,\,\xrightarrow{\cong}\,\, H_k(U;A). \]
\end{enumerate}
\end{theorem}
The proofs are essentially the same as in the classical case. Therefore we will only sketch the arguments and focus on what is different. We also warn the reader that we give the ``philosophically wrong proof'' of statement (4). This is due to the fact that we developed the theory of twisted coefficients only for inclusions and hence a homotopy inverse does not fit in our theory. Therefore statement (4) will be deduced in a slightly round-about way using the following elementary lemma~\cite[Theorem III.3.4 \& remark after proof]{Br93}\label{lem:homotopyliftingproperty}.

\begin{lemma}\textbf{\textup{(Covering Homotopy Theorem)}}
	Given a covering $\map{p}{\widetilde{X}}{X}$, a homotopy $\map{H}{Y\times I}{X}$, and a lift $\widetilde{h}\colon Y\to\widetilde{X}$ of $H(-,0)$,  there exists a unique lift $\map{\widetilde{H}}{Y\times I}{\widetilde{X}}$ of $H$ with $\widetilde{h}=\widetilde{H}(-,0)$.
\end{lemma}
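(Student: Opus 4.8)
The plan is to follow the classical argument, splitting the statement into a uniqueness part and an existence part, and using throughout the \emph{unique lifting property} of coverings: two lifts $f,g\colon Z\to\widetilde X$ of the same map $Z\to X$ with $Z$ connected and $f(z_0)=g(z_0)$ for some $z_0$ must agree everywhere, since the locus $\{z : f(z)=g(z)\}$ is both open and closed (because $p$ is a local homeomorphism and the sheets over an evenly covered open set are disjoint). Applied to $Z=[0,1]$ this is the unique path lifting property.

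Uniqueness of $\widetilde H$ is then immediate: for each fixed $y\in Y$ the path $t\mapsto\widetilde H(y,t)$ is a lift of $t\mapsto H(y,t)$ starting at $\widetilde h(y)$, hence determined, so $\widetilde H$ is determined pointwise and therefore as a map.

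For existence I would work locally in $Y$ and then glue. Fixing $y_0\in Y$, compactness of $\{y_0\}\times I$ together with continuity of $H$ produces, via a Lebesgue-number argument, a partition $0=t_0<\dots<t_n=1$ and an open neighbourhood $N\ni y_0$ with $H(N\times[t_{i-1},t_i])$ contained in an evenly covered open set $U_i\subseteq X$ for each $i$. I would then lift inductively over $N\times[0,t_i]$: assuming a continuous lift has been built over $N\times[0,t_{i-1}]$ agreeing with $\widetilde h$ on $N\times\{0\}$, let $V_i$ be the sheet of $p^{-1}(U_i)$ containing the image of $(y_0,t_{i-1})$, shrink $N$ so that the (continuous) partial lift sends $N\times\{t_{i-1}\}$ into $V_i$, and extend over $N\times[t_{i-1},t_i]$ by $(y,t)\mapsto (p|_{V_i})^{-1}(H(y,t))$; the pasting lemma, applied along the closed overlap $N\times\{t_{i-1}\}$, gives continuity of the extension. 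After $n$ steps this yields a continuous lift $\widetilde H^{(y_0)}$ on a neighbourhood $N_{y_0}\times I$ with $\widetilde H^{(y_0)}(-,0)=\widetilde h|_{N_{y_0}}$. Since any two such local lifts, restricted to $\{y\}\times I$ for $y$ in an overlap $N_{y_0}\cap N_{y_1}$, are both lifts of $H(y,-)$ starting at $\widetilde h(y)$, they agree by unique path lifting; hence the $\widetilde H^{(y_0)}$ assemble into a single well-defined $\widetilde H\colon Y\times I\to\widetilde X$, which is continuous because it is continuous on each $N_{y_0}\times I$, lifts $H$, and restricts to $\widetilde h$ on $Y\times\{0\}$.

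The only genuinely delicate point is the shrinking step in the inductive construction — arranging that the already-built stage-$(i-1)$ lift carries an \emph{entire} neighbourhood of $y_0$ into a single sheet $V_i$, not merely the point $y_0$. This uses only continuity of that partial lift and openness of the sheets, so no hypothesis on $Y$ (local connectedness, local path-connectedness, etc.) is required; making this uniform-over-$N$ statement precise, rather than settling for a pointwise statement, is where a careless version of the argument typically fails.
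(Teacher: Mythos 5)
Your argument is correct and complete: it is the classical subdivision-and-sheets proof of the homotopy lifting property for coverings, with the one genuinely delicate point (shrinking the neighbourhood of $y_0$ at each stage so that the already-constructed partial lift carries all of $N\times\{t_{i-1}\}$, not just the single point $(y_0,t_{i-1})$, into the chosen sheet $V_i$) correctly identified and correctly handled, and with the gluing of the local lifts justified by unique path lifting. The paper itself gives no proof of this lemma, citing Bredon (Chapter~III, Theorem~3.4) instead; the proof there is essentially the one you wrote, so there is nothing to reconcile.
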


\begin{proof}[Proof of Theorem~\ref{thm:basicproperties}]
Recall that $\map{p}{\widetilde{M}}{M}$ denotes the universal cover.
For statement (1) we consider the short exact sequence $0\to\Ch{*}{Y}{\Z[\pi]}\to\Ch{*}{X}{\Z[\pi]}\to\Ch{*}{X,Y}{\Z[\pi]}\to 0$ of \emph{free} $\Z[\pi]$-modules.
Since the modules are free the sequence stays exact after applying the functors $A\otimes_{\Z[\pi]} -$ and $\Hom_{\Z[\pi]}(-, A)$.

Recall the proof of statement (2) and (3) in the classical case as in~\cite[Chapter IV.17]{Br93}. The main ingredient is to show that the inclusion of chain complexes $C^{\mathcal{U}}_*(X;\Z[\pi])\to C^{*}(X;\Z[\pi])$ induces an isomorphism on homology \cite[Theorem IV.17.7]{Br93}. Here $\mathcal{U}$ is an open cover of $X$ and $C^{\mathcal{U}}_*(X;\Z[\pi])$ is the free abelian group generated by simplices $\sigma$ for which there is a $U\in \mathcal{U}$ such that $\sigma\colon \Delta^*\to p^{-1}(U)$.
This is done by defining the barycentric subdivision $\map{\Upsilon_*}{\Ch{*}{\wti X}{\Z}}{\Ch{*}{\wti X}{\Z}}$ and a chain homotopy $T$ between $\Upsilon_*$ and the identity \cite[Lemma IV.17.1]{Br93}. The important thing for us to observe is that both maps are natural \cite[Claim (1) in proof of Lemma IV.17.1]{Br93}. Hence for a twisted chain $\Upsilon(e\otimes_{\Z[\pi]} \sigma):= e\otimes_{\Z[\pi]} \Upsilon(\sigma)$ is well-defined, because
\begin{align*}
\Upsilon(e\otimes_{\Z[\pi]}\gamma\sigma)&= e\otimes_{\Z[\pi]} \Upsilon(\gamma\sigma) \\
&= e\otimes_{\Z[\pi]}\gamma \Upsilon(\sigma) \quad\mbox{(naturality of $\Upsilon$)} \\
&= e\gamma\otimes_{\Z[\pi]} \Upsilon(\sigma)= \Upsilon(e\gamma\otimes_{\Z[\pi]} \sigma).
\end{align*}
The same holds for $T$ and from now on one can follow the classical proofs. Alternatively, one could invoke Lemma ~\ref{lem:algebraiclem}.

Next we prove statement (4).
Let $\map{f}{X}{Y\subseteq X}$ be a homotopy inverse of the inclusion and $\map{H}{X\times I}{X}$ a homotopy between $\id_X$ and $f$. Since $\map{p}{\widetilde{X}}{X}$ is a covering and $\id_{{\widetilde{X}}}$ is a lift of $H(p(-),0)$, we get by Lemma~\ref{lem:homotopyliftingproperty} a lift $\map{\widetilde{H}}{\widetilde{X}\times I}{\widetilde{X}}$ of the homotopy $H$. One easily verifies  that the inclusion $\widetilde{Y}\to\widetilde{X}$ induces a homotopy equivalence where a homotopy inverse is given by $\widetilde{H}(-,1)$. Hence the inclusion induced map  $H_k(\Ch{*}{\widetilde{Y}}{\Z})\to H_k(\Ch{*}{\widetilde{X}}{\Z})$ is an isomorphism for every $k$. Thus the claim follows from Lemma~\ref{lem:algebraiclem}.

The proof of Statement (5) is almost verbatim  the same proof as in the classical case.
\end{proof}

%===========================
\section{The main technical theorem}

Given a group $\pi$  we can view $\Z$ as a $\Z[\pi]$-module with trivial $\pi$-action. We denote this module by  $\Z^{\operatorname{triv}}$. Let $p \colon \wt M \to M$ be the covering projection. We have the following useful lemma, concerning the chain map $\Ch{*}{X}{\Ztriv} \to \Ch{*}{X}{\Z}$ defined by $k \otimes_{\Z[\pi]} \wt \sigma \mapsto k \cdot p(\sigma)$.

\begin{lemma}\label{lem:untwisted}
Given any subset $X\subseteq M$ the chain map above is an isomorphism between $\Ch{*}{X}{\Ztriv}$ and $\Ch{*}{X}{\Z}$, and induces one between $\coCh{*}{X}{\Z}$ and $\coCh{*}{X}{\Z^{\operatorname{triv}}}$, where $\Ch{*}{X}{\Z}$ and $\coCh{*}{X}{\Z}$ are the untwisted singular chain complexes.
\end{lemma}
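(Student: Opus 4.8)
The statement is that the natural chain map $\Phi\colon C_*(X;\Ztriv) \to C_*(X;\Z)$, given on generators by $k \otimes_{\Z[\pi]} \widetilde\sigma \mapsto k\cdot p(\widetilde\sigma)$, is an isomorphism of chain complexes, and dually that the adjoint map identifies the untwisted singular cochain complex $C^*(X;\Z)$ with $C^*(X;\Z^{\operatorname{triv}})$. First I would recall the definitions: by the conventions set up in Section~\ref{section:twisted-invariants} and in the appendix, $C_*(X;\Ztriv) = \Ztriv \otimes_{\Z[\pi]} C_*(p^{-1}(X);\Z)$, where $\pi$ acts on the left on $C_*(p^{-1}(X);\Z)$ by deck transformations, and $\Ztriv = \Z$ with trivial right $\pi$-action. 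The key point is that the covering projection $p$ induces, for each $k$, a surjection of abelian groups $p_\#\colon C_k(p^{-1}(X);\Z) \to C_k(X;\Z)$ (every singular simplex $\sigma\colon \Delta^k \to X$ lifts, since $\Delta^k$ is simply connected, and a simplex in $p^{-1}(X)$ maps to a simplex in $X$), and this is a chain map.

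The core of the argument is the elementary observation that $p_\#$ descends to an isomorphism $\Z \otimes_{\Z[\pi]} C_k(p^{-1}(X);\Z) \xrightarrow{\cong} C_k(X;\Z)$. Surjectivity is immediate from the lifting property just noted. For injectivity (and well-definedness), one uses that $C_k(p^{-1}(X);\Z)$ is the free abelian group on the set $S_k$ of singular $k$-simplices in $p^{-1}(X)$, that $\pi$ acts freely on $S_k$ (a deck transformation has no fixed points, so if $\gamma\widetilde\sigma = \widetilde\sigma$ then $\gamma = 1$), and that $p_\#$ induces a bijection $S_k/\pi \to$ (singular $k$-simplices in $X$): two lifts of the same simplex $\sigma$ differ by the unique deck transformation matching up their chosen basepoint values, and conversely simplices in the same $\pi$-orbit have the same image. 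Hence $C_k(p^{-1}(X);\Z)$ is free as a $\Z[\pi]$-module on a set mapping bijectively to a $\Z$-basis of $C_k(X;\Z)$, so $\Z\otimes_{\Z[\pi]} C_k(p^{-1}(X);\Z)$ is free abelian on that same set and $\Phi$ is an isomorphism. Compatibility with the boundary maps is built into the fact that $p_\#$ is a chain map and $\id\otimes\partial$ is the differential on the tensor product, so $\Phi$ is an isomorphism of chain complexes.

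For the cochain statement, I would argue similarly: $C^k(X;\Z^{\operatorname{triv}}) = \Hom_{\Z[\pi]}(\overline{C_k(p^{-1}(X);\Z)}, \Ztriv)$, and since $C_k(p^{-1}(X);\Z)$ is $\Z[\pi]$-free on a basis $B_k$ mapping bijectively to the $\Z$-basis $p_\#(B_k)$ of $C_k(X;\Z)$, a $\Z[\pi]$-equivariant map into $\Ztriv$ is the same as an arbitrary function $B_k \to \Z$, i.e.\ the same as a $\Z$-linear map $C_k(X;\Z)\to\Z$; this gives the desired isomorphism $C^k(X;\Z) \cong C^k(X;\Z^{\operatorname{triv}})$, and one checks it commutes with the coboundary $\delta = \Hom(\partial,\id)$ because it is dual to the chain isomorphism $\Phi$.

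I do not anticipate a serious obstacle here; the only mild subtlety is being careful that the $\pi$-action on singular simplices of the universal cover is genuinely free and that a $\Z[\pi]$-basis of $C_*(p^{-1}(X);\Z)$ is obtained by choosing one lift of each simplex of $X$, which is exactly the freeness of the $\pi$-action on orbits of simplices. Everything else is formal, using only that $\Z[\pi]\otimes_{\Z[\pi]} - \cong \id$ and $\Hom_{\Z[\pi]}(\Z[\pi], -) \cong \id$ applied basis-element-wise. I would therefore keep the proof short, emphasizing the freeness of the $\pi$-action on the set of singular simplices of $\widetilde{M}$ as the one fact that makes both halves work.
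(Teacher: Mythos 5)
Your proof is correct and follows the same route as the paper's, which simply notes that a simplex lifts since $\Delta^k$ is simply connected and that the ambiguity of the lift, a deck transformation, acts trivially on $\Ztriv$. You have merely spelled out the freeness of the $\pi$-action on simplices and the resulting $\Z[\pi]$-basis more explicitly, and added the (routine) dual cochain argument; no new idea or gap.
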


\begin{proof}
	The isomorphism is given by lifting a simplex, which is always possible since a simplex is simply connected. If one has two different choices of lifts, then they differ by an element in $\pi$. But the action of $\Z[\pi]$ on $\Z$ is trivial and hence this indeterminacy vanishes.
\end{proof}

We will keep the notational difference between $\Ch{*}{X}{\Z}$ and $\Ch{*}{X}{\Z^{\operatorname{triv}}}$ to emphasise where our simplices live.

As above let $R$ be a ring and let $A$ be an $(R,\Z[\pi])$-bimodule.
Let $K\subseteq M$ be a compact subset of $M$. We define the (twisted) Cech cohomology groups
\begin{align*}
\Cech{p}{K}{A}:=\lim_{\underset{K\subseteq U \subseteq M}{\longrightarrow}} H^p(U;A),
\end{align*}
where the direct limit runs over all open sets in $M$ containing $K$. Since cohomology is contravariant, we define the order on open sets in the reversed way i.e.\ $U\leq V$ if $V\subseteq U$.

Now we assume that $M$ is oriented. Being oriented gives us, for any compact subset $K\subseteq M$, a preferred element $\theta_K\in H_n(M,M\setminus K;\Ztriv)\cong H_n(M,M\setminus K;\Z)$, which restricts for all $x\in K$ to the generator in $H_n(M,M\setminus\SET{x};\Ztriv)$.

For any open set $U\subseteq M$ containing $K$,
the inclusion induced map $H_n(U,U\setminus K;\Ztriv)\to H_n(M,M\setminus K;\Ztriv)$
is an isomorphism by Theorem~\ref{thm:basicproperties} (2).
Let $\ex_U\colon H_n(M,M\setminus K;\Ztriv)\to H_n(U,U\setminus K;\Ztriv)$ be the inverse of this inclusion induced  isomorphism i.e.\ if $\map{j}{U}{M}$ is the inclusion then $j_*\circ \ex_U =\id$.
We then obtain a map
\begin{align*}
	D_U\colon H^p(U;A)&\longrightarrow H_n(M,M\setminus K; A) \\
	\phi&\longmapsto j_*(\phi\acap \ex_U(\theta_K)).
\end{align*}
Given another open set $V\subseteq U$ denote by $\map{i}{V}{U}$ the inclusion. Then one easily calculates:
\begin{align*}
\PD_V(i^*\phi)=j_*i_*(i^*\phi \acap \ex_V(\theta_K)) = j_*(\phi\acap i_*\ex_V(\theta_K) ) = j_* (\phi\acap \ex_U(\theta_K))=\PD_U(\phi).
\end{align*}
Or,  in other words, the following diagram commutes:
\[\begin{tikzcd}[row sep=tiny]
H^p(U;A)\arrow[dd,"i^*"] \arrow[dr,"\PD_U"] \\
& H_{n-p}(M,M\setminus K; A). \\
H^p(V;A) \arrow[ur,"\PD_V"]
\end{tikzcd}
\]
By the universal property of the direct limit we obtain the \emph{dualising} map $\map{\PD_K}{\Cech{p}{K}{A}}{H_{n-p}(M,M\setminus K;A)}$.

In the remainder of this section we will prove the following theorem.

\begin{theorem}\textbf{\textup{(Poincar\'e Duality Theorem)}}\label{thm: pdclosed}
	The map $\map{\PD_K}{\Cech{p}{K}{A}}{H_{n-p}(M,M\setminus K;A)}$ is a left $R$-module isomorphism for all compact subsets $K\subseteq M$.
\end{theorem}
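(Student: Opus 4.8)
The plan is to run the familiar ``bootstrap'' argument for Poincar\'e duality in the form presented in Bredon~\cite[Chapter~VI, Section~8]{Br93}, but carried through with twisted coefficients using the machinery assembled in Theorem~\ref{thm:basicproperties}. The key formal device is a Mayer--Vietoris principle: call a compact subset $K \subseteq M$ \emph{good} if $\PD_K$ is an isomorphism, and establish the following three closure properties. First, if $K_1$, $K_2$ and $K_1 \cap K_2$ are good, then $K_1 \cup K_2$ is good. This follows by comparing the Mayer--Vietoris sequence for \v Cech cohomology of the pair $(K_1,K_2)$ (which exists because \v Cech cohomology is a direct limit of ordinary cohomology over neighbourhoods, and one can choose cofinal systems of the form $U_1 \cup U_2$, $U_1 \cap U_2$) with the Mayer--Vietoris sequence of Theorem~\ref{thm:basicproperties}(3) for the open covers $M \sm K_1$, $M \sm K_2$ of $M \sm (K_1\cap K_2)$; the cap product with the fundamental class intertwines the two sequences (this is the sign-bookkeeping step, using Lemma~\ref{lem:capboundary}), so the five lemma applies. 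Second, if $K_1 \supseteq K_2 \supseteq \cdots$ is a nested sequence of good compacta, then $K = \bigcap K_i$ is good; here one uses that \v Cech cohomology $\check H^p(K;A)$ is the colimit of the $\check H^p(K_i;A)$, that homology commutes with the relevant colimit by Theorem~\ref{thm:basicproperties}(5) applied to the increasing open sets $M\sm K_i$, and that the dualising maps are compatible. Third---the base case---a compact subset $K$ contained in the domain of a single chart $\varphi\colon U \xrightarrow{\cong} \R^n$, with $K$ of a convex or otherwise controlled shape, is good.

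\textbf{The base case and the inductive scheme.} For the base case I would first reduce, via Lemma~\ref{lem:untwisted}, to the observation that over such a chart neighbourhood $U$ the universal cover restricts to a trivial cover $p^{-1}(U) \cong U \times \pi$, so that $C_*(U;A) \cong A \otimes_\Z C_*^{\op{sing}}(U;\Z)$ and similarly for cohomology; the cap product with $\theta_K$ is then the ordinary (untwisted) duality map tensored up with $A$, which is an isomorphism because $A$ is flat-free in the relevant sense---more precisely, because the untwisted duality statement is a chain homotopy equivalence of free $\Z$-complexes, it remains an isomorphism on homology after applying $A \otimes_\Z -$. Thus one needs the \emph{untwisted} Theorem~\ref{thm: pdclosed} over $\R^n$, which is standard. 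With the three closure properties in hand, the general case follows by the usual induction on the number of charts needed to cover $K$: write $K = K_1 \cup \cdots \cup K_m$ with each $K_i$ inside a chart (and arrange, by subdividing, that all the intersections $K_{i_1}\cap \cdots \cap K_{i_r}$ are again nice compacta inside charts, handled by the nested-intersection property applied to cubical decompositions), and induct using the union property.

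\textbf{Expected main obstacle.} The routine-but-delicate part is the \emph{compatibility of cap products with the two Mayer--Vietoris sequences}, including getting all the signs right so that the ladder diagram genuinely commutes (not merely up to sign), since Lemma~\ref{lem:capboundary} carries a $(-1)^p$ and the connecting homomorphisms in the two sequences must be matched carefully; Bredon's treatment does this in the untwisted case and the argument transports, but it must be checked that the excision isomorphisms $\ex_U$ and the restriction of the orientation class $\theta_K$ behave naturally with respect to the intersections and unions involved. The genuinely new issue, as opposed to simply re-reading Bredon, is ensuring at every stage that all chain maps in sight are maps of \emph{free} $\Z[\pi]$-module complexes so that $A\otimes_{\Z[\pi]}-$ and $\Hom_{\Z[\pi]}(\overline{-},A)$ preserve exactness and quasi-isomorphisms; this is exactly the role played by Lemma~\ref{lem:whiteheadtrick} and by the freeness observations in the proof of Theorem~\ref{thm:basicproperties}, and it is where one must resist the temptation to use homotopy inverses directly. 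Once Theorem~\ref{thm: pdclosed} is proved, the Twisted Poincar\'e Duality Theorem~\ref{thm:poincareduality} for a compact manifold with boundary follows by the standard collar argument: take $K = M$ after doubling, or more directly identify $H^k(M,S;A)$ and $H_{n-k}(M,T;A)$ with the \v Cech cohomology and local homology of a suitable compactum using the collar neighbourhood theorem~\ref{thm:collar} and Theorem~\ref{thm:basicproperties}(2),(4).
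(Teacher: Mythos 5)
Your proposal is correct and follows essentially the same route as the paper: the Bredon bootstrap with twisted coefficients, the Mayer--Vietoris ladder intertwined (up to sign) by the cap product via Lemma~\ref{lem:capboundary} plus the five lemma, direct limits for nested intersections, and a local computation exploiting the triviality of the universal cover over a small chart for the base case. The only cosmetic difference is that the paper cites Bredon's bootstrap lemma as a black box (with base case the compacta deformation-retracting to a point, handled by Lemma~\ref{lem:explicitcalc} and homotopy invariance), whereas you unfold that lemma into an explicit induction over charts and cubical compacta.
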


Here, as above, $A$ is an $(R,\Z[\pi])$-bimodule.
In the subsequent section we will see that
the Twisted Poincar\'e Duality Theorem~\ref{thm:poincareduality} is a reasonably straightforward consequence of Theorem~\ref{thm: pdclosed}.

The proof of Theorem~\ref{thm: pdclosed} will be an application of the following lemma.

\begin{lemma}\textbf{\textup{(Bootstrap lemma)}}
For each compact subspace $K\in M$ let  $P_M(K)$ be a statement. If $P_M(\cdot)$ satisfies the following three conditions:
	\begin{enumerate}[leftmargin=1cm,font=\normalfont]
		\item $P_M(K)$ holds true for all compact subsets $K\subseteq M$ with the property that for all $x\in K$ the inclusions $\SET{x}\to K$ and $M\setminus K\to M\setminus\SET{x}$ are deformation retracts,
		\item If $P_M(K_1), P_M(K_2)$ and $P_M(K_1\cap K_2)$ are true, then $P_M(K_1\cup K_2)$ is true,
		\item If $\cdots \subseteq K_2\subseteq K_1$ and $P_M(K_i)$ is true for all $i\in\N$, then $P_M(\bigcap_{i\in\N}  K_i )$ is true.
	\end{enumerate}	
Then $P_M(K)$ is true for all $K\subseteq M$.
\end{lemma}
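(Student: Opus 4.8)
The plan is to run the classical ``bootstrap'' induction: starting from the class of compact sets supplied by condition~(1), I will enlarge the class of $K$ for which $P_M(K)$ is known in several stages, each step using only (1)--(3) together with one elementary geometric input. That input is the standard observation, borrowed verbatim from the untwisted Poincar\'e duality argument in \cite[Chapter~VI]{Br93}, that a nonempty compact \emph{convex} subset $K$ of a coordinate chart $U\cong\R^n$ of $M$ (in particular a point or a closed cube) lies in the class of condition~(1): for $x\in K$ the straight-line homotopy $H(y,t)=(1-t)y+tx$ exhibits $\{x\}$ as a deformation retract of $K$, and a radial push supported in $U$ exhibits $M\setminus K$ as a deformation retract of $M\setminus\{x\}$. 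Note also that the empty set satisfies the hypothesis of~(1) vacuously, so $P_M(\emptyset)$ holds; this is needed so that~(2) can be applied even when $K_1\cap K_2=\emptyset$.

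\textbf{Stage 1 (finite unions of convex sets in one chart).} I would show by induction on $r\ge 0$ that $P_M(C_1\cup\dots\cup C_r)$ holds whenever $C_1,\dots,C_r$ are compact convex subsets of a single chart $U$. The cases $r=0$ (empty set) and $r=1$ (a single convex set) are the previous paragraph. For the step, set $B:=C_1\cup\dots\cup C_{r-1}$; then $B\cap C_r=\bigcup_{i<r}(C_i\cap C_r)$ is again a union of at most $r-1$ compact convex subsets of $U$ (intersections of convex sets are convex, and empty intersections are discarded using $P_M(\emptyset)$), so $P_M(B)$, $P_M(C_r)$, $P_M(B\cap C_r)$ all hold by induction and the base case, and~(2) gives $P_M(B\cup C_r)$.

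\textbf{Stage 2 (arbitrary compact set in one chart).} Given compact $K\subset U\cong\R^n$, choose $i_0$ so large that the $2^{-i_0}\sqrt n$-neighbourhood of $K$ still lies in $U$, and for $i\ge i_0$ let $K_i$ be the union of all closed cubes of the standard dyadic grid of side $2^{-i}$ that meet $K$. Each $K_i$ is a finite union of compact convex subsets of $U$, so $P_M(K_i)$ holds by Stage~1; the dyadic grids are nested, so $K_{i}\supseteq K_{i+1}$; and $\bigcap_{i\ge i_0}K_i=K$ since $K$ is closed. Hence~(3) yields $P_M(K)$.

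\textbf{Stage 3 (globalisation) and the main obstacle.} Next I would prove by induction on $m\ge1$ that $P_M(A_1\cup\dots\cup A_m)$ holds whenever each $A_j$ is a compact set contained in \emph{some} chart of $M$; the case $m=1$ is Stage~2. For the step, put $B:=A_1\cup\dots\cup A_{m-1}$ and let $C:=A_m$ lie in a chart $U$; then $B\cap C$ is a compact subset of $U$, so $P_M(B\cap C)$ holds by Stage~2, while $P_M(B)$ holds by induction and $P_M(C)$ by Stage~2, and~(2) finishes. Finally, any compact $K\subset M$ is covered by finitely many charts $U_1,\dots,U_m$; choosing a closed shrinking, i.e.\ compact sets $F_j\subset U_j$ with $K=F_1\cup\dots\cup F_m$ (possible since $K$ is compact, hence normal), we conclude $P_M(K)$ from the statement just proved. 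The formal content of Stages~1--3 is routine bookkeeping; the one substantive point, and the reason condition~(1) is phrased exactly as it is, is the geometric claim that compact convex subsets of a chart satisfy the deformation-retract hypothesis of~(1) — in particular that $M\setminus\{x\}$ deformation retracts onto $M\setminus K$ — which I would cite from \cite[Chapter~VI]{Br93} rather than reprove.
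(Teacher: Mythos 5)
Your staged induction --- convex compacta in a chart, finite unions of these via (2), arbitrary compacta in a chart as decreasing intersections of finite unions of dyadic cubes via (3), then finite unions of compacta each contained in a chart via (2) and a closed shrinking of a finite chart cover --- is precisely the proof of the result the paper points to, namely Bredon \cite[Chapter~VI, Lemma~7.9]{Br93}; the paper gives no argument beyond that citation. The bookkeeping in your Stages 1--3, including the treatment of the empty set and the nestedness of the dyadic approximations, is correct.

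The one substantive geometric claim, however, is false as stated: for a compact convex $K$ in a chart with more than one point, $M\sms K$ is not a deformation retract of $M\sms\SET{x}$ --- it is not even a retract. A retract of a Hausdorff space is closed in it, whereas $M\sms K$ is not closed in $M\sms\SET{x}$: as soon as $K\ne\SET{x}$, the frontier of $K$ contains a point $y\ne x$ of $K$, and $y$ is a limit of points of $M\sms K$, so any retraction $M\sms\SET{x}\to M\sms K$ would have to fix $y\notin M\sms K$. (The half of your claim concerning $\SET{x}\hookrightarrow K$ is fine.) Read literally, then, the class appearing in hypothesis (1) consists only of singletons and $\emptyset$, so your Stage~1 has no input; indeed, with that literal reading the lemma itself would be false (take $P_M(K)$ to be ``$K$ is countable''). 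The intended reading --- and the only one compatible with how the paper verifies (1), namely ``by homotopy invariance'' --- is that the two inclusions are homotopy equivalences. Under that reading your geometric input is true, but it needs a slightly different argument than a single radial push of $M\sms\SET{x}$ onto $M\sms K$: choose a closed Euclidean ball $B$ in the chart with $K\subset\Int B$; then both $M\sms K$ and $M\sms\SET{x}$ strong deformation retract onto the closed subset $M\sms\Int B$ (push $B\sms K$ out to $\partial B$ along the rays emanating from the nearest-point projection to $K$, fixing everything outside $\Int B$), so $M\sms K\hookrightarrow M\sms\SET{x}$ is a homotopy equivalence by two-out-of-three. Note also that Bredon's bootstrap lemma states its first hypothesis directly for convex compacta, so the blanket citation of \cite{Br93} does not supply the retraction you assert; with the homotopy-equivalence reading of (1) and the fix above, your proof goes through and coincides with Bredon's.
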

\begin{proof}
	See \cite[Lemma VI.7.9]{Br93}.
\end{proof}

The idea is to apply the bootstrap lemma to the statement that
the conclusion of Theorem~\ref{thm: pdclosed} holds for a given compact set $K$.
It turns out that condition (3) is the easiest to verify. It follows from formal properties about direct limits. For the verification of condition (1) we have to do one explicit calculation. This is the content of the next lemma.

\begin{lemma}\label{lem:explicitcalc}
	Let $x\in M$ be a point. The map $\PD_{\SET{x}}\colon \Cech{0}{\SET{x}}{A}\to H_n(M,M\setminus\SET{x}; A)$ is an $R$-module isomorphism.	
\end{lemma}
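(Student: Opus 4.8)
The plan is to reduce the whole statement to a single local computation on a Euclidean ball. Fix a chart around $x$ and let $U$ range over the open Euclidean-ball neighbourhoods of $x$ contained in it. These balls are cofinal among all open neighbourhoods of $x$, so $\Cech{0}{\SET{x}}{A}=\varinjlim_U H^0(U;A)$, and by the commuting triangle recorded just before the lemma the map $\PD_{\SET{x}}$ is the colimit of the maps $D_U\colon H^0(U;A)\to H_n(M,M\sm\SET{x};A)$. By excision (Theorem~\ref{thm:basicproperties}(2), applied with $X=M$, $Y=M\sm\SET{x}$, $Z=M\sm U$) the inclusion induces an isomorphism $j_*\colon H_n(U,U\sm\SET{x};A)\xrightarrow{\cong}H_n(M,M\sm\SET{x};A)$, and by construction $D_U=j_*\circ\bigl(-\acap\ex_U(\theta_{\SET{x}})\bigr)$. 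Hence it is enough to show that for each such $U$ the cap product
\[ -\acap\ex_U(\theta_{\SET{x}})\colon H^0(U;A)\,\longrightarrow\,H_n(U,U\sm\SET{x};A) \]
is an isomorphism of left $R$-modules, compatibly with the restriction maps $H^0(U';A)\to H^0(U;A)$ for $U\subset U'$.

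Next I would trivialise the covering over $U$. Since $U$ is simply connected, $p^{-1}(U)$ is a disjoint union of sheets, each mapped homeomorphically onto $U$ and freely permuted by $\pi$. Fix the lift $\wt x$ of $x$ and let $U_0$ be the sheet containing it. This choice identifies, as chain complexes of $\Z[\pi]$-modules,
\[ C_*\bigl(p^{-1}(U)\bigr)\cong\Z[\pi]\otimes_\Z C_*(U_0),\qquad C_*\bigl(p^{-1}(U),p^{-1}(U\sm\SET{x})\bigr)\cong\Z[\pi]\otimes_\Z C_*(U_0,U_0\sm\SET{\wt x}), \]
with $\pi$ acting on the $\Z[\pi]$-factor alone. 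Applying $A\otimes_{\Z[\pi]}(-)$ and $\Hom_{\Z[\pi]}(\overline{(-)},A)$ then gives natural identifications $C_*(U,U\sm\SET{x};A)\cong A\otimes_\Z C_*(U_0,U_0\sm\SET{\wt x})$ and $C^*(U;A)\cong\Hom_\Z(C_*(U_0),A)$. Unwinding Lemma~\ref{lem:cap-product}, under these identifications $\ex_U(\theta_{\SET{x}})$ corresponds to $1\otimes\theta_0$, where $\theta_0\in H_n(U_0,U_0\sm\SET{\wt x};\Z)\cong\Z$ is the integral orientation generator, and the twisted cap product becomes $\id_A\otimes(-\acap\theta_0)$.

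Finally I would invoke the standard local computation. For the ball $U_0\cong\R^n$ one has $H^0(U_0;\Z)\cong\Z$ (and $H^k(U_0;\Z)=0$ for $k>0$), while $H_*(U_0,U_0\sm\SET{\wt x};\Z)$ is $\Z$ in degree $n$, generated by $\theta_0$, and $0$ otherwise; in particular these groups are torsion-free and the integral chain complexes involved are free. Capping the unit class $1\in H^0(U_0;\Z)$ with $\theta_0$ gives $\theta_0$, so $-\acap\theta_0\colon H^0(U_0;\Z)\to H_n(U_0,U_0\sm\SET{\wt x};\Z)$ is an isomorphism $\Z\to\Z$; by a universal-coefficients argument (or directly on chains, using freeness) $\id_A\otimes(-\acap\theta_0)$ induces an isomorphism $H^0(U_0;A)\cong A\longrightarrow H_n(U_0,U_0\sm\SET{\wt x};A)\cong A$. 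Thus $D_U$ is an isomorphism of left $R$-modules for every ball $U$; since all the restriction maps $H^0(U';A)\to H^0(U;A)$ are isomorphisms, these are compatible, and passing to the colimit shows $\PD_{\SET{x}}$ is an isomorphism.

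The only genuine work is the bookkeeping in the middle paragraph: checking that the trivialisation of the cover over $U$ carries the twisted cup and cap products of Lemmas~\ref{lem:cup-product} and~\ref{lem:cap-product}, together with the relevant boundary maps, to their untwisted counterparts tensored with $A$, and that $\ex_U(\theta_{\SET{x}})$ is carried to $1\otimes\theta_0$. Once that is set up the argument is formal. The $R$-linearity throughout is automatic, since $R$ acts only on the $A$-factor and the cap product is $R$-linear in that variable.
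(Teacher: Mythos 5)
Your proposal is correct and follows essentially the same route as the paper: restrict to a cofinal system of contractible, evenly covered neighbourhoods, fix the sheet of $p^{-1}(U)$ containing the chosen lift of $x$ to identify $C_*(U,U\sm\SET{x};A)\cong A\otimes_\Z C_*(U_0,U_0\sm\SET{\wt x};\Z)$ and $H^0(U;A)\cong A$, and observe that capping with the local orientation cycle becomes $e\mapsto e\otimes\theta_0$. The paper's proof is exactly this computation, written directly in $M$ rather than first excising into $U$.
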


\begin{proof}
Let $\map{p}{\widetilde{M}}{M}$ be the universal cover.
Since $x$ is a point in a manifold we can calculate the dualising map $\PD_{\SET{x}}$ by taking the limit over open neighbourhoods $U$ of $x$ with the following two properties:
\begin{enumerate}[leftmargin=1cm]
	\item $U$ is contractible,
	\item for any connected component $\overline{U}\subseteq p\inv(U)$ the map $p|_{\overline{U}}$ is a homeomorphism.
\end{enumerate}
This can be done, since any neighbourhood of $x$ contains a neighbourhood with these two properties. Let $U$ be such a neighbourhood of $x$ and $\overline{U}\subseteq p\inv(U)$ a fixed connected component. This choice of connected component gives us an isomorphism $H^0(U; A)\cong A$ as follows. Let $f\in H^0(U; A)$ be arbitrary and $\overline{x}\in\overline{U}$ be a point in our connected component. Then we get an element in $A$ by evaluating $f([\overline{x}])$. Conversely, given an element $e\in A$ we can construct a function in $H^0(U; A)$ by setting $f([\overline{x}])=e$ for all $\overline{x}\in\overline{U}$. Note that there is a unique way to extend $f$ equivariantly to $\Ch{0}{p\inv (U)}{\Z}$.

We are now going to construct a representative of the orientation class $\theta_K\in H_n(M,M\setminus\SET{x};\Ztriv)$ for which it is very simple to calculate the dualising map.
Let $\overline{x}$ be the preimage of $x$ in $\overline{U}$. Now take a cycle $\sum_{i=1}^{d}k_i \sigma_i$ which generates $H_n(\overline{U},\overline{U}\setminus\SET{\overline{x}};\Z)$. By Theorem~\ref{thm:basicproperties} (2) and Lemma~\ref{lem:untwisted} one easily sees that $1\otimes_{\Z[\pi]} \sum_{i=1}^{d}k_i \sigma_i$ is a generator of $H_n(M,M\setminus\SET{x}; \Ztriv)$.

Using the isomorphism $H^0(U; A)\cong A$ from above the dualising map becomes $\PD_{\SET{x}}\colon A\to H_n(M,M\setminus\SET{x}; A), e\mapsto e\otimes_{\Z[\pi]} \sum_{i=1}^{d}k_i \sigma_i$. This is clearly an isomorphism, since on the chain level we have:
\[ \Ch{*}{U,U\setminus\SET{x}}{A}=A
\otimes_{\Z[\pi]} \bigoplus_{\gamma\in\pi}\Ch{*}{\gamma\overline{U},\gamma\overline{U}\setminus\SET{\gamma\overline{x}}}{\Z}\cong A\otimes_\Z \Ch{*}{\overline{U},\overline{U}\setminus\SET{\overline{x}}}{\Z}. \qedhere \]
\end{proof}

In order to verify condition (2) of the bootstrap lemma we will need the following lemma (compare~\cite[Lemma VI.8.2]{Br93}).

\begin{lemma}\label{lem:mvscommutative}
If $K$ and $L$ are two compact subsets of $M$, thenfor all $p\in \N_0$ the diagram
\[ \begin{tikzcd}[column sep=tiny,font=\scriptsize]
\cdots \arrow[r]\hspace{-0.1cm}&\hspace{-0.1cm}\Cech{p}{K\shortcup L}{A}\arrow[r]\arrow[d,"\PD_{K\cup L}"]\hspace{-0.1cm}&\hspace{-0.1cm}\begin{array}{c}
\Cech{p}{K}{A}\\\oplus\\\Cech{p}{L}{A}
\end{array}\arrow[r]\arrow[d,"\PD_{K}\oplus\PD_{L}"]\hspace{-0.1cm}&\hspace{-0.1cm} \Cech{p}{K\cap L}{A}  \arrow[r]\arrow[d,"\PD_{K\cap L}"]\hspace{-0.1cm}&\hspace{-0.1cm}\Cech{p+1}{K\shortcup L}{A}\arrow[r]\arrow[d,"\PD_{K\cup L}"]\hspace{-0.1cm}&\hspace{-0.1cm}\cdots\\
\cdots\arrow[r]\hspace{-0.1cm}&\hspace{-0.1cm} H_{n-p}(M,M\shortsms (K\shortcup L);A)\arrow[r]\hspace{-0.1cm}&\hspace{-0.1cm}\hspace{-0.2cm}\begin{array}{c} H_{n-p}(M,M\shortsms K;A)\\\oplus\\ H_{n-p}(M,M\shortsms L;A)\end{array} \hspace{-0.2cm}\arrow[r]\hspace{-0.1cm}&\hspace{-0.1cm} H_{n-p}(M,M\shortsms (K\cap L);A)\arrow[r]\hspace{-0.1cm}&\hspace{-0.1cm}H_{n-p-1}(M,M\shortsms (K\shortcup L);A)\arrow[r]\hspace{-0.1cm}&\hspace{-0.1cm}\cdots
\end{tikzcd}
\]
has exact rows and it commutes up to a sign depending only on $p$.
\end{lemma}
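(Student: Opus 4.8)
The statement to prove is Lemma~\ref{lem:mvscommutative}: that the Mayer--Vietoris--type ladder relating Čech cohomology and local homology commutes up to a sign depending only on $p$, with exact rows.

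The plan is to build the diagram in three layers and check compatibility at each transition. First I would establish the exactness of the two rows. The bottom row is the Mayer--Vietoris sequence for the triad obtained from the open sets $M\sms K$, $M\sms L$ inside $M$; it comes from part (3) of Theorem~\ref{thm:basicproperties} applied to the pairs $(M, M\sms K)$ and $(M, M\sms L)$, noting $(M\sms K)\cup(M\sms L)=M\sms(K\cap L)$ and $(M\sms K)\cap(M\sms L)=M\sms(K\cup L)$. The top row is obtained by passing the cohomology Mayer--Vietoris sequence of Theorem~\ref{thm:basicproperties}(3) to the direct limit over open neighbourhoods; here one uses that $K\cup L$, $K$, $L$, $K\cap L$ admit cofinal systems of open neighbourhoods $U\cup V$, $U$, $V$, $U\cap V$ respectively (shrinking as needed), and that direct limits are exact, so the limit of the Mayer--Vietoris sequences is again exact. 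This last point is the analogue of \cite[Lemma~8.1]{Br93} and should be stated as such.

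Next I would check commutativity of the three types of squares. The squares involving the maps induced by inclusions on both homology and Čech cohomology (the first and second squares of each period) commute already on the level of finite stages $U$, $V$: this is just naturality of the cap product $\acup[\theta]$ with respect to restriction, exactly the computation displayed before Theorem~\ref{thm: pdclosed} showing $\PD_V\circ i^* = \PD_U$, together with functoriality of excision. The sign enters only in the third square, the one containing the connecting homomorphisms $\Cech{p}{K\cap L}{A}\to\Cech{p+1}{K\cup L}{A}$ and $H_{n-p}(M,M\sms(K\cap L);A)\to H_{n-p-1}(M,M\sms(K\cup L);A)$. Here I would trace a class through the zig-zag on both sides: represent a Čech class by a cocycle $\phi$ on $U\cap V$, extend it over $U$ and $V$ using a subordinate partition argument (or use the explicit Mayer--Vietoris splitting $\phi = \phi_U|_{U\cap V} - \phi_V|_{U\cap V}$), and compute $\delta\phi$; on the homology side, represent $\theta_{K\cup L}$ and push it around the Mayer--Vietoris boundary. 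The discrepancy between $\partial(\phi\acap\theta)$ and $(\delta\phi)\acap\theta$ is governed by Lemma~\ref{lem:capboundary}, which carries exactly a factor $(-1)^p$; combining this with the sign in the definition of the Mayer--Vietoris connecting maps yields a sign depending only on $p$, as claimed.

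The main obstacle will be the bookkeeping in this third square: one must be careful that the splitting of a Čech cocycle over $U$ and over $V$ used to define the connecting map is compatible (after excision) with the chain-level splitting of the cycle $\theta_{K\cup L}$ used to define the homology connecting map, and that all of this survives passage to the direct limit. Since the twisted cap product satisfies the same boundary formula (Lemma~\ref{lem:capboundary}) and the same naturality as in the untwisted case, no genuinely new phenomenon arises in the twisted setting; the argument is the one in \cite[Chapter~VI, Section~8]{Br93}, and I would present it by reducing to that reference for the sign computation while spelling out the points where the twisted coefficient system $A$ and the partition-of-unity-free Mayer--Vietoris of Theorem~\ref{thm:basicproperties}(3) need to be invoked. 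It is also worth remarking that the precise value of the sign is irrelevant for the later application (as in the conventions of the paper), so it suffices to know it depends only on $p$.
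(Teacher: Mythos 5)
Your proposal follows essentially the same route as the paper: exactness of both rows from Mayer--Vietoris plus exactness of direct limits, commutativity of the inclusion-induced squares by naturality of the cap product (the $\PD_V\circ i^*=\PD_U$ computation), and the connecting-map square by a chain-level trace in which the \v{C}ech cocycle is extended by zero off $U\cap V$ and the fundamental cycle is decomposed according to supports, with the sign coming from Lemma~\ref{lem:capboundary}. The only cosmetic difference is that you offer a ``subordinate partition argument'' as one option for splitting the cocycle --- partitions of unity are not available for twisted singular cochains, but your alternative (extension by zero, i.e.\ the explicit Mayer--Vietoris splitting) is exactly what the paper's diagram chase uses, so the argument goes through.
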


\begin{proof}
	The rows are exact by Mayer-Vietoris (see
 Theorem~\ref{thm:basicproperties} (3))
 and the fact that direct limit is an exact functor. The commutativity of the squares is clear except for the last one involving the boundary map. This will be a painful diagram chase. Let $U\supseteq K$ and $V\supseteq L$ be open neighbourhoods containing $K$ resp. $L$. The sequence in the top row comes from the short exact sequence ($\mathcal{U}=\SET{U,V}$):
	\[
	\begin{tikzcd}[column sep=small]
	0\arrow[r]& C^*_{\mathcal{U}}(U\cup V;A)\arrow[r]& \coCh{*}{U}{A}\oplus \coCh{*}{V}{A}\arrow[r]& \coCh{*}{U\cap V}{A}\arrow[r]&0.
	\end{tikzcd}
	\]
	An element $\phi\in\Cech{p}{K\cap L}{A}$ will already be represented by some element $f\in\coCh{p}{U\cap V}{A}$ for some $U$ and $V$ as above.
	We can extend $f$ to an element $\overline{f}\in\coCh{p}{M}{A}$ by
	\begin{align*}
	\overline{f}(\sigma)=\begin{cases}
	f(\sigma)&\mbox{if }\operatorname{Im}\sigma \subseteq \widetilde{U}\cap\widetilde{V} \\
	0&\mbox{else.}
	\end{cases}
	\end{align*}
	Note that $\overline{f}\in\coCh{p}{M}{A}$ since $p\inv(U\cap V)$ is an equivariant subspace and hence $\overline{f}$ is equivariant.
	If we consider $\overline{f}$ as an element in $\coCh{p}{U}{A}$ then the cohomology class $\delta(\phi)$ is represented by the cochain $h\in\coCh{p+1}{U\cup V}{A}$ which is given by
	\begin{align*}
	h(\sigma)=\begin{cases}
	\delta(\overline{f})(\sigma)&\mbox{if }\operatorname{Im}\sigma \subseteq \widetilde{U}\\
	0&\mbox{else.}
	\end{cases}
	\end{align*}
	Since $\phi$ is a cocycle we have $\delta(f)(\sigma)=0$ for $\sigma\in\Ch{*}{U\cap V}{A}$. It follows in particular  that if $\sigma$ is a simplex whose image is completely contained in $\widetilde{V}$, then $h(\sigma)=0$.
We can represent our orientation class $\theta\in H_n(M,M\setminus (K\cup L))$ by a cycle
\begin{align*}
 a=b+c+d+e\quad\mbox{with}\quad& b\in\Ch{n}{U\cap V}{\Ztriv}\quad c\in\Ch{n}{U\setminus (U\cap L) }{\Ztriv}
 \\  &d\in\Ch{n}{V\setminus (V\cap K)}{\Ztriv},\quad e\in\Ch{n}{M\setminus(K\cup L)}{\Ztriv}.
\end{align*}
Obviously $e$ does not play a role since we kill it in the end. With these representatives one computes that
	$\delta(\phi)(\theta)$ is represented by \[h\acap (b+c+d)= \delta(\overline{f})\cap c + h\acap d + \delta(f) \cap b = \delta(\overline{f})\cap c.\] The pairing of $h$ with $b$ is zero since $f$ was a cocycle in $\coCh{*}{U\cap V}{A}$ and the pairing of $h$ with $d$ is zero since $d$ consists of simplices with image in $\widetilde{V}$.
	
	The lower sequence comes from the short exact sequence:
	\[\begin{tikzcd}[column sep=tiny]
	0\arrow[r]&C_*(M,M\setminus (K\cup L);A)\arrow[r]&\begin{array}{c}C_*(M,M\setminus K;A)\\ \oplus\\C_*(M,M\setminus L);A)\end{array}\arrow[r]&C_*(M,M\setminus (K\cap L);A)\arrow[r]& 0.
	\end{tikzcd}
	\]
	Before we compute the other side $\partial (\phi\acap \ex_{U\cap V} (\theta))$ we want to recall that the cap product is natural on the chain complex level i.e.\ the following diagram commutes:
	\[ \begin{tikzcd}
	\coCh{p}{U}{A}\times\Ch{n}{U,U\setminus K}{\Ztriv}\arrow[r]\arrow[d,xshift=6ex] & \Ch{*}{U,U\setminus K}{A}\arrow[d] \\
	\coCh{p}{M}{A}\times\Ch{n}{M,M\setminus K}{\Ztriv}\arrow[r]\arrow[u,xshift=-10ex] &\Ch{*}{M,M\setminus K}{A}.	
	\end{tikzcd}
	\]
	Therefore we use the representatives from above. To construct the boundary map $\partial$, we take as the preimage of $\overline{f}\acap a\in\Ch{*}{M,M\setminus (K\cap L)}{A}$ the element $(\overline{f}\acap a,0)\in\Ch{*}{M,M\setminus K}{A}\oplus\Ch{*}{M,M\setminus L}{A}$. Then one computes in $\Ch{*}{M,M\setminus K}{A}$
	\begin{align*}
	\partial(\overline{f} \acap a)&=(-1)^{p+1}\cdot \delta(\overline{f})\acap a \pm f\acap \partial a \quad (\mbox{by Lemma~\ref{lem:capboundary}}) \\
									&=(-1)^{p+1}\cdot\delta(\overline{f})\acap a \quad(\mbox{since $f\acap\partial a\in\Ch{n-p-1}{M\setminus (K\cup L)}{A}$}) \\
									&=(-1)^{p+1}\cdot\delta(\overline{f})\acap b+c+d+e \\
									&=(-1)^{p+1}\cdot\delta(\overline{f}) \acap (c+d) \quad\mbox{(same reason as above)} \\
								    &=(-1)^{p+1}\cdot\delta(\overline{f})\acap c \quad\mbox{(since $d\in \Ch{n-p}{V\setminus (K\cap V)}{A}$)} 	
	\end{align*}
Therefore the element $\partial (\phi\acap \ex_{U\cap V} (\theta))$ is also represented by $(-1)^{p+1}\cdot \delta(\overline{f})\acap c\in\Ch{n-p-1}{M,M\setminus (K\cup L)}{A}$.
\end{proof}

\begin{proof}[Proof of Theorem~\ref{thm: pdclosed}]  Let $P_M(K)$ be the statement that the map $\PD_K$ is an isomorphism.
	Then it is sufficient to verify condition (1), (2) and (3) of the bootstrap lemma.
	We start by verifying (1). In the case that $K=\SET{x}$ is just a point we have already seen in Lemma~\ref{lem:explicitcalc} that the statement holds true.
	For a general compact $K$ with the property of (1) the statement follows from the following commutative diagram:
	\[\begin{tikzcd}
	\Cech{p}{K}{A}\arrow[d,"\simeq"]\arrow[r] &H_{n-p}(M,M\setminus K; A)\arrow[d,"\simeq"]\\
	\Cech{p}{\SET{x}}{A} \arrow[r,"\simeq"] & H_{n-p}(M,M\setminus \SET{x}; A),
	\end{tikzcd}\]
	where the vertical maps are isomorphisms by the homotopy invariance and the bottom row by the observation above. Hence condition (1) is verified.
	
	Condition (2) follows immediately from the five lemma and Lemma~\ref{lem:mvscommutative}.

Let $K_i$ be a sequence of compact subsets such that $P_M(K_i)$ holds for all $i\in\N$. We set $K=\bigcap_{i\in\N} K_i$. It is an exercise in point set topology of manifolds that each $K_i$ has a fundamental system  $U_{i,j}$ of open neighbourhoods. Fundamental system means that $U_{i,j}\subseteq U_{i,k}$ if $j<k$ and that for each open set $U$ containing $K_i$ there is a $j$ such that $U_{i,j}\subseteq U$.
Another exercise in the point set topology of manifolds shows that one can construct these sets such that $U_{1,j}\supseteq U_{2,j} \supseteq U_{3,j}\supseteq \ldots$ for all $j\in \N$. Then $U_{i,j}$ is a fundamental system of open neighbourhoods of $K$ with the order $(i,j)\leq(k,l) \Leftrightarrow i\leq k\ \wedge j\leq l$.
One has the natural isomorphism \cite[Appendix D5]{Br93}:
\[ \varinjlim_{i\in\N}\Cech{p}{K_i}{A} =  \varinjlim_{i\in\N} \varinjlim_{j\in\N} H^{p}(U_{i,j};A) \xrightarrow{\ \simeq\ }  \varinjlim_{i,j\in\N}H^{p}(U_{i,j};A)\cong\Cech{p}{K}{A}.  \]
Hence the theorem follows from the commutativity of the diagram:
\[
\begin{tikzcd}
 {\varinjlim}_{i\in\N}\Cech{p}{K_i}{A} \arrow[r]\arrow[d] & {\varinjlim}_{i\in\N} H_{n-p}(M,M\setminus K_i; A) \arrow[d] \\
\Cech{p}{K}{A} \arrow[r]& H_{n-p} (M,M\setminus K; A).
\end{tikzcd}
\]
\end{proof}

%=======================================================
\section{Proof of the Twisted Poincar\'e Duality Theorem}
For the reader's convenience we recall the main theorem from the last section.  Here, as above, $R$ is a ring and $A$ is an $(R,\Z[\pi])$-bimodule.\\

\noindent \textbf{Theorem~\ref{thm: pdclosed}.} \emph{Let $M$ be a compact, oriented, connected $n$-dimensional manifold.
The map $\map{\PD_K}{\Cech{p}{K}{A}}{H_{n-p}(M,M\setminus K;A)}$ is an isomorphism of left $R$-modules for all compact subsets $K\subseteq M$ and al $p\in \N_0$.}\\

Furthermore, we also recall that we need to prove the following theorem.\\

\noindent \textbf{Theorem~\ref{thm:poincareduality}.} \emph{Let $M$ a compact, oriented, connected $n$-dimensional manifold. Let $S$ and $T$ be codimension 0 compact submanifolds of $\partial M$ such that $\partial S=\partial T=S\cap T$ and $\partial M=S\cup T$.
Let $[M]\in H_n(M,\partial M;\Z)$ be the fundamental class of $M$. The map
\[-\acap [M]\colon H^k(M,S;{A})\,\,\rightarrow \,\,H_{n-k}(M,T;A)\]
 defined by Lemma~\ref{lem:boundary-cap} is an isomorphism of left $R$-modules.}\\

In the remainder of this appendix we will explain how to deduce
Theorem~\ref{thm:poincareduality} from Theorem~\ref{thm: pdclosed}.
First note that if $M$ is a closed manifold, then we can set
$K=M$ in Theorem~\ref{thm: pdclosed}.
Evidently we have $\Cech{p}{M}{A}=H(M;A)$.
Thus  we obtain precisely the statement of  Theorem~\ref{thm:poincareduality} in the closed case.

Next let $M$ be a compact oriented manifold with nonempty boundary.
First we consider the case $R=\emptyset$ and $S=\partial M$.
By the Collar Neighbourhood Theorem~\ref{thm:collar} there exists a collar $\partial M\times [0,2]\subseteq M$ of the boundary such that $\partial M=\partial M\times\SET{0}$. We obtain the following chain of isomorphisms:
\begin{align*}
H^p (M; A) &\cong H^p(M\setminus (\partial M\times[0,1)) ;A) \quad\mbox{(homotopy)} \\
&\cong \Cech{p}{M\setminus (\partial M\times[0,1))}{A}\quad\mbox{(follows from considering the open}\\
&\hspace{6.4cm}\mbox{neighbourhoods $M\sms (\partial M\times [0,1-\frac{1}{n}])$)} \\
& \cong H_{n-p}(M\setminus\partial M,\partial M\times (0,1); A) \quad\mbox{(duality $K=M\setminus (\partial M\times[0,1)))$}\\
&\cong H_{n-p}(M,\partial M\times [0,1);A)\quad\mbox{(excision $U=\partial M$)}\\
&\cong H_{n-p}(M,\partial M; A),
\end{align*}
It follows from the definition of the dualising map and naturality of cap product that these isomorphisms are given by capping with a generator $[M]\in H_n(M,\partial M;\Ztriv)\cong H_n(M,\partial M; \Z)$ as in the classical case.

The proof of the general case of Theorem~\ref{thm:poincareduality} relies on the following lemma.

\begin{lemma}\label{lem:pd-commutative-diagram}
Let $M$ be a compact, oriented, connected $n$-dimensional manifold. Let $R$ and $S$ be compact codimension 0 submanifolds of $\partial M$ such that $\partial R=\partial S=R\cap S$ and $\partial M=R\cup S$. For each $p\in \N_0$ the following diagram commutes up to a sign:	\[
\begin{tikzcd}[column sep=tiny,font=\footnotesize]
\ldots \arrow[r]&H^{p}(M,\partial M;A)\arrow[r]\arrow[dd,"\acap {[M]}"]&H^{p}(M,R;A)\arrow[r]\arrow[dd,"\acap {\left[M\right]}"]& H^{p}(\partial M,R;A)  \arrow[r,"\delta"]\arrow[d,"{\text{\emph{Theorem}}~\ref{thm:basicproperties} (2)}"]&H^{p+1}(M,\partial M;A)\arrow[r]\arrow[dd,"\acap {[M]}"]&\ldots\\ &&& H_{n-p-1}(S,\partial S;A) \arrow[d,"\acap {\left[R\right]}"]  \\
\ldots\arrow[r]& H_{n-p}(M;A)\arrow[r]& H_{n-p}(M,S;A) \arrow[r,"\partial"]& H_{n-p-1}( S;A)\arrow[r]&H_{n-p-1}(M;A)\arrow[r]&\ldots
\end{tikzcd}
\]
\end{lemma}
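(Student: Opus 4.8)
The plan is to prove the commutativity of the ladder at the level of (co-)chain complexes, following the strategy Bredon uses for untwisted Poincar\'e--Lefschetz duality \cite[Chapter~VI]{Br93}, with the chain-level cap product of Lemma~\ref{lem:cap-product}, its naturality with respect to inclusions, and the Leibniz-type boundary formula of Lemma~\ref{lem:capboundary} as the main technical tools. The top row is the long exact sequence in cohomology of the pair $(M,R)$, with connecting map $\delta\colon H^p(R;A)\to H^{p+1}(M,R;A)$; the bottom row is the long exact sequence in homology of the triple $(M,\partial M,S)$, with its term $H_*(\partial M,S;A)=H_*(R\cup S,S;A)$ rewritten via the isomorphism induced by the inclusion $(R,\partial R)\to(R\cup S,S)$, which is an excision isomorphism after choosing a collar of $\partial R$ in $\partial M$ (Theorem~\ref{thm:collar}, compare Theorem~\ref{thm:basicproperties}(2)). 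The three vertical families are all cap products with fundamental classes: $\acap[M]$ in the middle (duality for $M$ with its full boundary $\partial M$), $\acap[M]$ in relative form on $H^*(M,R)$ (the general duality statement), and $\acap[R]$ on the right (duality for the $(n-1)$-manifold-with-boundary $R$, with $\partial R=R\cap S$).

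First I would fix a chain-level representative of the fundamental class adapted to the decomposition $\partial M=R\cup S$. Using equivariant barycentric subdivision --- available with twisted coefficients because, as recalled in the proof of Theorem~\ref{thm:basicproperties}(2), the subdivision operator and its associated chain homotopy are natural, hence $\pi$-equivariant --- I can choose a cycle $\sigma\in C_n(M,\{R,S\};\Z)$ representing $[M]$ whose boundary splits as $\partial\sigma=a_R+a_S$ in $C_{n-1}(\partial M;\Z)$, with $a_R$ supported in $R$, $a_S$ supported in $S$, and with $\partial a_R=-\partial a_S$ a cycle in $C_{n-2}(R\cap S;\Z)=C_{n-2}(\partial R;\Z)$ representing $\pm[\partial R]$. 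Here I move freely between $\Z$- and $\Ztriv$-coefficients via Lemma~\ref{lem:untwisted}. This compatible representative is precisely what forces all the cap products below to land in the correct relative groups (cf. Corollary~\ref{cor:cap-well-defined}).

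With $\sigma$, $a_R$, $a_S$ in hand, the two ``straight'' squares --- those not involving a connecting homomorphism --- commute already on the chain level, by naturality of the chain-level cap product of Lemma~\ref{lem:cap-product}: restricting a cochain and then capping with $\sigma$, versus capping first and then projecting to the relevant quotient complex, produce literally the same relative chain. The remaining square, relating $\delta$ on top and the triple connecting map $H_{n-p}(M,\partial M;A)\to H_{n-p-1}(R\cup S,S;A)$ on the bottom, is the heart of the argument and commutes up to the sign $(-1)^{p+1}$. For a cocycle $\phi$ representing a class in $H^p(R;A)$, extend it to a cochain $\ol\phi$ on all of $M$ as in the standard construction of the connecting map, so that $\delta(\ol\phi)$ is supported away from $R$ and represents $\delta[\phi]\in H^{p+1}(M,R;A)$; then Lemma~\ref{lem:capboundary} gives $\partial(\ol\phi\acap\sigma)=(-1)^{p+1}\bigl(-\delta(\ol\phi)\acap\sigma+\ol\phi\acap\partial\sigma\bigr)$, the first term dies by the cocycle condition and by working modulo chains in $R$, and $\ol\phi\acap\partial\sigma=\ol\phi\acap a_R+\ol\phi\acap a_S$ with the $a_S$-term supported in $S$ and hence zero in the target quotient. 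What remains, $\pm\,\ol\phi\acap a_R$, has class $\pm\,([\phi]\acap[R])$ in $H_{n-p-1}(R,\partial R;A)$, and its image under the excision isomorphism is exactly the bottom composite. This is the same computation as in Lemma~\ref{lem:mvscommutative}, transcribed from the Mayer--Vietoris setting to the present relative one.

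The main obstacle will be the two pieces of bookkeeping in the middle square: producing the subdivided fundamental cycle with simultaneous control $\partial\sigma=a_R+a_S$ and $\partial a_R\in C_*(\partial R;\Z)$, and matching the chain-level descriptions of the two connecting maps closely enough for the boundary formula to be applied term by term. The signs are tedious but harmless --- each square is shown to commute up to a sign depending only on $p$, which is all that the statement (and its intended use through the five lemma in the proof of Theorem~\ref{thm:poincareduality}) requires, so no further normalization is needed.
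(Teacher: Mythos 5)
Your proof is correct and takes essentially the same route as the paper, whose own proof simply declares the lemma to be a direct consequence of Lemma~\ref{lem:capboundary} and the identity $\partial_*[M]=[\partial M]$, deferring the details to the untwisted computation in \cite{Fr19}; you have supplied exactly those details (a fundamental cycle adapted to $\partial M=R\cup S$ with $\partial\sigma=a_R+a_S$, chain-level naturality of the cap product for the straight squares, and the Leibniz formula for the squares involving connecting maps). One small slip in wording: the term $\delta(\ol{\phi})\acap\sigma$ does not ``die'' --- it represents $\delta[\phi]\acap[M]$, and the point is rather that the Leibniz identity exhibits it, up to sign and up to the boundary $\partial(\ol{\phi}\acap\sigma)$, as equal to $\ol{\phi}\acap a_R$ modulo chains in $S$, which is precisely the identification of the two composites that you correctly state at the end.
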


\begin{proof}
The commutativity is a more or less direct consequence of Lemma~\ref{lem:capboundary} and the observation that $\partial_*[M]=[\partial M]$. More precisely, the proof in the untwisted case is given in detail in  \cite[p.~2892]{Fr23}. The proof in the twisted case is basically the same.
\end{proof}

The proof of the general case of Theorem~\ref{thm:poincareduality} follows from
the previously discussed Poincar\'e Duality isomorphisms
$\acap [M] \colon H^p(M;A)\xrightarrow{\cong} H_{n-p}(M,\partial M;A)$,
$\acap [R] \colon H^{p+1}(R,A)\xrightarrow{\cong} H_{n-p-1}(R,\partial R;A)$
together with Lemma~\ref{lem:pd-commutative-diagram} and  the five lemma.

\end{appendix}

%\footnote{There are still several bib entries not giving people's first names. do we want to fix that?  Also, arxiv papers: do we want it to display the arxiv number? We don't have that right now.}

\bibliographystyle{alpha}
\renewcommand{\MR}[1]{}
\bibliography{BibTopMfd}

\end{document}